\numberwithin{equation}{section}
\newtheorem{theorem}{Theorem}[section]
\newtheorem{proposition}[theorem]{Proposition}
\newtheorem{corollary}[theorem]{Corollary}
\newtheorem{lemma}[theorem]{Lemma}
\theoremstyle{definition}
\newtheorem{definition}[theorem]{Definition}
\newtheorem{example}[theorem]{Example}
\theoremstyle{remark}
\newtheorem{remark}[theorem]{Remark}
 \newcommand{\addQEDstyle}[2]{\AtBeginEnvironment{#1}{\pushQED{\qed}\renewcommand{\qedsymbol}{#2}}\AtEndEnvironment{#1}{\popQED}}
\let\oldaleph\aleph
\def\aleph{{\ThisStyle{\scalebox{1.5}[1.1]{%
  \raisebox{-.2\LMpt}{$\SavedStyle\oldaleph$}}\kern-.2pt}}}
\newcommand{\balpha}{\bm{\alpha}}
\newcommand{\bell}{\bm{\ell}}
\newcommand{\bzeta}{\bm{\zeta}}
\newcommand{\baleph}{\bm{\aleph}}
\newcommand{\bbeth}{\bm{\beth}}
\newcommand{\Cvect}{\mathbf{X}}
\newcommand{\CS}{\mathcal{C}}
\newcommand{\Odd}{\mathcal{O}}
\DeclareMathOperator{\Tr}{Tr}
\DeclareMathOperator{\Id}{Id}
\DeclareMathOperator{\supp}{supp}
\DeclareMathOperator{\Span}{span}
\renewcommand{\Re}{\operatorname{Re}}
\renewcommand{\Im}{\operatorname{Im}}
\DeclareMathOperator{\changes}{\mathbf{Ch}}
\renewcommand{\csc}{\operatorname{cosec}}
\DeclareMathOperator{\codim}{codim}
\DeclareMathOperator{\Rot}{Rot}
\newcommand{\Sct}[1]{\mathfrak{S}_{#1}}
\newcommand{\Eangles}{\mathcal{E}}
\newcommand{\Sangles}{\mathcal{S}}
\newcommand{\pmset}[1]{\mathfrak{Z}^{#1}}
\newcommand{\Conj}{\mathbb{C}^2_\mathrm{conj}}
\renewcommand{\emptyset}{\varnothing}
\newcommand{\UC}{\hat{\mathbb{C}}_*}
\newcommand{\hbe}{\hat{\bm{\mathfrak{t}}}}
\newcommand{\hbv}{\hat{\bm{\mathfrak{v}}}}
\newcommand{\hbw}{\hat{\bm{\mathfrak{w}}}}
\newcommand{\er}{\mathrm{e}}
\newcommand{\ir}{\mathrm{i}}
\newcommand{\dr}{\mathrm{d}}
\newcommand{\tI}{\mathrm{I}}
\newcommand{\tII}{\mathrm{II}}
\newcommand{\out}{\mathrm{out}}
\newcommand{\inn}{\mathrm{in}}
\renewcommand{\tilde}{\widetilde}
\renewcommand{\hat}{\widehat}
\newcommand{\N}{\hat{\mathbf{N}}}
\newcommand{\T}{\hat{\mathtt T}}
\newcommand{\U}{\hat{\mathtt U}}
\newcommand{\R}{\hat{\mathtt R}}
\newcommand{\w}{\hat{\mathbf w}}
\newcommand{\Sh}{\hat{\mathtt S}}
\newcommand{\NP}{\mathcal{N}_\mathcal{P}}
\newcommand{\NPq}{\mathcal{N}_\mathcal{P}^q}
\newcommand{\Dir}{\mathfrak{D}}
\newcommand{\Codd}{\hat{\Cvect}_\mathrm{odd}}
\newcommand{\Riesz}{\mathcal{R}_\mathcal{P}}
\newcommand{\Pro}{{\bm{\Pi}}}
\newcommand{\sha}{\sharp}
\newcommand{\SL}{\mathbf{SL}}
\newcommand{\DL}{\mathbf{DL}}
\renewcommand*\env@matrix[1][\arraystretch]{%
  \edef\arraystretch{#1}%
  \hskip -\arraycolsep
  \let\@ifnextchar\new@ifnextchar
  \array{*\c@MaxMatrixCols c}}
\renewcommand*{\arraystretch}{1.25}
\renewcommand{\pod}[1]{\allowbreak\mathchoice
  {\if@display \mkern 18mu\else \mkern 8mu\fi (#1)}
  {\if@display \mkern 18mu\else \mkern 8mu\fi (#1)}
  {\mkern4mu(#1)}
  {\mkern4mu(#1)}
}
\renewcommand\footnotemark{}
\titleformat{\subsection}[runin]
{\normalfont\normalsize\bfseries}{\thesubsection.}{1ex}{}[.]
\begin{document}
\title{Sloshing, Steklov and corners:\\Asymptotics of  Steklov eigenvalues  for  curvilinear polygons\footnote{{\bf MSC(2020): } Primary 35P20. Secondary 34B45.}}
\author{Michael Levitin\hspace{-3ex}
\thanks{{\bf ML:} Department of Mathematics and Statistics,
University of Reading, Whiteknights, PO Box 220, Reading RG6 6AX, UK;
M.Levitin@reading.ac.uk; \url{http://www.michaellevitin.net}}
\and Leonid Parnovski\hspace{-3ex}
\thanks{{\bf LP:} Department of Mathematics, University College London, 
Gower Street, London WC1E 6BT, UK;
leonid@math.ucl.ac.uk}
\and Iosif Polterovich\hspace{-3ex}
\thanks{{\bf IP:} D\'e\-par\-te\-ment de math\'ematiques et de
sta\-tistique, Univer\-sit\'e de Mont\-r\'eal CP 6128 succ
Centre-Ville, Mont\-r\'eal QC  H3C 3J7, Canada;
iossif@dms.umontreal.ca; \url{http://www.dms.umontreal.ca/\~iossif}}
\and David A. Sher
\thanks{{\bf DAS:} Department of Mathematical Sciences, DePaul University, 2320 N Kenmore Ave, Chicago, IL 60614, USA;
dsher@depaul.edu}
}
\date{\small Proceedings of the LMS\qquad doi 10.1112/plms.12461\\final version June 2022}
\maketitle
\begin{abstract} 
We obtain  asymptotic formulae for the Steklov eigenvalues and eigenfunctions of curvilinear polygons in terms of their side lengths and angles. These formulae are quite precise: the errors tend to zero as the spectral parameter tends to infinity. The Steklov problem on planar domains with corners is closely linked to the classical sloshing and sloping beach problems in hydrodynamics; as we show it is also related to quantum graphs.  Somewhat surprisingly, the arithmetic properties of the angles of a curvilinear polygon have a significant effect on  the boundary  behaviour of the Steklov eigenfunctions. Our proofs are based on an explicit construction of quasimodes. We use a variety of methods, including ideas from spectral geometry, layer potential analysis, and some new techniques tailored to our  problem.
\end{abstract}  
\thispagestyle{plain}
\pagestyle{mystyle}
\tableofcontents
\clearpage\section{Introduction}\label{sec:intro}
\subsection{Preliminaries}
Let $\Omega\subset\mathbb{R}^2$ be a bounded connected planar domain with connected Lipschitz  boundary $\partial\Omega$, and let $|\partial\Omega|$ denote its perimeter. 
Consider the Steklov eigenvalue problem
\begin{equation}\label{eq:steklovproblem}
\Delta u =0\quad \text{in }\Omega,\qquad\qquad \frac{\partial u}{\partial n}=\lambda u\quad \text{on }\partial \Omega,
\end{equation}
with $\lambda$ being the spectral parameter, and $\dfrac{\partial u}{\partial n}$ being the exterior normal derivative. 
The spectral problem \eqref{eq:steklovproblem} may be understood in the sense of the normalised quadratic form
\[
\frac{\|\operatorname{grad}u\|_{L^2(\Omega)}^2}{\|u\|^2_{L^2(\partial\Omega)}},\qquad u\in H^1(\Omega).
\]

Let 
\[
\mathcal{D}_{\Omega}:H^{1/2}(\partial\Omega)\to H^{-1/2}(\partial\Omega),\qquad \mathcal{D}_{\Omega} f:=\left.\frac{\partial \mathcal{H}_\Omega f}{\partial n}\right|_{\Omega}
\] 
denote the \emph{Dirichlet-to-Neumann map}, where  $\mathcal{H}_\Omega f$ stands for the harmonic extension of $f$ to $\Omega$. The spectrum of $\mathcal{D}_{\Omega}$ coincides with that of the Steklov problem. 
The spectrum is discrete, 
\[
0=\lambda_1(\Omega)<\lambda_2(\Omega)\le\dots \le \lambda_m(\Omega) \le \dots,
\]
with the only limit point at $+\infty$. The corresponding eigenfunctions $u_m$ have the property that their boundary traces 
$u_m|_{\partial\Omega}$ form an orthogonal basis in $L^2(\partial \Omega)$.
 If the boundary $\partial \Omega$ is piecewise $C^1$, the Steklov eigenvalues have the following asymptotics (see \cite{Ag2006}):
\begin{equation}\label{eq:agranovich}
\lambda_m=\frac{\pi m}{ |\partial\Omega|} +o(m)\qquad\text{as }m\to+\infty.
\end{equation}
Moreover, if the boundary is smooth, then $\mathcal{D}_{\Omega}$ is a pseudodifferential operator of order one, and the remainder estimate could be significantly improved 
\cite{Ros, Ed}:
\begin{equation}
\label{RGM}
\lambda_{2m}=\lambda_{2m+1}+O\left(m^{-\infty}\right)=\frac{2\pi m}{|\partial\Omega|}+O\left(m^{-\infty}\right), \qquad m\to +\infty
\end{equation}
(see also \cite{GPPS} for the case of a disconnected $\partial\Omega$).

The asymptotic formula \eqref{RGM} immediately implies
\begin{proposition}
\label{RGMa}
Let $\Omega_\mathrm{I}$ and $\Omega_\mathrm{II}$ be  two smooth simply connected planar domains of the same perimeter. Then
\begin{equation}
\lambda_m(\Omega_\mathrm{I})-\lambda_m(\Omega_\mathrm{II})=O(m^{-\infty}),
\end{equation}
\end{proposition}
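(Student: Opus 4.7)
The claim is essentially an immediate consequence of the sharp asymptotic formula \eqref{RGM} cited just before the statement, so the plan is to extract it by a direct subtraction argument rather than to develop new machinery.

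Write $L=|\partial\Omega_{\mathrm I}|=|\partial\Omega_{\mathrm{II}}|$ for the common perimeter. Since both domains are smooth and simply connected, the hypotheses of \eqref{RGM} apply to each of them. First I would write down \eqref{RGM} for $\Omega_{\mathrm I}$: for every $k\in\mathbb{N}$,
\[
\lambda_{2m}(\Omega_{\mathrm I}) = \frac{2\pi m}{L} + O(m^{-k}),\qquad \lambda_{2m+1}(\Omega_{\mathrm I})=\frac{2\pi m}{L}+O(m^{-k}),\qquad m\to\infty,
\]
and the identical pair of asymptotics for $\Omega_{\mathrm{II}}$. Subtracting the two even-index expansions gives $\lambda_{2m}(\Omega_{\mathrm I})-\lambda_{2m}(\Omega_{\mathrm{II}})=O(m^{-k})$, and similarly for the odd indices, which together yield the stated bound $\lambda_m(\Omega_{\mathrm I})-\lambda_m(\Omega_{\mathrm{II}})=O(m^{-\infty})$.

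There is essentially no obstacle here; the only point requiring any care is making sure the cancellation actually produces an $O(m^{-\infty})$ bound uniformly in the decay exponent $k$, which it does because each individual asymptotic in \eqref{RGM} holds to every polynomial order. One bookkeeping remark: the pairing in \eqref{RGM} groups eigenvalues as $(\lambda_{2m},\lambda_{2m+1})$, so when computing the difference at a single index $m$ one must match even indices with even and odd with odd, as above, rather than combining one of each parity. Apart from that trivial matching, no further ingredients, such as the structure of the Dirichlet-to-Neumann map or properties of $\mathcal{H}_\Omega$, are needed.
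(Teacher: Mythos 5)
Your argument is correct and is precisely what the paper intends when it says the proposition follows "immediately" from \eqref{RGM}: apply the formula to each domain, note the leading term depends only on the common perimeter, and subtract, matching parities. No further commentary is needed.
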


For non-smooth domains such as polygons,  formula \eqref{RGM} and Proposition \ref{RGMa} are no longer valid, see e.g.\ \cite[section 3]{GP2017}. Building upon the approach introduced in \cite{sloshing}, in the present paper we develop the techniques that allow to improve the asymptotic  formula \eqref{eq:agranovich} significantly when $\Omega$ is a curvilinear polygon. 

\subsection{Curvilinear polygons. Exceptional and special angles}
To fix notation, let $\mathcal{P}=\mathcal{P}(\boldsymbol{\alpha},\bell)$  be a (simply connected) curvilinear polygon in $\mathbb{R}^2$ with  $n$ vertices $V_1,\dots,V_n$ numbered clock-wise, corresponding internal angles $0<\alpha_j<\pi$ at $V_j$, and smooth sides $I_j$  of length $\ell_j$ joining $V_{j-1}$ and $V_j$.  Here,  $\balpha=(\alpha_1,\dots,\alpha_n)\in\Pi^n$, where 
\[
\Pi:=(0,\pi),
\] 
$\bell=(\ell_1,\dots,\ell_n)\in\mathbb{R}_+^n$,  and we will use cyclic subscript identification $n+1\equiv 1$. Our choice of orientation ensures that an internal angle $\alpha_j$ is measured from $I_j$ to $I_{j+1}$ in the counter-clockwise direction, as in Figure  \ref{fig:fig1}. The perimeter of $\mathcal{P}$ is  $|\partial\mathcal{P}|=\ell_1+\dots+\ell_n$.
\begin{figure}[htb]
\begin{center}
\includegraphics{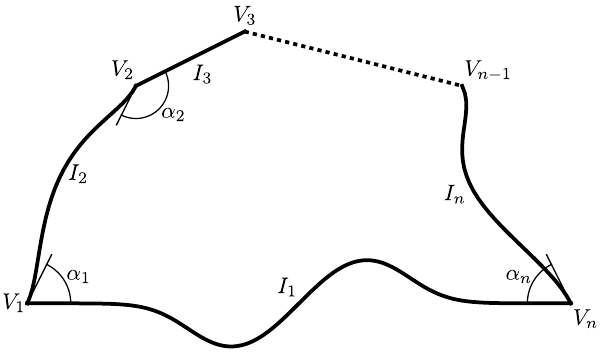}
\end{center}
\caption{A curvilinear polygon\label{fig:fig1}}
\end{figure} 

In what follows we will have to distinguish the cases when some of the polygon's angles belong to the sets of \emph{exceptional} and \emph{special} angles.
\begin{definition}\label{defn:angles}
Let  $\alpha=\dfrac{\pi}{j}$, $j\in \mathbb{N}$. We say that the angle $\alpha$ is {\it exceptional} if $j$ is even, and {\it special} if $j$ is odd, and denote the corresponding sets
\[
\Eangles=\left\{\frac{\pi}{2k}\,\middle|\, k\in\mathbb{N}\right\},\qquad\Sangles=\left\{\frac{\pi}{2k+1}\,\middle|\, k\in\mathbb{N}\right\}.
\]
We  call an exceptional angle $\alpha=\dfrac{\pi}{2k}$ \emph{odd} or \emph{even} depending on whether $k$ is odd or even, respectively, and define its \emph{parity} $\Odd(\alpha)$ to be 
\[
\Odd(\alpha):=\cos\left(\dfrac{\pi^2}{2\alpha}\right)=(-1)^k.
\] 
Similarly, we call a special angle $\alpha=\dfrac{\pi}{2k+1}$ \emph{odd} or \emph{even} depending on whether $k$ is odd or even, respectively, and  define its \emph{parity} $\Odd(\alpha)$ to be 
\[
\Odd(\alpha):=\sin\left(\dfrac{\pi^2}{2\alpha}\right)=(-1)^k.
\]
\end{definition}

\begin{definition}\label{defn:exceptpoly} A curvilinear polygon without any exceptional angles will be called a \emph{non-exceptional polygon}, otherwise it will be called an \emph{exceptional polygon}.
\end{definition}

\subsection{Main results}

The main purpose of this paper is to describe sharp asymptotic behaviour as $m\to\infty$ of the Steklov eigenvalues $\lambda_m$ of a curvilinear polygon $\mathcal{P}=\mathcal{P}(\balpha,\bell)$. More precisely, we show that the Steklov spectrum can be approximated as $m\to\infty$ by a sequence of \emph{quasi-eigenvalues} $\sigma_m$,  which are computable  in terms of side lengths $\bell$ and angles $\balpha$.  

The quasi-eigenvalues $\sigma_m$ can in fact be defined in several equivalent ways, each having its own merit. Originally they are defined in Section \ref{sec:statements} in terms of the so called \emph{vertex} and \emph{side transfer matrices}, in two different ways depending on the presence of exceptional angles. This is done according to Definitions  \ref{def:quasi} and \ref{def:quasimult}  in the non-exceptional case, and according to  Definitions  \ref{def:quasiexc}  and \ref{def:quasiexcmult}  in the exceptional case. This is the most natural definition arising from the construction of corresponding quasimodes. Later, Theorems  \ref{thm:polygoneqn0} and \ref{thm:polygoneqn1} state that the quasi-eigenvalues can be found as roots of some explicit trigonometric polynomials which also depend only upon the geometry of the curvilinear polygon. This approach is the most convenient computationally.  Theorem \ref{thm:quantum} states that $\sigma_m$ can be  viewed alternatively as the square roots of the eigenvalues of a particular  \emph{quantum graph Laplacian}. Here the metric graph is cyclic and is modelled on the boundary of $\mathcal{P}$, while the matching conditions at the vertices are determined by the angles. 
 This interpretation allows us to relate to the well developed theory of quantum graphs, see \cite{BK13} and references therein, and also \cite{BE, KoSm, KN2}. It also leads to another one, \emph{variational}, interpretation of quasi-eigenvalues, see Remark \ref{rem:varprinciple}, and allows us an easy proof of Theorem \ref{thm:polygoneqn0}. We note that in a different but somewhat reminiscent setting of a periodic problem involving Dirichlet-to-Neumann type maps, a relation to a quantum graph problem was already observed in \cite{KuKu}, see also \cite{KuKu99}. We emphasise, however, that we do not directly use the quantum graph analogy in the construction of our quasimodes, see  Remark \ref{rem:notquant}.
Finally, yet another equivalent way to define the quasi-eigenvalues is presented in subsections \ref{subs:noexc} and \ref{subs:exepenum} in terms of the \emph{lifts} of the vertex and side transfer matrices acting on the universal cover $\UC$ of the punctured plane, see subsection \ref{subsec:rep}. This definition is indispensable for the delicate analysis required to establish the correct \emph{enumeration} of quasi-eigenvalues and their monotonicity properties.  

With the definitions of quasi-eigenvalues  in place our main result is
 
\begin{theorem}\label{thm:main} Let  $\mathcal{P}=\mathcal{P}(\balpha,\bell)$ be a curvilinear polygon. Let $\{\sigma_m\}$ denote the sequence of quasi-eigenvalues  ordered increasingly with account of multiplicities. Then there exists $\varepsilon_0>0$ such that for any $\varepsilon\in(0,\varepsilon_0)$, the Steklov eigenvalues of  $\mathcal{P}$ satisfy
\[
\lambda_m=\sigma_m+O(m^{-\varepsilon})\qquad\text{as }m\to\infty.
\]
\end{theorem}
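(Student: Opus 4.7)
\emph{Strategy.} The plan is to prove Theorem~\ref{thm:main} by an explicit quasimode construction paired with a counting/enumeration argument. For each quasi-eigenvalue $\sigma_m$ I would build a function $u_m\in H^1(\mathcal{P})$, harmonic in $\mathcal{P}$, whose boundary trace is the natural combination of one-dimensional oscillations along $\partial \mathcal{P}$ suggested by the transfer matrix formalism of Section~\ref{sec:statements}, and then show that the Steklov residual $\partial u_m/\partial n-\sigma_m u_m$ on $\partial \mathcal{P}$ is small in a suitable negative-order norm, with a polynomial-in-$\sigma_m$ rate of decay. A standard spectral stability argument then produces an eigenvalue of $\mathcal{D}_\mathcal{P}$ within $O(m^{-\varepsilon})$ of each $\sigma_m$, and a density comparison upgrades this to the index-matched statement of the theorem.

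\emph{Local model at vertices.} First I would zoom in near each vertex $V_j$ to an infinite wedge of opening $\alpha_j$. There the Steklov problem admits explicit Peters-type waves: on each ray, a combination of $\er^{\pm\ir\sigma s}$ (to leading order) plus diffractive corrections at the apex, obtained by Mellin transform or via the sloping beach analysis of \cite{sloshing}. The outgoing-versus-incoming structure of these model waves defines the vertex transfer matrix of Definition~\ref{def:quasi}. The exceptional angles $\alpha\in\Eangles$ are precisely the resonant openings where the wedge admits anomalous bounded solutions (essentially polynomial in the distance to the apex), which forces the alternative formulation captured by Definitions~\ref{def:quasiexc} and \ref{def:quasiexcmult}; special angles in $\Sangles$ correspond to an additional sign ambiguity that is reflected in the parity $\Odd(\alpha)$.

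\emph{Global quasimode and quantization.} On each smooth side $I_j$ the boundary trace of $u_m$ is taken as a pair of oscillations $\er^{\pm\ir\sigma s}$ in arclength, propagated across $I_j$ by the side transfer matrix (pure phase $\er^{\ir\sigma\ell_j}$) and across each vertex by the vertex transfer matrix. Single-valuedness after one traversal of $\partial\mathcal{P}$, read on the universal cover $\UC$ as in subsections~\ref{subs:noexc}--\ref{subs:exepenum}, yields a secular equation whose roots are exactly the $\sigma_m$, and this is equivalent to the trigonometric-polynomial conditions of Theorems~\ref{thm:polygoneqn0}--\ref{thm:polygoneqn1}. The harmonic extension of the resulting boundary trace into $\mathcal{P}$ is built by patching: inside a disk of radius comparable to $\sigma^{-\delta}$ around each $V_j$, the explicit wedge model is used; away from the vertices, a single-layer potential (or the pseudodifferential calculus for $\mathcal{D}_\Omega$ on smooth arcs) produces the extension and controls the error. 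The resulting residual, restricted to each side and each vertex neighbourhood, can be estimated by standard cutoff/commutator arguments, giving a bound of the form $\sigma^{-\varepsilon}\|u_m\|_{L^2(\partial\mathcal{P})}$.

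\emph{Main obstacle and completion.} The principal difficulty is twofold. Quantitatively, converting the formal construction into an $O(m^{-\varepsilon})$ estimate requires careful bookkeeping of the matching on the overlap annuli between wedge models and global extensions; the rate $\varepsilon$ will be limited by the worst remainder in the Peters expansion and by how near the $\alpha_j$ come to $\Eangles\cup\Sangles$. Enumeratively, converting quasimode bounds into an \emph{index-matching} identity $\lambda_m=\sigma_m+O(m^{-\varepsilon})$ requires (i) near-orthogonality of the quasimodes in $L^2(\partial\mathcal{P})$, which follows from the oscillatory structure of their boundary traces together with non-stationary phase on each side, and (ii) a Weyl-type count showing that $\#\{\sigma_m\le\Lambda\}=\Lambda|\partial\mathcal{P}|/\pi+O(1)$, which is immediate from the quantum graph interpretation of Theorem~\ref{thm:quantum} and agrees with the Agranovich asymptotics \eqref{eq:agranovich} for $\lambda_m$. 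The exceptional-angle case, where the transfer matrix structure partially degenerates, is the most delicate and will require a parallel but separate treatment of the bound-state-type contributions coming from the resonant wedges.
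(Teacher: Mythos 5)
Your construction step (Peters wave models in wedges, vertex and side transfer matrices, a secular quantization on $\partial\mathcal P$ read on the universal cover) is essentially the paper's: Sections~\ref{sec:sector}--\ref{sec:quasieigenvalues} build exactly these \emph{scattering Peters solutions}, glue them with cutoffs, correct them to a harmonic function via Lemma~\ref{lem:improvement}, and apply a spectral stability result to get, for each $\sigma_m$, a true Steklov eigenvalue within $O(m^{-\varepsilon})$. The near-orthogonality you invoke is also there (Proposition~\ref{prop:almostorthogonality}, derived from the self-adjoint quantum-graph Dirac operator) and is what yields a \emph{strictly increasing} map $m\mapsto i_m$ with $\lambda_{i_m}=\sigma_m+O(m^{-\varepsilon})$ — Theorem~\ref{thm:injectivity}.

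The gap is in your item (ii) of the enumeration step. You claim that index-matching ($i_m=m$) follows from near-orthogonality together with the Weyl count $\#\{\sigma_m\le\Lambda\}=\frac{|\partial\mathcal P|}{\pi}\Lambda+O(1)$ being consistent with the Agranovich asymptotics \eqref{eq:agranovich}. This does not work. Both Weyl laws carry $O(1)$ (or worse, $o(\Lambda)$) errors, so they are blind to a constant integer shift $i_m=m+k$: the sequences $\{\lambda_m\}$ and $\{\sigma_m\}$ could in principle differ by insertion of a bounded number of ``extra'' Steklov eigenvalues with no quasimode counterpart, and neither counting argument rules this out. The near-orthogonality only guarantees injectivity of $m\mapsto i_m$, not surjectivity onto a cofinite set. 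The paper is explicit about this obstruction: Remark~\ref{rem:notquant} points out that because the quasimode error $O(m^{-\varepsilon})$ with small $\varepsilon$ is not square-summable, the Bary--Krein completeness argument (which \emph{is} usable in the simpler sloshing-type sub-case of Corollary~\ref{cor:curvsloshing}) fails for the general polygon.

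What the paper actually does to close the gap — and what is missing in your sketch — is the whole of Section~\ref{sec:completeness}: cut the polygon to obtain auxiliary zigzag domains; establish correct (``natural'') enumeration for short zigzags by direct comparison with the sloshing asymptotics of \cite{sloshing}; propagate this through Dirichlet--Neumann bracketing (Propositions~\ref{lemma2} and \ref{lemma2key}), using the monotonicity in $\sigma$ of the argument functions $\mathfrak d_j$ and $\varphi_{\mathcal Z^{(\aleph\beth)}}$ on the universal cover $\UC$ together with an integer-part lemma (Proposition~\ref{prop:integparts}); and finally re-assemble the polygon spectral counting function from the zigzag ones (Theorem~\ref{thm:mainpolygons}). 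This is the crux of the theorem, not a bookkeeping afterthought. In addition, your quasimode construction glues $\sigma$-dependent disks around each vertex to a global extension, whereas the paper treats partially curvilinear polygons first with a fixed partition of unity and then passes to fully curvilinear boundaries by a separate layer-potential continuity/deformation argument (Section~\ref{sec:layer}); you would need to address that step too, since your patching as described does not control the boundary-curvature error near the corners.
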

\begin{remark} We give an explicit formula for $\varepsilon_0$, depending only on the angles of $\mathcal P$, in Remark \ref{rem:defofepsilon}.
\end{remark}

As an immediate consequence of Theorem \ref{thm:main}, we obtain 
\begin{corollary}\label{col:same} 
Let $\mathcal{P}_\mathrm{I}(\balpha,\bell)$ and $\mathcal{P}_\mathrm{II}(\balpha,\bell)$ be two curvilinear polygons with the same angles $\balpha$ and the same side lengths $\bell$. Then
\[
\lambda_m(\mathcal{P}_\mathrm{I})-\lambda_m(\mathcal{P}_\mathrm{II})=O(m^{-\varepsilon})\qquad \text{as }m\to+\infty.
\]
\end{corollary}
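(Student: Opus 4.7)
The plan is to deduce Corollary \ref{col:same} directly from Theorem \ref{thm:main}, as the statement itself flags this as an ``immediate consequence''. The central observation is that the quasi-eigenvalue sequence $\{\sigma_m\}$, in any of the four equivalent formulations advertised after Theorem \ref{thm:main} (vertex/side transfer matrices, roots of trigonometric polynomials, quantum graph Laplacian eigenvalues, or lifts to the universal cover $\UC$), depends on the curvilinear polygon $\mathcal{P}(\balpha,\bell)$ only through the combinatorial data $(\balpha,\bell)$. No further geometric information about how the smooth sides $I_j$ are embedded between the vertices enters the construction.

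First I would verify this invariance claim carefully: inspecting, say, the trigonometric-polynomial characterisation of Theorems \ref{thm:polygoneqn0} and \ref{thm:polygoneqn1}, one sees that the coefficients of the polynomial whose roots are $\sigma_m$ are given in terms of $\balpha$ and $\bell$ alone. The same is true for the quantum graph interpretation of Theorem \ref{thm:quantum}: the underlying cyclic metric graph is determined by the side lengths $\ell_j$, and the matching conditions at the vertices are determined by the angles $\alpha_j$. Consequently, if $\mathcal{P}_\mathrm{I}=\mathcal{P}_\mathrm{I}(\balpha,\bell)$ and $\mathcal{P}_\mathrm{II}=\mathcal{P}_\mathrm{II}(\balpha,\bell)$ share both their angle vector and their side-length vector, then
\[
\sigma_m(\mathcal{P}_\mathrm{I})=\sigma_m(\mathcal{P}_\mathrm{II})\qquad\text{for every }m\in\mathbb{N},
\]
including the multiplicities, since the ordering is defined intrinsically from $(\balpha,\bell)$.

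Next, I would apply Theorem \ref{thm:main} separately to the two polygons. With $\varepsilon_0$ the constant from that theorem (which by Remark \ref{rem:defofepsilon} depends only on the angles $\balpha$, hence is the same for both polygons), fix any $\varepsilon\in(0,\varepsilon_0)$. Then
\[
\lambda_m(\mathcal{P}_\mathrm{I})=\sigma_m+O(m^{-\varepsilon}),\qquad \lambda_m(\mathcal{P}_\mathrm{II})=\sigma_m+O(m^{-\varepsilon}),
\]
as $m\to\infty$. Subtracting gives the desired bound $\lambda_m(\mathcal{P}_\mathrm{I})-\lambda_m(\mathcal{P}_\mathrm{II})=O(m^{-\varepsilon})$.

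There is essentially no obstacle here beyond bookkeeping. The only point that deserves a line of explanation is the \emph{matching of enumerations}: one has to make sure that the ordering with multiplicities used to define the sequence $\{\sigma_m\}$ truly depends only on $(\balpha,\bell)$, which is exactly the purpose of the lift construction in subsections \ref{subs:noexc} and \ref{subs:exepenum}. Once that is invoked, the two error terms attach to the same $\sigma_m$ and the cancellation is immediate.
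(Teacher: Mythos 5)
Your proposal is correct and follows exactly the route the paper intends: the paper states Corollary \ref{col:same} as an ``immediate consequence'' of Theorem \ref{thm:main}, and your argument — that the quasi-eigenvalue sequence $\{\sigma_m\}$ (in any of its equivalent formulations) depends only on $(\balpha,\bell)$, so applying Theorem \ref{thm:main} to each polygon and subtracting gives the bound — is precisely the intended one-line deduction, with the key invariance observation made explicit.
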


We also describe the asymptotic behaviour of the Steklov eigenfunctions on the boundary. Up to a small error, they are  given by trigonometric functions of frequency $\sigma_m$ along each edge.

\begin{theorem}\label{thm:maineigenfunction} Fix $\delta>0$. Then there exists $C>0$ such that for all $m$ with 
$\sigma_{m-1}+\delta\leq\sigma_m\leq\sigma_{m+1}-\delta$, there exist constants $a_{m,j}$ and $b_{m,j}$ such that for all $j$,
\[ \|(u_m|_{I_j})(s_j)-a_{m,j}\cos(\sigma_m s_j)-b_{m,j}\sin(\sigma_m s_j)\|_{L^2(I_j)}\leq Cm^{-\varepsilon},\]
where $s_j$ is an arc length coordinate along $I_j$, and $\varepsilon$ is as in Theorem \ref{thm:main}.
\end{theorem}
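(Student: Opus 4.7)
The plan is to extract the eigenfunction approximation from the quasimode $v_m$ underlying the proof of Theorem \ref{thm:main}. By the construction of the quasi-eigenvalues outlined in Section \ref{sec:statements}, $v_m$ is built explicitly on $\partial\mathcal{P}$ and, on each edge $I_j$, has the form
\[
v_m|_{I_j}(s_j) = \tilde{a}_{m,j}\cos(\sigma_m s_j)+\tilde{b}_{m,j}\sin(\sigma_m s_j)+w_{m,j}(s_j),
\]
where $w_{m,j}$ is a vertex-layer correction, concentrated near $V_{j-1}$ and $V_j$ and decaying into the bulk of $I_j$ fast enough that $\|w_{m,j}\|_{L^2(I_j)}=O(m^{-\varepsilon'})$ for some $\varepsilon'\geq\varepsilon$. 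The same construction also produces the residual estimate $\|(\mathcal{D}_{\mathcal{P}}-\sigma_m)v_m\|_{L^2(\partial\mathcal{P})}=O(m^{-\varepsilon'})$, with $\|v_m\|_{L^2(\partial\mathcal{P})}$ bounded away from $0$ and $\infty$ uniformly in $m$.

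First I would convert the assumed $\sigma_m$-gap into a Steklov spectral gap. By Theorem \ref{thm:main}, $\lambda_k=\sigma_k+O(k^{-\varepsilon})$, so the hypothesis $\sigma_{m-1}+\delta\leq\sigma_m\leq\sigma_{m+1}-\delta$ ensures that for all sufficiently large $m$ and every $k\neq m$, $|\lambda_k-\sigma_m|\geq\delta/2$.

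Second, I would run the standard approximate-eigenvector argument. Expand
\[
\frac{v_m}{\|v_m\|_{L^2(\partial\mathcal{P})}}=\sum_k c_{m,k}\,u_k|_{\partial\mathcal{P}}
\]
in the orthonormal Steklov boundary basis. Parseval combined with the residual bound yields $\sum_k(\sigma_m-\lambda_k)^2|c_{m,k}|^2=O(m^{-2\varepsilon'})$, and the spectral gap forces $\sum_{k\neq m}|c_{m,k}|^2=O(m^{-2\varepsilon'})$, hence $c_{m,m}=\pm 1+O(m^{-2\varepsilon'})$. After an appropriate choice of sign for $u_m$, therefore, $v_m/\|v_m\|_{L^2(\partial\mathcal{P})}$ equals $u_m|_{\partial\mathcal{P}}$ up to an $L^2(\partial\mathcal{P})$ error of size $O(m^{-\varepsilon'})$. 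Restricting to $I_j$ and setting $a_{m,j}:=\pm\tilde{a}_{m,j}/\|v_m\|_{L^2(\partial\mathcal{P})}$, $b_{m,j}:=\pm\tilde{b}_{m,j}/\|v_m\|_{L^2(\partial\mathcal{P})}$ with a consistent sign yields the claim, since both the vertex-layer remainder and the spectral projection error contribute $O(m^{-\varepsilon})$ in $L^2(I_j)$.

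The main obstacle is the bookkeeping of the vertex-layer contribution on each edge: one has to ensure that $\|w_{m,j}\|_{L^2(I_j)}$ is genuinely $O(m^{-\varepsilon})$, rather than merely having $O(m^{-\varepsilon})$ after summation over all edges. This requires that the quasimode near each vertex be sufficiently localised, so that the correction on a given side comes only from its two adjacent vertices — a property built into the quasimode construction in Theorem \ref{thm:main}. A minor subsidiary issue is the sign ambiguity in $u_m$, which is absorbed into the definitions of $a_{m,j}$ and $b_{m,j}$.
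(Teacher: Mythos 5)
Your argument is correct, and it genuinely departs from the paper's route. The paper deduces Theorem~\ref{thm:maineigenfunction} as a special case of the more general Theorem~\ref{thm:maineigenfunctiongen}, whose proof uses the almost-orthogonality machinery of Section~\ref{subsec:orthogcons}: one clusters the quasi-eigenvalues, invokes Corollary~\ref{cor:approxofquasibypsi} to approximate each $\Psi^{(m)}$ by a linear combination $\tilde u_m$ of eigenfunctions in a cluster, and then uses Corollaries~\ref{cor:almostorthogonality} and~\ref{cor:linind} to show that the finite transition matrices $G_k,H_k$ between $\{\Psi^{(m)}\}$, $\{\tilde u_m\}$, $\{u_m\}$ are invertible with uniformly bounded inverses. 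You instead exploit the gap hypothesis directly: after converting the quasi-eigenvalue gap into a Steklov spectral gap via Theorem~\ref{thm:main}, a Parseval expansion of the corrected quasimode in the Steklov boundary eigenbasis localises all but one Fourier coefficient to size $O(m^{-\varepsilon'})$ and gives the conclusion in one stroke, with an exponent $\varepsilon'=\tilde\delta-1$ that is actually better than the stated $\varepsilon=(\tilde\delta-1)/2$ (the abstract linear-algebra lemma used in the paper loses a square root, which your spectral-gap argument avoids). The trade-off is that your argument requires the isolated-eigenvalue hypothesis and so does not yield the cluster version (Theorem~\ref{thm:maineigenfunctiongen}), while the paper's approach does.

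Two small points to tighten. First, $\mathcal D_{\mathcal P}v_m$ is not literally well-defined since $v_m$ is not harmonic in $\mathcal P$; you should run the Parseval argument on the harmonic correction $\phi_m=\tilde v_m|_{\partial\mathcal P}$ produced in the proof of Theorem~\ref{thm:approx1}, for which $\|\mathcal D_{\mathcal P}\phi_m-\sigma_m\phi_m\|_{L^2}=O(\delta_m)$ and $\|\phi_m-v_m\|_{L^2(\partial\mathcal P)}=O(\delta_m)$, so nothing is lost. Second, in converting the $\sigma$-gap into a $\lambda$-gap you should note explicitly that the finitely many indices $k$ for which $|\lambda_k-\sigma_k|$ is not yet small are handled automatically because $\lambda_k$ stays bounded there while $\sigma_m\to\infty$, so the gap $|\lambda_k-\sigma_m|\ge\delta/2$ still holds for all $k\neq m$ once $m$ is large; finitely many exceptional $m$ are absorbed into the constant $C$.
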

\begin{remark} The assumption on $m$ is made only so that the theorem is easy to state, as it removes the possibility of clustering of eigenvalues and quasi-eigenvalues. Theorem \ref{thm:maineigenfunctiongen} is a more general version, without this assumption.
\end{remark}
\begin{remark} The coefficients $a_{m,j}$ and $b_{m,j}$ are related to each other by imposing \emph{matching conditions} at the vertices, and may be found explicitly in the same way as the quasi-eigenvalues. See Section \ref{subsec:bqm} for details.
\end{remark}

\subsection{Examples}\label{sec:examples}
The following examples give the flavour of the main results; they are further discussed in more detail and illustrated by numerics in Section \ref{sec:numerics}.

\begin{example}[Each angle is either special or exceptional]\label{ex:specialorexceptional} Let $\mathcal{P}(\balpha,\bell)$ be a curvilinear $n$-gon in which each angle is either special or exceptional
 in the sense of Definition \ref{defn:angles}. In this case we can use Theorem \ref{thm:main} together with Definitions \ref{def:quasi} and  \ref{def:quasiexc} directly without the use of trigonometric polynomials.  We will distinguish two cases. 
 \begin{itemize}
 \item[(a)] All angles are special, that is $\alpha_j=\dfrac{\pi}{2k_j+1}$, $k_j\in\mathbb{N}$, $j=1,\dots,n$. In this case we have the quasi-eigenvalues
 \begin{equation}\label{eq:allspecial}
 \begin{aligned}
 \sigma_1=0, \sigma_{2m}=\sigma_{2m+1}=\frac{2\pi m}{|\partial\mathcal{P}|},\quad m\in\mathbb{N},\qquad&\text{if }\sum_{j=1}^n k_j\text{ is even},\\ 
\sigma_{2m-1}=\sigma_{2m}=\frac{2\pi\left(m-\frac{1}{2}\right)}{|\partial\mathcal{P}|},\quad m\in\mathbb{N},\qquad&\text{if }\sum_{j=1}^n k_j\text{ is odd}.
 \end{aligned}
 \end{equation}
  \item[(b)]   Suppose that there are $K$ exceptional angles  $\alpha^\mathcal{E}_\kappa=\alpha_{E_\kappa}=\frac{\pi}{2k_\kappa}$, with $k_\kappa\in\mathbb{N}$, $\kappa=1,\dots,K$, $1\le E_1<E_2<\dots<E_K\le n$, and all the other angles are special.  We assume the cyclic enumeration of exceptional angles $E_{K+1}=E_1$. Let us denote also by $L_\kappa$ the \emph{total length} of the boundary pieces between exceptional angles $\alpha^\mathcal{E}_{\kappa-1}$ and $\alpha^\mathcal{E}_\kappa$.  
  
 Let 
 \[
 \mathfrak{K}_\mathrm{odd}:=\left\{\kappa\in\{1,\dots,K\}: \Odd\left(\alpha^\mathcal{E}_\kappa\right)\ne\Odd\left(\alpha^\mathcal{E}_{\kappa-1}\right)\right\},
 \] 
be the set of indices $\kappa$ such that $k_{\kappa}-k_{\kappa-1}$ is odd, and  let
\[
 \mathfrak{K}_\mathrm{even}:=\left\{\kappa\in\{1,\dots,K\}: \Odd\left(\alpha^\mathcal{E}_\kappa\right)=\Odd\left(\alpha^\mathcal{E}_{\kappa-1}\right)\right\}.
 \] 
Then $\sigma=0$ is a quasi-eigenvalue of multiplicity $\frac{\#\mathfrak{K}_\mathrm{odd}}{2}$, and the positive quasi-eigenvalues $\sigma$ form the set
\[
\left(\bigcup_{\kappa\in \mathfrak{K}_\mathrm{even}}\left\{\frac{\pi}{L_\kappa}\left(m-\frac{1}{2}\right)\mid m\in\mathbb{N}\right\}\right)\cup 
\left(\bigcup_{\kappa\in \mathfrak{K}_\mathrm{odd}}\left\{\frac{\pi}{L_\kappa}m\mid m\in\mathbb{N}\right\}\right),
\]
with account of multiplicities.
 \end{itemize}
Example \ref{ex:eigenfunctions} and Proposition \ref{prop:equidistrib} below also show strikingly different asymptotic behaviour of eigenfunctions in these two cases.

As an illustration, we consider the following two particular cases of right-angled triangles (see also cases (a$_3$) and (b$_4$) in Example \ref{ex:quasi-regular} below, and Example \ref{ex:eigenfunctions}):
\begin{itemize}
\item[($T_1$)] The isosceles right-angled triangle $T_1=\mathcal{P}\left(\left(\frac{\pi}{4},\frac{\pi}{4}, \frac{\pi}{2}\right),\left(1,\sqrt{2},1\right)\right)$. All angles are exceptional, two of them even, and one odd. There is a single quasi-eigenvalue at $\sigma=0$, a subsequence of quasi-eigenvalues $\sigma=\pi m$, $m\in\mathbb{N}$ of multiplicity two, and a subsequence of single quasi-eigenvalues $\sigma=\frac{\pi}{\sqrt{2}}\left(m-\frac{1}{2}\right)$, $m\in\mathbb{N}$.
\item[($T_2$)] The right-angled triangle $T_2=\mathcal{P}\left(\left(\frac{\pi}{3},\frac{\pi}{6}, \frac{\pi}{2}\right),\left(1,2,\sqrt{3}\right)\right)$. Two angles are odd exceptional and one is  odd special. There are two subsequences of single quasi-eigenvalues 
\[ 
\sigma=\frac{\pi}{3}\left(m-\frac{1}{2}\right)\quad\text{and}\quad\sigma=\frac{\pi}{\sqrt{3}}\left(m-\frac{1}{2}\right),\qquad m\in\mathbb{N}.
\]
\end{itemize}
 \end{example}

 \begin{remark}
Note that \emph{even special} angles do not affect the quasi-eigenvalues in both cases considered in Example \ref{ex:specialorexceptional}.  In particular, in case (a) with all even special angles the quasi-eigenvalues $\sigma$ are the same as for a smooth domain with the same perimeter, compare with \eqref{RGM}. This remains true for any curvilinear polygon --- a vertex with an even special angle can be removed and the two adjacent sides treated as a single side without affecting the quasi-eigenvalues.
\end{remark}

\begin{example}[Quasi-regular curvilinear polygon]\label{ex:quasi-regular} Consider a quasi-regular curvilinear $n$-gon $\mathcal{P}=\mathcal{P}_n(\alpha,\ell)$, namely, a curvilinear polygon whose angles are all equal to $\alpha$ and all sides have the same length $\ell$. Its perimeter is obviously  $|\partial\mathcal{P}|=n\ell$. Then we have the following two cases depending on whether $\alpha$ is exceptional. 
 \begin{itemize}
 \item[(a)] $\alpha\not\in\Eangles$. Then we have the following
 \begin{proposition}\label{prop:regnonexc} 
 Let $\mathcal{P}=\mathcal{P}_n(\alpha,\ell)$ be a quasi-regular curvilinear $n$-gon with a non-exceptional angle $\alpha$. Then the set  of quasi-eigenvalues $\sigma$  is 
 given by
 \[
\left\{\frac{\pm\arccos\left(\sin\left(\frac{\pi^2}{2\alpha}\right)\cos\left(\frac{2\pi q}{n}\right)\right)+2\pi m}{\ell}, m\in\mathbb{N}\cup\{0\}, q=0,1,\dots,\left[\frac{n}{2}\right]\right\}\cap[0,+\infty)
\]
(understood as a set of unique values without multiplicities).  All the quasi-eigenvalues should be taken with multiplicity two, except in the following cases when they are single:
 \begin{itemize}
 \item[(i)] $\alpha$ is not special and $q=0$.
  \item[(ii)] $\alpha$ is not special, $n$ is even, and $q=\frac{n}{2}$.
 \item[(iii)] $\alpha$ is even special, $q=0$, and $m=0$, which corresponds to the quasi-eigenvalue $0$.
 \item[(iv)] $\alpha$ is odd special, $n$ is even, $q=\frac{n}{2}$, and $m=0$, which corresponds to the quasi-eigenvalue $0$.
 \end{itemize}
 \end{proposition}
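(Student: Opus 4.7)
The plan is to exploit the $n$-fold cyclic symmetry of $\mathcal{P}_n(\alpha,\ell)$ within the transfer-matrix framework of Section \ref{sec:statements}. Let $M(\sigma):=T_\alpha S_\ell(\sigma)$ be the one-cell transfer matrix combining a single vertex transfer matrix with a single side transfer matrix. Because all sides and vertices are identical, the full monodromy around $\partial\mathcal{P}$ collapses to $M(\sigma)^n$, and (by Definition \ref{def:quasi}) $\sigma\geq 0$ is a quasi-eigenvalue precisely when $M(\sigma)^n-\Id$ is singular.

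The next step is a Bloch--Floquet decomposition associated with the $\mathbb{Z}/n\mathbb{Z}$ cyclic symmetry: $M(\sigma)^n-\Id$ has non-trivial kernel iff $M(\sigma)$ admits some eigenvalue of the form $e^{2\pi\ir q/n}$ with $q\in\{0,1,\dots,n-1\}$. Since $\det M(\sigma)=1$, its eigenvalues form a reciprocal pair $\{\omega,\omega^{-1}\}$ and may be parametrised uniquely by $q\in\{0,1,\dots,[n/2]\}$. A short computation of the trace from the explicit forms of $T_\alpha$ and $S_\ell(\sigma)$ will yield
\[
\tfrac12\Tr M(\sigma)=\frac{\cos(\ell\sigma)}{\sin(\pi^2/(2\alpha))},
\]
so that the eigenvalue condition becomes the dispersion relation
\[
\cos(\ell\sigma)=\sin(\pi^2/(2\alpha))\cos(2\pi q/n).
\]
The right-hand side lies in $[-1,1]$, and solving for $\sigma\geq 0$ via $\arccos$, parametrised by the integer $m$, yields exactly the set of values appearing in the proposition.

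Multiplicities follow from the representation theory of the cyclic group. For $0<q<n/2$, the two characters $e^{\pm 2\pi\ir q/n}$ are distinct but give the same value of $\cos(2\pi q/n)$, so they jointly contribute the default multiplicity $2$. For $q=0$ or $q=n/2$ (when $n$ is even), the characters coincide at $\pm 1$ and the multiplicity drops to $1$, explaining cases (i) and (ii). The remaining exceptions (iii) and (iv) concern only $\sigma=0$ when $\alpha$ is special: in that regime $|\sin(\pi^2/(2\alpha))|=1$ and $|\Tr M(\sigma)|=2$ at infinitely many $\sigma$, so one has to distinguish whether $M(\sigma)=\pm\Id$ (scalar, giving a $2$-dimensional kernel and restoring multiplicity $2$) or $M(\sigma)$ is a proper Jordan block (multiplicity $1$). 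A direct matrix calculation shows $M(\sigma)=\pm\Id$ at the positive degenerate values, whereas $M(0)=T_\alpha$ itself is a non-scalar Jordan block, which produces the single multiplicity at $\sigma=0$ exactly as stated.

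The main obstacle is the scalar-versus-Jordan-block alternative at the degenerate trace values when $\alpha$ is special. Establishing that $M(\sigma)=\pm\Id$ at the positive degenerate points but that $M(0)$ is a non-trivial Jordan block requires explicit manipulation of $T_\alpha$ and $S_\ell(\sigma)$; this is the only step beyond routine Bloch--Floquet analysis, and once it is in hand the enumeration of cases (i)--(iv) follows automatically.
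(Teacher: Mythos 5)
Your reduction to the one-cell transfer matrix $\mathtt{C}(\alpha,\ell,\sigma) = \mathtt{A}(\alpha)\mathtt{B}(\ell,\sigma)$, the observation that $\mathtt{T} = \mathtt{C}^n$, and the trace computation leading to $\cos(\ell\sigma) = \sin\left(\frac{\pi^2}{2\alpha}\right)\cos\left(\frac{2\pi q}{n}\right)$ exactly match the paper's proof; up to that point you and the paper are doing the same thing.

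The multiplicity discussion, however, contains two genuine errors. First, for the special-angle exceptions (iii) and (iv), you assert that ``$M(0) = T_\alpha$ itself is a non-scalar Jordan block.'' This is false: by Remark~\ref{rem:Aproperties}(c), a special angle gives $\mathtt{A}(\alpha) = \Odd(\alpha)\,\Id = \pm\Id$, so $M(0)$ is scalar and $\mathtt{T}(0) = (\pm\Id)^n = \pm\Id$ has a two-dimensional $1$-eigenspace whenever $1$ is its eigenvalue at all. The real reason the quasi-eigenvalue $\sigma = 0$ has multiplicity one is purely conventional: Definition~\ref{def:quasimult} declares by fiat that ``if $\sigma=0$ is a quasi-eigenvalue, its multiplicity is defined to be one,'' regardless of the geometric multiplicity of the eigenvalue $1$ of $\mathtt{T}(0)$. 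Your Jordan-block alternative is not what is doing the work here, and in fact points in the wrong direction.

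Second, the character-counting heuristic for cases (i) and (ii) --- that ``the characters coincide at $\pm 1$ and the multiplicity drops to $1$'' --- is not a valid argument on its own, because multiplicity is defined via the geometric multiplicity of the eigenvalue $1$ of $\mathtt{T}$, not by counting characters. When $c = \pm 1$ is a repeated eigenvalue of $\mathtt{C}$, the multiplicity is $1$ if and only if $\mathtt{C}$ is a \emph{non-scalar} Jordan block (whence $\mathtt{T}=\mathtt{C}^n$ is too). For non-special $\alpha$ this does hold, because then $\mathtt{A}(\alpha)$ has nonzero off-diagonal entry $\pm\ir\cot(\pi^2/(2\alpha))$, so $\mathtt{C} = \mathtt{A}(\alpha)\mathtt{B}(\ell,\sigma)$ cannot equal $\pm\Id$. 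But when $\alpha$ is special the coincidence of characters still occurs (indeed for every $q$, since $|\sin(\pi^2/(2\alpha))|=1$), yet $\mathtt{C} = \pm\mathtt{B}(\ell,\sigma)$ equals $\pm\Id$ at every solution $\sigma>0$, so the multiplicity stays $2$. Your heuristic does not distinguish these two regimes and would therefore incorrectly collapse the $q=0$, $m>0$ multiplicities in the special case. The Jordan-vs.-scalar dichotomy you invoke for (iii)--(iv) is precisely what you would need already for (i)--(ii); it should be checked there, and the conclusion for $\sigma=0$ should instead be traced to Definition~\ref{def:quasimult}.
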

 The proof of Proposition \ref{prop:regnonexc}  is presented  in Section \ref{sec:numerics}.
 \item[(b)] $\alpha\in\Eangles$.  This case is already covered by  Example \ref{ex:specialorexceptional}(b) with $K=K_\mathrm{even}=n$: all the quasi-eigenvalues have multiplicity $n$ and are given by
 \[
 \ell\sigma_{n(m-1)+1}= \ell\sigma_{n(m-1)+2}=\dots= \ell\sigma_{nm}=\pi \left(m-\frac{1}{2}\right),\qquad m\in\mathbb{N}.
 \]
 \end{itemize}

 The following particular cases are illustrative:
 \begin{itemize}
\item[(a$_1$)]  $\mathcal{P}_1\left(\alpha,1\right)$, a one-gon (a droplet) with the angle $\alpha$ and perimeter one. Then the set of quasi-eigenvalues is
\[
\left\{\pm\left(\frac{\pi}{2}-\frac{\pi^2}{2\alpha}\right)+2\pi m, m\in\mathbb{N}\cup\{0\}\right\}\cap[0,+\infty).
\]
The same formula works also in the case $\alpha\in\Eangles$.
 \item[(a$_3$)] $\mathcal{P}_3\left(\frac{\pi}{3},1\right)$, the equilateral triangle of side one (this case is also covered by Example \ref{ex:specialorexceptional}(a) as all angles are odd special). Then 
\[
\sigma_{2m-1}=\sigma_{2m}=\frac{(2m-1)\pi}{3},\qquad m\in\mathbb{N}.
\]
 \item[(b$_4$)] $\mathcal{P}_4\left(\frac{\pi}{2},1\right)$, the square of side one (this case is also covered by Example \ref{ex:specialorexceptional}(b) as all angles are even exceptional). Then 
\[
\sigma_{4m-3}=\sigma_{4m-2}=\sigma_{4m-1}=\sigma_{4m}=\left(m-\frac{1}{2}\right)\pi,\qquad m\in\mathbb{N}.
\]
 \item[(a$_5$)] $\mathcal{P}_5\left(\frac{3\pi}{5},1\right)$, the regular pentagon of side one. Then there are four subsequences of quasi-eigenvalues of multiplicity two,
\[
\begin{alignedat}{2}
\sigma&=-\arccos\left(\frac{\pm\sqrt{5}-1}{8}\right)+2\pi m, \qquad &&m\in\mathbb{N},\\
\sigma&=\arccos\left(\frac{\pm\sqrt{5}-1}{8}\right)+2\pi m, \qquad &&m\in\mathbb{N}\cup\{0\},
\end{alignedat}
\]
and two subsequences of quasi-eigenvalues of multiplicity one,
\[
\begin{alignedat}{2}
\sigma&=-\frac{\pi}{3}+2\pi m, \qquad &&m\in\mathbb{N},\\
\sigma&=\frac{\pi}{3}+2\pi m, \qquad &&m\in\mathbb{N}\cup\{0\}.
\end{alignedat}
\]
 \end{itemize}  
 The case  (b$_4$) agrees with the results of \cite[Section 3]{GP2017} obtained by separation of variables.
 \end{example}
 
 \begin{example}[Eigenfunction behaviour]\label{ex:eigenfunctions}
 The cases of all-special and all-exceptional angles also illustrate the dependence of the boundary behaviour of eigenfunctions on the arithmetic properties of the angles, via the following 
\begin{proposition}\label{prop:equidistrib} Let $\mathcal P$ be a curvilinear polygon.
\begin{itemize}
\item [(a)] If all angles are special, then the boundary eigenfunctions $u_m|_{\partial\mathcal P}$ are equidistributed in the sense that for any arc $I\subseteq\partial\mathcal{P}$, not necessarily a side,
\[
\lim_{m\to\infty}\frac{\|u_m\|_{L^2(I)}}{\|u_m\|_{L^2(\partial\mathcal{P})}}=\frac{|I|}{|\partial\mathcal{P}|}.
\]
\item [(b)] If all angles are exceptional, then the boundary traces of  eigenfunctions, $u_m|_{\mathcal\partial{P}}$, are \emph{not} equidistributed in the following sense. Pick $\delta>0$. Then for all $m$ with 
\begin{equation}\label{eq:nocluster}
\sigma_{m-1}+\delta\leq\sigma_m\leq\sigma_{m+1}-\delta,
\end{equation} 
there exists an edge $I_{M(m)}$ such that
\[
\|u_m\|_{L^2(\partial\mathcal{P}\setminus I_{M(m)})}=O\left(m^{-2\varepsilon}\right),
\]
with an implied constant in the right-hand side depending upon $\delta$.
\end{itemize}
\end{proposition}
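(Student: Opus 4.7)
\medskip

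\noindent\emph{Proof plan for Proposition \ref{prop:equidistrib}.} The starting point in both parts is Theorem \ref{thm:maineigenfunction} (or its cluster-free version Theorem \ref{thm:maineigenfunctiongen} alluded to in the remark), which approximates each boundary eigenfunction on each edge $I_j$ by a trigonometric linear combination $a_{m,j}\cos(\sigma_m s_j)+b_{m,j}\sin(\sigma_m s_j)$ with $L^2(I_j)$-error $O(m^{-\varepsilon})$. The oscillation of $\cos^2$ and $\sin^2$ as $\sigma_m\to\infty$ yields, for any fixed arc $I\subseteq\partial\mathcal P$,
\[
\int_{I\cap I_j}\!\bigl(a_{m,j}\cos(\sigma_m s)+b_{m,j}\sin(\sigma_m s)\bigr)^2\,\dr s=\tfrac12\bigl(a_{m,j}^2+b_{m,j}^2\bigr)|I\cap I_j|+O(\sigma_m^{-1}),
\]
so both conclusions reduce to understanding how the matching conditions at the vertices control the coefficients $(a_{m,j},b_{m,j})$.

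\textbf{Part (a).} The plan is to show that, at a special angle $\alpha=\pi/(2k+1)$, the vertex transfer matrix sending $(a_{m,j},b_{m,j})$ to $(a_{m,j+1},b_{m,j+1})$ is length-preserving (an orthogonal transformation, in fact a rotation in appropriate trigonometric coordinates), which we will read off from the description of quasimodes in Definition \ref{def:quasi}. Iterating around all vertices produces the invariant $a_{m,j}^2+b_{m,j}^2=:c_m$ independent of $j$. Summing the oscillation estimate over the edges gives
\[
\|u_m\|_{L^2(I)}^2=\tfrac{c_m}{2}|I|+O(m^{-\varepsilon}),\qquad \|u_m\|_{L^2(\partial\mathcal P)}^2=\tfrac{c_m}{2}|\partial\mathcal P|+O(m^{-\varepsilon}),
\]
and the ratio tends to $|I|/|\partial\mathcal P|$. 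Since, per \eqref{eq:allspecial}, quasi-eigenvalues cluster in pairs for every $m$, we apply the general eigenfunction theorem at the level of two-dimensional eigenspaces and note that the orthogonal matching at every special vertex preserves $a_{m,j}^2+b_{m,j}^2$ for every element of the quasimode span, hence for every eigenfunction inside the cluster.

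\textbf{Part (b).} The plan is to exploit the fact that at an exceptional angle $\alpha=\pi/(2k)$ the vertex matching degenerates: as already suggested by the decoupled form of the quasi-eigenvalues in Example \ref{ex:specialorexceptional}(b) with $K=n$, each quasi-eigenvalue $\sigma_m$ arises from a one-edge interval problem, so there is a distinguished edge $I_{M(m)}$ for which the corresponding quasimode is an honest trig function on $I_{M(m)}$ and a higher-order error on every other edge. Under the non-clustering assumption \eqref{eq:nocluster}, Theorem \ref{thm:maineigenfunctiongen} identifies $u_m$ up to $O(m^{-\varepsilon})$ with (a multiple of) this quasimode, so $a_{m,j}=O(m^{-\varepsilon})$ and $b_{m,j}=O(m^{-\varepsilon})$ for $j\neq M(m)$. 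Applying the oscillation estimate on $\partial\mathcal P\setminus I_{M(m)}$ and squaring the bounds then delivers $\|u_m\|^2_{L^2(\partial\mathcal P\setminus I_{M(m)})}=O(m^{-2\varepsilon})$, with the constant controlled by $\delta$ through the stability constant in Theorem \ref{thm:maineigenfunctiongen}.

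\textbf{Main obstacle.} The core difficulty, common to both parts, is the careful book-keeping of the vertex transfer matrices provided by Definitions \ref{def:quasi} and \ref{def:quasiexc}: one must verify that at every special vertex the matrix acts as a rotation (which is where the arithmetic condition $\alpha\in\Sangles$ enters), and that at every exceptional vertex the matching decouples the two adjoining edges so cleanly that the quasimode, together with the eigenfunction that tracks it, is genuinely concentrated on a single side. The non-clustering hypothesis in (b) and its absence in (a) force us to work, in part (a), at the level of the full eigenspace rather than a single eigenfunction, which is the subtlest bookkeeping step of the argument.
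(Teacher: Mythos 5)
Your plan is correct and runs along the same lines as the paper's own proof, which also goes through Theorem~\ref{thm:maineigenfunctiongen}: in the all-special case the paper notes that the vertex transfer matrices at special angles are $\pm\Id$ (Remark~\ref{rem:Aproperties}(c)), so the $\Psi^{(m)}$ can be taken to be global sines and cosines of frequency $\sigma_m$ on the whole of $\partial\mathcal P$, any norm-$\approx 1$ combination of which is equidistributed by the oscillation argument you describe; in the all-exceptional case each $\Psi^{(m)}$ is concentrated on a single edge, and \eqref{eq:nocluster} forces $\tilde\Psi^{(m)}$ to be a multiple of $\Psi^{(m)}$. Your "length-preserving / rotation" reading of the special-angle vertex transfer is a mild detour (it is literally $\pm\Id$), but it captures the exact fact you need, and you are right that the $\sigma_{2m}=\sigma_{2m+1}$ degeneracy in case (a) must be handled at the level of the two-dimensional quasimode span rather than mode by mode --- this is exactly what the paper does.

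One caveat about the exponent in part (b): what your argument (and the paper's machinery via Corollary~\ref{cor:approxofquasibypsi} and Theorem~\ref{thm:maineigenfunctiongen}) actually gives is $\|u_m-\Psi^{(m)}\|_{L^2(\partial\mathcal P)}=O(m^{-\varepsilon})$, hence $\|u_m\|_{L^2(\partial\mathcal P\setminus I_{M(m)})}=O(m^{-\varepsilon})$, i.e.\ a squared-norm bound of order $m^{-2\varepsilon}$ --- which is what you write. The proposition as stated asks for $\|u_m\|_{L^2(\partial\mathcal P\setminus I_{M(m)})}=O(m^{-2\varepsilon})$ on the \emph{un}-squared norm, a stronger decay rate that does not follow from the $O(m^{-\varepsilon})$ eigenfunction approximation; the paper's own proof just says "from which the result follows" without deriving the extra factor. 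So be aware that you have proved the natural bound $O(m^{-\varepsilon})$, which appears to be what the approximation theorem genuinely delivers, and that the stated $m^{-2\varepsilon}$ for the non-squared norm looks like a power-counting slip in the proposition rather than a gap in your plan.
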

For the proof of Proposition \ref{prop:equidistrib}, see the end of Section \ref{subsec:orthogcons}.

\begin{remark}\label{rem:unionedges} There are other versions of Proposition \ref{prop:equidistrib}(b) if some (at least two) but not all angles are exceptional. To state these versions we would need to use the language of \emph{exceptional components} in Section \ref{sec:exceptional}, see e.g.\ Theorem \ref{thm:maineigenfunctiongen} and Corollary \ref{cor:split}.
\end{remark}

\begin{remark} If all angles are exceptional and all lengths are pairwise incommensurable, then it is easy to show that the proportion of quasi-eigenvalues $\sigma_m$ which do \emph{not} satisfy the hypothesis of (b) tends to zero as $\delta\to 0$.
\end{remark}
 
\begin{remark}\label{rem:symmetry} Condition \eqref{eq:nocluster} is essential in Proposition \ref{prop:equidistrib}(b). Indeed, let $\mathcal{P}$ be a  two-gon with two exceptional angles, and suppose that $\mathcal{P}$ is symmetric with respect to the line $V_1V_2$. Then each eigenfunction is either symmetric or anti-symmetric with respect to this line and therefore cannot concentrate on one side. This happens because each quasi-eigenvalue $\sigma\ne 0$ has in this case multiplicity two.  The boundary behaviour of eigenfunctions of the right-angled isosceles triangle $T_1$, shown below, gives another example demonstrating this phenomenon.
\end{remark}
 
We illustrate  Proposition \ref{prop:equidistrib} by showing, in Figures \ref{fig:fig19} and \ref{fig:fig20}, the numerically computed boundary traces $u_m|_{\mathcal\partial{P}}$  for the equilateral triangle $\mathcal{P}_3$ from Example \ref{ex:quasi-regular}(a$_3$) (all angles are special) and for the right-angled isosceles triangle $T_1$ from Example \ref{ex:specialorexceptional} (all angles are exceptional); see Section \ref{sec: numsetup} for details of the numerical procedure. In both cases we plot two eigenfunctions $u_{18}$ and $u_{19}$.  For the equilateral triangle, these eigenfunctions correspond to  the eigenvalues $\lambda_{18}\approx 17.8023$ and $\lambda_{19}\approx 19.8968$, which in turn correspond to the quasi-eigenvalues $\sigma_{18}=\frac{17\pi}{3}$  and  $\sigma_{19}=\frac{19\pi}{3}$ (both of which are in fact double, $\sigma_{17}=\sigma_{18}$ and $\sigma_{19}=\sigma_{20}$).  For the right-angled isosceles triangle, these eigenfunctions correspond to the eigenvalues $\lambda_{18}\approx 15.708$ and $\lambda_{19}\approx 16.6608$, which in turn correspond to the quasi-eigenvalues $\sigma_{18}=5\pi$ (which is in fact double, $\sigma_{17}=\sigma_{18}$) and  $\sigma_{19}=\frac{15\pi}{2\sqrt{2}}$ (which is single). 

\begin{figure}[htb]
\begin{center}
\includegraphics{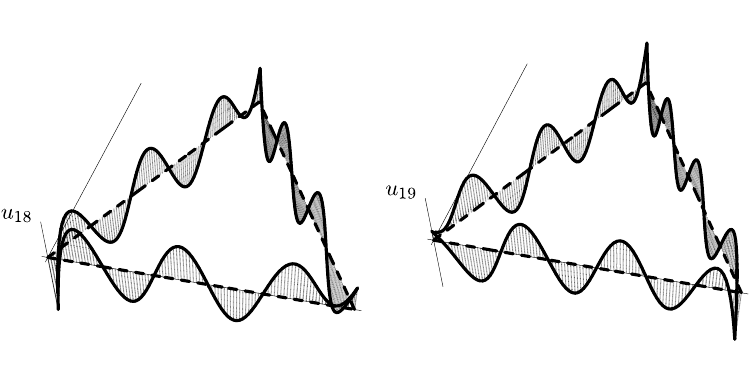}
\end{center}
\caption{Boundary traces  of  $u_{18}$ and $u_{19}$ for the equilateral triangle\label{fig:fig19}}
\end{figure} 

\begin{figure}[htb]
\begin{center}
\includegraphics{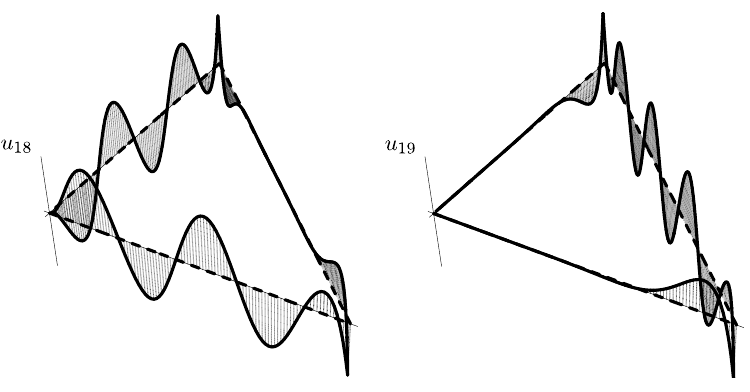}
\end{center}
\caption{Boundary traces  of  $u_{18}$ and $u_{19}$ for the right-angled isosceles triangle\label{fig:fig20}}
\end{figure} 

It is easily seen that in the case of the equilateral triangle the eigenfunctions are more or less equally distributed on all sides, whereas in the exceptional case in Figure \ref{fig:fig20} the eigenfunction $u_{18}$ is mostly concentrated on the union of two sides (and not on one side, cf.\  Remark \ref{rem:symmetry} and Corollary \ref{cor:split}; note that the corresponding quasi-eigenvalue is double),  and the eigenfunction $u_{19}$ is mostly concentrated on the hypothenuse.  
 \end{example}

\subsection{Plan of the paper and further directions}

We begin in Section \ref{sec:statements} by defining and studying the sequence $\{\sigma_m\}$ of quasi-eigenvalues which appears in Theorem \ref{thm:main}. 
The quasi-eigenvalues and, importantly, their \emph{multiplicities} are originally defined in terms of a combination of  \emph{vertex transfer} matrices $\mathtt{A}(\alpha_j)$ and \emph{side transfer} matrices $\mathtt{B}(\ell_j)$, which play a central role throughout the paper; see Definitions \ref{def:quasi}, \ref{def:quasimult},  \ref{def:quasiexc}, and \ref{def:quasiexcmult}. We then give two alternative characterisations of this sequence. On one hand, the quasi-eigenvalues coincide with the roots of  certain trigonometric polynomials, see Theorems \ref{thm:polygoneqn0} and \ref{thm:polygoneqn1}. On the other hand, the sequence of quasi-eigenvalues is also the spectrum of a particular eigenvalue problem on the boundary of our polygon, viewed as a quantum graph, see Theorem \ref{thm:quantum}. Section \ref{sec:statements} also contains statements of the results on Riesz means and the heat trace, see Theorem \ref{thm:riesz} and Corollary \ref{cor:heat}, as well as a discussion of quasi-eigenvalues of auxiliary \emph{zigzag domains}.

The rest of the paper principally contains the proofs of the main results.

In Section \ref{sec:sector}, we recall from \cite{sloshing} the construction of the Peters solutions \cite{Pet50}  of \emph{sloping beach problems} (that is,  mixed Robin-Neumann and Robin-Dirichlet problems) in an infinite  sector.  These solutions are then combined, via symmetry, to give so called \emph{scattering Peters solutions} of a pure Robin problem, see Theorem \ref{thm:mainthmssector}. This naturally gives rise to the previously defined vertex transfer matrices $\mathtt{A}(\alpha)$.

Section \ref{sec:quasieigenvalues} describes the  \emph{quasimode construction}, and finally makes apparent the reasons for our definitions of quasi-eigenvalues $\{\sigma_m\}$.  We construct approximate Steklov eigenfunctions on a curvilinear polygon, first in the straight boundary case, then in the partially curvilinear case (with boundary straight in a neighbourhood of each corner), and finally in the fully curvilinear case. The arguments use the Peters solutions of Section \ref{sec:sector} as building blocks. We conclude by proving that near each sufficiently large quasi-eigenvalue $\sigma_m$ there exists a distinct Steklov eigenvalue $\lambda_{i_m}$, see Theorem \ref{thm:approx}, and by stating and proving Theorem \ref{thm:maineigenfunctiongen} on the boundary behaviour of eigenfunctions.

In Section \ref{sec:completeness} we address the delicate issue  of \emph{enumeration} of quasi-eigenvalues, namely, by proving that we may take $i_m=m$. Note that this does not follow from quantum graph or any previously discussed techniques (see also Remark \ref{rem:notquant}), and requires development of a new machinery. In Section \ref{sec:completeness} we concentrate on the case of partially curvilinear polygons and prove that for such polygons  $|\sigma_m-\lambda_m|=o(1)$. A key element of the proof is the lifting of vectors and matrices onto the universal cover $\UC$ of the punctured complex plane and a construction based on the change of argument on $\UC$. The proof proceeds via a gluing construction: we cut our polygon through its side into a union of zigzag domains, establish correct enumeration for each of those by comparison with eigenvalue asymptotics for the Steklov--Dirichlet and Steklov--Neumann problems \cite{sloshing}, see Definition \ref{defn:naturalenumeration} and Proposition \ref{lemma1}, and then glue zigzag domains together via the Dirichlet--Neumann bracketing. Note that cutting a polygon through the vertices rather than through the sides may appear more natural. However, comparing the contribution from corners to the eigenvalue asymptotics for mixed Steklov--Dirichlet and Steklov--Neumann problems, one can see that the bracketing in that case does not yield accurate enough estimates.
 
Sections \ref{sec:trigpolynoms} and \ref{sec:quantum} explore various consequences of the alternative characterisations of $\{\sigma_m\}$. In the former, we prove Theorem \ref{thm:polygoneqn1} by explicitly writing down the trigonometric polynomials whose roots are $\sigma_m$. In the latter, we establish the quantum graph analogy, and use it to prove Theorem \ref{thm:polygoneqn0}, as well as the  results on the Riesz means.

In Section \ref{sec:layer}, we extend our results to fully curvilinear polygons. This is done by taking advantage of the well-known relationship between the Dirichlet-to-Neumann operator and layer potentials. A careful analysis of the kernels of single- and double-layer potential operators on curvilinear polygons, inspired by the work of Costabel \cite{cost},  allows us to show that a small change in the boundary curvature and its derivatives induces only a small change in the Steklov spectrum. From there, we use a deformation argument to complete the proof.

Finally, Section \ref{sec:numerics} contains some numerical calculations of the Steklov spectrum in specific examples, which provide an  illustration of our results and suggest further avenues for exploration.

We want to emphasise that the most  crucial and novel points of this paper are  the construction of the scattering Peters solutions in Section \ref{sec:sector}, and the enumeration argument  of Section  \ref{sec:completeness} based on step-by-step comparison between zigzag problems and the sloshing problem of \cite{sloshing}.  
Sections \ref{subsec:proofapprox1}--\ref{subsec:asymptef} and \ref{sec:layer} contain mostly fine-tuned technical details and may be omitted in the first reading. 

\begin{remark} The present article  is the second in a series of papers  concerned with the study of Steklov-type eigenvalue problems on planar domains with corners. Our preceding work \cite{sloshing} 
focused on spectral asymptotics for the sloshing problem. As was mentioned above, the  methods and results of \cite{sloshing}  have been instrumental for a number of arguments used in this  article.  

In a separate publication \cite{inversesteklov}, written jointly with S. Krymski,  we apply the results of the present article to the study of the inverse spectral problem for curvilinear polygons. In particular we show there that, generically, the side lengths of a curvilinear polygon and some information about its angles can be reconstructed from its Steklov spectrum. 
\end{remark}

\begin{remark} The results and most of the methods of this paper are specifically two-dimensional. For some related recent advances in higher dimensions see \cite{Iv, GLPS}.
\end{remark}

\subsection*{Acknowledgements}
\addcontentsline{toc}{subsection}{Acknowledgements}
The authors are grateful to  P. Kurasov  for pointing out that the notions introduced in subsections \ref{sec:verttran} and \ref{sec:quasi} 
can be interpreted in the language of quantum graphs. 
We are  thankful as well to M. Costabel for helpful advice regarding the results presented in Section \ref{sec:layer}.  We would also like to thank  G. Berkolaiko, J. Bolte, A. Girouard,  Y. Kannai, P. Kuchment, K.-M. Perfekt, and  U. Smilansky for useful discussions, as well as J. Lagac\'e for comments on the preliminary version of the paper. We are grateful to the anonymous referees for their careful reading of the  manuscript and numerous valuable suggestions.\label{page:ack}

Parts of this project were completed  while the authors   (in various combinations) attended workshops  at  Casa Matem\'atica Oaxaca (\emph{Dirichlet-to-Neumann maps: spectral theory, inverse problems and applications}, May--June, 2016), at the American Institute of Mathematics (\emph{Steklov eigenproblems}, April--May 2018),  and at the Banff International Research Station (\emph{Spectral geometry: theory, numerical analysis and applications}, July 2018), as well as a research program  
(\emph{Spectral methods in mathematical physics}, January--February 2019)  at the Institut Mittag-Leffler.  
Parts of this work were done when ML and LP were visiting  the Weizmann Institute of Science, and ML, LP and DS were visiting the Centre de recherches math\'{e}matiques, Montreal. We are grateful to all these institutions for their hospitality.

The research of LP was partially supported by   EPSRC grants EP/J016829/1 and EP/P024793/1.
The research of IP was partially supported by NSERC, the Canada Research Chairs program and the Weston Visiting Professorship program at the Weizmann Institute of Science.
The research of DS was partially supported by a  Faculty Summer Research Grant from DePaul University.
\clearpage\section{Quasi-eigenvalues. Definitions and further statements}\label{sec:statements}

\subsection{Vertex and side transfer matrices}
\label{sec:verttran}
Given an angle $\alpha$, set
\begin{equation}\label{eq:mualphadef}
\mu_\alpha:=\frac{\pi^2}{2\alpha}.
\end{equation}
For $\alpha\not\in\Eangles$, set
\begin{equation}\label{eq:a1a2defn}
a_1(\alpha):=\csc\mu_\alpha=\csc\frac{\pi^2}{2\alpha},\qquad a_2(\alpha):=\cot\mu_\alpha=\cot\frac{\pi^2}{2\alpha},
\end{equation}
and consider the  matrix
\begin{equation}\label{eq:Adef1}
\mathtt{A}(\alpha) := \begin{pmatrix}
a_1(\alpha)&-\ir a_2(\alpha)\\
\ir a_2(\alpha)&a_1(\alpha)
\end{pmatrix}=
\begin{pmatrix}
\csc\frac{\pi^2}{2\alpha}&-\ir \cot\frac{\pi^2}{2\alpha}\\
\ir \cot\frac{\pi^2}{2\alpha}&\csc\frac{\pi^2}{2\alpha}
\end{pmatrix}.
\end{equation}
For the reasons that will be explained later, the matrix $\mathtt{A}(\alpha)$ is called a {\it vertex transfer matrix} at the corner with angle $\alpha$.

\begin{remark}\label{rem:Aproperties} Note that
\begin{itemize}
\item[(a)] for exceptional angles $\alpha\in\Eangles$ the vertex transfer matrix is not defined since its entries blow up;
\item[(b)] for a non-exceptional $\alpha\not\in\Eangles$, $\det\mathtt{A}(\alpha)=1$, $\mathtt{A}^*(\alpha)=\mathtt{A}(\alpha)$, and $(\mathtt{A}(\alpha))^{-1}=\overline{\mathtt{A}(\alpha)}$;
\item[(c)] for special angles $\alpha\in\Sangles$ the vertex transfer matrix is equal to $\Odd(\alpha) \Id$, see Definition \ref{defn:angles};
\item[(d)] the eigenvalues of $\mathtt{A}(\alpha)$ are
\begin{equation}\label{eq:eta12}
\begin{split}
\eta_1(\alpha)&:=a_1(\alpha)-a_2(\alpha)=\tan\frac{\mu_\alpha}{2}=\tan\frac{\pi^2}{4\alpha},\\
\eta_2(\alpha)&:=a_1(\alpha)+a_2(\alpha)=\cot\frac{\mu_\alpha}{2}=\cot\frac{\pi^2}{4\alpha}=\frac{1}{\eta_1(\alpha)},
\end{split}
\end{equation}
and the corresponding eigenvectors do not depend on $\alpha$, see Remark \ref{rem:Xareeigenvectors}.
\end{itemize}
\end{remark}

Given a side of length  $\ell$, define the {\it side transfer matrix}
\begin{equation}\label{eq:Bdef1}
\mathtt{B}(\ell,\sigma):=\begin{pmatrix}
\exp(\ir\ell\sigma)&0\\
0&\exp(-\ir\ell\sigma)
\end{pmatrix},
\end{equation}
where $\sigma$ is a real parameter.

\begin{remark}\label{rem:Bproperties} Similarly to Remark \ref{rem:Aproperties}(b), we have, for any $\ell>0$ and $\sigma\in\mathbb{R}$,    $\det\mathtt{B}(\ell,\sigma)=1$, and $(\mathtt{B}(\ell,\sigma))^{-1}=\overline{\mathtt{B}(\ell,\sigma)}$.
\end{remark}

Set 
\begin{equation}\label{eq:Sdef1}
\mathtt{C}(\alpha,\ell,\sigma):=\mathtt{A}(\alpha)\mathtt{B}(\ell,\sigma)=
\begin{pmatrix}
\csc\left(\frac{\pi^2}{2\alpha}\right)\exp(\ir\ell\sigma)&-\ir\cot\left(\frac{\pi^2}{2\alpha}\right)\exp(-\ir\ell\sigma)\\
\ir\cot\left(\frac{\pi^2}{2\alpha}\right)\exp(\ir\ell\sigma)&\csc\left(\frac{\pi^2}{2\alpha}\right)\exp(-\ir\ell\sigma)
\end{pmatrix}.
\end{equation}

Given a non-exceptional polygon $\mathcal{P}(\balpha,\bell)$, we construct the matrix
\begin{equation}\label{eq:Tdef1}
\mathtt{T}(\balpha,\bell,\sigma):=\mathtt{C}(\alpha_n,\ell_n,\sigma)\mathtt{C}(\alpha_{n-1},\ell_{n-1},\sigma)\cdots\mathtt{C}(\alpha_1,\ell_1,\sigma).
\end{equation}

\subsection{Quasi-eigenvalues, non-exceptional polygons}
\label{sec:quasi}
\begin{definition}\label{def:quasi}
Let $\mathcal{P}=\mathcal{P}(\balpha,\bell)$ be a non-exceptional curvilinear polygon.
A non-negative number $\sigma$ is called a \emph{quasi-eigenvalue} of the Steklov problem on $\mathcal{P}$ if the matrix $\mathtt{T}(\balpha,\bell,\sigma)$ has an eigenvalue $1$. 
\end{definition}

\begin{remark}\label{rem:Tnotinvariant}
We note that although the matrix $\mathtt{T}(\balpha,\bell,\sigma)$ depends upon our choice of an enumeration of polygon vertices, it is easily checked that the definition of quasi-eigenvalues is invariant.
\end{remark}

The following result immediately follows from Remarks \ref{rem:Aproperties}(b) and \ref{rem:Bproperties}, and the equation \eqref{eq:Tdef1}.

\begin{lemma}\label{lem:Tproperties}\ 
\begin{itemize}
\item[(a)] The matrix  $\mathtt{T}=\mathtt{T}(\balpha,\bell,\sigma)$ has eigenvalue $1$ if and only if 
\begin{equation}\label{eq:Ttrace}
\Tr \mathtt{T}=2.
\end{equation} 
\item[(b)] The eigenvalue $1$ of $\mathtt{T}$ always has algebraic multiplicity two. It has geometric multiplicity two if and only if $\mathtt{T}=\Id$.
\item[(c)] The corresponding eigenvector(s) may be chosen from
\[
\Conj:=\left\{\begin{pmatrix}c\\\overline{c}\end{pmatrix}\,\middle|\,c\in\mathbb{C}\right\}.
\]
\end{itemize}
\end{lemma}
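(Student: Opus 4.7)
The plan is to exploit a common algebraic structure shared by the building blocks $\mathtt{A}(\alpha_j)$ and $\mathtt{B}(\ell_j,\sigma)$. First I will observe that by Remarks \ref{rem:Aproperties}(b) and \ref{rem:Bproperties}, both matrices have determinant $1$, and by inspection of \eqref{eq:Adef1} and \eqref{eq:Bdef1}, both are of the template form $M=\begin{pmatrix} a & b \\ \bar b & \bar a \end{pmatrix}$ with $a,b\in\mathbb{C}$. A routine calculation shows that this template is closed under matrix multiplication, so $\mathtt{T}(\balpha,\bell,\sigma)$ inherits both of these properties.

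Part (a) then drops out immediately: since $\det\mathtt{T}=1$, the characteristic polynomial of $\mathtt{T}$ is $t^2-(\Tr\mathtt{T})\,t+1$, which vanishes at $t=1$ precisely when $\Tr\mathtt{T}=2$. The algebraic multiplicity claim in (b) follows because under this condition the characteristic polynomial factors as $(t-1)^2$; for the geometric multiplicity, it equals two exactly when $\mathtt{T}$ is diagonalisable with all eigenvalues equal to $1$, which forces $\mathtt{T}=\Id$, and conversely.

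The substantive content lies in (c), and here the plan is as follows. Any matrix $M$ of template form sends $(c,\bar c)^T$ to $(ac+b\bar c,\,\overline{ac+b\bar c})^T$, so $\mathtt{T}$ preserves the real two-dimensional subspace $\Conj$. Identifying $\Conj\cong\mathbb{R}^2$ via $c\mapsto(\Re c,\Im c)$, a direct computation shows that the real $2\times 2$ matrix representing the restriction $\mathtt{T}|_\Conj$ has real trace $2\Re a$ and real determinant $|a|^2-|b|^2$, which agree respectively with $\Tr\mathtt{T}$ and $\det\mathtt{T}$. Hence whenever $\Tr\mathtt{T}=2$, the restriction $\mathtt{T}|_\Conj$ has both eigenvalues equal to $1$, and therefore admits a nonzero fixed vector $v\in\Conj$. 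If $\mathtt{T}\neq\Id$, this $v$ must span the one-dimensional complex eigenspace $\ker(\mathtt{T}-\Id)$; if $\mathtt{T}=\Id$, every vector is fixed, and one can select e.g.\ $(1,1)^T$ and $(\ir,-\ir)^T$ as two $\mathbb{C}$-linearly independent eigenvectors in $\Conj$. The only point requiring any care is this last passage from preservation of a real subspace to the existence of complex eigenvectors within it; the rest reduces to $2\times 2$ linear algebra.
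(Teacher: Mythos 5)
Your proof is correct and follows the same underlying approach the paper has in mind; the paper merely asserts the lemma "immediately follows" from Remarks \ref{rem:Aproperties}(b), \ref{rem:Bproperties}, and \eqref{eq:Tdef1}, and your argument supplies exactly the missing details via the structural class $\mathcal{M}_1$ (determinant one, form $\begin{pmatrix}p&q\\\overline{q}&\overline{p}\end{pmatrix}$, closure under multiplication) that the paper introduces explicitly later in Section \ref{subsec:propscat}. The only minor refinement one could note for (c): rather than computing the trace and determinant of the real $2\times 2$ restriction, one can observe directly that $\mathtt{T}$ commutes with the antilinear involution $J:(u_1,u_2)\mapsto(\overline{u_2},\overline{u_1})$ whose fixed set is $\Conj$, so if $\mathtt{T}v=v$ then $\mathtt{T}(Jv)=Jv$ as well, and one of $v+Jv$, $\ir(v-Jv)$ is a nonzero fixed vector lying in $\Conj$; but your computation reaches the same conclusion just as rigorously.
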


\begin{definition}\label{def:quasimult}
 In the absence of exceptional angles, the \emph{multiplicity} of a quasi-eigenvalue $\sigma>0$  is defined as the geometric multiplicity of the eigenvalue $1$ of the matrix  $\mathtt{T}(\balpha,\bell,\sigma)$. If $\sigma=0$ is a quasi-eigenvalue, its \emph{multiplicity} is defined to be one.
\end{definition}

\begin{remark} It follows immediately from Lemma \ref{lem:Tproperties}  that a quasi-eigenvalue of a non-exceptional curvilinear polygon has multiplicity at most two.
\end{remark}

\subsection{Quasi-eigenvalues, exceptional  polygons}
\label{sec:exceptional}
For curvilinear polygons having exceptional angles the definition of quasi-eigenvalues is more involved. Let $\mathcal{P}$ be a curvilinear $n$-gon with  $K$  exceptional angles $\alpha^\mathcal{E}_1=\alpha_{E_1}=\frac{\pi}{2k_1},\dots, \alpha^\mathcal{E}_K=\alpha_{E_K}=\frac{\pi}{2k_K}$, where $1\le K\le n$, and $1\le E_1<E_2<\dots<E_K\le n$. Without loss of generality we can take $E_K=n$ and identify $E_0$ with $E_K$, and $E_{K+1}$ with $E_1$.  These exceptional angles split the boundary of the polygon into $K$ parts, which we will call \emph{exceptional (boundary) components}, each consisting of either one smooth side or more smooth sides joined at non-exceptional angles. 

Let $n_\kappa=E_\kappa-E_{\kappa-1}$, $\kappa=1,\dots,K$, denote the number of smooth boundary pieces between two consecutive exceptional angles. Obviously, $n_1+n_2+\dots+n_K=n$. Re-label the full sequence of angles $\alpha_1,\dots,\alpha_n$ as
\[
\begin{split} 
&\alpha^{(1)}_1, \dots, \alpha^{(1)}_{n_1-1}, \alpha^{(1)}_{n_1}=:\alpha^\mathcal{E}_1,\\
&\alpha^{(2)}_1, \dots, \alpha^{(2)}_{n_2-1},\alpha^{(2)}_{n_2}=:\alpha^\mathcal{E}_2,\\
&\qquad\qquad\dots\\
&\alpha^{(K-1)}_{1},\dots,\alpha^{(K-1)}_{n_{K-1}-1},\alpha^{(K-1)}_{n_{K-1}}=:\alpha^\mathcal{E}_{K-1},\\
&\alpha^{(K)}_1, \dots,  \alpha^{(K)}_{n_K-1},\alpha^{(K)}_{n_K}=:\alpha^\mathcal{E}_K.
\end{split}
\]

The vertices of the polygon will be re-labeled in the same manner. 
We also re-label the full sequence of side lengths $\ell_1,\dots,\ell_n$ (recall that the side $I_j$ of length $\ell_j$ joins the vertices $V_{j-1}$ and $V_{j}$) as
\[
\ell^{(1)}_1, \dots, \ell^{(1)}_{n_1}, \ell^{(2)}_1, \dots, \ell^{(2)}_{n_2},\dots,\ell^{(K)}_1, \dots, \ell^{(K)}_{n_K},
\]
so that the exceptional vertex $V^\mathcal{E}_\kappa$ has adjoint sides of lengths $\ell^{(\kappa)}_{n_\kappa}$ and $\ell^{(\kappa+1)}_{1}$, see Figure \ref{fig:except} for an example.

\begin{figure}[htb]
\begin{center}
\includegraphics{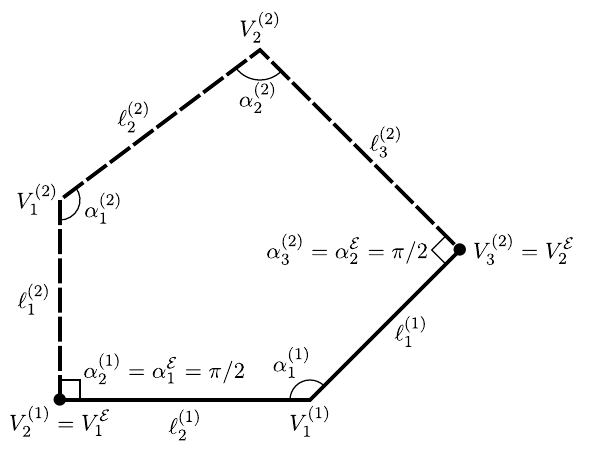}
\end{center}
\caption{An example of re-labelling for a pentagon with two exceptional angles and therefore two exceptional boundary components, one exceptional component (solid lines) consisting of  two pieces, and the other (dashed lines) consisting of three pieces\label{fig:except}}
\end{figure}

Denote also, for $\kappa=1,\dots,K$,
\[
\begin{split}
{\balpha'}^{(\kappa)}&=\left(\alpha^{(\kappa)}_1, \dots, \alpha^{(\kappa)}_{n_\kappa-1}\right), \\
\balpha^{(\kappa)}&=\left(\alpha^{(\kappa)}_1, \dots, \alpha^{(\kappa)}_{n_\kappa-1}, \alpha_\kappa^\mathcal{E}\right), \\
\bell^{(\kappa)}&=\left(\ell^{(\kappa)}_1, \dots, \ell^{(\kappa)}_{n_\kappa-1},\ell^{(\kappa)}_{n_\kappa}\right).
\end{split}
\]
We will be denoting an exceptional boundary component joining exceptional vertices $V^\Eangles_{\kappa-1}$  and $V^\Eangles_{\kappa}$ by $\mathcal{Y}_\kappa= \mathcal{Y}(\balpha^{(\kappa)},\bell^{(\kappa)})$.

Set
\begin{equation}\label{eq:Udef}
\mathtt{U}\left({\balpha'}^{(\kappa)}, \bell^{(\kappa)},\sigma\right)=\mathtt{B}\left(\ell^{(\kappa)}_{n_\kappa},\sigma\right)\mathtt{A}\left(\alpha^{(\kappa)}_{n_\kappa-1}\right)\mathtt{B}\left(\ell^{(\kappa)}_{n_\kappa-1},\sigma\right)\cdots \mathtt{A}\left(\alpha^{(\kappa)}_{1}\right)\mathtt{B}\left(\ell^{(\kappa)}_{1},\sigma\right).
\end{equation}
By \eqref{eq:Tdef1} and \eqref{eq:Sdef1},
\begin{equation}\label{eq:UviaBT}
\mathtt{U}\left({\balpha'}^{(\kappa)}, \bell^{(\kappa)},\sigma\right):=\mathtt{B}\left(\ell^{(\kappa)}_{n_\kappa},\sigma\right) \mathtt{T}\left({\balpha'}^{(\kappa)}, {\bell'}^{(\kappa)},\sigma\right),
\end{equation}
where ${\bell'}^{(\kappa)}=\left(\ell^{(\kappa)}_1, \dots, \ell^{(\kappa)}_{n_\kappa-1}\right)$.

Set also
\begin{equation}\label{eq:orthogspecial1}
\Cvect_\mathrm{even} = \frac{1}{\sqrt{2}}\begin{pmatrix}  \er^{-\ir \pi/4}  \\ \er^{\ir \pi/4} \end{pmatrix},\quad \Cvect_\mathrm{odd} = \frac{1}{\sqrt{2}}\begin{pmatrix}  \er^{\ir \pi/4}  \\ \er^{-\ir \pi/4} \end{pmatrix},
\end{equation}
and, for an exceptional angle $\alpha\in\Eangles$,
\begin{equation}\label{eq:orthogspecial2}
\Cvect(\alpha):=\begin{cases} 
\Cvect_\mathrm{even}\qquad&\text{if }\Odd(\alpha)=1,\\
\Cvect_\mathrm{odd}\qquad&\text{if }\Odd(\alpha)=-1.
\end{cases}
\end{equation}
 
\begin{remark}\label{rem:Cevenoddorthog}  We note that 
\begin{equation}\label{eq:Ceveoddorthog}
\overline{\Cvect_\mathrm{even}}=\Cvect_\mathrm{odd},\qquad\text{and}\qquad \Cvect_\mathrm{even}\cdot \Cvect_\mathrm{odd}=0,
\end{equation}
and we therefore set
\begin{equation}\label{eq:Cevenoddperp}
\Cvect_\mathrm{even}^\perp:=\Cvect_\mathrm{odd},\qquad \Cvect_\mathrm{odd}^\perp:=\Cvect_\mathrm{even},\qquad \Cvect^\perp(\alpha):=\left( \Cvect(\alpha)\right)^\perp=\begin{cases} 
\Cvect_\mathrm{odd}\qquad&\text{if }\Odd(\alpha)=1,\\
\Cvect_\mathrm{even}\qquad&\text{if }\Odd(\alpha)=-1.
\end{cases}.
\end{equation}
In \eqref{eq:Ceveoddorthog}, and throughout this paper, the dot product in $\mathbb{C}^2$ is understood in the usual sense:
\[
\begin{pmatrix}u_1\\u_2\end{pmatrix}\cdot \begin{pmatrix}v_1\\v_2\end{pmatrix}=u_1\overline{v_1}+u_2\overline{v_2}.
\]
\end{remark} 
 
\begin{remark}\label{rem:Xareeigenvectors} It is easily checked that $\Cvect_\mathrm{odd}$ and $\Cvect_\mathrm{even}$ are eigenvectors of the matrix $\mathtt{A}(\alpha)$ corresponding to the eigenvalues $\eta_1(\alpha)$ and $\eta_2(\alpha)$, respectively, for any $\alpha\not\in\Eangles$.
\end{remark} 
 
\begin{definition} 
\label{def:quasiexc}
Let $\mathcal{P}$ be a curvilinear polygon with exceptional angles $\alpha_{E_1}=\frac{\pi}{2k_1}$, \dots, $\alpha_{E_K}=\frac{\pi}{2k_K}$ as defined above.
We say that $\sigma\ge 0$ is a {\it quasi-eigenvalue} of $\mathcal{P}$ if there exists $1 \le \kappa\le K$,  such that $\sigma$ is a solution of the equation
\begin{equation}
\label{excepeq}
\mathtt{U}\left({\balpha'}^{(\kappa)}, \bell^{(\kappa)},\sigma\right) \Cvect\left(\alpha_{E_{\kappa-1}}\right) \cdot \Cvect\left(\alpha_{E_\kappa}\right)
=0.
\end{equation}
\end{definition}

\begin{remark}\label{rem:quasiexparallel}
Condition \eqref{excepeq} can be equivalently restated as 
\begin{equation}\label{eq:excepeqparallel}
\mathtt{U}\left({\balpha'}^{(\kappa)}, \bell^{(\kappa)},\sigma\right) \Cvect\left(\alpha_{E_{\kappa-1}}\right) \text{ is proportional  to } \Cvect^\perp\left(\alpha_{E_\kappa}\right).
\end{equation}
\end{remark}

\begin{definition}\label{def:exceptionalcomponentsoddeven} We will call an exceptional boundary component $\mathcal{Y}_\kappa$ which joins two exceptional angles $\alpha_{E_{\kappa-1}}$ and $\alpha_{E_\kappa}$ an \emph{even exceptional component} if the parities $\Odd\left(\alpha_{E_{\kappa-1}}\right)$ and $\Odd\left(\alpha_{E_\kappa}\right)$ are equal, and an \emph{odd exceptional component} if these parities differ.
\end{definition}

\begin{definition}\label{def:quasiexcmult}
 In the presence of exceptional angles, the \emph{multiplicity} of a quasi-eigenvalue $\sigma> 0$ is defined as the number of distinct values $\kappa$ for which $\sigma$ is a solution of \eqref{excepeq}. The \emph{multiplicity} of  quasi-eigenvalue $\sigma=0$ is defined as half the number of sign changes in the cyclic  sequence of exceptional angle parities $\Odd\left(\alpha_{E_1}\right),\dots$, $\Odd\left(\alpha_{E_K}\right)$, $\Odd\left(\alpha_{E_1}\right)$, or equivalently as half the number of \emph{odd} exceptional boundary components (see Definition \ref{def:exceptionalcomponentsoddeven}) joining the exceptional vertices. 
\end{definition}

\begin{remark} It is easy to see that the definition of multiplicity of a quasi-eigenvalue $\sigma=0$ in the exceptional case is consistent -- it always produces an integer as there is always an even number of odd  exceptional boundary components.
\end{remark}

\begin{remark} Let us  compare Definitions \ref{def:quasi} and \ref{def:quasiexc}. In the former, the quasi-eigenvalues are defined in the terms of the \emph{whole} boundary $\partial\mathcal{P}$. In the latter, the exceptional angles split the boundary into a number of exceptional boundary components, each producing its own independent sequence of quasi-eigenvalues. 
\end{remark}

\subsection{Quasi-eigenvalues as roots of trigonometric polynomials} We can re-formulate the quasi-eigenvalue equations \eqref{eq:Ttrace} and \eqref{excepeq} as the conditions that $\sigma$ is a root of some explicit trigonometric polynomials.  To define  these polynomials, we need to introduce some combinatorial notation. Let
\[
\pmset{n}=\{\pm1\}^n,
\]
and for a vector $\bzeta=(\zeta_1,\dots,\zeta_n)\in \pmset{n}$ with cyclic identification $\zeta_{n+1}\equiv\zeta_1$, let
\begin{equation}\label{eq:Cintrodef}
\changes(\bzeta):=\{j \in\{1,\dots,n\}\mid \zeta_j\ne \zeta_{j+1} \} 
\end{equation}
denote the set of indices of sign change in $\bzeta$, e.g.
\[
\changes((1, 1, 1)) = \varnothing; \quad \changes((-1, -1, 1, 1)) = \{2, 4\}.
\]

Given a curvilinear polygon $\mathcal{P}(\balpha,\bell)$, we now define the following trigonometric polynomials in real variable $\sigma$: firstly, we set 
\begin{equation}\label{eq:Fevendefn}
F_\mathrm{even}(\balpha,\bell,\sigma):=\sum_{\substack{\bzeta\in\pmset{n}\\\zeta_1=1}} \mathfrak{p}_{\bzeta} \cos(\bell\cdot\bzeta\sigma),
\end{equation}
where
\begin{equation}\label{eq:pfrakdefn}
\mathfrak{p}_{\bzeta}=\mathfrak{p}_{\bzeta}(\balpha):=\prod_{j\in\changes(\bzeta)}\cos\left(\frac{\pi^2}{2\alpha_j}\right),
\end{equation}
and we assume the convention $\prod\limits_\emptyset=1$. 

We further define
\begin{equation}\label{eq:quasieq1}
F^\mathcal{P}(\balpha,\bell,\sigma):=F_\mathrm{even}(\balpha,\bell,\sigma)-\prod_{j=1}^n\sin\left(\frac{\pi^2}{2\alpha_j}\right),
\end{equation}
which differs from \eqref{eq:Fevendefn} only in the constant term.

In either exceptional or non-exceptional  case, we have the following
\begin{theorem}\label{thm:polygoneqn0} 
 Let $\mathcal{P}(\balpha,\bell)$ be a curvilinear polygon. Then  $\sigma\ge 0$ is a quasi-eigenvalue if and only if it is a root of the trigonometric polynomial $F^\mathcal{P}(\balpha,\bell,\sigma)$. The multiplicity of a quasi-eigenvalue $\sigma>0$ coincides with its multiplicity as a root of  \eqref{eq:quasieq1}, and the multiplicity of the quasi-eigenvalue $\sigma=0$
 is half its multiplicity as a root of  \eqref{eq:quasieq1}.
 \end{theorem}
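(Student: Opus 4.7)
The plan is to reduce the quasi-eigenvalue condition $\Tr \mathtt{T}(\balpha,\bell,\sigma) = 2$ in the non-exceptional case to the explicit identity
\[
\Tr \mathtt{T}(\balpha,\bell,\sigma) \cdot \prod_{j=1}^n \sin(\mu_{\alpha_j}) = 2 F_{\mathrm{even}}(\balpha,\bell,\sigma).
\]
Combined with $\det \mathtt{T}=1$ (so that $\det(\mathtt{T}-\Id) = 2 - \Tr \mathtt{T}$), this immediately yields the clean identity
\[
F^\mathcal{P}(\balpha,\bell,\sigma) = -\tfrac{1}{2}\Big(\prod_{j=1}^n \sin(\mu_{\alpha_j})\Big) \det\big(\mathtt{T}(\balpha,\bell,\sigma) - \Id\big),
\]
from which the equivalence ``$\sigma$ is a quasi-eigenvalue iff $F^\mathcal{P}(\sigma)=0$'' is immediate.

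To prove the displayed identity, I would use the decomposition $\mathtt{A}(\alpha) = \csc(\mu_\alpha)\,\Id + \cot(\mu_\alpha)\,J$ with $J := \left(\begin{smallmatrix} 0 & -\ir \\ \ir & 0\end{smallmatrix}\right)$; a direct check gives $J^2 = \Id$ and $J\mathtt{B}(\ell,\sigma) = \mathtt{B}(-\ell,\sigma) J$. Expanding the product $\mathtt{T} = \mathtt{A}_n \mathtt{B}_n \cdots \mathtt{A}_1 \mathtt{B}_1$ multilinearly produces $2^n$ terms indexed by $\bm{\epsilon}\in\{0,1\}^n$, in each of which $\mathtt{A}_j$ is replaced by either $\csc(\mu_{\alpha_j})\,\Id$ (for $\epsilon_j=0$) or $\cot(\mu_{\alpha_j}) J$ (for $\epsilon_j=1$). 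Migrating every $J$ rightward past the subsequent $\mathtt{B}_k$'s using the commutation relation collapses the matrix part of the $\bm{\epsilon}$-term into $\mathtt{B}(L_{\bm{\epsilon}},\sigma) J^{|\bm{\epsilon}|}$, where $L_{\bm{\epsilon}} = \sum_{j=1}^n \ell_j (-1)^{\epsilon_j + \epsilon_{j+1} + \cdots + \epsilon_n}$ and $|\bm{\epsilon}|=\sum_j \epsilon_j$. Since $\Tr\big(\mathtt{B}(L,\sigma)\big) = 2\cos(L\sigma)$ but $\Tr\big(\mathtt{B}(L,\sigma) J\big) = 0$, only $\bm{\epsilon}$ with $|\bm{\epsilon}|$ even contribute. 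Setting $\zeta_j := (-1)^{\epsilon_j + \cdots + \epsilon_n}$ identifies the surviving $\bm{\epsilon}$'s with vectors $\bzeta\in\pmset{n}$ satisfying $\zeta_1=1$; the cyclic convention $\zeta_{n+1}\equiv\zeta_1$ is then consistent with the natural boundary value $1 = (-1)^0$, so $\{j : \epsilon_j = 1\}$ is precisely $\changes(\bzeta)$, and $L_{\bm{\epsilon}} = \bell\cdot\bzeta$. Multiplying through by $\prod_j \sin(\mu_{\alpha_j})$ converts $\csc(\mu_{\alpha_j})\sin(\mu_{\alpha_j})=1$ and $\cot(\mu_{\alpha_j})\sin(\mu_{\alpha_j})=\cos(\mu_{\alpha_j})$, producing exactly the coefficients $\mathfrak{p}_\bzeta$ of \eqref{eq:pfrakdefn} and completing the identity.

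For the multiplicity claim at $\sigma_*>0$ in the non-exceptional case: when $\mathtt{T}(\sigma_*)\ne\Id$ but has $1$ as an eigenvalue, the matrix $\mathtt{T}(\sigma_*)-\Id$ has rank one with nonzero adjugate, and $\tfrac{d}{d\sigma}\det(\mathtt{T}(\sigma)-\Id)\big|_{\sigma_*} = \Tr\big(\mathrm{adj}(\mathtt{T}(\sigma_*)-\Id)\,\mathtt{T}'(\sigma_*)\big)$ is nonzero thanks to the nondegeneracy of the $\mathtt{B}_j'$'s, yielding a simple root matching geometric multiplicity $1$. When $\mathtt{T}(\sigma_*)=\Id$, the expansion $\mathtt{T}(\sigma)-\Id = (\sigma-\sigma_*)\mathtt{T}'(\sigma_*)+O((\sigma-\sigma_*)^2)$ gives $\det(\mathtt{T}(\sigma)-\Id) = (\sigma-\sigma_*)^2\det\mathtt{T}'(\sigma_*) + O((\sigma-\sigma_*)^3)$, and an analogous explicit computation confirms $\det\mathtt{T}'(\sigma_*)\ne0$, producing a double root. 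At $\sigma=0$ one has $\mathtt{B}_j(0)=\Id$, so $\mathtt{T}(0) = \prod_j \mathtt{A}(\alpha_j)$ lies in the commutative algebra spanned by $\Id$ and $J$; writing $\mathtt{T}(0) = a\,\Id + bJ$ with $a^2-b^2=1$, the condition $\Tr \mathtt{T}(0)=2a=2$ forces $a=1$ and $b=0$, so $\mathtt{T}(0)=\Id$ is automatic. The expansion argument then yields a root of multiplicity exactly $2$ while the quasi-eigenvalue multiplicity at $0$ is set to $1$ by convention in Definition \ref{def:quasimult}, explaining the factor of two.

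For the exceptional case, the matrices $\mathtt{A}(\alpha_j)$ are undefined at exceptional $\alpha_j$, and I would take two complementary routes. First, the quantum graph reformulation of Theorem \ref{thm:quantum} realizes quasi-eigenvalues (in either setting) as square roots of eigenvalues of a specific cyclic quantum graph Laplacian, whose secular determinant is a trigonometric polynomial that coincides with $F^\mathcal{P}$ by continuity in $\balpha$, with matching multiplicities. Second, and more concretely, at every exceptional angle $\sin(\mu_{\alpha_j})=0$ and $\cos(\mu_{\alpha_j})=\Odd(\alpha_j)=\pm1$, so in the defining sum for $F_{\mathrm{even}}$ the only contributing $\bzeta$'s are those with $\{E_1,\ldots,E_K\}\subseteq\changes(\bzeta)$; this forces $F^\mathcal{P}$ to factorize as a product over the exceptional components $\mathcal{Y}_\kappa$, and the factor for $\mathcal{Y}_\kappa$ coincides (up to a scalar) with the left-hand side of \eqref{excepeq}, with the parity $\Odd(\alpha_{E_\kappa})$ dictating which of $\Cvect_{\mathrm{even}}, \Cvect_{\mathrm{odd}}$ appears. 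The main obstacle, in my view, is the multiplicity bookkeeping at $\sigma=0$ in the exceptional setting, where the parity-counting prescription of Definition \ref{def:quasiexcmult} must be reconciled with the order of vanishing of $F^\mathcal{P}$ at zero by carefully tracking which components $\mathcal{Y}_\kappa$ actually contribute a $\sigma=0$ root.
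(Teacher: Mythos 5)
Your approach is genuinely different from the paper's. The paper proves this theorem by recasting the quasi-eigenvalues as the spectrum of a quantum graph (Theorem~\ref{thm:quantum}), explicitly constructing the vertex and edge scattering matrices $\mathtt{Sc}^\text{v}_\mathcal{G}$ and $\mathtt{Sc}^\text{e}_\mathcal{G}$, and identifying the secular determinant $\det(\mathtt{Sc}^\text{v}_\mathcal{G}\mathtt{Sc}^\text{e}_\mathcal{G}-\Id)$ with $2\er^{-\ir\sigma L}F^\mathcal{P}$ via a tridiagonal circulant determinant formula; the multiplicity statement at $\sigma>0$ is then outsourced to standard quantum-graph theory, and the multiplicity at $\sigma=0$ is handled by the Fulling--Kuchment--Wilson index-type formula $\tilde N = 2N_0-|E|+D$. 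Your route works directly with the transfer matrix $\mathtt{T}$, and the core computational identity is delivered by a very clean binomial expansion: writing $\mathtt{A}(\alpha)=\csc(\mu_\alpha)\Id+\cot(\mu_\alpha)J$, exploiting $J^2=\Id$ and $J\mathtt{B}(\ell,\sigma)=\mathtt{B}(-\ell,\sigma)J$, and observing that only terms with an even number of $J$'s have nonzero trace. I have checked this calculation carefully; it is correct, including the bijection $\bm\epsilon\leftrightarrow\bzeta$ and the identification $\changes(\bzeta)=\{j:\epsilon_j=1\}$. This is a slicker derivation of the trace identity than the paper's inductive Theorem~\ref{thm:pnqnformulae}, and the formula $F^\mathcal{P}=-\tfrac12(\prod_j\sin\mu_{\alpha_j})\det(\mathtt{T}-\Id)$ is a nice structural observation. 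The observation that $\mathtt{T}(0)$ lies in the commutative algebra $\mathrm{span}\{\Id,J\}$, so that eigenvalue $1$ forces $\mathtt{T}(0)=\Id$, is also correct.

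However, there are genuine gaps. First, in the multiplicity argument for $\sigma_*>0$, you assert that $\Tr(\mathrm{adj}(\mathtt{T}(\sigma_*)-\Id)\mathtt{T}'(\sigma_*))\ne0$ ``thanks to the nondegeneracy of the $\mathtt{B}_j'$'s,'' and similarly that $\det\mathtt{T}'(\sigma_*)\ne0$ when $\mathtt{T}(\sigma_*)=\Id$. Neither is obvious: the first is equivalent to $\frac{d}{d\sigma}\Tr\mathtt{T}(\sigma_*)\ne0$, and $\mathtt{T}'$ is a sum of terms of mixed sign whose invertibility requires work. The paper's substitute is the monotonicity result of Proposition~\ref{delta:monotone}, whose proof runs through lifting to the universal cover and is not a one-line calculation. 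Without supplying such an argument, the statement that ``algebraic root multiplicity equals geometric eigenvalue multiplicity'' is unproved.

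Second, and more seriously, the mechanism you give for the exceptional-case factorization is wrong. At an exceptional angle $\alpha_{E_\kappa}$ one has $\cos(\mu_{\alpha_{E_\kappa}})=\Odd(\alpha_{E_\kappa})=\pm1\ne 0$, so the presence or absence of $E_\kappa$ in $\changes(\bzeta)$ does \emph{not} kill $\mathfrak p_\bzeta$ --- every $\bzeta$ still contributes (its coefficient is merely multiplied by $\pm1$). The factorization $F^\mathcal{P}=\mathrm{const}\cdot\prod_\kappa F_{\mathrm{even/odd}}(\balpha^{(\kappa)},\bell^{(\kappa)},\sigma)$ is true (one can check it on a $2$-gon, where it reduces to a product-to-sum identity such as $\cos((\ell_1+\ell_2)\sigma)+\cos((\ell_1-\ell_2)\sigma)=2\cos(\ell_1\sigma)\cos(\ell_2\sigma)$), but it is a nontrivial trigonometric identity that does not follow from vanishing of terms in the sum and requires its own proof. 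You also acknowledge yourself that the $\sigma=0$ multiplicity bookkeeping in the exceptional case is not completed. Your ``Route 1'' (invoking Theorem~\ref{thm:quantum} together with the quantum-graph secular equation) is essentially the paper's proof and would close both gaps in the exceptional case, but then your ``Route 2'' becomes redundant and the multiplicity gaps in the non-exceptional case remain.
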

 
The following result is more convenient for the actual computation of quasi-eigenvalues in the exceptional case, and also simplifies the calculation of multiplicities. 
 
\begin{theorem}\label{thm:polygoneqn1} \
\begin{itemize}
\item[(a)] Let $\mathcal{P}(\balpha,\bell)$ be a non-exceptional curvilinear polygon. Then  a root $\sigma>0$ of \eqref{eq:quasieq1} is a quasi-eigenvalue of multiplicity two if additionally $\sigma$ is a root of 
\begin{equation}\label{eq:Fodddefn}
F_\mathrm{odd}(\balpha,\bell,\sigma):=\sum_{\substack{\bzeta\in\pmset{n}\\\zeta_1=1}} \mathfrak{p}_{\bzeta}\sin(\bell\cdot\bzeta\sigma),
\end{equation}
otherwise it has multiplicity one. 
\item[(b)] Let $\mathcal{P}(\balpha,\bell)$ be a curvilinear polygon with exceptional angles $\alpha_{E_1}=\frac{\pi}{2k_1}$, \dots, $\alpha_{E_K}=\frac{\pi}{2k_K}$. 
Then  $\sigma\ge 0$ is a quasi-eigenvalue if and only if it is  a root of one of the trigonometric polynomials
\begin{equation}\label{eq:quasieq2}
F_\mathrm{even/odd}\left(\balpha^{(\kappa)},\bell^{(\kappa)},\sigma\right),\qquad \kappa=1,\dots,K,
\end{equation}    
corresponding to an exceptional boundary component $\mathcal{Y}\left(\balpha^{(\kappa)},\bell^{(\kappa)}\right)$. Here, $F_\mathrm{even/odd}$ stands for $F_\mathrm{even}$ if the exceptional boundary component $\mathcal{Y}\left(\balpha^{(\kappa)},\bell^{(\kappa)}\right)$ is even (or equivalently if $\Odd\left(\alpha_{E_{\kappa-1}}\right)=\Odd\left(\alpha_{E_\kappa}\right)$), and for $F_\mathrm{odd}$ if $\mathcal{Y}\left(\balpha^{(\kappa)},\bell^{(\kappa)}\right)$ is odd (or equivalently if $\Odd\left(\alpha_{E_{\kappa-1}}\right)=-\Odd\left(\alpha_{E_\kappa}\right)$), cf. Definition \ref{def:exceptionalcomponentsoddeven}.

The multiplicity of $\sigma>0$ is equal to the number of trigonometric polynomials \eqref{eq:quasieq2} for which it is a root, and the multiplicity of $\sigma=0$ is equal to  half the number of times  $F_\mathrm{odd}$  is chosen in \eqref{eq:quasieq2}.
\end{itemize}
\end{theorem}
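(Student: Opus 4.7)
The plan is to expand the products of transfer matrices $\mathtt{T}$ and $\mathtt{U}$ using a convenient commutation structure and to read off $F_{\mathrm{even}}$ and $F_{\mathrm{odd}}$ as explicit matrix entries. The algebraic input is the rewriting
\[
\mathtt{A}(\alpha) = \frac{1}{\sin\mu_\alpha}\bigl(\Id + \cos\mu_\alpha\,J\bigr),\qquad J := \begin{pmatrix} 0 & -\ir \\ \ir & 0 \end{pmatrix},\quad J^2=\Id,
\]
together with the easily verified identity $\mathtt{B}(\ell,\sigma)\,J = J\,\mathtt{B}(-\ell,\sigma)$ and the observation that $\Cvect_{\mathrm{even}}$, $\Cvect_{\mathrm{odd}}$ are the eigenvectors of $J$ with eigenvalues $+1$, $-1$ respectively --- which also explains why the parity conventions of Definition \ref{defn:angles} line up with the $F_{\mathrm{even/odd}}$ dichotomy in part (b).

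For part (a), expand
\[
\prod_{j=1}^n \sin\mu_j \cdot \mathtt{T}(\balpha,\bell,\sigma)
= \prod_{j=n}^{1}\bigl(\Id + \cos\mu_j J\bigr)\,\mathtt{B}_j(\sigma)
\]
by multilinearity into $2^n$ terms indexed by $(\varepsilon_1,\ldots,\varepsilon_n)\in\{0,1\}^n$ and commute every $J$ to the left. The commutation flips the signs of intervening $\ell$'s, so each term becomes $\bigl(\prod_j\cos\mu_j^{\varepsilon_j}\bigr)\,J^{\sum_j\varepsilon_j}\,\mathtt{B}(\bell\cdot\bzeta\,\sigma)$ with $\zeta_j=(-1)^{\varepsilon_1+\cdots+\varepsilon_{j-1}}$, in particular $\zeta_1=1$. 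Only the $J$-free terms contribute to $\mathtt{T}_{11}$ (since $(J\mathtt{B}(\theta))_{11}=0$). Grouping the two free choices of $\varepsilon_n$ so as to select such a term, and accounting for the cyclic change at index $n$ which occurs exactly when $\zeta_n=-1$, one obtains precisely the combinatorial factor $\mathfrak{p}_{\bzeta}$ of \eqref{eq:pfrakdefn} and
\[
\mathtt{T}_{11}(\sigma) \;=\; \frac{1}{\prod_j \sin\mu_j}\sum_{\substack{\bzeta\in\pmset{n}\\ \zeta_1=1}}\mathfrak{p}_{\bzeta}\,\er^{\ir\bell\cdot\bzeta\sigma} \;=\; \frac{F_{\mathrm{even}}(\balpha,\bell,\sigma)+\ir F_{\mathrm{odd}}(\balpha,\bell,\sigma)}{\prod_j \sin\mu_j}.
\]
Since $\Tr\mathtt{T}=2\Re\mathtt{T}_{11}$, Lemma \ref{lem:Tproperties}(a) yields $\Tr\mathtt{T}=2\iff F^\mathcal{P}(\sigma)=0$, confirming that each positive root of $F^\mathcal{P}$ is a quasi-eigenvalue. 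By Remarks \ref{rem:Aproperties}(b) and \ref{rem:Bproperties}, $\mathtt{T}$ has the form $\begin{pmatrix} t_{11} & t_{12} \\ \overline{t_{12}} & \overline{t_{11}}\end{pmatrix}$ with $|t_{11}|^2-|t_{12}|^2=1$. When $\Re t_{11}=1$, this constraint forces $(\Im t_{11})^2=|t_{12}|^2$, so $\mathtt{T}=\Id$ (geometric multiplicity two, by Lemma \ref{lem:Tproperties}(b)) if and only if $\Im t_{11}=0$, i.e.\ iff $F_{\mathrm{odd}}(\sigma)=0$. This proves part (a).

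For part (b), the same expansion applied to $\mathtt{U}\bigl({\balpha'}^{(\kappa)},\bell^{(\kappa)},\sigma\bigr)$ produces
\[
\prod_{j=1}^{n_\kappa-1}\sin\mu_j \cdot \mathtt{U}
= \sum_{\substack{\bzeta\in\pmset{n_\kappa}\\ \zeta_1=1}}\Bigl(\prod_{j\in C(\bzeta)}\!\cos\mu_j\Bigr)\,J^{|C(\bzeta)|}\,\mathtt{B}\bigl(\bell^{(\kappa)}\cdot\bzeta\,\sigma\bigr),
\]
where $C(\bzeta):=\{j\in\{1,\ldots,n_\kappa-1\}: \zeta_j\neq\zeta_{j+1}\}$ is the non-cyclic change set. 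Since $\Cvect(\alpha_{E_{\kappa-1}})$ is an eigenvector of $J$ with eigenvalue $\Odd(\alpha_{E_{\kappa-1}})$, the factor $J^{|C(\bzeta)|}$ acts as the scalar $\Odd(\alpha_{E_{\kappa-1}})^{|C(\bzeta)|}$. A direct computation gives $\mathtt{B}(\theta)\Cvect(\alpha)\cdot\Cvect(\alpha')=\cos\theta$ when $\Odd(\alpha)=\Odd(\alpha')$ and $\pm\sin\theta$ otherwise. Combined with the identity
\[
\mathfrak{p}_{\bzeta} = \Bigl(\prod_{j\in C(\bzeta)}\!\cos\mu_j\Bigr)\cdot\Odd(\alpha^\mathcal{E}_\kappa)^{|C(\bzeta)|}
\]
(the extra factor arises from the cyclic change at index $n_\kappa$, which occurs iff $|C(\bzeta)|$ is odd iff $\zeta_{n_\kappa}=-1$), a case analysis over the four parity combinations of $(\Odd(\alpha_{E_{\kappa-1}}),\Odd(\alpha_{E_\kappa}))$ shows that $\mathtt{U}\Cvect(\alpha_{E_{\kappa-1}})\cdot\Cvect(\alpha_{E_\kappa})$ equals $F_{\mathrm{even}}(\balpha^{(\kappa)},\bell^{(\kappa)},\sigma)$ up to the nonvanishing factor $\prod_{j<n_\kappa}\sin\mu_j$ if $\mathcal{Y}_\kappa$ is an even exceptional component, and $\pm F_{\mathrm{odd}}(\balpha^{(\kappa)},\bell^{(\kappa)},\sigma)$ (up to the same factor) if $\mathcal{Y}_\kappa$ is odd. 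The multiplicity statement for $\sigma>0$ is then immediate from Definition \ref{def:quasiexcmult}; for $\sigma=0$ one uses that every $F_{\mathrm{odd}}^{(\kappa)}$ automatically vanishes at $0$, so the $K_{\mathrm{odd}}$ odd components each contribute, yielding multiplicity $K_{\mathrm{odd}}/2$ as required.

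The principal technical obstacle will be the careful bookkeeping of signs during the $J$-commutations, and reconciling the cyclically defined $\changes(\bzeta)$ appearing in $\mathfrak{p}_{\bzeta}$ with the non-cyclic $C(\bzeta)$ produced by the linear expansion. The bridge is the identity $(-1)^{|C(\bzeta)|}=\zeta_{n}$ (respectively $\zeta_{n_\kappa}$), which converts the parity of the non-cyclic change count into the cyclic contribution associated with the terminal exceptional angle, and makes the four parity cases in part (b) collapse uniformly onto $F_{\mathrm{even}}$ or $F_{\mathrm{odd}}$.
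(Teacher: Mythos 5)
Your proposal is correct in substance and takes a genuinely different route from the paper. The paper proves the explicit formulas for $p_n,q_n$ (Theorem \ref{thm:pnqnformulae}) by induction via the recurrence relations \eqref{eq:pn1}--\eqref{eq:qn1}, then reads $F_{\mathrm{even}}+\ir F_{\mathrm{odd}}$ off via Proposition \ref{prop:Fprelation}, and proves part (b) by a four-way case analysis using the recurrence of Proposition \ref{prop:Frecurrence}. You instead exploit the structural decomposition $\mathtt{A}(\alpha)=\frac{1}{\sin\mu_\alpha}(\Id+\cos\mu_\alpha\,J)$ together with $\mathtt{B}(\ell,\sigma)J=J\mathtt{B}(-\ell,\sigma)$, expand the product multilinearly in one shot, and use $(J\mathtt{B}(\theta))_{11}=0$ and the fact that $\Cvect_{\mathrm{even}},\Cvect_{\mathrm{odd}}$ are $J$-eigenvectors. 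This makes the combinatorial origin of $\mathfrak p_{\bzeta}$, the cyclic-versus-non-cyclic change count, and the dichotomy $F_\mathrm{even}/F_\mathrm{odd}$ far more transparent than the paper's bookkeeping; your argument also unifies parts (a) and (b) into a single mechanism (in the paper they look rather different).

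One small inaccuracy worth flagging in part (b). After you commute the $J$'s to the \emph{left}, in the expression $J^{|C(\bzeta)|}\mathtt{B}(\theta_{\bzeta})\Cvect(\alpha_{E_{\kappa-1}})$ the power of $J$ is not acting on $\Cvect(\alpha_{E_{\kappa-1}})$ --- the diagonal factor $\mathtt{B}(\theta_{\bzeta})$ sits in between, and $\mathtt{B}(\theta)$ does not preserve the $J$-eigenspaces. What saves you is that $J$ is self-adjoint, so in the inner product against $\Cvect(\alpha_{E_\kappa})$ it scales by $\Odd(\alpha_{E_\kappa})^{|C(\bzeta)|}$, i.e.\ the parity of the \emph{terminal} exceptional angle, not the initial one as you wrote. (Alternatively, you could commute the $J$'s to the \emph{right}, in which case they act directly on $\Cvect(\alpha_{E_{\kappa-1}})$ and produce $\Odd(\alpha_{E_{\kappa-1}})^{|C|}$, but then the sign pattern $\bzeta$ one obtains is anchored at $\zeta_{n_\kappa}=1$ rather than $\zeta_1=1$.) Your subsequent bookkeeping identity $\mathfrak p_{\bzeta}=\bigl(\prod_{j\in C(\bzeta)}\cos\mu_j\bigr)\Odd(\alpha^\mathcal E_\kappa)^{|C(\bzeta)|}$ already uses the terminal angle, so the cancellation does go through once you fix the $J$-eigenvalue to $\Odd(\alpha_{E_\kappa})$; as it stands the two displayed factors are inconsistent in the ``different parity'' case (off by $(-1)^{|C(\bzeta)|}$, which varies with $\bzeta$ and is not an overall sign). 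This is a fixable slip, not a structural flaw, and does not affect the set of roots nor the multiplicity count, which are all stated correctly.
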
 

We prove Theorem \ref{thm:polygoneqn1} directly from the definitions of quasi-eigenvalues in Section \ref{sec:trigpolynoms}; the proof of Theorem \ref{thm:polygoneqn0}, which uses the quantum graph analogy discussed below in subsection \ref{subsec:quantum}, is given in subsection \ref{subs:quantumgrapheigenvalues}.

\begin{remark} According to the definition, the quasi-eigenvalue $\sigma=0$ in the non-exceptional case always has multiplicity one if present. Moreover, as will be seen from the proof of Theorem  \ref{thm:quantum} in subsection \ref{subsec:proofquantum}, $\sigma=0$ is a quasi-eigenvalue in the non-exceptional case if and only if
\[
\prod_{j=1}^n \tan\frac{\pi^2}{4\alpha_j}= \prod_{j=1}^n \cot\frac{\pi^2}{4\alpha_j}=1.
\]
\end{remark}

\begin{remark}\label{rem:multiplicityexc} In the exceptional case, the set of roots of equations \eqref{eq:quasieq2} can be equivalently re-written as a set of roots of a single trigonometric equation
\begin{equation}\label{eq:quasieq2bis}
\prod_{\kappa\in\mathfrak{K}_\mathrm{even}} F_\mathrm{even}(\balpha^{(\kappa)},\bell^{(\kappa)};\sigma)\times 
\prod_{\kappa\in\mathfrak{K}_\mathrm{odd}} F_\mathrm{odd}(\balpha^{(\kappa)},\bell^{(\kappa)};\sigma)=0,
\end{equation}
where 
\[
\begin{split}
\mathfrak{K}_\mathrm{odd}&:=\left\{\kappa\in\{1,\dots,K\}: \Odd\left(\alpha^\mathcal{E}_\kappa\right)=-\Odd\left(\alpha^\mathcal{E}_{\kappa-1}\right)\right\},\\
\mathfrak{K}_\mathrm{even}&:=\left\{\kappa\in\{1,\dots,K\}: \Odd\left(\alpha^\mathcal{E}_\kappa\right)=\Odd\left(\alpha^\mathcal{E}_{\kappa-1}\right)\right\}
\end{split}
\]
The multiplicity of a positive quasi-eigenvalue is then equal to an algebraic multiplicity of it as a root of  \eqref{eq:quasieq2bis}, and the multiplicity of $\sigma=0$ is $\frac{\#\mathfrak{K}_\mathrm{odd}}{2}$.
\end{remark}

Since the  multiplicities of quasi-eigenvalues are finite,  Theorem \ref{thm:polygoneqn0}  immediately implies
the following 
\begin{proposition}
\label{prop:discrete}
 The quasi-eigenvalues of a curvilinear polygon form a discrete set with  the accumulation points only at $+\infty$. 
\end{proposition}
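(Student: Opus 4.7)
The plan is to derive Proposition~\ref{prop:discrete} directly from Theorem~\ref{thm:polygoneqn0}, which characterises the non-negative quasi-eigenvalues of $\mathcal{P}(\balpha,\bell)$ as the non-negative zeros of the trigonometric polynomial $F^{\mathcal{P}}(\balpha,\bell,\cdot)$, and identifies their multiplicities with the corresponding orders of vanishing (up to a factor of $\frac{1}{2}$ at $\sigma=0$). By \eqref{eq:Fevendefn}--\eqref{eq:quasieq1}, the map $\sigma\mapsto F^{\mathcal{P}}(\balpha,\bell,\sigma)$ is a finite sum of cosines plus a constant, so it extends to an entire function of $\sigma\in\mathbb{C}$; in particular it is real-analytic on $\mathbb{R}$. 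The zero set of any non-zero entire function is locally finite on $\mathbb{R}$ with finite orders of vanishing, and can accumulate only at $\pm\infty$. Hence the whole argument reduces to verifying that $F^{\mathcal{P}}$ is not identically zero.

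To establish this non-triviality I would inspect the expansion of $F^{\mathcal{P}}$ more closely. The sum in \eqref{eq:Fevendefn} runs over the $2^{n-1}$ vectors $\bzeta\in\pmset{n}$ with $\zeta_1=1$, and for each the frequency is $\bell\cdot\bzeta$. Among these vectors, the unique one maximising $|\bell\cdot\bzeta|$ is $\bzeta_0=(1,\dots,1)$, attaining $\bell\cdot\bzeta_0=|\partial\mathcal{P}|$: for every other admissible $\bzeta$, at least one coordinate is $-1$, whence $\bell\cdot\bzeta=|\partial\mathcal{P}|-2\sum_{j:\zeta_j=-1}\ell_j$ lies strictly between $-|\partial\mathcal{P}|$ and $|\partial\mathcal{P}|$, since all $\ell_j>0$ and at least one but not all signs are flipped (the latter because $\zeta_1=1$). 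For $\bzeta_0$ we have $\changes(\bzeta_0)=\emptyset$ and therefore $\mathfrak{p}_{\bzeta_0}=1$ by the convention $\prod_\emptyset=1$. Consequently $\cos(|\partial\mathcal{P}|\sigma)$ appears in $F^{\mathcal{P}}$ with coefficient $1$ and with a frequency strictly larger in modulus than every other cosine frequency present; being a non-constant function, it cannot be cancelled either by other cosine terms or by the scalar constant subtracted in \eqref{eq:quasieq1}. Thus $F^{\mathcal{P}}\not\equiv 0$.

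With this in hand, restricting the zero set of $F^{\mathcal{P}}$ to $[0,+\infty)$ and invoking Theorem~\ref{thm:polygoneqn0} to translate \emph{order of vanishing of $F^{\mathcal{P}}$} into \emph{multiplicity of a quasi-eigenvalue} concludes the argument. I do not anticipate any genuine obstacle: the content of the proof is the explicit top-frequency observation above. The only minor caveat worth a sentence in the write-up is that the halving of multiplicity at $\sigma=0$ prescribed by Theorem~\ref{thm:polygoneqn0} preserves finiteness, so the discreteness and accumulation conclusions are entirely unaffected.
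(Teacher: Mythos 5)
Your proof is correct and takes essentially the same route as the paper: both rest on Theorem~\ref{thm:polygoneqn0} and the fact that $F^{\mathcal P}$ is real-analytic, so its zeros are isolated. Your explicit verification that $F^{\mathcal P}\not\equiv 0$ (by isolating the top-frequency term $\cos(|\partial\mathcal P|\sigma)$, whose coefficient is $1$ and whose frequency exceeds that of every other cosine in the sum) closes a step the paper leaves implicit, and is a worthwhile addition.
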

Indeed,  \eqref{eq:quasieq1} is an analytic functions of a real variable $\sigma$, and zeros  of analytic functions are isolated.

\begin{remark} 
\label{rem:symmetric}
It is easily seen that the real roots $\sigma$ of \eqref{eq:quasieq1}  are symmetric with respect to $\sigma=0$, and therefore the algebraic multiplicity of $\sigma=0$ is always even.
This, in principle, would also allow us to consider \emph{all real} quasi-eigenvalues in both non-exceptional and exceptional cases, and not just  the non-negative ones as in Definitions \ref{def:quasi} and \ref{def:quasiexc}, cf.\ also Remark \ref{rem:posnonpos}. Such an approach will be sometimes advantageous, and we will make clear when we  use it.
\end{remark}

\subsection{An eigenvalue problem on a quantum graph}
\label{subsec:quantum}
Consider the boundary of the polygon $\mathcal{P}(\balpha,\bell)$ as  a cyclic metric graph  $\mathcal{G}(\bell)$ with $n$ vertices $V_1,\dots, V_n$ and $n$ edges $I_j$ (joining $V_{j-1}$ and $V_j$, with $V_0$ identified with $V_n$) of length $\ell_j$, $j=1,\dots,n$.
Let $s$ be the arc-length parameter on $\mathcal{G}(\bell)$  starting at $V_1$ and going in the clockwise direction, see Figure \ref{fig:graph}.

\begin{figure}[htb]
\begin{center}
\includegraphics{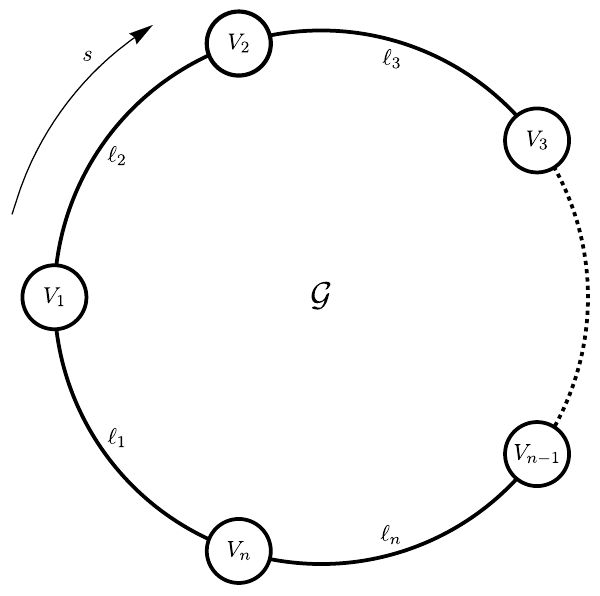}
\end{center}
\caption{A quantum graph.\label{fig:graph}}
\end{figure} 

Consider the spectral problem for a quantum graph Laplacian on $\mathcal{G}$ (see \cite{BK13} and references therein),
\[
-\frac{\mathrm{d}^2 f}{\mathrm{d}s^2}=\nu f,
\]
with matching  conditions 
\begin{equation}
\label{matchcondboth}
\begin{split}
\sin\left(\frac{\pi^2}{4\alpha_j}\right)f|_{V_j+0}&=\cos\left(\frac{\pi^2}{4\alpha_j}\right)\,f|_{V_j-0},\\
\cos\left(\frac{\pi^2}{4\alpha_j}\right)f'|_{V_j+0}&=\sin\left(\frac{\pi^2}{4\alpha_j}\right)\,f'|_{V_j-0}.
\end{split}
\end{equation}
Hereinafter at each vertex $V_j$, $j=1,\dots,n$, $g|_{V_j-0}$ and $g|_{V_j+0}$ denote the limiting values of a quantity $g(s)$ as $s$ approaches the vertex $V_j$ from the left and from the right, respectively, in the direction of $s$. 
 
 \begin{remark} For  $\alpha_j \notin \mathcal{E}$, we can re-write the matching conditions as
\begin{equation}
\label{matchcond}
\begin{split}
f|_{V_j+0}&=\cot\left(\frac{\pi^2}{4\alpha_j}\right)\,f|_{V_j-0},\\
f'|_{V_j+0}&=\tan\left(\frac{\pi^2}{4\alpha_j}\right)\,f'|_{V_j-0},
\end{split}
\end{equation}
For $\alpha_j \in \mathcal {E}$, the matching conditions are given by
\begin{equation}
\label{matchcond:exc}
\begin{cases}
f|_{V_j-0}=f'|_{V_j+0}=0\quad &\text{if }\ \Odd(\alpha_j)=1,\\
f|_{V_j+0}=f'|_{V_j-0}=0\quad &\text{if }\ \Odd(\alpha_j)=-1.
\end{cases}
\end{equation}
\end{remark}

We will denote the operator $f\mapsto -\frac{\mathrm{d}^2f}{\mathrm{d}s^2}$ subject to matching conditions  \eqref{matchcondboth} by $\Delta_\mathcal{G}$. It is easy to check that $\Delta_\mathcal{G}$ is self-adjoint and non-negative.  Therefore, its spectrum is given by
a sequence of non-negative real eigenvalues 
\[
0\le \nu_1 \le \nu_2  \le \dots \nu_m  \le \dots \nearrow +\infty,
\]
listed  with the account of multiplicities. 
\begin{remark}\label{rem:varprinciple}
The eigenvalues $\nu_m$ also satisfy a standard variational principle: if
\[
\operatorname{Dom}(Q_\mathcal{G}):=\left\{f\in\bigoplus_{j=1}^n H^1(I_j): 
\sin\left(\frac{\pi^2}{4\alpha_j}\right)f|_{V_j+0}=\cos\left(\frac{\pi^2}{4\alpha_j}\right)f|_{V_j-0}\right\}
\]
denotes the domain of the quadratic form 
\[
Q_\mathcal{G}[f]:=\sum\limits_{j=1}^n \int_{I_j} (f'(s))^2\,\dr s
\]
of  $\Delta_\mathcal{G}$, then
\[
\nu_m=\inf_{\substack{S\subset\operatorname{Dom}(Q_\mathcal{G})\\\dim S=m}}\sup_{0\ne f\in S} \frac{Q_\mathcal{G}[f]}{\sum\limits_{j=1}^n \int_{I_j} (f(s))^2\,\dr s}.
\]
\end{remark}

 It turns out the eigenvalues $\nu_m$ are precisely the squares of the quasi-eigenvalues of the Steklov problem on the polygon $\mathcal{P}(\balpha,\bell)$ as defined by Definitions \ref{def:quasi} and \ref{def:quasiexc}.
\begin{theorem}
\label{thm:quantum}
Let $\sigma_m$, $m\ge 1$, be the Steklov quasi-eigenvalues of a curvilinear polygon $\mathcal{P}(\balpha,\bell)$, and let $\nu_m$, $m\ge 1$, be the eigenvalues of $\Delta_\mathcal{G}$, in both cases ordered non-decreasingly with account of multiplicities. Then $\sigma_m^2=\nu_m$ for all $m\ge 1$.
\end{theorem}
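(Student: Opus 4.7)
The plan is to identify solutions of the quantum-graph eigenvalue problem at energy $\nu=\sigma^2$ with vectors in the $1$-eigenspace of the transfer matrix $\mathtt{T}(\balpha,\bell,\sigma)$ (or, in the exceptional case, with vectors satisfying \eqref{excepeq} on some component). On each edge $I_j$ an eigenfunction of $\Delta_\mathcal{G}$ takes the form $f|_{I_j}(s_j)=c_j^+\er^{\ir\sigma s_j}+c_j^-\er^{-\ir\sigma s_j}$, and is thus encoded by a state vector $\mathbf{c}_j=(c_j^+,c_j^-)^T\in\mathbb{C}^2$. Traversing $I_j$ from $V_{j-1}$ to $V_j$ multiplies $\mathbf{c}_j$ by the diagonal matrix $\mathtt{B}(\ell_j,\sigma)$, while the matching conditions at the next vertex transform the result into $\mathbf{c}_{j+1}$.

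First I would compute the effect of the matching conditions \eqref{matchcond} on the state vector at a non-exceptional vertex $V_j$. Using the identities $\cot\theta+\tan\theta=2\csc 2\theta$ and $\cot\theta-\tan\theta=2\cot 2\theta$ with $\theta=\pi^2/(4\alpha_j)$, one obtains a linear map whose matrix, expressed in the exponential $(c^+,c^-)$ coordinates rather than in the real $(a,b)$ amplitude coordinates of $\cos/\sin$, is precisely the vertex transfer matrix $\mathtt{A}(\alpha_j)$; consistently with Remark \ref{rem:Xareeigenvectors}, the vectors $\Cvect_\mathrm{odd},\Cvect_\mathrm{even}$ appear as its eigenvectors. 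Composing side and vertex transfers around the full cycle yields $\mathtt{T}(\balpha,\bell,\sigma)$, and the existence of a nontrivial periodic solution of $\Delta_\mathcal{G}f=\sigma^2 f$ becomes the condition $\mathtt{T}\mathbf{c}_1=\mathbf{c}_1$, which is Definition \ref{def:quasi}. The geometric $1$-eigenspace of $\mathtt{T}$ is then in bijection with the $\sigma^2$-eigenspace of $\Delta_\mathcal{G}$, so multiplicities agree when $\sigma>0$.

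For an exceptional vertex $V^\mathcal{E}_\kappa$, the matching conditions \eqref{matchcond:exc} degenerate into a Dirichlet condition on one adjacent edge and a Neumann condition on the other, thereby decoupling $\mathcal{G}$ into the exceptional components $\mathcal{Y}_\kappa$. I would verify, by direct substitution in the $(c^+,c^-)$ basis used above, that these Dirichlet/Neumann endpoint conditions force the initial state vector on the first edge of $\mathcal{Y}_\kappa$ to be proportional to $\Cvect(\alpha_{E_{\kappa-1}})$ and the state after applying $\mathtt{U}({\balpha'}^{(\kappa)},\bell^{(\kappa)},\sigma)$ to be proportional to $\Cvect^\perp(\alpha_{E_\kappa})$; by Remark \ref{rem:quasiexparallel} this is exactly \eqref{excepeq}. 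The number of independent $\Delta_\mathcal{G}$-eigenfunctions supported on $\mathcal{Y}_\kappa$ with eigenvalue $\sigma^2>0$ then agrees with the contribution of $\kappa$ to the multiplicity in Definition \ref{def:quasiexcmult}.

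The case $\sigma=0$ needs a separate argument because $\mathtt{B}(\ell,0)=\Id$. In the non-exceptional case, a constant-per-edge eigenfunction of $\Delta_\mathcal{G}$ exists iff the cyclic product $\prod_j \tan(\pi^2/(4\alpha_j))$ equals $1$; by Remark \ref{rem:Aproperties}(d) this is equivalent to $1$ being an eigenvalue of $\mathtt{T}(\balpha,\bell,0)=\prod_j\mathtt{A}(\alpha_j)$, and both sides assign multiplicity one. In the exceptional case, constant functions survive only on Neumann–Neumann components, i.e., on \emph{odd} exceptional components in the sense of Definition \ref{def:exceptionalcomponentsoddeven}, and each such component contributes exactly one zero mode; since odd components come in pairs around the cycle, this reproduces $\tfrac12\#\mathfrak{K}_\mathrm{odd}$ of Definition \ref{def:quasiexcmult}. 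The main obstacle will be the bookkeeping in Step~2: tracking orientation (the clockwise enumeration of vertices), the passage between the $(a,b)$ and $(c^+,c^-)$ bases, and the precise identification of Dirichlet versus Neumann endpoints with the $\Cvect$/$\Cvect^\perp$ directions, all consistently with the phase conventions built into \eqref{eq:orthogspecial1} and the formulas for $\mathtt{A}$ and $\mathtt{B}$.
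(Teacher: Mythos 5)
Your route is genuinely different from the paper's. The paper first proves Proposition~\ref{prop:Dirac} (a self-adjoint Dirac operator $\Dir$ with matching matrix $\mathtt{A}(\alpha_j)$ has eigenvalues equal to the quasi-eigenvalue solutions), then diagonalizes $\Dir^2$ into $\Dir^2_{\mathrm{odd}}\oplus\Dir^2_{\mathrm{even}}$, identifies those summands with $\Delta_\mathcal G$ and a dual Laplacian $\Delta_{\mathcal G'}$, and separately proves $\Delta_\mathcal G$ and $\Delta_{\mathcal G'}$ isospectral via $f\mapsto f'$. You bypass the Dirac detour and the isospectrality lemma by directly encoding a scalar $\Delta_\mathcal G$-eigenfunction as a state vector and reading off a periodicity condition. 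That is a legitimate and arguably more economical argument; the paper's modular choice is partly because it reuses $\Dir$ elsewhere (e.g.\ in the almost-orthogonality argument).

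However, one specific computational claim is wrong as stated. Writing $f=c^+\er^{\ir\sigma s}+c^-\er^{-\ir\sigma s}$ and substituting into \eqref{matchcond}, the vertex transfer in the $(c^+,c^-)$ basis comes out as the real symmetric matrix
\[
\begin{pmatrix}\csc\frac{\pi^2}{2\alpha_j} & \cot\frac{\pi^2}{2\alpha_j}\\[2pt] \cot\frac{\pi^2}{2\alpha_j} & \csc\frac{\pi^2}{2\alpha_j}\end{pmatrix},
\]
not $\mathtt{A}(\alpha_j)$: the off-diagonal entries lack the $\mp\ir$. The two are related by conjugation with the constant diagonal matrix $D=\operatorname{diag}(\er^{-\ir\pi/4},\er^{\ir\pi/4})$ (equivalently, you obtain literally $\mathtt{A}(\alpha_j)$ if you parametrise $f=\er^{\ir\pi/4}d^+\er^{\ir\sigma s}+\er^{-\ir\pi/4}d^-\er^{-\ir\sigma s}$). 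Since $D$ commutes with every $\mathtt{B}(\ell,\sigma)$, the cyclic product in your basis is $D^{-1}\mathtt{T}D$, so the eigenvalue-$1$ condition and its geometric multiplicity agree with Definitions~\ref{def:quasi}--\ref{def:quasimult}, and likewise $D$ maps your $\mathbf{N}$, $\mathbf{D}$ endpoint vectors onto $\sqrt2\,\Cvect_{\mathrm{even}}$, $\sqrt2\,\Cvect_{\mathrm{odd}}$, matching \eqref{eq:orthogspecial2} in the exceptional case. The argument survives once this constant conjugation is carried through consistently, but as written your identification of the transfer matrix with $\mathtt{A}$ is incorrect; the eigenvectors of the correct matrix are $(1,\pm1)^T$, not $\Cvect_{\mathrm{odd}},\Cvect_{\mathrm{even}}$, contrary to what you assert via Remark~\ref{rem:Xareeigenvectors}.

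One further point you should make explicit in the $\sigma=0$ discussion: when $\sigma=0$ the map $(c^+,c^-)\mapsto f$ is no longer injective (both exponentials collapse to the constant $1$), so the transfer-matrix fixed space is $2$-dimensional ($\mathtt{T}(\balpha,\bell,0)=\Id$ when $\prod_j\tan\frac{\pi^2}{4\alpha_j}=1$) while the $\Delta_\mathcal G$-eigenspace is $1$-dimensional. This is precisely why Definition~\ref{def:quasimult} imposes multiplicity one at $\sigma=0$ by fiat rather than using the geometric multiplicity of $\mathtt{T}$; your sentence ``both sides assign multiplicity one'' is true, but the reason the transfer-matrix count overstates by two deserves a sentence.
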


\begin{remark}\label{rem:secular} Theorem \ref{thm:polygoneqn0} will be derived from Theorem \ref{thm:quantum}: we will demonstrate  in subsection  \ref{subs:quantumgrapheigenvalues} that the quantum graph eigenvalues $\nu=\nu_m$ are the roots of the graph \emph{secular equation} \eqref{eq:secular}, which is equivalent to $F^\mathcal{P}(\balpha,\bell,\sqrt{\nu})=0$. 
\end{remark}

\begin{remark} 
\label{rem:notquant}
We would like to emphasise that the eigenfunctions of $\Delta_\mathcal{G}$ are \emph{not} the quasimodes of  the Dirichlet-to-Neumann map $\mathcal{D}_\Omega$; moreover, they do not even belong to the domain of $\mathcal{D}_\Omega$. What rather happens is that each eigenfunction of $\mathcal{D}_\Omega$ carries enough information to \emph{construct} a corresponding proper  Dirichlet-to-Neumann quasimode. Note also that we cannot deduce the completeness of the set of eigenfunctions of  
$\mathcal{D}_\Omega$ corresponding to quasimodes directly from the completeness of the set of eigenfunctions of $\Delta_\mathcal{G}$. Indeed, while the eigenfunctions of $\mathcal{D}_\Omega$ could in principle be viewed as perturbations of the eigenfunctions of $\Delta_\mathcal{G}$, the error is too big to guarantee the completeness of the perturbed set via  the standard Bary-Krein lemma \cite[Lemma 4.8]{sloshing}. 
\end{remark}

The proof of Theorem \ref{thm:quantum} is postponed until Section \ref{sec:quantum}. It uses an alternative formulation of the quantum graph problem which, although more complicated to state, is more closely related to the Steklov problem. We consider the eigenvalue problem for the 
following Dirac-type operator on $\mathcal{G}(\bell)$:
\begin{equation}\label{eq:defofDirac}
\Dir=\begin{pmatrix} -\ir\frac{\mathrm{d}}{\mathrm{d}s} &0 \\0 & \ir\frac{\mathrm{d}}{\mathrm{d}s}
\end{pmatrix},
\end{equation}
acting on vector functions $\mathbf{f}(s)=\begin{pmatrix}f_1(s) \\ f_2(s)\end{pmatrix}$; here $s$ is the arc-length coordinate on $\mathcal{G}(\bell)$, see Figure \ref{fig:graph}.  For $\alpha_j \notin \mathcal{E}$, we impose matching conditions at $V_j$ given by
\begin{equation}
\label{matchVj}
\mathbf{f}|_{V_j+0} = {\mathtt A}(\alpha_j)\mathbf{f}|_{V_j-0},
\end{equation}
where ${\mathtt A}(\alpha_j)$ is the vertex transfer matrix defined by \eqref{eq:Adef1}. If $\alpha_j \in \mathcal{E}$ we set 
\begin{equation}\label{matchVjexc}
\begin{split}
\mathbf{f}|_{V_j-0}&\text{ is proportional to } \Cvect(\alpha_j)^\perp\\
\mathbf{f}|_{V_j+0}&\text{ is proportional to }  \Cvect(\alpha_j),
\end{split}
\end{equation}
where $ \Cvect(\alpha_j)$ is defined by \eqref{eq:orthogspecial2}. 

We have the following
\begin{proposition}\label{prop:Dirac} The operator $\Dir$, with the domain consisting of vector-functions $\mathbf{f}(s)$ such that their restrictions to the edge $I_j$ are in $(H^1(I_j))^2$  and they satisfy the matching conditions above, is self-adjoint in $(L^2(\mathcal{G}))^2$.  Moreover, with multiplicity, its eigenvalues are the real solutions of equation \eqref{eq:Ttrace} (provided $\alpha_j \notin \mathcal{E}$, $j=1,\dots, n$), or of equation \eqref{excepeq} if there exists $\alpha_j\in \mathcal{E}$.
\end{proposition}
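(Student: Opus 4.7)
The plan is to prove $\Dir$ is symmetric on the stated domain, then identify the eigenvalue equation by transporting a Dirac solution along each edge and closing the cycle (non-exceptional case) or exploiting the vertex decoupling (exceptional case).

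First, for the symmetry of $\Dir$, integration by parts edge by edge yields
\[
\langle \Dir \mathbf{f},\mathbf{g}\rangle - \langle \mathbf{f},\Dir \mathbf{g}\rangle \;=\; \ir\sum_{j=1}^{n}\Bigl(\mathbf{g}|_{V_j+0}^{*}\,J\,\mathbf{f}|_{V_j+0}-\mathbf{g}|_{V_j-0}^{*}\,J\,\mathbf{f}|_{V_j-0}\Bigr),
\]
with $J=\operatorname{diag}(1,-1)$. At a non-exceptional vertex, the matching \eqref{matchVj} turns each summand into $\mathbf{g}|_{V_j-0}^{*}\,(\mathtt{A}(\alpha_j)^{*}J\mathtt{A}(\alpha_j)-J)\,\mathbf{f}|_{V_j-0}$, and a direct computation from \eqref{eq:Adef1} using $\csc^{2}\mu_\alpha-\cot^{2}\mu_\alpha=1$ gives $\mathtt{A}^{*}J\mathtt{A}=J$, so the summand vanishes. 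At an exceptional vertex, \eqref{matchVjexc} forces $\mathbf{f}|_{V_j\pm 0}$ and $\mathbf{g}|_{V_j\pm 0}$ into one-dimensional subspaces spanned by $\Cvect_\mathrm{even}$ or $\Cvect_\mathrm{odd}$, and from \eqref{eq:orthogspecial1} one verifies that $\Cvect_\mathrm{even}^{*}J\Cvect_\mathrm{even}=\Cvect_\mathrm{odd}^{*}J\Cvect_\mathrm{odd}=0$, so each side vanishes independently. Combined with the observation that in either case the vertex conditions fix exactly half of the four-dimensional vertex data (a Lagrangian constraint with respect to $\ir J$), this gives self-adjointness.

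Second, I solve the eigenvalue equation on each edge. The system $\Dir \mathbf{f}=\sigma \mathbf{f}$ decouples into $-\ir f_1'=\sigma f_1$ and $\ir f_2'=\sigma f_2$, so $\mathbf{f}$ propagates along $I_j$ via the side transfer matrix $\mathtt{B}(\ell_j,\sigma)$ of \eqref{eq:Bdef1}. In the non-exceptional case, combining with \eqref{matchVj} yields $\mathbf{f}|_{V_j+0}=\mathtt{C}(\alpha_j,\ell_j,\sigma)\,\mathbf{f}|_{V_{j-1}+0}$, so after one full circuit $\mathbf{f}|_{V_n+0}=\mathtt{T}(\balpha,\bell,\sigma)\,\mathbf{f}|_{V_0+0}$. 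The cyclic identification $V_0\equiv V_n$ then demands that $1$ be an eigenvalue of $\mathtt{T}$, which, since $\det\mathtt{T}=1$, is by Lemma \ref{lem:Tproperties}(a) equivalent to \eqref{eq:Ttrace}; the dimension of the $\sigma$-eigenspace of $\Dir$ then coincides with the geometric multiplicity of $1$ for $\mathtt{T}(\balpha,\bell,\sigma)$, as required.

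Third, in the exceptional case \eqref{matchVjexc} completely severs the graph at each exceptional vertex, since the values of $\mathbf{f}$ on either side lie in prescribed one-dimensional subspaces independently of one another. The eigenvalue problem on $\mathcal{G}$ therefore decomposes into $K$ mutually independent problems, one on each exceptional component $\mathcal{Y}_\kappa$. On $\mathcal{Y}_\kappa$ the transfer from $V_{E_{\kappa-1}}+0$ to $V_{E_\kappa}-0$ is exactly $\mathtt{U}({\balpha'}^{(\kappa)},\bell^{(\kappa)},\sigma)$ (from \eqref{eq:Udef}), with boundary constraints $\mathbf{f}|_{V_{E_{\kappa-1}}+0}\in\operatorname{span}\Cvect(\alpha_{E_{\kappa-1}})$ and $\mathbf{f}|_{V_{E_\kappa}-0}\in\operatorname{span}\Cvect^\perp(\alpha_{E_\kappa})$; a nontrivial solution exists precisely when $\mathtt{U}\,\Cvect(\alpha_{E_{\kappa-1}})$ is a scalar multiple of $\Cvect^\perp(\alpha_{E_\kappa})$, i.e.\ when \eqref{excepeq} holds, contributing a one-dimensional eigenspace per solving $\kappa$. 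Summing over $\kappa$ recovers the stated multiplicity. The only bookkeeping subtlety I foresee lies in the exceptional self-adjointness step: one must read \eqref{matchVjexc} as specifying genuine $J$-isotropic boundary subspaces on \emph{both} sides of each exceptional vertex rather than a single matching constraint, but once this is set up the symmetry computation runs in parallel with the non-exceptional case.
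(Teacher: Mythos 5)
Your proof is correct and follows essentially the same route as the paper's: integration by parts reduces symmetry to the identity $\mathtt{A}(\alpha)^* \mathtt{D}\mathtt{A}(\alpha)=\mathtt{D}$ with $\mathtt{D}=\operatorname{diag}(1,-1)$ (equivalently your $\mathtt{A}^*J\mathtt{A}=J$, since $\mathtt{A}^*=\mathtt{A}$), and the eigenvalue characterisation comes from propagating the edge solutions $d_1\er^{\ir\sigma s}, d_2\er^{-\ir\sigma s}$ through $\mathtt{B}$ and $\mathtt{A}$ to obtain the cycle/severing conditions. Your treatment is a bit more explicit than the paper's (which handles only the non-exceptional case and refers to section 4.2 for the exceptional one, and invokes \cite{BK13} for full self-adjointness), but the ideas are the same.
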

Proposition \ref{prop:Dirac} will be proved in Section \ref{sec:quantum}. Along with Theorem \ref{thm:quantum} it shows that the squares of the eigenvalues of $\Dir$ are precisely the eigenvalues $\nu_m$ of our quantum graph Laplacian.

\begin{remark}\label{rem:quantum} Note that in the case of a graph Dirac operator, we need to consider  \emph{all} solutions of the characterstic equations, not just non-negative ones as in Definitions \ref{def:quasi} and \ref{def:quasiexc}. Moreover, in view of Remark \ref{rem:symmetric} and Definitions \ref{def:quasimult} and \ref{def:quasiexcmult}, the spectrum of $\Dir$ may be represented as $\{\pm \sigma_m\}$, with the same multiplicities as for quasi-eigenvalues of $\mathcal{P}$ if $\sigma_m>0$ and \emph{twice} the multiplicity of an eigenvalue $\sigma_m=0$. In other words, the multiplicity of $\sigma^2$  in the spectrum of $\Dir^2$ coincides with \emph{twice} the multiplicity of $\sigma$ as a quasi-eigenvalue of $\mathcal{P}$.
\end{remark}

\subsection{Riesz mean and heat trace asymptotics} 
Let $\{s_m\}$, $ m=1,2,\dots$, be a non-decreasing sequence of nonnegative real numbers. 
\begin{definition}
The function  $\mathcal{N}(\{s_m\}; z):=\#\{m\in \mathbb{N}\,| \, s_m \le  z \}$ is called the {\it counting function} for the sequence $\{s_m\}$, and the function
\begin{equation}
\label{eq:rieszdef}
\mathcal{R}(\{s_m\}; z)=\mathcal{R}_1(\{s_m\}; z):=\int_0^z \mathcal{N}(\{s_m\}; t)\, \dr t=\sum_{m=1}^\infty (z-s_m)_+
\end{equation}
is called the {\it first Riesz mean} (or simply the Riesz mean) of $\{s_m\}$. Here $z_+=\max(z,0)$. 
\end{definition}
The asymptotics of the Riesz mean often captures more refined features of the distribution of the sequence $\{ s_n \}$  than the asymptotics of the counting function. In particular, it is a standard tool to study eigenvalue asymptotics, see, for instance, \cite{Sa, LH}.

Let $\NP(\lambda):=\mathcal{N}(\{\lambda_m\};\lambda)$ and $\mathcal{R}_\mathcal{P}(\lambda):=\mathcal{R}(\{\lambda_m\}; \lambda)$  be, respectively,  the eigenvalue counting function and the Riesz mean for the Steklov eigenvalues on a curvilinear polygon 
$\mathcal{P}$. We first prove a basic Weyl law. Observe that due to Theorem \ref{thm:quantum}, if $\mathcal{N}(\{\sigma_m\};\sigma)$ is the counting function for the \emph{quasi-eigenvalues} $\sigma_m$, we have by \cite[Lemma 3.7.4]{BK13} that
\begin{equation}\label{eq:Weylawquasi}
\mathcal{N}(\{\sigma_m\};\sigma)=\frac{|\partial \mathcal{P}|}{\pi}\sigma+O(1).
\end{equation}
This can be easily combined with Theorem \ref{thm:main} to yield the following Weyl law, which was proved in \cite[Corollary 1.11]{sloshing} but only for straight polygons.
\begin{proposition} For \emph{any} curvilinear polygon $\mathcal P$ with angles less than $\pi$,
\begin{equation}
\label{eq:Weylaw}
\NP(\lambda)=\frac{|\partial \mathcal{P}|}{\pi}\lambda + O(1)\qquad \text{ as }\lambda\to+\infty.
\end{equation}
\end{proposition}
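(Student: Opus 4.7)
The plan is to combine two already-available ingredients: the quasi-eigenvalue Weyl law \eqref{eq:Weylawquasi} (which the authors deduce from Theorem \ref{thm:quantum} together with the standard count for quantum graph Laplacians, \cite[Lemma~3.7.4]{BK13}), and the close approximation $\lambda_m = \sigma_m + O(m^{-\varepsilon})$ provided by Theorem \ref{thm:main}. Both sequences $\{\lambda_m\}$ and $\{\sigma_m\}$ are understood with multiplicities and listed non-decreasingly, so their counting functions can be compared index-by-index.

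The argument I would use is a standard sandwich on counting functions. First, fix a constant $C > 0$ (depending only on $\mathcal P$) such that $|\lambda_m - \sigma_m| \le C$ for every $m \ge 1$. This follows from Theorem \ref{thm:main}, which gives $|\lambda_m - \sigma_m| \le C_0 \, m^{-\varepsilon}$ for all sufficiently large $m$; the finitely many remaining indices can be absorbed into $C$ using the boundedness of the first few terms of each sequence. Consequently, if $\lambda_m \le \lambda$ then $\sigma_m \le \lambda + C$, and if $\sigma_m \le \lambda - C$ then $\lambda_m \le \lambda$. Taking into account multiplicities in the same way for both sequences, this gives
\[
\mathcal{N}(\{\sigma_m\}; \lambda - C) \;\le\; \NP(\lambda) \;\le\; \mathcal{N}(\{\sigma_m\}; \lambda + C).
\]

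Substituting \eqref{eq:Weylawquasi} into both sides, each of $\mathcal{N}(\{\sigma_m\}; \lambda \pm C)$ equals $\tfrac{|\partial\mathcal{P}|}{\pi}\lambda + O(1)$, since shifting the argument by a bounded constant only alters the $O(1)$ remainder. This sandwich immediately yields $\NP(\lambda) = \tfrac{|\partial\mathcal{P}|}{\pi}\lambda + O(1)$.

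There is essentially no substantive obstacle at this stage: the heavy lifting is contained in Theorems \ref{thm:main} and \ref{thm:quantum}, and once those are available the Weyl law reduces to this monotone sandwich. The only minor point worth verifying is that the error constant from Theorem \ref{thm:main} is genuinely uniform in $m$, which will be clear from its proof in later sections; the restriction $\alpha_j < \pi$ is used implicitly through the hypotheses of Theorems \ref{thm:main} and \ref{thm:quantum}, but does not require additional input here.
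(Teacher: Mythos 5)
Your proof is correct and follows exactly the route the paper takes (which the paper leaves implicit, saying only that \eqref{eq:Weylawquasi} ``can be easily combined with Theorem \ref{thm:main}''); your monotone sandwich on counting functions is the natural way to make that combination explicit.
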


As a consequence, one expects (see \cite{Sa}) that
\begin{equation}\label{eq:riesz}
\Riesz(\lambda)=\frac{|\partial \mathcal{P}|}{2 \pi}\lambda^2 + c_1\lambda + o(\lambda) 
\end{equation}
for some constant coefficient $c_1$. 
\begin{theorem}
\label{thm:riesz}
Let  $\mathcal{P}$ be a  curvilinear polygon with $n$ sides of  lengths $\ell_1, \dots \ell_n$. Let $\tilde\varepsilon\in(0,\varepsilon_0)\cap\left(0,\frac{1}{2n+1}\right]$, with $\varepsilon_0$ as in Theorem \ref{thm:main}. Then the Riesz mean for the Steklov eigenvalues of $\mathcal{P}$ satisfies the asymptotics
\begin{equation}\label{eq:rieszcorr}
\Riesz(\lambda)=\frac{|\partial \mathcal{P}|}{2 \pi}\lambda^2 + O(\lambda^{1-\tilde\varepsilon})\quad\text{ as }\lambda\to+\infty.
\end{equation}
In particular, the formula  \eqref{eq:riesz} holds with  the coefficient $c_1=0$. 
\end{theorem}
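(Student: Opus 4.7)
The plan is to split the target error as
\begin{equation*}
\Riesz(\lambda)-\frac{|\partial\mathcal{P}|}{2\pi}\lambda^{2}
=
\bigl(\Riesz(\lambda)-\mathcal{R}(\{\sigma_m\};\lambda)\bigr)
+
\Bigl(\mathcal{R}(\{\sigma_m\};\lambda)-\frac{|\partial\mathcal{P}|}{2\pi}\lambda^{2}\Bigr)
\end{equation*}
and to bound each of the two bracketed terms by $O(\lambda^{1-\tilde\varepsilon})$ separately. Granting this, the assertion $c_1=0$ in \eqref{eq:riesz} is an automatic corollary, since $\lambda^{1-\tilde\varepsilon}=o(\lambda)$.

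For the first difference I would use the Lipschitz estimate $|(\lambda-s)_+-(\lambda-s')_+|\le|s-s'|$ combined with Theorem~\ref{thm:main}, which supplies $|\lambda_m-\sigma_m|\le C m^{-\tilde\varepsilon}$ since $\tilde\varepsilon<\varepsilon_0$. By the Weyl bounds \eqref{eq:Weylaw} and \eqref{eq:Weylawquasi}, only indices $m\le C\lambda$ contribute non-trivially (boundary indices, where exactly one of $\lambda_m,\sigma_m$ exceeds $\lambda$, still satisfy the same $m^{-\tilde\varepsilon}$ bound by the same Lipschitz estimate), so
\begin{equation*}
\bigl|\Riesz(\lambda)-\mathcal{R}(\{\sigma_m\};\lambda)\bigr|
\le\sum_{m=1}^{\lfloor C\lambda\rfloor} |\lambda_m-\sigma_m|
\le C\sum_{m=1}^{\lfloor C\lambda\rfloor} m^{-\tilde\varepsilon}
=O(\lambda^{1-\tilde\varepsilon}).
\end{equation*}

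For the second difference I would rewrite it as $\int_{0}^{\lambda}\bigl(\mathcal{N}(\{\sigma_m\};\sigma)-\tfrac{|\partial\mathcal{P}|}{\pi}\sigma\bigr)\,\dr\sigma$. The bare $O(1)$ bound on the integrand coming from \eqref{eq:Weylawquasi} would yield only $O(\lambda)$, so a finer analysis of the remainder $\psi(\sigma):=\mathcal{N}(\{\sigma_m\};\sigma)-\tfrac{|\partial\mathcal{P}|}{\pi}\sigma$ is required. Here I would exploit the explicit description of the quasi-eigenvalues from Theorem~\ref{thm:polygoneqn0} (equivalently Theorem~\ref{thm:quantum}), which realises $\{\sigma_m\}$ as the non-negative real zeros of the trigonometric polynomial $F^{\mathcal{P}}(\balpha,\bell,\sigma)$, a sum of at most $2^{n-1}$ cosines with frequencies $\bell\cdot\bzeta$, $\bzeta\in\pmset{n}$. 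Via an argument-principle representation of the zero-counting function of $F^{\mathcal{P}}$ (or equivalently a trace formula for the Dirac operator $\Dir$ from Proposition~\ref{prop:Dirac}), $\psi$ decomposes as a bounded almost-periodic combination of such oscillatory terms. Mollifying $\mathcal{N}$ at scale $\lambda^{-\alpha}$ and applying van-der-Corput-type integration by parts to each Fourier mode of $\psi$, then balancing the smoothing error $O(\lambda^{1-\alpha})$ against the oscillatory gain over the $O(2^n)$ modes, would yield $\int_{0}^{\lambda}\psi(\sigma)\,\dr\sigma=O(\lambda^{1-\tilde\varepsilon})$ for any $\tilde\varepsilon\le\tfrac{1}{2n+1}$.

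The main obstacle is this second step: extracting genuine oscillatory cancellation in the integrated Weyl remainder rather than merely controlling it term-by-term. The prescribed upper bound $\tilde\varepsilon\le\tfrac{1}{2n+1}$ on the admissible exponent reflects the unavoidable loss from having to control all $2^n$ frequencies of $F^{\mathcal{P}}$ simultaneously; a sharper exponent would presumably require a Diophantine hypothesis on the side lengths $\bell$, which the theorem does not presume.
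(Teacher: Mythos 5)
Your first reduction (splitting off $\Riesz(\lambda)-\Riesz^q(\lambda)$, where $\Riesz^q=\mathcal{R}(\{\sigma_m\};\lambda)$, and bounding it by $O(\lambda^{1-\tilde\varepsilon})$ via the Lipschitz property of $x\mapsto(\lambda-x)_+$, Theorem~\ref{thm:main}, and Weyl's law) is exactly what the paper does, and that part is fine. The problem is entirely in your second step.

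The paper does not estimate $\int_0^\lambda\bigl(\mathcal{N}(\{\sigma_m\};\sigma)-\tfrac{|\partial\mathcal{P}|}{\pi}\sigma\bigr)\,\dr\sigma$ by oscillatory cancellation. Instead it first treats the case of \emph{commensurable} side lengths, where the sequence $\{\sigma_m\}$ is exactly periodic, $\sigma_{m+M}=\sigma_m+T$, and moreover the quasi-eigenvalues are point-symmetric within each period (a consequence of the symmetry of the roots of $F^{\mathcal{P}}$ about $\sigma=0$ and its periodicity). This gives the closed-form identity $\Riesz^q(kT)=\tfrac{MT}{2}k^2$ with \emph{no} error term, hence $\Riesz^q(\lambda)=\tfrac{|\partial\mathcal{P}|}{2\pi}\lambda^2+O(T)$ for all $\lambda$. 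For arbitrary $\bell$ one then applies simultaneous Dirichlet approximation (via Minkowski) to replace $\bell$ by a rational vector $\bell'$ with common denominator $\zeta\in(d,4d)$ and componentwise error $<d^{-(n+1)/n}$, uses the side-length perturbation estimate Corollary~\ref{lemma:pertside} ($|\sigma_m-\sigma_m'|\le C\sigma_m\varepsilon$) to pass between the two Riesz means at cost $O(\lambda^2 d^{-(n+1)/n})$, and notes that the period of the rational problem is $T'=O(d)$. Optimizing $d=\lambda^{2n/(2n+1)}$ then yields $O(\lambda^{2n/(2n+1)})=O(\lambda^{1-\frac{1}{2n+1}})$.

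Your van-der-Corput route, as written, has a genuine gap. The claim that $\psi(\sigma)=\mathcal{N}(\{\sigma_m\};\sigma)-\tfrac{|\partial\mathcal{P}|}{\pi}\sigma$ ``decomposes as a bounded almost-periodic combination of'' the $2^{n-1}$ frequencies $\bell\cdot\bzeta$ is not correct: the quantum-graph trace formula for the Dirac operator $\Dir$ expresses the smoothed counting function as a sum over \emph{all closed orbits} on the metric graph, whose lengths are nonnegative integer combinations $\sum c_j\ell_j$ and whose number up to length $R$ grows polynomially in $R$. The sum is only conditionally convergent and must be truncated at a $\lambda$-dependent scale, and to bound the resulting tail you need exactly the sort of quantitative control over rational approximations to $\bell$ that the paper obtains from Dirichlet's theorem. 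Your proposal silently assumes the $O(2^n)$-frequency structure, which circumvents precisely the hard part, and the diagnosis of where $\tfrac{1}{2n+1}$ comes from (``controlling all $2^n$ frequencies'') is not accurate — in the paper it is the classical exponent $\tfrac{n+1}{n}$ of $n$-dimensional simultaneous Dirichlet approximation, balanced against a linear-in-$d$ period, that produces $\lambda^{2n/(2n+1)}$.

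In short: your step one is correct and matches the paper; your step two is a different strategy with a real hole (the ``finitely many Fourier modes'' decomposition does not exist as stated), and the correct route in the paper is periodicity in the rational case combined with Dirichlet approximation and the quasi-eigenvalue continuity estimate of Corollary~\ref{lemma:pertside}.
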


Theorem \ref{thm:riesz} immediately implies
\begin{corollary}\label{cor:heat} The Steklov heat trace on a curvilinear polygon $\mathcal{P}$ satisfies an asymptotic formula
\begin{equation}
\label{heatrace}
\sum_{k=1}^\infty \er^{-t\lambda_k}=\frac{|\partial \mathcal{P}|}{\pi\,t}+O(t^{\tilde\varepsilon})\,\,\, \text{as } \,\, t\to 0^+.
\end{equation}
\end{corollary}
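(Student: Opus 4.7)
The plan is to extract the heat trace asymptotic from the Riesz mean asymptotic of Theorem \ref{thm:riesz} by a standard Abelian-type integral transform. The crux is the elementary identity
\[
\sum_{k=1}^\infty e^{-t\lambda_k} = t^2 \int_0^\infty \mathcal{R}_\mathcal{P}(\lambda)\,e^{-t\lambda}\,d\lambda,\qquad t>0,
\]
which follows from $e^{-t\mu} = t^2\int_\mu^\infty (\lambda-\mu)e^{-t\lambda}\,d\lambda$ (verifiable by the substitution $u = t(\lambda-\mu)$) applied term by term, with sum and integral exchanged via monotone convergence. This identity converts asymptotics of $\mathcal{R}_\mathcal{P}$ at $+\infty$ into asymptotics of the heat trace at $0^+$.

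I would then substitute $\mathcal{R}_\mathcal{P}(\lambda) = \frac{|\partial\mathcal{P}|}{2\pi}\lambda^2 + E(\lambda)$ with $|E(\lambda)| = O(\lambda^{1-\tilde\varepsilon})$ as $\lambda\to\infty$ provided by Theorem \ref{thm:riesz}. Since both $\mathcal{R}_\mathcal{P}(\lambda)$ and $\lambda^2$ are bounded on any compact subset of $[0,\infty)$, the error satisfies the uniform bound $|E(\lambda)| \leq C(1+\lambda^{1-\tilde\varepsilon})$ on all of $[0,\infty)$. The main term reproduces the leading heat trace behaviour via
\[
t^2\cdot\frac{|\partial\mathcal{P}|}{2\pi}\int_0^\infty \lambda^2 e^{-t\lambda}\,d\lambda = t^2\cdot\frac{|\partial\mathcal{P}|}{2\pi}\cdot\frac{2}{t^3} = \frac{|\partial\mathcal{P}|}{\pi\,t},
\]
matching the claim, while the error contributes
\[
t^2\int_0^\infty(1+\lambda^{1-\tilde\varepsilon})e^{-t\lambda}\,d\lambda = O(t) + \Gamma(2-\tilde\varepsilon)\,t^{\tilde\varepsilon} = O(t^{\tilde\varepsilon})
\]
as $t\to 0^+$, using $\tilde\varepsilon \leq \frac{1}{2n+1} < 1$ to absorb the $O(t)$ term into $O(t^{\tilde\varepsilon})$.

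This is essentially a textbook Abelian derivation, with all the hard spectral work already contained in Theorem \ref{thm:riesz}; the only mildly technical point is propagating the Riesz mean error bound uniformly down to $\lambda = 0$, which is immediate from boundedness of both $\mathcal{R}_\mathcal{P}$ and the leading polynomial on compact sets. I therefore do not foresee any substantial obstacle, and the entire argument should fit in a handful of lines.
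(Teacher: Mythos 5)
Your proposal is correct and is exactly the approach the paper takes: the paper likewise invokes the Laplace-transform identity $\sum_k e^{-t\lambda_k}=t^2\int_0^\infty\mathcal R_{\mathcal P}(\lambda)e^{-t\lambda}\,d\lambda$ and cites Theorem \ref{thm:riesz}, leaving the rest as a "direct computation." You have simply filled in those computations carefully and correctly.
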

Indeed, this follows by a direct computation from a well-known relation between the heat trace and the Riesz mean.
\[
\sum_{k=1}^\infty \er^{-t\lambda_k}=t^2\int_0^\infty \mathcal{R}(\lambda_k; z)\,e^{-zt}\,\dr z.
\]

\begin{remark}
 It would be interesting to establish the existence of a complete asymptotic expansion for the Steklov heat trace
on a curvilinear polygon, similarly to the smooth case, see \cite[formula (1.2.2)]{PS}. Formula \eqref{heatrace} implies that the first heat invariant is zero, since the constant term on the right-hand side of \eqref{heatrace} vanishes. Note that the same result holds for smooth planar domains, see \cite[Remark 1.4.5]{PS}). The fact that the constant term in the Steklov heat trace is the same for polygons and for smooth domains is somewhat surprising, as it is not the case for the heat invariants arising from the boundary value problems for the Laplacian, see \cite{MR, NRS}.
\end{remark}
\begin{remark}
In view of Theorem \ref{thm:quantum}, one could also deduce the expansion \eqref{heatrace} from the heat asymptotics for the eigenvalues of a quantum graph \cite{Rueck}
using the standard results relating the heat traces of an operator and of its power via the zeta function (see \cite{Gil, Gru}). 
\end{remark}

\subsection{Zigzags} 
The notation and results of this section may seem rather esoteric. Although they are auxiliary, they are absolutely essential for proving the main Theorems of the paper.

\begin{definition}\label{def:zigzag} 
Let $n\in\mathbb{N}$, $\bell=(\ell_1,\dots,\ell_n)\in\mathbb{R}_+^n$, and $\balpha=(\alpha_1,\dots,\alpha_{n-1})\in\Pi^{n-1}$. 
A curvilinear $n$ piece \emph{zigzag} $\mathcal{Z}=\mathcal{Z}(\balpha,\bell)$ is a piecewise smooth continuous non-self-intersecting curve in $\mathbb{R}^2$ with  vertices $V_0$,\dots,$V_n$ and  smooth arcs $I_j$ of length $\ell_j$ joining $V_{j-1}$ and $V_j$, $j=1,\dots,n$. The arcs $I_j$ and $I_{j+1}$ meet at $V_j$ at an angle $\alpha_j$ (measured  from $I_j$ to $I_{j+1}$ counterclockwise), $j=1,\dots,n-1$, see Figure \ref{fig:zigzag}. The vertices $V_0$ and $V_n$ will be called the \emph{start} and \emph{end points} of $\mathcal{Z}$, respectively (or just endpoints if we do not need to distinguish them).

\begin{figure}[htb]
\begin{center}
\includegraphics{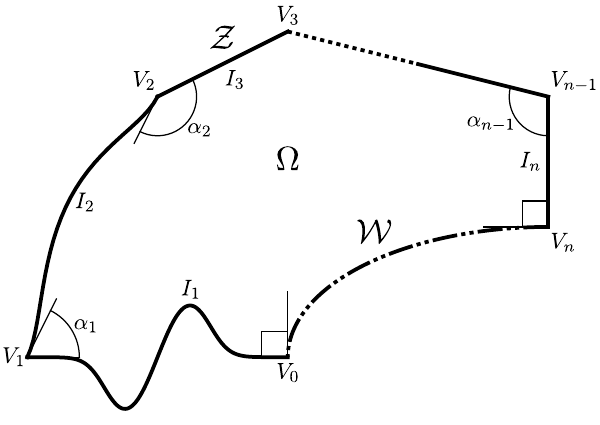}
\end{center}
\caption{A zigzag $\mathcal{Z}$ and a zigzag domain $\Omega$\label{fig:zigzag}}
\end{figure}

We will call a zigzag \emph{straight} if its arcs $I_1,\dots,I_n$, are straight-line intervals, and \emph{partially curvilinear} if the arcs  are straight in a neighbourhood of each vertex. 

We will call a zigzag \emph{non-exceptional} or \emph{exceptional} if $\balpha\in(\Pi\setminus\Eangles)^{n-1}$ or if there exists $\alpha_j\in\Eangles$, respectively.
\end{definition} 

\begin{definition}\label{def:zigzagdomain} Let $\mathcal{Z}$ be a zigzag. A \emph{$\mathcal Z$-zigzag domain} $\Omega\subset\mathbb{R}^2$ (or just a \emph{zigzag domain}) is an open bounded simply connected set whose boundary $\partial\Omega=\mathcal{Z}\cup\mathcal{W}$, where a piecewise smooth non-self-ntersecting curve $\mathcal{W}$ meets $\mathcal{Z}$ only at the start and end points  of $\mathcal{Z}$ forming interior angles $\frac{\pi}{2}$.
\end{definition}

Let $\Omega$ be a zigzag domain with boundary $\partial\Omega=\mathcal{Z}\cup\mathcal{W}$. We consider in $\Omega$ generalised mixed Dirichlet-Neumann-sloshing  eigenvalue problems of the type
\begin{equation}\label{eq:gensloshing}
\Delta u =0\quad \text{in }\Omega,\qquad \frac{\partial u}{\partial n}=\lambda u\quad \text{on }\mathcal{Z},\qquad u\frac{\partial u}{\partial n}=0\quad \text{on }\mathcal{W},
\end{equation}
The last condition is understood in the following sense: we represent $\mathcal{W}$ as a closure of a finite union of non-intersecting open arcs, and impose either Dirichlet or Neumann condition on each arc.  We will write 
\begin{equation}\label{eq:gensloshingDN}
\mathcal{D}_{\Omega,\mathcal{Z}}: u |_\mathcal{Z} \left.\mapsto \frac{\partial u}{\partial n}\right|_\mathcal{Z}\quad\text{subject to }\Delta u=0\text{ in }\Omega,\ u\frac{\partial u}{\partial n}=0 \text{ on }\mathcal{W}
\end{equation}
for the corresponding (partial) Dirichlet-to-Neumann map on $\mathcal{Z}$.

Each such generalised mixed Dirichlet-Neumann-sloshing problem has a discrete spectrum of eigenvalues $\lambda_1<\lambda_2\le \dots$ accumulating to $+\infty$.

We will term \eqref{eq:gensloshing} a \emph{Dirichlet--Dirichlet zigzag problem} (or \emph{$DD$-zigzag} for short) and refer to it as \eqref{eq:gensloshing}$_{DD}$ if the Dirichlet boundary condition is imposed on $\mathcal{W}$ in neighbourhoods of both start and end points of $\mathcal{Z}$, independently of the boundary conditions on the rest of $\mathcal{W}$. Similarly, we will term  \eqref{eq:gensloshing} a \emph{Neumann--Dirichlet zigzag problem} (or \emph{$ND$-zigzag} for short)  and refer to it as \eqref{eq:gensloshing}$_{ND}$ if the Neumann boundary condition is imposed on $\mathcal{W}$ in a neighbourhood of the start point of $\mathcal{Z}$, and the Dirichlet boundary condition in a neighbourhood of the end point of $\mathcal{Z}$. The \emph{$DN$-zigzags} and \emph{$NN$-zigzags} are defined analogously. In general, we will write $\aleph\beth$-zigzag, or $\mathcal{Z}^{(\aleph\beth)}$, with $\aleph, \beth\in\{D,N\}$, and refer to \eqref{eq:gensloshing}
as \eqref{eq:gensloshing}$_{\aleph\beth}$ to indicate the boundary conditions imposed on $\mathcal{W}$ near the start and end point of $\mathcal{Z}^{(\aleph\beth)}$.

Define the vectors 
\begin{equation}\label{eq:ND}
\mathbf{N}:=\begin{pmatrix}1\\1\end{pmatrix},\qquad \mathbf{D}:=\begin{pmatrix}\ir\\-\ir\end{pmatrix}.
\end{equation}

Note that the vectors $\mathbf{N}, \mathbf{D}$ are orthogonal, and we will set $\mathbf{N}^\perp:=\mathbf{D}$ and $\mathbf{D}^\perp:=\mathbf{N}$. We will write $\baleph, \bbeth$ to indicate any of the vectors $\mathbf{N}, \mathbf{D}$.

\begin{definition}\label{def:quasizigzag} 
Let $\mathcal{Z}=\mathcal{Z}(\balpha,\bell)$ be a non-exceptional zigzag. Let $\aleph, \beth\in\{D,N\}$. A real number $\sigma$ is called a \emph{quasi-eigenvalue of the $\aleph\beth$-zigzag} $\mathcal{Z}$ if $\sigma$ is a solution of the equation
\begin{equation}\label{eq:quasizigzag}
\mathtt{U}\left(\balpha, \bell,\sigma\right) \baleph \cdot \bbeth^\perp=0,
\end{equation}
where $\mathtt{U}$ is defined in \eqref{eq:Udef}.
\end{definition}

\begin{remark}\label{rem:parallel} Condition \eqref{eq:quasizigzag} can be equivalently restated as
\begin{equation}\label{eq:quasizigzagparallel}
\mathtt{U}\left(\balpha, \bell,\sigma\right) \baleph \text{ is proportional to } \bbeth,
\end{equation}
cf. Remark \ref{rem:quasiexparallel}.
\end{remark}

The enumeration of zigzag quasi-eigenvalues is much more delicate than in the Steklov problem, but with an appropriate choice of the so-called \emph{natural enumeration}, see Section \ref{sec:completeness}, we have
\begin{theorem}\label{thm:quasizigzags} Let $\mathcal{Z}$ be a partially curvilinear zigzag  with all non-exceptional angles $\alpha_1$,\dots,$\alpha_{n-1}$, and let $\Omega$ be any $\mathcal{Z}$-zigzag domain. For $\aleph, \beth\in\{D,N\}$, let $\lambda_m^{(\aleph\beth)}$ denote the eigenvalues of \eqref{eq:gensloshing}$_{\aleph\beth}$ enumerated in increasing order with account of multiplicities, and let $\sigma_m^{(\aleph\beth)}$ denote the quasi-eigenvalues of the $\aleph\beth$-zigzag $\mathcal{Z}$ in the natural enumeration. Then
\[
\lambda_m^{(\aleph\beth)}=\sigma_m^{(\aleph\beth)}+o(1)\qquad\text{as }m\to\infty.
\]
\end{theorem}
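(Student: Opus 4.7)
The plan is to follow the three-stage strategy indicated in the introduction and fully developed in Section \ref{sec:completeness}: first, construct quasimodes associated to each quasi-eigenvalue; second, use the spectral lemma to obtain an actual eigenvalue near each large quasi-eigenvalue; third (and hardest), establish that the indices in the natural enumeration coincide up to $o(1)$. The first two stages essentially mirror the corresponding steps in Section \ref{sec:quasieigenvalues} for polygons; the third is the main obstacle and is precisely what forces the introduction of the universal-cover machinery of subsection \ref{subsec:rep} and the step-by-step comparison with the sloshing problem of \cite{sloshing}.

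For the quasimode construction, given a quasi-eigenvalue $\sigma$ satisfying \eqref{eq:quasizigzagparallel}, I would set $\bafrak^{(0)} := \baleph$ and propagate $\bafrak^{(j)} := \mathtt{A}(\alpha_j)\mathtt{B}(\ell_j,\sigma)\bafrak^{(j-1)}$ for $j=1,\dots,n-1$, so that $\mathtt{B}(\ell_n,\sigma)\bafrak^{(n-1)} = \mathtt{U}(\balpha,\bell,\sigma)\baleph$ is proportional to $\bbeth$ exactly when \eqref{eq:quasizigzag} holds. On each arc $I_j$ of the partially curvilinear zigzag, I would build an approximate eigenfunction of the mixed Dirichlet-to-Neumann map $\mathcal{D}_{\Omega,\mathcal{Z}}$ as a suitably cut-off linear combination, with coefficients determined by $\bafrak^{(j-1)}$ and $\bafrak^{(j)}$, of the scattering Peters solutions of Theorem \ref{thm:mainthmssector}, localized near each endpoint of the arc. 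The defining property of the vertex transfer matrix $\mathtt{A}(\alpha_j)$ guarantees that the Peters solutions from adjacent arcs agree to leading order at each interior vertex $V_j$, while at the endpoints $V_0$ and $V_n$ the right-angle encounter with $\mathcal{W}$ together with the Dirichlet/Neumann prescription on $\mathcal{W}$ forces $\bafrak^{(0)}$ and $\mathtt{U}\bafrak^{(0)}$ to be proportional to $\baleph$ and $\bbeth$, respectively, by the same arguments as in the sloshing construction of \cite{sloshing}. Because the Peters solutions decay away from their corner and the boundary is straight in a neighbourhood of every vertex, the defect $(\mathcal{D}_{\Omega,\mathcal{Z}} - \sigma)u_\sigma$ is $o(1)$ in $L^2(\mathcal{Z})$ as $\sigma \to \infty$, and the standard spectral lemma produces an eigenvalue $\lambda^{(\aleph\beth)}$ within $o(1)$. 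A Bary-Krein-type orthogonality argument, as in \cite[Lemma 4.8]{sloshing}, ensures that distinct quasi-eigenvalues with nearly orthogonal quasimodes yield distinct eigenvalues.

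The \emph{correct enumeration} is the main difficulty. To define the natural enumeration I would lift the map $\sigma \mapsto \mathtt{U}(\balpha,\bell,\sigma)\baleph$ continuously as a $\UC$-valued function starting from $\sigma=0$, and fix a lift of $\bbeth$; the quasi-eigenvalues $\sigma_m^{(\aleph\beth)}$ are then the values of $\sigma \geq 0$ at which the difference of arguments is an integer multiple of $\pi$, enumerated in the order produced by this continuous lifting. Because each side-transfer factor $\mathtt{B}(\ell_j,\sigma)$ contributes an argument increment of $\ell_j\sigma$ and each vertex factor $\mathtt{A}(\alpha_j)$ contributes a bounded fluctuation (controlled by Remark \ref{rem:Aproperties}), a standard winding-number count gives $\#\{\sigma_m^{(\aleph\beth)} \leq \sigma\} = |\partial\mathcal{Z}|\sigma/\pi + O(1)$, matching the Weyl law for the zigzag problem \eqref{eq:gensloshing}$_{\aleph\beth}$. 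To upgrade this density agreement to an index-by-index matching I would proceed by induction on the number of arcs $n$: the base case $n=1$ is exactly the sloping-beach problem of \cite{sloshing}, where both enumerations are already known to coincide; in the inductive step, the insertion of one extra interior vertex is handled by a one-parameter family argument that tracks the joint evolution of $\lambda_m^{(\aleph\beth)}$ and of the lifted argument through $\UC$, using Dirichlet-Neumann bracketing on auxiliary subdomains to rule out crossings or index-shifts. The hard part is exactly this last continuity argument: keeping the natural enumeration of quasi-eigenvalues and the increasing enumeration of eigenvalues synchronized throughout the deformation requires the universal-cover viewpoint, since naive modular-$\pi$ counting does not see the jumps caused by coincidences in the spectra.
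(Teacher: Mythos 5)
Your three-stage scaffolding (quasimodes via scattering Peters solutions $\to$ existence of nearby eigenvalues $\to$ matching of enumerations) is the same as the paper's, and the first two stages are essentially correct. The gap is in the third stage, which you rightly flag as the hardest, but where the argument you sketch is not the one that works.

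First, the paper does not induct on the number of arcs $n$. Its decomposition (Figure~\ref{fig:zigzagdecomposition}) is into \emph{two kinds of building blocks}: partially curvilinear \emph{one-piece} zigzags, and \emph{straight two-piece zigzags with equal sides} (one surrounding each interior vertex). The two-piece symmetric case is not subsumed in your $n=1$ base case, and it is genuinely harder: for $NN$ and $DD$ boundary conditions it is handled by symmetrization across the bisector and comparison with the sloshing problem of \cite{sloshing}, while for $ND$/$DN$ it requires a separate \emph{isospectrality via transplantation} argument (Lemma~\ref{lemma:transpl}) converting the problem to one on an isosceles triangle. Nothing in an arc-by-arc induction starting from a single sloping-beach arc gives you this; the mixed $ND$ two-piece case is the concrete obstruction your plan would hit.

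Second, your gluing step — ``a one-parameter family argument that tracks the joint evolution of $\lambda_m^{(\aleph\beth)}$ and of the lifted argument through $\UC$'' — is a deformation argument that the paper does not use, and it is unclear how to make it rigorous: small deformations of the zigzag do not obviously keep the eigenvalues $\lambda_m$ and the lifted arguments synchronized index-by-index, which is precisely the thing you are trying to prove. What the paper does instead, for a \emph{fixed} zigzag split at a boundary point $W$ into $\mathcal{Z}_{\mathrm I}$ and $\mathcal{Z}_{\mathrm{II}}$, is to compare the quasi-eigenvalue counting function of $\mathcal{Z}$ with the sum of those of the pieces via the multiplicativity identity $\varphi_{\mathcal{Z}_{\mathrm I}}+\tilde\varphi_{\mathcal{Z}_{\mathrm{II}}}$ (equation~\eqref{eq:NZassum}), and then shows these agree on a sequence of intervals of length bounded below going off to infinity (Lemma~\ref{lemma2key}). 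The key ingredient there is a purely real-analytic/arithmetic fact about integer parts of monotone functions with derivatives bounded above and below (Proposition~\ref{prop:integparts}), which makes the Dirichlet–Neumann bracketing pinch the counting function on those intervals. Your winding-number/Weyl-law observation recovers the $O(1)$ agreement of counting functions but not the index-by-index matching, and the continuity argument you propose as the bridge does not close the gap. You need something like the integer-part lemma as a concrete mechanism, not a homotopy.
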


\begin{remark}\label{rem:posnonpos} There is a distinction between the quasi-eigenvalue Definitions \ref{def:quasi} and \ref{def:quasiexc} for polygons and Definition \ref{def:quasizigzag} for zigzag domains --- the former include only \emph{non-negative} quasi-eigenvalues, whereas the latter allow for all the \emph{real} ones, cf.\ also Remark \ref{rem:symmetric}. This is not an oversight but a deliberate choice, although a forced one. The reason for that is that the natural enumeration for zigzag domains mentioned above sometimes takes into account \emph{some} negative quasi-eigenvalues.
\end{remark}

An analog of Theorem \ref{thm:quasizigzags} exists for exceptional zigzags, but we postpone the statement until Section \ref{sec:completeness}.

There is also a quantum graph analogy of Proposition \ref{prop:Dirac}  for an $\aleph\beth$-zigzag problem. Let us associate with a non-exceptional zigzag $\mathcal{Z}(\balpha,\bell)$ a path $\mathcal{L}$ joining the vertex $V_0$ to the vertex $V_n$  through $V_1,\dots,V_{n-1}$, see Figure \ref{fig:graphL}. The length of each edge $I_j$ joining $V_{j-1}$ to $V_j$, $j=1,\dots,n$, is taken to be $\ell_j$, and let $s$ be  the coordinate on $\mathcal{L}$.

\begin{figure}[htb]
\begin{center}
\includegraphics{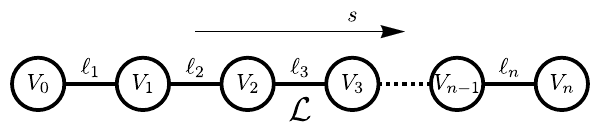}
\end{center}
\caption{A path $\mathcal{L}$ \label{fig:graphL}}
\end{figure} 

Consider the Dirac operator \eqref{eq:defofDirac} on $\mathcal{L}$ acting on vector functions $\mathbf{f}(s)$ with the matching conditions \eqref{matchVj} at internal vertices $V_1,\dots,V_{n-1}$ and with the boundary conditions 
\begin{equation}\label{eq:Diracalephbeth}
\mathbf{f}|_{V_0+}\cdot\aleph^\perp=\mathbf{f}|_{V_n-}\cdot\beth^\perp=0.
\end{equation}

We have the following 
\begin{proposition}\label{prop:DiracL} The operator $\Dir$ on the path $\mathcal{L}$, with the domain consisting of vector-functions $\mathbf{f}(s)$ such that their restrictions to the edge $I_j$ are in $(H^1(I_j))^2$  and they satisfy the matching and boundary conditions above, is self-adjoint in $(L^2(\mathcal{L}))^2$.  Moreover, with multiplicity, its eigenvalues are the real solutions of equation \eqref{eq:quasizigzag}.
\end{proposition}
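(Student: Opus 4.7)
The result is the path-graph analogue of Proposition~\ref{prop:Dirac} in the cyclic setting; my plan is to adapt that proof to the path $\mathcal{L}$, replacing the single cyclic identification by the boundary conditions \eqref{eq:Diracalephbeth} at the two endpoints.

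For self-adjointness I would invoke the Lagrange identity. Integration by parts on each edge yields
\[
\langle\Dir\mathbf{f},\mathbf{g}\rangle-\langle\mathbf{f},\Dir\mathbf{g}\rangle
\;=\; \text{boundary terms involving } Q(\mathbf{u},\mathbf{v}):=u_1\bar v_1-u_2\bar v_2,
\]
where at each internal vertex $V_j$ the contribution equals $i\bigl(Q(\mathbf{f},\mathbf{g})|_{V_j+0}-Q(\mathbf{f},\mathbf{g})|_{V_j-0}\bigr)$. A direct calculation using $a_1(\alpha)^2-a_2(\alpha)^2=\csc^2\mu_\alpha-\cot^2\mu_\alpha=1$ shows $\mathtt{A}(\alpha)^*\operatorname{diag}(1,-1)\mathtt{A}(\alpha)=\operatorname{diag}(1,-1)$, so the matching condition \eqref{matchVj} makes $Q(\mathbf{f},\mathbf{g})$ continuous across $V_j$ and the internal contributions cancel. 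At the endpoint $V_0$ the boundary condition restricts $\mathbf{f}|_{V_0+0}$ to the line $\mathbb{C}\aleph$ (since $\aleph\cdot\aleph^\perp=0$), and a direct check in both cases $\aleph=\mathbf{N}$ and $\aleph=\mathbf{D}$ shows that $Q$ vanishes identically on $\mathbb{C}\aleph\times\mathbb{C}\aleph$; likewise at $V_n$. A dimension count—$2(n-1)$ matching conditions plus $2$ endpoint conditions, totalling $2n$—matches the deficiency of the minimal Dirac operator on the disjoint union of $n$ intervals, so the chosen extension is maximal and therefore self-adjoint.

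To characterize the eigenvalues I would solve $\Dir\mathbf{f}=\sigma\mathbf{f}$ explicitly on each edge: in arc-length coordinate $s$ on $I_j$ measured from $V_{j-1}$ one has $f_1=c_1^{(j)}e^{i\sigma s}$, $f_2=c_2^{(j)}e^{-i\sigma s}$, so $\mathbf{f}|_{V_j-0}=\mathtt{B}(\ell_j,\sigma)\mathbf{f}|_{V_{j-1}+0}$. Applying \eqref{matchVj} at every internal vertex and iterating from $V_0$ to $V_n$ gives
\[
\mathbf{f}|_{V_n-0}=\mathtt{U}(\balpha,\bell,\sigma)\,\mathbf{f}|_{V_0+0},
\]
with $\mathtt{U}$ as in \eqref{eq:Udef}. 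The left boundary condition forces $\mathbf{f}|_{V_0+0}\in\mathbb{C}\aleph$, whereupon the right boundary condition $\mathbf{f}|_{V_n-0}\cdot\beth^\perp=0$ becomes precisely \eqref{eq:quasizigzag}. Conversely, every real $\sigma$ satisfying \eqref{eq:quasizigzag} yields a nonzero eigenfunction via these formulas.

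The main subtlety is the multiplicity match. Since $\mathtt{U}(\balpha,\bell,\sigma)$ is a product of matrices of determinant $\pm 1$, it is invertible for every real $\sigma$; together with the one-dimensionality of $\mathbb{C}\aleph$ this forces $\ker(\Dir-\sigma)$ to be one-dimensional at every eigenvalue. Self-adjointness then forces $\sigma$ to be a simple root of \eqref{eq:quasizigzag}, since otherwise generalised eigenvectors would appear, contradicting the coincidence of algebraic and geometric multiplicities; this can be verified directly by differentiating \eqref{eq:quasizigzag} in $\sigma$ and using invertibility of $\mathtt{U}$. In particular, the eigenvalues of $\Dir$ counted with multiplicity agree with the real solutions of \eqref{eq:quasizigzag} counted with their (necessarily trivial) root multiplicity.
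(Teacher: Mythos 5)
Your plan — adapting the proof of Proposition~\ref{prop:Dirac} by replacing the cyclic identification with the two endpoint conditions \eqref{eq:Diracalephbeth} — is exactly what the paper has in mind: the paper states the proof is ``almost identical'' to that of Proposition~\ref{prop:Dirac} and omits it. The symmetry argument via the sesquilinear boundary form $Q(\mathbf{u},\mathbf{v})=u_1\bar v_1-u_2\bar v_2$, the invariance identity $\mathtt{A}(\alpha)^*\operatorname{diag}(1,-1)\mathtt{A}(\alpha)=\operatorname{diag}(1,-1)$ (which is the paper's $\mathtt{A}(\alpha_j)\mathtt{D}\mathtt{A}(\alpha_j)=\mathtt{D}$ since $\mathtt{A}^*=\mathtt{A}$), the observation that $Q$ vanishes on $\mathbb{C}\baleph\times\mathbb{C}\baleph$, the explicit edge-by-edge solution $\mathbf{f}|_{I_j}=(c_1e^{\ir\sigma s},c_2 e^{-\ir\sigma s})$, and the propagation $\mathbf{f}|_{V_n-0}=\mathtt{U}(\balpha,\bell,\sigma)\,\mathbf{f}|_{V_0+0}$ leading to~\eqref{eq:quasizigzag} are all correct and match the paper's strategy.

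The one flaw is in your multiplicity paragraph. The assertion that ``self-adjointness then forces $\sigma$ to be a simple root of \eqref{eq:quasizigzag}, since otherwise generalised eigenvectors would appear, contradicting the coincidence of algebraic and geometric multiplicities'' is not a valid inference: the equality of algebraic and geometric multiplicity for a self-adjoint operator is a statement about the operator's spectral decomposition, and says nothing a priori about the order of vanishing of an auxiliary scalar function $\sigma\mapsto\mathtt{U}(\balpha,\bell,\sigma)\baleph\cdot\bbeth^\perp$. Nor does invertibility of $\mathtt{U}$ alone give a nonvanishing $\sigma$-derivative at a root. A correct elementary argument is the standard Lagrange-identity one: let $\mathbf{f}(\sigma,\cdot)$ solve $\Dir\mathbf{f}=\sigma\mathbf{f}$ on each edge with the matching conditions and $\mathbf{f}(\sigma,V_0+0)=\baleph$, and at a root $\sigma_0$ set $\mathbf{g}=\partial_\sigma\mathbf{f}|_{\sigma_0}$; then $(\Dir-\sigma_0)\mathbf{g}=\mathbf{f}(\sigma_0,\cdot)$ while $\mathbf{g}(V_0+0)=0$, and integrating by parts over all edges gives
\[
\|\mathbf{f}(\sigma_0,\cdot)\|^2=-\ir\,Q(\mathbf{g},\mathbf{f})\big|_{V_n-0},
\]
and since $\mathbf f|_{V_n-0}=c\,\bbeth$ with $c\neq 0$ and $\operatorname{diag}(1,-1)\,\bbeth=\pm\ir\,\bbeth^\perp$, the right-hand side is a nonzero scalar multiple of $\frac{\dr}{\dr\sigma}\bigl(\mathtt{U}(\balpha,\bell,\sigma)\baleph\cdot\bbeth^\perp\bigr)\big|_{\sigma_0}$, which therefore cannot vanish. (Alternatively one may invoke the monotonicity of $\arg(\hat{\mathtt{U}}\hat{\baleph})$, Lemma~\ref{lem:phi}, once it becomes available.) With this replacement your proof is complete.
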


The proof of Proposition \ref{prop:DiracL}  is almost identical to that of Proposition \ref{prop:Dirac} and is omitted.
\clearpage\section{Auxiliary problems in a sector. Peters solutions}\label{sec:sector}
\subsection{Plane wave solutions in a sector}
Let $(x,y)$ be Cartesian coordinates in $\mathbb{R}^2$, let $z=x+\ir y\in\mathbb C$, and let $(\rho,\theta)$ denote polar coordinates so that $z=\rho\er^{\ir\theta}$. Consider the sector $\Sct{\alpha}=\{-\alpha<\theta<0\}$, where $0<\alpha\le\pi$, and denote its boundary components by $I_{\inn}=\{\theta=-\alpha\}$ and $I_{\out}=\{\theta=0\}$. Let $I=\{\theta=-\alpha/2\}$ denote its bisector. Let us additionally introduce the natural coordinate $s$ on $I_{\inn}\cup I_{\out}$ so that $s$ is zero at the vertex, negative on $I_{\inn}$ and positive on $I_{\out}$, see Figure \ref{fig:sector}.

\begin{figure}[htb]
\begin{center}
\includegraphics{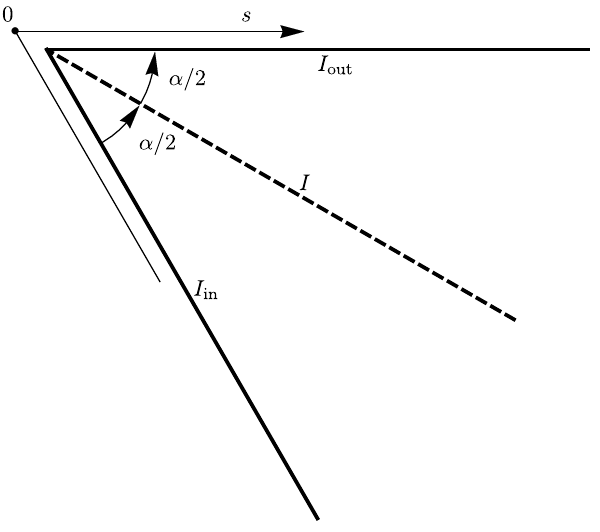}
\end{center}
\caption{Sectors $\Sct{\alpha}$ and $\Sct{\alpha/2}$.\label{fig:sector}}
\end{figure}

Let,  for $t\in\mathbb{R}$,
\begin{equation}\label{eq:edef}
\mathbf{e}(t):=\begin{pmatrix} \er^{-\ir t}\\\er^{\ir t}\end{pmatrix}.
\end{equation}
For any fixed vector $\mathbf h=\begin{pmatrix}h_1\\h_2\end{pmatrix}\in\mathbb C^2$, define the harmonic plane waves (which we will call  \emph{outgoing and incoming} plane waves)
\[
W^{\mathbf h}_{\out,\alpha}(z):=\er^y(\mathbf h\cdot \mathbf{e}(x))=\er^y\left(h_1\er^{\ir x}+h_2\er^{-\ir x}\right),\qquad W^{\mathbf h}_{\inn,\alpha}(z):=W_{\out,\alpha}^{\mathbf{h}'}(\mathcal M_{\alpha}(z)),
\]
where ${\mathbf h}'=\begin{pmatrix}h_2\\h_1\end{pmatrix}$ and $\mathcal M_{\alpha}:(\rho,\theta)\mapsto (\rho, \alpha-\theta)$ is the operator of reflection across the bisector $I$.  It is important to observe that
\begin{equation}\label{eq:Wboundary}
\left.W^{\mathbf h}_{\out,\alpha}(z)\right|_{I_{\out}}=\mathbf h\cdot\mathbf{e}(s),\qquad \left.W^{\mathbf h}_{\inn,\alpha}(z)\right|_{I_{\inn}}=\mathbf h\cdot\mathbf e(s),
\end{equation}
and that $W^{\mathbf h}_{\out,\alpha}(z)$ and $W^{\mathbf h}_{\inn,\alpha}(z)$ are bounded inside the sector.

Consider the Robin boundary value problem 
\begin{equation}\label{eq:Robin1}
\Delta \Phi = 0\quad\text{ in }\Sct{\alpha},\qquad \frac{\partial\Phi}{\partial n}=\Phi\quad\text{ on }\partial\Sct{\alpha}
\end{equation}
in the sector $\Sct{\alpha}$, cf.\ \cite{Kh,KOBP}. We are interested in solutions of \eqref{eq:Robin1} which approximately behave as a combination of an incoming and an outgoing plane wave,
that is, as 
\begin{equation}\label{eq:definitionofphi}
\Phi(z)= \Phi_{\alpha}^{(\mathbf h_{\inn},\mathbf h_{\out})}(z):=W^{\mathbf h_{\out}}_{\out,\alpha}(z)+W^{\mathbf h_{\inn}}_{\inn,\alpha}(z)+R^{\mathbf h_{\inn},\mathbf h_{\out}}_{\alpha}(z),
\end{equation}
with some vectors $\mathbf h_{\inn}$ and $\mathbf h_{\out}\in\mathbb{C}^2$, where the remainder $R=R^{\mathbf{h}_{\inn},\mathbf{h}_{\out}}_{\alpha}(z)$ is decreasing, together with its gradient, away from the corner, in the sense that 
\begin{equation}\label{eq:universalremainder}
|R(z)|+\|\rho\nabla_{(x,y)}R(z)\|\leq C\, \rho^{-r}
\end{equation}
for all $z\in\Sct{\alpha}$ with $|z|$ sufficiently large, with some constant $r>0$ depending on the angle $\alpha$, and some constant $C>0$ which may additionally depend on $\|\mathbf h_{\inn}\|$ and $\|\mathbf h_{\out}\|$. In particular, we are interested in sufficient conditions on $\mathbf h_{\inn}$ and $\mathbf h_{\out}$ for the existence of a solution \eqref{eq:definitionofphi}. The next result, which is the main statement of this section, shows that these sufficient conditions differ depending upon exceptionality of the  angle $\alpha$. 

Throughout the rest of this section, let
\begin{equation}\label{eq:chi}
\begin{aligned}
\mu&=\mu_{\alpha/2}=\frac{\pi}{\alpha},\\
\chi_N&=\chi_{\alpha/2,N}=\frac{\pi}{4}(1-\mu)=\frac{\pi}{4}-\frac{\pi^2}{4\alpha},\\
\chi_D&=\chi_{\alpha/2,D}=\frac{\pi}{4}(1+\mu)=\frac{\pi}{4}+\frac{\pi^2}{4\alpha}.
\end{aligned}
\end{equation}
This notation is chosen to match \cite{sloshing}.

\begin{theorem}\label{thm:mainthmssector} 
(a) Let $\alpha$ be non-exceptional, i.e.\ $\alpha\notin\Eangles$. Then for any vector $\mathbf h_{\inn}\in\mathbb C^2$, there exists a vector $\mathbf h_{\out}\in\mathbb C^2$ and a solution \eqref{eq:definitionofphi} of \eqref{eq:Robin1} satisfying \eqref{eq:universalremainder} with $r=\mu_{\alpha/2}$ and $C=C_\alpha \|\mathbf h_{\inn}\|$, where $C_\alpha>0$ is some constant depending only on $\alpha$.

Moreover, in this case 
\begin{equation}\label{eq:hinoutA}
\mathbf h_{\out}=\mathtt{A}(\alpha)\mathbf h_{\inn},
\end{equation} 
where $\mathtt{A}(\alpha)$ is the matrix defined in \eqref{eq:Adef1}.

(b) If $\alpha=\alpha^\Eangles$ is exceptional,  $\alpha=\dfrac{\pi}{2k}\in\Eangles$, $k\in\mathbb{N}$, then for any two vectors $\mathbf h_{\inn}$ and $\mathbf h_{\out}\in\mathbb C^2$ additionally satisfying
\begin{equation}\label{eq:exceptionalcondition}
\mathbf{h}_{\inn}\cdot\Cvect(\alpha)= \mathbf h_{\out}\cdot\Cvect^\perp(\alpha)=0
\end{equation}
(with $\Cvect(\alpha)$ defined by \eqref{eq:orthogspecial2}, see also \eqref{eq:orthogspecial1} and \eqref{eq:Cevenoddperp}), there exists a solution \eqref{eq:definitionofphi} of \eqref{eq:Robin1} again satisfying \eqref{eq:universalremainder} with $r=\mu_{\alpha/2}$ and $C=C_\alpha (\|\mathbf h_{\inn}\|+\|\mathbf h_{\out}\|)$, where $C_\alpha>0$ is some constant depending only on $\alpha$.
\end{theorem}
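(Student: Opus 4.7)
The plan is to reduce the Robin problem in $\Sct{\alpha}$ to a pair of sloping-beach problems in the half-sector $\Sct{\alpha/2}$ by splitting a candidate solution into its even and odd parts under the bisector reflection $\mathcal{M}_\alpha$. A harmonic $\Phi$ on $\Sct{\alpha}$ satisfying the Robin condition on both $I_\inn$ and $I_\out$ decomposes as $\Phi = \Phi^{+} + \Phi^{-}$, where $\Phi^{\pm}(z) = \tfrac{1}{2}(\Phi(z) \pm \Phi(\mathcal{M}_\alpha z))$ are, respectively, harmonic on $\Sct{\alpha/2}$ with Robin data on $I_\inn$ and Neumann (resp.\ Dirichlet) data on the bisector $I$. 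Conversely, any Robin--Neumann or Robin--Dirichlet solution in $\Sct{\alpha/2}$ extends back to $\Sct{\alpha}$ by even, resp.\ odd, reflection across $I$, so it suffices to construct scattering solutions for the two half-sector sloping-beach problems and to superpose them.

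For each of these half-sector problems I would import the explicit \emph{Peters solutions} $\Psi_{\alpha/2,N}$ and $\Psi_{\alpha/2,D}$ constructed in \cite{sloshing} along the lines of Peters \cite{Pet50}. These solutions have the form of a plane wave, $\er^y\cos(x+\chi_N)$ or $\er^y\sin(x+\chi_D)$, along the Robin edge $I_\inn$, plus a contour-integral remainder that decays uniformly like $\rho^{-\mu_{\alpha/2}}$; bounds on $\rho\nabla R$ follow from the corresponding bounds in \cite{sloshing} together with interior Cauchy estimates for harmonic functions applied on balls of radius proportional to $\rho$. In the non-exceptional case $\alpha\notin\Eangles$ both constructions are non-degenerate, and their even/odd reflections produce two linearly independent solutions of \eqref{eq:Robin1} of the scattering form \eqref{eq:definitionofphi}. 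Reading off their boundary data on $I_\inn$ and $I_\out$, the vectors $\Cvect_\mathrm{even}$ and $\Cvect_\mathrm{odd}$---which precisely encode the Peters phases $\chi_N$ and $\chi_D$---emerge as simultaneous eigenvectors for the incoming-to-outgoing map, with eigenvalue ratios $\eta_2(\alpha)$ and $\eta_1(\alpha)$ as in \eqref{eq:eta12}. By Remark \ref{rem:Xareeigenvectors} this is precisely the spectral decomposition of $\mathtt{A}(\alpha)$, so for any $\mathbf{h}_\inn\in\mathbb{C}^2$ the matching superposition of the two reflected Peters solutions yields \eqref{eq:definitionofphi} with $\mathbf{h}_\out = \mathtt{A}(\alpha)\mathbf{h}_\inn$.

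For exceptional $\alpha = \pi/(2k)$, one of $\chi_N,\chi_D$ becomes an integer multiple of $\pi/2$ and the corresponding Peters integral representation degenerates; this is the same reason $a_1(\alpha), a_2(\alpha)$ blow up in \eqref{eq:a1a2defn}. Only the surviving half-sector construction remains available, and its reflection furnishes a one-parameter family of scattering solutions whose incoming data is forced into the hyperplane $\Cvect(\alpha)^\perp$ and whose outgoing data lies along $\Cvect(\alpha)$, giving exactly the orthogonality restrictions \eqref{eq:exceptionalcondition}. The main technical obstacle throughout is the book-keeping of the Peters phases: one must verify, via the identity $\mu_\alpha = \tfrac{\pi}{2}-2\chi_N$ and the ensuing double-angle identities, that the half-sector construction produces precisely the matrix $\mathtt{A}(\alpha)$ and the specific vectors $\Cvect(\alpha)$ defined in Section \ref{sec:statements}---and, in the exceptional case, that the degenerate direction really is $\Cvect(\alpha)$ rather than some other codimension-one subspace. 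The remainder estimate \eqref{eq:universalremainder} with $r = \mu_{\alpha/2}$ is then essentially the Peters remainder estimate transported through the symmetrisation, together with linear dependence of the constant on $\|\mathbf{h}_\inn\|$ (respectively on $\|\mathbf{h}_\inn\|+\|\mathbf{h}_\out\|$ in the exceptional case).
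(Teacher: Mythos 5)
Your part (a) follows the paper's approach closely: split across the bisector, import the two Peters sloping-beach solutions from \cite{sloshing}, reflect to get $\Phi_{\alpha,N}$ and $\Phi_{\alpha,D}$, read off the incoming/outgoing boundary data, and observe that the resulting incoming-to-outgoing linear map has $\Cvect_\mathrm{odd}, \Cvect_\mathrm{even}$ as eigenvectors with eigenvalues $\eta_1(\alpha), \eta_2(\alpha)$, i.e.\ is $\mathtt{A}(\alpha)$. This is essentially the argument via Lemma \ref{lem:propsofsymantisym} and Proposition \ref{prop:planewaveapprox}.

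Part (b) has a genuine gap, and it stems from a false premise. For $\alpha = \pi/(2k)$ one has $\mu = \pi/\alpha = 2k$, so
\[
\chi_N = \frac{\pi}{4}(1-2k), \qquad \chi_D = \frac{\pi}{4}(1+2k),
\]
and both are \emph{odd} multiples of $\pi/4$; neither is ever an integer multiple of $\pi/2$. Consequently neither Peters construction in the half-sector degenerates or becomes unavailable --- both $\Phi_{\alpha,N}$ and $\Phi_{\alpha,D}$ exist for every $\alpha\in(0,\pi)$. What actually degenerates is the \emph{joint} map from $(F_N,F_D)$ to the boundary data: since $\det\mathtt{G}_{\inn}(\alpha)=\det\mathtt{G}_{\out}(\alpha) = \tfrac{\ir}{2}\sin(\chi_D-\chi_N) = \tfrac{\ir}{2}\sin\bigl(\tfrac{\pi^2}{2\alpha}\bigr)$, and $\tfrac{\pi^2}{2\alpha} = \pi k$, both matrices $\mathtt{G}_{\inn}$ and $\mathtt{G}_{\out}$ become rank one while the two Peters solutions remain distinct. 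Your plan to use ``only the surviving half-sector construction'' and its reflection produces a \emph{one}-parameter family of scattering solutions, in which the ratio of the incoming amplitude $h_\inn$ (along $\Cvect^\perp(\alpha)$) to the outgoing amplitude $h_\out$ (along $\Cvect(\alpha)$) is fixed. The theorem, however, requires realizing \emph{arbitrary independent} $(h_\inn,h_\out)\in\mathbb{C}^2$ subject only to the orthogonality constraints \eqref{eq:exceptionalcondition} --- a two-complex-parameter family. To hit it you still need linear combinations of \emph{both} Peters solutions, and the key computation that makes this work is the one missing from your argument: the kernels $\ker \mathtt{G}_{\out}=\operatorname{span}\{\mathbf{K}(\alpha)\}$ and $\ker \mathtt{G}_{\inn}=\operatorname{span}\{\mathbf{K}^\perp(\alpha)\}$ are transversal, so one can choose $\mathbf{F}\in\operatorname{span}\{\mathbf{K}(\alpha)\}\oplus\operatorname{span}\{\mathbf{K}^\perp(\alpha)\}$ so as to control $\mathbf{h}_\inn$ and $\mathbf{h}_\out$ independently, exactly as in Lemma \ref{lem:rangeGinout} and the paragraph following it.
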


\begin{remark} In both the non-exceptional and exceptional angle cases, we obtain the existence of a solution $\Phi_{\alpha}^{(\mathbf h_{\inn},\mathbf h_{\out})}$ by fixing \emph{two} out of the four components of the vectors $\mathbf h_{\inn}$ and $\mathbf h_{\out}$. The difference is that in the non-exceptional case we fix the two components of the same vector and find the other vector from \eqref{eq:hinoutA} (it does not in fact matter whether we fix either of the two vectors as $\mathtt{A}(\alpha)$ is invertible), whereas in the exceptional case we fix exactly one component of each of $\mathbf h_{\inn}$ and $\mathbf h_{\out}$, and recover the other ones from \eqref{eq:exceptionalcondition}.
\end{remark}

\begin{remark}\label{rem:Cparallel} Conditions \eqref{eq:exceptionalcondition} can be equivalently rewritten as
\[
\mathbf{h}_{\inn}\in\operatorname{Span}\left\{\Cvect^\perp(\alpha)\right\},\qquad \mathbf h_{\out}\in\operatorname{Span}\left\{\Cvect(\alpha)\right\}.
\]
\end{remark}

\begin{remark}\label{rem:alphabiggerpi} Note that our proof of Theorem \ref{thm:mainthmssector} does not work for $\alpha\ge\pi$ for reasons explained in \cite[Remark 2.4]{sloshing}.
\end{remark}

\subsection{Sloping beach problems and Peters solutions} 
Consider, in the half sector $\Sct{\alpha/2}$, a mixed Robin-Neumann problem 
\begin{equation}\label{eq:slopingN}
\Delta \Phi=0\quad\text{in }\Sct{\alpha/2},\qquad \left(\left.\frac{\partial\Phi}{\partial y}-\Phi\right)\right|_{I_\out}=0,\qquad \left.\frac{\partial\Phi}{\partial n}\right|_{I}=0,
\end{equation}
and a similar mixed  Robin-Dirichlet problem 
\begin{equation}\label{eq:slopingD}
\Delta \Phi=0\quad\text{in }\Sct{\alpha/2},\qquad \left(\left.\frac{\partial\Phi}{\partial y}-\Phi\right)\right|_{I_\out}=0,\qquad \left.\Phi\right|_{I}=0,
\end{equation}
These two problems, called the \emph{sloping beach} problems and arising in hydrodynamics, have  special solutions, originally due to Peters \cite{Pet50} in the Neumann case, are written down, with some improvements on the remainder terms, in \cite[Theorem 2.1]{sloshing}. We now define two specific solutions $\Phi_{\alpha,N}$ and $\Phi_{\alpha,D}$ of the problem \eqref{eq:Robin1} in the full sector $\Sct{\alpha}$, which we call the \emph{symmetric/anti-symmetric Peters solutions} in $\Sct{\alpha}$. To obtain $\Phi_{\alpha,N}$, we  take the even (with respect to $I$) extension of Peters sloping beach solution of \eqref{eq:slopingN}.  To obtain $\Phi_{\alpha,D}$, we  take the odd (with respect to $I$) extension  of Peters sloping beach solution of \eqref{eq:slopingD}. 

The key properties of $\Phi_{\alpha,N}$ and $\Phi_{\alpha,D}$ now follow quickly from \cite[Theorem 2.1]{sloshing}:
\begin{lemma}\label{lem:propsofsymantisym} We have, for $\aleph\in\{N,D\}$,
\[\Phi_{\alpha,\aleph}(z)=W^{\mathbf g_{\out,\aleph}}_{\out,\alpha}(z)+W^{\mathbf g_{\inn,\aleph}}_{\inn,\alpha}(z)+\widetilde R_{\alpha,\aleph}(z),\]
with
\begin{equation}\label{eq:gdef}
\begin{split}
\mathbf g_{\out,N}&=\frac 12\begin{pmatrix}\er^{-\ir\chi_{N}}\\ \er^{\ir\chi_{N}}\end{pmatrix},\ \mathbf g_{\inn,N}=\overline{\mathbf g_{\out,N}},\\ 
\mathbf g_{\out,D}&=\frac 12\begin{pmatrix}\er^{-\ir\chi_{D}}\\ \er^{\ir\chi_{D}}\end{pmatrix},\ \mathbf g_{\inn,D}=-\overline{\mathbf g_{\out,D}},
\end{split}
\end{equation}
and the remainder terms $R=\widetilde R_{\alpha,\aleph}(z)$ satisfy \eqref{eq:universalremainder} with some constants $C>0$  depending only on $\alpha$, and with $r=\mu$ in the case $\aleph=N$ and  $r=2\mu$ in the case $\aleph=D$.
\end{lemma}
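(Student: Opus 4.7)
\textbf{Proof proposal for Lemma \ref{lem:propsofsymantisym}.} The plan is to construct $\Phi_{\alpha,\aleph}$ directly from the Peters sloping beach solutions of \eqref{eq:slopingN}, \eqref{eq:slopingD} on the half-sector $\Sct{\alpha/2}$, and then transport their known asymptotic expansion (from \cite[Theorem 2.1]{sloshing}) to the full sector via the reflection across the bisector $I=\{\theta=-\alpha/2\}$. First, I would write $\Phi_{\alpha,N}$ as the even extension across $I$ of the Peters solution of \eqref{eq:slopingN}, and $\Phi_{\alpha,D}$ as the odd extension across $I$ of the Peters solution of \eqref{eq:slopingD}. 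Because of the Neumann condition $\partial_n\Phi|_I=0$ (resp.\ $\Phi|_I=0$) in the half-sector problems, the even (resp.\ odd) extension produces a $C^1$ (in fact smooth, by interior elliptic regularity) harmonic function on all of $\Sct{\alpha}$. The Robin condition on $I_\out$ is inherited directly from the half-sector problem, and the Robin condition on $I_\inn$ follows because the reflection $\mathcal{M}$ across $I$ maps $I_\out$ to $I_\inn$ and intertwines their exterior normal derivatives.

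Next, I would transfer the asymptotic expansion. By \cite[Theorem 2.1]{sloshing}, the Peters solution $\Phi^{\mathrm{slop}}_{\alpha,\aleph}$ on $\Sct{\alpha/2}$ equals, on the boundary piece $I_\out$ parametrised by arc-length $s>0$, a trigonometric profile $\cos(s-\chi_N)$ (for $\aleph=N$) or $\cos(s-\chi_D)$ (for $\aleph=D$) plus a polynomially decaying remainder, while in the interior it equals $\er^y\cos(x-\chi_\aleph)$ plus a remainder satisfying the pointwise and gradient bound of \eqref{eq:universalremainder}. The profile $\er^y\cos(x-\chi_\aleph)$ is exactly $W^{\mathbf g_{\out,\aleph}}_{\out,\alpha}$ with $\mathbf g_{\out,\aleph}=\tfrac12\bigl(\er^{-\ir\chi_\aleph},\,\er^{\ir\chi_\aleph}\bigr)^T$, matching the claim. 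To obtain the $W^{\mathbf g_{\inn,\aleph}}_{\inn,\alpha}$ contribution, I would apply the reflection $\mathcal{M}$ to this leading profile: the key observation is that the reflected function $\er^y\cos(x-\chi_\aleph)\circ\mathcal{M}$ is precisely an incoming plane wave of the form $W^{\mathbf g'}_{\inn,\alpha}$, and a direct computation (reading off the arc-length trace on $I_\inn$ and using the permutation $\mathbf h\mapsto\mathbf h'$ built into the definition of $W^{\cdot}_{\inn,\alpha}$) yields $\mathbf g_{\inn,N}=\overline{\mathbf g_{\out,N}}$ in the even case and $\mathbf g_{\inn,D}=-\overline{\mathbf g_{\out,D}}$ in the odd case, which is the asserted relation.

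Finally, the remainder $\widetilde R_{\alpha,\aleph}$ on the full sector is assembled as the sum of the Peters remainder on the half-sector $\Sct{\alpha/2}$ and its even/odd reflection on the other half. Each piece satisfies \eqref{eq:universalremainder} by \cite[Theorem 2.1]{sloshing}, and reflection preserves both $\rho$ and $\|\rho\nabla R\|$, so the combined remainder satisfies the bound with the same exponent $r$. The distinction $r=\mu$ for $\aleph=N$ versus $r=2\mu$ for $\aleph=D$ is inherited verbatim from the respective sloping beach results, where the additional decay in the Dirichlet case reflects the extra cancellation arising from the odd symmetry.

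The main technical obstacle I anticipate is bookkeeping rather than conceptual: ensuring the correct correspondence between the arc-length coordinate $s$ (which is negative on $I_\inn$ by our convention), the reflected plane wave components under the swap $\mathbf h\mapsto\mathbf h'$, and the sign produced by odd extension, so that the resulting vectors $\mathbf g_{\out,\aleph}$ and $\mathbf g_{\inn,\aleph}$ come out exactly as in \eqref{eq:gdef}. Once these sign and permutation conventions are lined up, the lemma follows immediately from the half-sector Peters expansion.
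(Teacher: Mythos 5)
Your approach is essentially the same as the paper's: write the even/odd extension of the Peters sloping-beach solution, invoke \cite[Theorem 2.1]{sloshing} to get the leading profile $\er^y\cos(x-\chi_\aleph)$ plus a decaying remainder, convert the cosine to the pair of complex exponentials so the leading term reads $W^{\mathbf g_{\out,\aleph}}_{\out,\alpha}$, and use the reflection $\mathcal{M}_\alpha$ (together with the built-in swap $\mathbf h\mapsto\mathbf h'$ in the definition of $W^{\cdot}_{\inn,\alpha}$) to identify the corresponding $\mathbf g_{\inn,\aleph}$. The algebra $(\mathbf g_{\out,N})'=\overline{\mathbf g_{\out,N}}=\mathbf g_{\inn,N}$ (and the analogous odd-case sign) is exactly what makes the claim work, and you flag that bookkeeping correctly.

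One small gap in your last paragraph: you describe $\widetilde R_{\alpha,\aleph}$ as "the sum of the Peters remainder on $\Sct{\alpha/2}$ and its even/odd reflection on the other half," but that is not quite the remainder appearing in the claimed decomposition $\Phi = W_\out + W_\inn + \widetilde R$. On the half-sector $\{-\alpha/2\le\theta\le 0\}$, the Peters expansion gives $\Phi = W^{\mathbf g_{\out,\aleph}}_{\out,\alpha} + R_\aleph$, so to match the claimed form you must write
\[
\widetilde R \;=\; R_\aleph \;-\; W^{\mathbf g_{\inn,\aleph}}_{\inn,\alpha}
\]
there (and symmetrically, $\widetilde R = R_\aleph\circ\mathcal M_\alpha - W^{\mathbf g_{\out,\aleph}}_{\out,\alpha}$ on $\{-\alpha\le\theta\le-\alpha/2\}$). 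The term you subtract is the "wrong-side" plane wave, and one must check it does not spoil the remainder estimate. This is where the paper inserts the key observation you omitted: $W^{\mathbf h}_{\inn,\alpha}(z)$ decays like $\er^{-\operatorname{dist}(z,I_\inn)}$, and on $\{-\alpha/2\le\theta\le 0\}$ that distance is bounded below by a positive multiple of $\rho$, so the subtracted plane wave decays exponentially in $\rho$ and is absorbed into the remainder without degrading the polynomial rate $r=\mu$ (or $2\mu$). Add that sentence and your argument closes.
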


\begin{proof} We prove this for the Neumann solution and for $-\alpha/2\leq\theta\leq 0$. By \cite[Theorem 2.1]{sloshing}, the Peters solution for \eqref{eq:slopingN} in $\Sct{\alpha/2}$ is equal to
\[
\er^y\cos(x-\chi_{N})+R_{N}(x,y),
\]
where $R=R_{N}$ satisfies \eqref{eq:universalremainder} with $r=\mu$. 
Converting the cosine term to a complex exponential, we obtain, with account of \eqref{eq:gdef},
\[
\er^y\cos(x-\chi_{N})=\er^y\,\frac{\er^{-\ir\chi_{N}}\er^{\ir x}+\er^{\ir\chi_{N}}\er^{-\ir x}}{2}=\er^y\mathbf{g}_{\out,N}\cdot\mathbf{e}(x),
\]
which is precisely $W^{\mathbf g_{\out},N}_{\out,\alpha}(z)$. The other term, $W^{\mathbf g_{\inn,N}}_{\inn,\alpha}(z)$, decays exponentially in the distance from $z$ to $I_{\inn}$. In $\{-\alpha\leq\theta\leq 0\}$, this distance is bounded below by a positive multiple of $\rho$, so this term decays exponentially in $\rho$ and may therefore be absorbed into the remainder.

The case where $-\alpha\leq\theta\leq -\alpha/2$ follows by symmetry, and the Dirichlet case is similar.
\end{proof}

\subsection{Proof of Theorem \ref{thm:mainthmssector}}
Now we consider arbitrary linear combinations of the symmetric and anti-symmetric solutions.

\begin{proposition}\label{prop:planewaveapprox} Consider, for $\mathbf{F}=\begin{pmatrix} F_{N}\\F_{D} \end{pmatrix}\in \mathbb{C}^2$, a linear combination $\Phi(z) = F_N \Phi_{\alpha,N}(z)+ F_D \Phi_{\alpha,D}(z)$. Let
\[
\mathtt G_{\out}(\alpha):=\frac 12\begin{pmatrix} \er^{-\ir \chi_{N}} & \er^{-\ir \chi_{D}}\\ \er^{\ir \chi_{N}} & \er^{\ir \chi_{D}}\end{pmatrix},
\qquad
\mathtt G_{\inn}(\alpha):=\frac 12\begin{pmatrix} \er^{\ir \chi_{N}} & -\er^{\ir \chi_{D}}\\ \er^{-\ir \chi_{N}} &- \er^{-\ir \chi_{D}}\end{pmatrix},
\]
\begin{equation}\label{eq:fginout}
\mathbf h_{\out}=\mathtt G_{\out}(\alpha)\mathbf{F},\qquad \mathbf h_{\inn}=\mathtt G_{\inn}(\alpha)\mathbf{F}.
\end{equation}
Then we have
\[
\Phi(z)=W^{\mathbf h_{\out,\alpha}}_{\out,\alpha}(z)+W^{\mathbf h_{\inn,\alpha}}_{\inn,\alpha}(z)+ R_{\alpha,\mathbf{F}}(z),
\]
where $R=R_{\alpha,\mathbf{F}}$ satisfies \eqref{eq:universalremainder} with $r=\mu$ and $C=C_\alpha \|\mathbf{F}\|$ with some constant $C_\alpha$ depending only on $\alpha$. 
\end{proposition}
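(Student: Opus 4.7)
The plan is to derive Proposition \ref{prop:planewaveapprox} by a direct linear combination argument built on top of Lemma \ref{lem:propsofsymantisym}. Since that lemma has already packaged all the analytic content of the construction of $\Phi_{\alpha,N}$ and $\Phi_{\alpha,D}$, no new estimates are needed and the argument reduces essentially to linear algebra.

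First, I would invoke Lemma \ref{lem:propsofsymantisym} to expand each $\Phi_{\alpha,\aleph}$, $\aleph\in\{N,D\}$, as a sum of outgoing and incoming plane waves plus a remainder $\widetilde{R}_{\alpha,\aleph}$ satisfying \eqref{eq:universalremainder} with $r=\mu$ (the stronger $r=2\mu$ bound available in the Dirichlet case of course implies the $r=\mu$ bound). Then I would form $\Phi = F_N\Phi_{\alpha,N} + F_D\Phi_{\alpha,D}$ and exploit the linearity of the maps $\mathbf{h}\mapsto W^{\mathbf{h}}_{\out,\alpha}$ and $\mathbf{h}\mapsto W^{\mathbf{h}}_{\inn,\alpha}$, which is immediate from their definitions together with \eqref{eq:edef}. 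Collecting terms yields a single outgoing wave with prefactor $F_N\mathbf{g}_{\out,N} + F_D\mathbf{g}_{\out,D}$, a single incoming wave with prefactor $F_N\mathbf{g}_{\inn,N} + F_D\mathbf{g}_{\inn,D}$, and the combined remainder $R_{\alpha,\mathbf{F}} := F_N\widetilde{R}_{\alpha,N} + F_D\widetilde{R}_{\alpha,D}$.

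Next, I would match these prefactors to $\mathtt{G}_\out(\alpha)\mathbf{F}$ and $\mathtt{G}_\inn(\alpha)\mathbf{F}$ by direct inspection. Reading off \eqref{eq:gdef}, the columns of $\mathtt{G}_\out(\alpha)$ are precisely $\mathbf{g}_{\out,N}$ and $\mathbf{g}_{\out,D}$, so $\mathtt{G}_\out(\alpha)\mathbf{F} = \mathbf{h}_\out$ by the definition of matrix--vector product; the identification for $\mathtt{G}_\inn$ follows after recognising that $\mathbf{g}_{\inn,N} = \overline{\mathbf{g}_{\out,N}}$ and $\mathbf{g}_{\inn,D} = -\overline{\mathbf{g}_{\out,D}}$ (the latter relation in \eqref{eq:gdef} appears to contain a typo that should be corrected). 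The remainder bound is a one-line application of the triangle inequality, giving $|R_{\alpha,\mathbf{F}}(z)| + \|\rho\nabla R_{\alpha,\mathbf{F}}(z)\| \le C_\alpha(|F_N|+|F_D|)\rho^{-\mu} \le C'_\alpha\|\mathbf{F}\|\rho^{-\mu}$, which is \eqref{eq:universalremainder} with $r=\mu$ and $C = C'_\alpha\|\mathbf{F}\|$.

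There is essentially no analytic obstacle in this proof; all the work was done in constructing the Peters solutions and establishing Lemma \ref{lem:propsofsymantisym}. The only points that require care are the bookkeeping of signs and complex conjugates in the incoming-wave prefactors, and noting that $\mathbf{g}_{\out,N}$ and $\mathbf{g}_{\out,D}$ are linearly independent whenever $\mu\neq 0$ (equivalently $\chi_N\not\equiv\chi_D$ modulo $\pi$), so that the linear maps defined by $\mathtt{G}_\out(\alpha)$ and $\mathtt{G}_\inn(\alpha)$ are invertible --- a property that will presumably be used in subsequent sections to pass between the basis $(F_N,F_D)$ of symmetric/anti-symmetric solutions and the basis $(\mathbf{h}_\inn,\mathbf{h}_\out)$ of incoming/outgoing plane-wave data.
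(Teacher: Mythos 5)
Your proof is correct and follows exactly the same route as the paper's: expand each $\Phi_{\alpha,\aleph}$ via Lemma~\ref{lem:propsofsymantisym}, use linearity of $\mathbf{h}\mapsto W^{\mathbf{h}}_{\out,\alpha}$ and $\mathbf{h}\mapsto W^{\mathbf{h}}_{\inn,\alpha}$, match the combined prefactors to the columns of $\mathtt{G}_{\out}(\alpha)$ and $\mathtt{G}_{\inn}(\alpha)$, and take the weaker Neumann exponent $r=\mu$ for the combined remainder. (You also correctly spotted the typo in \eqref{eq:gdef}: the relation should read $\mathbf{g}_{\inn,D}=-\overline{\mathbf{g}_{\out,D}}$; one small inaccuracy in your final aside is that linear independence of $\mathbf{g}_{\out,N},\mathbf{g}_{\out,D}$ fails precisely when $\alpha$ is exceptional, i.e.\ when $\frac{\pi^2}{2\alpha}\in\pi\mathbb{Z}$, not when $\mu=0$ --- $\mu=\pi/\alpha$ is always positive --- but this does not affect the proof of the proposition itself.)
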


\begin{proof} Since $W_{\out,\alpha}^{\mathbf h}$ and $W_{\inn,\alpha}^{\mathbf h}$ are linear in $\mathbf{h}$, the proof follows instantaneously from Lemma \ref{lem:propsofsymantisym} and linear algebra. Note that in the remainder estimate we obtain the weaker, Neumann, exponent for an arbitrary linear combination.
\end{proof}

We proceed to the proof of Theorem \ref{thm:mainthmssector}. At least in the case $\alpha\notin\Eangles$, we would like to start with an arbitrary $\mathbf h_{\inn}\in\mathbb C^2$ and apply Proposition \ref{prop:planewaveapprox} with
\[
\mathbf{F}=(\mathtt G_{\inn}(\alpha))^{-1}\mathbf h_{\inn},\qquad \mathbf h_{\out}=\mathtt G_{\out}(\alpha)(\mathtt G_{\inn}(\alpha))^{-1}\mathbf h_{\inn}.
\]
Indeed, this gives us everything we want, including the remainder estimate, as long as $\mathtt G_{\inn}(\alpha)$ is invertible. By a direct computation, we find
\[\det\mathtt G_{\out}(\alpha)=\det\mathtt G_{\inn}(\alpha)=\frac{\ir}{2}\sin\left(\frac{\pi^2}{2\alpha}\right).\]
Therefore $\mathtt G_{\out}(\alpha)$ and $\mathtt G_{\inn}(\alpha)$ are invertible if and only if $\alpha$ is not exceptional. Observing that, again by a direct calculation,
\[
\mathtt{A}(\alpha)=\mathtt{G}_{\out}(\alpha)(\mathtt{G}_{\inn}(\alpha))^{-1},
\]
leads to \eqref{eq:hinoutA}.

Now suppose $\alpha\in\Eangles$. In this case, given $\mathbf h_{\inn}$ and $\mathbf h_{\out}$ satisfying \eqref{eq:exceptionalcondition}, we want to find $\mathbf{F}$ such that we have \eqref{eq:fginout}. We will use 

\begin{lemma}\label{lem:rangeGinout} Let $\alpha=\dfrac{\pi}{2k}\in\Eangles$. Consider $\mathtt G_{\out}(\alpha)$ and $\mathtt G_{\inn}(\alpha)$ as linear mappings $\mathbb{C}^2\to\mathbb{C}^2$. Then
\[
\operatorname{Range} \mathtt G_{\out}(\alpha) = \operatorname{Span}_\mathbb{C}\left\{\Cvect(\alpha)\right\},\qquad
\operatorname{Range} \mathtt G_{\inn}(\alpha) = \operatorname{Span}_\mathbb{C}\left\{\Cvect^\perp(\alpha)\right\}.
\]
and
\[
\operatorname{Ker} \mathtt G_{\out}(\alpha) = \operatorname{Span}_\mathbb{C}\left\{\mathbf{K}(\alpha)\right\},\qquad
\operatorname{Ker} \mathtt G_{\inn}(\alpha) = \operatorname{Span}_\mathbb{C}\left\{\mathbf{K}^\perp(\alpha)\right\},
\]
where 
\[
\mathbf{K}(\alpha):=\dfrac{1}{\sqrt{2}}\begin{pmatrix}1\\\mathcal{O}(\alpha)\end{pmatrix}=\dfrac{1}{\sqrt{2}}\begin{pmatrix}1\\(-1)^k\end{pmatrix},\qquad 
\mathbf{K}^\perp(\alpha):=\dfrac{1}{\sqrt{2}}\begin{pmatrix}1\\-\mathcal{O}(\alpha)\end{pmatrix}=\dfrac{1}{\sqrt{2}}\begin{pmatrix}1\\(-1)^{k+1}\end{pmatrix}.
\]
\end{lemma}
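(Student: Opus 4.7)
The plan is to exploit the fact that for exceptional $\alpha=\pi/(2k)$, the difference $\chi_D-\chi_N=\frac{\pi\mu}{2}=\frac{\pi^2}{2\alpha}=\pi k$ is an integer multiple of $\pi$, which collapses the rank of $\mathtt G_{\out}(\alpha)$ and $\mathtt G_{\inn}(\alpha)$ to one. All four statements of the lemma are then just identifications of the rank-one range and the one-dimensional kernel with the explicitly given vectors.

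\textbf{Step 1: Collapse of the exponentials.} From \eqref{eq:chi} I would compute $\chi_D-\chi_N=\pi\mu/2$, specialise to $\mu=2k$, and conclude
\[
\er^{\pm\ir\chi_D}=\er^{\pm\ir(\chi_N+\pi k)}=(-1)^k\er^{\pm\ir\chi_N}.
\]
Substituting into the definitions of $\mathtt G_{\out}(\alpha)$ and $\mathtt G_{\inn}(\alpha)$ immediately shows that the second column of $\mathtt G_{\out}$ equals $(-1)^k$ times the first, and the second column of $\mathtt G_{\inn}$ equals $-(-1)^k$ times the first. So both matrices have rank one.

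\textbf{Step 2: Kernels.} From Step 1, $\mathtt G_{\out}\begin{pmatrix}a\\b\end{pmatrix}=0$ is equivalent to the scalar condition $a+(-1)^kb=0$, giving a one-dimensional kernel spanned by a vector of the form $\begin{pmatrix}1\\\pm\mathcal{O}(\alpha)\end{pmatrix}$; matching to the normalisation in $\mathbf{K}(\alpha)$, $\mathbf{K}^\perp(\alpha)$ identifies this kernel with $\operatorname{Span}\{\mathbf{K}(\alpha)\}$. The analogous computation for $\mathtt G_{\inn}$ yields $\operatorname{Span}\{\mathbf{K}^\perp(\alpha)\}$.

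\textbf{Step 3: Ranges.} Since rank equals one, the range is spanned by the first column. I would compute this column explicitly by splitting on the parity of $k$. Writing $\chi_N=\pi/4-\pi k/2$, for $k=2m$ even one finds
\[
\tfrac12\begin{pmatrix}\er^{-\ir\chi_N}\\ \er^{\ir\chi_N}\end{pmatrix}=\tfrac{(-1)^m}{\sqrt 2}\cdot\tfrac{1}{\sqrt 2}\begin{pmatrix}\er^{-\ir\pi/4}\\ \er^{\ir\pi/4}\end{pmatrix}=\tfrac{(-1)^m}{\sqrt 2}\Cvect_{\mathrm{even}},
\]
while for $k=2m+1$ odd the extra factor of $\pm\ir$ rotates $\er^{\mp\ir\pi/4}$ into $\er^{\pm\ir\pi/4}$, giving $\tfrac{(-1)^m}{\sqrt 2}\Cvect_{\mathrm{odd}}$. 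In either case, by \eqref{eq:orthogspecial2} this is a scalar multiple of $\Cvect(\alpha)$, so $\operatorname{Range}\mathtt G_{\out}(\alpha)=\operatorname{Span}\{\Cvect(\alpha)\}$. The parallel calculation for $\mathtt G_{\inn}$ differs only by swapping $\chi_N\mapsto-\chi_N$ in the first column, which exchanges $\Cvect_{\mathrm{even}}\leftrightarrow\Cvect_{\mathrm{odd}}$, producing $\operatorname{Range}\mathtt G_{\inn}(\alpha)=\operatorname{Span}\{\Cvect^\perp(\alpha)\}$.

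No part of this is genuinely hard; the whole proof is a direct verification, and the main thing to be careful about is tracking the parity of $k$ consistently across the four identifications and verifying the sign conventions in $\mathbf{K}$, $\mathbf{K}^\perp$, $\Cvect$, $\Cvect^\perp$ so that the claimed pairing (rather than its swap) is produced.
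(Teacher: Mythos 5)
Your approach is exactly the paper's: write out the matrices, observe the column-proportionality with factor $\pm(-1)^k$, read off rank one, kernel, and range. Steps~1 and~3 check out. But Step~2 is where a genuine sign issue appears, and you gloss over it. The condition $a+(-1)^k b=0$ you correctly derive for the kernel of $\mathtt G_{\out}$ forces $b=-(-1)^k a=(-1)^{k+1}a$, so the kernel is spanned by $\begin{pmatrix}1\\(-1)^{k+1}\end{pmatrix}$, which by the lemma's own definitions is $\mathbf K^\perp(\alpha)$, not $\mathbf K(\alpha)$. Likewise for $\mathtt G_{\inn}$ the condition is $a-(-1)^kb=0$, kernel spanned by $\begin{pmatrix}1\\(-1)^{k}\end{pmatrix}=\mathbf K(\alpha)$. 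You leave the sign as ``$\pm\mathcal O(\alpha)$'' and then assert the identification with $\mathbf K(\alpha)$ without resolution; that assertion is wrong as written. You can confirm the discrepancy concretely with $k=1$, $\alpha=\pi/2$: then $\chi_N=-\pi/4$, $\chi_D=3\pi/4$, and
\[
\mathtt G_{\out}(\tfrac{\pi}{2})\begin{pmatrix}1\\1\end{pmatrix}
=\tfrac12\begin{pmatrix}\er^{\ir\pi/4}+\er^{-3\ir\pi/4}\\ \er^{-\ir\pi/4}+\er^{3\ir\pi/4}\end{pmatrix}=\begin{pmatrix}0\\0\end{pmatrix},
\]
so $\begin{pmatrix}1\\1\end{pmatrix}=\sqrt2\,\mathbf K^\perp(\pi/2)$ is in $\operatorname{Ker}\mathtt G_{\out}(\pi/2)$, while $\mathbf K(\pi/2)=\tfrac1{\sqrt2}\begin{pmatrix}1\\-1\end{pmatrix}$ is not.

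What this actually exposes is a sign slip in the paper: the four displayed \emph{kernel} assertions in the lemma are incompatible with the lemma's own definitions of $\mathbf K(\alpha)$ and $\mathbf K^\perp(\alpha)$ (either the two kernel identities should be interchanged, or the $\pm\mathcal O(\alpha)$ in the definitions of $\mathbf K,\mathbf K^\perp$ should be swapped; either fix makes the subsequent formula for $\mathbf F$ in the proof of Theorem~\ref{thm:mainthmssector} well defined). The range assertions in the lemma and in your Step~3 are fine. So: your plan is right, Steps~1 and~3 are correct, but Step~2 never pins the sign down, and when it is pinned down, the kernel labels come out swapped relative to the stated $\mathbf K,\mathbf K^\perp$. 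You should flag this rather than silently match the claimed statement.
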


\begin{proof}[Proof of Lemma \ref{lem:rangeGinout}]  We have in this case
\[ 
\mathtt G_{\out}(\alpha)=\er^{-\ir \pi/4}\er^{\ir \pi k/2}\begin{pmatrix} 1 & (-1)^k\\ \ir(-1)^k & \ir\end{pmatrix},\qquad  \mathtt G_{\inn}(\alpha)=\er^{-\ir \pi/4}\er^{\ir \pi k/2}\begin{pmatrix}  \ir(-1)^k & -\ir\\ 1 & -(-1)^k\end{pmatrix},
\]
and the statement follows by a direct computation and comparison with  \eqref{eq:orthogspecial2}, \eqref{eq:orthogspecial1} and \eqref{eq:Cevenoddperp}.
\end{proof}

By Lemma \ref{lem:rangeGinout}, the conditions \eqref{eq:exceptionalcondition} (or their equivalent form, see Remark \ref{rem:Cparallel}), are the necessary conditions for the solvability of \eqref{eq:fginout}. We can now assume $\mathbf h_{\inn}=h_{\inn} \Cvect^\perp(\alpha)$, $\mathbf h_{\out}=h_{\out} \Cvect(\alpha)$ with some constants 
$h_{\inn}, h_\out\in\mathbb{C}$. Taking now
\[
\mathbf{F}=\frac{h_\inn}{\mathtt G_{\inn}(\alpha)\mathbf{K}(\alpha)\cdot \Cvect^\perp(\alpha)}\mathbf{K}(\alpha)+
\frac{h_\out}{\mathtt G_{\out}(\alpha)\mathbf{K}^\perp(\alpha)\cdot \Cvect(\alpha)} \mathbf{K}^\perp(\alpha)
\]
gives the desired result. Indeed, applying Lemma \ref{lem:rangeGinout} again, we obtain
\[
G_{\inn}(\alpha)\mathbf{F}\cdot \Cvect^\perp(\alpha)=h_\inn= \mathbf h_{\inn}\cdot\Cvect^\perp(\alpha),\qquad
G_{\out}(\alpha)\mathbf{F}\cdot \Cvect(\alpha)=h_\out= \mathbf h_{\out}\cdot\Cvect(\alpha),
\]
and therefore \eqref{eq:fginout}.

We have now found a vector $\mathbf{F}$ with the desired properties, and moreover $\|\mathbf{F}\|\leq C_\alpha(\|\mathbf h_{\inn}\|+\|\mathbf h_{\out}\|)$ for some constant $C_\alpha$ depending only on $\alpha$. Applying Proposition \ref{prop:planewaveapprox} with this vector $\mathbf{F}$ completes the proof of Theorem \ref{thm:mainthmssector}.

\begin{remark} Conditions \eqref{eq:hinoutA} or \eqref{eq:exceptionalcondition} are not just sufficient but also \emph{necessary} for the existence of a solution \eqref{eq:definitionofphi} of \eqref{eq:Robin1}, see \cite[Remark 2.2]{sloshing}.
\end{remark}

\begin{remark} The effects observed in Theorem \ref{thm:mainthmssector} are similar to \emph{scattering}. In fact, if we define
\begin{equation}\label{eq:scbeach}
\mathtt{Sc}(\alpha):=\begin{pmatrix} \ir\cos\frac{\pi^2}{2\alpha}&\sin\frac{\pi^2}{2\alpha}\\\sin\frac{\pi^2}{2\alpha}&\ir\cos\frac{\pi^2}{2\alpha}\end{pmatrix},
\end{equation}
then $\mathtt{Sc}(\alpha)$ can be thought of as the \emph{scattering matrix} for the Peters solutions: in the sense of Theorem \ref{thm:mainthmssector}, we have
\[
\begin{pmatrix} h_{\inn,1}\\h_{\out,2}\end{pmatrix}=\mathtt{Sc}(\alpha) \begin{pmatrix} h_{\inn,2}\\h_{\out,1}\end{pmatrix}.
\]
Then at exceptional angles the scattering is fully reflective and at special angles there is no reflection at all. Therefore, we will from now on call the solutions \eqref{eq:definitionofphi} the \emph{scattering Peters solutions}.
\end{remark}
\clearpage\section{Construction of quasimodes}\label{sec:quasieigenvalues}

\subsection{General approach}\label{subsec:genapproach}  The main purpose of this section is to prove the following theorem, which establishes that the quasi-eigenvalues introduced in Definitions
\ref{def:quasi} and \ref{def:quasiexc} are indeed approximate eigenvalues of the Steklov problem \eqref{eq:steklovproblem}.

\begin{theorem}\label{thm:approx} Let $\mathcal{P}$ be a curvilinear polygon, and let $\{\sigma_m\}$ be its sequence of quasi-eigenvalues. Then there exists a non-decreasing sequence $\{i_m\}$ and a sequence of positive real numbers $\{\varepsilon_m\}$ approaching zero such that
\[
|\sigma_m-\lambda_{i_m}|\le\varepsilon_m\qquad\text{for all }m.
\]
\end{theorem}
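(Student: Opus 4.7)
The plan is to construct, for each quasi-eigenvalue $\sigma_m$, an explicit \emph{quasimode} $v_m\in H^1(\mathcal{P})$ that is harmonic in $\mathcal{P}$ and whose boundary trace nearly satisfies $\partial_n v_m=\sigma_m\,v_m$ on $\partial\mathcal{P}$ with a boundary error tending to zero. A standard variational / min-max argument for the self-adjoint Dirichlet-to-Neumann operator $\mathcal{D}_\mathcal{P}$ then guarantees that in the window $[\sigma_m-\epsilon_m,\sigma_m+\epsilon_m]$, with $\epsilon_m:=\|\mathcal{D}_\mathcal{P}(v_m|_{\partial\mathcal{P}})-\sigma_m\,v_m|_{\partial\mathcal{P}}\|_{L^2(\partial\mathcal{P})}/\|v_m|_{\partial\mathcal{P}}\|_{L^2(\partial\mathcal{P})}$, there lies at least one Steklov eigenvalue. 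If $\sigma_m$ has multiplicity $k$ (in the sense of Definitions \ref{def:quasimult} or \ref{def:quasiexcmult}), we produce $k$ almost $L^2(\partial\mathcal{P})$-orthonormal quasimodes, so the same window contains $k$ distinct Steklov eigenvalues counted with multiplicity. This yields the desired non-decreasing assignment $m\mapsto i_m$ (the sharper claim $i_m=m$ is the subject of section \ref{sec:completeness}).

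The backbone of the quasimode construction is the scattering Peters solutions $\Phi_{\alpha_j}^{(\mathbf{h}_{\inn},\mathbf{h}_{\out})}$ of Theorem \ref{thm:mainthmssector}, placed in a small neighbourhood of each vertex $V_j$ and rescaled by $\sigma_m$ so that the boundary oscillation has frequency $\sigma_m$. Away from the corners, along each side $I_j$, the quasimode is taken to be a boundary plane wave $\mathbf{h}_j\cdot\mathbf{e}(\sigma_m s_j)$ harmonically extended into $\mathcal{P}$. Combining the local corner solutions and the plane-wave pieces by means of a smooth cutoff supported in annular regions around each vertex produces a globally defined harmonic function; the harmonicity defect from the cutoff is controlled by the universal remainder estimate \eqref{eq:universalremainder}, which, after rescaling, contributes a boundary error of order $\sigma_m^{-r}$ with $r>0$ depending only on the angles.

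The amplitudes $\mathbf{h}_{\inn,j},\mathbf{h}_{\out,j}$ at each vertex and the plane-wave data $\mathbf{h}_j$ on adjacent sides must be consistent, and this is exactly where the definitions of quasi-eigenvalues come in. Propagation along a side $I_j$ of length $\ell_j$ corresponds to multiplication of the amplitude vector by $\mathtt{B}(\ell_j,\sigma_m)$, while scattering at a non-exceptional vertex corresponds by \eqref{eq:hinoutA} to multiplication by $\mathtt{A}(\alpha_j)$. Demanding that the boundary data closes up cyclically after one full traverse of $\partial\mathcal{P}$ reduces precisely to the equation $\mathtt{T}(\balpha,\bell,\sigma_m)\mathbf{h}=\mathbf{h}$; independent eigenvectors in $\Conj$ (Lemma \ref{lem:Tproperties}(c)) give independent real-valued quasimodes matching the multiplicity in Definition \ref{def:quasi}. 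For exceptional angles the Peters data must satisfy \eqref{eq:exceptionalcondition}, so such vertices act as scattering barriers and decouple $\partial\mathcal{P}$ into the exceptional components $\mathcal{Y}_\kappa$; on each $\mathcal{Y}_\kappa$ the compatibility condition \eqref{excepeq} furnishes a separate family of quasimodes essentially supported near $\mathcal{Y}_\kappa$, reproducing Definition \ref{def:quasiexc} and explaining the multiplicity count of Definition \ref{def:quasiexcmult}.

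I would carry out the estimates in three successive stages of increasing generality, as suggested by the section structure \ref{subsec:proofapprox1}--\ref{subsec:asymptef}. First, for a straight-sided polygon the plane-wave pieces are automatically harmonic, so the only error originates in the Peters remainder and is explicitly $O(\sigma_m^{-r})$. Second, for a partially curvilinear polygon, the sides are straight near each vertex so the corner construction is unchanged; the plane-wave pieces need a small harmonic correction on the curved portions, controlled by standard elliptic estimates. Third, for a fully curvilinear polygon the correction is accomplished via the layer-potential deformation argument of section \ref{sec:layer}. The main obstacle is quantitative: one must simultaneously secure a lower bound on $\|v_m|_{\partial\mathcal{P}}\|_{L^2}$ (to prevent the quasimode from collapsing) and a uniform-in-$m$ upper bound of the form $\|\mathcal{D}_\mathcal{P} v_m-\sigma_m v_m\|_{L^2(\partial\mathcal{P})}=O(\sigma_m^{-\varepsilon})$ for some $\varepsilon>0$. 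The extra subtlety in the exceptional case is that a single quasimode is a good approximation only on one component $\mathcal{Y}_\kappa$ at a time, forcing the component-by-component construction; handling this gluing correctly, and tracking multiplicities consistently, is the most delicate technical point of the argument.
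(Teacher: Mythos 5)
Your overall strategy---Peters scattering solutions near corners, plane-wave data along edges coupled by the transfer matrices $\mathtt{A}$ and $\mathtt{B}$, cutoff gluing, then an abstract ``almost eigenvalue'' bound for the self-adjoint operator $\mathcal{D}_{\mathcal{P}}$---is indeed the paper's approach. But there is a genuine gap at the step where you invoke the variational/min-max argument: you assert that $v_m$ is ``harmonic in $\mathcal{P}$'' while in the same paragraph you acknowledge that the cutoff gluing introduces a ``harmonicity defect.'' Both cannot hold. The min-max (or the abstract quasimode lemma \cite[Theorem 4.1]{sloshing}) requires feeding $\mathcal{D}_{\mathcal{P}}$ an \emph{exact} boundary trace of a harmonic function, since $\mathcal{D}_{\mathcal{P}}(v_m|_{\partial\mathcal{P}})$ equals $\partial_n v_m$ only when $\Delta v_m=0$. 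The paper's quasimodes $v_m$ are \emph{not} harmonic: they satisfy the boundary conditions to high precision but carry interior error $\|\Delta v_m\|_{L^2}\ne 0$ from the cutoffs. The crucial missing ingredient is the correction step (Lemma~\ref{lem:improvement} and its use in the proof of Theorem~\ref{thm:approx1}): one solves an under-determined Poisson problem to produce $w_m$ with $\Delta w_m=-\Delta v_m$ so that $\tilde v_m=v_m+w_m$ is genuinely harmonic, with quantitative $H^1$-traces of $w_m$ on $\partial_S\mathcal{P}$ controlled by $\|\Delta v_m\|_{L^2}$. Without this bridge, one has no operator $\mathcal{D}_\mathcal{P}$ identity to plug the quasimode into, and the claimed error window $\epsilon_m$ does not follow.

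Two further points. First, your plane-wave ansatz along a curved edge $I_j$ is stated as ``$\mathbf{h}_j\cdot\mathbf{e}(\sigma_m s_j)$ harmonically extended,'' but this is backwards: the harmonic extension of a boundary oscillation does not satisfy the Steklov condition $\partial_n u=\sigma_m u$ along the side. The paper instead uses the ansatz $w_{\sigma,M}(s,t)=\er^{\sigma\omega_M(s,t)}$ built from a truncated transport equation \eqref{eq:complextransport}--\eqref{eq:defofomegan}, which satisfies the Steklov condition \emph{exactly} but is only approximately harmonic, with interior error of order $\sigma^2 t^M\er^{-c\sigma t}$. Second, the reference to the layer-potential argument of Section~\ref{sec:layer} for the fully curvilinear quasimode is misplaced: quasimodes for fully curvilinear polygons are still constructed directly in Section~\ref{sec:quasieigenvalues} (Theorem~\ref{thm:fullcurvquasi}) by conformally straightening only the Peters \emph{remainder} terms; the layer-potential machinery of Section~\ref{sec:layer} is used for the distinct problem of transporting the \emph{enumeration} $i_m=m$ from partially to fully curvilinear polygons, not for building quasimodes.
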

We will prove Theorem \ref{thm:approx} by constructing an appropriate sequence of \emph{quasimodes} which we first define in a very general setting. 

Let $\mathcal{P}$ be a curvilinear polygon with all angles in $\Pi$. Suppose that $\partial\mathcal{P}$ is decomposed in the union $\partial_S\mathcal{P}\sqcup\partial_D\mathcal{P}\sqcup\partial_N\mathcal{P}$, where each of  $\partial_S\mathcal{P}$, $\partial_D\mathcal{P}$ and $\partial_N\mathcal{P}$ are unions of the boundary arcs (and thus meet only at the vertices), with $\partial_S\mathcal{P}$ being non-empty. 

Consider in $\mathcal{P}$ a mixed Steklov-Dirichlet-Neumann eigenvalue problem
\begin{equation}\label{eq:SDNproblem}
\Delta u =0\quad \text{in }\mathcal{P},\qquad \frac{\partial u}{\partial n}=\lambda u\quad \text{on }\partial_S \mathcal{P},\qquad u=0\quad \text{on }\partial_D \mathcal{P},
\qquad \frac{\partial u}{\partial n}=0\quad \text{on }\partial_N\mathcal{P},
\end{equation}
and denote its eigenvalues and the corresponding eigenvectors by $\lambda_m$, $u_m$, where $m=1,2,\dots$, and 
\[
\|u_m\|_{L^2(\partial_S\mathcal{P})}=1.
\]

\begin{remark} Zigzag problems \eqref{eq:gensloshing} are just special cases of \eqref{eq:SDNproblem}, and we can define the partial Dirichlet-to-Neumann map
$\mathcal{D}_{\mathcal{P},\partial_S \mathcal{P}}$ for \eqref{eq:SDNproblem} analogously to \eqref{eq:gensloshingDN}.
\end{remark}

\begin{definition}\label{def:quasimodes} A sequence of functions $\{v_m\}\subset H^2(\mathcal{P})$ with $\|v_m\|_{L^2(\partial_S \mathcal{P})}=1$,  is called a sequence of \emph{quasimodes} corresponding to a monotonically converging to $+\infty$ sequence of quasi-eigenvalues $\{\sigma_m\}$ for the problem \eqref{eq:SDNproblem} if the three non-negative number sequences $\varepsilon^{(j)}_m$, $j=1,2,3$, defined by 
\[
\begin{split}
\varepsilon^{(1)}_m&:=\left\|\frac{\partial v_m}{\partial n}-\sigma_m v_m\right\|_{L^2(\partial_S \mathcal{P})},\\ 
\varepsilon^{(2)}_m&:=\left\|\frac{\partial v_m}{\partial n}\right\|_{L^2(\partial_N \mathcal{P})}+\|v_m\|_{H^1(\partial_D \mathcal{P})},\\ 
\varepsilon^{(3)}_m&:=(\sigma_m+1)\|\Delta v_m\|_{L^2(\mathcal{P})},
\end{split}
\]
all converge to zero as $m\to\infty$. Moreover, for $\delta_m$ a given sequence converging to zero, we say $\{v_m\}$ are quasimodes of \emph{order} $\delta_m$ if there exists a constant $C>0$ independent of $m$ such that
\[\varepsilon_m^{(1)}+\varepsilon_m^{(2)}+\varepsilon_m^{(3)}\leq C\delta_m.\]
\end{definition}

The point of this definition is the following approximation result:
\begin{theorem}\label{thm:approx1} Suppose that there exist sequences of quasi-eigenvalues $\{\sigma_m\}$ and quasimodes $\{v_m\}$, of order $\delta_m$, for the problem \eqref{eq:SDNproblem}. Then there exist a sequence $\{i_m\}$ of non-negative integers and a sequence of functions $\{\tilde u_{m}\}$ such that
\[
|\sigma_m-\lambda_{i_m}|\le C\delta_m\qquad\text{and}\qquad \|v_m-\tilde u_m\|_{L^2(\partial_S\mathcal{P})}\le C\sqrt{\delta_m},
\]
with $C>0$ a constant independent of $m$, and with each $\tilde u_m$ being a linear combination of eigenfunctions of \eqref{eq:SDNproblem} with eigenvalues in the interval $[\sigma_m - C\sqrt{\delta_m},\sigma_m + C\sqrt{\delta_m}]$.
\end{theorem}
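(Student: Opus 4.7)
The plan is to show that the boundary trace $f_m := v_m|_{\partial_S\mathcal{P}}$ is, up to a small defect, an eigenfunction of the partial Dirichlet-to-Neumann operator $\mathcal{D} := \mathcal{D}_{\mathcal{P},\partial_S\mathcal{P}}$ associated with \eqref{eq:SDNproblem}, and then invoke standard self-adjoint spectral theory. To begin, I would correct $v_m$ into the domain of $\mathcal{D}$: let $w_m$ be the unique harmonic function in $\mathcal{P}$ with $w_m = f_m$ on $\partial_S\mathcal{P}$, $w_m = 0$ on $\partial_D\mathcal{P}$, and $\partial w_m/\partial n = 0$ on $\partial_N\mathcal{P}$, so that by construction $\mathcal{D} f_m = \partial w_m/\partial n$ on $\partial_S\mathcal{P}$. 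Setting $\phi_m := w_m - v_m$, we have $\Delta\phi_m = -\Delta v_m$ in $\mathcal{P}$, $\phi_m = 0$ on $\partial_S\mathcal{P}$, $\phi_m = -v_m$ on $\partial_D\mathcal{P}$, and $\partial\phi_m/\partial n = -\partial v_m/\partial n$ on $\partial_N\mathcal{P}$.

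The key analytic step is to bound $\|\partial\phi_m/\partial n\|_{L^2(\partial_S\mathcal{P})}$, which combined with $\epsilon_m^{(1)}$ will give $\|\mathcal{D} f_m - \sigma_m f_m\|_{L^2(\partial_S\mathcal{P})} \le C\delta_m$. I would proceed by duality: for a test function $g$ on $\partial_S\mathcal{P}$, let $G_g$ be its mixed harmonic extension (zero Dirichlet on $\partial_D\mathcal{P}$, zero Neumann on $\partial_N\mathcal{P}$); Green's identity applied to $(\phi_m, G_g)$ yields
\begin{equation*}
\int_{\partial_S\mathcal{P}} g\,\frac{\partial \phi_m}{\partial n}\,\dr s = -\int_{\mathcal{P}} G_g\,\Delta v_m\,\dr A + \int_{\partial_N\mathcal{P}} G_g\,\frac{\partial v_m}{\partial n}\,\dr s - \int_{\partial_D\mathcal{P}} v_m\,\frac{\partial G_g}{\partial n}\,\dr s.
\end{equation*}
Each of the three terms on the right is then bounded by $C\delta_m\|g\|_{L^2(\partial_S\mathcal{P})}$: the first via $\|G_g\|_{L^2(\mathcal{P})}\,\|\Delta v_m\|_{L^2(\mathcal{P})} \le C\|g\|\cdot\epsilon_m^{(3)}/(\sigma_m+1)$, the second via the $L^2(\partial_N\mathcal{P})$ trace norm of $G_g$ paired with $\partial v_m/\partial n$ (whose norm is in $\epsilon_m^{(2)}$), and the third via duality between $v_m|_{\partial_D\mathcal{P}} \in H^{1/2}(\partial_D\mathcal{P})$ (controlled by $\epsilon_m^{(2)}$) and $\partial G_g/\partial n \in H^{-1/2}(\partial_D\mathcal{P})$. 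Taking the supremum over unit $g$ yields the desired bound on $\|\partial \phi_m/\partial n\|_{L^2(\partial_S\mathcal{P})}$.

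Once $\|\mathcal{D} f_m - \sigma_m f_m\|_{L^2(\partial_S\mathcal{P})} \le C\delta_m$ is established with $\|f_m\|_{L^2(\partial_S\mathcal{P})} = 1$, the rest is pure Hilbert space spectral theory. Expanding $f_m = \sum_k c_k u_k|_{\partial_S\mathcal{P}}$ in the orthonormal basis of eigenfunction traces, Parseval gives $\sum_k (\lambda_k - \sigma_m)^2 |c_k|^2 \le C^2 \delta_m^2$. Choosing $i_m$ to minimize $|\lambda_k - \sigma_m|$ yields $|\sigma_m - \lambda_{i_m}| \le C\delta_m$. For the eigenfunction approximation, set
\begin{equation*}
\tilde u_m := \sum_{k\,:\,|\lambda_k - \sigma_m| \le C\sqrt{\delta_m}} c_k u_k;
\end{equation*}
a Chebyshev-type estimate gives $\sum_{k\,:\,|\lambda_k - \sigma_m| > C\sqrt{\delta_m}} |c_k|^2 \le \delta_m$, whence $\|v_m - \tilde u_m\|_{L^2(\partial_S\mathcal{P})} \le \sqrt{\delta_m}$.

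The main obstacle is the elliptic regularity required for $G_g$: one needs the continuity of $g \mapsto G_g$ from $L^2(\partial_S\mathcal{P})$ into $H^1(\mathcal{P})$, together with well-defined traces of $G_g$ and its conormal derivative in the appropriate dual spaces on $\partial_D\mathcal{P}$ and $\partial_N\mathcal{P}$. Since $\mathcal{P}$ is a curvilinear polygon, solutions of mixed Dirichlet-Neumann-Robin problems are generally not $H^2$ near vertices, so $\partial G_g/\partial n$ lives on $\partial_D\mathcal{P}$ only as a distribution in $H^{-1/2}$, and its pairing with $v_m|_{\partial_D\mathcal{P}}$ must be interpreted via the corresponding duality. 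The required well-posedness with continuous dependence is standard on Lipschitz domains once all internal angles lie in $(0,\pi)$, and once these estimates are in hand the argument proceeds as above.
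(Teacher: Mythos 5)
Your proof is correct in outline and tracks the paper's proof closely in substance, while reorganizing it via a transpose/duality argument. The paper directly constructs a correction function $w_m$ (their Lemma~\ref{lem:improvement}): they first handle the Poisson inhomogeneity $\Delta v_m$ by solving $\Delta w_1 = \widetilde{\Delta v_m}$ on a large disk (so $w_1 \in H^2$), then apply Brown's estimate for the mixed Dirichlet--Neumann problem on a Lipschitz domain to $w_2 = w - w_1$ (which is \emph{harmonic} with $L^2$ Neumann data and $H^1$ Dirichlet data), and add. This yields a \emph{forward} estimate on $\partial w/\partial n|_{\partial_S\mathcal{P}}$. You instead test $\partial\phi_m/\partial n$ against $g \in L^2(\partial_S\mathcal{P})$ and reduce everything to properties of the adjoint solution $G_g$. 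These two routes are transposes of each other and rest on exactly the same Lipschitz regularity input (Brown/Verchota). The spectral-theoretic conclusion is also done differently — you carry out the Parseval/Chebyshev truncation explicitly, while the paper cites a general abstract lemma from the companion paper — but the two are equivalent.

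Two points deserve a flag. First, a minor slip: since $\epsilon_m^{(2)}$ contains $\|v_m\|_{H^1(\partial_D\mathcal{P})}$, the natural pairing on $\partial_D\mathcal{P}$ is $H^1 \times H^{-1}$, not $H^{1/2}\times H^{-1/2}$; you should state that you need $\|\partial G_g/\partial n\|_{(H^1(\partial_D\mathcal{P}))^*} \leq C\|g\|_{L^2(\partial_S\mathcal{P})}$. Second, and more substantively, the ``main obstacle'' you identify — namely, that $g\mapsto G_g$ with $L^2$ Dirichlet data on $\partial_S\mathcal{P}$ must be bounded into $L^2(\mathcal{P})$, $L^2(\partial_N\mathcal{P})$, and (for the conormal derivative) $(H^1(\partial_D\mathcal{P}))^*$ — \emph{is} the entire content of the paper's Lemma~\ref{lem:improvement}, so calling these estimates ``standard'' and moving on understates where the work is. In particular, for $g$ only in $L^2$ the harmonic extension $G_g$ need not lie in $H^1(\mathcal{P})$, so even defining $\partial G_g/\partial n$ on $\partial_D\mathcal{P}$ distributionally requires a density argument and a uniform bound, which takes you straight back to the Lipschitz theory of Brown. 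The paper's organization — removing the $\Delta v_m$ term via a Poisson problem with full Dirichlet conditions on a ball, so that Brown's theorem is invoked only for a homogeneous equation with regular ($L^2$ Neumann, $H^1$ Dirichlet) data — avoids working with $L^2$ Dirichlet extensions altogether, and for that reason is somewhat cleaner to carry out rigorously.
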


\begin{remark} Later on in sections \ref{sec:completeness} and \ref{sec:layer} we will prove, for Steklov curvilinear polygons and zigzag domains, that $i_m=m$ under an appropriate choice of enumeration. 
\end{remark}

Assuming that quasimodes have been constructed, Theorem \ref{thm:approx1} almost immediately implies Theorem \ref{thm:approx}. The only detail that remains is to show that the sequence $\{i_m\}$ may be chosen non-decreasing. This can be done via the following manoeuvre: let
\[
\varepsilon_m:=\sup_{k\geq m}C\delta_k,
\]
with $C$ as in Theorem \ref{thm:approx1}. Observe that $\varepsilon_m$ is now a decreasing sequence converging to zero. Now, for each $m$, the interval $(\sigma_m-\varepsilon_m,\sigma_m+\varepsilon_m)$ must contain at least one $\lambda_i$, and we redefine $i_m$ by letting $i_m$ be the minimal index among such $\lambda_i$. We claim that $\{i_m\}$, defined in this way, is a non-decreasing sequence. Indeed since $\{\sigma_m\}$ is increasing then $\sigma_{m-1}-\varepsilon_{m-1}\le\sigma_m-\varepsilon_m$. Therefore, the interval $(\sigma_m-\varepsilon_m,\sigma_m+\varepsilon_m)$ cannot contain any $\lambda_i$ which both fails to be an element of $(\sigma_{m-1}-\varepsilon_{m-1},\sigma_{m-1}+\varepsilon_{m-1})$ and which is smaller than all $\lambda_i$ in the latter interval. Thus $i_m\ge i_{m-1}$, so $\{i_m\}$ is non-decreasing.

The proof of Theorem \ref{thm:approx} has thus been reduced to the proof of Theorem \ref{thm:approx1} and the construction of quasimodes for $\mathcal{P}$ satisfying Definition \ref{def:quasimodes}.

\subsection{Boundary quasimodes. Justification of quasi-eigenvalue definitions}\label{subsec:bqm}  Before proceeding to the proof of Theorem \ref{thm:approx}, we give a semi-informal justification of the quasi-eigenvalue Definitions \ref{def:quasi} and \ref{def:quasiexc}, in the case of a purely Steklov polygon $P=P(\balpha,\bell)$ with $\partial_D P=\partial_N P=\varnothing$.

We introduce on $\partial P$ near each vertex $V_j$ the local coordinate $s_j$ such that $s_j$ is zero at $V_j$, negative on the side $I_{j}$, and positive on the side $I_{j+1}$. Note that on each side $I_j$ joining $V_{j-1}$ and $V_j$ we have effectively two coordinates: the coordinate $s_j$ running from $-\ell_j$ to $0$, and the coordinate $s_{j-1}$ running from $0$ to $\ell_j$, related as
\begin{equation}\label{eq:sjjm1}
s_j=s_{j-1}-\ell_j.
\end{equation}

\begin{figure}[htb]
\begin{center}
\includegraphics{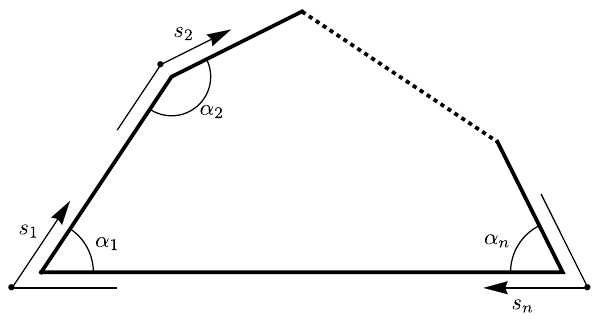}
\end{center}
\caption{A straight polygon with local coordinates\label{fig:polygon_coords}}
\end{figure}

Let $\mathcal{V}_j$ be the orientation-preserving isometry of the complex plane which maps the sector  $V_{j-1}V_jV_{j+1}$ into the sector $\Sct{\alpha_j}$ with the vertex at the origin. We will seek the quasimode waves $v_{\sigma}(z)$ of our problem  \eqref{eq:steklovproblem}.

We first consider the situation when all angles are non-exceptional. Near each vertex $V_j$, $v_{\sigma}(z)$ will be closely approximated by a specific scattering Peters solution constructed in Section \ref{sec:sector}. Specifically we will have, for $z$ in a neighbourhood of $V_j$,
\begin{equation}\label{eq:Phi}
v_{\sigma}(z)= \Phi_{\alpha_j}^{(\mathbf c_{j,\inn},\mathbf c_{j,\out})}(\sigma\mathcal{V}_j z)+o(1)\text{ as }\sigma\to\infty,
\end{equation}
where suitable values of the quasi-eigenvalues $\sigma$ and the coefficients $\mathbf{c}_{j,\inn}, \mathbf{c}_{j,\out}\in\mathbb{C}^2$ are to be determined. By  Theorem \ref{thm:mainthmssector}(a), the vectors $\mathbf{c}_{j,\inn}, \mathbf{c}_{j,\out}$ should be related by 
\begin{equation}\label{eq:coutAcin}
\mathbf c_{j,\out}:=\mathtt{A}(\alpha_j)\mathbf c_{j,\inn}
\end{equation}
to ensure the existence of the scattering Peters solutions. Note that these rescaled scattering Peters solutions satisfy Steklov boundary conditions on the sides $I_j$ and $I_{j+1}$ with parameter $\sigma$. 

As a consequence of \eqref{eq:Wboundary} and Theorem \ref{thm:mainthmssector},
\[
\left.v_{\sigma}\right|_{\partial\Omega}=\Psi+o(1)\quad\text{ as }\sigma\to\infty,
\]
where
\begin{equation}\label{eq:Psijsj}
\left.\Psi\right|_{I_j}(s_j)=: \Psi_{j}(s_j)= \mathbf{c}_{j,\inn}\cdot\mathbf{e}(\sigma s_j),
\end{equation}
or, alternatively, using the coordinate $s_{j-1}$,
\begin{equation}\label{eq:Psijsj1}
\Psi_{j}(s_{j-1})=\mathbf{c}_{j-1,\out} \cdot\mathbf{e}(\sigma s_{j-1}).
\end{equation}
If we want \eqref{eq:Psijsj} and \eqref{eq:Psijsj1} to match, we must have, with account of \eqref{eq:sjjm1},
\begin{equation}\label{eq:cpm}
\mathbf{c}_{j,\inn}=\mathtt{B}(\ell_j,\sigma) \mathbf{c}_{j-1,\out},
\end{equation}

We call \eqref{eq:Psijsj}, or equivalently the vector $\mathbf{c}_{j,\inn}$, the \emph{boundary quasi-wave incoming into} $V_j$ (from $V_{j-1}$) and  \eqref{eq:Psijsj1}, or equivalently the vector $\mathbf{c}_{j-1,\out}$, the \emph{boundary quasi-wave outgoing from} $V_{j-1}$ (towards $V_j$). In order for our scattering Peters solutions on $I_j$ to match, these must be related by \eqref{eq:cpm}. 

This formulation allows us to think of our problem as a transfer problem. Consider a boundary quasi-wave $\mathbf{b}:=\mathbf{c}_{n,\out}$ outgoing from the vertex $V_n$  towards $V_1$. It arrives at the vertex $V_1$ as an incoming quasi-wave $\mathbf{c}_{1,\inn}=\mathtt{B}(\ell_1,\sigma)\mathbf{b}$ and, according to Section \ref{sec:sector} and \eqref{eq:Phi}, leaves $V_1$ towards $V_2$ as an outgoing quasi-wave 
\[
\mathbf{c}_{1,\out}=\mathtt{A}(\alpha_1)\mathbf{c}_{1,\inn}=\mathtt{A}(\alpha_1)\mathtt{B}(\ell_1,\sigma)\mathbf{b}.
\] 
It then arrives at $V_2$ as an incoming quasi-wave 
\[
\mathbf{c}_{2,\inn}=\mathtt{B}(\ell_2,\sigma)\mathtt{A}(\alpha_1)\mathtt{B}(\ell_1,\sigma)\mathbf{b},
\] 
and leaves $V_2$ towards $V_3$ as an outgoing quasi-wave 
\[
\mathbf{c}_{2,\out}=\mathtt{A}(\alpha_2)\mathtt{B}(\ell_2,\sigma)\mathtt{A}(\alpha_1)\mathtt{B}(\ell_1,\sigma)\mathbf{b}.
\] 
Continuing the process, we conclude that it arrives at $V_n$ as an incoming quasi-wave \[\mathbf{c}_{n,\inn}=\mathtt{B}(\ell_n,\sigma)\prod\limits_{j=n-1}^1\mathtt{A}(\alpha_j)\mathtt{B}(\ell_j,\sigma)\mathbf{b}\] and leaves $V_n$ towards $V_1$ as an outgoing quasi-wave \[\mathbf{c}_{n,\out}=\prod\limits_{j=n}^1\mathtt{A}(\alpha_j)\mathtt{B}(\ell_j,\sigma)\mathbf{b}.\] This must match the original outgoing quasi-wave $\mathbf b$, which imposes a quantisation condition on $\sigma$:
\[
\mathtt{T}(\balpha,\bell,\sigma)\mathbf{b}=\mathbf{b},
\]
thus justifying Definition \ref{def:quasi}.

Let us now deal with the situation when there are $K$ exceptional angles $\alpha_{E_1}$,\dots, $\alpha_{E_K}=\alpha_n$. We will seek the quasimodes again in the form \eqref{eq:Phi}. 
At  an exceptional vertex $V_{E_\kappa}$, $\kappa=1,\dots,K$ the incoming and outgoing boundary quasi-waves
must satisfy, according  to Theorem \ref{thm:mainthmssector}, the
conditions \eqref{eq:exceptionalcondition}, see also Remark \ref{rem:Cparallel}, which take the form
\begin{equation}\label{eq:exceptionalconditionmark2}
\mathbf{c}_{E_\kappa,\inn} \perp \operatorname{Span}_\mathbb{C}\left\{\Cvect\left(\alpha_{E_\kappa}\right)\right\},\qquad
\mathbf{c}_{E_\kappa,\out} \in \operatorname{Span}_\mathbb{C}\left\{\Cvect\left(\alpha_{E_\kappa}\right)\right\}.
\end{equation}
Noting that the transfer along each exceptional boundary component joining $V_{E_{\kappa-1}}$ and $V_{E_{\kappa}}$ leads, with account of re-labelling as in Section \ref{sec:statements}, to
\[
\mathbf c_{E_\kappa,\inn}=\mathtt{U}({\balpha'}^{(\kappa)},\bell^{(\kappa)},\sigma)\mathbf c_{E_{\kappa-1},\out},
\]
we arrive at \eqref{excepeq}, thus justifying Definition \ref{def:quasiexc}.

Although this justification was done in case of an exact polygon $P$, it also gives the correct heuristics for a curvilinear polygon. The construction of quasimodes is more difficult, but as we see in the next few sections, it can be done. 

In order to assist in this construction, we define some new notation. Observe that for each $m\in\mathbb N$, the quantisation condition gives a quasi-eigenvalue $\sigma_m$, and also a corresponding collection of vectors $\mathbf{c}_{j,\inn}$ and $\mathbf{c}_{j,\out}$ (which also depend on $m$) which satisfy the transfer conditions \eqref{eq:cpm} along each side and either \eqref{eq:coutAcin} or \eqref{eq:exceptionalconditionmark2} at each corner depending on whether or not the angle is exceptional. These vectors $\mathbf{c}_{j,\inn}$ and $\mathbf{c}_{j,\out}$ are the solutions of a system of linear equations, and if the multiplicity of $\sigma_m$ is one then they are determined up to an overall multiplicative constant. These define a boundary quasi-wave.
\begin{definition}\label{def:bqw} 
For each $m\in\mathbb N$, let $\Psi^{(m)}$ be a boundary quasi-wave, defined by \eqref{eq:Psijsj}, associated to the quasi-eigenvalue $\sigma_m$, normalised so that $\|\Psi^{(m)}\|_{\partial P}=1$. The restrictions to $I_j$ of each $\Psi^{(m)}$ are denoted by $\Psi^{(m)}_j$.
\end{definition}
Observe this definition may also be applied if the multiplicity of $\sigma_m$ is greater than one, as then the quasi-waves form a linear space  of dimension greater than one. In this situation we simply pick $\Psi^{(m)}$ to be any boundary quasi-wave which is in that space, is normalised, and is orthogonal to all previous choices of boundary quasi-waves for the same quasi-eigenvalue.

\begin{remark}\label{rem:evfromConj} We note that all the vectors  $\mathbf{c}_{j,\inn}$ and $\mathbf{c}_{j,\out}$ may be chosen from $\Conj$. In the non-exceptional case, this is true for $\mathbf{b}:=\mathbf{c}_{n,\out}$ by Lemma \ref{lem:Tproperties}(c), and for the rest of the vectors by the fact that matrices $\mathtt{A}(\alpha)$ and $\mathtt{B}(\ell)$ preserve $\Conj$. The exceptional case
is similar. 
\end{remark}

\begin{remark} Using a similar scheme, we may also define quasi-frequencies $\sigma_m$ and boundary quasi-waves $\Psi^{(m)}$ for the mixed problem \eqref{eq:SDNproblem}. The quasi-waves $\Psi^{(m)}$ are supported on $\partial_S\mathcal{P}\cup\partial_N\mathcal{P}$ and vanish on  $\partial_D\mathcal{P}$. Suppose for the moment that $\partial_S\mathcal{P}$ is a single connected component, without loss of generality beginning at vertex $V_1$ and ending at vertex $V_k$. We define $\Psi^{(m)}$ by specifying collections $\mathbf{c}_{j,\inn}$ and $\mathbf{c}_{j,\out}$ as before and then using \eqref{eq:Psijsj}. Along each side, we have the transfer conditions \eqref{eq:cpm}, and at each non-endpoint vertex we have either \eqref{eq:coutAcin} or \eqref{eq:exceptionalconditionmark2} depending on whether or not the angle is exceptional. 

However, at the endpoint vertices $V_1$ and $V_k$, something different happens: our $\Psi^{(m)}$ must be chosen to match the appropriate Peters \emph{sloping beach} solution, either Dirichlet or Neumann. Note that these are sloping beach solutions in a sector $\Sct{\alpha}$ rather than $\Sct{\alpha/2}$, so the terminology in Lemma \ref{lem:propsofsymantisym} needs to be adjusted. Specifically, we consider the vectors obtained by taking \eqref{eq:gdef} and replacing $\alpha$ with $2\alpha$ throughout. These are
\begin{equation}\label{eq:rem48}
\mathbf{g}_{\out,N,2\alpha}=\begin{pmatrix}\er^{-\ir(\frac{\pi}{4}-\frac{\pi^2}{8\alpha})}\\ \er^{\ir(\frac{\pi}{4}-\frac{\pi^2}{8\alpha})}\end{pmatrix},\qquad 
\mathbf{g}_{\out,D,2\alpha}=\begin{pmatrix}\er^{-\ir(\frac{\pi}{4}+\frac{\pi^2}{8\alpha})}\\ \er^{\ir(\frac{\pi}{4}+\frac{\pi^2}{8\alpha})}\end{pmatrix},
\end{equation}
with, as in \eqref{eq:gdef},
\[\mathbf{g}_{\inn,N,2\alpha}=\overline{\mathbf{g}_{\out,N,2\alpha}},\qquad \mathbf{g}_{\inn,D,2\alpha}=-\overline{\mathbf{g}_{\out,N,2\alpha}}.\]
Now suppose that $V_1\in\partial_{\aleph}\mathcal{P}$ and $V_k\in\partial_{\beth}\mathcal{P}$, with $\aleph,\beth\in\{N,D\}$. We require
\begin{equation}\label{eq:imposeendpointconditions}\mathbf{c}_{1,\out}\in\Span_{\mathbb R}(\mathbf{g}_{\out,\aleph,2\alpha}),\qquad \mathbf{c}_{k,\inn}\in\Span_{\mathbb R}(\mathbf{g}_{\inn,\beth,2\alpha}).
\end{equation}
Imposing the conditions \eqref{eq:imposeendpointconditions}, in addition to all the conditions previously discussed for the sides and the non-endpoint vertices, leads to a quantization condition for $\sigma$, which yields a sequence of quasi-eigenvalues $\sigma_m$, each with an accompanying collection of $\mathbf{c}_{j,\inn}$ and $\mathbf{c}_{j,\out}$. These may then be used to define $\Psi^{(m)}$ as before. In the event that $\partial_S\mathcal{P}$ consists of multiple connected components, each component is treated separately and independently.
\end{remark}

\subsection{Proof of Theorem \ref{thm:approx1}}\label{subsec:proofapprox1}

\begin{lemma}\label{lem:improvement} Let $\mathcal{P}$ be a curvilinear polygon with all  angles in $\Pi$ and the same conditions as before: namely, that $\partial\mathcal{P}=\partial_S\mathcal{P}\sqcup\partial_D\mathcal{P}\sqcup\partial_N\mathcal{P}$, where each of  $\partial_S\mathcal{P}$, $\partial_D\mathcal{P}$ and $\partial_N\mathcal{P}$ are the unions of the boundary arcs (and thus meet only at the vertices), with $\partial_S\mathcal{P}$ being non-empty.

Then the under-determined problem
\begin{equation}\label{eq:adjunctproblemgeneral}
\begin{dcases} \Delta w=f_1(z)&\quad\textrm{ on }\mathcal{P},\\ 
\frac{\partial w}{\partial n}=f_2(z)&\quad\textrm{ on }\partial_N\mathcal{P},\\ 
w=f_3(z)&\quad\text{ on }\partial_D\mathcal{P},
\end{dcases}
\end{equation}
where $f_1\in L^2(\mathcal{P})$, $f_2\in L^2(\partial_N\mathcal{P})$, and $f_3\in H^1(\partial_D\mathcal{P})$, has a solution $w(z)$ which satisfies the following estimates on $\partial_S\mathcal{P}$,
\begin{align}
\label{eq:estimate1forlemma}
\|w\|_{H^1(\partial_S\mathcal{P})}&\leq C\|f_1\|_{L^2(\mathcal{P})},\\
\label{eq:estimate2forlemma}
\left\|\frac{\partial w}{\partial n}\right\|_{L^2(\partial_S\mathcal{P})}&\leq C\left(\|f_1\|_{L^2(\mathcal{P})}+\|f_2\|_{L^2(\partial_N\mathcal{P})}+\|f_3\|_{H^1(\partial_D\mathcal{P})}\right),
\end{align}
where $C$ are constants depending only on $\mathcal{P}$, $\partial_S\mathcal{P}$, $\partial_N\mathcal{P}$, and $\partial_D\mathcal{P}$. In fact the constant in \eqref{eq:estimate1forlemma} depends only on the diameter of $\mathcal{P}$.
\end{lemma}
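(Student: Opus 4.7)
The plan is to reduce the under-determined inhomogeneous problem to a homogeneous Laplace equation with adjusted boundary data by subtracting off a Newtonian potential, then exploit the freedom on $\partial_S\mathcal{P}$ by imposing a homogeneous Dirichlet condition there.

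First, extend $f_1$ by zero to $\mathbb{R}^2$ and set $w_1(z) := \frac{1}{2\pi} \int_{\mathcal{P}} \log|z-y|\,f_1(y)\,\dr A(y)$, the logarithmic Newtonian potential. Standard Calder\'on--Zygmund theory combined with Sobolev embedding yields $\Delta w_1 = f_1$ in $\mathcal{P}$ together with $\|w_1\|_{H^2(\mathcal{P})} \leq C\|f_1\|_{L^2(\mathcal{P})}$, where $C$ depends only on the diameter of $\mathcal{P}$ (since the potential is computed on the extension-by-zero). Look for $w = w_1 + v$, so that $v$ must be harmonic in $\mathcal{P}$ with $v|_{\partial_D\mathcal{P}} = f_3 - w_1|_{\partial_D\mathcal{P}}$ and $(\partial v/\partial n)|_{\partial_N\mathcal{P}} = f_2 - (\partial w_1/\partial n)|_{\partial_N\mathcal{P}}$, while the boundary datum on $\partial_S\mathcal{P}$ is at our disposal; I impose the homogeneous Dirichlet condition $v \equiv 0$ on $\partial_S\mathcal{P}$.

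The resulting mixed Dirichlet--Neumann problem for $v$ (with Dirichlet data on $\partial_D\mathcal{P} \cup \partial_S\mathcal{P}$ and Neumann data on $\partial_N\mathcal{P}$) is solvable in $H^1(\mathcal{P})$ by Lax--Milgram. Since all interior angles of $\mathcal{P}$ are strictly less than $\pi$, Grisvard-type regularity results for mixed boundary problems on curvilinear polygons upgrade this to a local $H^{3/2+s}$ estimate away from transition vertices and, globally, deliver
\[
\left\|\frac{\partial v}{\partial n}\right\|_{L^2(\partial_S\mathcal{P})} \leq C\Bigl(\|f_3 - w_1\|_{H^1(\partial_D\mathcal{P})} + \|f_2 - \tfrac{\partial w_1}{\partial n}\|_{L^2(\partial_N\mathcal{P})}\Bigr).
\]
To assemble the estimates, observe that on $\partial_S\mathcal{P}$ we have $w = w_1$ because $v$ vanishes there; thus the trace theorem gives $\|w\|_{H^1(\partial_S\mathcal{P})} = \|w_1\|_{H^1(\partial_S\mathcal{P})} \leq C\|w_1\|_{H^2(\mathcal{P})} \leq C\|f_1\|_{L^2(\mathcal{P})}$, which is exactly \eqref{eq:estimate1forlemma} with a constant depending only on the diameter (as required). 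The triangle inequality, combined with trace estimates for $w_1$, then yields $\|\partial w/\partial n\|_{L^2(\partial_S\mathcal{P})} \leq \|\partial w_1/\partial n\|_{L^2(\partial_S\mathcal{P})} + \|\partial v/\partial n\|_{L^2(\partial_S\mathcal{P})}\leq C(\|f_1\|_{L^2(\mathcal{P})} + \|f_2\|_{L^2(\partial_N\mathcal{P})} + \|f_3\|_{H^1(\partial_D\mathcal{P})})$, establishing \eqref{eq:estimate2forlemma}.

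The main technical obstacle is the regularity analysis of the mixed problem for $v$ at \emph{transition vertices}, where the type of the boundary condition switches (between $\partial_S$, $\partial_D$, and $\partial_N$). At such points the harmonic function $v$ may pick up corner singularities of the form $\rho^{\pi/(2\alpha)}$ even though the interior angle $\alpha < \pi$, so one cannot expect $v \in H^2(\mathcal{P})$ globally. However, since we only require $\partial v/\partial n$ in $L^2$ on $\partial_S\mathcal{P}$ rather than in any stronger Sobolev space, the dominant singular exponents satisfy $\pi/(2\alpha) - 1 > -\tfrac12$, so the singular parts are square-integrable along the boundary, and a Kondratiev-type weighted analysis (or a direct expansion into explicit singular functions at each transition vertex) produces the required bound with constants depending only on the geometry of $\mathcal{P}$ and on the decomposition of $\partial\mathcal{P}$.
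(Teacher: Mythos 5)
Your argument is essentially correct and reaches the same structure as the paper's proof: construct an $H^2$ particular solution $w_1$ to dispose of $f_1$, make $w=w_1$ on $\partial_S\mathcal{P}$ by imposing a homogeneous Dirichlet condition there for the harmonic remainder, and then read off both estimates. The intermediate steps match the paper's almost exactly — e.g., the key observation that \eqref{eq:estimate1forlemma} depends only on $\|w_1\|_{H^2}$ because the remainder vanishes on $\partial_S\mathcal{P}$ is precisely what the paper uses.

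The one place where you take a genuinely different road is the regularity/Rellich estimate for the harmonic remainder $v$ (the paper's $w_2$). You rely on Grisvard/Kondratiev corner-singularity expansions: identify the worst singular exponent $\rho^{\pi/(2\alpha)}$ at a mixed (Dirichlet/Neumann) transition vertex, note $\partial v/\partial n\sim\rho^{\pi/(2\alpha)-1}$ and check square-integrability under $\alpha<\pi$. Your exponent bookkeeping is correct, and the threshold $\alpha<\pi$ you extract is exactly the paper's hypothesis. The paper instead invokes Brown's mixed-problem theorem for Lipschitz domains \cite{brown}, which gives the estimate
\[
\|\nabla w_2\|^2_{L^2(\partial\mathcal{P})}\leq C\left(\|f_4\|^2_{L^2(\partial_N\mathcal{P})}+\|f_5\|^2_{H^1(\partial_D\mathcal{P})}\right)
\]
as a single black-box citation, under the same angle-less-than-$\pi$ hypothesis at Dirichlet/Neumann interfaces. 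The trade-off: Brown's theorem applies directly to curvilinear (Lipschitz) polygons and produces the $L^2(\partial\Omega)$ gradient bound in one step; your Grisvard-type route is more transparent about where the angle restriction enters, but needs an extra argument to cover the curvilinear case (Grisvard's core results are for straight or piecewise-smooth boundaries and require a localization/flattening argument near corners, with uniform control of the lower-order terms generated by the change of variables). If you pursue the corner-expansion route you should spell that localization out; otherwise your write-up is a valid alternative to the paper's proof.
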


%

\begin{proof} Throughout we let $C$ be various constants depending only on $\mathcal{P}$, $\partial_S\mathcal{P}$, $\partial_N\mathcal{P}$, and $\partial_D\mathcal{P}$. The proof proceeds by first dealing with $f_1(z)$, then with $f_2(z)$ and $f_3(z)$.

Let $B$ be a large disk compactly containing $\mathcal P$, and let $\tilde f_1(z)$ be the extension by zero of $f_1(z)$ to a function on $B$. Then certainly $\|\tilde f_1\|_{L^2(B)}=\|f_1\|_{L^2(\mathcal P)}$. By the usual elliptic estimate for the solution of the Poisson problem on a disk with Dirichlet boundary conditions, there exists a function $w_1(z)\in H^2(B)$  vanishing on the boundary $\partial B$, with $\Delta w_1(z)=\tilde f_1(z)$ and
\[
\|w_1\|_{H^2(B)}\leq C \|f_1\|_{L^2(\mathcal P)}.
\]
Now let $G$ be any smooth non-self-intersecting arc in the interior of $B$. Let $\mathbf{n}_G$ be a unit normal vector field along $G$. Then let $\mathbf{V}$ be a vector field on $B$ whose restriction to $G$ is $\mathbf{n}_G$ and which is bounded in the sense that $\mathbf{V}:H^2(B)\to H^1(B)$ is bounded. By the trace theorem,
\[
\|w_1\|_{H^1(G)}+\left\|\frac{\partial w_1}{\partial n}\right\|_{L^2(G)}\leq C\left (\|w_1\|_{H^{3/2}(B)}+\left\|\mathbf{V}w_1\right\|_{H^{1/2}(B)}\right )\leq C\left (\|w_1\|_{H^{2}(B)}+\left\|\mathbf{V}w_1\right\|_{H^{1}(B)}\right ).
\]
Therefore, by the definition of Sobolev norms and the elliptic estimate above,
\begin{equation}\label{eq:interiorarc}
\|w_1\|_{H^1(G)}+\left\|\frac{\partial w_1}{\partial n}\right\|_{L^2(G)}\leq C\|f_1\|_{L^2(\mathcal{P})}.
\end{equation}
We note that \eqref{eq:interiorarc} still holds if $G$ is a finite disjoint union of smooth non-self-intersecting arcs $G_1,\dots, G_i$, and we interpret $H^1(G)$ as the direct sum $H^1(G_1)\oplus \dots\oplus H^1(G_i)$. 
This applies, in particular, with $G=\partial_S\mathcal{P}$, $\partial_N\mathcal{P}$, and $\partial_D\mathcal{P}$, so by combining all three estimates we certainly have
\begin{equation}\label{eq:u1estimate}
\|w_1\|_{H^1(\partial\mathcal{P})}+\left\|\frac{\partial w_1}{\partial n}\right\|_{L^2(\partial\mathcal{P})}\leq C\|f_1\|_{L^2(\mathcal{P})}.
\end{equation}

Now we would like to find a function $w_2$ on $\mathcal{P}$ satisfying
\begin{equation}\label{eq:adjunctproblemgeneralu2}
\begin{dcases} \Delta w_2=0&\quad\textrm{ on }\mathcal{P},\\ 
\frac{\partial w_2}{\partial n}=f_4(z):=f_2(z)-\frac{\partial w_1}{\partial n}(z)&\quad\textrm{ on }\partial_N\mathcal{P},\\ 
w_2=f_5(z):=f_3(z)-w_1(z)&\quad\text{ on }\partial_D\mathcal{P},\\
w_2=0 &\quad\text{ on }\partial_S\mathcal{P}.
\end{dcases}
\end{equation}
Assuming such a function exists, $w=w_1+w_2$ solves \eqref{eq:adjunctproblemgeneral} and $w=w_1$ on $S$, and we will see that this $w$ satisfies \eqref{eq:estimate1forlemma} and \eqref{eq:estimate2forlemma}. 

To construct $w_2$, we can use the theory of boundary value problems on Lipschitz domains developed by Brown \cite{brown}. We have Dirichlet data on $\partial_D\mathcal{P}\cup \partial_S\mathcal{P}$ and Neumann datum on $\partial_N\mathcal{P}$, so our assumption on vertex angles tells us that the angles between components with Dirichlet data and components with Neumann data are less than $\pi$. This is precisely what is needed for the estimates in \cite{brown} to hold. Specifically, since $\partial_S\mathcal{P}\cup\partial_D\mathcal{P}$, the Dirichlet portion of the boundary, is nonempty, the problem \eqref{eq:adjunctproblemgeneralu2}
has a unique solution \cite[Theorem 2.1]{brown}. Moreover, by the same theorem and the discussion after  \cite[formula (2.12)]{brown}, we have the estimate
\[
\|\nabla w_2\|^2_{L^2(\partial\mathcal{P})}\leq C\left(\|f_4\|^2_{L^2(\partial_N\mathcal{P})}+\|\nabla_{\tan}f_5\|^2_{L^2(\partial_D\mathcal{P}))}+\|f_5\|^2_{L^2(\partial_D\mathcal{P})}\right),
\]
where $\nabla_{\tan}$ denotes a tangential derivative along $\partial_N\mathcal{P}$. Note that we have omitted the portion of this estimate involving $\partial_S\mathcal{P}$ because our Dirichlet datum on $\partial_S\mathcal{P}$ is trivial. Using the definition of the $H^1$ norm, and restricting to $\partial_S\mathcal{P}\subset\partial\mathcal P$ on the left hand side, we obtain
\begin{equation}\label{eq:u2estimate}
\left\|\frac{\partial w_2}{\partial n}\right\|^2_{L^2(\partial_S\mathcal{P})}\leq C\left(\|f_4\|^2_{L^2(\partial_N\mathcal{P})}+\|f_5\|^2_{H^1(\partial_D\mathcal{P})}\right).
\end{equation}
This translates to
\begin{equation}\label{eq:u2estimate2}
\left\|\frac{\partial w_2}{\partial n}\right\|^2_{L^2(\partial_S\mathcal{P})}\leq C\left(\left\|f_2-\frac{\partial w_1}{\partial n}\right\|^2_{L^2(\partial_N\mathcal{P})}+\|f_3-w_1\|^2_{H^1(\partial_D\mathcal{P})}\right).
\end{equation}
Removing the squares, using $\sqrt{a^2+b^2}\leq a+b$, gives
\[\left\|\frac{\partial w_2}{\partial n}\right\|_{L^2(\partial_S\mathcal{P})}\leq C\left(\|f_2\|_{L^2(\partial_N\mathcal{P})}+\left\|\frac{\partial w_1}{\partial n}\right\|_{L^2(\partial_N\mathcal{P})}+\|f_3\|_{H^1(\partial_D\mathcal{P})}+\|w_1(z)\|_{H^1(\partial_D\mathcal{P})}\right).\]
Applying \eqref{eq:interiorarc} with $G=\partial_N\mathcal{P}$ and $G=\partial_D\mathcal{P}$ gives
\begin{equation}\label{eq:u2estimate3}
\left\|\frac{\partial w_2}{\partial n}\right\|_{L^2(\partial_S\mathcal{P})}\leq C\left(\|f_2\|_{L^2(\partial_N\mathcal{P})}+\|f_3\|_{H^1(\partial_D\mathcal{P})}+\|f_1\|_{L^2(\mathcal{P})}\right).
\end{equation}
Since $w=w_1$ on $\partial_S\mathcal{P}$, \eqref{eq:estimate1forlemma} is instantaneous from \eqref{eq:u1estimate}. And \eqref{eq:estimate2forlemma} follows from $w=w_1+w_2$, \eqref{eq:u1estimate}, and \eqref{eq:u2estimate3}. This completes the proof of Lemma \ref{lem:improvement}.
\end{proof}

Now we prove Theorem \ref{thm:approx1}. 

\begin{proof}[Proof of Theorem  \ref{thm:approx1}] The idea is to take a sequence of quasimodes, use Lemma \ref{lem:improvement} to correct them to harmonic functions, then use some general linear algebra for the mixed Dirichlet-to-Neumann operator developed in \cite{sloshing} to prove existence of nearby eigenvalues and eigenfunctions of \eqref{eq:SDNproblem}. 

So let $\{v_m\}$ be a sequence of quasimodes of order $\delta_m$ for the problem \eqref{eq:SDNproblem}, with the usual assumptions on $\mathcal{P}$. For each $m$ we use Lemma \ref{lem:improvement} to produce a function $w_{m}$ with $\Delta w_{m}=-\Delta v_{m}$ on $\mathcal{P}$, $\frac{\partial}{\partial n} w_{m}=-\frac{\partial}{\partial n}  v_{m}$ on $\partial_N\mathcal{P}$, and $w_{m}=-v_{m}$ on $\partial_D\mathcal{P}$. As a result, the functions
\[
\tilde v_{m}:=v_m+w_m
\]
are harmonic on $\mathcal{P}$, satisfy Neumann boundary conditions on $\partial_N\mathcal{P}$, and satisfy Dirichlet boundary conditions on $\partial_D\mathcal{P}$. Therefore, if we let $\phi_m=\tilde v_{m}|_{\partial_S\mathcal{P}}$, we have
\[
\mathcal{D}_{\mathcal{P},\partial_S\mathcal{P}}\phi_m=\frac{\partial\tilde v_m}{\partial n}.
\]
By direct computation, on $\partial_S\mathcal{P}$,
\[
\mathcal{D}_{\mathcal{P},\partial_S\mathcal{P}}\phi_{m}-\sigma_m \phi_{m}=\frac{\partial \tilde v_{m}}{\partial n}-\sigma_m \phi_{m}=\left(\frac{\partial v_m}{\partial n}-\sigma_m v_{m}\right)+\frac{\partial w_m}{\partial n}-\sigma_mw_m.
\]
Using the triangle inequality,
\[
\|\mathcal{D}_{\mathcal{P},\partial_S\mathcal{P}}\phi_{m}-\sigma_m \phi_{m}\|_{L^2(\partial_S\mathcal{P})}\leq\left\|\frac{\partial v_m}{\partial n}-\sigma_m v_{m}\right\|_{L^2(\partial_S\mathcal{P})}+\|\frac{\partial w_m}{\partial n}\|_{L^2(\partial_S\mathcal{P})}+\|\sigma_mw_m\|_{L^2(\partial_S\mathcal{P})}.
\]
We can apply the estimates of Lemma \ref{lem:improvement} to bound the second and third terms on the right-hand side. Using \eqref{eq:estimate2forlemma} and \eqref{eq:estimate1forlemma} respectively, along with the definition of $w_m$, yields
\begin{equation}\label{eq:abigimprovementprelim}
\begin{split}
\|\mathcal{D}_{\mathcal{P},\partial_S\mathcal{P}}\phi_{m}-\sigma_m \phi_{m}\|_{L^2(\partial_S\mathcal{P})}\leq&\left\|\frac{\partial v_m}{\partial n}-\sigma_m v_{m}\right\|_{L^2(\partial_S\mathcal{P})}
\\+&C\left(||\Delta v_m||_{L^2(\mathcal P)}+\left\|\frac{\partial v_m}{\partial n}\right\|_{L^2(\partial_N\mathcal{P})}+\|v_m\|_{H^1(\partial_D\mathcal{P})}\right)\\
+&C\sigma_m\|\Delta v_m\|_{L^2(\mathcal{P})}.
\end{split}
\end{equation}
But $\{v_m\}$ are quasimodes of order $\delta_m$. So, using the terminology of Definition \ref{def:quasimodes}, we have
\[
\|\mathcal{D}_{\mathcal{P},\partial_S\mathcal{P}}\phi_{m}-\sigma_m \phi_{m}\|_{L^2(\partial_S\mathcal{P})}\leq \varepsilon_m^{(1)}+C\left(\varepsilon_m^{(3)}+\varepsilon_m^{(2)}\right)+C\varepsilon_m^{(3)}\leq C\delta_m.
\]
Since $\delta_m$ approaches zero as $m\to\infty$, we may apply \cite[Theorem 4.1]{sloshing}, which gives the existence of sequences $\{i_m\}$ and $\{\tilde u_m\}$, with satisfying the eigenvalue bounds in Theorem \ref{thm:approx1}. As we are applying that theorem to $\phi_m$ rather than to $v_m$ directly, the obtained eigenfunction bound appears slightly different from the one we want. We know
\[\|\phi_m-\tilde u_m\|_{L^2(\partial_S\mathcal{P})}\leq C\sqrt{\delta_m}.\]
We want to replace $\phi_m$ with $v_m$. However, \eqref{eq:estimate1forlemma} in Lemma \ref{lem:improvement} shows that 
\[\|v_m-\phi_m\|_{L^2(\partial_S\mathcal{P})}=\|w_m\|_{L^2(\partial_S\mathcal{P})}\leq \|w_m\|_{H^1(\partial_S\mathcal{P})}\leq C\|\Delta v_m\|_{L^2(\mathcal P)}\leq C\varepsilon^{(3)}_m\leq C\delta_m\leq C\sqrt{\delta_m}.\] 
Combining these last two equations using the triangle inequality yields the eigenfunction bound, and with it Theorem \ref{thm:approx1}.
\end{proof}

\begin{remark}\label{rem:keyhole1}We can also consider ``keyhole domains'' $\mathcal P$, that is, domains which have all the same requirements on the angles but for which some components of $\partial\mathcal{P}$ coincide with each other, with opposite orientations. An example is an annulus with a single, straight cut from the outside to the inside. For these domains we may define quasimodes as in Definition \ref{def:quasimodes}. We claim that Theorem \ref{thm:approx1} and Theorem \ref{thm:approx} still hold in this setting. Indeed, the only difficulty is in the proof of Lemma \ref{lem:improvement}, which cannot be repeated verbatim as the argument with the larger disk does not make sense. However, a keyhole domain $\mathcal P$ may be conformally mapped to a non-keyhole domain, with a conformal factor bounded above and below on $\mathcal P$. In the case of an annulus with a single straight cut along the negative real axis, this conformal map can be chosen to be the inverse of the map $z\mapsto z^2$. Applying Lemma \ref{lem:improvement} to the conformally related problem there and then pulling the solution back to $\mathcal P$, absorbing the various conformal factors by possibly increasing the constant $C$, yields the result. The remainder of the arguments in this section, and in the proof of Theorem \ref{thm:approx}, apply to keyhole and ordinary domains alike.
\end{remark}

\subsection{Quasimodes near a curved boundary}

In this section we construct functions that will be used to approximate our Steklov eigenfunctions away from the corners but near a curved boundary. 

Consider a domain $\Omega\subset\mathbb R^2$ with a boundary parametrised by arc length. Assume for the moment that the boundary is smooth. Consider a patch $(s,t)$ of boundary orthogonal coordinates, where $s$ is the coordinate along the boundary and $t$ is the normal coordinate, positive into the interior. Ideally, we would like to find functions $w_{\sigma}(s,t)$ that are harmonic,  satisfy the Steklov boundary condition with parameter $\sigma$, and for which $w_{\sigma}(s,0)=\er^{\sigma\ir s}$ (see Figure \ref{fig:orthog}). 
Our ansatz will be of the form
\[
w_{\sigma}(s,t)=\er^{\sigma \omega(s,t)},
\]
where $\omega(s,t)$ is a complex-valued function with $\omega(s,0)=\ir s$. By immediate computation, under these assumptions, the Steklov boundary condition with parameter $\sigma$ is precisely 
\begin{equation}\label{eq:stekbc}
\frac{\partial\omega}{\partial t}(s,0)=-1.
\end{equation}

\begin{figure}[htb]
\begin{center}
\includegraphics{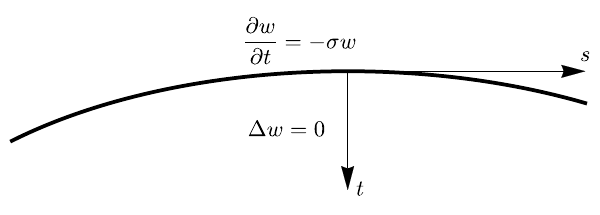}
\end{center}
\caption{Boundary orthogonal coordinates for a curvilinear boundary.\label{fig:orthog}}
\end{figure}

Now we write out the Laplacian. Let $\gamma(s)$ be the signed curvature of the boundary. Set \[\Gamma(s,t):=1+t\gamma(s).\]
Then from \cite{DES}, the expression for the Laplacian in our orthogonal coordinates is
\[
\Delta:= \Gamma^{-1/2}\left(-\frac{\partial}{\partial s} \Gamma^{-2}\frac{\partial}{\partial s}-\frac{\partial^2}{\partial t^2}\right)\Gamma^{1/2}-\frac{\gamma^2}{4\Gamma^2}+\frac{t\gamma''}{2\Gamma^3}-\frac{5t^2(\gamma')^2}{4\Gamma^4}.
\]
By direct computation, collecting powers of $\sigma$,
\[
\begin{split}
\Delta w_{\sigma}=-&\sigma^2\left(
		\left(\frac{\partial\omega}{\partial t}\right)^2
		+\Gamma^{-2}\left(\frac{\partial\omega}{\partial s}\right)^2
		\right)w_{\sigma}\\
-&\sigma\left(
\frac{\partial^2\omega}{\partial t^2}+\Gamma^{-1}\frac{\partial\omega}{\partial t}\frac{\partial\Gamma}{\partial t}
+\Gamma^{-2}\frac{\partial^2\omega}{\partial s^2}
-\Gamma^{-3}\frac{\partial\omega}{\partial s}\frac{\partial\Gamma}{\partial s}
\right)w_{\sigma}.
\end{split}
\]
This may be rewritten in a ``factorised'' form:
\begin{equation}\label{eq:factoredlaplacian}
\begin{split}
\Delta w_{\sigma}=-&\sigma^2\left(\frac{\partial\omega}{\partial t}+\ir\Gamma^{-1}\frac{\partial\omega}{\partial s}\right)\left(\frac{\partial\omega}{\partial t}-\ir \Gamma^{-1}\frac{\partial\omega}{\partial s}\right)w_{\sigma}\\
&-\sigma\left(\frac{\partial}{\partial t}+\ir\Gamma^{-1}\frac{\partial}{\partial s}+\Gamma^{-1}\frac{\partial\Gamma}{\partial t}\right)\left(\frac{\partial\omega}{\partial t}-\ir \Gamma^{-1}\frac{\partial\omega}{\partial s}\right)w_{\sigma}.
\end{split}
\end{equation}
Therefore, $w_{\sigma}(s,t)$ is harmonic for \emph{all} $\sigma$ if and only if
\begin{equation}\label{eq:twoparts}
\begin{dcases}
\displaystyle\left(\frac{\partial\omega}{\partial t}+\ir \Gamma^{-1}\frac{\partial\omega}{\partial s}\right)\left(\frac{\partial\omega}{\partial t}-\ir \Gamma^{-1}\frac{\partial\omega}{\partial s}\right)&=0;\\ 
\displaystyle\left(\frac{\partial}{\partial t}+\ir\Gamma^{-1}\frac{\partial}{\partial s}+\Gamma^{-1}\frac{\partial\Gamma}{\partial t}\right)\left(\frac{\partial\omega}{\partial t}-\ir \Gamma^{-1}\frac{\partial\omega}{\partial s}\right)&=0.
\end{dcases}
\end{equation}
\begin{proposition}Suppose that $\omega(s,t)$ satisfies the initial value problem
\begin{equation}\label{eq:complextransport}
\begin{dcases}
\frac{\partial\omega}{\partial t}=\ir\Gamma^{-1} \frac{\partial\omega}{\partial s},\\ 
\omega(s,0)=\ir s,
\end{dcases} 
\end{equation}
in a patch with coordinates $(s,t)$.
Then $w_{\sigma}(s,t)=\er^{\sigma\ir \omega(s,t)}$ is harmonic for all $\sigma$ and satisfies a Steklov boundary condition with parameter $\sigma$.
\end{proposition}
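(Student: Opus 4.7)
The plan is to verify both assertions directly by invoking the factorisation \eqref{eq:factoredlaplacian} already derived. Observe that the transport equation \eqref{eq:complextransport} is precisely the statement that the factor
\[
\frac{\partial\omega}{\partial t}-\ir \Gamma^{-1}\frac{\partial\omega}{\partial s}
\]
vanishes identically in the patch. Crucially, this same factor appears in \emph{both} the $\sigma^2$-term and the $\sigma$-term on the right-hand side of \eqref{eq:factoredlaplacian}. Consequently, the $\sigma^2$-term vanishes because one of its factors is zero, while the $\sigma$-term vanishes because it applies a first-order differential operator to a function that is identically zero on the patch. Hence $\Delta w_\sigma=0$ for every $\sigma\in\mathbb{R}$, which is the first claim.

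For the Steklov boundary condition, I would differentiate the initial condition $\omega(s,0)=\ir s$ tangentially to obtain $\frac{\partial\omega}{\partial s}(s,0)=\ir$, and note that $\Gamma(s,0)=1+0\cdot\gamma(s)=1$. Evaluating the transport equation \eqref{eq:complextransport} at $t=0$ then gives
\[
\frac{\partial\omega}{\partial t}(s,0)=\ir\cdot 1\cdot\ir=-1,
\]
which is exactly the condition \eqref{eq:stekbc}. Since the exterior normal on $\{t=0\}$ is $-\partial_t$ (as $t$ is positive into the interior), a direct computation yields
\[
\left.\frac{\partial w_\sigma}{\partial n}\right|_{t=0}=-\sigma\frac{\partial\omega}{\partial t}(s,0)\,w_\sigma(s,0)=\sigma\, w_\sigma(s,0),
\]
which is precisely the Steklov boundary condition with parameter $\sigma$.

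I do not anticipate any real obstacle, as the factorisation \eqref{eq:factoredlaplacian} does all the essential work and the proposition amounts to a verification. The only mild subtleties are (i) noticing that the single equation \eqref{eq:complextransport} suffices to kill both the $\sigma^2$- and $\sigma$-terms simultaneously because the common vanishing factor appears in each, so the stronger-looking system \eqref{eq:twoparts} is in fact equivalent to \eqref{eq:complextransport}, and (ii) keeping track of the sign of the exterior normal. Note that the proposition takes the existence of $\omega$ as a hypothesis, so well-posedness of \eqref{eq:complextransport} is not required here, though it can be easily established locally via the complex method of characteristics since $\Gamma$ is smooth and bounded away from zero in a small enough collar of the boundary.
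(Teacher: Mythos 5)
Your proof is correct and follows essentially the same route as the paper's (very terse) argument: the single transport equation annihilates the common factor $\frac{\partial\omega}{\partial t}-\ir\Gamma^{-1}\frac{\partial\omega}{\partial s}$ that appears in both terms of \eqref{eq:factoredlaplacian}, hence both equations of \eqref{eq:twoparts} follow at once, and the boundary condition \eqref{eq:stekbc} is recovered from the transport equation at $t=0$ together with $\Gamma(s,0)=1$. Your slightly more explicit treatment of the sign of the exterior normal, and your (implicit) use of the formula $w_\sigma=\er^{\sigma\omega}$ from the text rather than the $\er^{\ir\sigma\omega}$ appearing in the proposition statement -- which is in fact a typo in the paper -- are both sound.
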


\begin{proof}The Steklov boundary condition \eqref{eq:stekbc} is automatic from \eqref{eq:complextransport} and the fact that $\Gamma(s,0)=1$, and \eqref{eq:twoparts} follows instantly from \eqref{eq:complextransport} as well. \end{proof}

Now consider the problem \eqref{eq:complextransport}. In the setting where the boundary is analytic, then the curvature $\gamma(s)$ is analytic, and hence $\Gamma(s,t)$ is analytic. By the Cauchy--Kovalevskaya theorem the problem \eqref{eq:complextransport} has a unique solution in that setting, and the coefficients of its power series in $t$ may be determined recursively. However, if the curvature $\gamma(s)$ is only assumed $C^{\infty}$ or less, the problem may have no solution; there is always a power series expansion, but it may not converge for any positive $t$. 

Nevertheless, approximate solutions may be constructed by truncation of the formal power series expansion. Define a set of functions $\{\tilde\omega_j(s)\}$ recursively by
\begin{equation}\label{eq:recursionrel}
\tilde\omega_0(s):=is; \quad \tilde\omega_{j}(s):=\frac{\ir}{j}\tilde\omega_{j-1}'(s)-\frac{j-1}{j}\gamma(s)\tilde\omega_{j-1}(s)\mbox{ for }j\geq 1.
\end{equation}
Note that this process works to produce $\tilde\omega_M(s)$ for an integer $M\geq 0$ as long as $\gamma(s)$ is $(M-2)$ times differentiable, that is, as long as the boundary is $C^{M}$. Then set
\begin{equation}\label{eq:defofomegan}
\omega_M(s,t):=\sum_{j=0}^{M}\tilde\omega_j(s)t^j\quad \textrm{ and }\quad \hat\omega_M(s):=M\tilde\omega_M(s)-\ir\tilde\omega_M'(s),
\end{equation}
with the latter definition making sense as long as $\gamma(s)$ is $(M-1)$ times differentiable. If additionally $\gamma(s)$ is $C^{M-1}$, that is, the boundary is $C^{M+1}$, then $\hat\omega_M(s)$ is a continuous function of $s$.
\begin{proposition}\label{prop:prop177} Suppose the curvature $\gamma(s)$ is $(M-1)$ times differentiable. Then the function $\omega_M(s,t)$ defined by \eqref{eq:defofomegan} satisfies
\begin{equation*}
\left(\frac{\partial\omega}{\partial t}-\ir \Gamma^{-1}\frac{\partial\omega}{\partial s}\right)\omega_M(s,t)= \Gamma^{-1}\hat\omega_M(s)t^M
\end{equation*}
and thus may be interpreted as an order-$M$ approximate solution of \eqref{eq:complextransport}.
\end{proposition}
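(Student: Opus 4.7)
The plan is a direct polynomial identity check. First I would clear the factor of $\Gamma^{-1}$ by multiplying both sides by $\Gamma$, reducing the statement to showing that
\[
\Gamma\, \frac{\partial \omega_M}{\partial t} - \ir\, \frac{\partial \omega_M}{\partial s} = \Gamma \cdot \Gamma^{-1}\hat\omega_M(s)\, t^M,
\]
i.e.\ that the left-hand side equals a pure power $t^M$ times a function of $s$ alone. I would then plug in the polynomial ansatz $\omega_M(s,t) = \sum_{j=0}^{M}\tilde\omega_j(s)\, t^j$, noting that $\partial_t \omega_M = \sum_{k=0}^{M-1}(k+1)\tilde\omega_{k+1}(s)\, t^k$, while $\ir\partial_s \omega_M = \ir\sum_{j=0}^{M}\tilde\omega_j'(s)\, t^j$. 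Expanding $\Gamma = 1 + t\gamma(s)$ regroups everything as a polynomial in $t$ of degree $M$, with coefficients depending on $\tilde\omega_j$, $\tilde\omega_j'$, and $\gamma$.

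Next I would compare coefficients of $t^k$ for $0 \leq k \leq M-1$ on both sides. By the construction of $\omega_M$ as a truncation of the formal Cauchy--Kovalevskaya series, I expect each of these to vanish. The coefficient of $t^k$ on the left is precisely
\[
(k+1)\tilde\omega_{k+1}(s) + k\gamma(s)\tilde\omega_k(s) - \ir\, \tilde\omega_k'(s),
\]
(with the middle term interpreted as zero when $k=0$). But the recursion \eqref{eq:recursionrel}, applied with index $j = k+1$, is exactly the assertion that this combination is identically zero. This is the heart of the proof: the recursion was reverse-engineered to kill off all lower-order coefficients, so this step is algebraic bookkeeping once the indices are lined up.

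Finally, only the coefficient of $t^M$ on the left survives. Since $\omega_M$ is truncated at degree $M$, there is no $\tilde\omega_{M+1}$ available to absorb the contribution from $t\gamma(s)\,\partial_t\omega_M$ at top order, nor from the $\ir\tilde\omega_M'(s)$ piece. Collecting these leftovers gives the residual, which one identifies with $\Gamma^{-1}\hat\omega_M(s) t^M$ after dividing back by $\Gamma$.

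I do not expect any real analytic obstacle: the proof is entirely algebraic, and the smoothness assumption on $\gamma$ is needed only to guarantee that $\tilde\omega_M'$ (and hence $\hat\omega_M$) makes sense. The only thing to be careful about is the indexing of the sums and the slight asymmetry between the $k=0$ and $k\geq 1$ coefficients in the expansion of $\Gamma\,\partial_t\omega_M$.
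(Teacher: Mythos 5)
Your proposal is correct and takes essentially the same approach as the paper: multiply through by $\Gamma$, expand $\omega_M$ as a polynomial in $t$, and observe that the recursion \eqref{eq:recursionrel} (with $j=k+1$) annihilates the coefficient of $t^k$ for every $0\le k\le M-1$, leaving only the top-order residual. One small note for both your argument and the paper's: the residual that actually emerges from the computation is $M\gamma(s)\tilde\omega_M(s)-\ir\tilde\omega_M'(s)$, so the displayed formula for $\hat\omega_M(s)$ in \eqref{eq:defofomegan} appears to be missing a factor of $\gamma(s)$ in its first term.
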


\begin{proof} We compute, multiplying by $\Gamma$ (bounded above and below in a neighbourhood of the boundary) for convenience:
\[
\Gamma\left(\frac{\partial}{\partial t}-\ir \Gamma^{-1}\frac{\partial}{\partial s}\right)\sum_{j=0}^{M}\tilde\omega_j(s)t^j=\sum_{j=1}^{M}j\tilde\omega_j(s)(1+t\gamma(s))t^{j-1}-\sum_{j=0}^M\ir\tilde\omega_j'(s)t^{j}.
\]
Rearranging and re-labeling,
\[
\Gamma\left(\frac{\partial}{\partial t}-\ir \Gamma^{-1}\frac{\partial}{\partial s}\right)\sum_{j=0}^{M}\tilde\omega_j(s)t^j=\sum_{j=0}^{M-1}(j+1)\tilde\omega_{j+1}(s)t^j+\sum_{j=1}^Mj\tilde\omega_j(s)\gamma(s)t^j-\sum_{j=0}^M\ir\tilde\omega_j'(s)t^j.
\]
We may as well add $j=0$ to the second sum, since it is zero. Rearranging yet again, we see that the recursion relation causes most of the terms to cancel, yielding
\begin{equation}
\Gamma\left(\frac{\partial\omega}{\partial t}-\ir \Gamma^{-1}\frac{\partial\omega}{\partial s}\right)\omega_M(s,t)= M\tilde\omega_M(s)\gamma(s)t^M-\ir\tilde\omega'_M(s)t^M.\end{equation}
The proposition then follows from the definition of $\hat\omega_M(s)$.
\end{proof}

Now, formally, assuming that $\gamma(s)$ is $(M-2)$ times differentiable, we set
\begin{equation}\label{eq:curvilinearansatz}
w_{\sigma,M}(s,t)=\er^{\sigma \omega_M(s,t)}.
\end{equation}
This function immediately satisfies a Steklov boundary condition with parameter $\sigma$, and further:

\begin{proposition}\label{prop:ansatzestimates} There exist constants $C<\infty$ and $c>0$, and a sufficiently small $t_0$, such that in our patch $(s,t)$ with $t<t_0$,
\begin{equation}
|w_{\sigma,M}(s,t)|\leq C\er^{-\sigma ct}.
\end{equation}
If $\gamma(s)$ is $(M-1)$ times differentiable, we have similar constants such that
\begin{equation}
 |\nabla w_{\sigma,M}(s,t)|\leq C\sigma \er^{-\sigma ct}.
\end{equation}
Finally if $\gamma(s)$ is $M$ times differentiable with $M\geq 1$, then also $w_{\sigma,M}$ is approximately harmonic in the sense that
\begin{equation}
|\Delta w_{\sigma,M}(s,t)|\leq C\left(\sigma^2t^{M}+\sigma t^{M-1}\right)\er^{-\sigma c t}.
\end{equation}
\end{proposition}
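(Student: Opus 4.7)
The plan is to prove the three estimates in order of increasing complexity, exploiting the explicit polynomial form of $\omega_M(s,t)$ and the factorised Laplacian \eqref{eq:factoredlaplacian}.

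First, for the pointwise bound on $|w_{\sigma,M}|$, I would extract the leading behaviour of $\omega_M(s,t)$ in $t$. The recursion \eqref{eq:recursionrel} gives $\tilde\omega_0(s)=\ir s$ and
\[\tilde\omega_1(s)=\ir\,\tilde\omega_0'(s)=\ir\cdot \ir=-1,\]
so $\omega_M(s,t)=\ir s - t + O(t^2)$ uniformly in $s$ as $t\to 0$, where the implied constant depends on derivatives of $\gamma$ on the patch. Hence $\Re\omega_M(s,t)=-t+O(t^2)$, and for $t_0$ small enough we have $\Re\omega_M(s,t)\le -t/2$ throughout the patch. Since $|w_{\sigma,M}|=\er^{\sigma\Re\omega_M}$, this yields the first estimate with $c=1/2$ (no further regularity beyond the mere existence of $\omega_M$ is needed).

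For the gradient estimate, I would differentiate $w_{\sigma,M}=\er^{\sigma\omega_M}$ directly, obtaining $\nabla w_{\sigma,M}=\sigma\,(\nabla\omega_M)\,w_{\sigma,M}$. Since $\omega_M$ is a polynomial in $t$ with coefficients $\tilde\omega_j(s)$ that are continuous (requiring $\gamma\in C^{M-1}$, hence the stated regularity hypothesis), both $\partial_s\omega_M$ and $\partial_t\omega_M$ are bounded on the patch. Combining with the first estimate gives the claimed bound $|\nabla w_{\sigma,M}|\le C\sigma\er^{-\sigma c t}$.

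The main substance is in the Laplacian estimate, and this is where the ansatz's approximate-transport structure pays off. I would apply the factorised formula \eqref{eq:factoredlaplacian}, which shows that both the $\sigma^2$- and $\sigma$-contributions carry the common factor
\[\mathcal{E}(s,t):=\frac{\partial\omega_M}{\partial t}-\ir\,\Gamma^{-1}\frac{\partial\omega_M}{\partial s}.\]
By Proposition \ref{prop:prop177}, $\mathcal{E}(s,t)=\Gamma^{-1}\hat\omega_M(s)\,t^M$, which is $O(t^M)$ (here the regularity $\gamma\in C^{M-1}$ ensures $\hat\omega_M\in C^0$, but to legitimately place $\mathcal{E}$ inside the second differential operator in \eqref{eq:factoredlaplacian} we need $\gamma\in C^{M}$, explaining that hypothesis). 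The first term of \eqref{eq:factoredlaplacian} is then bounded by $C\sigma^2 t^M\er^{-\sigma c t}$, since the other factor $\frac{\partial\omega_M}{\partial t}+\ir\Gamma^{-1}\frac{\partial\omega_M}{\partial s}$ is bounded. For the second term, the operator $\partial_t+\ir\Gamma^{-1}\partial_s+\Gamma^{-1}\partial_t\Gamma$ applied to $\Gamma^{-1}\hat\omega_M(s)t^M$ produces a leading $M\,\Gamma^{-2}\hat\omega_M(s)\,t^{M-1}$ from the $\partial_t$ hitting $t^M$, plus lower-order terms of size $O(t^M)$; multiplying by $w_{\sigma,M}$ yields $O(\sigma t^{M-1}\er^{-\sigma c t})$. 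Summing the two contributions gives the claimed bound. The one subtlety to watch for is absorbing $t^M$ into $t^{M-1}$ near $t=0$ and verifying the regularity bookkeeping exactly, but no genuinely new ideas are required beyond Proposition \ref{prop:prop177}.
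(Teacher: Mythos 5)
Your argument is correct and follows essentially the same approach as the paper: the first two estimates come from the explicit polynomial structure of $\omega_M$ with leading terms $\ir s - t$, and the Laplacian bound comes from combining the factorised form \eqref{eq:factoredlaplacian} with Proposition \ref{prop:prop177}. The only minor slip is a $\Gamma^{-2}$ that should be $\Gamma^{-1}$ in the leading $\partial_t$-term, which does not affect the conclusion.
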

\begin{proof} The estimate on $|w_{\sigma,M}|$ is immediate since $\omega_M$ is simply a polynomial in $t$ with the leading terms $(\ir s-t)$. Indeed, with a sufficiently small neighbourhood, $c$ and $C$ may be chosen arbitrarily close to one. 

Taking a derivative in $s$ or $t$ multiplies $w_{\sigma,M}$ by $\sigma$ times the appropriate derivative of $\omega_{M}(s,t)$. That derivative is again a polynomial in $t$ with coefficients that may depend on, now, $(M-1)$ derivatives of curvature. Its leading term is either $\ir$ for an $s$-derivative or $-1$ for a $t$-derivative. The result follows.

To compute $\Delta w_{\sigma,M}(s,t)$, we use \eqref{eq:factoredlaplacian} and Proposition \ref{prop:prop177} to obtain
\begin{equation}
\begin{split}
\frac{\Delta w_{\sigma,M}(s,t)}{w_{\sigma,M}(s,t)}=-&\sigma^2\left(\frac{\partial\omega}{\partial t}+\ir\Gamma^{-1}\frac{\partial\omega}{\partial s}\right)\Gamma^{-1}\hat\omega_M(s)t^M\\
-&\sigma\left(\frac{\partial}{\partial t}+\ir\Gamma^{-1}\frac{\partial}{\partial s}+\Gamma^{-1}\frac{\partial\Gamma}{\partial t}\right)\Gamma^{-1}\hat\omega_M(s)t^M.
\end{split}
\end{equation}
When computed, each term is a fraction with denominator some power of $m$ and numerator a polynomial in $t$ with coefficients depending on curvature. The number of derivatives of curvature that appear is at most $\max\{1,M\}$, the $M$ from taking $\hat\omega_M'(s)$ and the $1$ from taking $\frac{\partial\Gamma}{\partial s}$. The leading order terms are $\sigma^2t^M$ and $\sigma t^{M-1}$, yielding the result.
\end{proof}

\subsection{Quasimodes for a partially curvilinear polygon or zigzag}

We recall that a polygon or a zigzag is called partially curvilinear if all the sides are straight in some neighbourhoods of the vertices.  In this subsection we construct quasimodes for partially curvilinear polygons, proving the following theorem:
\begin{theorem}\label{thm:quasimodesstraightnearcorners} 
Let $\mathcal P$ be a partially curvilinear polygon, and consider the mixed Steklov-Dirichlet-Neumann problem  \eqref{eq:SDNproblem}. Assume additionally that $\partial_S\mathcal{P}\ne\varnothing$, and that the curvature of each side in $\partial_S\mathcal{P}$ is $M$ times differentiable with $M\geq 3$. Finally, assume $\delta>1$, where
\begin{equation}\label{eq:defofdelta}
\begin{split}
\delta=\min&\left(\left\{\frac{\pi}{\alpha_k}: V_k\notin(\partial_D\mathcal{P}\cup \partial_N\mathcal{P})\right\}\right.\\
\cup&\left\{\frac{\pi}{2\alpha_k}:V_k\in (\partial_S\mathcal{P})\cap(\partial_D\mathcal{P}\cup \partial_N\mathcal{P}),\alpha_k\neq \pi/2\right\}
\cup\left.\left\{M-\frac 32\right\}\right).
\end{split}
\end{equation}
Then there exists a sequence of quasimodes $\{v_m\}$ for the problem \eqref{eq:SDNproblem}, of order $\sigma_m^{-\delta+1}$.
\end{theorem}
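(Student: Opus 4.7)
The construction of $v_m$ proceeds by patching together three kinds of local approximate solutions via a partition of unity adapted to the boundary geometry. The building blocks are: near each interior vertex of $\partial_S\mathcal P$ (which lies inside a straight disk by the partially-curvilinear hypothesis), the scattering Peters solution $\Phi_{\alpha_j}^{(\mathbf c_{j,\inn},\mathbf c_{j,\out})}$ of Theorem \ref{thm:mainthmssector}; near each endpoint of $\partial_S\mathcal P$, the sloping-beach Peters solutions of Lemma \ref{lem:propsofsymantisym} combined to enforce the appropriate Dirichlet or Neumann condition on the adjoining arc (using the vectors \eqref{eq:rem48}); along each curved stretch of a Steklov side, the curvilinear approximate solutions $w_{\sigma_m,M}(s,t) = \er^{\sigma_m\omega_M(s,t)}$ from \eqref{eq:curvilinearansatz}. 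These pieces share a common boundary trace, namely the quasi-wave $\Psi^{(m)}$ of Definition \ref{def:bqw}, whose very definition imposes the transfer relations \eqref{eq:cpm}--\eqref{eq:exceptionalconditionmark2} making all local Peters prescriptions compatible at the corners.

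I would choose cutoffs $\chi^c_j$ supported in the straight disk about each vertex $V_j$, cutoffs $\chi^s_j$ supported in a tubular neighbourhood of the interior of each side $I_j$, each depending only on the tangential coordinate near the boundary, with $\sum_j(\chi^c_j+\chi^s_j)\equiv 1$ on a boundary strip, and a final interior cutoff truncating at a fixed depth. Set
\[
v_m := \sum_j \chi^c_j\, \Phi^{(j)}(\sigma_m\mathcal{V}_j\,\cdot)\;+\;\sum_j \chi^s_j\bigl(c^{(j)}_+ w^+_{\sigma_m,M} + c^{(j)}_- w^-_{\sigma_m,M}\bigr),
\]
where $\Phi^{(j)}$ is the Peters solution appropriate to the type of vertex (scattering or sloping-beach) with coefficients $\mathbf c_{j,\inn},\mathbf c_{j,\out}$ prescribed by $\Psi^{(m)}$, and $c^{(j)}_\pm$ are the scalars for which $c^{(j)}_+\er^{\ir\sigma_m s_j}+c^{(j)}_-\er^{-\ir\sigma_m s_j}=\Psi^{(m)}_j(s_j)$; the $\pm$ superscripts indicate the two independent curvilinear approximations corresponding to the two plane-wave directions along the side. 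Finally normalise $v_m$ by dividing by $\|v_m\|_{L^2(\partial_S\mathcal P)}$, which tends to $\|\Psi^{(m)}\|_{L^2(\partial_S\mathcal P)}=1$.

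The error estimates of Definition \ref{def:quasimodes} then separate into contributions from the Peters/sloping-beach remainder, from the Laplacian defect of $w_{\sigma_m,M}$, and from cutoff derivatives. Since both Peters and curvilinear blocks satisfy the Steklov condition exactly on their respective portions of $\partial_S\mathcal P$, and the sloping-beach pieces enforce Dirichlet or Neumann on $\partial_{D/N}\mathcal P$ exactly, $\epsilon^{(1)}_m$ and $\epsilon^{(2)}_m$ come only from the Peters remainder \eqref{eq:universalremainder}, which upon the $\sigma_m$ rescaling contributes $O(\sigma_m^{-\pi/\alpha_k})$ at interior Steklov vertices and $O(\sigma_m^{-\pi/(2\alpha_k)})$ at endpoints. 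For $\epsilon^{(3)}_m$, Proposition \ref{prop:ansatzestimates} yields $|\Delta w_{\sigma_m,M}|\le C(\sigma_m^2 t^M + \sigma_m t^{M-1})\er^{-c\sigma_m t}$; substituting $u=\sigma_m t$ in the $L^2$ integral over a tubular strip gives $\|\Delta w_{\sigma_m,M}\|_{L^2}=O(\sigma_m^{3/2-M})$, hence $(\sigma_m+1)\|\Delta v_m\|_{L^2}=O(\sigma_m^{5/2-M})=O(\sigma_m^{-(M-3/2)+1})$. The three sources combine to produce the exponent $\delta$ of \eqref{eq:defofdelta}.

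The main technical obstacle is controlling the cutoff-derivative contributions in the transition regions between corner and side patches. Since the polygon is \emph{exactly straight} near each corner, the Peters solution is exactly harmonic and exactly Steklov there, so the discrepancy between the Peters and curvilinear descriptions in the overlap arises only from (i) the Peters remainder and (ii) the departure of $w_{\sigma_m,M}$ from the bare plane wave $\er^{\sigma_m(\ir s-t)}$, governed by the recursively-defined terms $\tilde\omega_j(s)t^j$ of \eqref{eq:recursionrel} which require up to $M-1$ derivatives of curvature. Combined with the exponential decay of every building block in the transverse coordinate $t$ -- which after the substitution $u=\sigma_m t$ kills any algebraic blow-up from $\nabla\chi$ or $\Delta\chi$ -- this keeps all cutoff-induced errors at order $O(\sigma_m^{-\delta+1})$. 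The hypothesis $\delta>1$ is exactly what makes all three candidate exponents in \eqref{eq:defofdelta} exceed unity, ensuring the construction yields genuine quasimodes rather than mere formal expansions.
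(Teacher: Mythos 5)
Your construction is essentially the paper's: a partition of unity with cutoffs whose gradients are tangential to the boundary; scattering Peters solutions near interior Steklov vertices and sloping-beach Peters solutions at $\partial_S\mathcal{P}$--$\partial_{D/N}\mathcal{P}$ junctions, with coefficients dictated by the boundary quasi-wave $\Psi^{(m)}$; the recursive curvilinear ansatz $w_{\sigma_m,M}$ along the interiors of Steklov edges; and gluing that works because the building blocks share the boundary trace $\Psi^{(m)}$. So the route is correct.

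Two points in your error bookkeeping are off, though neither changes the final exponent. First, you state that $\epsilon^{(1)}_m$ and $\epsilon^{(2)}_m$ ``come only from the Peters remainder'' of size $O(\sigma_m^{-\pi/\alpha_k})$. This contradicts your own preceding sentence: because every building block satisfies its boundary condition \emph{exactly} (the Peters remainder $R$ is part of an exact Robin solution, not a defect), and because $\nabla\chi\cdot\mathbf n=0$ on $\partial\mathcal{P}$, one in fact has $\epsilon^{(1)}_m=\epsilon^{(2)}_m=0$ identically. The Peters remainder contributes to $\epsilon^{(3)}_m$, via the partition-of-unity commutator terms $2\nabla v_{m,V}\cdot\nabla\chi_V+v_{m,V}\Delta\chi_V$ (and their $I$-counterparts), where \eqref{eq:universalremainder} after the $\sigma_m$-rescaling produces the powers $\sigma_m^{-\pi/\alpha_k}$ and $\sigma_m^{-\pi/(2\alpha_k)}$. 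Second, in the overlap strips between corner and side cutoffs you list as an error source ``the departure of $w_{\sigma_m,M}$ from the bare plane wave $\er^{\sigma_m(\ir s-t)}$.'' For a partially curvilinear polygon the curvature $\gamma\equiv 0$ on those strips, so the recursion \eqref{eq:recursionrel} gives $\tilde\omega_j\equiv 0$ for $j\ge 2$ there and $w_{\sigma_m,M}$ \emph{is} the bare plane wave; the plane-wave part of the Peters solution cancels it exactly (this is the whole point of the transfer conditions), and only the scaled $R$-terms, together with exponentially decaying cross-terms, survive in the commutators. Recognising this is what keeps the cutoff-induced error clean and makes the $M-\tfrac 32$ candidate in \eqref{eq:defofdelta} come solely from $\|\Delta w_{\sigma_m,M}\|_{L^2}$ over the side interiors.
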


\begin{remark} The condition that $\delta>1$ is implied by the following:  all Steklov-Dirichlet and Steklov-Neumann angles are less than or equal to $\pi/2$, and each side with a Steklov boundary condition has at least $C^5$ regularity. Sides in $\partial_D\mathcal{P}\cup \partial_N\mathcal{P}$ need only be differentiable.
\end{remark}
\begin{remark} This theorem covers the case of zigzag domains.
\end{remark}

Using Theorem \ref{thm:approx1}, and applying the same argument as in the proof of Theorem \ref{thm:approx}, see the paragraph after Theorem \ref{thm:approx1}, we immediately obtain as a consequence:
\begin{corollary}\label{cor:quasimodesstraightnearcorners} 
For a partially curvilinear polygon $\mathcal{P}$ that satisfies the assumptions of Theorem \ref{thm:quasimodesstraightnearcorners}, there is a non-decreasing sequence $\{i_m\}$ and a constant $C>0$ such that
\[
|\sigma_m-\lambda_{i_m}|\leq C\sigma_m^{-\delta+1}\qquad\text{for all }m\in\mathbb N,
\]
where the $\lambda_{i_m}$ are eigenvalues of the problem \eqref{eq:SDNproblem}. There is also a sequence of $\tilde u_m$ as in Theorem \ref{thm:approx1}, with $\delta_m=\sigma_m^{-\delta+1}$.
\end{corollary}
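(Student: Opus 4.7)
The plan is to simply chain Theorem \ref{thm:quasimodesstraightnearcorners} with Theorem \ref{thm:approx1}, and then execute the same monotonisation of indices that was spelled out immediately after Theorem \ref{thm:approx1}.

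First I would apply Theorem \ref{thm:quasimodesstraightnearcorners} to produce, for the polygon $\mathcal{P}$ under consideration, a sequence $\{v_m\}$ of quasimodes of order $\delta_m := \sigma_m^{-\delta+1}$ for the mixed problem \eqref{eq:SDNproblem}. Plugging this sequence into Theorem \ref{thm:approx1} would then yield an initial index sequence $\{\hat\imath_m\}$, possibly non-monotone, together with approximations $\{\tilde u_m\}$, satisfying $|\sigma_m-\lambda_{\hat\imath_m}|\leq C\delta_m$ and $\|v_m-\tilde u_m\|_{L^2(\partial_S\mathcal{P})}\leq C\sqrt{\delta_m}$, with each $\tilde u_m$ a linear combination of eigenfunctions of \eqref{eq:SDNproblem} whose eigenvalues lie in the interval $[\sigma_m - C\sqrt{\delta_m},\sigma_m + C\sqrt{\delta_m}]$.

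The second step would be to repair the possible lack of monotonicity in $\{\hat\imath_m\}$ using the manoeuvre described in the paragraph following Theorem \ref{thm:approx1}: set $\epsilon_m := \sup_{k\geq m} C\delta_k$ and redefine $i_m$ to be the smallest index $i$ with $\lambda_i \in (\sigma_m - \epsilon_m, \sigma_m + \epsilon_m)$. The same short argument presented there shows that this redefined $\{i_m\}$ is non-decreasing, and by construction $|\sigma_m-\lambda_{i_m}|\leq\epsilon_m$.

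The main (and only mildly non-trivial) thing to verify is that passing from $C\delta_m$ to $\epsilon_m$ does not worsen the rate. Since the hypothesis $\delta>1$ together with $\sigma_m\to+\infty$ makes $\delta_k = \sigma_k^{-\delta+1}$ a non-increasing positive sequence tending to $0$, the supremum defining $\epsilon_m$ is attained at $k=m$, giving $\epsilon_m = C\sigma_m^{-\delta+1}$. This delivers the announced bound $|\sigma_m-\lambda_{i_m}|\leq C\sigma_m^{-\delta+1}$; the sequence $\{\tilde u_m\}$ inherited from Theorem \ref{thm:approx1} then has precisely the form claimed with $\delta_m = \sigma_m^{-\delta+1}$. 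There is no real obstacle in this argument, since all the substantive analytic work already lives inside the two theorems being invoked.
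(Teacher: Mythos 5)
Your proposal is correct and follows exactly the route the paper takes: the paper obtains Corollary \ref{cor:quasimodesstraightnearcorners} by feeding the quasimodes of order $\sigma_m^{-\delta+1}$ from Theorem \ref{thm:quasimodesstraightnearcorners} into Theorem \ref{thm:approx1} and then repeating the monotonisation of indices described in the paragraph after that theorem. Your extra observation that $\epsilon_m=\sup_{k\ge m}C\delta_k=C\sigma_m^{-\delta+1}$ (since $\delta>1$ and $\{\sigma_m\}$ is non-decreasing) is exactly the point that keeps the rate unchanged, so nothing is missing.
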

To begin the quasimode construction for $\mathcal{P}$, first define a partition of unity
\[
1=\chi_V(z)+\chi_I(z)+\chi_{0}(z),
\]
such that (see Figure \ref{fig:dpartition})

\begin{itemize}
\item $\chi_V$ is supported in a union of pairwise disjoint neighbourhoods of each vertex in which $\mathcal{P}$ is isometric to an exact wedge;
\item $\chi_I$ is supported in a union of pairwise disjoint neighbourhoods of the portion of each edge  away from the vertices;
\item $\chi_0$ has support compactly contained in the interior of $\mathcal P$;
\item $\nabla\chi_V$ (and therefore $\nabla\chi_I$) is perpendicular to the normal vector $\mathbf{n}$ on the boundary $\partial\mathcal{P}$, that is, each of our cut-off functions has zero normal derivative on $\partial\mathcal{P}$.
\end{itemize}

{\color{red}  
\begin{figure}[htb]
\begin{center}
\includegraphics{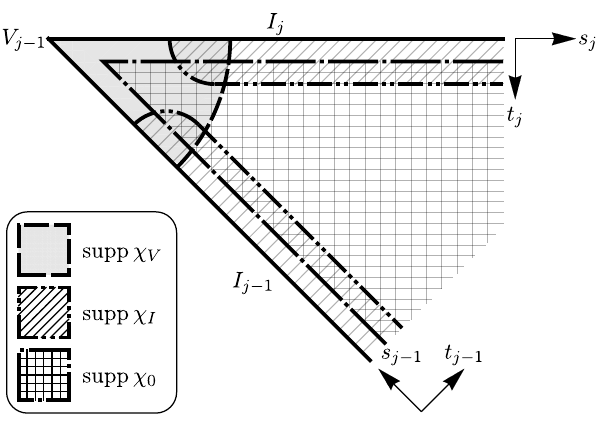}
\end{center}
\caption{Partition of unity and boundary coordinates in a neighbourhood of $V_j$ \label{fig:dpartition}}
\end{figure} 
}

A partition of unity with the required properties can be constructed, for example, in the following manner. First, let $\chi(x)$ be a standard smooth nonnegative cut-off function defined on $[0,+\infty)$ such that 
\begin{equation}\label{eq:cutofffunction}
\chi(x)=1\text{ or }x\in[0,1], \qquad\chi(x)=0\text{ for }x\in[2,\infty).
\end{equation}
Then, working in local polar coordinates $(\rho_j,\theta_j)$ in the vicinity of each vertex $V_j$, set $\chi_{V_j}=\chi(\rho_j/\varepsilon_j)$, choosing the parameters $\varepsilon_j>0$ in such a way that  $\supp \chi_{V_j}$ does not intersect the curved part of the boundary, and that $\supp \chi_{V_j}\cap \supp \chi_{V_k}=\emptyset$ for $k\ne j$, and define 
\[
\chi_V:=\sum_{j=1}^n \chi_{V_j}.
\]
Further, working in the vicinity of each side $I_j$ in local coordinates $(s_j, t_j)$, as shown in Figure \ref{fig:dpartition}, set $\chi_{I_j}=\chi(t_j/\delta_j)(1-\chi_V)$ and 
\[
\chi_I:=\sum_{j=1}^n \chi_{I_j},
\]
again choosing the parameters $\delta_j>0$ in order to make sure that $\supp \chi_{I_j}\cap \supp \chi_{I_k}=\emptyset$ for $k\ne j$. Finally, set 
\[
\chi_0:=1-\chi_V-\chi_I.
\]
        
Now, for each $m\in\mathbb N$, recall that we have an eigenvalue $\sigma_m$ and a boundary quasi-wave $\Psi^{(m)}(s)$. We use these data to define two functions $v_{m,V}(z)$ and $v_{m,I}(z)$ which are supported on the supports of $\chi_V$ and $\chi_I$ respectively. To define $v_{m,V}(z)$, we need to prescribe its value for $z$ in a neighbourhood of each vertex $V_j$. So fix a $j$ and suppose that $z$ is in a small neighbourhood of $V_j$. The boundary quasi-wave $\Psi^{(m)}(s)$ gives coefficients $\mathbf{c}_{j,\inn}$ and $\mathbf{c}_{j,\out}$ which satisfy the appropriate transfer conditions at $V_j$. We may therefore let, as in \eqref{eq:Phi}, 
\begin{equation}\label{eq:defofvmV}
v_{m,V}(z):=\Phi_{\alpha_j}^{(\mathbf{c}_{j,\inn},\mathbf{c}_{j,\out})}(\sigma_m\mathcal{V}_jz).
\end{equation}
Putting these together for each $j$ gives a full definition of $v_{m,V}(z)$.

To define $v_{m,I}(z)$, we localise to the edge $I_j$. We assume without loss of generality that the boundary orthogonal coordinates $(s_j,t_j)$ are valid on the connected component of the support of $\chi_I$ which intersects $I_j$ (if not, take $\chi_I$ supported closer to the boundary). In this neighbourhood we expect $v_{m,I}(s_j,0)$ to equal $\Psi^{(m)}_j(s_j)$.
So we use a solution of the form \eqref{eq:curvilinearansatz}, namely, for some $M$ to be chosen later,
\[
v_{m,I}(z):=\Re(w_{\sigma_m,M}(s_j+\beta,t_j)),
\]
where the shift $\beta$ is chosen so that
\[
\Re(w_{\sigma_m,M}(s_j+\beta,0))=\Psi^{(m)}_j(s_j).
\]

Our overall quasimode is obtained by gluing these together in the obvious way:
\[v_m(z):=\chi_V(z)v_{m,V}(z)+\chi_I(z)v_{m,I}(z).\]
We now claim that $\{v_m\}$ is a sequence of quasimodes for \eqref{eq:SDNproblem}, of order $\sigma_m^{-\delta+1}$. Indeed, using the terminology of Definition \ref{def:quasimodes}, we see that $\varepsilon_m^{(2)}=0$, as because $\nabla\chi_{V}$ and $\nabla\chi_I$ are perpendicular to the normal to the boundary, the functions $\{v_m\}$ satisfy all Dirichlet and Neumann conditions of \eqref{eq:SDNproblem}. Moreover, for the same reason, together with the fact that $v_{m,V}(z)$ and $v_{m,I}(z)$ both satisfy the Steklov conditions of \eqref{eq:SDNproblem} on $\partial_S\mathcal{P}$, with frequency $\sigma_m$, we have $\varepsilon_m^{(1)}(z)=0$. Thus the only issue is $\varepsilon_m^{(3)}$ and indeed this is nonzero, as $v_m$ may not be harmonic. We may compute its Laplacian and use the fact that $v_{m,V}(z)$ is harmonic to obtain
\begin{equation}\label{eq:computedlaplacian}
\begin{split}
\Delta v_m(z)=2\nabla v_{m,V}(z)\cdot\nabla\chi_V(z)&+v_{m,V}(z)\Delta\chi_V(z)
+\Delta v_{m,I}(z)\chi_I(z)\\&+2\nabla v_{m,I}(z)\cdot\nabla\chi_I(z)+v_{m,I}(z)\Delta\chi_I(z).
\end{split}
\end{equation}
The third term of \eqref{eq:computedlaplacian} is nonzero on the support of $\chi_I(z)$. However, by Proposition \ref{prop:ansatzestimates}, we have
\[|\Delta v_{m,I}(z)|\leq C(\sigma_m^2t^M+\sigma_m t^{M-1})\er^{-\sigma_m ct}.\]
By a direct calculation, the $L^2$ norm of this term over the support of $\chi_I(z)$, indeed all the way out to $t=\infty$, is bounded by a universal constant $C$ times $\sigma_m^{3/2-M}$, and thus by $C\sigma_m^{-\delta}$.

Thus we may turn our attention to estimating the other four terms of \eqref{eq:computedlaplacian}, which are only nonzero on the transition regions where the \emph{gradients} of some elements of the partition of unity are nonzero. These regions have two types: the ones contained in the support of $\chi_{0}(z)$, and the ones where only $\chi_V(z)$ and $\chi_I(z)$ are nonzero. We consider each in turn.

First consider the support of $\chi_{0}(z)$, which is compactly contained in the interior of $\mathcal{P}$. By Proposition \ref{prop:ansatzestimates}, the fourth and fifth terms of \eqref{eq:computedlaplacian} decay uniformly exponentially in $m$ on this region. As for the first two terms, recall \eqref{eq:defofvmV}, which identifies $v_m(z)$ with a function $\Phi$. The function $\Phi$ is a linear combination of plane waves with frequency $\sigma_m$ and remainder terms $R(\sigma_m\mathcal{V}_jz)$, with $R(z)$ satisfying \eqref{eq:universalremainder} for various values of $r$ depending on the boundary conditions. On the support of $\chi_{0}(z)$, the plane waves decay uniformly exponentially in $m$ as well (and are zero away from neighbourhoods of each vertex, as there $\chi_{V}(z)$ is zero). The decay of the remainder terms can be computed using the chain rule, scaling, and \eqref{eq:universalremainder}. Using these and the fact that $d(z,V_j)$ is bounded uniformly above and below on these regions, we have that on the support of $\chi_0(z)$,
\begin{equation}\label{eq:universalremainder2}
|R(\sigma_m\mathcal{V}_jz)|+|\nabla_{z}R(\sigma_m\mathcal{V}_jz)|\leq C\sigma_m^{-r}.
\end{equation}
The same estimate therefore applies to the $L^2$ norm of each term. Not that the constant $C$ \emph{a priori} may depend on $\Psi^{(m)}$, in particular on the norms of various $\Psi^{(m)}_j$. However, the normalisation condition on $\Psi^{(m)}$ implies that these norms are universally bounded independent of $m$, and thus $C$ may be taken independent of $m$.

As for the exponent $r$, it depends on the angle. If $V_j\notin(\partial_D\mathcal{P}\cup\partial_N\mathcal{P})$, we have a Steklov-Steklov corner and we extract $r=\mu_{\alpha_j/2}=\pi/\alpha_j$. If $V_j\in\partial_S\mathcal{P}\cap\partial_D\mathcal{P}$ or $V_j\in\partial_S\mathcal{P}\cap\partial_N\mathcal{P}$, we get $r=\mu_{\alpha_j}=\pi/(2\alpha_j)$, unless $\alpha_j=\pi/2$, in which case the remainder term vanishes, since a sloping beach Peters solution in this case is a pure plane wave. If $V_j\notin\partial_S\mathcal{P}$, then $v_m(z)$ is identically zero and we do not care about it. Overall, from our observations and \eqref{eq:universalremainder2}, we obtain precisely that
\[
\|\Delta v_m(z)\|_{L^2(\supp\chi_{0})}\leq C\sigma_m^{-\delta},
\]
where $\delta$ is given by \eqref{eq:defofdelta}.

Finally, consider the first, second, fourth, and fifth terms of \eqref{eq:computedlaplacian} in a region where $\chi_V(z)$ and $\chi_I(z)$ are nonzero --- specifically assume we are along some edge $I_j$, without loss of generality near a vertex $V_j$ rather than $V_{j-1}$. By our geometric assumptions, in this region, the boundary is a straight line. Therefore, $v_{m,I}(z)$ is equal to a plane wave with frequency $\sigma_m$ and boundary phase $\Psi^{(m)}_j$. Moreover, by \eqref{eq:defofvmV}, $v_{m,V}(z)$ is a solution $\Phi$ which equals a plane wave along $I_j$ with phase $\mathbf{c}_{j,\inn}$, plus a plane wave along $I_{j+1}$ with phase $\mathbf{c}_{j,\out}$, plus a remainder term $R(\sigma_m\mathcal{V}_j(z))$ for some $j$. By definition of $\Psi^{(m)}_j$, the plane wave along $I_j$ is exactly $v_{m,I}(z)$. We also observe that $\nabla\chi_I(z)=-\nabla\chi_V(z)$ and $\Delta\chi_I(z)=-\Delta\chi_V(z)$. This allows us to combine the first, second, fourth, and fifth terms of \eqref{eq:computedlaplacian} as
\begin{equation}
2\nabla R(\sigma_m\mathcal{V}_j(z))\cdot\nabla\chi_I(z)+ R(\sigma_m\mathcal{V}_j(z))\Delta\chi_V(z),
\end{equation}
plus two further terms from the plane wave along $I_{j+1}$, which both decay exponentially, uniformly on our region. Estimating the $R$ terms may now be handled precisely as it was on the support of $\chi_0$, and the $L^2$ norm here is no worse than $C\sigma_m^{-\delta}$. 

Overall, putting everything together, we have proven that for some constant $C$ independent of $m$,
\[\|\Delta v_m(z)\|_{L^2(\mathcal{P})}\leq C\sigma_m^{-\delta}.\] 
This shows that we may take $\varepsilon^{(m)}_3=C(\sigma_m+1)\sigma_m^{-\delta}\leq C\sigma_m^{-\delta+1}$ in Definition \ref{def:quasimodes}, which completes the proof of the results in this section.

There is an important special case in which we get enumeration as well.
\begin{corollary}\label{cor:curvsloshing} Suppose that $\mathcal P$ is a partially curvilinear polygon satisfying the conditions of Theorem \ref{thm:quasimodesstraightnearcorners}, with $M\geq 4$ (so that the boundary is $C^6$). Suppose further that $\partial_S\mathcal{P}$ is a single boundary arc, and that the angle at each end is $\pi/2$. Then there exists a constant $C>0$ such that for all $m$,
\[
|\sigma_m-\lambda_m|\leq C\sigma_m^{-M+\frac 52}.
\]
\end{corollary}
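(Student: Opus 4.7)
The plan has two parts: extracting the decay rate $\sigma_m^{-M+5/2}$ from the construction already in place, and upgrading the non-decreasing enumeration $\{i_m\}$ supplied by Corollary \ref{cor:quasimodesstraightnearcorners} to the identity $i_m=m$.

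For the rate, I would inspect the quantity $\delta$ in \eqref{eq:defofdelta} under the present hypotheses. Since $\partial_S\mathcal{P}$ is a single boundary arc it has no interior Steklov--Steklov corners, so the first set in \eqref{eq:defofdelta} is empty; each endpoint of $\partial_S\mathcal{P}$ has angle exactly $\pi/2$, and these are excluded from the second set of \eqref{eq:defofdelta} by its very definition, since at a Steklov--Dirichlet or Steklov--Neumann corner of angle $\pi/2$ the Peters sloping-beach solution reduces to a pure plane wave with no remainder (as noted explicitly in the proof of Theorem \ref{thm:quasimodesstraightnearcorners}). Hence $\delta=M-\frac{3}{2}$, and Corollary \ref{cor:quasimodesstraightnearcorners} directly yields $|\sigma_m-\lambda_{i_m}|\leq C\sigma_m^{-M+5/2}$ with some non-decreasing $\{i_m\}$.

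To upgrade this to $i_m=m$, I would combine Weyl-law counting with a continuous deformation down to the straight sloshing setup of \cite{sloshing}. Both sequences $\{\sigma_m\}$ and $\{\lambda_m\}$ obey the asymptotic $\mathcal{N}(t)=\frac{|\partial_S\mathcal{P}|}{\pi}t+O(1)$ as $t\to+\infty$---the one for quasi-eigenvalues through Proposition \ref{prop:DiracL} applied to the path $\mathcal{L}$ and \cite[Lemma 3.7.4]{BK13}, the one for eigenvalues by the standard Weyl law for mixed Steklov problems on Lipschitz polygons---so that combined with the quasimode bound, $i_m-m$ is uniformly bounded in $m$. To fix the exact equality, I would connect $\mathcal{P}$ through a continuous family $\{\mathcal{P}_s\}_{s\in[0,1]}$ of partially curvilinear polygons sharing the data $(\balpha,\bell)$, with uniform $C^6$-regularity on the Steklov arc, to a straight polygon $\mathcal{P}_0$ obtained by flattening every curved side of $\partial_S\mathcal{P}$. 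Along this family $\sigma_m$ is constant in $s$ (being determined purely by $(\balpha,\bell)$), the eigenvalues $\lambda_m(s)$ depend continuously on $s$ after pulling \eqref{eq:SDNproblem} back to a fixed reference domain through diffeomorphisms supported near $\partial_S\mathcal{P}$, and at $s=0$ the problem is a straight sloshing-type problem with right-angled endpoints for which $i_m=m$ is established in \cite{sloshing}. Propagation from $s=0$ to $s=1$ then follows from the uniform-in-$s$ version of the quasimode bound---uniform because the construction of Theorem \ref{thm:quasimodesstraightnearcorners} depends only on $(\balpha,\bell)$ and on a finite number of boundary derivatives, all controlled uniformly in $s$---together with the bounded-index control above.

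The main obstacle will be making the propagation step rigorous: one must rule out a crossing of $\lambda_m(s)$ across $\sigma_m$ as $s$ varies, so that the index $i_m(s)$ can only change on a discrete set where the argument can be restarted. This is essentially a streamlined local version of the enumeration machinery developed in full generality in Section \ref{sec:completeness}, and is viable here precisely because the single-arc plus right-angle hypotheses place the problem into the exact sloshing regime treated in \cite{sloshing}. Absorbing finitely many initial indices into the constant $C$ would then yield the stated inequality for all $m\in\mathbb{N}$.
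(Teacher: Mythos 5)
Your identification of the decay rate is correct and matches the paper's first step: with $\partial_S\mathcal{P}$ a single side, the first set in \eqref{eq:defofdelta} is empty, and the right-angle endpoints are explicitly excluded from the second set, so $\delta = M-\tfrac 32$ and Corollary~\ref{cor:quasimodesstraightnearcorners} gives $|\sigma_m - \lambda_{i_m}| \leq C\sigma_m^{-M+5/2}$ with a non-decreasing $\{i_m\}$.

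Your upgrade to $i_m=m$, however, departs from the paper's argument and has a genuine gap. The paper exploits the special structure of this case directly: because the two end angles are $\pi/2$, the Peters remainders vanish and the boundary restrictions $\phi_m = v_m|_{\partial_S\mathcal{P}}$ are exact trigonometric functions --- the \emph{eigenfunctions of the one-dimensional Laplacian on the interval $\partial_S\mathcal{P}$} with Dirichlet/Neumann ends. Hence $\{\phi_m\}$ is a complete orthonormal basis of $L^2(\partial_S\mathcal{P})$ and the $\sigma_m$ are simple and equally spaced. The hypothesis $M\geq 4$ makes $\|\tilde u_m - \phi_m\|$ square-summable, so the Bary--Krein lemma \cite[Lemma 4.8]{sloshing} equates codimensions of the two tails, which forces each $\tilde u_m$ to eventually be a single eigenfunction; a monotonicity argument then pins $i_m=m$ for large $m$. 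No deformation, no continuity of $\lambda_m(s)$, and no appeal to the straight case is needed.

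Your deformation route runs into two problems. First, the propagation step, which you call "the main obstacle," is in fact the entire content of the enumeration question, and you leave it unresolved: the quasimode bound gives an \emph{upper} bound on the distance from $\sigma_m$ to \emph{some} eigenvalue but no lower bound on the spacing of eigenvalues near $\sigma_m$, so nothing prevents a second eigenvalue from entering and leaving the small ball as $s$ varies and changing the nearest-index assignment $i_m(s)$ discontinuously. The Weyl law only bounds $i_m - m$, which is not enough. Second, invoking "a streamlined local version of the enumeration machinery developed in Section~\ref{sec:completeness}" risks circularity: that machinery (Proposition~\ref{lemma1} in particular) uses Corollary~\ref{cor:curvsloshing} as its base case. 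The missing idea is that the special geometry here makes $\{\phi_m\}$ a complete orthonormal system on the Steklov arc, so the comparison with the true eigenfunctions can be done directly via Bary--Krein rather than through a homotopy.
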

\begin{proof} First observe that Theorem \ref{thm:quasimodesstraightnearcorners} applies with $\delta=M-\frac 32$, in particular with $\delta>2$. Consider our quasimodes $v_m$. Since the two angles at each end of $\partial_S\mathcal{P}$ are $\pi/2$, the sloping beach Peters solutions are exact plane waves and the remainders $R(z)$ are all zero. By construction, in this case, the restrictions $\phi_m=v_m|_{\partial_S\mathcal{P}}$ are exact trigonometric functions, of frequency $\sigma_m$, satisfying Dirichlet or Neumann conditions at each end. Moreover, again by direct calculation, $\{\sigma_m\}$ are precisely the eigenvalues, and $\{\phi_m\}$ the eigenfunctions, of the one-dimensional Laplacian $\Delta_{\partial_S\mathcal{P}}$ with Dirichlet or Neumann boundary conditions at each end as appropriate. Each $\sigma_m$ is simple and a (half-)integer multiple of $\pi/L$, and they form an arithmetic progression.

Consider the set $\{\tilde u_m\}$ given by Corollary \ref{cor:quasimodesstraightnearcorners}. Each $\tilde u_m$ is a linear combination of true eigenfunctions of \eqref{eq:SDNproblem}, with eigenvalues within $C\sigma_m^{(-M+\frac 52)/2}$ of $\sigma_m$, and we also have
\[
\|\tilde u_m-\phi_m\|_{L^2(\partial_S\mathcal{P})}\leq C\sigma_m^{(-M+\frac 52)/2}.
\]
Since $M\geq 4$, the sequence $\|\tilde u_m-\phi_m\|_{L^2(\partial_S\mathcal{P})}$ is square summable. Hence, there is an $M_0$ such that
\[
\sum_{k=M_0+1}^{\infty}\|\tilde u_k-\phi_k\|_{L^2(\partial_S\mathcal{P})}^{2}<1.
\]
Additionally, since $\sigma_{m+1}-\sigma_m$ is bounded away from zero, at some point the intervals
\[
E_m:=\left[\sigma_m-C\sigma_m^{-\delta+1},\sigma_m+C\sigma_m^{-\delta+1}\right],
\] 
where $C$ and $\delta$ are as in Corollary \ref{cor:quasimodesstraightnearcorners}, become disjoint from all preceding intervals. 
In other words, there exists $M_1\in\mathbb{N}$ such that $m\ge M_1$ implies
\begin{equation}\label{eq:nonintersect}
E_m\cap E_k=\emptyset
\end{equation}
for all $k\in\mathbb{N}$, $k\ne m$.
For that $M_1$, all the functions $\tilde u_m$, $m\geq M_1$, are  linear combinations of eigenfunctions of \eqref{eq:SDNproblem} corresponding to non-intersecting spectral windows, and therefore are mutually orthogonal.
Now let $M_2=\max\{M_0,M_1\}$, pick any $m$ such that $m\geq M_2$, and consider the two subspaces
\[
\Span\{\phi_{m+1},\phi_{m+2},\ldots\}\textrm{ and }\Span\{\tilde u_{m+1},\tilde u_{m+2},\ldots\}.
\]
By the version of the Bary--Krein lemma given in \cite[Lemma 4.8]{sloshing}, and the fact that $m\geq M_0$, these two subspaces have the same codimension. Since $\{\phi_m\}$ are the eigenfunctions of a one-dimensional problem, they form a complete orthonormal basis of $L^2(\partial_S\mathcal{P})$, and hence
\[
\codim(\Span\{\tilde u_{m+1},\tilde u_{m+2},\ldots\})=\codim(\Span\{\phi_{m+1},\phi_{m+2},\ldots\})=m.
\]
This means in particular that at most finitely many $\tilde u_j$ can be linear combinations of more than one eigenfunction of \eqref{eq:SDNproblem}, otherwise the codimension would be infinite. So there exists $M_3\ge M_2$ such that $j\ge M_3$ implies that $\tilde u_j$ is a pure eigenfunction of \eqref{eq:SDNproblem}, that is, $\tilde u_j=u_{\Lambda(j)}$ for some function $\Lambda: \{M_3+1,M_3+2,\ldots\}\to\mathbb N$. As a consequence of \eqref{eq:nonintersect}, this function is strictly increasing, and the complement of its range has $M_3$ elements. So the complement of its range has a largest element, and beyond that we must have $\Lambda(j)=j$. Therefore, for sufficiently large $m$, $\tilde u_m=u_m$. 

Thus the eigenvalue $\lambda_m$ is within $C\sqrt{\sigma_m}$ of $\sigma_m$ for sufficiently large $m$, and is a bounded distance away from each other $\sigma_m$. Therefore in Corollary \ref{cor:quasimodesstraightnearcorners} we have to have $i_m=m$ for large enough $m$. Since any finite set of indices is irrelevant, Corollary \ref{cor:curvsloshing} follows.
\end{proof}

\subsection{Quasimodes for a fully curvilinear polygon}

Here we generalise and construct quasimodes for a curvilinear polygon, not necessarily straight near the corners. However, we are now only interested in the fully Steklov problem rather than the mixed problem.
\begin{theorem}\label{thm:fullcurvquasi} Let $\mathcal{P}$ be a curvilinear polygon which is piecewise $C^5$. Let
\begin{equation}\label{eq:defofdeltafull}
\delta=\min\left(\left\{\frac{\pi}{\alpha_k}: k\in\{1,\ldots,n\}\right\}\cup\left\{\frac 32\right\}\right),
\end{equation}
and observe that $\delta>1$. Then there is a sequence $\{v_m\}$ of quasimodes for the Steklov problem on $\mathcal{P}$, corresponding to the previously constructed sequence of quasi-eigenvalues $\{\sigma_m\}$, such that they are of order $\sigma_m^{-\tilde\delta+1}$ for any $\tilde\delta<\delta$.
\end{theorem}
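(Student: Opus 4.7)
The plan is to extend the quasimode construction of Theorem \ref{thm:quasimodesstraightnearcorners} by replacing the exact plane waves (which solve Robin problems on a straight-edged sector) with the curvilinear boundary-layer ansatz $w_{\sigma_m, M}(s,t) = \er^{\sigma_m \omega_M(s,t)}$ of the previous subsection. The outer structure follows the partially curvilinear case: take a partition of unity $1 = \chi_V + \chi_I + \chi_0$ on $\mathcal P$, with $\chi_V$ supported in fixed-size neighbourhoods of each vertex $V_j$, $\chi_I$ supported in tubular neighbourhoods of the edge interiors where boundary-orthogonal coordinates $(s_j, t_j)$ are valid, and $\chi_0$ supported in the interior, with $\chi_V$ and $\chi_I$ having vanishing normal derivative on $\partial\mathcal{P}$. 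Along each edge $I_j$, set $v_{m,I}(z) := \Re(w_{\sigma_m, M}(s_j + \beta_j, t_j))$ with $M = 3$, where $\beta_j$ is chosen so that the boundary trace matches the boundary quasi-wave $\Psi^{(m)}_j(s_j)$. The $C^5$ hypothesis on $\partial\mathcal{P}$ makes the curvature $C^3$, more than enough to permit $M = 3$; by Proposition \ref{prop:ansatzestimates}, $v_{m,I}$ satisfies the Steklov condition on $I_j$ exactly and has an $L^2(\mathcal{P})$ harmonicity defect of order $\sigma_m^{3/2-M} = \sigma_m^{-3/2}$.

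Near each vertex $V_j$, the construction starts from the scattering Peters solution $\Phi_{\alpha_j}^{(\mathbf{c}_{j,\inn}, \mathbf{c}_{j,\out})}$, which is defined on the tangent wedge of $\mathcal{P}$ at $V_j$ and decomposes (in the notation of Section \ref{sec:sector}) as a sum of two plane waves $W_{\out}^{\mathbf{c}_{j,\out}} + W_{\inn}^{\mathbf{c}_{j,\inn}}$ plus a remainder $R$ decaying like $\rho^{-\pi/\alpha_j}$. The key modification is to replace each rescaled plane wave $W_\bullet(\sigma_m \mathcal{V}_j z)$ with a curvilinear ansatz $w_{\sigma_m, M}$ along the corresponding curved edge ($I_j$ for $W_{\inn}$ and $I_{j+1}$ for $W_{\out}$), with coefficients chosen so that the boundary traces equal $\mathbf{c}_{j,\inn} \cdot \mathbf{e}(\sigma_m s_j)$ on $I_j$ and $\mathbf{c}_{j,\out} \cdot \mathbf{e}(\sigma_m s_j)$ on $I_{j+1}$. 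This guarantees compatibility with the edge ansatz $v_{m,I}$ in the transition region. The rescaled remainder $R(\sigma_m \mathcal{V}_j z)$, originally defined on the tangent wedge, is extended to a neighbourhood of $\partial \mathcal{P}$ via a local diffeomorphism straightening $\partial\mathcal{P}$ near $V_j$, preserving the decay estimate \eqref{eq:universalremainder} up to a multiplicative constant. The resulting modified $v_{m,V}$ then satisfies the Steklov condition on $\partial\mathcal{P}$ exactly near $V_j$.

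The gluing $v_m = \chi_V v_{m,V} + \chi_I v_{m,I}$ satisfies $\epsilon_m^{(1)} = \epsilon_m^{(2)} = 0$ by construction: both components obey the Steklov condition on $\partial\mathcal{P}$, and $\chi_V$, $\chi_I$ have vanishing normal derivative there. The quantity $\epsilon_m^{(3)} = (\sigma_m + 1)\|\Delta v_m\|_{L^2(\mathcal{P})}$ then receives contributions from (i) the harmonicity defect of the curvilinear ansatz along each edge, of order $\sigma_m^{-3/2}$; (ii) the defect from the extended Peters remainder $R$ in the vertex region, of order $\sigma_m^{-\pi/\alpha_j}$ near $V_j$; and (iii) cross-terms from derivatives of cut-off functions applied to curvilinear ansatzes extending away from their associated edge, which are exponentially small. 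Combining these estimates gives $\epsilon_m^{(3)} \leq C \sigma_m^{-\tilde\delta + 1}$ for any $\tilde\delta < \delta$, with $\delta$ as in \eqref{eq:defofdeltafull}. Theorem \ref{thm:approx1} then finishes the proof.

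The principal technical obstacle is to show that after substituting the plane waves in the Peters solution by the curvilinear ansatzes, the modified vertex quasimode retains all the properties needed for the argument: exact Steklov on the curved boundary, the expected harmonicity defect, and appropriate decay of the ``non-matching'' terms away from their associated edge. Extending the Peters remainder $R$ from the tangent wedge to $\mathcal{P}$ while preserving both its uniform bound and its gradient decay requires care, and the interaction between the diffeomorphism-induced error in $R$ and the harmonicity defect of the curvilinear ansatz must be tracked uniformly. The small loss in the exponent ($\tilde\delta < \delta$ rather than $\tilde\delta = \delta$) reflects this additional book-keeping, which is absent in the partially curvilinear case where the boundary is straight near each vertex.
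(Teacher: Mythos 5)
Your outer construction is essentially the paper's: same partition of unity, same curvilinear edge ansatz $v_{m,I}$, and a vertex quasimode assembled from the two adjacent edge ansatzes plus the Peters remainder pulled back by a map straightening the neighbourhood of $V_j$ (the paper uses the Riemann map $\Theta_j$). But there is a genuine gap in the central claim. You assert that after this modification ``the resulting $v_{m,V}$ satisfies the Steklov condition on $\partial\mathcal{P}$ exactly near $V_j$,'' and conclude $\epsilon_m^{(1)}=0$. This is false, and it is precisely the point where the fully curvilinear case differs from the partially curvilinear one. No local map can simultaneously preserve both harmonicity and the Steklov boundary condition on the curved boundary: a conformal map preserves harmonicity but turns $\partial_n u=\sigma u$ into $\partial_n u=\sigma|\Theta'|u$ with a variable weight, while a boundary-flattening diffeomorphism that is not conformal destroys harmonicity. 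The transplanted remainder $R_j(\sigma_m\Theta_j(z))$ therefore has a nonzero \emph{Steklov defect} on $\partial\mathcal{P}$, and estimating its $L^2$ norm is the technical heart of the paper's proof, not a detail. That is where the constant $\tfrac32$ in \eqref{eq:defofdeltafull} actually comes from, via the H\"older ($C^{1,\gamma}$, $\gamma<1$) regularity of $\Theta_j$ at the vertex and the integral bounds on the Peters remainder and its derivatives (cf.\ \eqref{eq:petersrembounds}).

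A concrete way to see the problem with your accounting: under your claim that $\epsilon_m^{(1)}=0$, the order $\sigma_m^{1-\tilde\delta}$ would be governed entirely by the harmonicity defect $\sigma_m^{3/2-M}$ of the edge ansatz and the Peters tail decay $\sigma_m^{-\pi/\alpha_j}$. Both of those improve with $M$ (i.e.\ with boundary smoothness), so a $C^\infty$ boundary would beat the $\tfrac32$ barrier — directly contradicting the paper's remark that this cap ``is due to the influence of the curvature at the corners'' and cannot be removed by smoothing the arcs. The missing ingredient is the nonzero $\epsilon_m^{(1)}$ estimate: you need to compare the Steklov defect of $R_j(\sigma_m\Theta_j(z))$ along the curved $I_{j+1}$ with that of $R_j(\sigma_m z)$ along the straight tangent edge, term by term, and show the difference is $O(\sigma_m^{1-\tilde\delta})$. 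This comparison is in fact where the loss $\tilde\delta<\delta$ also originates, rather than from the ``book-keeping'' you describe.
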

\begin{remark} In this case, as opposed to the partially curvilinear case, it is not possible to increase the $\frac 32$ in \eqref{eq:defofdeltafull} by increasing the smoothness of the boundary arcs. This term is due to the influence of the curvature at the corners. 
\end{remark}
\begin{remark}\label{rem:defofepsilon} With $\delta$ as defined here, Theorem \ref{thm:main} holds with $\varepsilon_0:=\frac 12(\delta-1)$.
\end{remark}
As in the previous subsection, there is the usual corollary, with an identical proof:
\begin{corollary}\label{cor:quasimodesfullycurv} For a curvilinear polygon $\mathcal{P}$ with any $\tilde\delta<\delta$, there is an non-decreasing sequence $\{i_m\}$ and a constant $C>0$ such that
\[
|\sigma_m-\lambda_{i_m}|\leq C\sigma_m^{-\tilde\delta+1}\qquad\text{for all }m\in\mathbb N,
\]
where the $\lambda_{i_m}$ are Steklov eigenvalues of $\mathcal{P}$. There is also a sequence of $\tilde u_m$ as in Theorem \ref{thm:approx1}, with $\delta_m=\sigma_m^{-\tilde\delta+1}$.
\end{corollary}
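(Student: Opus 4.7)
The plan is to combine Theorem \ref{thm:fullcurvquasi} with the abstract approximation result Theorem \ref{thm:approx1} and then apply the monotonisation manoeuvre that was used in the paragraph immediately following Theorem \ref{thm:approx1} (and which established Theorem \ref{thm:approx} from it). In essence, this is a direct analogue of Corollary \ref{cor:quasimodesstraightnearcorners}, but now in the fully curvilinear setting.

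First, fix $\tilde\delta < \delta$. By Theorem \ref{thm:fullcurvquasi}, applied to the fully Steklov problem on $\mathcal{P}$ (i.e.\ $\partial_S\mathcal{P} = \partial\mathcal{P}$, $\partial_D\mathcal{P} = \partial_N\mathcal{P} = \varnothing$), we have a sequence of quasimodes $\{v_m\}$ corresponding to $\{\sigma_m\}$ of order $\delta_m := \sigma_m^{-\tilde\delta + 1}$. Feeding these into Theorem \ref{thm:approx1} produces a sequence $\{i_m\}$ of non-negative integers and functions $\{\tilde u_m\}$ with
\[
|\sigma_m - \lambda_{i_m}| \leq C \delta_m = C \sigma_m^{-\tilde\delta+1},
\]
and with each $\tilde u_m$ a linear combination of genuine Steklov eigenfunctions of $\mathcal{P}$ with eigenvalues in $[\sigma_m - C\sqrt{\delta_m}, \sigma_m + C\sqrt{\delta_m}]$, satisfying $\|v_m - \tilde u_m\|_{L^2(\partial\mathcal{P})} \leq C\sqrt{\delta_m}$. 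Note that the hypothesis of Theorem \ref{thm:approx1} that $\partial_S\mathcal{P}$ be non-empty is trivially met here.

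The only issue is that Theorem \ref{thm:approx1} does not a priori guarantee that $\{i_m\}$ is non-decreasing. To rectify this, I would run the exact same argument as in the paragraph after Theorem \ref{thm:approx1}: set $\epsilon_m := \sup_{k\geq m} C \delta_k$, and for each $m$ redefine $i_m$ to be the smallest index such that $\lambda_{i_m} \in (\sigma_m - \epsilon_m, \sigma_m + \epsilon_m)$. Since $\tilde\delta > 1$ and $\sigma_m \to \infty$, the sequence $\delta_m = \sigma_m^{-\tilde\delta+1}$ tends to zero, hence $\epsilon_m$ is a decreasing sequence with $\epsilon_m \to 0$. Moreover, because $\sigma_m$ is non-decreasing, we in fact have $\epsilon_m = C\delta_m$ for all $m$ large enough, so the bound $|\sigma_m - \lambda_{i_m}| \leq C\sigma_m^{-\tilde\delta+1}$ (after possibly enlarging $C$) survives the re-selection. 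The monotonicity argument from the proof of Theorem \ref{thm:approx} then shows that $\{i_m\}$ is non-decreasing.

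There is no real obstacle here — the corollary is a packaging of already-proven ingredients. The most delicate issue is purely bookkeeping: one must check that after the $\sup$-replacement $\delta_m \rightsquigarrow \epsilon_m$ the quantitative rate is not degraded. This is immediate because $\{\sigma_m^{-\tilde\delta+1}\}$ is (eventually) monotonically decreasing, so $\epsilon_m$ and $\delta_m$ differ only by a constant factor, and the accompanying sequence $\{\tilde u_m\}$ from Theorem \ref{thm:approx1} carries over verbatim.
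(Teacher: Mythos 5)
Your proof is correct and follows exactly the paper's intended route: apply Theorem \ref{thm:fullcurvquasi} to obtain quasimodes of order $\sigma_m^{-\tilde\delta+1}$, feed them into Theorem \ref{thm:approx1}, and then use the monotonisation manoeuvre from the paragraph after Theorem \ref{thm:approx1}. The paper states this corollary has "an identical proof" to Corollary \ref{cor:quasimodesstraightnearcorners}, which is precisely the argument you gave, and your observation that $\{\sigma_m^{-\tilde\delta+1}\}$ is non-increasing (so that taking the $\sup$ does not degrade the rate) correctly handles the one bookkeeping point.
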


Our quasimode construction in this section will proceed by ``straightening out" a neighbourhood of each corner with a conformal map, then applying a partition of unity argument as in the previous subsection. One subtlety is that we only use the conformal map to modify the \emph{remainders} in the scattering Peters solutions, rather than the solutions themselves. So, rather than the sum of two plane waves and a remainder, our models near each vertex will be the sum of the curved boundary models along each adjacent side, plus a conformally mapped remainder. 

For each $j$, we use the Riemann mapping theorem to define a conformal map $\Theta_j$ from a small neighbourhood $U_j$ of $V_j$ into a small neighbourhood of the origin in $\Sct{\alpha_j}$, with $V_j$ mapped to the origin, and with $|D\Theta_j(V_j)|=1$. By \cite{pp14}, no matter what our choice of $\Theta_j$, the map $\Theta_j$ is in the H\"older class $C^{1,\gamma}$ for any $\gamma<1$. So $D\Theta_j$ is a continuous function, and thus in a sufficiently small neighbourhood of $V_j$ (without loss of generality, $U_j$), we have $|D\Theta_j|\geq 2/3$ and in the image of that neighbourhood we have $|D\Theta_j^{-1}|\geq 1/2$.

Now we define a partition of unity $1=\chi_V+\chi_I+\chi_{0}$ precisely as in the previous section, with the property that each $\chi_{V_j}(z)$ is supported in a compact subset of $U_j$, and with the gradient of each cutoff function perpendicular to $\mathbf{n}$ at every point of $\partial\mathcal{P}$. We will define two functions $v_{m,V}(z)$ and $v_{m,I}(z)$. In fact, the definition of $v_{m,I}(z)$ is identical to the one in the previous subsection: near the edge $I_j$ we have
\[v_{m,I_j}(z):=\Re(w_{\sigma_m,M}(s_j+\beta,t_j)),\]
where the shift $\beta$ is chosen so that the restriction to the boundary of $v_{m,I}(z)$ is $\Psi^{(m)}_j(s_j)$. Then $v_{m,I}(z)$ is the sum of these over $j$. To define $v_{m,V}(z)$ we have to work a little harder. We would like to use \eqref{eq:defofvmV} but cannot because the sector is not straight. Instead, we use Proposition \ref{prop:planewaveapprox} to write
\begin{equation}\label{eq:straightcomparisonorig}
\Phi_{\alpha_j}^{(\mathbf{c}_{j,\inn},\mathbf{c}_{j,\out})}(z)=W^{\mathbf c_{j,\inn}}_{\out,\alpha_j}(z)+W^{\mathbf c_{j,\inn}}_{\inn,\alpha_j}(z)+ R_{j}(z),
\end{equation}
where the $W$ terms are pure plane waves and $R_j(z)$ is the remainder. We define
\begin{equation}\label{eq:defofvmVcurv}
v_{m,V_j}(z):=v_{m,I_j}(z)+v_{m,I_{j+1}}(z)+R_j(\sigma_m\Theta_j(z)).
\end{equation}
The first two terms here are the curvilinear, approximately harmonic functions along the incoming and outgoing edges from $V_j$ respectively (again, without loss of generality we assume that $U_j$ is a small enough neighbourhood so these are defined). The third term is the remainder term in the appropriate scattering Peters solution, pulled back.
As before, we let $v_{m,V}(z)$ be the sum of these over all $j$, and then set, in full,
\[v_{m}(z):=v_{m,V}(z)\chi_V(z)+v_{m,I}(z)\chi_I(z).\]

We need to prove that these are quasimodes. This requires estimating $\varepsilon_m^{(i)}$, $i=1,2,3$, in Definition \ref{def:quasimodes}. Since $\partial_{N}\mathcal{P}=\partial_{D}\mathcal{P}=\emptyset$, we may take $\varepsilon_m^{(2)}=0$. To address $\varepsilon_m^{(3)}$, we use a very similar argument to that in the previous subsection. The formula \eqref{eq:computedlaplacian} still holds, and the analysis proceeds analogously, with the plane waves replaced by $v_{m,I_j}(z)$ and $v_{m,I_{j-1}}(z)$ and the remainders $R(\sigma_m\mathcal{V}_j(z))$ replaced by $R_j(\sigma_m\Theta_j(z))$. Since $\Theta_j(z)$ has derivative bounded away from zero on $U_j$, the analogue of the decay estimate \eqref{eq:universalremainder2}, with the above replacement, still holds away from the vertices. Moreover, the functions $v_{m,I_j}(z)$ decay exponentially in $\sigma_m$ away from $I_j$. This allows the argument to proceed unchanged, and we may as before take $\varepsilon_3^{(m)}\leq C\sigma_m^{-\delta+1}$, which is enough. 

It remains only to estimate $\varepsilon_m^{(1)}$. To do this we introduce a new piece of terminology: for a family of functions $\tilde v_m(z)$ on $\mathcal{P}$, we define their ``Steklov defect" to be the functions on the boundary given by
\[
\texttt{SD}(v_m)(z):=\frac{\partial v_m}{\partial n}(z)-\sigma_m v_m(z).
\]
Our goal is to find an upper bound for the $L^2$ norms of $\texttt{SD}(v_m)(z)$. Since all cutoff functions have zero normal derivative at the boundary, and the functions $v_{m,I}(z)$ satisfy an exact Steklov boundary condition,
\[\texttt{SD}(v_m)(z)=\texttt{SD}(v_{m,V})(z)\chi_V(z)+\texttt{SD}(v_{m,I})(z)\chi_I(z)=\texttt{SD}(v_{m,V})(z)\chi_V(z).\]
This is supported in a union of $2n$ regions, one on each side of each vertex, and we can consider the $L^2$ norm over each separately. So fix $j$, and consider a short segment along $I_{j+1}$ in which $\chi_{V,j}(z)|_{I_{j+1}}$ is supported. Note that $s_{j+1}$ is the coordinate here, and this segment is contained in $[0,\varepsilon]$ for some $\varepsilon>0$. So we need to bound 
\[\|\texttt{SD}(v_{m,V_j})\|_{L^2[0,\varepsilon]}.\]

We will do this via comparison with a non-curvilinear case. Our function $v_{m,V_j}(z)$ is given by \eqref{eq:defofvmVcurv}. If the sides were straight near the corner, then we would instead have the function $\tilde v_{m,V_j}(z)$ given by:
\begin{equation}\label{eq:straightcomparison}
\tilde v_{m,V_j}(z)=W^{\mathbf c_{j,\inn}}_{\inn,\alpha_j}(\sigma_m z)+W^{\mathbf c_{j,\out}}_{\out,\alpha_j}(\sigma_m z)+ R_{j}(\sigma_m z).
\end{equation}
Of course, the values of $z$ in \eqref{eq:defofvmVcurv} and \eqref{eq:straightcomparison} do not have the same domain. Nevertheless, we can compare the Steklov defects of these two functions, as $s_{j+1}$ is a legitimate coordinate along the \emph{boundary} of each. Moreover, the Steklov defect of $\tilde v_{m,V_j}(z)$ is zero since it is an exact (scaled) scattering Peters solution. Thus it suffices to bound
\[
\|\texttt{SD}(v_{m,V_j})-\texttt{SD}(\tilde v_{m,V_j})\|_{L^2[0,\varepsilon]}.
\]
This involves comparing \eqref{eq:defofvmVcurv} and \eqref{eq:straightcomparison} term by term. Observe that since we are along $I_{j+1}$ rather than $I_j$, the Steklov defect of $v_{m,I_{j+1}}(z)$ is zero by the observation before Proposition \ref{prop:ansatzestimates}. The Steklov defect of the outgoing plane wave $W^{\mathbf c_{j,\out}}_{\out,\alpha_j}(\sigma_m z)$ is also zero. So we just need to compare the first and third terms, and it suffices to bound
\begin{equation}\label{eq:firstandthird}
\|\texttt{SD}(v_{m,I_j})-\texttt{SD}(W^{\mathbf c_{j,\inn}}_{\inn,\alpha_j}(\sigma_m z))\|_{L^2[0,\varepsilon]} + 
\|\texttt{SD}(R_j(\sigma_m\Theta_j(z)))-\texttt{SD}(R_j(\sigma_m z))\|_{L^2[0,\varepsilon]}.
\end{equation}
Both of these can be handled with direct calculations. In all we claim that the expression \eqref{eq:firstandthird} is bounded by $C\sigma^{1-\tilde\delta}$. Assuming this claim, we can take $\varepsilon^{(1)}_m=\varepsilon^{(3)}_m=C\sigma_m^{1-\tilde\delta}$ in Definition \ref{def:quasimodes}, which proves Theorem \ref{thm:fullcurvquasi}. It therefore remains only to prove the needed bounds on \eqref{eq:firstandthird}.

We begin by analysing the remainder term of \eqref{eq:firstandthird}. Recall that $R_j(z)$ is a remainder term in a scattering Peters solution, defined on an infinite sector. For $x\in[0,\infty)$, define 
\[
f(x):=R_j(x,0);\quad g(x):=\frac{\partial R_j}{\partial y}(x,0).
\]
Bounds on these functions and on their $x$-derivatives, for both large $x$ and small $x$, may be extracted from \cite[Theorem 2.1]{sloshing} and the usual angle-doubling reflection argument. Note that our normalisation condition ensures that the constants $C$ may be chosen independent of $m$. The bounds we obtain, with $\mu=\pi/\alpha_j$ (note that $\mu\geq\delta >1$), are:
\begin{equation}\label{eq:petersrembounds}
\begin{aligned}{2}
|f(x)|&\leq C(1+x)^{-\mu};&\quad |f'(x)|&\leq C(1+x)^{-\mu-1};\\ 
|g(x)|&\leq C(1+x)^{-\mu-1};&\quad |g'(x)|&\leq C\min\left\{x^{\mu-2},x^{-\mu-2}\right\}.
\end{aligned}
\end{equation}

Now let $\theta$ be the restriction of $\Theta$ to the edge $I_{j+1}$, in the coordinate $s_{j+1}$. Observe that $\theta'(0)=1$ and both $\theta$ and $\theta^{-1}$ have derivatives bounded below by $1/2$ and above by $2$ on $[0,\varepsilon]$. Moreover by \cite{pp14}, $\theta$ is $C^{1,\gamma}$ and $\theta'$ is $C^{0,\gamma}$ for every $\gamma<1$. The remainder term of \eqref{eq:firstandthird} is, with all this terminology,
\begin{equation}\label{eq:thingtoboundgandf}
\|(\sigma_m|\theta'(s_{j+1})|g(\sigma_m\theta(s_{j+1}))-\sigma_m f(\sigma_m\theta(s_{j+1})))-(\sigma_m g(\sigma_m s_{j+1})-\sigma_m f(\sigma_m s_{j+1}))\|_{L^2[0,\varepsilon]}.
\end{equation}

We bound the differences of the $g$ terms and the $f$ terms separately. For the difference of $f$ terms, we write
\[
\begin{split}
|\sigma_m(f(\sigma_m s_{j+1})-f(\sigma_m\theta(s_{j+1}))|&\leq\sigma_m\max_{x\in[\sigma_m s_{j+1},\sigma_m\theta(s_{j+1})]}|f'(x)|\cdot|\sigma_m s_{j+1}-\sigma_m\theta(s_{j+1})|\\
&\leq\sigma_m^2|s_{j+1}-\theta(s_{j+1})|\max_{x\in[\sigma_m s_{j+1}/2,2\sigma_m s_{j+1}]}|f'(x)|.
\end{split}
\]
The function $s_{j+1}-\theta(s_{j+1})$ is $C^{1,\gamma}$ for all $\gamma<1$, and both the function and its derivative are zero at $s_{j+1}=0$, so in fact $|s_{j+1}-\theta(s_{j+1})|\leq Cs_{j+1}^{1+\gamma}$ for all $\gamma<1$. As for $|f'(x)|$, since $x\in\sigma_m s_{j+1}[1/2,2]$, we can bound it using \eqref{eq:petersrembounds}. In all we conclude
\[
|\sigma_m(f(\sigma_m s_{j+1})-f(\sigma_m\theta(s_{j+1})))|\leq C\sigma_m^2s_{j+1}^{1+\gamma}(1+\sigma_m s_{j+1})^{-\mu-1}.
\]
The square of the $L^2$ norm of the right-hand side, using the substitution $w=\sigma_m s_{j+1}$, is
\begin{equation}\label{eq:thing833}
C\sigma_m^{1-2\gamma}\int_0^{\sigma_m\varepsilon}w^{2+2\gamma}(1+w)^{-2\mu-2}\, \dr w\leq C\sigma_m^{1-2\gamma}\int_0^{\sigma_m\varepsilon}(1+w)^{-2\mu+2\gamma}\, \dr w.
\end{equation}
Using $\mu\geq\delta$, as long as we avoid choosing $\gamma=\mu-\frac 12$, this is bounded by
\[C\sigma_m^{1-2\gamma}|\sigma_m^{-2\delta+2\gamma+1}+1|\leq C\sigma_m^{1-2\gamma}+C\sigma_m^{2-2\delta}.\]
Taking square roots to get the $L^2$ norm and using $\sqrt{a+b}\leq\sqrt{a}+\sqrt{b}$, we have
\[\||\sigma_m(f(\sigma_m s_{j+1})-f(\sigma_m\theta(s_{j+1}))\|_{L^2[0,\varepsilon]}\leq C\sigma_m^{\frac 12-\gamma}+C\sigma_m^{1-\delta}.\]
Since $\tilde\delta<\delta\leq 3/2$, we can choose $\gamma$ sufficiently close to $1$ so that both terms are bounded by $C\sigma_m^{1-\tilde\delta}$, as desired.

Now for the difference of $g$ terms. By adding and subtracting $\sigma_m g(\sigma_m\theta(s_{j+1}))$ we can write it as
\begin{equation}
\sigma_m(|\theta'(s_{j+1})|-1)g(\sigma_m\theta(s_{j+1}))+\sigma_m (g(\sigma_m\theta(s_{j+1}))- g(\sigma_m s_{j+1})).
\end{equation}
The first of these two terms, again using H\"older continuity of $\theta$, \eqref{eq:petersrembounds}, and $s_{j+1}/2\leq\theta(s_{j+1})\leq 2s_{j+1}$, is bounded by 
\[C\sigma_m s_{j+1}^{\gamma}(1+\sigma_m s_{j+1})^{-\mu-1}.\]
Using the exact same argument as for the $f$ terms we can show that the square of the $L^2$ norm of this quantity is bounded by \eqref{eq:thing833}, except with $(1+w)^{-2\mu+2\gamma-2}$ instead of $(1+w)^{-2\mu+2\gamma}$. This makes the integral smaller rather than larger, so the same $C\sigma_m^{1-\tilde\delta}$ bound holds. As for the second of the two terms, by similar arguments as before, it is less than
\[
\sigma_m\max_{x\in[\sigma_m s_{j+1}/2,2\sigma_m s_{j+1}]}|g'(x)|\cdot |\sigma_m\theta(s_{j+1})-\sigma_m s_{j+1}|\leq C\sigma_m^2s_{j+1}^{1+\gamma}\min\left\{(\sigma_m s_{j+1})^{-\mu-2},(\sigma_m s_{j+1})^{\mu-2})\right\}.
\]
The $L^2$ norm squared is thus bounded, using the substitution $w=\sigma_m s_{j+1}$ again, along with $\mu\geq\delta$, by
\[
C\sigma_m^{1-2\gamma}\left(\int_0^1 w^{2+2\gamma}w^{2\mu-4}\, \dr w+\int_1^{\sigma_m\varepsilon} w^{2+2\gamma}w^{-2\delta-4}\, \dr w\right).
\]
Since $\mu>1$, for $\gamma$ sufficiently close to 1, the exponent in the first term is positive and the integral is bounded by 1. For the second term, we have (without the pre-factor) a bound of $C\sigma_m^{2\gamma-2\delta-1}$. After incorporating the prefactor and taking square roots, the $L^2$ norm is bounded by
\[C\sigma_m^{\frac 12-\gamma}+C\sigma_m^{-\delta},\]
which as before is bounded by $C\sigma_m^{1-\tilde\delta}$ for $\gamma$ sufficiently close to $1$. This proves the necessary bound for the remainder term of \eqref{eq:firstandthird}.

It remains only to analyse the first term in \eqref{eq:firstandthird}. To do this, let $(s,t)=(s_{j+1},t)$ be the curvilinear coordinates along $I_{j+1}$, and let $(\tilde s,\tilde t)$ be the curvilinear coordinates along the adjacent edge $I_j$, so that $\tilde s=L_j-s_j$. 
For the exact sector of angle $\alpha=\alpha_j$, we have, for some shift $\beta$ depending on $\mathbf{c}_{j,\inn}$,
\[
W^{\mathbf c_{j,\inn}}_{\inn,\alpha_j}(\sigma(\tilde s,\tilde t))=\Re(\er^{\sigma(\ir(\tilde s+\beta)-\tilde t))}),
\]
and for a curvilinear sector, we have, for the same $\beta$,
\[
v_{m,I_j}(\tilde s,\tilde t)=\Re(\er^{\sigma f_M(\tilde s,\tilde t)})=\Re(\er^{\sigma(\ir(\tilde s+\beta)-\tilde t+O(\tilde t^2))}).
\]
We need to compute the Steklov defects of each of these functions in the coordinates along $I_{j+1}$. To do this, first compute the gradients of each:
\begin{equation}\label{eq:computedthegradients}
\nabla v_{m,I_j}(\tilde s,\tilde t)=\sigma\Re\begin{pmatrix}(\ir+O(\tilde t^2)) \er^{\sigma(\ir(\tilde s+\beta)-\tilde t+O(\tilde t^2))}\\ (-1+O(\tilde t))\er^{\sigma(\ir(\tilde s+\beta)-\tilde t+O(\tilde t^2))}\end{pmatrix},
\end{equation}
with the same expression, without the error terms, for $\nabla W$.

We will need to take $L^2$ norms in $s$, so we need to discuss how the coordinates are related. For an exact sector, we have $\begin{pmatrix}\tilde s\\ \tilde t\end{pmatrix}=\begin{pmatrix}s\cos\alpha\\ s\sin\alpha\end{pmatrix}$, and the normal vector $\mathbf{n}$ to the opposite edge $I_{j}$, as a function of $s$, is $\mathbf{n}(s)=\begin{pmatrix}\sin\alpha\\ -\cos\alpha\end{pmatrix}$. For the \emph{curvilinear} sector, we have errors of the following types:
\[
\begin{pmatrix}\tilde s\\ \tilde t\end{pmatrix}=\begin{pmatrix}s\cos\alpha\\ s\sin\alpha\end{pmatrix}+O(s^2);\quad 
\mathbf{n}(s)=\begin{pmatrix}\sin\alpha\\ -\cos\alpha\end{pmatrix}+O(s).
\]
So the difference of Steklov defects we need to consider is
\begin{equation}\label{eq:thing834}
\begin{split}
&\sigma\Re\left((\er^{\sigma(\ir(s\cos\alpha+\beta)-s\sin\alpha))}\right)-\Re\left(\er^{\sigma(\ir(s\cos\alpha+\beta)-s\sin\alpha+O(s^2)))})\right)\\
-&\left(\begin{pmatrix}\sin\alpha\\ -\cos\alpha\end{pmatrix}\cdot \nabla W^{\mathbf c_{j,\inn}}_{\inn,\alpha_j}(\sigma(\tilde s(s),\tilde t(s))\right)\\
-&\left(\begin{pmatrix}\sin\alpha\\ -\cos\alpha\end{pmatrix}+O(s))\cdot\nabla v_{m,I_j}(\tilde s(s),\tilde t(s))\right).
\end{split}
\end{equation}

Consider first the difference of terms without gradients in \eqref{eq:thing834} and take its absolute value. It is
\begin{equation}\label{eq:thing835}
\left|-\sigma\Re\left(\er^{\sigma(\ir(s\cos\alpha+\beta)-s\sin\alpha)}\right)\left(\er^{\sigma O(s^2)}-1\right)\right|\leq \sigma \er^{\sigma(-s\sin\alpha)}\left(\er^{C\sigma s^2}-1\right).
\end{equation}
Use the fact that $\er^x-1\leq x\er^x$, then use the fact that $\sin\alpha>0$, to see that there exists $c>0$ such that for $s$ sufficiently small, this is bounded by
\[
C\sigma^2s^2\er^{-c\sigma s}.
\]
The $L^2$ norm of this function can be computed directly via the usual change of variables $w=\sigma s$, and is bounded by $C\sigma^{-1/2}$. Since $\tilde\delta<3/2$ this is bounded by $C\sigma^{1-\tilde\delta}$ as desired.

Now analyse the gradient terms in \eqref{eq:thing834}. First examine the $O(s)$ term in \eqref{eq:thing834}. Using \eqref{eq:computedthegradients}, and the equivalence of the $\ell_2$ and $\ell_1$ norms on $\mathbb R^2$, a bound for the absolute value of this term is
\[
C\sigma s\left((1+O(s^2))\er^{-\sigma s\sin\alpha+O(s^2)}+(1+O(s))\er^{-\sigma s\sin\alpha+O(s^2)}\right).
\]
As with the terms without gradients, there exists $c>0$ such that this is bounded by $C\sigma s \er^{-c\sigma s}$, and then the same $L^2$ norm computation gives the same bound. The remainder of the terms are given by
\[
-\begin{pmatrix}\sin\alpha\\ -\cos\alpha\end{pmatrix}\cdot\left(\nabla W^{\mathbf c_{j,\inn}}_{\inn,\alpha_j}(\sigma\tilde s(s),\sigma\tilde t(s))-\nabla v_{m,I_j}(\tilde s(s),\tilde t(s))\right).
\]
Using \eqref{eq:computedthegradients} again and the usual adding/subtracting trick, we have a bound of
\[
C\sigma\left(O(s)\er^{-\sigma s\sin\alpha+O(s^2)}\right)+C\sigma\left(\er^{-\sigma s\sin\alpha+O(s^2)}-\er^{-\sigma s\sin\alpha}\right).
\]
The first term is again bounded by $C\sigma s \er^{-c\sigma s}$ and may be taken care of as before. The second term satisfies the bound \eqref{eq:thing835} and therefore can be treated the same way as well. This shows that all terms are bounded by $C\sigma_m^{1-\tilde\delta}$, completing the proof of Theorem \ref{thm:fullcurvquasi}.

For later use we also record a corollary:
\begin{corollary}\label{cor:approxofquasibypsi} As $m\to\infty$, the quasimodes $v_m$ on the boundary, as well as the corresponding linear combinations $\tilde u_m$ of Steklov eigenfunctions, get closer to $\Psi^{(m)}$ in the sense that
\begin{equation}\label{eq:almostorthogonal1} \|v_m-\Psi^{(m)}\|_{L^2(\partial\Omega)}=O(m^{-1/2}),\quad \|\tilde u_m-\Psi^{(m)}\|_{L^2(\partial\Omega)}=O(m^{\frac 12(1-\tilde\delta)}).
\end{equation}
\end{corollary}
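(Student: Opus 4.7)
The second bound for $\tilde u_m$ is an easy triangle-inequality consequence of the first bound once we combine it with Theorem \ref{thm:approx1}: since the quasimodes are of order $\delta_m=\sigma_m^{-\tilde\delta+1}$ by Corollary \ref{cor:quasimodesfullycurv}, that theorem gives $\|\tilde u_m-v_m\|_{L^2(\partial\Omega)}\le C\sqrt{\delta_m}=C\sigma_m^{(1-\tilde\delta)/2}$. Using $\sigma_m\asymp m$ from \eqref{eq:Weylawquasi} and the fact that $\tilde\delta<\delta\le 3/2$, this term dominates the $O(m^{-1/2})$ coming from the first estimate, yielding the claimed rate $O(m^{(1-\tilde\delta)/2})$. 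So the real content is the first bound, which I will extract directly from the construction of $v_m$ in Section \ref{sec:quasieigenvalues}.

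Since $\chi_0$ is compactly supported in the interior of $\mathcal{P}$, on the boundary we have $\chi_V+\chi_I\equiv 1$, and hence
\[
v_m|_{\partial\Omega}=\chi_V\,v_{m,V}|_{\partial\Omega}+(1-\chi_V)\,v_{m,I}|_{\partial\Omega}.
\]
The key observation is that the shift $\beta$ in the definition of $v_{m,I_j}$ is precisely chosen so that $v_{m,I_j}|_{I_j}=\Psi^{(m)}_j(s_j)$. Thus
\[
v_m-\Psi^{(m)}=\chi_V\bigl(v_{m,V}-v_{m,I}\bigr)\big|_{\partial\Omega},
\]
which is supported in the finitely many vertex neighbourhoods where $\chi_V\neq 0$. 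It therefore suffices to bound, on each edge $I_j$ adjacent to a vertex $V_j$, the $L^2$ norm of $v_{m,V_j}|_{I_j}-v_{m,I_j}|_{I_j}$.

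Using \eqref{eq:defofvmVcurv} (or the analogous \eqref{eq:defofvmV} and \eqref{eq:straightcomparisonorig} in the partially curvilinear case), this difference splits into two pieces: first, the adjacent-edge wave $v_{m,I_{j+1}}|_{I_j}$ (respectively the outgoing plane wave $W^{\mathbf{c}_{j,\out}}_{\out,\alpha_j}$), and second, the Peters remainder $R_j(\sigma_m\Theta_j(z))|_{I_j}$ (respectively $R_j(\sigma_m\mathcal{V}_jz)|_{I_j}$). On $I_j$ near $V_j$ the coordinate $t_{j+1}$ is asymptotic to $\rho\sin\alpha_j$ where $\rho$ is the distance from $V_j$, so Proposition \ref{prop:ansatzestimates} (or the explicit form of the plane wave) yields the bound $|v_{m,I_{j+1}}|_{I_j}|\le C\er^{-c\sigma_m\rho}$, whose $L^2$ norm along $I_j$ is $O(\sigma_m^{-1/2})$. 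For the remainder piece, the boundary-trace estimates \eqref{eq:petersrembounds} give $|R_j(x,0)|\le C(1+|x|)^{-\mu}$ with $\mu=\pi/\alpha_j>1$; since $\Theta_j$ (respectively $\mathcal{V}_j$) is bi-Lipschitz near $V_j$ with derivative bounded above and below, the substitution $w=\sigma_m\rho$ gives
\[
\bigl\|R_j(\sigma_m\Theta_j(\cdot))\bigr\|_{L^2(I_j\cap\operatorname{supp}\chi_V)}^2\le C\sigma_m^{-1}\int_0^\infty(1+w)^{-2\mu}\,\dr w=O(\sigma_m^{-1}),
\]
where convergence of the integral relies on $2\mu>1$. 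Summing the finitely many vertex contributions and invoking $\sigma_m\asymp m$ completes the proof.

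\textbf{Main obstacle.} The only delicate point is checking that the Peters remainder is square-integrable on the half-line, which forces us to use the sharp pointwise bound from \cite[Theorem 2.1]{sloshing} (recorded in \eqref{eq:petersrembounds}) with decay rate $(1+x)^{-\mu}$, $\mu=\pi/\alpha_j>1$. The lower regularity of the conformal map $\Theta_j$ (only $C^{1,\gamma}$, $\gamma<1$) is not an issue here because we only need $|\Theta_j(z)|\asymp |z-V_j|$ in a neighbourhood of $V_j$, which holds since $|D\Theta_j(V_j)|=1$ and $D\Theta_j$ is continuous. Everything else is bookkeeping.
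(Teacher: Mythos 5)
Your proposal is correct and follows essentially the same route as the paper's proof: the difference $v_m-\Psi^{(m)}$ is supported near the vertices, and there it consists of the adjacent-edge wave together with the (conformally transported) Peters remainder, each of which has $L^2$ norm $O(\sigma_m^{-1/2})$ by the scaling substitution $w=\sigma_m\rho$; the bound for $\tilde u_m$ then follows by the triangle inequality via Corollary \ref{cor:quasimodesfullycurv}. You have simply written out the scaling and integrability computations that the paper's proof leaves as ``a scaling argument and direct integration.''
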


\begin{proof} First consider $v_m$, for which the difference is zero off the support of $\chi_V$. On the support of $\chi_V$, near a vertex $V_j$, the restriction of the term $v_{m,I_{j+1}}(z)$ to the edge $I_{j+1}$ is precisely $\Psi^{(m)}$. The other two terms in \eqref{eq:defofvmVcurv} both have $L^2$ norms which go to zero as $m\to\infty$ and do so, via a scaling argument and direct integration, at order $\sigma_m^{-1/2}$. An analogous argument works along the edge $I_j$, and adding up the contributions from the finitely many vertices completes the proof for $v_m$. The statement for $\tilde u_m$ follows immediately from the statement for $v_m$ and Corollary \ref{cor:quasimodesfullycurv}.
\end{proof}

\subsection{Almost orthogonality and its consequences}\label{subsec:orthogcons}

In statements such as Theorem \ref{thm:approx} we showed that near each quasi-eigenvalue $\sigma_m$ there exists a true eigenvalue $\lambda_{i_m}$. We did not, however, show that there is a \emph{distinct} true eigenvalue near each quasi-eigenvalue. That is, we did not show the map $m\mapsto i_m$ is injective. We now remedy this, but at a small cost.
\begin{theorem}\label{thm:injectivity} Let $\mathcal P$ be a curvilinear polygon with all angles less than $\pi$. Let $1<\tilde\delta<\delta$, where $\delta$ is defined by \eqref{eq:defofdeltafull}. Then there exists a map $j:\mathbb N\to\mathbb N$ which is \emph{strictly} increasing for large arguments, and there exists a constant $C$, such that
\[|
\sigma_m-\lambda_{j(m)}|\leq Cm^{\frac 12(1-\tilde\delta)}.
\]
\end{theorem}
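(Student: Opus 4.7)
\textit{Proof plan.} The plan is to upgrade Corollary~\ref{cor:quasimodesfullycurv} to a strictly increasing matching by proving that the eigenfunction packets $\tilde u_m$ are linearly independent for large $m$, and then invoking a Hall-type combinatorial argument. The central new input will be an \emph{almost-orthogonality} of the boundary quasi-waves $\Psi^{(m)}$ defined in Definition~\ref{def:bqw}.

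The first step is the following near-orthogonality: there exists $m_0$ such that for all $m,n\geq m_0$,
\[
\bigl\langle\Psi^{(m)},\Psi^{(n)}\bigr\rangle_{L^2(\partial\mathcal P)}=\delta_{mn}+O\!\left(\frac{1}{\sigma_m+\sigma_n}\right).
\]
To see this, exploit the Dirac operator interpretation from Proposition~\ref{prop:Dirac}: on each side $I_j$ one has $\Psi^{(m)}_j=f_1^{(m)}+f_2^{(m)}$, where $\mathbf f^{(m)}=(f_1^{(m)},f_2^{(m)})^\top$ is an eigenfunction of $\Dir$ with eigenvalue $\sigma_m$, and $f_1^{(m)}\propto e^{\ir\sigma_m s}$, $f_2^{(m)}\propto e^{-\ir\sigma_m s}$ on each edge. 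Self-adjointness of $\Dir$ gives $\langle\mathbf f^{(m)},\mathbf f^{(n)}\rangle_{L^2(\mathcal G)^2}=0$ whenever $\sigma_m\neq\sigma_n$; for $\sigma_m=\sigma_n$ (multiplicities), orthogonality was imposed directly in Definition~\ref{def:bqw}. The cross terms $\int(f_1^{(m)}\overline{f_2^{(n)}}+f_2^{(m)}\overline{f_1^{(n)}})\,\dr s$ oscillate at frequency $\sigma_m+\sigma_n$, so edgewise integration by parts produces the stated $O\bigl((\sigma_m+\sigma_n)^{-1}\bigr)$ remainder, while the normalization of $\Psi^{(m)}$ enforces $\|\mathbf f^{(m)}\|_{L^2(\mathcal G)^2}^2=1+O(\sigma_m^{-1})$.

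Next, combine this with Corollary~\ref{cor:approxofquasibypsi}, which gives $\|\tilde u_m-\Psi^{(m)}\|_{L^2(\partial\mathcal P)}=O\bigl(m^{\frac12(1-\tilde\delta)}\bigr)$. The Gram matrix of $\{\tilde u_m\}_{m\geq m_0}$ differs from that of $\{\Psi^{(m)}\}_{m\geq m_0}$ by an operator with small norm (estimated via a Schur test, since the sum $\sum_n 1/(m+n)+\|\tilde u_m-\Psi^{(m)}\|$ can be made arbitrarily small uniformly in $m$ once $m_0$ is chosen large enough; the oscillatory decay is summable in this combined form). Consequently, for every finite subset $S=\{m_1<\dots<m_r\}\subset\{m_0,m_0+1,\dots\}$ the vectors $\{\tilde u_{m_i}\}$ are linearly independent and span an $r$-dimensional subspace.

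Finally, by Corollary~\ref{cor:quasimodesfullycurv} together with Theorem~\ref{thm:approx1}, each $\tilde u_m$ lies in the spectral subspace $E(I_m):=\operatorname{span}\{u_k:\lambda_k\in I_m\}$, where $I_m:=[\sigma_m-Cm^{\frac12(1-\tilde\delta)},\sigma_m+Cm^{\frac12(1-\tilde\delta)}]$. Linear independence forces $\dim\operatorname{span}\{u_k:\lambda_k\in\bigcup_{i}I_{m_i}\}\geq r$ for every finite $S$, which is precisely Hall's marriage condition for the bipartite graph joining quasi-eigenvalue indices $m\geq m_0$ to Steklov indices $k$ via the relation $\lambda_k\in I_m$. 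The countable version of Hall's theorem yields an injection $j$; a bubble-sort pass then upgrades $j$ to be strictly increasing, because whenever $j(m)>j(m+1)$ we must have $\lambda_{j(m)},\lambda_{j(m+1)}\in I_m\cap I_{m+1}$ (as their centers satisfy $\sigma_m\leq\sigma_{m+1}$ and both eigenvalues lie within the overlap), so swapping preserves admissibility. Extend $j$ arbitrarily on $\{1,\dots,m_0-1\}$. The main obstacle, and the delicate point, is the uniform-in-$m,n$ control of the Dirac cross terms in the near-orthogonality estimate --- particularly ensuring that near-clusters of distinct quasi-eigenvalues are not problematic (they are not, because the relevant oscillatory frequency is $\sigma_m+\sigma_n$ rather than $\sigma_m-\sigma_n$) and that the argument carries through in the exceptional case, where $\Psi^{(m)}$ decomposes across exceptional components but the Dirac picture of Proposition~\ref{prop:Dirac} still applies.
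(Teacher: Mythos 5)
Your first step — the almost-orthogonality $\langle\Psi^{(m)},\Psi^{(n)}\rangle=\delta_{mn}+O((\sigma_m+\sigma_n)^{-1})$ via the Dirac operator on the quantum graph — is exactly the paper's Proposition~\ref{prop:almostorthogonality} and is argued the same way. The combination with Corollary~\ref{cor:approxofquasibypsi} to obtain $|\langle\tilde u_m,\tilde u_l\rangle|\to 0$ is also in the paper (Corollary~\ref{cor:almostorthogonality}). After that the arguments diverge, and yours has a gap.

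\emph{The Schur-test step fails.} The off-diagonal bound $|\langle\Psi^{(m)},\Psi^{(n)}\rangle|\leq C(\sigma_m+\sigma_n)^{-1}$ has row sums $\sum_{n\geq m_0}(\sigma_m+\sigma_n)^{-1}\asymp\sum_{n\geq m_0}\frac{1}{m+n}=\infty$ (using $\sigma_n\asymp n$), so the Schur test gives no bound at all. Even abandoning Schur, a matrix with entries $\asymp\frac{1}{m+n}$ is a Hilbert matrix — bounded on $\ell^2$ with norm $\pi$ — and pushing $m_0\to\infty$ does not make that norm small. Likewise $\sum_m m^{\frac12(1-\tilde\delta)}$ diverges for $1<\tilde\delta<3/2$. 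Thus there is no $m_0$ for which the Gram matrix of $\{\tilde u_m\}_{m\geq m_0}$ is uniformly close to the identity, and your claim that \emph{every} finite subset of $\{\tilde u_m\}_{m\geq m_0}$ is linearly independent is unsupported. Without that, the Hall condition cannot be verified for arbitrary finite subsets from a fixed $m_0$.

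What repairs this is the observation you omitted: by the quantum-graph Weyl law with bounded remainder, the intervals $\mathcal I_m$ cannot be chained indefinitely, so there is a universal $N$ such that at most $N$ of them overlap (the paper's Proposition~\ref{prop:nomoreN}). You then only need linear independence of collections of at most $N$ of the $\tilde u_m$, and for that the weak statement you actually have (inner products $\to 0$, so given $N$ one can choose $M(N)$ below which to discard) suffices — this is Corollary~\ref{cor:linind}. The paper avoids Hall's theorem altogether: clusters $\mathcal C_k$ of at most $N$ quasi-eigenvalues contain at least as many true eigenvalues by linear independence, and $j$ is defined cluster-by-cluster in increasing order. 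This automatically gives strict monotonicity across clusters (they are disjoint intervals of $\mathbb R$) and within each finite cluster, removing your need for a bubble-sort pass; incidentally a bubble-sort on an infinite sequence needs a separate termination argument, which you do not give. If you insist on Hall's theorem, you must still first establish bounded overlap, after which your bubble-sort argument on each individual cluster is fine.
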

\begin{remark} The cost is simply the extra factor of $\frac{1}{2}$ in the exponent. This shows up as a consequence of an abstract linear algebra result \cite[Theorem 4.1]{sloshing}. It may be able to be removed in our setting, but for simplicity we have not done so.
\end{remark}

To begin the proof, first we show that the boundary plane waves $\Psi^{(m)}$ are nearly orthogonal. We do this by taking advantage of the relationship between $\Psi^{(m)}$ and the eigenfunctions of the quantum graph Dirac operator $\Dir$ defined in \eqref{eq:defofDirac}. Although $\Psi^{(m)}$ turn out \emph{not} to be orthonormal in $L^2(\mathcal G)$, inner products of distinct boundary plane waves are nevertheless small.
\begin{proposition}\label{prop:almostorthogonality} There exists a constant $C$ such that for all $m,l\in\mathbb N$ with $m\neq l$,
\[
|\langle\Psi^{(m)},\Psi^{(l)}\rangle|\leq C(\sigma_m+\sigma_l)^{-1}.
\]
\end{proposition}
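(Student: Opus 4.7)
The plan is to exploit the correspondence between boundary quasi-waves $\Psi^{(m)}$ and eigenfunctions of the quantum graph Dirac operator $\Dir$ from \eqref{eq:defofDirac}, as developed in subsection \ref{subsec:bqm} and Proposition \ref{prop:Dirac}. Given $\Psi^{(m)}$ with coefficients $\mathbf{c}_{j,\inn}^{(m)} = (c_{j,\inn,1}^{(m)}, c_{j,\inn,2}^{(m)})^\top$, define a vector-valued function $\mathbf{f}^{(m)}$ on $\mathcal{G}$ by setting, on each edge $I_j$ in the local coordinate $s_j$ of subsection \ref{subsec:bqm},
\[
\mathbf{f}^{(m)}|_{I_j}(s_j) = \begin{pmatrix} c_{j,\inn,1}^{(m)}\,\er^{\ir\sigma_m s_j} \\ c_{j,\inn,2}^{(m)}\,\er^{-\ir\sigma_m s_j} \end{pmatrix}.
\]
A direct calculation gives $\Dir\mathbf{f}^{(m)} = \sigma_m \mathbf{f}^{(m)}$ on each edge, while the transfer relations \eqref{eq:cpm} and \eqref{eq:coutAcin} (or \eqref{eq:exceptionalconditionmark2}) imposed on the coefficients in the definition of a quasi-eigenvalue are precisely the matching conditions \eqref{matchVj} (or \eqref{matchVjexc}) required of vectors in the domain of $\Dir$. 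Thus $\mathbf{f}^{(m)}$ is an eigenfunction of the self-adjoint operator $\Dir$ with eigenvalue $\sigma_m$, and moreover $\Psi^{(m)} = f_1^{(m)} + f_2^{(m)}$ edge-wise.

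With this correspondence in hand, split into cases. If $\sigma_m \ne \sigma_l$, the orthogonality of $\mathbf{f}^{(m)}$ and $\mathbf{f}^{(l)}$ in $(L^2(\mathcal{G}))^2$ yields
\[
\int_{\mathcal{G}} \bigl(f_1^{(m)}\overline{f_1^{(l)}} + f_2^{(m)}\overline{f_2^{(l)}}\bigr)\, \dr s = 0,
\]
so only the cross terms survive in the expansion of the scalar inner product:
\[
\langle \Psi^{(m)}, \Psi^{(l)} \rangle_{L^2(\partial\mathcal{P})} = \int_{\mathcal{G}} \bigl(f_1^{(m)}\overline{f_2^{(l)}} + f_2^{(m)}\overline{f_1^{(l)}}\bigr)\, \dr s.
\]
On each edge the integrand is a linear combination of $\er^{\pm\ir(\sigma_m+\sigma_l)s_j}$ with coefficients built from $\mathbf{c}_{j,\inn}^{(m)}$ and $\mathbf{c}_{j,\inn}^{(l)}$, so direct integration yields a contribution of order $(\sigma_m+\sigma_l)^{-1}$ per edge. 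If instead $\sigma_m = \sigma_l$ with $m \ne l$, the orthogonalisation step built into Definition \ref{def:bqw} gives $\langle \Psi^{(m)}, \Psi^{(l)} \rangle = 0$ exactly, trivially satisfying the claim (this covers the possibility that $0$ is a multiple quasi-eigenvalue in the exceptional case).

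The main technical point is to establish the uniform bound $|c_{j,\inn,k}^{(m)}| \le C$, independent of $m$, $j$, $k$. By Remark \ref{rem:evfromConj} we may assume $\mathbf{c}_{j,\inn}^{(m)} = (c_j^{(m)}, \overline{c_j^{(m)}})^\top$, whence $\Psi^{(m)}|_{I_j}(s_j) = 2\Re\bigl(c_j^{(m)}\er^{\ir\sigma_m s_j}\bigr)$ and direct integration gives
\[
\|\Psi^{(m)}\|_{L^2(I_j)}^2 = 2|c_j^{(m)}|^2\ell_j + O\bigl(|c_j^{(m)}|^2/\sigma_m\bigr)
\]
as $\sigma_m \to \infty$. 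Summing over $j$ and using the normalisation $\|\Psi^{(m)}\|_{L^2(\partial\mathcal{P})} = 1$ forces $|c_j^{(m)}| \le C$ uniformly for all sufficiently large $\sigma_m$, while the finitely many low-frequency pairs $(m,l)$ obey $|\langle \Psi^{(m)}, \Psi^{(l)} \rangle| \le 1$ by Cauchy--Schwarz and can be absorbed into the constant. The principal obstacle, beyond this bookkeeping, is verifying the correspondence between matching conditions and the transfer relations at exceptional vertices, where \eqref{matchVjexc} is rank-one rather than an isomorphism; this is resolved by direct inspection of Definitions \ref{def:quasi}--\ref{def:quasiexcmult} together with Definition \ref{def:bqw}.
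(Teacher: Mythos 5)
Your proof is correct and follows essentially the same route as the paper's: build a Dirac eigenfunction $\mathbf f^{(m)}$ on $\mathcal G$ from the coefficients of $\Psi^{(m)}$, split the scalar inner product into same-component and cross-component pieces, kill the former by orthogonality, and estimate the latter by integrating $\er^{\pm\ir(\sigma_m+\sigma_l)s}$ edge-by-edge together with a uniform bound on the coefficients. The only cosmetic differences are that you handle $\sigma_m=\sigma_l$ by invoking the orthogonalisation built into Definition \ref{def:bqw} rather than the orthonormality of the Dirac eigenbasis, and that you derive the coefficient bound $|c_j^{(m)}|\le C$ explicitly from the normalisation of $\Psi^{(m)}$ rather than from the Dirac eigenfunction normalisation; both are sound.
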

\begin{proof} Recall from Proposition \ref{prop:Dirac} that $\Dir$, with the matching conditions \eqref{matchcondboth}, is self-adjoint and that its set of non-negative eigenvalues is precisely $\{\sigma_m\}\setminus\{0\}$. As a consequence of self-adjointness, its basis eigenfunctions $\mathbf{f}_{m,\pm}=\begin{pmatrix}f_{m,\pm,1}\\f_{m,\pm,2}\end{pmatrix}$ corresponding to eigenvalues $\pm \sigma_m$ can be chosen to be orthonormal in $(L^2(\mathcal{G}))^2$. We note that the eigenfunctions can be chosen in the form 
\[
\mathbf{f}_{m,\pm}|_{\mathcal I_j} =\begin{pmatrix} d_{m,j,\pm,1}\er^{\pm\ir\sigma_m s} \\ d_{m,j,\mp,2}\er^{\mp\ir\sigma_m s} \end{pmatrix},
\]
with some constants $d_{m,j,\pm,p}\in\mathbb{C}$, $p=1,2$. Moreover, by the same reasoning as in Remark \ref{rem:evfromConj} we can choose 
\[
d_{m,j,\pm,2}=\overline{d_{m,j,\pm,1}}.
\]

We will be mostly interested in eigenfunctions $\mathbf{f}_{m,\pm}$ from now now. Comparing these eigenfunctions with the boundary quasi-waves $\Psi^{(m)}$,of the Steklov problem and their restrictions $\Psi^{(m)}_j$ on $I_j$, see Definition \ref{def:bqw} and equation \eqref{eq:Psijsj}, we immediately conclude that up to a scaling factor 
\[
\Psi^{(m)}_j=\left.{f}_{m,+,1}+{f}_{m,+,2}\right|_{I_j}=\left.2\Re\left({f}_{m,+,1}\right)\right|_{I_j},
\]
and therefore we may write
\[
\langle\Psi^{(m)},\Psi^{(l)}\rangle=\langle{f}_{m,+,1},{f}_{l,+,1}\rangle+\langle{f}_{m,+,2},{f}_{l,+,2}\rangle+\langle{f}_{m,+,1},{f}_{l,+,2}\rangle+\langle{f}_{m,+,2},{f}_{l,+,1}\rangle.
\]
Since we have $m\neq l$, and the basis eigenfunctions $\mathbf{f}_{m,+}$ are orthonormal in $(L^2(\mathcal{G}))^2$, the first two terms of this sum add to zero, leaving only the last two terms to estimate.

Thus, remembering that we have a complex conjugate on the second entry in our inner product, and setting $d_{m,j}:=d_{m,j,+,1}$, we have
\[
\langle{f}_{m,+,1},{f}_{l,+,2}\rangle=\sum_{j=1}^n\int_{I_j}d_{m,j}\er^{\ir\sigma_m s}d_{l,j}\er^{\ir\sigma_l s}\, \dr s=\sum_{j=1}^n d_{m,j}d_{l,j}\int_{I_j}\er^{\ir(\sigma_m+\sigma_l)s}\, \dr s,
\]
where $n$ is the number of vertices. By explicit integration by parts, each such integral, in absolute value, is bounded by $2(\sigma_m+\sigma_l)^{-1}$. Thus,
\[
|\langle{f}_{m,+,1},{f}_{l,+,2}\rangle|\leq 2(\sigma_m+\sigma_l)^{-1}\sum_{j=1}^{n}\left|d_{m,j}d_{l,j}\right|\leq  2(\sigma_m+\sigma_l)^{-1}\left(\sum_{j=1}^{n}\left|d_{m,j}\right|^2\right)^{1/2}\left(\sum_{j=1}^{n}|d_{l,j}|^2\right)^{1/2},
\]
which by normalisation of the Dirac eigenfunctions is $2(\sigma_m+\sigma_l)^{-1}$.  A similar analysis works for the other nonzero term, showing that in fact
\[
|\langle\Psi^{(m)},\Psi^{(l)}\rangle|\leq 4(\sigma_m+\sigma_l)^{-1},
\]
proving Proposition \ref{prop:almostorthogonality}. 
\end{proof}

\begin{corollary}\label{cor:almostorthogonality} For any $\varepsilon>0$ there exists $M\in\mathbb N$ such that $m,l\geq M$ with $m\neq l$ implies $|\langle\tilde u_m,\tilde u_l\rangle|<\varepsilon$.
\end{corollary}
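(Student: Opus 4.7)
The plan is to deduce this corollary essentially immediately from two earlier inputs: Proposition \ref{prop:almostorthogonality}, which gives near-orthogonality of the boundary quasi-waves $\Psi^{(m)}$, and Corollary \ref{cor:approxofquasibypsi}, which says that $\tilde u_m$ is close to $\Psi^{(m)}$ on $\partial\mathcal{P}$. The strategy is to expand $\langle \tilde u_m,\tilde u_l\rangle$ around $\langle \Psi^{(m)},\Psi^{(l)}\rangle$ and bound the error terms via Cauchy--Schwarz.

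Concretely, writing $\tilde u_m = \Psi^{(m)} + r_m$ with $\|r_m\|_{L^2(\partial\mathcal{P})} = O(m^{(1-\tilde\delta)/2})$ by Corollary \ref{cor:approxofquasibypsi}, bilinearity of the inner product gives
\[
\langle \tilde u_m,\tilde u_l\rangle = \langle \Psi^{(m)},\Psi^{(l)}\rangle + \langle r_m,\Psi^{(l)}\rangle + \langle \Psi^{(m)},r_l\rangle + \langle r_m,r_l\rangle.
\]
The first summand is controlled by Proposition \ref{prop:almostorthogonality}: its absolute value is at most $C(\sigma_m+\sigma_l)^{-1}$, and since the Weyl-type bound \eqref{eq:Weylawquasi} yields $\sigma_m \geq cm$ for sufficiently large $m$, this term is $O(M^{-1})$ uniformly over $m,l\geq M$ with $m\neq l$. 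For the cross terms, Cauchy--Schwarz together with the normalisation $\|\Psi^{(l)}\|_{L^2(\partial\mathcal{P})}=1$ gives $|\langle r_m,\Psi^{(l)}\rangle|\leq \|r_m\|_{L^2(\partial\mathcal{P})} = O(m^{(1-\tilde\delta)/2})$, and likewise for the symmetric term; the final term satisfies $|\langle r_m,r_l\rangle|\leq \|r_m\|\,\|r_l\| = O((ml)^{(1-\tilde\delta)/2})$.

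Because $\tilde\delta > 1$, the exponent $(1-\tilde\delta)/2$ is strictly negative, so each of the four contributions vanishes as $m,l\to\infty$. Given any $\epsilon>0$, choosing $M$ large enough that the sum of these four bounds is less than $\epsilon$ for all $m,l\geq M$ yields the conclusion. There is no real obstacle: the argument is purely a triangle-inequality manoeuvre that packages the content of Proposition \ref{prop:almostorthogonality} and Corollary \ref{cor:approxofquasibypsi}. The only point requiring any attention is to verify that the norms $\|\tilde u_m\|_{L^2(\partial\mathcal{P})}$ are uniformly bounded (which follows from $\|v_m\|_{L^2(\partial_S\mathcal{P})}=1$ in Definition \ref{def:quasimodes} combined with Theorem \ref{thm:approx1}), so that the implicit constants hidden in the Cauchy--Schwarz estimates are independent of $m,l$.
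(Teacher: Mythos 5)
Your argument is correct and is exactly the route the paper takes: it cites Proposition \ref{prop:almostorthogonality} and Corollary \ref{cor:approxofquasibypsi} as giving the result immediately, and your write-up simply makes explicit the bilinear expansion and Cauchy--Schwarz bounds that the word ``immediate'' suppresses.
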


\begin{proof} This is an immediate consequence of Proposition \ref{prop:almostorthogonality} and Corollary \ref{cor:approxofquasibypsi}.
\end{proof}

\begin{corollary}\label{cor:linind} Pick any $N\in\mathbb N$. There exists $M\in\mathbb N$ such that any $N$ of the functions $\{\tilde u_m\}_{m\geq M}$ form a linearly independent set.
\end{corollary}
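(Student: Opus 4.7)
The plan is to show that for $m$ sufficiently large the family $\{\tilde u_m\}$ behaves like an almost-orthonormal set, so that any $N$ of them have a positive-definite, and hence invertible, Gram matrix. Linear independence is then immediate.

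First I would fix the scale. By Corollary \ref{cor:approxofquasibypsi} we have $\|\tilde u_m - \Psi^{(m)}\|_{L^2(\partial\mathcal{P})} \to 0$ as $m \to \infty$, and the quasi-waves $\Psi^{(m)}$ are normalised by construction (Definition \ref{def:bqw}). Therefore $\|\tilde u_m\|_{L^2(\partial\mathcal{P})} \to 1$, and in particular there exists $M_1$ such that for all $m \geq M_1$,
\[
\tfrac{1}{2} \leq \|\tilde u_m\|^2_{L^2(\partial\mathcal{P})} \leq \tfrac{3}{2}.
\]

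Next, given $N$, I would invoke Corollary \ref{cor:almostorthogonality} with $\epsilon := \frac{1}{2N}$ to obtain some $M_2$ such that $|\langle \tilde u_m, \tilde u_l\rangle| < \frac{1}{2N}$ whenever $m \neq l$ and $m, l \geq M_2$. Set $M := \max(M_1, M_2)$.

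Now pick any $N$ indices $m_1 < \dots < m_N$ with $m_1 \geq M$, and form the $N \times N$ Gram matrix $G$ with entries $G_{ij} := \langle \tilde u_{m_i}, \tilde u_{m_j}\rangle$. Its diagonal entries lie in $[\tfrac12, \tfrac32]$, while each off-diagonal entry has modulus strictly less than $\frac{1}{2N}$. Hence for every row $i$,
\[
\sum_{j \neq i} |G_{ij}| < (N-1)\cdot\tfrac{1}{2N} < \tfrac{1}{2} \leq G_{ii}.
\]
By the Gershgorin circle theorem (applied to the Hermitian matrix $G$), every eigenvalue of $G$ lies in $\bigcup_i [G_{ii} - \sum_{j \neq i}|G_{ij}|,\; G_{ii} + \sum_{j \neq i}|G_{ij}|] \subset (0, \infty)$. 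Thus $G$ is positive definite, and in particular invertible. Since the Gram matrix of a finite collection of vectors in a Hilbert space is invertible if and only if those vectors are linearly independent, the functions $\tilde u_{m_1}, \dots, \tilde u_{m_N}$ are linearly independent, which completes the argument.

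No serious obstacle is anticipated: the corollary is a direct linear-algebraic consequence of the two preceding results, and the only care required is a uniform choice of the threshold $\epsilon$ (depending on $N$) so that diagonal dominance survives summing $N-1$ off-diagonal contributions.
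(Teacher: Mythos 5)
Your proof is correct, and it rests on exactly the same ingredients and the same quantitative choice $\epsilon = \tfrac{1}{2N}$ as the paper's own argument; the only difference is presentational. The paper proceeds by contradiction — assume a nontrivial relation, normalise it so the largest coefficient is $1$, and take inner products with that term to derive $\tfrac12 \leq \tfrac{N-1}{2N}$ — whereas you package the identical diagonal-dominance observation as positive-definiteness of the Gram matrix via Gershgorin. Both are elementary and equivalent; yours is perhaps slightly cleaner in that it avoids the normalisation-of-coefficients step by letting Gershgorin do that bookkeeping, at the modest cost of invoking the Gram-matrix criterion for linear independence. No gaps.
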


\begin{proof} As $m\to\infty$, $\|\tilde u_m\|\to 1$ by Corollary \ref{cor:approxofquasibypsi}, and all inner products go to zero by Corollary \ref{cor:almostorthogonality}. So there is an $M$ such that if $m,l\geq M$ with $m\neq l$, $\|\tilde u_m\|\geq \sqrt{\frac{1}{2}}$ and $|\langle\tilde u_m,\tilde u_l\rangle|\leq \frac{1}{2N}$. Now select $N$ of the $\{\tilde u_m\}$ --- call them $\tilde u_{m_1},\ldots,\tilde u_{m_N}$ --- with all $m_j\geq M$. Suppose for contradiction they are \emph{not} linearly independent; then there exists a nontrivial relation among them, which without loss of generality may be written
\[
\tilde u_{m_1}=a_2\tilde u_{m_2}+\ldots+a_N\tilde u_{m_N},
\]
with all $|a_i|\leq 1$. Now take inner products with $\tilde u_{m_1}$ and use the triangle inequality, obtaining
\[
\|\tilde u_{m_1}\|^2\leq\sum_{i=2}^{N}|\langle\tilde u_{m_i},\tilde u_{m_1}\rangle|.
\]
But this means $\frac{1}{2}\leq  \frac{N-1}{2N}$, a contradiction which completes the proof.
\end{proof}

Now we complete the argument. By Corollary \ref{cor:quasimodesfullycurv}, for each $m$, $\tilde u_m$ is a linear combination of eigenfunctions with eigenvalues in the interval
\[
\mathcal I_m:=\left(\sigma_m-Cm^{\frac 12(1-\tilde\delta)}, \sigma_m+Cm^{\frac 12(1-\tilde\delta)}\right).
\]

\begin{proposition}\label{prop:nomoreN} There exists an $N>0$ such that no more than $N$ of the intervals $\mathcal I_m$ overlap (that is, have a connected union).
\end{proposition}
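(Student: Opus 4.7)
The plan is to combine two ingredients: the fact that $\tilde\delta>1$ forces the half-widths $Cm^{(1-\tilde\delta)/2}$ of the intervals $\mathcal{I}_m$ to tend to zero as $m\to\infty$, and the Weyl law \eqref{eq:Weylawquasi}, which gives a uniform lower bound on the average spacing of the quasi-eigenvalues $\sigma_m$. Specifically, \eqref{eq:Weylawquasi} implies $\sigma_m = \frac{\pi m}{|\partial\mathcal{P}|}+O(1)$ with a \emph{uniform} remainder, so there is a constant $K>0$ with $|\sigma_m-\frac{\pi m}{|\partial\mathcal{P}|}|\le K$ for all $m$.

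First I would observe that because $\sigma_m$ is non-decreasing in $m$ and each center $\sigma_m$ lies in $\mathcal{I}_m$, the set of indices $m$ for which $\mathcal{I}_m$ is contained in a single connected component of $\bigcup_m\mathcal{I}_m$ must be a \emph{contiguous block} $\{m_0,m_0+1,\ldots,m_0+N-1\}$ of integers. I would then fix such a block and exploit the fact that consecutive intervals in a connected component must overlap, so
\[
\sigma_{i+1}-\sigma_i\le Ci^{(1-\tilde\delta)/2}+C(i+1)^{(1-\tilde\delta)/2}.
\]

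Next, choose $M_0$ large enough that $Cm^{(1-\tilde\delta)/2}<\frac{\pi}{4|\partial\mathcal{P}|}$ for every $m\ge M_0$. Summing the previous inequality over the portion of the chain with $i\ge M_0$ shows that this portion spans a length at most $\frac{\pi}{2|\partial\mathcal{P}|}$ times its number of indices. Comparing this against the Weyl lower bound $\frac{\pi}{|\partial\mathcal{P}|}(\text{length}-1)-2K$ for the same span immediately bounds the length of the tail $\{i\ge M_0\}$ in the chain by a universal constant $1+\frac{4K|\partial\mathcal{P}|}{\pi}$. Since the head of the chain ($i<M_0$) contributes at most $M_0-1$ further indices, the total length $N$ of any connected component is bounded by a universal constant, which is the required $N$.

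The argument is short and the main (minor) technical point is handling the two regimes uniformly: for small $m$ the intervals are long and the gap bound is weak, so one must not attempt to apply the Weyl estimate there directly but instead absorb those finitely many indices into the constant. Everything else is just summing a telescoping inequality against the uniform Weyl control.
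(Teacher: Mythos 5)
Your proof uses the same two ingredients as the paper's: the Weyl law \eqref{eq:Weylawquasi} with a \emph{bounded} remainder for the quasi-eigenvalue counting function, and the fact that the half-widths $r_m=Cm^{(1-\tilde\delta)/2}$ of the intervals $\mathcal I_m$ tend to zero. The paper encodes the Weyl control combinatorially (there is $N_0$ such that any unit interval contains fewer than $N_0$ quasi-eigenvalues, hence a gap of size at least $1/N_0$ must appear within every $N_0$ consecutive terms, and once $|\mathcal I_m|<1/N_0$ such a gap disconnects the union), whereas you encode it additively via $\sigma_m=\pi m/|\partial\mathcal P|+O(1)$ and bound the span of a connected component; these are equivalent routes using the same facts.

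There is, however, a genuine gap at the step where you claim that consecutive intervals in a connected component must overlap, so that $\sigma_{i+1}-\sigma_i\leq r_i+r_{i+1}$ for telescoping. Because the half-widths $r_m$ are \emph{decreasing}, an earlier and wider interval $\mathcal I_k$ with $k<i$ can bridge the gap between two later, disjoint intervals $\mathcal I_i$ and $\mathcal I_{i+1}$ while the union stays connected, and one can realise this with $r_m=m^{-s}$ for small $s$ whenever several $\sigma_m$ happen to cluster. So the telescoping inequality need not hold term-by-term as written. The repair is painless and preserves your structure: for a connected union of open intervals, $\operatorname{length}\bigl(\bigcup_{m\in S}\mathcal I_m\bigr)\leq\sum_{m\in S}|\mathcal I_m|$ by subadditivity of Lebesgue measure. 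This gives exactly the span bound your telescoping was aiming for without assuming consecutive overlap, and the remainder of your argument (absorb the head $m<M_0$ into a constant, control the tail $m\geq M_0$ by comparing $\sum 2r_m$ against the Weyl lower bound on $\sigma_b-\sigma_a$) then goes through unchanged.
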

\begin{proof} By Theorem \ref{thm:quantum}, the quasi-eigenvalues $\sigma_m$ are the square roots of the eigenvalues of a quantum graph Laplacian with non-Robin boundary conditions \cite{BK13}, and as such, obey a Weyl law with bounded remainder \cite[Lemma 3.7.4]{BK13}. This means that there exists an $N_0>0$ such that the number of $\sigma_m$ in any interval of length $1$ is less than $N_0$. As a result, the sequence $\{\sigma_m\}$ cannot go $N_0$ terms without a gap of size at least $1/N_0$. Now the length $\left|\mathcal{I}_m\right|\to 0$ as $m\to\infty$, so for sufficiently large $m$, each gap of size $1/N_0$ will cause $\bigcup\mathcal I_m$ to be disconnected. Thus for sufficiently large $m$, at most $N_0$ intervals $\mathcal I_m$ may overlap. 
\end{proof}

Now let $\mathcal C_k$ be the $k$-th connected component of $\bigcup\mathcal I_m$, let $N_k$ be the number of quasi-eigenvalues it contains, and let $\sigma_{m_k}$ be the smallest quasi-eigenvalue it contains. By our work to this point we have $1\leq N_k\leq N$ and thus $k\leq m_k\leq Nk$. Further, the length of $\mathcal C_k$ is at most $N\left|\mathcal{I}_{m_k}\right|=2CNm_{k}^{\frac 12(1-\tilde\delta)}$. For sufficiently large $k$, by Corollary \ref{cor:linind}, the functions $\tilde u_m$ associated to each of the $N_k$ quasi-eigenvalues in $\mathcal C_k$ are linearly independent. Since each of them is a linear combination of eigenfunctions with eigenvalues in $\mathcal C_k$, we see that $\mathcal C_k$ must contain (at least) $N_k$ eigenvalues. We construct $j(m)$ for $m\in\mathcal C_k$ by listing these in increasing order. As a result the map $m\mapsto j(m)$ is injective on $\mathcal C_k$. Further, if $\sigma_m\in\mathcal C_k$,
\[|\sigma_m-\lambda_{j(m)}|\leq\ell(\mathcal C_k)=2CNm_k^{\frac 12(1-\tilde\delta)}\leq 4CNm^{\frac 12(1-\tilde\delta)},\]
where we have used that $m_k^{\frac 12(1-\tilde\delta)}\leq 2m^{\frac 12(1-\tilde\delta)}$ for large enough $k$ and $m\in\mathcal C_k$. This proves Theorem \ref{thm:injectivity}. \qed

Additionally, we observe that the natural analogue of Theorem \ref{thm:injectivity} holds for a zigzag as well, by precisely the same arguments, see \eqref{eq:defofdelta}.

\begin{theorem}\label{thm:injectivityzigzag} Let $\mathcal Z$ be a partially curvilinear zigzag domain such that $\delta>1$, where $\delta$ is defined by \eqref{eq:defofdelta}. Then there exists a map $j:\mathbb N\to\mathbb N$ which is \emph{strictly} increasing beyond a certain point (i.e. for $m$ greater than some threshold value), and there exists a constant $C$, such that
\[|\sigma_m-\lambda_{j(m)}|\leq Cm^{\frac 12(1-\delta)}.\]
\end{theorem}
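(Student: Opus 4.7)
The plan is to mimic, step by step, the argument for Theorem \ref{thm:injectivity}, replacing every object attached to the cyclic graph $\mathcal G$ by its analogue on the path $\mathcal L$ associated with $\mathcal{Z}$, and replacing Corollary \ref{cor:quasimodesfullycurv} with Corollary \ref{cor:quasimodesstraightnearcorners} which provides approximating eigenfunctions $\tilde u_m$ with eigenvalue error $Cm^{-\delta+1}$ and boundary error $Cm^{\frac12(1-\delta)}$.

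First I would establish an almost-orthogonality statement for the boundary quasi-waves of the zigzag problem: there is a constant $C$ such that $|\langle \Psi^{(m)},\Psi^{(l)}\rangle|\le C(\sigma_m+\sigma_l)^{-1}$ for $m\ne l$. This is the direct analogue of Proposition \ref{prop:almostorthogonality}. Its proof carries over verbatim once we use Proposition \ref{prop:DiracL} in place of Proposition \ref{prop:Dirac}: the (real) spectrum of the Dirac operator $\Dir$ on $\mathcal L$ with boundary conditions \eqref{eq:Diracalephbeth} coincides, with multiplicities, with the zigzag quasi-eigenvalues (allowing negative ones); the associated eigenvectors $\mathbf{f}_{m,\pm}$ form an orthonormal basis of $(L^2(\mathcal L))^2$; on each edge $I_j$ each component is a pure complex exponential $d_{m,j}\er^{\pm \ir\sigma_m s}$; and $\Psi^{(m)}|_{I_j}=2\Re(f_{m,+,1})|_{I_j}$ up to normalisation. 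The four terms in $\langle \Psi^{(m)},\Psi^{(l)}\rangle$ then split into two orthogonality contributions (which vanish) and two oscillatory integrals of the form $\int_{I_j}\er^{\pm\ir(\sigma_m+\sigma_l)s}\,\dr s$, each bounded by $2(\sigma_m+\sigma_l)^{-1}$; Cauchy--Schwarz on the coefficients $d_{m,j}d_{l,j}$ and the $L^2$-normalisation of $\mathbf{f}_{m,+}$ yield the claim.

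Next I would transfer the estimate to the $\tilde u_m$. The analogue of Corollary \ref{cor:approxofquasibypsi} for partially curvilinear zigzag domains follows from our quasimode construction: on each edge the boundary restriction of $v_m$ differs from $\Psi^{(m)}$ only inside the vertex patches and in a neighbourhood of the sloping-beach endpoints, where a rescaling argument gives an $L^2$ error $O(\sigma_m^{-1/2})$; combined with Corollary \ref{cor:quasimodesstraightnearcorners} this gives $\|\tilde u_m-\Psi^{(m)}\|_{L^2(\mathcal Z)}=O(m^{\frac12(1-\delta)})$. Together with the almost-orthogonality of the $\Psi^{(m)}$, this yields a zigzag version of Corollary \ref{cor:almostorthogonality}, and then of Corollary \ref{cor:linind}: any $N$ of the $\tilde u_m$ with indices above some threshold are linearly independent.

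Finally I would run the interval-counting argument. Define $\mathcal I_m=(\sigma_m-Cm^{\frac12(1-\delta)},\sigma_m+Cm^{\frac12(1-\delta)})$. To control overlaps I would invoke the Weyl law for the path-graph Dirac operator (equivalently, for the associated quantum-graph Laplacian; as in \cite[Lemma 3.7.4]{BK13} the remainder is bounded), which again gives a uniform upper bound $N$ on the number of quasi-eigenvalues in any interval of length $1$, and hence on the number of mutually overlapping $\mathcal I_m$ for large enough $m$. On each connected component $\mathcal C_k$ of $\bigcup_m \mathcal I_m$, the $N_k\le N$ functions $\tilde u_m$ indexed by $\sigma_m\in\mathcal C_k$ are linearly independent and each is a linear combination of true eigenfunctions of the zigzag problem with eigenvalues in $\mathcal C_k$, so $\mathcal C_k$ contains at least $N_k$ eigenvalues $\lambda_{j(m)}$; listing these in increasing order defines the required strictly increasing (eventually) map $j$, and the length bound $\ell(\mathcal C_k)\le 2CNm_k^{\frac12(1-\delta)}\le 4CNm^{\frac12(1-\delta)}$ gives the stated eigenvalue inequality.

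The main obstacle is conceptually the first step: verifying that Proposition \ref{prop:DiracL} really does provide the same structural information on eigenfunctions as Proposition \ref{prop:Dirac}, so that the integration-by-parts bound for the mixed inner products $\langle f_{m,+,1},f_{l,+,2}\rangle$ goes through unchanged; once this is in place, the rest of the proof is a routine line-by-line translation of the arguments in section \ref{subsec:orthogcons}.
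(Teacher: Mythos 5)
Your proposal follows the paper's outline quite closely, and most of it is on target; in particular the almost-orthogonality step via Proposition \ref{prop:DiracL} and the transfer to $\tilde u_m$ via the analogue of Corollary \ref{cor:approxofquasibypsi} are exactly what the paper intends. However, there is one genuine gap in the interval-counting step.

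You write that to bound the number of overlapping intervals $\mathcal I_m$ you would ``invoke the Weyl law for the path-graph Dirac operator (equivalently, for the associated quantum-graph Laplacian; as in \cite[Lemma 3.7.4]{BK13} the remainder is bounded).'' This ``equivalently'' fails for the path $\mathcal L$, and the paper explicitly warns about it. For the cyclic graph $\mathcal G$, $\Dir^2$ decomposes into a direct sum of \emph{two} quantum-graph Laplacians with non-Robin matching conditions (see the proof of Theorem \ref{thm:quantum}), which is precisely what makes \cite[Lemma 3.7.4]{BK13} applicable. For the path $\mathcal L$ this decomposition breaks: after the change of basis $\mathtt W$, the endpoint conditions \eqref{eq:Diracalephbeth} become $f_{\mathrm{odd}}\mp f_{\mathrm{even}}=0$ at $V_0$ and $V_n$, so the ``odd'' and ``even'' scalar components are \emph{coupled} at the endpoints and $\Dir^2$ is not a direct sum of two non-Robin quantum-graph Laplacians. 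Hence \cite[Lemma 3.7.4]{BK13} cannot be invoked directly, and Proposition \ref{prop:nomoreN} does not transfer as stated.

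What the paper uses instead is Proposition \ref{prop:nomoreNexc}: the uniform bound on the number of zigzag quasi-eigenvalues in an interval of fixed length is obtained not from a quantum-graph Weyl law but from the monotonicity and Lipschitz bounds of the argument functions (Lemma \ref{lem:phi}, Corollary \ref{cor:diffbounded}) and the exceptional-zigzag enumeration of Section \ref{subs:exepenum}. You should replace the BK13 invocation with a citation of Proposition \ref{prop:nomoreNexc}; the rest of your argument then goes through as you describe. A minor point that is worth noting (but easily dispatched): in the zigzag natural enumeration a finite number of $\sigma_m$ may be negative (Remark \ref{rem:posnonpos}), so the integration-by-parts bound $\int_{I_j}\er^{\ir(\sigma_m+\sigma_l)s}\,\dr s=O((\sigma_m+\sigma_l)^{-1})$ is used only for $m,l$ beyond a finite threshold, which is harmless for the asymptotic statement.
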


This is proved in an almost identical way, using Corollary \ref{cor:quasimodesstraightnearcorners} as the replacement for Corollary \ref{cor:quasimodesfullycurv}.  There is still a self-adjoint Dirac operator on a quantum path $\mathcal{L}$ whose eigenvalues coincide with the quasi-eigenvalues of $\mathcal{Z}$, see Proposition \ref{prop:DiracL}. We cannot use Proposition \ref{prop:nomoreN} at this stage because the square of this Dirac operator cannot be decomposed as a direct sum of two quantum graph (non-Robin) Laplacians, but we can use Proposition \ref{prop:nomoreNexc} instead.  In addition, the analogue of Corollary \ref{cor:approxofquasibypsi} still holds for a zigzag, by an even easier argument.

\subsection{Asymptotics of eigenfunctions}\label{subsec:asymptef} We note that we have not yet proved Theorem \ref{thm:main} stating in particular that each true eigenvalue corresponds to a quasi-eigenvalue. Assume, however, for the rest of this section that  it holds.  We can deduce the following generalisation of Theorem \ref{thm:maineigenfunction}.

\begin{theorem}\label{thm:maineigenfunctiongen} For any curvilinear polygon $\mathcal P$, there exists $C>0$ such that the restrictions of the eigenfunctions $u_m$ to the boundary $\partial{\mathcal P}$ satisfy
\begin{equation}\label{eq:objective1}
\|u_m|_{\partial\mathcal{P}}-\tilde\Psi^{(m)}\|=O(m^{-\varepsilon}),
\end{equation}
where $\tilde\Psi^{(m)}$ is a linear combination of the functions $\Psi^{(l)}$ corresponding to quasi-eigenvalues $\sigma_l$ in the interval $[\sigma_m-Cm^{-\varepsilon},\sigma_m+Cm^{-\varepsilon}]$.
\end{theorem}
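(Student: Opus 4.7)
The idea is to use the approximate eigenfunctions $\tilde u_m$ from Corollary \ref{cor:quasimodesfullycurv} as a bridge between the true eigenfunctions $u_m$ and the boundary quasi-waves $\Psi^{(m)}$.  Recall that $\tilde u_m$ is simultaneously (i) a linear combination of true Steklov eigenfunctions $u_k$ with $|\lambda_k-\sigma_m|$ small, and (ii) $O(m^{-\varepsilon})$-close in $L^2(\partial\mathcal{P})$ to $\Psi^{(m)}$ by Corollary \ref{cor:approxofquasibypsi}.  Having assumed Theorem \ref{thm:main}, we also know every true eigenvalue is $O(m^{-\varepsilon})$-close to a quasi-eigenvalue, giving a two-sided identification of clusters.

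Fix $\tilde\delta\in(1,\delta)$ and set $\varepsilon=\tfrac12(\tilde\delta-1)$.  Choose constants $A<B$ (large compared to all relevant error constants) and nested intervals
\[
J_m=[\sigma_m-Am^{-\varepsilon},\sigma_m+Am^{-\varepsilon}]\;\subset\;I_m=[\sigma_m-Bm^{-\varepsilon},\sigma_m+Bm^{-\varepsilon}],
\]
so that for all $m$ sufficiently large: (i) $\lambda_m\in I_m$, by Theorem \ref{thm:main}; (ii) if $\sigma_l\in J_m$, then every Steklov eigenfunction $u_k$ appearing in the decomposition of $\tilde u_l$ from Corollary \ref{cor:quasimodesfullycurv} satisfies $\lambda_k\in I_m$; and (iii) the identification of clusters from Theorem \ref{thm:main} restricts to a bijection between $\Sigma_m:=\{l:\sigma_l\in J_m\}$ and $\Lambda_m:=\{k:\lambda_k\in I_m\}$.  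A generic mild adjustment of $A,B$ depending on $m$ may be required in (iii) to avoid eigenvalues lying exactly at a window endpoint, but Proposition \ref{prop:nomoreN} (uniform local Weyl bound) guarantees this is always possible.

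Set $V_m:=\mathrm{span}\{u_k:k\in\Lambda_m\}$ and $\tilde V_m:=\mathrm{span}\{\tilde u_l:l\in\Sigma_m\}$.  Condition (ii) yields $\tilde V_m\subseteq V_m$.  By Proposition \ref{prop:nomoreN} the cardinality $|\Sigma_m|$ is uniformly bounded by some $N$, so Corollary \ref{cor:linind} gives linear independence of $\{\tilde u_l\}_{l\in\Sigma_m}$ for large $m$.  Combined with (iii), we obtain $\dim\tilde V_m=|\Sigma_m|=|\Lambda_m|=\dim V_m$, hence $\tilde V_m=V_m$; in particular $u_m\in\tilde V_m$, and we may write $u_m=\sum_{l\in\Sigma_m}c_l\tilde u_l$ for some coefficients $c_l$.

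To conclude, combine Proposition \ref{prop:almostorthogonality} with Corollary \ref{cor:approxofquasibypsi}: the Gram matrix $[\langle\tilde u_l,\tilde u_k\rangle]_{l,k\in\Sigma_m}$ converges to the identity as $m\to\infty$, uniformly in the entries because $|\Sigma_m|\leq N$.  Consequently $|c_l|\leq C$ for a constant independent of $m$.  Define $\tilde\Psi^{(m)}:=\sum_{l\in\Sigma_m}c_l\Psi^{(l)}$, which has the required form since $J_m\subset I_m$.  The triangle inequality and a second application of Corollary \ref{cor:approxofquasibypsi} give
\[
\|u_m-\tilde\Psi^{(m)}\|_{L^2(\partial\mathcal{P})}\leq\sum_{l\in\Sigma_m}|c_l|\,\|\tilde u_l-\Psi^{(l)}\|_{L^2(\partial\mathcal{P})}=O(m^{-\varepsilon}).
\]
The main obstacle is the bookkeeping in (i)--(iii): making the dimension count exact requires both directions of Theorem \ref{thm:main} together with the avoidance of accidental degeneracies at the window endpoints.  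This is a routine but delicate exercise rather than a conceptual difficulty; once the windows are set up correctly, everything else is linear algebra driven by the almost-orthogonality of the $\Psi^{(l)}$.
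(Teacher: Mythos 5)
Your proposal is correct and uses essentially the same strategy as the paper: identify matching clusters of eigenvalues and quasi-eigenvalues, use Corollaries \ref{cor:linind} and \ref{cor:almostorthogonality} together with the local Weyl bound (Proposition \ref{prop:nomoreN}) to conclude that the $\tilde u_l$ in a cluster are linearly independent with uniformly bounded Gram inverse, and then convert $u_m = \sum_l c_l \tilde u_l$ into $\tilde\Psi^{(m)}=\sum_l c_l\Psi^{(l)}$ via Corollary \ref{cor:approxofquasibypsi}. The paper phrases the last two steps as invertibility of a transition matrix $H_k$ with bounded inverse and writes $u = H_k^{-1}G_k\Psi+O(m^{-\varepsilon})$, but this is the same linear algebra you carry out via subspace equality $V_m=\tilde V_m$ and bounded coefficients.
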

Note that Theorem \ref{thm:maineigenfunctiongen} immediately implies Theorem \ref{thm:maineigenfunction}, since under the  assumptions of the latter we must have $\tilde\Psi^{(m)}$ equal to a multiple of $\Psi^{(m)}$ itself, and $\Psi^{(m)}$ is a trigonometric polynomial of frequency $\sigma_m$ along each edge. It also implies the following slight variation: 

\begin{corollary}\label{cor:split} Suppose that a curvilinear polygon $\mathcal P$ has $K\ge 2$ exceptional angles (and therefore $K$ exceptional boundary components $\mathcal{Y}_\kappa$, $\kappa=1,\dots,K$). Let $C$ be as in   Theorem \ref{thm:maineigenfunctiongen}, let $\sigma_m$ be a quasi-eigenvalue, and let 
\[
\mathfrak{K}_m:=\bigcup_\kappa \mathcal{Y}_\kappa,
\]
where the union is taken over all $\kappa$ such that \eqref{eq:quasieq2} has a root $\sigma$ in the interval $[\sigma_m-Cm^{-\varepsilon},\sigma_m+Cm^{-\varepsilon}]$. Then
\[
\|u_m|_{\partial\mathcal{P}\setminus\mathfrak{K}_m}\|=O(m^{-\varepsilon}).
\]
\end{corollary}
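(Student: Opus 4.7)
\medskip

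\noindent\textbf{Proof proposal for Corollary \ref{cor:split}.} The plan is to deduce the corollary essentially immediately from Theorem \ref{thm:maineigenfunctiongen} once one verifies that in the exceptional case the boundary quasi-waves $\Psi^{(l)}$ are supported in a single exceptional boundary component. By Theorem \ref{thm:maineigenfunctiongen} there exists a linear combination
\[
\tilde\Psi^{(m)}=\sum_{\sigma_l\in[\sigma_m-Cm^{-\varepsilon},\sigma_m+Cm^{-\varepsilon}]} c_{m,l}\,\Psi^{(l)}
\]
such that $\|u_m|_{\partial\mathcal{P}}-\tilde\Psi^{(m)}\|_{L^2(\partial\mathcal{P})}=O(m^{-\varepsilon})$. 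Hence it suffices to show that each summand $\Psi^{(l)}$ appearing here vanishes identically on $\partial\mathcal{P}\setminus\mathfrak{K}_m$; then $\tilde\Psi^{(m)}$ itself is supported in $\mathfrak{K}_m$ and the triangle inequality gives
\[
\|u_m|_{\partial\mathcal{P}\setminus\mathfrak{K}_m}\|_{L^2}\leq\|u_m|_{\partial\mathcal{P}}-\tilde\Psi^{(m)}\|_{L^2(\partial\mathcal{P})}=O(m^{-\varepsilon}).
\]

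\medskip

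The core step is therefore the structural statement that in the exceptional case each boundary quasi-wave lives on exactly one exceptional component. Recall from Definition \ref{def:quasiexc} and Definition \ref{def:quasiexcmult} that a quasi-eigenvalue $\sigma_l$ in the exceptional case is by definition a root of \eqref{excepeq} for some specific index $\kappa=\kappa(l)\in\{1,\dots,K\}$, with multiplicity counted as the number of such $\kappa$. First I would enumerate the quasi-eigenvalues so that each $\Psi^{(l)}$ is attached to a single $\kappa(l)$. Then, following the construction in subsection \ref{subsec:bqm}, the collection of vectors $(\mathbf c_{j,\inn},\mathbf c_{j,\out})$ determining $\Psi^{(l)}$ can be taken to be nonzero only on the arcs making up $\mathcal{Y}_{\kappa(l)}$: indeed, the exceptional matching conditions \eqref{eq:exceptionalconditionmark2} at the exceptional vertices $V_{E_{\kappa-1}}$ and $V_{E_\kappa}$ constrain the incoming and outgoing waves on $\mathcal{Y}_{\kappa(l)}$ to one-dimensional subspaces and, crucially, completely decouple the transfer problem on $\mathcal{Y}_{\kappa(l)}$ from that on the neighbouring components. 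Consequently setting all vectors on other components to zero satisfies all the relations \eqref{eq:cpm}, \eqref{eq:coutAcin}, \eqref{eq:exceptionalconditionmark2} trivially, and the resulting $\Psi^{(l)}$ is supported in $\mathcal{Y}_{\kappa(l)}$.

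\medskip

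Finally, since $\sigma_l\in[\sigma_m-Cm^{-\varepsilon},\sigma_m+Cm^{-\varepsilon}]$ is a root of \eqref{excepeq} with $\kappa=\kappa(l)$, Theorem \ref{thm:polygoneqn1}(b) identifies it as a root of the corresponding trigonometric polynomial in \eqref{eq:quasieq2} for that same $\kappa(l)$. Hence $\mathcal{Y}_{\kappa(l)}\subseteq\mathfrak{K}_m$ by the very definition of $\mathfrak{K}_m$, so $\operatorname{supp}\Psi^{(l)}\subseteq\mathfrak{K}_m$ for every $l$ in the sum, and therefore $\operatorname{supp}\tilde\Psi^{(m)}\subseteq\mathfrak{K}_m$. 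The bound stated in the corollary then follows as in the first paragraph.

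\medskip

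The only place that requires real care is the structural claim that $\Psi^{(l)}$ may be chosen with support in a single $\mathcal{Y}_{\kappa(l)}$; this is a direct consequence of the form of Definition \ref{def:quasiexc} together with the decoupling effect of the exceptional vertices demonstrated in Theorem \ref{thm:mainthmssector}(b) and exploited in the quasimode construction of section \ref{sec:quasieigenvalues}. Everything else is bookkeeping, and no new analytic ingredient beyond Theorem \ref{thm:maineigenfunctiongen} is needed.
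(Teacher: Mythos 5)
Your proposal is correct and follows the paper's intended route: the paper treats the corollary as an immediate consequence of Theorem \ref{thm:maineigenfunctiongen}, and the one piece of structure you need to supply—that in the exceptional case each boundary quasi-wave $\Psi^{(l)}$ can be chosen with support in a single exceptional component $\mathcal{Y}_{\kappa(l)}$, because the conditions \eqref{eq:exceptionalconditionmark2} at the two exceptional endpoints decouple the transfer problem across components—is exactly the right observation (cf.\ Definition \ref{def:quasiexc}, Definition \ref{def:quasiexcmult}, and the proof of Proposition \ref{prop:equidistrib}(b), where the same point is used). The identification of $\sigma_l$ with a root of the corresponding polynomial in \eqref{eq:quasieq2} via Theorem \ref{thm:polygoneqn1}(b) is the correct final bookkeeping step.
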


In other words, Corollary \ref{cor:split} states that in the exceptional case the boundary values of Steklov eigenfunctions are asymptotically concentrated on the unions of exceptional components contributing to the particular clusters of eigenvalues. 

\begin{proof}[Proof of Theorem \ref{thm:maineigenfunctiongen}]
By the clustering argument above, the quasi-eigenvalues $\{\sigma_m\}$ separate into clusters of width bounded by $Cm^{-\varepsilon}$. By Theorem \ref{thm:main}, the same is true for the eigenvalues $\{\lambda_m\}$. At the cost of possibly increasing $C$, we may assume the clusters for both the eigenvalues and quasi-eigenvalues are the same. 

Now pick a cluster $\mathcal C_k$ with $N_k$ eigenvalues and $N_k$ quasi-eigenvalues, with indices from $m=a$ to $m=b=a+N_k-1$. For each $m$ with $\sigma_m\in\mathcal C_k$, by Corollary \ref{cor:approxofquasibypsi}, $\Psi^{(m)}$ is within $Cm^{-\varepsilon}$ of a linear combination $\tilde u_m$ of eigenfunctions with eigenvalues in $\mathcal C_k$. Note that $\Psi^{(m)}$ has norm $1$ and therefore each $\tilde u_m$ has norm within $Cm^{-\varepsilon}$ of $1$ (and is therefore within $Cm^{-\varepsilon}$ of its normalised version). This shows that there exist two $N_k$-by-$N_k$ matrices $G_k$ and $H_k$ for which
\[
\begin{pmatrix}\tilde u_a\\ \vdots\\ \tilde u_b\end{pmatrix}=G_k\begin{pmatrix}\Psi^{(a)}\\ \vdots\\ \Psi^{(b)}\end{pmatrix}+O(m^{-\varepsilon}),\quad \begin{pmatrix}\tilde u_a\\ \vdots\\ \tilde u_b\end{pmatrix}=H_k\begin{pmatrix}u_a\\ \vdots\\ u_b\end{pmatrix}+O(m^{-\varepsilon}).
\]
However, by Corollary \ref{cor:linind}, $H_k$ is invertible for sufficiently large $k$, and by Corollary \ref{cor:almostorthogonality} its inverse is uniformly bounded. We deduce
\[
\begin{pmatrix} u_a\\ \vdots\\ u_b\end{pmatrix}=H_k^{-1}G_k\begin{pmatrix}\Psi^{(a)}\\ \vdots\\ \Psi^{(b)}\end{pmatrix}+O(m^{-\varepsilon}),
\]
which is precisely Theorem \ref{thm:maineigenfunctiongen}.
\end{proof}
We also prove Proposition \ref{prop:equidistrib}, which now becomes very simple.
 
\begin{proof}[Proof of Proposition \ref{prop:equidistrib}] If all angles are special,  the quasi-eigenvalues are given by \eqref{eq:allspecial}, with each nonzero eigenvalue having multiplicity two. The corresponding $\Psi^{(m)}$ may be taken to be
\[
\sqrt{\frac{2}{|\partial\mathcal{P}|}}\sin(\sigma_m s),\quad \sqrt{\frac{2}{|\partial\mathcal{P}|}}\cos(\sigma_m s),
\]
where $s$ is an arc length coordinate along the boundary; note that these functions are orthogonal for each $m$. Therefore the functions $\tilde\Psi^{(m)}$ are linear combinations of $\Psi^{(m)}$, and each has norm $1+O(m^{-\varepsilon})$ by \eqref{eq:objective1}. An immediate calculation shows that these are equidistributed and in fact that the error is $O(m^{-\varepsilon})$. On the other hand, if all angles are exceptional, for all $m$ we may choose $\Psi^{(m)}$ to have support on just \emph{one} side. If $\sigma_m$ is isolated in the sense of Proposition \ref{prop:equidistrib}, then for sufficiently large $m$ we have $\tilde\Psi^{(m)}=\Psi^{(m)}+O(m^{-\varepsilon})$, thus $u_m=\Psi^{(m)}+O(m^{-\varepsilon})$, from which the result follows. 
\end{proof}
\clearpage\section{Enumeration of quasi-eigenvalues}\label{sec:completeness}

\subsection{Matrix groups}\label{subsec:propscat}
Let $\alpha$ be a non-exceptional angle and let $\mathtt{A}:=\mathtt{A}(\alpha)$ and $\mathtt{B}:=\mathtt{B}(\ell,\sigma)$ be the vertex and side transfer matrices defined by equations \eqref{eq:Adef1} and \eqref{eq:Bdef1}, see also \eqref{eq:mualphadef}, \eqref{eq:a1a2defn} and Remarks  \ref{rem:Aproperties},  \ref{rem:Bproperties}, and \ref{rem:Xareeigenvectors}.  

Define
\begin{align*}
\mathcal{M}_1&=\left\{\mathtt{M}=\begin{pmatrix}p&q\\\overline{q}&\overline{p}\end{pmatrix}\,\middle|\, p,q\in\mathbb{C},\ \det \mathtt{M}=|p|^2-|q|^2=1\right\},\\
\mathcal{M}_{1,\mathtt{A}}&=\left\{\mathtt{M}=\begin{pmatrix}p&-\ir r\\\ir r&p\end{pmatrix}\,\middle|\, p,r\in\mathbb{R},\ \det \mathtt{M}=p^2-r^2=1\right\}\subset \mathcal{M}_1,\\
\mathcal{M}_{1,\mathtt{B}}&=\left\{\mathtt{M}=\begin{pmatrix}p&0\\0&\overline{p}\end{pmatrix}\,\middle|\, p\in\mathbb{C},\ \det \mathtt{M}=|p|^2=1\right\}\subset \mathcal{M}_1,
\end{align*}
It is easy to check that $\mathcal{M}_1$ is a group with respect to matrix multiplication, $\mathcal{M}_{1,\mathtt{A}}$ and $\mathcal{M}_{1,\mathtt{B}}$ are subgroups of $\mathcal{M}_{1}$, and that $\mathtt{A}(\alpha)\in\mathcal{M}_{1,\mathtt{A}}$ for any $\alpha\not\in\Eangles$, and  $\mathtt{B}(\ell,\sigma)\in\mathcal{M}_{1,\mathtt{B}}$ for any real $\ell$ and $\sigma$. It is also easy to check that any matrix from $\mathcal{M}_1$ maps the (real) linear subspace $\Conj$ of $\mathbb{C}^2$ onto itself. Therefore, this is also true for the matrices  $\mathtt{T}$ and $\mathtt{U}$ defined by \eqref{eq:Tdef1} and \eqref{eq:Udef} respectively.

\subsection{Representation of vectors and matrices on the universal cover} %
\label{subsec:rep}
We can naturally identify vectors $\mathbf{b}=\begin{pmatrix}b\\\overline{b}\end{pmatrix}\in\Conj$ with vectors $\mathbf{b}^\sha=\begin{pmatrix}\Re b\\\Im b\end{pmatrix}$
considered as elements of $\mathbb{R}^2$ (or just elements $b$ of $\mathbb{C}$). As an illustration, the vectors $\mathbf{N}$ and $\mathbf{D}$ defined in \eqref{eq:ND} give rise to
\begin{equation}\label{eq:NDshadef}
\mathbf{N}^{\sha}=\begin{pmatrix}1\\ 0\end{pmatrix},\qquad \mathbf{D}^{\sha}=\begin{pmatrix}0\\ 1\end{pmatrix}.
\end{equation}
The mapping $\mathbb{R}^2\to\Conj$ is defined by $\mathbf{b}=\mathtt{J}\mathbf{b}^\sha$, with 
\begin{equation}\label{eq:Jdef}
\mathtt{J}= \begin{pmatrix} 1&\ir\\1&-\ir\end{pmatrix},
\end{equation}
Matrices $\mathtt{M}\in\mathcal{M}_1$ therefore act on $\mathbb{R}^2$ as
\[
(\mathtt{M}\mathbf{b})^\sha=\mathtt{J}^{-1}\mathtt{M}\mathbf{b}=\mathtt{J}^{-1}\mathtt{M}\mathtt{J}\mathbf{b}^\sha,
\]
where 
\[
\mathtt{J}^{-1}= \frac{1}{2}\begin{pmatrix} 1&1\\-\ir&\ir\end{pmatrix},
\]
and we set 
\[
\mathtt{M}^\sha:=\mathtt{J}^{-1}\mathtt{M}\mathtt{J}.
\]

It is straightforward to check  that the mapping $\mathtt{M}\mapsto \mathtt{M}^\sha$ sends $\mathcal{M}_1$ into the space $\mathcal{M}^\sha_1$ of all real $2\times2$ matrices with determinant one. Moreover, it maps the subgroup $\mathcal{M}_{1,\mathtt{A}}\subset \mathcal{M}_1$ into the subgroup $\mathcal{S}^\sha\subset \mathcal{M}_1^\sha$ of all symmetric real $2\times2$ matrices with determinant one and equal diagonal entries, and the subgroup $\mathcal{M}_{1,\mathtt{B}}\subset \mathcal{M}_1$ into the subgroup $\mathcal{R}^\sha\subset \mathcal{M}_1^\sha$ of all  real $2\times2$ rotation matrices.

The matrices in $\mathcal{R}^\sha$ are characterised by a single parameter, the angle of rotation, and we will denote them by
\[
\mathtt{R}^\sha(\psi):=\begin{pmatrix} \cos(\psi) & -\sin(\psi)\\ \sin(\psi) & \cos(\psi)\end{pmatrix}\in\mathcal{R}^\sha.
\]
In particular, we have
\[
(\mathtt{B}(\ell,\sigma))^\sha=\mathtt{R}^\sha(\sigma\ell).
\]

The matrices in $\mathcal{S}^\sha$ always have normalised eigenvectors 
\[
\Cvect^\sha_\text{even}=\frac{1}{\sqrt{2}}\begin{pmatrix} 1\\-1\end{pmatrix},\qquad \Cvect^\sha_\text{odd}=\frac{1}{\sqrt{2}}\begin{pmatrix} 1\\1\end{pmatrix}.
\]
Thus, any $\mathtt{S}^\sha\in \mathcal{S}^\sha$ may be characterised  by one eigenvalue $\tau$ (the other eigenvalue being $\frac{1}{\tau}$) and a corresponding normalised eigenvector $\mathbf{w}^\sha\in\left\{\pm \Cvect^\sha_\text{even}, \pm \Cvect^\sha_\text{odd}\right\}$,  and we will write 
\[
\mathtt{S}^\sha=\mathtt{S}^\sha(\tau,\mathbf{w}^\sha).
\]
This representation is not unique:
\[
\mathtt{S}^\sha(\tau,\mathbf{w}^\sha)=\mathtt{S}^\sha(\tau,-\mathbf{w}^\sha)=\mathtt{S}^\sha(1/\tau,(\mathbf{w}^\sha)^\perp)=\mathtt{S}^\sha(1/\tau,-(\mathbf{w}^\sha)^\perp),
\]
where $(\mathbf{w}^\sha)^\perp$ is a normalised eigenvector perpendicular to $\mathbf{w}^\sha$. 

In particular, for $\mathtt{A}(\alpha)=\begin{pmatrix}a_1(\alpha) & -\ir a_2(\alpha) \\ \ir a_2(\alpha) & a_1(\alpha) \end{pmatrix}$ with eigenvalues $\eta_j(\alpha)$ defined in \eqref{eq:eta12}, we have
\begin{equation}\label{eq:Ahat}
\begin{split}
\mathtt{A}^\sha(\alpha):=(\mathtt{A}(\alpha))^\sha=\begin{pmatrix} a_1(\alpha) & -a_2(\alpha)\\ -a_2(\alpha) & a_1(\alpha)\end{pmatrix}
&=\mathtt{S}^\sha\left(\eta_1(\alpha),\Cvect^\sha_\text{odd}\right)=\mathtt{S}^\sha\left(\eta_1(\alpha),-\Cvect^\sha_\text{odd}\right)\\
&=\mathtt{S}^\sha\left(\eta_{2}(\alpha),\Cvect^\sha_\text{even}\right)=\mathtt{S}^\sha\left(\eta_2(\alpha),-\Cvect^\sha_\text{even}\right),
\end{split}
\end{equation}
The matrix $\mathtt{A}^\sha(\alpha)$ is positive or negative depending on the sign of $\sin(\mu_\alpha)$.

Throughout this section it will be useful to deal, instead of vectors $\mathbf{b}\in\Conj\setminus\{\mathbf{0}\}$ or $\mathbf{b}^\sha\in\mathbb{R}^2\setminus\{\mathbf{0}\}$, with vectors $\hat{\mathbf{b}}$ on the \emph{universal cover} $\UC$ of the punctured complex plane, that is, of the logarithmic surface. The elements $\hat{\mathbf{b}}\in\UC$ have positive moduli and arguments $\arg \hat{\mathbf{b}}\in(-\infty,+\infty)$.  Let $\Pro:\UC\to\mathbb{R}^2\setminus\{\mathbf{0}\}$ be the projection which preserves the modulus but takes argument modulo $2\pi$ in such a way that $\arg(\Pro\hat{\mathbf{b}})\in(-\pi,\pi]$. Any element $\hat{\mathbf{b}}\in\UC$ such that $\Pro\hat{\mathbf{b}}=\mathbf{b}^\sha\in\mathbb{R}^2\setminus\{\mathbf{0}\}$ will be called a \emph{lift} of $\mathbf{b}^\sha$ onto $\UC$.  We will distinguish the \emph{principal lift} $\Pro^{-1}: \mathbb{R}^2\setminus\{\mathbf{0}\}\to\UC$ such that $\arg(\Pro^{-1}\mathbf{b}^\sha)\in(-\pi,\pi]$. This allows us to lift previously defined vectors to the universal cover:
\begin{equation}\label{eq:special_args}
\hat{\mathbf N}=\Pro^{-1}\mathbf{N}^{\sha},\quad \hat{\mathbf{D}}=\Pro^{-1}\mathbf{D}^{\sha},\quad \hat{\Cvect}_\text{even}=\Pro^{-1}\Cvect^{\sha}_\text{even}, \quad \hat{\Cvect}_\text{odd}=\Pro^{-1}\Cvect^{\sha}_\text{odd}.
\end{equation}

We now need to define the analogues of matrices $\mathtt{M}^\sha\in\mathcal{R}^\sha$ and  $\mathtt{M}^\sha\in\mathcal{S}^\sha$ acting on the universal cover. They will be maps $\hat{\mathtt{M}}$ from $\UC$ to $\UC$, which we will call \emph{lifted matrices}, defined in the following way. Firstly, we require, for any $\hat{\mathbf{b}}\in\UC$,
\begin{equation}\label{eq:Prohat}
\Pro\left(\hat{\mathtt{M}}\hat{\mathbf{b}}\right):=\mathtt{M}^\sha\Pro\hat{\mathbf{b}}=\mathtt{M}^\sha\mathbf{b}^\sha.
\end{equation}
The relation \eqref{eq:Prohat} defines the modulus of $\hat{\mathtt{M}}\hat{\mathbf{b}}$ uniquely, and its argument modulo $2\pi$. 
We prescribe the exact value of $\arg\hat{\mathtt{M}}\hat{\mathbf{b}}$ in two distinct ways depending on whether $\mathtt{M}^\sha\in\mathcal{R}^\sha$ or  $\mathtt{M}^\sha\in\mathcal{S}^\sha$. 

In the former case $\mathtt{M}^\sha=\mathtt{R}^\sha(\psi)$,  we set
\[
\arg\left(\hat{\mathtt{R}}(\psi)\hat{\mathbf{b}}\right):=\arg\hat{\mathbf{b}}+\psi.
\]
We remark that although the matrix-valued function $\mathtt{R}^\sha(\psi)$ is $2\pi$-periodic in $\psi\in\mathbb{R}$, the maps $\hat{\mathtt{R}}(\psi)$ and $\hat{\mathtt{R}}(\psi+2\pi)$ are different, and therefore $\hat{\mathtt{R}}(\psi)$ should not be viewed as a ``lift'' of $\mathtt{R}^\sha(\psi)$ onto the universal cover, but rather as an independent object depending on the parameter $\psi$.

In the latter case, we note that any $\mathtt{M}^\sha=\mathtt{S}^\sha(\tau, \mathbf{w}^\sha)\in \mathcal{S}^\sha$ is either positive definite or negative definite, and we deal first with the positive ones, requesting that,  for any $\hat{\mathbf{b}}\in\UC$,
\begin{equation}\label{eq:hatbound}
\left|\arg\left(\hat{\mathtt{S}}\hat{\mathbf{b}}\right)-\arg\hat{\mathbf{b}}\right|<\frac{\pi}{2}.
\end{equation}
The conditions \eqref{eq:Prohat} and \eqref{eq:hatbound} define $\hat{\mathtt{S}}\hat{\mathbf{b}}$ uniquely. A more explicit formula for $\arg\left(\hat{\mathtt{S}}\hat{\mathbf{b}}\right)$ is given below in Lemma \ref{lem:explicitarg}. 

If $\mathtt{S}^\sha$ is negative, we choose
\begin{equation}
\label{arg:choiceneg}
\arg\left(\hat{\mathtt{S}}\hat{\mathbf{b}}\right)=\arg\left(\hat{(-\mathtt{S})}\hat{\mathbf{b}}\right)+\pi.
\end{equation}

If $\mathtt{S}^\sha=\mathtt{S}^\sha(\tau, \mathbf{w}^\sha)\in \mathcal{S}^\sha$, and $\hat{\mathbf{w}}$ is any lift of $\mathbf{w}^\sha$, we will denote the corresponding map on the universal cover  as $\hat{\mathtt{S}}=\hat{\mathtt{S}}(\tau, \hat{\mathbf{w}})$ and call $\hat{\mathbf{w}}$ an \emph{eigenvector} of $\hat{\mathtt{S}}$ corresponding to the eigenvalue $\tau$. Of course such a representation is not unique. We will say that $\hat{\mathtt{S}}$ is \emph{positive} or \emph{negative} if the corresponding $\mathtt{S}^\sha$ (or $\tau$) is positive or negative, respectively.

Let us introduce the set
\begin{equation}\label{eq:Xcalhat}
\hat{\mathcal{X}}:=\left\{\hat{\mathbf{w}}\in\UC:\ \Pro\hat{\mathbf{w}} \in\left\{\pm \Cvect^\sha_\text{even}, \pm \Cvect^\sha_\text{odd}\right\}\right\}
=\left\{\hat{\mathbf{w}}\in\UC:\ |\hat{\mathbf{w}}|=1, \arg\hat{\mathbf{w}}=\frac{\pi}{4}\pmod{\frac{\pi}{2}}\right\}.
\end{equation}
The set $\hat{\mathcal{X}}$ consists of all the lifts onto the universal cover of all normalised eigenvectors of matrices $\mathtt{S}^\sha$.  The elements of $\hat{\mathcal{X}}$ divide $\UC$ into \emph{quadrants} of argument width $\frac{\pi}{2}$: for any $\hat{\mathbf{b}}\in\UC$ there exist the elements $\hat{\mathbf{w}}_1,\hat{\mathbf{w}}_2\in \hat{\mathcal{X}}$ (which depend on $\hat{\mathbf{b}}$) such that
\[
\arg\hat{\mathbf{w}}_1\le \arg\hat{\mathbf{b}}<\arg\hat{\mathbf{w}}_2=\arg\hat{\mathbf{w}}_1+\pi/2.
\]
The following lemma gives an explicit expression for $\arg\hat{\mathtt{S}}\hat{\mathbf{b}}$ in terms of $\arg\hat{\mathbf{b}}$ and $\arg\hat{\mathbf{w}}_1$.

\begin{lemma}\label{lem:explicitarg} Let $\mathtt{S}^\sha\in\mathcal{S}^\sha$ be positive, let $\hat{\mathbf{b}}\in\UC$, and let $\hat{\mathbf{w}}_j\in\hat{\mathcal{X}}$, $j=1,2$ and $\hat{\mathtt{S}}=\hat{\mathtt{S}}(\tau,\hat{\mathbf{w}}_1)$ as above. Then
\begin{equation}\label{arg:choice}
\arg\left(\hat{\mathtt{S}}\hat{\mathbf{b}}\right)=\arg \hat{\mathbf{w}}_1+\operatorname{arctan}\left(\frac{1}{\tau^2}\tan\left(\arg \hat{\mathbf{b}}-\arg\hat{\mathbf{w}}_1\right)\right)
\end{equation}
\end{lemma}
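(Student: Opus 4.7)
The plan is to reduce to a diagonal computation in the eigenbasis of $\mathtt{S}^\sha$, carry out the argument calculation there, and then verify that the resulting value of $\arg\hat{\mathtt{S}}\hat{\mathbf{b}}$ satisfies both defining conditions \eqref{eq:Prohat} and \eqref{eq:hatbound} so that it is indeed the one picked out by the definition of the lift.

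First, I would introduce the rotated angle coordinate $\phi := \arg\hat{\mathbf{b}}-\arg\hat{\mathbf{w}}_1$, which by the hypothesis on the quadrant lies in $[0,\pi/2)$. In the orthonormal eigenbasis $\{\hat{\mathbf{w}}_1,\hat{\mathbf{w}}_2\}$ of $\mathtt{S}^\sha$, where $\mathtt{S}^\sha=\operatorname{diag}(\tau,\tau^{-1})$, a representative $\mathbf{b}^\sha=\Pro\hat{\mathbf{b}}$ has components $|\mathbf{b}^\sha|(\cos\phi,\sin\phi)$, and therefore $\mathtt{S}^\sha\mathbf{b}^\sha$ has components $|\mathbf{b}^\sha|(\tau\cos\phi,\tau^{-1}\sin\phi)$. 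Since $\phi\in[0,\pi/2)$ and $\tau>0$, both components are nonnegative, with the first strictly positive, so the angle of $\mathtt{S}^\sha\mathbf{b}^\sha$ relative to $\hat{\mathbf{w}}_1$ in this basis lies in $[0,\pi/2)$ and equals $\arctan(\tau^{-2}\tan\phi)$. This already gives the right-hand side of \eqref{arg:choice} modulo $2\pi$, establishing \eqref{eq:Prohat}.

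Next I would verify \eqref{eq:hatbound}. Set $\psi := \arctan(\tau^{-2}\tan\phi)\in[0,\pi/2)$. Both $\phi$ and $\psi$ lie in the same half-open interval $[0,\pi/2)$, and the function $\phi\mapsto\psi$ is a strictly increasing bijection of $[0,\pi/2)$ onto itself that fixes $0$ and has limit $\pi/2$ at $\pi/2$. Consequently $|\psi-\phi|<\pi/2$, which is precisely the bound $\bigl|\arg(\hat{\mathtt{S}}\hat{\mathbf{b}})-\arg\hat{\mathbf{b}}\bigr|<\pi/2$ required by \eqref{eq:hatbound}. Hence the value $\arg\hat{\mathbf{w}}_1+\psi$ is the unique lift singled out by the definition, and \eqref{arg:choice} follows.

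The only subtlety worth flagging is a consistency check: the representation $\hat{\mathtt{S}}=\hat{\mathtt{S}}(\tau,\hat{\mathbf{w}}_1)$ is not unique, because we could equally describe the same map using the orthogonal eigenvector $\hat{\mathbf{w}}_2$ with eigenvalue $\tau^{-1}$, and one should check that \eqref{arg:choice} gives the same answer when one replaces $(\tau,\hat{\mathbf{w}}_1)$ by $(\tau^{-1},\hat{\mathbf{w}}_2)$, shifting $\phi$ by $\pi/2$. A short computation with the identity $\arctan(x)+\arctan(1/x)=\pi/2$ for $x>0$ handles this. I do not expect a genuine obstacle here; the lemma is really a computation in polar coordinates in the eigenbasis, and the bulk of the work is simply checking that the branch of $\arctan$ matches the bound \eqref{eq:hatbound}.
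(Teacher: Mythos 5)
Your proof is correct and follows essentially the same route as the paper's: decompose $\mathbf{b}^\sha$ in the orthonormal eigenbasis $\{\mathbf{w}_1^\sha,\mathbf{w}_2^\sha\}$, apply the diagonal matrix, read off the angle, and match the branch to the defining bound \eqref{eq:hatbound}. You have simply spelled out the "elementary trigonometry" that the paper leaves implicit; the final consistency check is a nice observation but is not required, since the lemma is stated for the particular $\hat{\mathbf{w}}_1$ with $\arg\hat{\mathbf{w}}_1\le\arg\hat{\mathbf{b}}<\arg\hat{\mathbf{w}}_1+\pi/2$.
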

\begin{proof} Let $\mathbf{b}^\sha=\Pro\hat{\mathbf{b}}$ and $\mathbf{w}^\sha_j=\Pro\hat{\mathbf{w}}_j$, $j=1,2$. Write $\mathbf{b}^\sha$ in the basis $\mathbf{w}^\sha_1$, $\mathbf{w}^\sha_2$:  $\mathbf{b}^\sha=c_1\mathbf{w}^\sha_1+c_2\mathbf{w}^\sha_2$. Then $\mathtt{S}^\sha\mathbf{b}^\sha=c_1\tau\mathbf{w}^\sha_1+c_2\tau^{-1}\mathbf{w}^\sha_2$. The result then follows by some elementary trigonometry and by lifting $\mathtt{S}^\sha\mathbf{b}^\sha$ back to $\hat{\mathtt{S}}\hat{\mathbf{b}}$ with account of \eqref{eq:hatbound}.
\end{proof}

We have the following important monotonicity result.

\begin{lemma}\label{lem:monotone} Let $\hat{\bm{\xi}}_1, \hat{\bm{\xi}}_2\in\UC$ with $\arg\hat{\bm{\xi}}_1<\arg\hat{\bm{\xi}}_2$. Then for any $\hat{\mathtt{S}}=\hat{\mathtt{S}}(\tau,\hat{\mathbf{w}})\in\hat{\mathcal{S}}$ we have 
\begin{equation}\label{eq:monotone}
\arg\left(\hat{\mathtt{S}}\hat{\bm{\xi}}_1\right)<\arg\left(\hat{\mathtt{S}}\hat{\bm{\xi}}_2\right).
\end{equation}
\end{lemma}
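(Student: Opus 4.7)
The plan is to reduce to the explicit formula \eqref{arg:choice} from Lemma \ref{lem:explicitarg}, check strict monotonicity on each quadrant of $\UC$ determined by $\hat{\mathcal{X}}$, and then verify continuity of the map $\arg\hat{\bm{\xi}} \mapsto \arg(\hat{\mathtt{S}}\hat{\bm{\xi}})$ at the quadrant boundaries. The negative case will follow trivially from \eqref{arg:choiceneg}.

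More concretely, I would first treat the case of positive $\hat{\mathtt{S}}=\hat{\mathtt{S}}(\tau,\hat{\mathbf{w}})$. Fix $\hat{\bm{\xi}}\in\UC$ and let $\hat{\mathbf{w}}_1\in\hat{\mathcal{X}}$ be chosen so that $\arg\hat{\mathbf{w}}_1\le\arg\hat{\bm{\xi}}<\arg\hat{\mathbf{w}}_1+\pi/2$, and set $\hat{\mathbf{w}}_2\in\hat{\mathcal{X}}$ with $\arg\hat{\mathbf{w}}_2=\arg\hat{\mathbf{w}}_1+\pi/2$. Because $\Pro\hat{\mathbf{w}}_1$ is an eigenvector of $\mathtt{S}^\sha$ (independently of whether the data $(\tau,\hat{\mathbf{w}})$ use $\hat{\mathbf{w}}_1$ directly or the other eigendirection/sign), Lemma \ref{lem:explicitarg} applies with eigenvalue $\tilde\tau\in\{\tau,1/\tau\}$ and yields
\[
\arg(\hat{\mathtt{S}}\hat{\bm{\xi}})=\arg\hat{\mathbf{w}}_1+\arctan\!\left(\tfrac{1}{\tilde\tau^{2}}\tan(\arg\hat{\bm{\xi}}-\arg\hat{\mathbf{w}}_1)\right).
\]
On the open interval $\arg\hat{\bm{\xi}}\in(\arg\hat{\mathbf{w}}_1,\arg\hat{\mathbf{w}}_1+\pi/2)$, the right-hand side is a composition of strictly increasing functions ($\tan$ on $(-\pi/2,\pi/2)$, multiplication by the positive constant $1/\tilde\tau^2$, and $\arctan$), so the map $\arg\hat{\bm{\xi}}\mapsto\arg(\hat{\mathtt{S}}\hat{\bm{\xi}})$ is strictly increasing on every such open quadrant.

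Next I would check continuity (and the correct value) at the boundary points $\arg\hat{\bm{\xi}}=\arg\hat{\mathbf{w}}_j$, $\hat{\mathbf{w}}_j\in\hat{\mathcal{X}}$. At such a point, $\Pro\hat{\bm{\xi}}=\pm\Cvect^\sha_\mathrm{even/odd}$ is itself an eigenvector of $\mathtt{S}^\sha$, so $\mathtt{S}^\sha\Pro\hat{\bm{\xi}}$ is a positive multiple of $\Pro\hat{\bm{\xi}}$, and the bound \eqref{eq:hatbound} forces $\arg(\hat{\mathtt{S}}\hat{\bm{\xi}})=\arg\hat{\bm{\xi}}$. Taking the limits in \eqref{arg:choice} from the quadrant on either side (where $\tan$ tends to $\pm\infty$ or to $0$ as appropriate) gives the same value $\arg\hat{\mathbf{w}}_j$, and thus the map is globally continuous on $\UC$. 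Combined with strict monotonicity on each open quadrant, this establishes strict monotonicity on all of $\UC$ in the positive case.

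Finally, if $\hat{\mathtt{S}}$ is negative, then by \eqref{arg:choiceneg} we have $\arg(\hat{\mathtt{S}}\hat{\bm{\xi}})=\arg(\widehat{(-\mathtt{S})}\hat{\bm{\xi}})+\pi$, and since $-\mathtt{S}^\sha\in\mathcal{S}^\sha$ is positive the result just proven applies to $\widehat{(-\mathtt{S})}$, yielding \eqref{eq:monotone} for $\hat{\mathtt{S}}$ as well. I do not anticipate real obstacles; the only mildly delicate point is the continuity check across the quadrant boundaries of $\hat{\mathcal{X}}$, where one must be careful that the two admissible labellings $(\tau,\hat{\mathbf{w}}_1)$ and $(1/\tau,\hat{\mathbf{w}}_2)$ produce matching limits, which they do precisely because of the compatibility \eqref{eq:Ahat}-type identities.
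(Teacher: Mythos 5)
Your proof is correct and takes essentially the same approach as the paper: reduce to positive $\hat{\mathtt S}$ via \eqref{arg:choiceneg}, then use the explicit formula \eqref{arg:choice} from Lemma \ref{lem:explicitarg} to get strict monotonicity within each quadrant. The only cosmetic difference is that the paper handles the case of $\hat{\bm{\xi}}_1,\hat{\bm{\xi}}_2$ in different quadrants directly via the quadrant-preservation built into \eqref{eq:hatbound}, while you reach the same conclusion by checking continuity of $\arg\hat{\bm{\xi}}\mapsto\arg(\hat{\mathtt S}\hat{\bm{\xi}})$ at the quadrant boundaries; both are valid and rest on the same facts.
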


\begin{proof} Without loss of generality we can assume $\hat{\mathtt{S}}$ to be positive (that is, take $\tau>0$) , otherwise we just consider $-\hat{\mathtt{S}}$ and add $\pi$ to both arguments in \eqref{eq:monotone}.   If $\hat{\bm{\xi}}_1, \hat{\bm{\xi}}_2$ lie in different quadrants with respect to eigenvectors $\pm\hat{\mathbf{w}}$, $\pm\hat{\mathbf{w}}^\perp$ of $\hat{\mathtt{S}}$, the result is immediate by our definition of the action of $\hat{\mathtt{S}}$ on $\UC$. Suppose they lie in the same quadrant
\[
\arg\hat{\mathbf{w}}_1\le\arg\hat{\bm{\xi}}_1<\arg\hat{\bm{\xi}}_2\le\arg\hat{\mathbf{w}}_2=\arg\hat{\mathbf{w}}_1+\frac{\pi}{2},
\]
where $\hat{\mathbf{w}}_1, \hat{\mathbf{w}}_2$ are two orthogonal eigenvectors of $\hat{\mathtt{S}}$ corresponding to eigenvalues $\tau>0$ and $1/\tau$ respectively.
Then the result follows from \eqref{arg:choice} applied to $\hat{\mathbf{b}}=\hat{\bm{\xi}}_i$ as both $\tan$ and $\operatorname{arctan}$ are monotone increasing on $(0,\pi/2)$, $(0,+\infty)$, respectively.
\end{proof} 

Let $n\ge 1$, $\balpha=(\alpha_1,\dots,\alpha_n)$, $\balpha'=(\alpha_1,\dots,\alpha_{n-1})$, $\bell=(\ell_1,\dots,\ell_n)$, and consider now the matrices $\mathtt{T}=\mathtt{T}(\balpha,\bell,\sigma)$ defined by \eqref{eq:Tdef1} and $\mathtt{U}=\mathtt{U}(\balpha',\bell,\sigma)$ defined by \eqref{eq:Udef}. When acting on $\UC$ they become
\[
\hat{\mathtt{T}}=\hat{\mathtt{T}}(\balpha,\bell,\sigma)=\hat{\mathtt{A}}(\alpha_n)\hat{\mathtt{R}}(\sigma\ell_n)\cdots\hat{\mathtt{A}}(\alpha_1)\hat{\mathtt{R}}(\sigma\ell_1),
\]
and
\[
\hat{\mathtt{U}}=\hat{\mathtt{U}}(\balpha',\bell,\sigma)=\hat{\mathtt{R}}(\sigma\ell_{n})\hat{\mathtt{A}}(\alpha_{n-1})\hat{\mathtt{R}}(\sigma\ell_{n-1})\cdots\hat{\mathtt{A}}(\alpha_1)\hat{\mathtt{R}}(\sigma\ell_1),
\]
and we have
\begin{lemma}\label{lem:monotoneT} Let $\hat{\bm{\xi}}_1, \hat{\bm{\xi}}_2\in\UC$ with $\arg\hat{\bm{\xi}}_1<\arg\hat{\bm{\xi}}_2$. Then 
\begin{equation}\label{eq:monotoneTxi}
\arg\left(\hat{\mathtt{T}}(\balpha,\bell,\sigma)\hat{\bm{\xi}}_1\right)<\arg\left(\hat{\mathtt{T}}(\balpha,\bell,\sigma)\hat{\bm{\xi}}_2\right)\quad\text{for any }\sigma,
\end{equation}
and for any $\sigma_1<\sigma_2$, 
\begin{equation}\label{eq:monotoneTsigma}
\arg\left(\hat{\mathtt{T}}(\balpha,\bell,\sigma_1)\hat{\bm{\xi}}\right)<\arg\left(\hat{\mathtt{T}}(\balpha,\bell,\sigma_2)\hat{\bm{\xi}}\right)\quad\text{for any }\bm{\xi}\in\UC.
\end{equation}
Moreover, $\arg\left(\hat{\mathtt{T}}(\balpha,\bell,\sigma)\hat{\bm{\xi}}\right)$ is continuous in $\sigma$ for any $\bm{\xi}\in\UC$.

All these statements remain true if $\hat{\mathtt{T}}(\balpha,\bell,\sigma)$ is replaced by $\hat{\mathtt{U}}(\balpha',\bell,\sigma)$.
\end{lemma}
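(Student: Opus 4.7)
The strategy is to reduce everything to elementary properties of the two kinds of building blocks, $\hat{\mathtt R}(\psi)$ and $\hat{\mathtt A}(\alpha)$, and then propagate these properties through the composition defining $\hat{\mathtt T}$ and $\hat{\mathtt U}$.

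First I would check that each building block has the three required properties individually. The action of $\hat{\mathtt R}(\psi)$ on $\UC$ is, by definition, the shift $\arg\hat{\mathbf{b}}\mapsto\arg\hat{\mathbf{b}}+\psi$, so it is manifestly continuous and strictly monotone in both $\hat{\mathbf{b}}$ and $\psi$. For $\hat{\mathtt A}(\alpha)=\hat{\mathtt S}(\eta_1(\alpha),\hat{\Cvect}_{\text{odd}})$ (or its negative, via \eqref{arg:choiceneg}), strict monotonicity in $\hat{\bm{\xi}}$ is exactly Lemma \ref{lem:monotone}. Continuity in $\arg\hat{\mathbf{b}}$ follows from the explicit formula \eqref{arg:choice}: within each quadrant determined by the eigenframe $\hat{\mathcal X}$ it is the composition of continuous functions, and at each quadrant boundary, where $\arg\hat{\mathbf{b}}=\arg\hat{\mathbf{w}}_1$, both the ``incoming'' formula (with the old $\hat{\mathbf{w}}_1$ and $\tan\hspace{-1pt}(\pi/2)=+\infty$) and the ``outgoing'' formula (with the new $\hat{\mathbf{w}}_1$ shifted by $\pi/2$ and $\tan 0=0$) return $\arg\hat{\mathbf{w}}_1$; the two branches glue continuously.

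Next I would propagate these properties through the composition. For \eqref{eq:monotoneTxi}, fix $\sigma$ and note that $\hat{\mathtt T}(\balpha,\bell,\sigma)$ is a finite composition of $\hat{\mathtt A}$'s and $\hat{\mathtt R}$'s, each of which is strictly order-preserving on $\UC$; hence so is the composition, which gives the claim. For continuity of $\arg(\hat{\mathtt T}(\balpha,\bell,\sigma)\hat{\bm{\xi}})$ in $\sigma$, observe that the only $\sigma$-dependence is through the rotations $\hat{\mathtt R}(\sigma\ell_j)$, which are continuous in $\sigma$, and the subsequent $\hat{\mathtt A}(\alpha_j)$'s are continuous on $\UC$; continuity of the composition follows. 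For monotonicity in $\sigma$, the plan is a step-by-step bootstrapping: given $\sigma_1<\sigma_2$, I apply $\hat{\mathtt R}(\sigma\ell_1)$ first and note that since $\ell_1>0$ we have $\arg(\hat{\mathtt R}(\sigma_1\ell_1)\hat{\bm{\xi}})<\arg(\hat{\mathtt R}(\sigma_2\ell_1)\hat{\bm{\xi}})$. Applying $\hat{\mathtt A}(\alpha_1)$ preserves this strict inequality by Lemma \ref{lem:monotone}. Applying $\hat{\mathtt R}(\sigma\ell_2)$ strictly increases the argument on the left by $\sigma_1\ell_2$ and on the right by $\sigma_2\ell_2>\sigma_1\ell_2$, so strict inequality is preserved (indeed amplified). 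Alternating in this way through all $n$ factors yields \eqref{eq:monotoneTsigma}.

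Finally the same argument applies verbatim to $\hat{\mathtt U}(\balpha',\bell,\sigma)$, whose definition differs from $\hat{\mathtt T}$ only by starting and ending with rotations and using $\balpha'=(\alpha_1,\dots,\alpha_{n-1})$. The compositional structure is identical, so strict monotonicity in $\hat{\bm{\xi}}$ and in $\sigma$, as well as continuity in $\sigma$, transfer directly.

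The only place where anything subtle happens is the verification of continuity of $\hat{\mathtt A}$ across the eigenframe $\hat{\mathcal X}$, since the definition of $\hat{\mathtt S}\hat{\mathbf{b}}$ implicitly uses the quadrant of $\hat{\mathbf{b}}$ via $\hat{\mathbf{w}}_1$; however, the bound \eqref{eq:hatbound} precisely ensures that this choice is consistent at quadrant boundaries, which is the obstacle worth pausing over, but not a genuinely difficult one.
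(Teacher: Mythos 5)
Your argument is correct and essentially identical to the paper's: both observe that $\hat{\mathtt T}$ and $\hat{\mathtt U}$ are finite compositions of rotations (which shift argument by a constant, hence preserve order and are continuous and strictly monotone in $\sigma$) and $\sigma$-independent symmetric-type maps $\hat{\mathtt A}(\alpha_j)$ (which preserve order by Lemma~\ref{lem:monotone}), and then propagate these elementary facts through the composition. You are merely more explicit than the paper on the continuity check across the eigenframe $\hat{\mathcal{X}}$, which the paper dismisses as obvious.
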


\begin{proof}  To prove \eqref{eq:monotoneTxi} we just notice that any rotation matrix increases the argument by the same amount, and apply Lemma \ref{lem:monotone}
when acting by each matrix $\hat{\mathtt{A}}(\alpha_j)$. To prove \eqref{eq:monotoneTsigma} we notice that the rotation matrices increase the arguments of vectors they act upon monotonically in $\sigma$, and the matrices $\hat{\mathtt{A}}$ are $\sigma$-independent. The continuity statement is obvious.
\end{proof}

\subsection{Enumeration of quasi-eigenvalues for non-exceptional zigzags}
\label{sec:zigzags}
Let $\mathcal{Z}=\mathcal{Z}^{(\aleph\beth)}=\mathcal{Z}^{(\aleph\beth)}(\bm{\alpha},\bm{\ell})$ be a non-exceptional zigzag and let $\hat{\mathtt{U}}_\mathcal{Z}(\sigma):=\hat{\mathtt{U}}(\bm{\alpha},\bm{\ell}, \sigma)$ be the corresponding zigzag matrix acting on the universal cover $\UC$. Recall that the quasi-eigenvalues of $\mathcal{Z}$  are defined by  equation \eqref{eq:quasizigzag}, see also \eqref{eq:quasizigzagparallel}. 
Let us give an equivalent definition in terms of the action of the matrix $\hat{\mathtt{U}}_\mathcal{Z}$ on the vectors $\hat{\baleph},\hat{\bbeth}\in\{\hat{\mathbf{N}},\hat{\mathbf{D}}\}$.

\begin{remark}\label{rem:U0pos} For the rest of this section we will assume that all the matrices $\hat{\mathtt{U}}_\mathcal{Z}(0)$ (which are  products of symmetric matrices) are positive. If this is not the case, we can just formally replace  $\hat{\mathtt{U}}_\mathcal{Z}(\sigma)$ by $-\hat{\mathtt{U}}_\mathcal{Z}(\sigma)$, and the vector $\hat{\bbeth}$ by 
$-\hat{\bbeth}$ throughout. (Similarly to \eqref{arg:choiceneg}, we understand $-\hat{\bbeth}$ as $\hat{\mathtt{R}}(\pi)\hat{\bbeth}$, so that $-(-\hat{\bbeth})$ is $\hat{\bbeth}$ rotated by $2\pi$ rather than $\hat{\bbeth}$.)
\end{remark}

A real number $\sigma$ is a quasi-egenvalue of a $\aleph \beth$-zigzag $\mathcal{Z}$ if and only if 
\begin{equation}\label{eq:hatzigzag}
\arg \left(\hat{\mathtt{U}}_\mathcal{Z}(\sigma)\, \hat{\mathbf\baleph}\right) =\arg\hat{\bbeth}  \pmod{\pi},
\end{equation}
which should be used together with \eqref{eq:special_args}. Equivalently, \eqref{eq:hatzigzag} may be re-stated as
\begin{equation}\label{eq:hatzigzaginv}
-\arg \left(\hat{\mathtt{U}}^{-1}_\mathcal{Z}(\sigma)\, \hat{\mathbf\bbeth}\right) =-\arg{\hat{\baleph}} \pmod{\pi},
\end{equation}
or, if we set
\begin{equation}\label{eq:phialephbethdef}
\begin{split}
\varphi_{\mathcal{Z}}(\sigma)=\varphi_{\mathcal{Z}^{(\aleph\beth)}}(\sigma)&:=\frac{\arg \left(\hat{\mathtt{U}}_\mathcal{Z}(\sigma)\, \hat{\baleph}\right)-\arg\hat{\bbeth}}{\pi},\\
\tilde{\varphi}_{\mathcal{Z}}(\sigma)=\tilde{\varphi}_{\mathcal{Z}^{(\aleph\beth)}}(\sigma)&:=\frac{-\arg \left(\hat{\mathtt{U}}^{-1}_\mathcal{Z}(\sigma)\, \hat{\bbeth}\right)+\arg\hat{\baleph}}{\pi},
\end{split}
\end{equation}
as
\begin{equation}\label{eq:phicond}
\varphi_{\mathcal{Z}}(\sigma)\in\mathbb{Z}\qquad\text{or}\qquad\tilde{\varphi}_{\mathcal{Z}}(\sigma)\in\mathbb{Z}.
\end{equation}

Similarly to Proposition \ref{prop:discrete},  one can show that  the solutions of any of the equations \eqref{eq:hatzigzag}  form a discrete set.  
Therefore, the set of such solutions could be viewed as a  sequence of real numbers $\{\sigma_m^{(\aleph \beth)}\}_{m=-\infty}^{+\infty}$ which is monotone increasing with $m$.
In order to fix enumeration of this sequence we need to specify the element  $\sigma_1^{(\aleph \beth)}$. Alternatively, we can prescribe the definitions of \emph{zigzag quasi-eigenvalue  counting functions}
\[
\mathcal{N}^q_{\mathcal{Z}^{(\aleph\beth)}}(\sigma)=\#\{m\in\mathbb{N}\mid \sigma_m^{(\aleph \beth)}\le \sigma\},
\]
which are only defined a priori modulo addition of an integer. This is done according to the following
 
\begin{definition}\label{defn:naturalenumeration} The {\em natural enumeration} of the quasi-eigenvalues of a zigzag  $\mathcal{Z}^{(\aleph\beth)}$ is defined by setting
\[
\mathcal{N}^q_{\mathcal{Z}^{(\aleph\beth)}}(\sigma):=
\begin{cases}
\left[\varphi_{\mathcal{Z}^{(\aleph\beth)}}(\sigma)\right]+1,\quad&\text{if\ \ }\aleph=N,\\
\left[\varphi_{\mathcal{Z}^{(\aleph\beth)}}(\sigma)\right],\quad&\text{if\ \ }\aleph=D,\\
\end{cases}
\]
where $[\cdot]$ denotes the integer part. 
\end{definition}

In order to reformulate Definition \ref{defn:naturalenumeration} in terms of specifying the element  $\sigma_1^{(\aleph \beth)}$, we need to look at the values of $\varphi_{\mathcal{Z}^{(\aleph\beth)}}(0)$. We recall that the corresponding zigzag matrix $\hat{\mathtt{U}}(0)$ is  just  a product of symmetric matrices $\hat{\mathtt{A}}(\alpha_{n-1})\cdots\hat{\mathtt{A}}(\alpha_{1})$ and therefore has eigenvectors $\pm\hat{\Cvect}_\mathrm{even}$ and $\pm\hat{\Cvect}_\mathrm{odd}$ whose arguments 
are odd multiples of $\frac{\pi}{4}$. Thus, applying definition \eqref{eq:hatbound} and \eqref{arg:choiceneg}, and recalling Remark \ref{rem:U0pos} we deduce that
\[
\arg \left(\hat{\mathtt{U}}_\mathcal{Z}(0)\, \hat{\mathbf{N}}\right)\in
\left(-\frac{\pi}{4}, \frac{\pi}{4}\right)
\qquad
\text{and}
\qquad
\arg \left(\hat{\mathtt{U}}_\mathcal{Z}(0)\, \hat{\mathbf{D}}\right)\in
\left(\frac{\pi}{4}, \frac{3\pi}{4}\right).
\]

We now consider four zigzag problems separately.
\begin{proposition}\label{prop:startcount}\ 
\begin{itemize}
\item[(i)] For an $NN$-zigzag, if $\arg\left(\hat{\mathtt{U}}_\mathcal{Z}(0)\, \hat{\mathbf{N}}\right)\ge 0$, then $\sigma_1^{(NN)}$ is  the \emph{first non-positive} quasi-eigenvalue
(i.e., the non-positive quasi-eigenvalue with the smallest absolute value), otherwise $\sigma_1^{(NN)}$ is the \emph{first positive} quasi-eigenvalue.
\item[(ii)] For an $ND$-zigzag, $\sigma_1^{(ND)}$ is the \emph{first positive} quasi-eigenvalue.
\item[(iii)] For a $DN$-zigzag, $\sigma_1^{(DN)}$ is the \emph{first positive} quasi-eigenvalue.
\item[(iv)] For a $DD$-zigzag, if $\arg\left(\hat{\mathtt{U}}_\mathcal{Z}(0)\, \hat{\mathbf{D}}\right)\ge \frac{\pi}{2}$, then $\sigma_1^{(DD)}$ is  the \emph{first positive} quasi-eigenvalue, otherwise  $\sigma_1^{(DD)}$ is the \emph{second positive} quasi-eigenvalue.
\end{itemize}
\end{proposition}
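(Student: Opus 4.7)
The plan is to reduce the claim to a case-by-case unwinding of Definition \ref{defn:naturalenumeration} after first establishing the necessary monotonicity. The key preliminary observation I would record is that $\varphi_{\mathcal{Z}^{(\aleph\beth)}}(\sigma)$ is a \emph{continuous, strictly increasing} function of $\sigma\in\mathbb{R}$. Continuity is inherited directly from Lemma \ref{lem:monotoneT}, and strict monotonicity follows from the same lemma together with the fact that the product defining $\hat{\mathtt{U}}_\mathcal{Z}(\sigma)$ contains the rotation blocks $\hat{\mathtt{R}}(\sigma\ell_j)$ with $\ell_j>0$; in particular, as $\sigma$ sweeps $\mathbb{R}$, the argument $\arg(\hat{\mathtt{U}}_\mathcal{Z}(\sigma)\hat{\baleph})$ sweeps all of $\mathbb{R}$ as well. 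Consequently the solutions of \eqref{eq:phicond} form a bi-infinite strictly increasing sequence, and $\sigma_1^{(\aleph\beth)}$ is simply the smallest $\sigma$ at which $\mathcal{N}^q_{\mathcal{Z}^{(\aleph\beth)}}(\sigma)$ first attains the value $1$.

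The heart of the argument is then to evaluate $\varphi_{\mathcal{Z}^{(\aleph\beth)}}(0)$ in each of the four cases, using $\arg\hat{\mathbf{N}}=0$, $\arg\hat{\mathbf{D}}=\pi/2$ together with the range information
\[
\arg\bigl(\hat{\mathtt{U}}_\mathcal{Z}(0)\hat{\mathbf{N}}\bigr)\in\left(-\frac{\pi}{4},\frac{\pi}{4}\right),\qquad
\arg\bigl(\hat{\mathtt{U}}_\mathcal{Z}(0)\hat{\mathbf{D}}\bigr)\in\left(\frac{\pi}{4},\frac{3\pi}{4}\right)
\]
recorded just before the proposition (under the positivity convention of Remark \ref{rem:U0pos}). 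A short calculation gives $\varphi(0)\in(-1/4,1/4)$ in the $NN$ case, $\varphi(0)\in(-3/4,-1/4)$ in the $ND$ case, $\varphi(0)\in(1/4,3/4)$ in the $DN$ case, and $\varphi(0)\in(-1/4,1/4)$ in the $DD$ case. In the $ND$ and $DN$ cases the integer part of $\varphi(0)$ is forced, so reading off Definition \ref{defn:naturalenumeration} immediately yields $\mathcal{N}^q(0)=0$, hence $\sigma_1^{(\aleph\beth)}$ must be the first positive quasi-eigenvalue; this handles (ii) and (iii). In the $NN$ and $DD$ cases the sign of $\varphi(0)$ splits into two subcases. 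For $NN$ one has $\mathcal{N}^q(0)=[\varphi(0)]+1\in\{0,1\}$, with the value $1$ occurring precisely when $\arg(\hat{\mathtt{U}}_\mathcal{Z}(0)\hat{\mathbf{N}})\geq 0$, in which case $\sigma_1$ lies in $(-\infty,0]$ and is the first non-positive quasi-eigenvalue; otherwise $\sigma_1$ is the first positive one. For $DD$ one has $\mathcal{N}^q(0)=[\varphi(0)]\in\{-1,0\}$, and the threshold $\arg(\hat{\mathtt{U}}_\mathcal{Z}(0)\hat{\mathbf{D}})\geq \pi/2$ distinguishes whether the first positive quasi-eigenvalue (where $\varphi$ reaches $1$) or only the second (where $\varphi$ reaches $1$ after first reaching $0$) corresponds to index $1$.

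I do not expect a substantive obstacle. Once the monotonicity of $\varphi$ and the branch convention of Remark \ref{rem:U0pos} are in place, the argument reduces to integer-part arithmetic. The only care needed is the bookkeeping of lifts $\hat{\mathbf{N}},\hat{\mathbf{D}}\in\UC$ (so that arguments are honest real numbers rather than residues mod $2\pi$) and the verification that $\varphi(0)$ really lies in the stated intervals, which is where the range information just above \eqref{eq:special_args}, derived from the quadrant decomposition induced by $\hat{\mathcal{X}}$ in \eqref{eq:Xcalhat}, is used.
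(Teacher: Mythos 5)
Your proof is correct and follows essentially the same route as the paper: the paper's own proof of this proposition is precisely "check that the counting function induced by the stated choice of $\sigma_1$ matches Definition \ref{defn:naturalenumeration} at $\sigma=0$," and then it performs the same four case-by-case evaluations of $\varphi_{\mathcal{Z}^{(\aleph\beth)}}(0)$ from formula \eqref{eq:phialephbethdef} and the ranges of $\arg(\hat{\mathtt{U}}_\mathcal{Z}(0)\hat{\baleph})$ that you use. The only cosmetic difference is that you spell out the monotonicity justification (via Lemma \ref{lem:monotoneT}) for why matching at a single point suffices, where the paper leaves that step implicit; that bookkeeping is correct and does not change the argument.
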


\begin{proof} It is sufficient to check that the counting functions induced by the choice of $\sigma_1$ in the statements matches Definition \ref{defn:naturalenumeration} at one value of $\sigma$, say, $\sigma=0$.
\begin{itemize}
\item[(i)] For an $NN$-zigzag,  the formula \eqref{eq:phialephbethdef} and Definition \ref{defn:naturalenumeration} yield 
\[
\mathcal{N}^q_{\mathcal{Z}^{(NN)}}(0)=\begin{cases}
1,\quad&\text{if\ \ }\arg \left(\hat{\mathtt{U}}_\mathcal{Z}(0)\, \hat{\mathbf{N}}\right)\ge 0,\\
0,\quad&\text{if\ \ }\arg \left(\hat{\mathtt{U}}_\mathcal{Z}(0)\, \hat{\mathbf{N}}\right)<0,
\end{cases}
\]
implying the result.
\item[(ii)] For an $ND$-zigzag,  by \eqref{eq:phialephbethdef} and Definition \ref{defn:naturalenumeration}, 
\[
\mathcal{N}^q_{\mathcal{Z}^{(ND)}}(0)=0,
\]
hence the result.
\item[(iii)] Similarly, for a $DN$-zigzag, 
\[
\mathcal{N}^q_{\mathcal{Z}^{(DN)}}(0)=0.
\]
\item[(iv)] For a $DD$-zigzag, again by formula \eqref{eq:phialephbethdef} and Definition \ref{defn:naturalenumeration},
\[
\mathcal{N}^q_{\mathcal{Z}^{(DD)}}(0)=\begin{cases}
0,\quad&\text{if\ \ }\arg \left(\hat{\mathtt{U}}_\mathcal{Z}(0)\, \hat{\mathbf{D}}\right)\ge \frac{\pi}{2},\\
-1,\quad&\text{if\ \ }\arg \left(\hat{\mathtt{U}}_\mathcal{Z}(0)\, \hat{\mathbf{D}}\right)<\frac{\pi}{2},
\end{cases}
\]
implying the result.
\end{itemize}
\end{proof}

The following result will be useful for expressing the quasi-eigenvalues counting functions in terms of $\varphi_{\mathcal{Z}}(\sigma)$. 

\begin{lemma}\label{lem:phiandtildephi} Consider a zigzag  $\mathcal{Z}^{(\aleph\beth)}$. Then for all $\sigma\in\mathbb{R}$,
\[
\left[\tilde{\varphi}_{\mathcal{Z}^{(\aleph\beth)}}(\sigma)\right]=\left[\varphi_{\mathcal{Z}^{(\aleph\beth)}}(\sigma)\right].
\]
\end{lemma}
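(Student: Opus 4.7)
My plan is to show that both $\Phi(\sigma) := \pi\varphi_{\mathcal{Z}^{(\aleph\beth)}}(\sigma)$ and $\tilde\Phi(\sigma) := \pi\tilde\varphi_{\mathcal{Z}^{(\aleph\beth)}}(\sigma)$ are continuous, strictly monotone increasing functions of $\sigma$ which attain each integer multiple of $\pi$ at the same value of the argument. Once this is established, the equality of integer parts is immediate, since both functions partition $\mathbb{R}$ into the same sequence of maximal open intervals on which they take values strictly between two consecutive integer multiples of $\pi$ (with the same integer).

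Continuity and strict monotonicity of $\Phi$ are provided by Lemma~\ref{lem:monotoneT}. For $\tilde\Phi$, I observe that
\[
\hat{\mathtt{U}}^{-1}_\mathcal{Z}(\sigma)=\hat{\mathtt{R}}(-\sigma\ell_1)\hat{\mathtt{A}}^{-1}(\alpha_1)\cdots\hat{\mathtt{A}}^{-1}(\alpha_{n-1})\hat{\mathtt{R}}(-\sigma\ell_n)
\]
has the same structural form as $\hat{\mathtt{U}}_\mathcal{Z}(\sigma)$, except that each rotation factor is reversed; repeating the proof of Lemma~\ref{lem:monotoneT} shows that $\arg(\hat{\mathtt{U}}^{-1}_\mathcal{Z}(\sigma)\hat{\bbeth})$ is continuous and strictly \emph{decreasing} in $\sigma$, whence $\tilde\Phi(\sigma)$ is continuous and strictly increasing.

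To see that $\Phi$ and $\tilde\Phi$ attain each integer multiple of $\pi$ at the same $\sigma$, I will establish two identities on $\UC$:
\begin{itemize}
\item[(i)] $\hat{\mathtt{U}}_\mathcal{Z}(\sigma)\circ\hat{\mathtt{U}}^{-1}_\mathcal{Z}(\sigma)=\mathrm{Id}$;
\item[(ii)] $\hat{\mathtt{U}}_\mathcal{Z}(\sigma)\circ\hat{\mathtt{R}}(k\pi)=\hat{\mathtt{R}}(k\pi)\circ\hat{\mathtt{U}}_\mathcal{Z}(\sigma)$ for every $k\in\mathbb{Z}$.
\end{itemize}
Both identities reduce to checking them on each atomic factor. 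For rotations they are obvious. For a positive symmetric lift $\hat{\mathtt{S}}$, identity (i) follows from the defining bound \eqref{eq:hatbound}, because $\hat{\mathtt{S}}\circ\hat{\mathtt{S}}^{-1}$ projects to the identity on $\mathbb{R}^2$ and shifts arguments by at most a quantity strictly between $-\pi$ and $\pi$, the only integer multiple of $2\pi$ in which is zero. Identity (ii) for $\hat{\mathtt{S}}$ follows from Lemma~\ref{lem:explicitarg}: a shift of $\arg\hat{\mathbf{b}}$ by $\pi$ moves $\hat{\mathbf{b}}$ into an adjacent quadrant, which shifts the relevant eigenvector lift $\hat{\mathbf{w}}_1$ by $\pi$ while preserving the associated eigenvalue $\tau$, so the $\arctan$ correction is unchanged. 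The negative case is dealt with via \eqref{arg:choiceneg}, reducing to the positive case at the cost of two cancelling shifts of $\pi$.

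With (i) and (ii) in place, suppose $\Phi(\sigma_0)=k\pi$, so that $\arg(\hat{\mathtt{U}}_\mathcal{Z}(\sigma_0)\hat{\baleph})=k\pi+\arg\hat{\bbeth}=\arg(\hat{\mathtt{R}}(k\pi)\hat{\bbeth})$. Since the action of $\hat{\mathtt{U}}^{-1}_\mathcal{Z}(\sigma_0)$ on $\UC$ depends only on the argument of its input, applying this operator and using identities (i), (ii) (the latter also applied to $\hat{\mathtt{U}}^{-1}_\mathcal{Z}$, which follows from (i) and (ii) for $\hat{\mathtt{U}}_\mathcal{Z}$) yields
\[
\arg\hat{\baleph}=\arg\bigl(\hat{\mathtt{U}}^{-1}_\mathcal{Z}(\sigma_0)\hat{\mathtt{R}}(k\pi)\hat{\bbeth}\bigr)=\arg\bigl(\hat{\mathtt{R}}(k\pi)\hat{\mathtt{U}}^{-1}_\mathcal{Z}(\sigma_0)\hat{\bbeth}\bigr)=k\pi+\arg\bigl(\hat{\mathtt{U}}^{-1}_\mathcal{Z}(\sigma_0)\hat{\bbeth}\bigr),
\]
that is, $\tilde\Phi(\sigma_0)=k\pi$; the converse implication is symmetric. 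The main technical obstacle is identity (ii) for symmetric lifts: it requires careful tracking of the consistent choice of eigenvector lift under a $\pi$-shift and separate handling of the negative case; everything else then follows from strict monotonicity and continuity.
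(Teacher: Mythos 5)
Your approach genuinely differs from the paper's. The paper observes that both $\varphi$ and $\tilde\varphi$ are continuous, strictly increasing, and take integer values exactly at quasi-eigenvalues (by \eqref{eq:phicond}), so $[\varphi]-[\tilde\varphi]$ is a constant step function; a single check at $\sigma=0$, mirroring the computation in Proposition~\ref{prop:startcount}, then pins the constant to zero. You instead try to prove directly that whenever $\varphi$ hits an integer $k$, $\tilde\varphi$ hits the \emph{same} $k$, via the lifted identities (i) and (ii), thereby avoiding any base-point calibration. This is a legitimate strategy and your verification of (ii) via Lemma~\ref{lem:explicitarg} and the $\pi$-shift of the quadrant structure is correct, including its reduction for negative lifts.

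However, your verification of identity (i) has a genuine gap in the negative case. You assert that the two shifts of $\pi$ coming from \eqref{arg:choiceneg} cancel, but they do not: \eqref{arg:choiceneg} always adds $+\pi$, and both $\hat{\mathtt{S}}$ and $\hat{\mathtt{S}}^{-1}$ (the lift of the inverse matrix) are negative, so the two shifts accumulate to $+2\pi$. Working it through (using (i) and (ii) for the positive counterpart $-\hat{\mathtt{S}}$), one finds $\arg(\hat{\mathtt{S}}\hat{\mathtt{S}}^{-1}\hat{\mathbf{b}}) = \arg\hat{\mathbf{b}} + 2\pi$, so $\hat{\mathtt{S}}\circ\hat{\mathtt{S}}^{-1} = \hat{\mathtt{R}}(2\pi)\neq\mathrm{Id}$ on $\UC$. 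Negative atoms $\hat{\mathtt{A}}(\alpha_j)$ do occur (whenever $\sin\mu_{\alpha_j}<0$), so the "product of inverse lifts" you wrote for $\hat{\mathtt{U}}^{-1}_\mathcal{Z}$ actually equals $\hat{\mathtt{R}}(4r\pi)$ times the functional inverse of $\hat{\mathtt{U}}_\mathcal{Z}$, where $2r$ is the number of negative atoms, and your derivation would then give $\tilde\varphi(\sigma_0)=k-4r$ rather than $k$. The clean fix is to take $\hat{\mathtt{U}}^{-1}_\mathcal{Z}(\sigma)$ to mean the \emph{functional} inverse of the bijection $\hat{\mathtt{U}}_\mathcal{Z}(\sigma)\colon\UC\to\UC$ (which is what the definition of $\tilde\varphi$ implicitly requires anyway), so that (i) is tautologous and only (ii) needs checking; but then your sentence about (i) "reducing to checking atomic factors" should be removed, since the reduction-to-atoms step fails precisely in the negative case, and the false claim about the two $\pi$-shifts cancelling must be deleted.
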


\begin{proof} By \eqref{eq:phicond}, the two expressions may differ only by an integer as they have jumps at the same points. Therefore it is enough to check the equality for $\sigma=0$. This is done exactly in the same manner as in the  proof of  Proposition \ref{prop:startcount}.
\end{proof}
 
In general, the functions $\varphi_{\mathcal{Z}^{(\aleph\beth)}}(\sigma)$ and $\tilde{\varphi}_{\mathcal{Z}^{(\aleph\beth)}}(\sigma)$ are not the same, although their integer parts coincide. It is easy to check that both  functions  are smooth.  Moreover, they are strictly monotone with the derivatives bounded away from zero. Namely, we have the following result which strengthens  Lemma \ref{lem:monotone}.
\begin{lemma}\label{lem:phi}
There exist constants $C_1,C_2>0$ such that $C_1\le \varphi'_{\mathcal{Z}^{(\aleph\beth)}}(\sigma)<C_2$ and  $C_1<\tilde{\varphi}'_{\mathcal{Z}^{(\aleph\beth)}}(\sigma)<C_2$ for all $\sigma>0$.
\end{lemma}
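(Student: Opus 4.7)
The plan is to exploit the explicit formulas for the action of $\hat{\mathtt{R}}(\psi)$ and $\hat{\mathtt{A}}(\alpha)$ on arguments of vectors in $\UC$, and to compute $\varphi'_{\mathcal{Z}^{(\aleph\beth)}}(\sigma)$ by differentiating step-by-step through the factorisation
\[
\hat{\mathtt{U}}_{\mathcal Z}(\sigma) = \hat{\mathtt{R}}(\sigma\ell_n)\,\hat{\mathtt{A}}(\alpha_{n-1})\,\hat{\mathtt{R}}(\sigma\ell_{n-1})\cdots\hat{\mathtt{A}}(\alpha_1)\,\hat{\mathtt{R}}(\sigma\ell_1).
\]
First I would introduce the intermediate vectors $\hat{\bm{\xi}}_k(\sigma)$ obtained by applying the first $k$ factors (from the right) to $\hat{\baleph}$ and set $\beta_k(\sigma):=\arg\hat{\bm{\xi}}_k(\sigma)$, so that $\pi\,\varphi_{\mathcal Z}(\sigma)=\beta_{2n-1}(\sigma)-\arg\hat{\bbeth}$ up to an additive constant. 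The rotation factors contribute $\beta_k'(\sigma)=\beta_{k-1}'(\sigma)+\ell_j$, while Lemma~\ref{lem:explicitarg} and \eqref{arg:choiceneg} give the action of $\hat{\mathtt{A}}(\alpha_j)$ on arguments, from which a direct differentiation yields
\[
\beta_k'(\sigma)=f_j(\sigma)\,\beta_{k-1}'(\sigma),\qquad
f_j(\sigma):=\frac{\tau_j^{-2}(1+\tan^2 t_j)}{1+\tau_j^{-4}\tan^2 t_j},
\]
with $t_j:=\beta_{k-1}(\sigma)-\arg\hat{\mathbf w}_j$ and $\tau_j\in\{\eta_1(\alpha_j),\eta_2(\alpha_j)\}$ fixed by \eqref{eq:Ahat}.

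The key observation is that the rational function $u\mapsto\tau_j^{-2}(1+u)/(1+\tau_j^{-4}u)$ takes values in the closed interval $[\min(\tau_j^2,\tau_j^{-2}),\max(\tau_j^2,\tau_j^{-2})]$ for all $u\ge 0$, so $f_j(\sigma)$ is bounded above and below by positive constants depending only on $\alpha_j$ (which is non-exceptional, so $\tau_j\ne 0,\pm\infty$). A short check at the quadrant-boundary values $t_j=\pm\pi/2$ shows that $f_j$ extends continuously across quadrants (the two one-sided limits coincide after relabelling the base eigenvector), so the recursion is valid for all $\sigma$. Iterating the recursion yields the explicit formula
\[
\pi\,\varphi'_{\mathcal Z}(\sigma)=\beta_{2n-1}'(\sigma)=\ell_n+\sum_{j=1}^{n-1}\ell_j\prod_{k=j}^{n-1}f_k(\sigma),
\]
a sum of strictly positive terms, each bounded from above and below by positive constants depending only on $\balpha$ and $\bell$. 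This immediately gives the required two-sided bound on $\varphi'_{\mathcal Z}(\sigma)$ uniformly in $\sigma$.

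The bound for $\tilde\varphi'_{\mathcal Z}(\sigma)$ follows from the same scheme applied to $\hat{\mathtt{U}}^{-1}_{\mathcal Z}(\sigma)=\hat{\mathtt{R}}(-\sigma\ell_1)\hat{\mathtt{A}}^{-1}(\alpha_1)\cdots\hat{\mathtt{A}}^{-1}(\alpha_{n-1})\hat{\mathtt{R}}(-\sigma\ell_n)$: the rotation factors now contribute $-\ell_j$ to the argument derivative, but the overall sign flip built into the definition of $\tilde\varphi_{\mathcal Z}$ (see \eqref{eq:phialephbethdef}) restores positivity, while each $\hat{\mathtt{A}}^{-1}(\alpha_j)$ is again a symmetric matrix in $\hat{\mathcal S}$ (with eigenvalues $\tau_j^{-1},\tau_j$) producing a positive multiplicative factor with the same two-sided bound as $f_j$. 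I expect the only mildly delicate point to be verifying that the argument function $\sigma\mapsto\beta_k(\sigma)$ is differentiable across the crossover events where the input to some $\hat{\mathtt{A}}(\alpha_j)$ aligns with one of its eigenvectors; this reduces to checking that the two branches of the $\arctan$-formula in Lemma~\ref{lem:explicitarg} match to first order at such points, which is the computation indicated above.
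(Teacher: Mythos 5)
Your proof is correct and follows essentially the same route as the paper's: differentiate step by step through the product $\hat{\mathtt{U}}=\prod\hat{\mathtt{R}}(\sigma\ell_j)\hat{\mathtt{A}}(\alpha_j)$, noting that each rotation adds $\ell_j$ to the argument-derivative and each symmetric factor, via Lemma~\ref{lem:explicitarg} and the chain rule, multiplies it by a factor bounded away from $0$ and $\infty$ uniformly in $\sigma$. You simply record the explicit bound $f_j\in[\min(\tau_j^{\pm 2}),\max(\tau_j^{\pm 2})]$ and the iterated product formula, and verify continuity of the derivative across quadrant crossings, all of which the paper leaves implicit but which is exactly the content of its ``apply \eqref{arg:choice}\ with the chain rule iteratively.''
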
 
\begin{proof} We will work with the function $\varphi_{\mathcal{Z}^{(\aleph\beth)}}(\sigma)$; the reasoning for  $\tilde{\varphi}_{\mathcal{Z}^{(\aleph\beth)}}(\sigma)$ will be similar. In view of the definition of the matrix $\hat{\mathtt{U}}$, the function  $\varphi_{\mathcal{Z}^{(\aleph\beth)}}(\sigma)$  is equal to the cumulative changes of the argument under the action of  rotation matrices $\hat{\mathtt{R}}(\sigma \ell_i)$ and   symmetric matrices  belonging to $\hat{\mathcal{S}}$ which are independent of $\sigma$.  The rotation matrices increase the argument linearly in $\sigma$. Now apply formula \eqref{arg:choice} with $\hat{\mathtt{S}}=\hat{\mathtt{A}}(\alpha_1)$ and $\hat{\mathbf{b}}=\hat{\mathtt{R}}(\sigma \ell_1)\hat{\baleph}$ together with the chain rule. This leads to the bound
\[
C_{1,1}\le \frac{\dr }{\dr \sigma}\arg\left(\hat{\mathtt{A}}(\alpha_1)\hat{\mathtt{R}}(\sigma \ell_1)\hat{\baleph}\right)<C_{1,2}
\]
for some constants $0<C_{1,1}<C_{1,2}$. Applying this observation  iteratively to the matrices arising in the representation of   $\hat{\mathtt{U}}$ we obtain the desired inequalities.
\end{proof} 
 
We immediately have
\begin{corollary}\label{cor:diffbounded} The difference $\sigma_{m+1}-\sigma_m$ for a non-exceptional zigzag is bounded away from zero.
\end{corollary}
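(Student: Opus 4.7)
The plan is to deduce Corollary \ref{cor:diffbounded} directly from Lemma \ref{lem:phi} together with the fact, already embedded in the discussion between \eqref{eq:hatzigzag} and Definition \ref{defn:naturalenumeration}, that the quasi-eigenvalues of a non-exceptional $\aleph\beth$-zigzag $\mathcal{Z}$ are precisely the real $\sigma$ for which $\varphi_{\mathcal{Z}}(\sigma)\in\mathbb{Z}$. So the proof amounts to observing that $\varphi_{\mathcal{Z}}$ is a smooth monotone function whose derivative is bounded both above and below, and then invoking the mean value theorem.

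First I would recall that by Lemma \ref{lem:phi} the function $\varphi_{\mathcal{Z}}$ is strictly increasing with $\varphi_{\mathcal{Z}}'(\sigma)\in[C_{1},C_{2})$ for all $\sigma>0$. In particular $\varphi_{\mathcal{Z}}:\mathbb{R}_{+}\to\mathbb{R}$ is a bijection onto its image, so the preimage $\varphi_{\mathcal{Z}}^{-1}(\mathbb{Z})$ consists of isolated points, one for each integer in the range. Enumerating these preimages in increasing order gives exactly the sequence of positive quasi-eigenvalues $\{\sigma_{m}\}$, and consecutive quasi-eigenvalues $\sigma_{m}<\sigma_{m+1}$ correspond to \emph{consecutive} integer values of $\varphi_{\mathcal{Z}}$; that is,
\[
\varphi_{\mathcal{Z}}(\sigma_{m+1})-\varphi_{\mathcal{Z}}(\sigma_{m})=1.
\]

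Next, applying the mean value theorem on $[\sigma_{m},\sigma_{m+1}]$ gives some $\xi_{m}\in(\sigma_{m},\sigma_{m+1})$ with
\[
1=\varphi_{\mathcal{Z}}(\sigma_{m+1})-\varphi_{\mathcal{Z}}(\sigma_{m})=\varphi_{\mathcal{Z}}'(\xi_{m})(\sigma_{m+1}-\sigma_{m})< C_{2}(\sigma_{m+1}-\sigma_{m}),
\]
so $\sigma_{m+1}-\sigma_{m}>1/C_{2}$, which is precisely the claim. The (negligible) verification needed is that the natural enumeration introduced in Definition \ref{defn:naturalenumeration} matches the one coming from ordering the roots of $\varphi_{\mathcal{Z}}\in\mathbb{Z}$, which is immediate from \eqref{eq:phialephbethdef}. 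There is no real obstacle here; the work has all been done in Lemma \ref{lem:phi}, and the one-line argument above just converts a uniform upper bound on $\varphi_{\mathcal{Z}}'$ into a uniform lower bound on the gap between consecutive preimages of integers. (If one prefers a single unified statement valid at $\sigma=0$ as well, one uses the same inequality on any interval $[\sigma_{m},\sigma_{m+1}]$ with $m\ge 1$ in the natural enumeration and handles the at most one possible non-positive quasi-eigenvalue separately.)
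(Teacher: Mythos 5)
Your proof is correct and is precisely the argument the paper intends: the paper presents Corollary \ref{cor:diffbounded} as an immediate consequence of Lemma \ref{lem:phi}, and your mean-value-theorem step using the uniform upper bound $\varphi'_{\mathcal{Z}}<C_2$ together with the fact that consecutive quasi-eigenvalues are consecutive integer preimages of $\varphi_{\mathcal{Z}}$ is exactly that deduction.
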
 
 
Our next goal is to prove Theorem   \ref{thm:quasizigzags}.
The result follows from the following two propositions.
\begin{proposition} 
\label{lemma1}
Theorem \ref{thm:quasizigzags} holds for partially curvilinear zigzags with one side and for straight zigzags with two equal sides.
\end{proposition}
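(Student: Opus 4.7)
For the one-sided partially curvilinear zigzag, the Steklov boundary of any $\mathcal{Z}$-zigzag domain consists of a single smooth arc meeting the complementary boundary at right angles. This places the mixed problem \eqref{eq:gensloshing}$_{\aleph\beth}$ exactly in the scope of Corollary~\ref{cor:curvsloshing}, which immediately yields $|\sigma_m-\lambda_m|=O(\sigma_m^{-M+5/2})$ once we verify two matching facts. First, with $\mathtt{U}(\balpha,\bell,\sigma)=\mathtt{B}(\ell,\sigma)$, direct computation of $\mathtt{B}(\ell,\sigma)\baleph\cdot\bbeth^\perp=0$ reduces to a scalar equation of the form $\sin(\ell\sigma)=0$ or $\cos(\ell\sigma)=0$, whose roots are precisely the eigenvalues of $-\partial_s^2$ on $[0,\ell]$ with the $\aleph,\beth$ boundary conditions (the same sequence to which the eigenvalues $\phi_m$ in the proof of Corollary~\ref{cor:curvsloshing} converge). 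Second, unpacking Proposition~\ref{prop:startcount} in each of the four $\aleph\beth$-combinations shows that the natural enumeration of Definition~\ref{defn:naturalenumeration} agrees with the standard enumeration of these one-dimensional eigenvalues, including the zero mode when $\aleph=\beth=N$. Both are routine trigonometric checks.

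For the straight two-equal-sided zigzag $\mathcal{Z}=\mathcal{Z}((\alpha),(\ell,\ell))$, I would exploit the reflection $\tau$ across the bisector of the interior angle at $V_1$. The plan is to choose a $\mathcal{Z}$-zigzag domain $\Omega$ and its $\mathcal{W}$-portion to be $\tau$-invariant. When $\aleph=\beth$, the full problem is $\tau$-invariant, so its eigenfunctions split into symmetric and antisymmetric sectors; restricted to the half-domain $\Omega_+$, each sector becomes a mixed Steklov problem with a single straight Steklov segment of length $\ell$, meeting the remaining boundary at angle $\pi/2$ at $V_0$ and at angle $\alpha/2$ at $V_1$ (with Neumann resp.\ Dirichlet on the symmetry axis). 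These are sloshing problems with a sloping beach of angle $\alpha/2$, for which the results of \cite{sloshing} provide the required $o(1)$ asymptotics with correct enumeration. To close the argument I would identify the two resulting sequences of quasi-frequencies (one symmetric, one antisymmetric) with the roots of $\mathtt{U}(\sigma)\baleph\cdot\bbeth^\perp=0$ for $\mathtt{U}(\sigma)=\mathtt{B}(\ell,\sigma)\mathtt{A}(\alpha)\mathtt{B}(\ell,\sigma)$, in the natural enumeration; this reduces to an explicit trigonometric identity comparing the Peters sloping beach phase at angle $\alpha/2$ (coming from the half-problem on $\Omega_+$) with the vertex transfer factor $\mathtt{A}(\alpha)$. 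For the asymmetric cases $\aleph\neq\beth$, $\tau$-invariance fails, and I would recover a symmetric setup by doubling $\Omega$ across the $\beth$-portion of $\mathcal{W}$; the $\aleph\aleph$-spectrum on the doubled domain splits into symmetric and antisymmetric parts, one of which recovers the original $\aleph\beth$-spectrum on $\Omega$, reducing the matter to the already-treated symmetric case.

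The principal obstacle throughout is enumeration, rather than existence of approximating eigenvalues. Existence of a nearby $\lambda_{i_m}$ for each $\sigma_m$ is already provided by Theorem~\ref{thm:approx1} together with the quasimode construction of Section~\ref{sec:quasieigenvalues}; the delicate point is showing that the map $m\mapsto i_m$ is the identity for large $m$. In the one-sided case, this matching is built into the Bary–Krein argument already embedded in the proof of Corollary~\ref{cor:curvsloshing}. In the two-sided symmetric case, the $\tau$-splitting reduces the counting function to a pair of one-sided sloshing counts from \cite{sloshing}, while the doubling trick disposes of the asymmetric two-sided case by the same mechanism. What ties this together is the universal-cover formalism of subsection~\ref{subsec:rep}: the monotonicity of $\arg(\hat{\mathtt{U}}(\sigma)\hat{\baleph})$ in $\sigma$ supplied by Lemma~\ref{lem:monotoneT}, combined with Proposition~\ref{prop:startcount}, is exactly what enables us to confirm, index by index, that the union of the split sequences agrees with the natural enumeration of $\{\sigma_m^{(\aleph\beth)}\}$.
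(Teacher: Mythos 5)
Your handling of the one-sided case and of the symmetric two-sided cases ($\aleph=\beth$) matches the paper's: record $\hat{\mathtt{U}}=\hat{\mathtt{R}}(\ell\sigma)$ for the one-piece zigzag and invoke Corollary~\ref{cor:curvsloshing}, and for the $NN$ and $DD$ zigzags with two equal straight sides, reflect across the bisector at the vertex and compare the resulting mixed sloshing problems with the counting in Proposition~\ref{prop:startcount} (you correctly identify the enumeration bookkeeping, encoded in \eqref{eq:gainloss}, as the crux).

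The gap is in your treatment of the asymmetric two-sided cases $ND$ and $DN$. Doubling across the $\beth$-portion of $\mathcal{W}$ produces a zigzag domain whose zigzag has \emph{three} arcs (equivalently four arcs with an interior angle of $\pi$ at the doubling point) rather than two. The symmetric/antisymmetric decomposition of the $\aleph\aleph$-problem on this doubled domain does recover the $\aleph D$ and $\aleph N$ spectra on the original $\Omega$, but the enumeration statement for the doubled domain is itself an instance of Theorem~\ref{thm:quasizigzags} for a strictly larger zigzag, which at this point in the argument has not been established. It is established later \emph{via} Proposition~\ref{lemma2}, whose Dirichlet--Neumann--bracketing proof requires that the natural enumeration already holds for the two pieces under \emph{all four} boundary-condition combinations $\aleph\daleth$, $\daleth\beth$, including the asymmetric ones. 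So the doubling strategy is circular inside the induction on zigzag size: it reduces a basic two-piece case to a three-piece case not yet covered. The paper avoids this by not doubling at all: for $ND$/$DN$ on the two-piece straight zigzag it uses the transplantation argument of Lemma~\ref{lemma:transpl}, showing that the zigzag-domain problem $ABCF$ is isospectral to a mixed Steklov--Neumann--Dirichlet problem on the isosceles triangle $A'C'F'$, which is a sloshing configuration handled directly by \cite[Proposition~1.13]{sloshing} with its enumeration intact. To repair your argument, replace the doubling step by this transplantation (or some other argument that compares the $ND$/$DN$ two-piece zigzag to a problem whose enumeration is independently known), rather than to a larger zigzag domain.

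Minor point: in the one-sided paragraph you write ``the eigenvalues $\phi_m$''; in the proof of Corollary~\ref{cor:curvsloshing}, $\phi_m$ denotes eigenfunctions of $\Delta_{\partial_S\mathcal{P}}$, not eigenvalues, and the frequencies involved are $\sigma_m$ rather than $\sigma_m^2$. This is cosmetic and does not affect the substance.
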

\begin{proposition}
\label{lemma2}
Let $\mathcal{Z}:=Z_{PQ}^{(\aleph\beth)}$ be a partially curvilinear zigzag with endpoints $P$ and $Q$, and let $W\in \mathcal{Z}$ be a point which is not a vertex and such that the zigzag $\mathcal{Z}$ is straight in some neighbourhood of $W$. The point $W$ splits $\mathcal{Z}$ into two partially curvilinear zigzags $\mathcal{Z}_{PW}$, starting at $P$ and ending at $W$, and $\mathcal{Z}_{WQ}$, starting at $W$ and ending at $Q$.  Impose a boundary condition $\daleth\in\{D,N\}$ at $W$. If Theorem  \ref{thm:quasizigzags} holds for both $\mathcal{Z}_\mathrm{I}:=\mathcal{Z}_{PW}^{(\aleph\daleth)}$ and $\mathcal{Z}_\mathrm{II}:=\mathcal{Z}_{WQ}^{(\daleth\beth)}$ then it also holds for $\mathcal{Z}_{PQ}^{(\aleph\beth)}$.
\end{proposition}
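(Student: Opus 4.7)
My plan is to combine Dirichlet--Neumann bracketing on the eigenvalue side with an additivity identity for the natural enumeration counting functions on the transfer-matrix side. First I will realise $\mathcal{Z}$ as the Steklov part of some $\mathcal{Z}$-zigzag domain $\Omega$; since $\mathcal{Z}$ is straight in a neighbourhood of $W$, I can draw a smooth arc $\gamma\subset\overline{\Omega}$ starting orthogonally to $\mathcal{Z}$ at $W$ and terminating at a smooth point of $\mathcal{W}$. This cut partitions $\Omega$ into $\Omega_\mathrm{I}$ and $\Omega_\mathrm{II}$, which are valid $\mathcal{Z}_\mathrm{I}$- and $\mathcal{Z}_\mathrm{II}$-zigzag domains in the sense of Definition \ref{def:zigzagdomain} (the angle at $W$ in each sub-domain is $\pi/2$, as required). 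Imposing the boundary condition $\daleth$ on both sides of $\gamma$ decouples the mixed problem on $\Omega$ into independent problems on $\Omega_\mathrm{I}$ and $\Omega_\mathrm{II}$, and the standard variational bracketing (a Neumann cut enlarges the admissible set, a Dirichlet cut restricts it) yields
\[
\mathcal{N}^{(\aleph D)}_\mathrm{I}(\lambda)+\mathcal{N}^{(D\beth)}_\mathrm{II}(\lambda)\le\mathcal{N}^{(\aleph\beth)}_\Omega(\lambda)\le\mathcal{N}^{(\aleph N)}_\mathrm{I}(\lambda)+\mathcal{N}^{(N\beth)}_\mathrm{II}(\lambda).
\]
I will invoke the inequality matching the hypothesised $\daleth$; by Theorem \ref{thm:quasizigzags} applied to $\mathcal{Z}_\mathrm{I}$ and $\mathcal{Z}_\mathrm{II}$, the relevant sum is equal to $\mathcal{N}^q_{\mathcal{Z}_\mathrm{I}^{(\aleph\daleth)}}(\lambda)+\mathcal{N}^q_{\mathcal{Z}_\mathrm{II}^{(\daleth\beth)}}(\lambda)+o(1)$, on values of $\lambda$ that avoid the discrete quasi-eigenvalue set (which has uniform gaps by Corollary \ref{cor:diffbounded}).

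The central step is to establish the additive identity
\[
\mathcal{N}^q_{\mathcal{Z}^{(\aleph\beth)}}(\sigma)=\mathcal{N}^q_{\mathcal{Z}_\mathrm{I}^{(\aleph\daleth)}}(\sigma)+\mathcal{N}^q_{\mathcal{Z}_\mathrm{II}^{(\daleth\beth)}}(\sigma)+c_{\daleth}
\]
for an explicit integer constant $c_{\daleth}$. The key algebraic input is the factorisation $\hat{\mathtt{U}}_{\mathcal{Z}}(\sigma)=\hat{\mathtt{U}}_{\mathcal{Z}_\mathrm{II}}(\sigma)\,\hat{\mathtt{U}}_{\mathcal{Z}_\mathrm{I}}(\sigma)$, which holds because $W$ is not a vertex and no $\hat{\mathtt{A}}$ matrix appears between the two factors. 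Using the definition \eqref{eq:phialephbethdef} of $\varphi_{\mathcal{Z}}$ together with the monotonicity of Lemmas \ref{lem:monotone}--\ref{lem:monotoneT}, I will track $\arg(\hat{\mathtt{U}}_{\mathcal{Z}}(\sigma)\hat{\baleph})$ on $\UC$ by first passing through $\hat{\mathtt{U}}_{\mathcal{Z}_\mathrm{I}}$ and registering crossings near $\arg\hat{\bdaleth}$ (which gives $\varphi_{\mathcal{Z}_\mathrm{I}^{(\aleph\daleth)}}$), then continuing through $\hat{\mathtt{U}}_{\mathcal{Z}_\mathrm{II}}$ to the target direction $\arg\hat{\bbeth}$ (which gives $\varphi_{\mathcal{Z}_\mathrm{II}^{(\daleth\beth)}}$). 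After dividing by $\pi$ and taking integer parts per Definition \ref{defn:naturalenumeration}, this decomposition produces additivity up to an integer $c_{\daleth}$ which I pin down by evaluating at $\sigma=0$ using Proposition \ref{prop:startcount}.

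To conclude, I combine the bracketing with the complementary lower bound coming from the quasimode construction of Section \ref{sec:quasieigenvalues}: Theorem \ref{thm:injectivityzigzag} supplies an eventually strictly increasing injection $j$ with $|\sigma_m-\lambda_{j(m)}|\to 0$, and hence $\mathcal{N}^{(\aleph\beth)}_\Omega(\lambda)\ge \mathcal{N}^q_{\mathcal{Z}^{(\aleph\beth)}}(\lambda)-O(1)$. Together with the one-sided upper bound from the bracketing and the additive identity, this gives $|\mathcal{N}^{(\aleph\beth)}_\Omega(\lambda)-\mathcal{N}^q_{\mathcal{Z}^{(\aleph\beth)}}(\lambda)|\le O(1)$; since quasi-eigenvalues are bounded apart, this forces $\lambda_m=\sigma_{m+c}+o(1)$ for some eventual constant integer shift $c$, and the exactness of $c_{\daleth}$ combined with the natural enumeration convention pins $c=0$, yielding $\lambda_m=\sigma_m+o(1)$.

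The hardest part of this programme will be the bookkeeping in the additive identity. The function $\varphi$ is strictly increasing by Lemma \ref{lem:phi}, but its integer part jumps whenever the image crosses a multiple of $\pi$; one must carefully track how the eigen-directions $\pm\Ceven$, $\pm\Codd$ of the various $\hat{\mathtt{A}}(\alpha_j)$ shift these crossing points, and how the quadrant conventions of Lemma \ref{lem:explicitarg} propagate under the composition. A subtler issue is that $\arg(\hat{\mathtt{U}}_{\mathcal{Z}_\mathrm{II}}(\sigma)\hat{\bdaleth})+\arg(\hat{\mathtt{U}}_{\mathcal{Z}_\mathrm{I}}(\sigma)\hat{\baleph})-\arg\hat{\bdaleth}$ agrees with $\arg(\hat{\mathtt{U}}_{\mathcal{Z}}(\sigma)\hat{\baleph})$ only up to a bounded correction that depends on the quadrant of $\hat{\mathtt{U}}_{\mathcal{Z}_\mathrm{I}}(\sigma)\hat{\baleph}$ relative to $\hat{\bdaleth}$; reconciling these discrete shifts to produce the exact constant $c_{\daleth}$ is what ultimately justifies that the natural enumerations align.
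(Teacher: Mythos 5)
Your overall strategy — Dirichlet--Neumann bracketing combined with an additivity relation between quasi-eigenvalue counting functions and the factorisation $\hat{\mathtt{U}}_{\mathcal{Z}}(\sigma)=\hat{\mathtt{U}}_{\mathcal{Z}_\mathrm{II}}(\sigma)\,\hat{\mathtt{U}}_{\mathcal{Z}_\mathrm{I}}(\sigma)$ — is exactly the paper's plan. However, the central identity you propose,
\[
\mathcal{N}^q_{\mathcal{Z}^{(\aleph\beth)}}(\sigma)=\mathcal{N}^q_{\mathcal{Z}_\mathrm{I}^{(\aleph\daleth)}}(\sigma)+\mathcal{N}^q_{\mathcal{Z}_\mathrm{II}^{(\daleth\beth)}}(\sigma)+c_{\daleth}
\qquad\text{for all }\sigma,
\]
with a fixed integer $c_{\daleth}$, is \emph{false}, and this is a genuine gap. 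The two sides are integer-valued step functions with \emph{different jump locations}: the left-hand side jumps at the quasi-eigenvalues of the whole zigzag $\mathcal{Z}^{(\aleph\beth)}$ (roots of $\varphi_{\mathcal{Z}_\mathrm{I}}+\tilde\varphi_{\mathcal{Z}_\mathrm{II}}\in\mathbb{Z}$), whereas the right-hand side jumps at the union of the quasi-eigenvalue sets of $\mathcal{Z}_\mathrm{I}^{(\aleph\daleth)}$ and $\mathcal{Z}_\mathrm{II}^{(\daleth\beth)}$ (roots of $\varphi_{\mathcal{Z}_\mathrm{I}}\in\mathbb{Z}$ or $\tilde\varphi_{\mathcal{Z}_\mathrm{II}}\in\mathbb{Z}$). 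These sets generally disagree (that is the whole content of gluing: $[a+b]\ne[a]+[b]$), so two step functions with incommensurate jumps cannot differ by a constant, and no amount of ``pinning $c_{\daleth}$ at $\sigma=0$'' can repair this.

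The paper circumvents this by proving a substantially weaker but still sufficient statement, Lemma~\ref{lemma2key}: for each choice of $\aleph,\beth,\daleth$ there are intervals $\mathcal{I}_M\subset(M,\infty)$ of length uniformly bounded below on which the corrected additivity (with correction $0$ or $1$ depending on the case) holds \emph{exactly}. The technical engine is Proposition~\ref{prop:integparts}, a purely real-variable statement about integer parts of two monotone functions with derivatives bounded above and below: both $[\varphi_1]+[\varphi_2]=[\varphi_1+\varphi_2]$ and $[\varphi_1]+[\varphi_2]+1=[\varphi_1+\varphi_2]$ are each achieved on intervals of definite length arbitrarily far out. Which identity is needed in each of the eight $(\aleph,\beth,\daleth)$ cases is pinned down by evaluating at $\sigma=0$ exactly as you suggest, but only on those intervals, not globally. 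This equality on arbitrary-far intervals, combined with both halves of the bracketing and with the existence (from Theorem~\ref{thm:injectivityzigzag} or Corollaries~\ref{cor:quasimodesstraightnearcorners} and \ref{cor:diffbounded}) of the limit $\lim_{\sigma\to\infty}\left(\mathcal{N}_{\mathcal{Z}^{(\aleph\beth)}}(\sigma)-\mathcal{N}^q_{\mathcal{Z}^{(\aleph\beth)}}(\sigma)\right)$, forces both bracketing inequalities to become equalities and yields the natural enumeration. You should reformulate your middle step to aim at this ``intervals of definite length arbitrarily far out'' version of additivity rather than a global one, and you will need something like Proposition~\ref{prop:integparts} to make that work.
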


To prove Theorem  \ref{thm:quasizigzags} for an arbitrary partially curvilinear zigzag it remains simply to note that any partially curvilinear zigzag can be represented as a union of partially curvilinear zigzags with one side and straight zigzags with two equal sides, see Figure \ref{fig:zigzagdecomposition}.
\begin{figure}[htb]
\begin{center}
\includegraphics{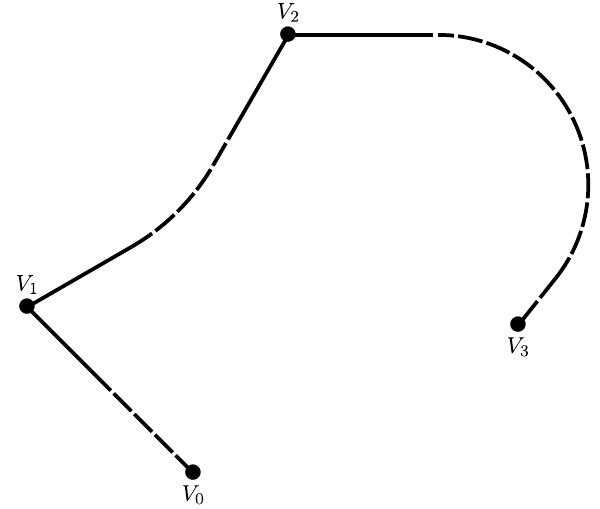}
\end{center}
\caption{Decomposition of a three-arc partially curvilinear zigzag into the union of two two-piece straight zigzags with equal sides (solid lines) and three partially  curvilinear one-piece zigzags (dashed lines)\label{fig:zigzagdecomposition}}
\end{figure}
 
\subsection{Proof of Proposition \ref{lemma1}}
Consider first  a zigzag $\mathcal{Z}_1$  consisting of one side of length $\ell$. The corresponding matrix is given by 
\[
\hat{\mathtt{U}}_{\mathcal{Z}_1}=\hat{\mathtt{R}}(\ell\sigma),
\]
and therefore
\[
\varphi_{\mathcal{Z}_1^{(\aleph\beth)}}(\sigma)=\ell\sigma+\arg\hat{\baleph}-\arg\hat{\bbeth},
\]
leading,  by Definition \ref{defn:naturalenumeration}, to
\begin{equation}\label{eq:NZ1}
\mathcal{N}^q_{\mathcal{Z}_1^{(NN)}}(\sigma)=[\ell\sigma]+1,\quad
\mathcal{N}^q_{\mathcal{Z}_1^{(DD)}}(\sigma)=[\ell\sigma],\quad
\mathcal{N}^q_{\mathcal{Z}_1^{(ND)}}(\sigma)=\mathcal{N}^q_{\mathcal{Z}_1^{(DN)}}(\sigma)=\left[\ell\sigma+\frac{1}{2}\right].
\end{equation}
At the same time, it  follows from Corollary \ref{cor:curvsloshing} (which is applicable since, according to Definition \ref{def:zigzagdomain}, zigzag domains always have angles $\pi/2$ at the ends of the corresponding zigzag, see Figure \ref{fig:zigzag}) that $\lambda^{(\aleph\beth)}_m-\sigma^{(\aleph\beth)}_m=o(1)$, where
\[
 \ell\sigma_m^{(NN)}=\pi(m-1),\quad
 \ell\sigma_m^{(DD)} = \pi m, \quad
 \ell\sigma_m^{(ND)}= \ell\sigma_m^{(DN)}=\pi\left(m-\frac{1}{2}\right),\qquad m\in\mathbb{N},
\]
which is in agreement with \eqref{eq:NZ1}. This proves Proposition \ref{lemma1} for a one-sided zigzag.

\smallskip

Consider now a zigzag $\mathcal{Z}_2:=\mathcal{Z}((\alpha),(\ell,\ell))$ with two equal straight sides of length $\ell$ and the angle $\alpha$ between them. The corresponding zigzag matrix is given by 
\[
\hat{\mathtt{U}}_{\mathcal{Z}_2}(\sigma)=\hat{\mathtt{R}}(\ell\sigma) \hat{\mathtt{A}}(\alpha)\hat{\mathtt{R}}(\ell\sigma),
\]
and a direct calculation gives
\begin{equation}\label{eq:U2N}
\hat{\mathtt{U}}_{\mathcal{Z}_2}(\sigma)\hat{\mathbf{N}}=\csc(\mu_\alpha)\begin{pmatrix}\cos(2\ell\sigma)\\-\cos(\mu_\alpha)+\sin(2\ell\sigma)\end{pmatrix}
\end{equation}
and
\begin{equation}\label{eq:U2D}
\hat{\mathtt{U}}_{\mathcal{Z}_2}(\sigma)\hat{\mathbf{D}}=\csc(\mu_\alpha)\begin{pmatrix}-\cos(\mu_\alpha)-\sin(2\ell\sigma)\\\cos(2\ell\sigma)\end{pmatrix}.
\end{equation}
We note additionally that
\[
\arg\left(\hat{\mathtt{U}}_{\mathcal{Z}_2}(0)\hat{\mathbf{N}}\right)=\arg\begin{pmatrix}\csc(\mu_\alpha)\\-\cot(\mu_\alpha)\end{pmatrix}\pmod\pi.
\]
and therefore
\begin{equation}\label{eq:U20Narg}
\arg\left(\hat{\mathtt{U}}_{\mathcal{Z}_2}(0)\hat{\mathbf{N}}\right)
\in\left[0,\frac{\pi}{4}\right)\iff \mu_\alpha\in\left[\frac{\pi}{2},\frac{3\pi}{2}\right]\;(\bmod\;2\pi)\iff\left\{\frac{\mu_\alpha}{2\pi}\right\}\in\left[\frac{1}{4},\frac{3}{4}\right],
\end{equation}
where $\{\cdot\}$ denotes the fractional part.  Similarly,
\[
\arg\left(\hat{\mathtt{U}}_{\mathcal{Z}_2}(0)\hat{\mathbf{D}}\right)=\arg\begin{pmatrix}-\cot(\mu_\alpha)\\\csc(\mu_\alpha)\end{pmatrix}\pmod\pi.
\]
and therefore
\begin{equation}\label{eq:U20Darg}
\arg\left(\hat{\mathtt{U}}_{\mathcal{Z}_2}(0)\hat{\mathbf{D}}\right)
\in\left(\frac{\pi}{4},\frac{\pi}{2}\right)\iff \mu_\alpha\in\left(\frac{\pi}{2},\frac{3\pi}{2}\right)\;(\bmod\;2\pi)\iff\left\{\frac{\mu_\alpha}{2\pi}\right\}\in\left(\frac{1}{4},\frac{3}{4}\right).
\end{equation}
 
Consider first the Neumann--Neumann case. By \eqref{eq:U2N}, a real $\sigma$ is a quasi-eigenvalue whenever 
\[
-\cos(\mu_\alpha)+\sin(2\ell\sigma)=0,
\] 
that is, when
\begin{equation}\label{eq:Z2NN}
2\sigma\ell\in\left\{2\pi m-\frac{3\pi}{2}\pm\mu_\alpha\mid m\in\mathbb{Z}\right\}.
\end{equation}

At the same time, symmetrising the zigzag $\mathcal{Z}_2$ along the bisector, one can  represent the eigenvalue problem on a corresponding zigzag domain as the union of two mixed Steklov--Neumann and Steklov--Neumann--Dirichlet eigenvalue problems (with either Neumann or Dirichlet condition imposed on the bisector, see Figure \ref{fig:zigzag-Neumann}). The eigenvalue asymptotics for these problems are known due to the results of \cite[Propositions 1.3 and 1.13]{sloshing}:  the quasi-eigenvalues are given by 
\begin{equation}\label{eq:Z2NNsloshing}
2\sigma\ell\in\left\{2\pi m-\frac{3\pi}{2}\pm\mu_\alpha\mid m\in\mathbb{N}\right\}.
\end{equation}

\begin{figure}[htb]
\begin{center}
\includegraphics{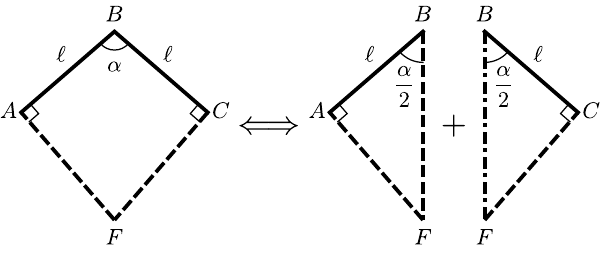}
\end{center}
\caption{Symmetric decomposition of Steklov--Neumann symmetric zigzag domain. Solid lines denote Steklov conditions, dashed lines --- Neumann conditions, and dot-dashed lines --- Dirichlet conditions\label{fig:zigzag-Neumann}}
\end{figure}

We need to show that for sufficiently large $\sigma$ the enumeration defined by \eqref{eq:Z2NNsloshing} and the natural enumeration of \eqref{eq:Z2NN} are the same.  The natural enumeration of \eqref{eq:Z2NN}  means starting counting from $m_\pm=\left[\frac{3}{4}\mp\frac{\mu_\alpha}{2\pi}\right]+1$ instead of starting counting from $1$, giving the \emph{total loss} of 
\begin{equation}\label{eq:gainloss}
\left[\frac{3}{4}+\frac{\mu_\alpha}{2\pi}\right]+\left[\frac{3}{4}-\frac{\mu_\alpha}{2\pi}\right]
=\begin{cases}
1,\quad&\text{if }\left\{\frac{\mu_\alpha}{2\pi}\right\}\in\left[\frac{1}{4},\frac{3}{4}\right],\\
0,\quad&\text{otherwise}.
\end{cases}
\end{equation}
Therefore, if the condition $\left\{\frac{\mu_\alpha}{2\pi}\right\}\in\left[\frac{1}{4},\frac{3}{4}\right]$ is satisfied, we must start counting from the first non-positive quasi-eigenvalue to ensure correct enumeration. But this is exactly the condition \eqref{eq:U20Narg}, which with account of Proposition \ref{prop:startcount}(i) guarantees that the enumeration imposed by Definition \ref{defn:naturalenumeration} is correct, thus proving Theorem \ref{thm:quasizigzags} for a symmetric straight $NN$-zigzag with two sides.

Consider now the case of the Dirichlet-Dirichlet  boundary conditions. By \eqref{eq:U2D}, a real $\sigma$ is a quasi-eigenvalue whenever 
\[
-\cos(\mu_\alpha)-\sin(2\ell\sigma)=0,
\] 
that is, when
\begin{equation}\label{eq:Z2DD}
2\sigma\ell\in\left\{2\pi m- \frac{\pi}{2} \pm \mu_\alpha\mid m\in\mathbb{Z}\right\}.
\end{equation}
Symmetrising as above (see Figure \ref{fig:zigzag-Dirichlet}) and using \cite[Propositions 1.8 and 1.13]{sloshing} we know that the quasi-eigenvalues will be correctly enumerated if we count over $m\in\mathbb{N}$ in \eqref{eq:Z2DD}. 
\begin{figure}[htb]
\begin{center}
\includegraphics{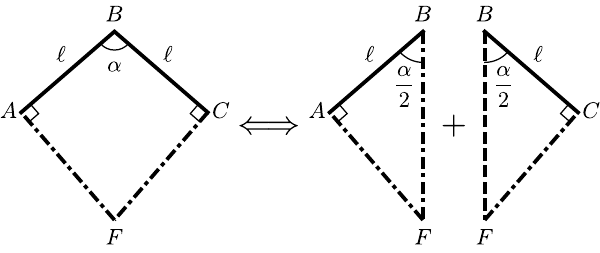}
\end{center}
\caption{Symmetric decomposition of Steklov--Dirichlet symmetric zigzag domain. Solid lines denote Steklov conditions, dashed lines --- Neumann conditions, and dot-dashed lines --- Dirichlet conditions\label{fig:zigzag-Dirichlet}}
\end{figure}

Similarly to the Neumann--Neumann case we compare this to counting only \emph{positive} quasi-eigenvalues in \eqref{eq:Z2DD}. In the latter case, the total loss
is now given (after some simplifications) by 
\begin{equation}
\left[\frac{1}{4}+\frac{\mu_\alpha}{2\pi}\right]+\left[\frac{1}{4}-\frac{\mu_\alpha}{2\pi}\right]
=\begin{cases}
-1,\quad&\text{if }\left\{\frac{\mu_\alpha}{2\pi}\right\}\in\left(\frac{1}{4},\frac{3}{4}\right),\\
0,\quad&\text{otherwise}.
\end{cases}
\end{equation}
Comparing with  \eqref{eq:U20Darg} and using Proposition \ref{prop:startcount}(iv) guarantees that the enumeration imposed by Definition \ref{defn:naturalenumeration} is correct, thus proving Theorem \ref{thm:quasizigzags} for a symmetric straight $DD$-zigzag with two sides.

Finally, consider the Neumann--Dirichlet or Dirichlet--Neumann boundary conditions on the zigzag $\mathcal{Z}_2$. In either case, the set of real quasi-eigenvalues is given by
\[
2\sigma\ell\in\left\{-\frac{\pi}{2}+\pi m\mid m\in\mathbb{Z}\right\},
\]
see \eqref{eq:U2N} and \eqref{eq:U2D}. 
However, the boundary conditions are no longer symmetric with respect to the bisector,  therefore a direct comparison to a sloshing problem is impossible, and a different approach is needed. We will use the following isospectrality result. Let $ABC=\mathcal{Z}_2=\mathcal{Z}((\alpha),(\ell,\ell))$ be a zigzag with two equal straight sides $AB$ and $BC$ of length $\ell$ joined at an angle $\alpha$, and let $ABCF$ be a  $ND$-zigzag domain, with the straight line intervals $FA$ and $FC$ being orthogonal to $AB$ and $BC$, respectively. Also, let $A'C'F'$ be an isosceles triangle with the base $A'C'$ of length $2\ell$ and angles $\alpha/2$ between the base and the sides, see Figure \ref{fig:zigzag-isospectral}. 
\begin{figure}[htb]
\begin{center}
\includegraphics{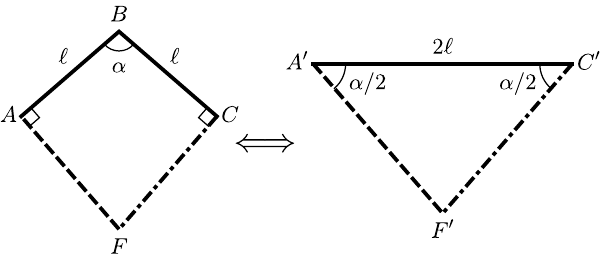}
\end{center}
\caption{Two isospectral Steklov--Neumann--Dirichlet problems. Solid lines denote Steklov conditions, dashed lines --- Neumann conditions, and dot-dashed lines --- Dirichlet conditions\label{fig:zigzag-isospectral}}
\end{figure}

\begin{lemma}
\label{lemma:transpl}
The Steklov--Neumann--Dirichlet eigenvalue problem 
\[
-\Delta u=0\text{ in }ABCF,\quad\left.\left(\frac{\partial u}{\partial n}-\lambda u\right)\right|_{ABC}=0,\quad \left.\frac{\partial u}{\partial n}\right|_{AF}=0,\quad u|_{CF}=0
\]
is isospectral to the Steklov--Neumann--Dirichlet eigenvalue problem 
\[
-\Delta u=0\text{ in }A'C'F',\quad\left.\left(\frac{\partial u}{\partial n}-\lambda u\right)\right|_{A'C'}=0,\quad \left.\frac{\partial u}{\partial n}\right|_{A'F'}=0,\quad u|_{C'F'}=0.
\]
\end{lemma}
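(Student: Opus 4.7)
I would prove this by an explicit transplantation of eigenfunctions based on the following geometric congruence. The kite $ABCF$ has an axis of symmetry along the segment $BF$ which cuts it into two congruent right triangles, each with right angle at $A$ (resp.\ $C$), angle $\alpha/2$ at $B$, and angle $\pi/2-\alpha/2$ at $F$; the legs have lengths $\ell$ (Steklov side $AB$) and $\ell\tan(\alpha/2)$ (Neumann side $AF$), while the hypotenuse $BF$ has length $\ell\sec(\alpha/2)$. The isosceles triangle $A'C'F'$ has an axis of symmetry along the altitude from $F'$, which cuts it into two congruent right triangles with right angle at the midpoint $B'$ of $A'C'$, angle $\alpha/2$ at $A'$ (resp.\ $C'$), and angle $\pi/2-\alpha/2$ at $F'$; the legs have lengths $\ell$ (Steklov side $A'B'$) and $\ell\tan(\alpha/2)$ (internal altitude $B'F'$), with the hypotenuse $A'F'$ of length $\ell\sec(\alpha/2)$. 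Thus the half-kite $ABF$ and the half-triangle $A'B'F'$ are congruent via the unique direct isometry $T$ matching angles: $T(A)=B'$, $T(B)=A'$, $T(F)=F'$. An analogous isometry $T'$ relates the right halves $BCF$ and $B'C'F'$.

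Crucially, $T$ and $T'$ \emph{swap} the roles of the ``external non-Steklov boundary'' and the ``internal axis of symmetry'': the Neumann side $AF$ of the kite corresponds under $T$ to the internal segment $B'F'$ of the triangle, while the internal segment $BF$ of the kite corresponds to the Neumann side $A'F'$ of the triangle. Similarly under $T'$, the Dirichlet side $CF$ corresponds to $B'F'$, and $BF$ to the Dirichlet side $C'F'$. Using this, I would construct a linear bijection between the eigenspaces at each eigenvalue $\lambda$. Given a Steklov--Neumann--Dirichlet eigenfunction $w$ on the triangle, I would look at its Cauchy data (value and normal derivative) along the internal axis $B'F'$, transplant this Cauchy data via $T$ to prescribe data along the Neumann side $AF$ of the kite and via $T'$ along the Dirichlet side $CF$, and then solve the half-kite problems with Steklov condition on $AB$, $BC$ and these data on $AF$, $CF$, requiring matching of Cauchy data across the internal axis $BF$. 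The Neumann condition $\partial_n w=0$ on $A'F'$ and the Dirichlet condition $w=0$ on $C'F'$ of the triangle then translate, via the swap, into the correct matching conditions on $BF$ that make the resulting $u$ harmonic across $BF$ and force it to satisfy $\partial_n u=0$ on $AF$ and $u=0$ on $CF$. Invertibility of the construction would be verified by running it in the opposite direction.

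\textbf{Main obstacle.} The hard part will be rigorously justifying that the transplantation does produce well-defined eigenfunctions of the target problem, given the swap of internal and external boundary roles. The naive definition $u:=w\circ T$ on $ABF$ and $u:=w\circ T'$ on $BCF$ does not directly work, because the Neumann condition on $A'F'$ becomes a Neumann condition for $u$ on the internal segment $BF$ (seen from the left), while the Dirichlet condition on $C'F'$ becomes a Dirichlet condition for $u$ on the same $BF$ (seen from the right), and these are incompatible for a nontrivial harmonic $u$. The resolution is to combine the transplantation with the reflection symmetry of each domain and use Green's identities on the half-domains to show that the SND problem on either domain is unitarily equivalent, via $T$, to a common mixed Steklov--Neumann problem on the half-domain with a Dirichlet-to-Neumann type matching condition on the cut. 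Verifying this equivalence of one-sided problems---and in particular that the DtN maps on the internal axes $BF$ and $B'F'$ match under $T$---is the core technical step where the interplay between the Neumann condition on $A'F'$ and the Dirichlet condition on $C'F'$ of the triangle gets converted into the single compatibility condition across $BF$ required on the kite side.
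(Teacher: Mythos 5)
Your geometric setup is correct and matches the paper's: the building block is the right triangle $K=ABF$, obtained by halving either domain along its axis of symmetry, and the two domains are the unfoldings of $K$ across $b=BF$ (giving $ABCF$) and across $a=AF$ (giving, up to the relabelling $A\mapsto B'$, $B\mapsto A'$, $F\mapsto F'$, the isosceles triangle $A'C'F'$). You also correctly identify that the naive pullback $u:=w\circ T$ (resp.\ $w\circ T'$) fails because the Neumann, Dirichlet, and matching conditions end up on the wrong pieces.

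The gap is in your proposed resolution. The claim that ``the DtN maps on the internal axes $BF$ and $B'F'$ match under $T$'' is false, and the two SND problems are not unitarily equivalent ``via $T$'' to a common half-domain problem. After pulling everything back to $K$, the half-kite problem reads ``harmonic on $K$, Steklov on $AB$, Neumann on $a=AF$, matching condition on $b=BF$'', whereas the half-triangle problem reads ``harmonic on $K$, Steklov on $AB$, Neumann on $b=BF$, matching condition on $a=AF$''. These are genuinely different mixed problems on $K$ (the roles of $a$ and $b$ are exchanged), so their partial DtN operators on the cut do not match, and Green's identities will not make them do so. The missing ingredient is the explicit transplantation of \cite{LPP}: set $u_1:=u|_{ABF}$ and $u_2:=\left(u|_{BCF}\right)\circ R_{BF}$, both functions on $K$, and build the eigenfunction on the triangle from the \emph{sum} $u_1+u_2$ on one half and the \emph{difference} $u_1-u_2$ on the other half. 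The point is that the sum has vanishing normal derivative on $a$ (since $\partial_n u_1=0$ and $\partial_n u_1+\partial_n u_2=0$ there both give $\partial_n(u_1+u_2)=0$), the difference vanishes on $a$, and both inherit the Neumann/Dirichlet conditions on $b$ from the matching of $u$ across $BF$; the Steklov condition on $AB$, $BC$ survives because the construction is a composition of an isometry of $K$ with a linear combination. This map is an involution up to a factor of $2$, hence a bijection between eigenspaces. That is exactly what \cite[Theorem~3.1]{LPP} packages, and the paper's proof is simply the observation that its transplantation argument applies verbatim with $K=ABF$, $a=AF$, $b=BF$, the extra Steklov side being harmless for the reason just stated.
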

\begin{proof}[Proof of Lemma  \ref{lemma:transpl}]
The lemma follows from a direct application of the transplantation argument of \cite[Theorem 3.1]{LPP}. In our case, the construction block $K$ is the triangle $ABF$, the line $a$ is the side $AF$, and the line $b$ is $BF$. Note that although \cite[Theorem 3.1]{LPP} is stated for the Laplacian with mixed Dirichlet-Neumann boundary conditions, its proof applies verbatim in our case, see also \cite{GHW}.
\end{proof}
Using Lemma \ref{lemma:transpl} and applying \cite[Proposition 1.13]{sloshing} to the isosceles triangle constructed in the lemma, we immediately obtain that
\[
2\ell \sigma_m^{(ND)}=-\frac{\pi}{2}+\pi k, m=1,2,\dots,
\]
and therefore $\sigma_1^{(ND)}$ is the first positive quasi-eigenvalue as prescribed by Definition  \ref{defn:naturalenumeration}. Exactly the same argument works for $\sigma_1^{(DN)}$. This completes the proof of Proposition \ref{lemma1}.

\subsection{Proof of Proposition \ref{lemma2}} 
The proof  is  based on the Dirichlet--Neumann bracketing. Given some boundary conditions $\aleph$ and $\beth$ at the points $P$ and $Q$, respectively, we  impose the Dirichlet or Neumann boundary condition $\daleth$ at the point $W$ and use the assumption that Theorem \ref{thm:quasizigzags} holds for two parts $\mathcal{Z}_\text{I}:=\mathcal{Z}_{PW}^{(\aleph\daleth)}$ and $\mathcal{Z}_\text{II}:=\mathcal{Z}_{WQ}^{(\daleth\beth)}$ of $\mathcal{Z}$ with $\daleth\in\{D,N\}$. We  then show that the only enumeration of eigenvalues on  the big zigzag $\mathcal{Z}^{(\aleph\beth)}$ that agrees with the Dirichlet--Neumann bracketing is the one given by Definition  \ref{defn:naturalenumeration}.

Indeed, let $\mathcal{N}_{\mathcal{Z}}(\sigma)$, $\mathcal{N}_{\mathcal{Z}_{\tI}}(\sigma)$, $\mathcal{N}_{\mathcal{Z}_{\tII}}(\sigma)$ be the eigenvalue counting functions (i.e. the number of eigenvalues less or equal than $\sigma$) for the zigzags $\mathcal{Z}$, $\mathcal{Z}_{\tI}$ and 
$\mathcal{Z}_{\tII}$ with given boundary conditions at the end points. Similarly, let   
$\mathcal{N}^q_{\mathcal{Z}}(\sigma)$, $\mathcal{N}^q_{\mathcal{Z}_{\tI}}(\sigma)$, $\mathcal{N}^q_{\mathcal{Z}_{\tII}}(\sigma)$ be the corresponding quasi-eigenvalue counting functions, where the quasi-eigenvalues are enumerated according to Definition \ref{defn:naturalenumeration}. The following key lemma holds.
\begin{lemma}
\label{lemma2key}
Fix $\aleph,\beth,\daleth\in\{D,N\}$. There exists $\delta>0$ such that for any $M>0$ there exists an interval $\mathcal{I}_{M}h\subset(M,+\infty)$ of length $\delta$ such that 
\begin{equation}
\label{equal:lemma2key}
\mathcal{N}^q_{\mathcal{Z}_{\tI}^{(\aleph\daleth)}}(\sigma)+\mathcal{N}^q_{\mathcal{Z}_{\tII}^{(\daleth\beth)}}(\sigma)=\mathcal{N}^q_{\mathcal{Z}^{(\aleph\beth)}}(\sigma)\qquad\text{for any }\sigma \in \mathcal{I}_{M}.
\end{equation}
\end{lemma}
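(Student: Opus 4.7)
The cornerstone is the factorisation
\[
\hat{\mathtt{U}}_{\mathcal{Z}}(\sigma) = \hat{\mathtt{U}}_{\mathcal{Z}_{\tII}}(\sigma)\circ\hat{\mathtt{U}}_{\mathcal{Z}_{\tI}}(\sigma)
\]
as a composition of maps on $\UC$. This holds because $W$ splits a straight segment of length $\ell$ into pieces of lengths $\ell',\ell''$, and on the universal cover one has $\hat{\mathtt{R}}(\sigma\ell) = \hat{\mathtt{R}}(\sigma\ell'')\circ\hat{\mathtt{R}}(\sigma\ell')$. I would first record this identity together with the commutation rule $\hat{\mathtt{M}}\circ\hat{\mathtt{R}}(k\pi) = \hat{\mathtt{R}}(k\pi)\circ\hat{\mathtt{M}}$ for any integer $k$ and any lifted map $\hat{\mathtt{M}}$ built from rotation and vertex factors. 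The latter reduces by induction on the number of factors to the single-factor case, which in turn follows from the defining bounds \eqref{eq:hatbound} and \eqref{arg:choiceneg}: both sides project to the same vector in $\mathbb{R}^2$ and both arguments lie within $\pi/2$ of $\arg\hat{\mathbf{b}}+k\pi$, hence they agree.

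Armed with these, set $k(\sigma):=\lfloor\varphi_{\mathcal{Z}_{\tI}^{(\aleph\daleth)}}(\sigma)\rfloor$, so that $\hat{\mathtt{U}}_{\mathcal{Z}_{\tI}}(\sigma)\hat{\baleph}$ has argument in $[\arg\hat{\bdaleth}+k(\sigma)\pi,\,\arg\hat{\bdaleth}+(k(\sigma)+1)\pi)$. Applying the $\sigma$-fixed action of $\hat{\mathtt{U}}_{\mathcal{Z}_{\tII}}(\sigma)$ and combining the monotonicity of Lemma \ref{lem:monotoneT} with the commutation identity yields the sandwich
\[
k(\sigma)+\varphi_{\mathcal{Z}_{\tII}^{(\daleth\beth)}}(\sigma)\,\le\,\varphi_{\mathcal{Z}^{(\aleph\beth)}}(\sigma)\,<\,k(\sigma)+1+\varphi_{\mathcal{Z}_{\tII}^{(\daleth\beth)}}(\sigma).
\]
Passing to integer parts and using Definition \ref{defn:naturalenumeration} translates this into sharp constraints on the defect
\[
D_\daleth(\sigma):=\mathcal{N}^q_{\mathcal{Z}_{\tI}^{(\aleph\daleth)}}(\sigma)+\mathcal{N}^q_{\mathcal{Z}_{\tII}^{(\daleth\beth)}}(\sigma)-\mathcal{N}^q_{\mathcal{Z}^{(\aleph\beth)}}(\sigma),
\]
namely $D_D(\sigma)\in\{-1,0\}$ and $D_N(\sigma)\in\{0,1\}$ for every $\sigma$.

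Next I would invoke Dirichlet--Neumann bracketing for the Steklov problem on the zigzag domain $\Omega$ split by a short perpendicular segment at $W$:
\[
\mathcal{N}_{\mathcal{Z}_{\tI}^{(\aleph D)}}+\mathcal{N}_{\mathcal{Z}_{\tII}^{(D\beth)}}\le\mathcal{N}_{\mathcal{Z}^{(\aleph\beth)}}\le\mathcal{N}_{\mathcal{Z}_{\tI}^{(\aleph N)}}+\mathcal{N}_{\mathcal{Z}_{\tII}^{(N\beth)}}.
\]
Under the inductive hypothesis that Theorem \ref{thm:quasizigzags} holds on $\mathcal{Z}_{\tI}$ and $\mathcal{Z}_{\tII}$, the sub-zigzag true counts may be replaced by the corresponding quasi-counts with $o(1)$ errors, and on intervals avoiding small neighbourhoods of all four sub-zigzag (quasi-)eigenvalues these replacements are exact. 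The resulting chain $\mathcal{N}^q_{\mathcal{Z}^{(\aleph\beth)}}+D_D\le\mathcal{N}_{\mathcal{Z}^{(\aleph\beth)}}\le\mathcal{N}^q_{\mathcal{Z}^{(\aleph\beth)}}+D_N$, combined with the refined Weyl asymptotic for the quantum-graph counting function supplied by Theorem \ref{thm:quantum} (in particular the absence of an $O(1)$ bias in $\mathcal{N}_{\mathcal{Z}}-\mathcal{N}^q_{\mathcal{Z}}$), forces both $D_D$ and $D_N$ to equal $0$ on a set of $\sigma$ of positive density, which is all that is required.

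Finally, the localisation of $\mathcal{I}_M$ is a pigeonhole exercise. Corollary \ref{cor:diffbounded}, applied to each of the five zigzags involved (the three appearing in the lemma together with the Dirichlet and Neumann cut variants used for the bracketing), gives a uniform lower bound $\delta_0>0$ on the gaps between consecutive quasi-eigenvalues. Choosing $\delta:=\delta_0/20$ and applying pigeonhole in any window of length $2\delta_0$ inside $(M,+\infty)$ produces a sub-interval of length $\delta$ on which all five quasi-counts are constant; for $M$ large enough, the inductive hypothesis also makes the true and quasi counts of the sub-zigzags coincide there, so the bracketing becomes exact and the analysis above forces $D_\daleth(\sigma)\equiv 0$ throughout $\mathcal{I}_M$. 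The main obstacle is precisely the passage from the algebraic range $\{-1,0\}$ or $\{0,1\}$ to the single value $0$: pure argument-counting on $\UC$ admits the extremal ambiguities, and only the fusion of the Dirichlet--Neumann sandwich with the vanishing of the constant term in the Weyl asymptotic eliminates them.
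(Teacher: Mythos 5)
Your factorisation $\hat{\mathtt{U}}_{\mathcal{Z}}=\hat{\mathtt{U}}_{\mathcal{Z}_{\tII}}\circ\hat{\mathtt{U}}_{\mathcal{Z}_{\tI}}$, the commutation with $\hat{\mathtt{R}}(k\pi)$, and the ensuing sandwich that places $\lfloor\varphi_{\mathcal{Z}^{(\aleph\beth)}}\rfloor$ within one unit of $\lfloor\varphi_{\mathcal{Z}_\tI}\rfloor+\lfloor\varphi_{\mathcal{Z}_\tII}\rfloor$ are all sound. But the step that is supposed to kill the residual ambiguity is a genuine gap. You invoke Dirichlet--Neumann bracketing of the \emph{true} eigenvalue counting functions together with ``the absence of an $O(1)$ bias in $\mathcal{N}_{\mathcal{Z}}-\mathcal{N}^q_{\mathcal{Z}}$''; the latter is precisely the statement that the natural enumeration is correct for the big zigzag $\mathcal{Z}$, which is the conclusion of Proposition~\ref{lemma2} — the very proposition whose proof is waiting to use Lemma~\ref{lemma2key}. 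Theorem~\ref{thm:quantum} supplies no such thing: it identifies quasi-eigenvalues with quantum-graph Laplacian eigenvalues, and the resulting Weyl law for quasi-eigenvalues (equation~\eqref{eq:Weylawquasi}) carries an $O(1)$ remainder, not a vanishing constant term. So your argument is circular, and Lemma~\ref{lemma2key} in any case should never appeal to true eigenvalues: it is a statement purely about the quasi-eigenvalue counting functions.

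The way the paper closes the ambiguity is different and purely on the level of phase functions. Instead of working with $\varphi_{\mathcal{Z}_\tII^{(\daleth\beth)}}$, it pairs $\varphi_{\mathcal{Z}_\tI^{(\aleph\daleth)}}$ with the \emph{conjugate} phase $\tilde\varphi_{\mathcal{Z}_\tII^{(\daleth\beth)}}$ defined through $\hat{\mathtt{U}}^{-1}_{\mathcal{Z}_\tII}$, see~\eqref{eq:phialephbethdef}. The crucial algebraic fact — which your approach never uses — is that the sum $\varphi_{\mathcal{Z}_\tI^{(\aleph\daleth)}}(\sigma)+\tilde\varphi_{\mathcal{Z}_\tII^{(\daleth\beth)}}(\sigma)$ is integer-valued precisely at the quasi-eigenvalues of $\mathcal{Z}^{(\aleph\beth)}$, so there is an exact identity $\mathcal{N}^q_{\mathcal{Z}^{(\aleph\beth)}}(\sigma)=\lfloor\varphi_{\mathcal{Z}_\tI^{(\aleph\daleth)}}(\sigma)+\tilde\varphi_{\mathcal{Z}_\tII^{(\daleth\beth)}}(\sigma)\rfloor+m_0$ for a fixed integer $m_0$, rather than a range. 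Evaluating at $\sigma=0$ (using the explicit symmetric form of $\hat{\mathtt{U}}(0)$ and~\eqref{arg:choice}) pins down $m_0$, and Lemma~\ref{lem:phiandtildephi} lets one replace $\tilde\varphi$ by $\varphi$ at the level of integer parts. From there the result reduces to Proposition~\ref{prop:integparts}, an elementary calculus statement about sums of two monotone $C^1$ functions with uniformly bounded derivatives, which produces the required intervals directly. Your pigeonhole/gap argument for choosing $\mathcal{I}_M$ is not wrong, but it is only needed once the exact $m_0$ has been found; without that, the ambiguity $D_\daleth\in\{0,\pm 1\}$ cannot be resolved by any amount of argument-counting on $\UC$ alone.
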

Before proving  Lemma \ref{lemma2key} let us  show first how it  implies Proposition \ref{lemma2}. 

\begin{proof}[Proof of Proposition \ref{lemma2}] Consider a zigzag  domain corresponding to the zigzag $\mathcal{Z}$.  It can be represented as a union of two zigzag domains corresponding to the zigzags  $\mathcal{Z}_{\tI}$, $\mathcal{Z}_{\tII}$. By Dirichlet-Neumann bracketing, for all $\sigma>0$ we have
\begin{equation}
\label{twosided}
\mathcal{N}_{\mathcal{Z}_{\tI}^{(\aleph D)}}(\sigma)+\mathcal{N}_{\mathcal{Z}_{\tII}^{(D\beth)}}(\sigma) \le 
\mathcal{N}_{\mathcal{Z}^{(\aleph\beth)}}(\sigma)\le 
\mathcal{N}_{\mathcal{Z}_{\tI}^{(\aleph N)}}(\sigma)+\mathcal{N}_{\mathcal{Z}_{\tII}^{(N\beth)}}(\sigma).
\end{equation}
At the same time,  by our assumption,  the natural enumeration holds for the 
zigzags $\mathcal{Z}_\tI$ and $\mathcal{Z}_\tII$.  Therefore,  the eigenvalue counting functions and the corresponding quasimode counting functions of these zigzags coincide away from a union of intervals of lengths tending to zero. Let us combine this observation with Lemma \ref{lemma2key} and formula  \eqref{twosided}.  We deduce that 
there exists a positive number $\delta'$ such that for any $M>0$ there exist intervals $\mathcal{I}_M^N, \mathcal{I}_M^D\subset(M,+\infty)$ of length $\delta'$ on which the following inequalities hold:
\begin{equation}
\label{deduced}
\begin{split}
\mathcal{N}_{\mathcal{Z}^{(\aleph\beth)}}(\sigma) \le \mathcal{N}^q_{\mathcal{Z}^{(\aleph\beth)}}(\sigma),\qquad  &\sigma \in \mathcal{I}_M^N;\\
\mathcal{N}_{\mathcal{Z}^{(\aleph\beth)}}(\sigma) \ge \mathcal{N}^q_{\mathcal{Z}^{(\aleph\beth)}}(\sigma),\qquad &\sigma \in \mathcal{I}_{M}^D.
\end{split}
\end{equation} 
At the same time, it follows from Theorem \ref{thm:injectivityzigzag} that  there exists a limit (possibly equal to $+\infty$)
\begin{equation}
\label{llimit}
\lim_{\substack{\sigma\to \infty \\  \sigma \notin S}} \left(\mathcal{N}_{\mathcal{Z}^{(\aleph\beth)}}(\sigma) - \mathcal{N}^q_{\mathcal{Z}^{(\aleph\beth)}}(\sigma)\right),
\end{equation}
 where $S$ is a union of intervals of lengths tending to zero. In fact this also follows directly from Corollaries \ref{cor:quasimodesstraightnearcorners} and \ref{cor:diffbounded}. Clearly, \eqref{llimit} implies that both inequalities in \eqref{deduced}
are equalities.  Therefore,  the natural enumeration holds for the zigzag $\mathcal{Z}^{(\aleph\beth)}$ which completes the proof of Proposition \ref{lemma2}. 
\end{proof}

It remains to prove  Lemma \ref{lemma2key}.  The following abstract proposition will be used in the proof of the lemma. 
\begin{proposition}
\label{prop:integparts}
Let $\varphi_1, \varphi_2 \in C^1(\mathbb{R})$ be two monotone increasing functions such that $0<C_1<\varphi_1',\varphi_2' <C_2$ for some constants $C_1,C_2>0$.
Then there exists $\delta>0$ such that for any $M\in \mathbb{N}$ there exist  intervals $\mathcal{I},\mathcal{I}' \subset (M,+\infty)$ of length $\delta$ such that 
\begin{equation}
\label{integparts1}
[\varphi_1(\sigma)]+[\varphi_2(\sigma)]+1=[\varphi_1(\sigma)+\varphi_2(\sigma)], \,\,\,  \sigma \in \mathcal{I}.
\end{equation}
\begin{equation}
\label{integparts2}
[\varphi_1(\sigma)]+[\varphi_2(\sigma)]=[\varphi_1(\sigma)+\varphi_2(\sigma)], \,\,\,  \sigma \in \mathcal{I}'.
\end{equation}
\end{proposition}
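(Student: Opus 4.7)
Set $h(\sigma) := \{\varphi_1(\sigma)\} + \{\varphi_2(\sigma)\} \in [0,2)$. Then \eqref{integparts1} is equivalent to $h(\sigma) \geq 1$ and \eqref{integparts2} to $h(\sigma) < 1$, so my plan is to analyse $h$ directly. The function $h$ has a sawtooth-type structure: between consecutive \emph{wrap events} (values of $\sigma$ at which one of $\varphi_1, \varphi_2$ crosses an integer) $h$ is strictly increasing with slope $\varphi_1' + \varphi_2' \in [2C_1, 2C_2]$, and at each wrap event $h$ drops by exactly $1$. Because $h \in [0,2)$, every such monotone up-piece begins at some value $h_{\mathrm{s}} \in [0,1)$ and ends just before the next wrap at some $h_{\mathrm{e}} \in [1,2)$, so it crosses the level $h = 1$ exactly once, precisely when $\psi := \varphi_1 + \varphi_2$ passes through an integer. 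Each up-piece therefore consists of a sub-interval where $h < 1$ of $\sigma$-length $(1 - h_{\mathrm{s}})/(\varphi_1' + \varphi_2')$ followed by a sub-interval where $h \geq 1$ of $\sigma$-length $(h_{\mathrm{e}} - 1)/(\varphi_1' + \varphi_2')$.

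Consequently it suffices to establish, for some universal $\eta > 0$ depending only on $C_1, C_2$, that every tail $(M, \infty)$ contains a wrap event at which $h_{\mathrm{s}}$ is bounded away from $1$ (yielding an interval $\mathcal{I}'$ where $h < 1$ of length at least $\eta/(2C_2)$) and a wrap event at which $h_{\mathrm{s}}$ is bounded away from $0$, so that the preceding up-piece yields an interval $\mathcal{I}$ where $h \geq 1$ of length at least $\eta/(2C_2)$ (here one uses that $h_{\mathrm{e}} - 1$ equals the $h_{\mathrm{s}}$ of the next up-piece). The first of these I would treat by a short pigeonhole argument on consecutive wraps: if at a $\varphi_1$-wrap $\tau_k$ one has $\{\varphi_2(\tau_k)\} \geq 1 - \eta$, then the very next wrap must be a $\varphi_2$-wrap occurring within $\sigma$-distance at most $\eta/C_1$, and at that next wrap $\{\varphi_1\} \leq \eta C_2/C_1$. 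Choosing $\eta < C_1/(C_1 + C_2)$ makes $\eta C_2/C_1 < 1 - \eta$, forbidding two successive wraps from both violating the desired bound, and producing $\mathcal{I}'$ in any tail.

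The dual statement is the main obstacle, since \emph{a priori} one could imagine a ``resonant'' trajectory in which $\{\varphi_1\}$ and $\{\varphi_2\}$ are always simultaneously near integers at wrap events. To handle it I would study the induced first-return dynamics of the trajectory $\sigma \mapsto (\{\varphi_1\}, \{\varphi_2\}) \in \mathbb{T}^2$ on the two coordinate circles. The step by which $\{\varphi_2\}$ advances between consecutive $\varphi_1$-wraps is the fractional part of $\varphi_2'/\varphi_1'$ averaged over the gap; when this step is bounded away from $0$ and $1$ modulo $1$, iterating a bounded number of times forces $\{\varphi_2\}$ into the middle band $[\eta, 1 - \eta]$. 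In the complementary near-integer-resonance regime, where $\varphi_2'/\varphi_1'$ is close to some nonnegative integer $m$, I would instead examine the positions of $\{\varphi_1\}$ at the approximately $m$ $\varphi_2$-wraps lying between consecutive $\varphi_1$-wraps (if $m \geq 1$; if $m = 0$, swap the roles of the two indices): these positions are essentially $j/m$ for $j = 1, \ldots, m$, and at least one of them lies in the middle band provided $\eta < 1/(2m)$. Collecting the resulting quantitative estimates and tracking their dependence on $C_1, C_2$ produces the desired universal $\delta > 0$.
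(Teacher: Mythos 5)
Your structural reduction is sound and genuinely different from the paper's: you trade the paper's direct case analysis near a level crossing of $\varphi_1+\varphi_2$ for a sawtooth picture of $h=\{\varphi_1\}+\{\varphi_2\}$, and you correctly reduce $\mathcal I'$ (resp.\ $\mathcal I$) to exhibiting, in every tail, a wrap event with $h_{\mathrm s}$ bounded away from $1$ (resp.\ from $0$). The pigeonhole you run for $\mathcal I'$ is correct: once $\eta<C_1/(C_1+C_2)$, two consecutive wraps cannot both have $h_{\mathrm s}\geq 1-\eta$.

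The gap is in the $\mathcal I$ half. The ``first-return dynamics'' sketch does not constitute a proof. The step by which $\{\varphi_2\}$ advances between consecutive $\varphi_1$-wraps is not a fixed rotation but an integral of $\varphi_2'$ over a gap whose length is itself determined by $\varphi_1$; the dichotomy between ``step bounded away from $0$ and $1$ mod~$1$'' and ``near-integer resonance $\varphi_2'/\varphi_1'\approx m$'' is therefore not well posed, since $\varphi_2'/\varphi_1'$ may wander freely between $C_1/C_2$ and $C_2/C_1$ inside a single gap. The claim that iterating a bounded number of times must push $\{\varphi_2\}$ into the middle band is also not obvious for variable steps, and the ``essentially $j/m$'' heuristic again presupposes a near-constancy that the hypotheses do not give you. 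More importantly, the whole detour is unnecessary: the ``resonant'' trajectory you are guarding against cannot occur, precisely by the pigeonhole argument you already gave for $\mathcal I'$, run backward in time. Concretely, if a $\varphi_1$-wrap at $\tau$ has $h_{\mathrm s}(\tau)=\{\varphi_2(\tau)\}\leq\eta$ with $\eta\leq C_1/(2C_2)$, let $\tau'<\tau$ be the previous integer crossing of $\varphi_2$; then $\tau-\tau'\leq\eta/C_1$, so $\varphi_1$ increased by at most $C_2\eta/C_1\leq 1/2<1$ on $[\tau',\tau]$, hence $\tau'$ is a pure $\varphi_2$-wrap and $h_{\mathrm s}(\tau')=\{\varphi_1(\tau')\}\geq 1-C_2\eta/C_1\geq 1/2\geq\eta$. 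So two consecutive wraps cannot both have $h_{\mathrm s}\leq\eta$, and every tail contains a wrap with $h_{\mathrm s}\geq\eta$, giving $\mathcal I$ of length $\eta/(2C_2)$. (You should also handle the degenerate case of a simultaneous wrap, where $h$ drops by $2$ rather than $1$: there $h_{\mathrm e}=2$ and the preceding up-piece produces $\mathcal I$ directly, though your bookkeeping identity $h_{\mathrm e}-1=h_{\mathrm s}(\text{next})$ fails.)

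For comparison, the paper avoids the sawtooth decomposition altogether: it picks $s$ with $\varphi_1(s)+\varphi_2(s)=N\in\mathbb N$, so that $\omega_1(s)+\omega_2(s)\in\{0,1\}$, and splits on whether $\omega_1(s)$ is small. If it is, one backs up to the nearby $\varphi_1$-wrap $s'$, checks that $\omega_2(s')\geq 1/3$ there, and gets an interval of $h\geq 1$ just to the left of $s'$; if not, both $\omega_i(s)$ are bounded away from $\{0,1\}$ and one moves forward from $s$. This is the same mechanism as the backward pigeonhole above, packaged more self-containedly.
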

We postpone the proof of Proposition \ref{prop:integparts} and proceed with the proof of Lemma \ref{lemma2key}.

\begin{proof}[Proof of Lemma \ref{lemma2key}]  We start by making the following observation: $\sigma$ is a quasi-eigenvalue of $\mathcal{Z}^{(\aleph\beth)}$ if and only if
\begin{equation}\label{eq:NZassum}
\varphi_{\mathcal{Z}_{\tI}^{(\aleph\daleth)}}(\sigma)+\tilde{\varphi}_{\mathcal{Z}_\tII^{(\daleth\beth)}}(\sigma)\in\mathbb{Z},\quad\daleth\in\{D,N\},
\end{equation}
Indeed, for $\sigma$ to be a quasi-eigenvalue we must have 
\[
\begin{split}
\mathtt{U}_{\mathcal{Z}_{\tII}}(\sigma)\,\mathtt{U}_{\mathcal{Z}_{\tI}}(\sigma)\,\baleph&\text{ is proportional to }\bbeth\\
&\Updownarrow\\
\arg\left(\mathtt{U}_{\mathcal{Z}_{\tI}}(\sigma)\,\baleph\right)&=\arg\left(\mathtt{U}^{-1}_{\mathcal{Z}_{\tII}}(\sigma)\,\bbeth\right)\pmod\pi\\
&\Updownarrow\\
\arg\left(\mathtt{U}_{\mathcal{Z}_{\tI}}(\sigma)\,\baleph\right)-\arg(\bm{\daleth})&+\arg(\bm{\daleth})-\arg\left(\mathtt{U}^{-1}_{\mathcal{Z}_{\tII}}(\sigma)\,\bbeth\right)=0\pmod\pi,
\end{split}
\] 
and then recall the definitions \eqref{eq:phialephbethdef} giving us \eqref{eq:NZassum}.

Therefore the quasi-eigenvalue counting function $\mathcal{N}^q_{\mathcal{Z}^{(\aleph\beth)}}(\sigma)$ may only differ from the integer part of the left-hand side of \eqref{eq:NZassum} by addition of an integer $m_0$ independent of $\sigma$. To find $m_0$ it is enough to consider $\sigma=0$.

We further assert that for any $\aleph,\beth,\daleth\in\{D,N\}$ we have
\begin{equation}\label{eq:phiZZIZII}
\left[\varphi_{\mathcal{Z}^{(\aleph\beth)}}(0)\right] =
\left[\varphi_{\mathcal{Z}_{\tI}^{(\aleph\daleth)}}(0)+\tilde{\varphi}_{\mathcal{Z}_\tII^{(\daleth\beth)}}(0)\right].
\end{equation}
We prove \eqref{eq:phiZZIZII} in the case $\aleph=\beth=\daleth=N$. Recall that all matrices $\mathtt{U}(0)$ are symmetric and we can therefore write $\mathtt{U}_{\mathcal{Z}_{\tI}}(0)=\hat{\mathtt{S}}\left(1/\tau_\tI, \hat{\Cvect}_\mathrm{even}\right)$ and $\mathtt{U}_{\mathcal{Z}_{\tII}}(0)=\hat{\mathtt{S}}\left(1/\tau_\tII, \hat{\Cvect}_\mathrm{even}\right)$ with some $\tau_\tI, \tau_\tII\in\mathbb{R}$.  Using \eqref{eq:phialephbethdef} and \eqref{arg:choice} we obtain
\begin{equation}\label{eq:phiZNNN0}
\left[\varphi_{\mathcal{Z}^{(NN)}}(0)\right]=\left[\frac{\arctan\left(\tau_\tI^2\tau_\tII^2\right)}{\pi}-\frac{1}{4}\right]\\
=\begin{cases}
-1,\quad&\text{if }\tau_\tI^2\tau_\tII^2<1,\\
0,\quad&\text{if }\tau_\tI^2\tau_\tII^2\ge 1.
\end{cases}
\end{equation}
On the other hand,
\[
\left[\varphi_{\mathcal{Z}_{\tI}^{(NN)}}(0)+\tilde{\varphi}_{\mathcal{Z}_\tII^{(NN)}}(0)\right]=\left[\frac{\arctan\left(\tau_\tI^2\right)-\arctan\left(\tau_\tII^{-2}\right)}{\pi}\right],
\]
thus coinciding with the right-hand side of \eqref{eq:phiZNNN0} and proving \eqref{eq:phiZZIZII} in the case $\aleph=\beth=\daleth=N$. The other cases of \eqref{eq:phiZZIZII}  are treated similarly.

But now the combination of \eqref{eq:phiZZIZII}, Definition \ref{defn:naturalenumeration}, and Lemma \ref{lem:phiandtildephi} allows us to find the integer $m_0$ in each case. We arrive at
the following table:
\renewcommand*{\arraystretch}{1.5}
\[
\begin{matrix}
\aleph&\beth&\daleth&\mathcal{N}^q_{\mathcal{Z}^{(\aleph\beth)}}(\sigma)&\mathcal{N}^q_{\mathcal{Z}_\tI^{(\aleph\daleth)}}(\sigma)&\mathcal{N}^q_{\mathcal{Z}_\tII^{(\daleth\beth)}}(\sigma)\\
\hline
N&N&N&\left[\varphi_{\mathcal{Z}_{\tI}^{(NN)}}(\sigma)+\tilde{\varphi}_{\mathcal{Z}_\tII^{(NN)}}(\sigma)\right]+1&\left[\varphi_{\mathcal{Z}_{\tI}^{(NN)}}(\sigma)\right]+1&\left[\tilde{\varphi}_{\mathcal{Z}_\tII^{(NN)}}(\sigma)\right]+1\\
N&N&D&\left[\varphi_{\mathcal{Z}_{\tI}^{(NN)}}(\sigma)+\tilde{\varphi}_{\mathcal{Z}_\tII^{(DN)}}(\sigma)\right]+1&\left[\varphi_{\mathcal{Z}_{\tI}^{(ND)}}(\sigma)\right]+1&\left[\tilde{\varphi}_{\mathcal{Z}_\tII^{(DN)}}(\sigma)\right]\\
N&D&N&\left[\varphi_{\mathcal{Z}_{\tI}^{(NN)}}(\sigma)+\tilde{\varphi}_{\mathcal{Z}_\tII^{(ND)}}(\sigma)\right]+1&\left[\varphi_{\mathcal{Z}_{\tI}^{(NN)}}(\sigma)\right]+1&\left[\tilde{\varphi}_{\mathcal{Z}_\tII^{(ND)}}(\sigma)\right]+1\\
N&D&D&\left[\varphi_{\mathcal{Z}_{\tI}^{(ND)}}(\sigma)+\tilde{\varphi}_{\mathcal{Z}_\tII^{(DD)}}(\sigma)\right]+1&\left[\varphi_{\mathcal{Z}_{\tI}^{(ND)}}(\sigma)\right]+1&\left[\tilde{\varphi}_{\mathcal{Z}_\tII^{(DD)}}(\sigma)\right]\\
D&N&N&\left[\varphi_{\mathcal{Z}_{\tI}^{(DN)}}(\sigma)+\tilde{\varphi}_{\mathcal{Z}_\tII^{(NN)}}(\sigma)\right]&\left[\varphi_{\mathcal{Z}_{\tI}^{(DN)}}(\sigma)\right]&\left[\tilde{\varphi}_{\mathcal{Z}_\tII^{(NN)}}(\sigma)\right]+1\\
D&N&D&\left[\varphi_{\mathcal{Z}_{\tI}^{(DD)}}(\sigma)+\tilde{\varphi}_{\mathcal{Z}_\tII^{(DN)}}(\sigma)\right]&\left[\varphi_{\mathcal{Z}_{\tI}^{(DD)}}(\sigma)\right]&\left[\tilde{\varphi}_{\mathcal{Z}_\tII^{(DN)}}(\sigma)\right]\\
D&D&N&\left[\varphi_{\mathcal{Z}_{\tI}^{(DN)}}(\sigma)+\tilde{\varphi}_{\mathcal{Z}_\tII^{(ND)}}(\sigma)\right]&\left[\varphi_{\mathcal{Z}_{\tI}^{(DN)}}(\sigma)\right]&\left[\tilde{\varphi}_{\mathcal{Z}_\tII^{(ND)}}(\sigma)\right]+1\\
D&D&D&\left[\varphi_{\mathcal{Z}_{\tI}^{(DD)}}(\sigma)+\tilde{\varphi}_{\mathcal{Z}_\tII^{(DD)}}(\sigma)\right]&\left[\varphi_{\mathcal{Z}_{\tI}^{(DD)}}(\sigma)\right]&\left[\tilde{\varphi}_{\mathcal{Z}_\tII^{(DD)}}(\sigma)\right]\\
\hline
\end{matrix}
\]
\renewcommand*{\arraystretch}{1.25}
Recalling Lemma \ref{lem:phi}, the proof of Lemma  \ref{lemma2key} now follows by the application of Proposition \ref{prop:integparts}, which applies in all eight of these cases.
\end{proof}

We conclude this subsection by the proof of Proposition \ref{prop:integparts}.

\begin{proof}[Proof of Proposition \ref{prop:integparts}]  Assume without loss of generality that $C_2=C$ and $C_1=1/C$, for some $C>1$. Let us first prove the assertion \eqref{integparts1}. Let $\omega_1(\sigma)=\left\{\varphi_1(\sigma)\right\}, \omega_2(\sigma)=\left\{\varphi_2(\sigma)\right\}$ denote the fractional parts of $\varphi_1(\sigma)$,
$\varphi_2(\sigma)$, respectively.  Note that the equality in  \eqref{integparts1} is equivalent to the inequality
\begin{equation}
\label{greater1}
\omega_1(\sigma)+\omega_2(\sigma)\ge 1.
\end{equation}

Choose an integer number $N>M$ and let $s$ be the value for which $\varphi_1(s)+\varphi_2(s)=N$. If  $\omega_1(s)=\omega_2(s)=0$ then the result trivially follows for the interval $(s-\delta, s)$ and $\delta=\frac{1}{2C}$.
Therefore, we may  suppose that $\omega_1(s)+\omega_2(s)=1$, and   assume without loss of generality that $\omega_2(s)\ge \frac{1}{2} \ge \omega_1(s)$. There are two cases.

Suppose first that $\omega_1(s)\leq\frac{1}{3C^2}$. Then since $\varphi_1'>\frac 1C$, there exists (precisely one) $s'\in(s-\frac{1}{3C},s)$ for which $\omega_1(s')=0$. On the other hand, since $\varphi_2'<C$, $\varphi_2(s')\geq\varphi_2(s)-\frac 13$, and since $\omega_2(s)\geq 1-\frac{1}{3C^2}\geq \frac 23$, we must have $\omega_2(s')\geq\frac 13$. Therefore $(\omega_1+\omega_2)(s')\geq\frac 13$. Our inequality \eqref{greater1} then holds on the interval $(s'-\frac 1{6C},s')$, which is a subset of $(s-\frac{2}{3C},s)$.

On the other hand suppose that $\omega_1(s)>\frac{1}{3C^2}$. Then $\omega_2(s)<1-\frac{1}{3C^2}$, and so on the interval $(s,s+\frac{1}{3C^3})$, $\omega_2$ remains less than 1, as does $\omega_1$. Since both are still increasing, \eqref{greater1} holds on the interval $(s,s+\frac{1}{3C^3})$.

In either case, the interval $(s-\frac{2}{3C},s+\frac{2}{3C})$ contains an interval of length at least $\frac{1}{6C^3}$ where \eqref{greater1} holds. Since there are infinitely many values of $s$, \eqref{integparts1} follows.

The relation \eqref{integparts2} is proved in a similar manner. 
\end{proof}

\subsection{Enumeration of quasi-eigenvalues for non-exceptional polygons}\label{subs:noexc}
Let $\mathcal{P}:=\mathcal{P}(\balpha,\bell)$ be a partially curvilinear non-exceptional polygon and let $\T(\sigma):=\hat{\mathtt{T}}(\balpha,\bell,\sigma)$ be the lifted corresponding matrix defined in subsection \ref{subsec:propscat} acting on the universal cover $\UC$. Recall that a real number $\sigma \ge 0$ is  a quasi-eigenvalue of the polygon 
$\mathcal{P}$
 if the matrix  $\mathtt{T}(\balpha,\bell,\sigma)$ has eigenvalue one. Equivalently, this means  that there exists a vector $ 0\neq \hat{\bm{\xi}} \in \UC$ such that $|\hat{\bm{\xi}}|=|\hat{\mathtt{T}}(\sigma) \hat{\bm{\xi}}|$ and 
\begin{equation}
\label{condperiod}
\arg\left(\hat{\mathtt{T}}(\sigma)\hat{\bm{\xi}}\right)=\arg\hat{\bm{\xi}} \pmod{2\pi}.
\end{equation}

Let us for the moment switch back to the representation of vectors and matrices on $\mathbb{R}^2$. Given that $\det(\mathtt{T}^\sha(\sigma))=1$, for each $\sigma$ there exist two linearly independent vectors $\mathfrak{t}^\sha_1=\mathfrak{t}^\sha_1(\mathtt{T}^\sha(\sigma))$ and $\mathfrak{t}^\sha_2=\mathfrak{t}^\sha_2(\mathtt{T}^\sha(\sigma))$ such that 
\[
\left|\mathtt{T}^\sha(\sigma)\mathfrak{t}^\sha_j\right|=\left|\mathfrak{t}^\sha_j\right|,\qquad j=1,2.
\]  
Indeed, by polar decomposition the matrix $\mathtt{T}^\sha(\sigma)$ could be represented as a product of a symmetric matrix and a rotation; the latter does not change length, and for a symmetric matrix the statement is easy to check. (Interestingly, the problem of finding the vectors whose length is preserved under the action of a given matrix has other unexpected applications, see \cite{Tol21}.) Moreover, the vectors $\mathfrak{t}^\sha_1$ and $\mathfrak{t}^\sha_2$ are uniquely defined up to multiplication by a constant or up to a swap, unless $\mathtt{T}^\sha(\sigma)$ is a pure rotation, in which case one could take any pair of linearly independent vectors. To fix the argument, we shall assume that 
\begin{equation}
\label{argassump}
0\le \arg \mathfrak{t}^\sha_j <\pi, \qquad j=1,2.
\end{equation}
Set now
\[
\hbe_j:=\Pro^{-1}\mathfrak{t}^\sha_j , \qquad j=1,2,
\]
and
\begin{equation}
\label{def:delta}
\mathfrak{d}_j(\hat{\mathtt{T}}(\sigma)):=\arg\left(\hat{\mathtt{T}}(\sigma) \hbe_j\right)-\arg\left(\hbe_j\right), \qquad j=1,2.
\end{equation}
Note that $\mathfrak{d}_j(\T(\sigma))$ is always well-defined: if $\T(\sigma)$ is a rotation by an angle $\psi$, then $\mathfrak{d}_j(\T(\sigma))=\psi$ for any choice of $\hbe_j$.
The following proposition is an immediate consequence of Definition \ref{def:quasi}. 
\begin{proposition}
\label{prop:deltazeros}
A number $\sigma \ge 0$ is a quasi-eigenvalue of the polygon $\mathcal{P}$ if and only if $\mathfrak{d}_j(\T(\sigma))=0\pmod{2\pi}$ for either $j=1$ or $j=2$.
If $\sigma>0$ and $\mathfrak{d}_j(\T(\sigma))=0 \pmod{2\pi}$ for both $j=1,2$, then $\sigma$ is a quasi-eigenvalue of multiplicity two.
\end{proposition}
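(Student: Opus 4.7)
The plan is to reduce everything to the basic linear-algebraic fact that, for $\mathtt{T}^\sha(\sigma)\in SL(2,\mathbb{R})$, the eigenvalue $1$ occurs precisely when $\mathtt{T}^\sha(\sigma)$ fixes some nonzero vector $\mathbf{v}\in\mathbb{R}^2$, and then to translate the equality $\mathtt{T}^\sha(\sigma)\mathbf{v}=\mathbf{v}$ into the vanishing condition \eqref{def:delta} on the universal cover.

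First, I would characterise the vectors $\mathfrak{t}^\sha_j$ more concretely using polar decomposition $\mathtt{T}^\sha(\sigma)=\mathtt{R}\,\mathtt{S}$, where $\mathtt{R}\in\mathcal{R}^\sha$ and $\mathtt{S}$ is symmetric positive definite with determinant $1$ (cf.\ subsection \ref{subsec:rep}). Since $|\mathtt{T}^\sha\mathbf{v}|=|\mathtt{S}\mathbf{v}|$, the locus of vectors with $|\mathtt{T}^\sha\mathbf{v}|=|\mathbf{v}|$ consists either of two distinct lines through the origin (when $\mathtt{S}\neq\Id$, the eigenvalues of $\mathtt{S}$ being $\tau\neq 1$ and $1/\tau$), or of all of $\mathbb{R}^2$ (when $\mathtt{S}=\Id$, i.e.\ when $\mathtt{T}^\sha$ itself is a rotation). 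In either case, $\mathfrak{t}^\sha_1$ and $\mathfrak{t}^\sha_2$ span $\mathbb{R}^2$, and any vector $\mathbf{v}$ satisfying $\mathtt{T}^\sha\mathbf{v}=\mathbf{v}$ must lie in the length-preserving locus, hence be proportional to one of $\mathfrak{t}^\sha_1$, $\mathfrak{t}^\sha_2$.

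For the main equivalence, I would argue as follows. By definition \eqref{def:delta}, the condition $\mathfrak{d}_j(\T(\sigma))\equiv 0\pmod{2\pi}$ says exactly that $\T(\sigma)\hbe_j$ and $\hbe_j$ have the same modulus (which is automatic by the choice of $\mathfrak{t}^\sha_j$) and arguments differing by a multiple of $2\pi$; applying $\Pro$ and using \eqref{eq:Prohat}, this is equivalent to $\mathtt{T}^\sha(\sigma)\mathfrak{t}^\sha_j=\mathfrak{t}^\sha_j$, i.e.\ to $\mathfrak{t}^\sha_j$ being a $1$-eigenvector of $\mathtt{T}^\sha(\sigma)$. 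Together with the observation of the previous paragraph, this shows that $\mathtt{T}^\sha(\sigma)$ has eigenvalue $1$ if and only if $\mathfrak{d}_j(\T(\sigma))\equiv 0\pmod{2\pi}$ for at least one $j\in\{1,2\}$, which is precisely Definition \ref{def:quasi}.

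Finally, for the multiplicity claim: if $\mathfrak{d}_1(\T(\sigma))\equiv\mathfrak{d}_2(\T(\sigma))\equiv 0\pmod{2\pi}$, then by the above $\mathtt{T}^\sha(\sigma)$ fixes both $\mathfrak{t}^\sha_1$ and $\mathfrak{t}^\sha_2$, and since these are linearly independent we must have $\mathtt{T}^\sha(\sigma)=\Id$, hence $\mathtt{T}(\balpha,\bell,\sigma)=\Id$; by Lemma \ref{lem:Tproperties}(b) and Definition \ref{def:quasimult} this yields geometric multiplicity two, as required. The only mildly delicate point, which I anticipate to be the main (though modest) obstacle, is the non-uniqueness issue when $\mathtt{T}^\sha(\sigma)$ is a pure rotation: there the $\mathfrak{t}^\sha_j$ are not uniquely determined, so one must check that $\mathfrak{d}_j$ is nevertheless unambiguously defined --- this is immediate because a rotation lifts to translation by a fixed angle on $\UC$, so $\mathfrak{d}_j(\T(\sigma))$ equals that angle independently of $\hbe_j$, as already noted after \eqref{def:delta}.
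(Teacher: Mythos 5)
Your proposal is correct. The paper itself gives no proof — it simply declares the proposition an immediate consequence of Definition \ref{def:quasi} — and your argument is the natural fleshing-out of that declaration: you translate $\mathfrak{d}_j\equiv 0\pmod{2\pi}$ into $\mathtt{T}^\sha(\sigma)\mathfrak{t}^\sha_j=\mathfrak{t}^\sha_j$ via \eqref{eq:Prohat}, observe that any $1$-eigenvector of an $SL(2,\mathbb{R})$ matrix must lie in the length-preserving locus (which, outside the rotation case, is precisely the pair of lines $\operatorname{Span}\mathfrak{t}^\sha_1\cup\operatorname{Span}\mathfrak{t}^\sha_2$), and deduce the multiplicity-two case from $\mathtt{T}^\sha(\sigma)=\Id$ together with Lemma \ref{lem:Tproperties}(b) and Definition \ref{def:quasimult}. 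Your handling of the degenerate rotation case — noting $\mathfrak{d}_j$ is well defined because a lifted rotation translates all arguments by the same fixed amount — is exactly the remark the authors make after \eqref{def:delta}. There is no alternative route here; this is the proof the authors had in mind.
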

\begin{remark}
\label{rem:deltachoice}
The matrix $\T(\sigma)$ corresponding to a polygon $\mathcal{P}$ is defined up to similarity: it depends on the choice of enumeration of vertices of the polygon $\mathcal{P}$. 
As a consequence,  the vectors $\hbe_j$ and the functions $\mathfrak{d}_j(\sigma)$, $j=1,2$, depend on this choice as well.  To simplify  notation, in what follows we write 
$\hbe_j(\sigma):=\hbe_j(\T(\sigma))$ and $\mathfrak{d}_j(\sigma):=\mathfrak{d}_j(\T(\sigma))$, when the choice of the matrix $\T$ is clear from the context.  Note also that by Proposition \ref{prop:deltazeros},  the values of $\sigma$ such that $\mathfrak{d}_j(\T(\sigma))=0 \pmod{2\pi}$  depend only on  the polygon $\mathcal{P}$ but not on the choice of the matrix $\T$, cf. Remark  \ref{rem:Tnotinvariant}.
\end{remark}
 The following regularity properties of the functions $\mathfrak{d}_j(\T(\sigma))$, $j=1,2$ may be deduced from the structure of the matrices $\T(\sigma)$.
\begin{lemma}
\label{lemma:diff}
The function $\mathfrak{d}_j(\T(\sigma))$ is a continuous function in $\sigma$, $j=1,2$.  Moreover, if $\T(\sigma_0)$ is not a rotation, then $\mathfrak{d}_j(\T(\sigma))$ is differentiable at $\sigma=\sigma_0$;
otherwise, left and right derivatives at $\sigma=\sigma_0$ exist.
\end{lemma}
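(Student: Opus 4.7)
The plan is to exploit the analyticity of $\T(\sigma)$. Observe that the entries of $\mathtt{T}^\sha(\sigma)$ are real-analytic functions of $\sigma$, since this matrix is a finite product of constant symmetric matrices $\mathtt{A}^\sha(\alpha_j)$ and rotation matrices $\mathtt{R}^\sha(\sigma\ell_j)$ whose entries are trigonometric in $\sigma\ell_j$. The defining condition $|\mathtt{T}^\sha \mathfrak{t}^\sha_j|=|\mathfrak{t}^\sha_j|$ can be re-written as $(\mathfrak{t}^\sha_j)^T(M(\sigma)-I)\mathfrak{t}^\sha_j = 0$, where $M(\sigma):=(\mathtt{T}^\sha(\sigma))^T\mathtt{T}^\sha(\sigma)$ is symmetric with $\det M \equiv 1$ and $\operatorname{tr}M \geq 2$, with equality precisely when $\T(\sigma)$ is a rotation (since $\mathtt{T}^\sha$ has determinant $1$). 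Writing $M(\sigma)-I = \begin{pmatrix} a(\sigma) & b(\sigma)\\ b(\sigma) & c(\sigma)\end{pmatrix}$, the directions $\theta_j(\sigma)$ of $\mathfrak{t}_j^\sha(\sigma)$ solve the quadratic
\[
a(\sigma)\cos^2\theta+2b(\sigma)\sin\theta\cos\theta+c(\sigma)\sin^2\theta = 0.
\]

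First I would treat the case when $\T(\sigma_0)$ is not a rotation. Then $(a,b,c)(\sigma_0)\neq(0,0,0)$, and the discriminant $b^2-ac = -\det(M-I) = \operatorname{tr}M - 2$ is strictly positive at $\sigma_0$, hence positive in a neighbourhood. Thus the quadratic above has two distinct analytic branches $\theta_j(\sigma)$, giving analytic lifts $\hbe_j(\sigma)\in\UC$ (one can even write them down explicitly by the quadratic formula). Combined with analyticity of $\T(\sigma)$, this immediately shows that $\mathfrak{d}_j(\sigma)=\arg(\T(\sigma)\hbe_j(\sigma))-\arg(\hbe_j(\sigma))$ is real-analytic near $\sigma_0$, establishing both continuity and differentiability in this regime.

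The main obstacle is the rotation case $\T(\sigma_0) = \hat{\mathtt R}(\psi)$, where $a=b=c$ all vanish at $\sigma_0$ and the null directions degenerate. Continuity of $\mathfrak{d}_j$ at $\sigma_0$ is nevertheless automatic: since $\arg(\hat{\mathtt R}(\psi) v)-\arg(v) = \psi$ for \emph{every} $v\in\UC$, any subsequential limit of $\mathfrak{d}_j(\sigma)$ as $\sigma\to\sigma_0$ equals $\psi = \mathfrak{d}_j(\sigma_0)$, irrespective of how $\hbe_j(\sigma)$ behaves. For one-sided differentiability I would Taylor-expand $\mathtt{T}^\sha(\sigma)=\mathtt{R}^\sha(\psi)\bigl(I+h\mathtt{L}+O(h^2)\bigr)$ with $h=\sigma-\sigma_0$ and $\mathtt{L}=\mathtt{R}^\sha(-\psi)(\mathtt{T}^\sha)'(\sigma_0)$, yielding $M(\sigma)-I = 2h\,\mathtt{L}_s + O(h^2)$, where $\mathtt{L}_s:=(\mathtt{L}+\mathtt{L}^T)/2$ is symmetric with $\operatorname{tr}\mathtt{L}_s=0$ (as $\det M\equiv 1$). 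In the generic case $\mathtt{L}_s\neq 0$, the two null directions of $M(\sigma)-I$ converge, as $\sigma\to\sigma_0^\pm$, to the (fixed) null directions of $\mathtt{L}_s$, at a rate that, by direct solution of the quadratic in $\theta$ with analytic $h$-dependent coefficients, gives a Puiseux expansion of $\theta_j(\sigma)-\theta_j(\sigma_0^\pm)$ in powers of $h$ whose leading term is linear. Substituting this expansion into the first-order Taylor approximation of $\arg(\T(\sigma)\hbe_j(\sigma))-\arg(\hbe_j(\sigma))$ yields finite one-sided derivatives; a possible swap of the labels $\hbe_1\leftrightarrow\hbe_2$ across $\sigma_0$ is harmless since we only claim one-sided (not two-sided) differentiability. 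The degenerate sub-case $\mathtt{L}_s=0$ is handled analogously by expanding $M(\sigma)-I$ to the lowest order at which a nonzero symmetric coefficient appears (such an order exists unless $\mathtt{T}^\sha(\sigma)$ is a rotation throughout a full neighbourhood of $\sigma_0$, in which case $\mathfrak{d}_j$ is manifestly smooth).
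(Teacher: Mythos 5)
Your argument is correct, and it takes a genuinely different route from the paper. The paper decomposes $\T(\sigma)$ as a rotation $\R(L\sigma)$ times a product $\Sh_n(\sigma)\cdots\Sh_1(\sigma)$ of $\sigma$-dependent symmetric matrices and then invokes Rellich's theorem on analytic perturbation of symmetric matrices to extract analytic branches of the unit-length-preserving directions, treating the point where $\T(\sigma_0)$ is a rotation as an intersection of analytic eigenvalue branches. You instead work directly with the Gram matrix $M(\sigma)=(\mathtt{T}^\sha)^{T}\mathtt{T}^\sha$, recast the condition $|\mathtt{T}^\sha v|=|v|$ as a quadratic equation $v^{T}(M-I)v=0$ in the direction angle $\theta$, and obtain analytic branches by hand: non-degeneracy of the discriminant $b^2-ac=\operatorname{tr}M-2>0$ gives analytic roots away from the rotation locus, while at a rotation you divide out the leading power $h^k$ of $M(\sigma)-I$ (whose trace-free symmetric coefficient $L_k$ is nonzero unless $\T$ is identically a rotation nearby) and apply the same discriminant argument to the rescaled analytic quadratic. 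Your observation that $\operatorname{tr}L_k=0$ follows from $\det M\equiv 1$ is the right way to see why the rescaled quadratic remains non-degenerate. Your approach is more self-contained — it re-derives the relevant $2\times 2$ piece of Rellich explicitly rather than citing it — at the cost of slightly more hands-on bookkeeping; the paper's proof is shorter in exchange for the external reference. You also correctly identify that the possible relabelling of $\hbe_1\leftrightarrow\hbe_2$ at $\sigma_0$ is exactly why the lemma only claims one-sided derivatives there. Two minor stylistic points: what you call a ``Puiseux expansion'' is in fact an ordinary Taylor expansion (after dividing by $h^k$ the roots are analytic, not merely Puiseux), and the statement that the leading term is linear is not needed — $C^1$ one-sided dependence of the root directions suffices to conclude.
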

\begin{proof}
Let us write down the polar decomposition for $\T(\sigma)$ explicitly.  First, observe by a direct computation that  $\Sh(\tau,\w)\R(\psi)=\R(\psi)\Sh(\tau,\R(-\psi)\w)$, where $\Sh$ is a symmetric matrix and $\R$ is a rotation as defined in subsection \ref{subsec:rep}. Iterating this relation  and taking into account \eqref{eq:Ahat} we obtain
\begin{equation}
\label{polardec}
\T(\sigma)=\T(\balpha, \bell, \sigma)= \R(L\sigma) \Sh_n(\sigma)\Sh_{n-1}(\sigma)\cdots \Sh_1(\sigma),
\end{equation}
where $L_j=\sum_{k=1}^j \ell_k$, $L=L_n$, and 
\[
\Sh_j(\sigma)=\Sh(a_1(\alpha_j)-a_2(\alpha_j),\R(-L_j\sigma)\Codd).
\]
It follows from \eqref{polardec} that $\T(\sigma)$ is a rotation if and only if 
\begin{equation}
\label{rotcond}
\Sh_n(\sigma)\Sh_{n-1}(\sigma)\cdots \Sh_1(\sigma) =\pm \hat{\Id}.
\end{equation}
Moreover, note that the entries of $\T(\sigma)$ are real analytic functions of $\sigma$. Hence,  for a given $\sigma=\sigma_0$  there are three possibilities: 
\begin{itemize}
\item[(i)] $\T(\sigma)$ is not a rotation  in some neighbourhood of $\sigma_0$. Then the vectors $\hbe_j(\sigma)$, $j=1,2$, are uniquely defined for each $\sigma$ in this neighbourhood and 
$\mathfrak{d}_j(\sigma)$ depends smoothly on $\sigma$.
\item[(ii)] $\T(\sigma)$ is a rotation in some neighbourhood of $\sigma_0$. Then $\mathfrak{d}_j(\sigma)$ is a linear function in $\sigma$ in this neighbourhood.
\item[(iii)] $\T(\sigma_0)$ is a rotation, but $T(\sigma)$ is not a rotation in some punctured neighbourhood of $\sigma_0$. In this case $\sigma=\sigma_0$ corresponds to a double eigenvalue. 
However, we claim the left and right derivatives of  $\mathfrak{d}_j(\sigma)$, which are defined a priori only for $\sigma\neq\sigma_0$, in fact exist at $\sigma=\sigma_0$. Indeed, this follows from a standard perturbation theory result of Rellich \cite{Rel}. The matrix $\Sh_n(\sigma)\dots\Sh_1(\sigma)$ is a symmetric matrix, all of whose coefficients have analytic dependence on $\sigma$. By \cite[Theorem 1, p. 42]{Rel}, its eigenvalues and eigenvectors may be chosen to have analytic dependence on $\sigma$ in a neighbourhood of $\sigma=\sigma_0$, with $\sigma=\sigma_0$ corresponding to an intersection of analytic branches. By a direct calculation, the unit vectors whose length is preserved by $\Sh_n(\sigma)\dots\Sh_1(\sigma)$ have analytic dependence on the eigenvalues and eigenvectors, and hence themselves depend analytically on $\sigma$. However, by \eqref{polardec}, these vectors are precisely $\hbe_j(\sigma)$, $j=1,2$. The result follows.
\end{itemize}
This completes the proof of the lemma.
\end{proof}

The next proposition is important for our analysis.
\begin{proposition}
\label{delta:monotone}
The functions $\mathfrak{d}_j(\T(\sigma))$ are monotone increasing in $\sigma$ and
$$
0<C_1 \le \frac{\dr^\pm\,\mathfrak{d}_j(\T(\sigma))}{\dr\sigma} \le C_2, 
$$
where $\frac{d^\pm}{d\sigma}$ denotes  one-sided derivatives.
\end{proposition}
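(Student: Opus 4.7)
The plan is to prove Proposition \ref{delta:monotone} by combining the polar-type decomposition \eqref{polardec} of $\T(\sigma)$ with a careful chain-rule analysis of the $\sigma$-dependence of the length-preserving vectors $\hbe_j(\sigma)$, together with the monotonicity established in Lemma \ref{lem:monotoneT} and its quantitative refinement from Lemma \ref{lem:phi}.

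As a first step, I would exploit \eqref{polardec} to write $\T(\sigma)=\R(L\sigma)\tilde{\T}(\sigma)$, where $L=\sum_{k=1}^{n}\ell_k>0$ and $\tilde{\T}(\sigma)=\Sh_n(\sigma)\Sh_{n-1}(\sigma)\cdots\Sh_1(\sigma)$. Since the rotation $\R(L\sigma)$ preserves all lengths, the set of length-preserving unit vectors for $\T(\sigma)$ and for $\tilde{\T}(\sigma)$ coincides, so the vectors $\hbe_j(\sigma)$ are the same for both matrices. Moreover, by the definition of the lifted action of $\R(L\sigma)$ on $\UC$ (see subsection \ref{subsec:rep}), we obtain the additive splitting
\begin{equation*}
\mathfrak{d}_j(\T(\sigma))=L\sigma+\mathfrak{d}_j(\tilde{\T}(\sigma)).
\end{equation*}
The first term already contributes a strictly positive slope $L$. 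It remains to bound the derivative of $\mathfrak{d}_j(\tilde{\T}(\sigma))$ in absolute value by a constant strictly less than $L$, which would yield both monotonicity and the two-sided derivative bounds.

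The second step is to carry out this estimate. Using $\Sh_j(\sigma)=\R(-L_j\sigma)\,\hat{\mathtt{A}}(\alpha_j)\,\R(L_j\sigma)$, I would rewrite $\tilde{\T}(\sigma)$ as an alternating product of rotations through the partial arc-lengths $\ell_k$ and the fixed (and $\sigma$-independent) scattering matrices $\hat{\mathtt{A}}(\alpha_k)$. Parametrising unit vectors by their angle $\alpha$ via $\mathbf{h}(\alpha)$, set $\Phi(\alpha,\sigma):=\arg\bigl(\tilde{\T}(\sigma)\mathbf{h}(\alpha)\bigr)$ on $\UC$, so that
\begin{equation*}
\mathfrak{d}_j(\tilde{\T}(\sigma))=\Phi(\alpha_j(\sigma),\sigma)-\alpha_j(\sigma),\qquad \frac{\dr\mathfrak{d}_j(\tilde\T(\sigma))}{\dr\sigma}=\partial_\sigma\Phi+(\partial_\alpha\Phi-1)\alpha_j'(\sigma).
\end{equation*}
By Lemma \ref{lem:monotoneT} and the argument of Lemma \ref{lem:phi}, both $\partial_\sigma\Phi$ and $\partial_\alpha\Phi$ are bounded between fixed positive constants that depend only on $\balpha$ and $\bell$. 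Implicit differentiation of the length-preservation constraint $\mathbf{h}(\alpha_j(\sigma))^{T}\tilde{\T}(\sigma)^{T}\tilde{\T}(\sigma)\mathbf{h}(\alpha_j(\sigma))=1$ yields an explicit expression for $\alpha_j'(\sigma)$ in terms of the bounded derivatives of the matrix entries of $\tilde{\T}$, with a denominator that is nonzero precisely when $\T(\sigma)$ is not a pure rotation. This will produce the required bound $|\dr\mathfrak{d}_j(\tilde\T)/\dr\sigma|<L$, and hence $C_1\le\dr\mathfrak{d}_j(\T(\sigma))/\dr\sigma\le C_2$ with $C_1,C_2>0$.

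The main obstacle arises at the isolated values $\sigma=\sigma_0$ where $\T(\sigma_0)$ is a pure rotation: there the two length-preserving directions merge, $\alpha_j'$ can blow up from either side, and only one-sided derivatives of $\mathfrak{d}_j$ are guaranteed by Lemma \ref{lemma:diff}. To handle these degeneracies I would invoke the Rellich analytic perturbation theorem already used in the proof of Lemma \ref{lemma:diff}: the eigenvalues and eigenvectors of the symmetric matrix $\Sh_n(\sigma)\cdots\Sh_1(\sigma)$ extend analytically through $\sigma_0$ as crossing analytic branches, and the length-preserving directions $\hbe_j(\sigma)$ inherit analytic continuations on each side. This allows the chain-rule computation above to be applied one-sidedly, yielding the same derivative bounds near $\sigma_0$ as in the generic regime. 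Combined with the continuity of $\mathfrak{d}_j$ across the isolated degeneracies, this upgrades the derivative bound to strict monotonicity of $\mathfrak{d}_j(\T(\sigma))$ on all of $\mathbb{R}$, finishing the proof.
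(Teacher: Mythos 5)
You correctly identify the additive splitting $\mathfrak{d}_j(\T(\sigma)) = L\sigma + \mathfrak{d}_j(\tilde\T(\sigma))$ and the chain-rule structure $\frac{\dr\mathfrak{d}_j}{\dr\sigma} = \partial_\sigma\Phi + (\partial_\alpha\Phi - 1)\,\alpha_j'(\sigma)$, but the argument has a genuine gap exactly at the step that carries the proof.

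The crux of the paper's proof is that the cross-term $(\partial_\alpha\Phi - 1)\,\alpha_j'(\sigma)$ --- the paper's quantity $D_2$ --- vanishes \emph{identically}, not merely that it can be bounded. This is an exact cancellation coming from the geometry of unimodular matrices: since $\hbe_j(\sigma)$ is a length-preserving direction for $\mathtt T^\sha(\sigma)$ and $\det \mathtt T^\sha(\sigma)=1$, area preservation together with radial length preservation forces the angular Jacobian of $\mathtt T^\sha(\sigma)$ at $\hbe_j(\sigma)$ to equal $1$. The paper establishes this by comparing the areas of the parallelograms spanned by the first-order variation of $\hbe_j(\sigma)$ and by its image under $\T(\sigma_0)$. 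Your plan --- get $\alpha_j'(\sigma)$ from implicitly differentiating the constraint and then bound it --- does not reproduce this cancellation. Near the isolated values where $\T(\sigma)$ approaches a pure rotation, $\alpha_j'(\sigma)$ genuinely blows up; the reason $\frac{\dr\mathfrak{d}_j}{\dr\sigma}$ nevertheless stays bounded is precisely that the factor $(\partial_\alpha\Phi - 1)$ evaluated at $\alpha = \alpha_j(\sigma)$ vanishes for \emph{every} $\sigma$, so the blow-up never enters. Invoking Rellich to obtain one-sided analytic branches across the crossing, as you propose, is part of Lemma \ref{lemma:diff} and gives existence of the one-sided derivatives, but it does not by itself provide a bound; without $D_2 \equiv 0$ your proof does not close.

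Separately, your target bound $\left|\frac{\dr\mathfrak{d}_j(\tilde\T)}{\dr\sigma}\right| < L$ is not true in general, so even with $D_2 = 0$ in hand this route would stall. Once $D_2 = 0$, one has $\frac{\dr\mathfrak{d}_j(\tilde\T)}{\dr\sigma} = D_1 - L$, where $D_1 = \frac{\dr}{\dr\sigma}\arg\!\bigl(\T(\sigma)\hbe_j(\sigma_0)\bigr)$ and the paper bounds $C_1 \le D_1 \le C_2$ by iterating the $\arctan$-chain-rule estimate of Lemma \ref{lem:phi}. The upper constant $C_2$ is a product of the $\ell_k$'s with factors controlled by the eigenvalue ratios of the $\A(\alpha_k)$, and there is nothing forcing $C_2 < 2L$. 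The paper's proof never needs such an inequality: it bounds $D_1$ directly and concludes. Your decomposition isolates $L\sigma$ as the ``dominant'' part but then requires a bound on the remainder that the actual geometry does not supply; the efficient path is to show $D_2 = 0$ and then estimate $D_1$ as a whole.
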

\begin{proof}
We consider separately the cases (i)-(iii) above. Consider first case (i). Then $\mathfrak{d}_j(\T\sigma)$ depends smoothly on $\sigma$ and
\[
\begin{split}
 \left.\frac{\dr\,\mathfrak{d}_j(\T(\sigma))}{\dr\sigma}\right|_{\sigma=\sigma_0}&= \left.\frac{\dr}{\dr\sigma}\right|_{\sigma=\sigma_0}\arg(\T(\sigma))\hbe_j(\sigma)-
\left.\frac{\dr}{\dr\sigma}\right|_{\sigma=\sigma_0} \arg\hbe_j(\sigma)\\
&= \left.\frac{\dr}{\dr\sigma}\right|_{\sigma=\sigma_0}\arg(\T(\sigma))\hbe_j(\sigma_0)\\
&\quad+
\left( \left.\frac{\dr}{\dr\sigma}\right|_{\sigma=\sigma_0}\arg(\T(\sigma_0))\hbe_j(\sigma)-\left.\frac{\dr}{\dr\sigma}\right|_{\sigma=\sigma_0} \arg\hbe_j(\sigma)\right).
\end{split}
\]
Set 
\[
D_1(\sigma)= \left.\frac{\dr}{\dr\sigma}\right|_{\sigma=\sigma_0}\arg(\T(\sigma))\hbe_j(\sigma_0)
\]
and 
\[
D_2(\sigma)= \left.\frac{\dr}{\dr\sigma}\right|_{\sigma=\sigma_0}\arg(\T(\sigma_0))\hbe_j(\sigma)-\left.\frac{\dr}{\dr\sigma}\right|_{\sigma=\sigma_0} \arg\hbe_j(\sigma)
\]
Arguing in the same way as in the proof of of Lemma \ref{lem:phi} one can check that there exist constants $C_1,C_2>0$ such that $C_1\le D_1(\sigma)\le C_2$. 
The proof of Proposition \ref{delta:monotone} in case (i) then follows from the following claim:
\begin{equation}
\label{claimI2}
D_2(\sigma)=0.
\end{equation}
To prove \eqref{claimI2}, let us assume without loss of generality that $j=1$ and $|\hbe_1(\sigma_0)|=1$.  For $\sigma$ close to $\sigma_0$, let  
\newcommand{\myparallel}{{\mkern3mu\vphantom{\perp}\vrule depth 0pt\mkern2mu\vrule depth 0pt\mkern3mu}}
\[
\hbe_1(\sigma)=\hbe_1(\sigma_0)+(\sigma-\sigma_0)  \hbv_1^\myparallel+ (\sigma-\sigma_0)  \hbv_1^\perp + o(\sigma-\sigma_0),
\]
where $\hbv_1^\myparallel$ is a vector in the same direction as $\hbe_1(\sigma_0)$ and the angle between $\hbv_1^\myparallel$ and $\hbv_1^\perp$ is equal to $\pi/2$. It is easy to see that 
\begin{equation}
\label{hbv}
\left.\frac{\dr}{\dr\sigma}\right|_{\sigma=\sigma_0} \arg\hbe_1(\sigma)=\left|\hbv_1^\perp\right|.
\end{equation}
 Let $\hbw_1'=\T(\sigma_0)\hbv_1^\myparallel$ and $\hbw_1''=\T(\sigma_0)\hbv_1^\perp$.
By definition of $\hbe_1(\sigma_0)$ we have $|\T(\sigma_0)\hbe_1(\sigma_0)|=|\hbe_1(\sigma_0)|$ and therefore $\left|\hbw_1'\right|=\left|\hbv_1^\myparallel\right|$.   At the same time, $\det \T(\sigma_0)=1$,
and therefore the areas of the parallelograms generated by the pairs of vectors $(\hbv_1^\myparallel,\hbv_1^\perp)$ and $(\hbw_1', \hbw_1'')$ are the same. Hence the  projection of $\hbw_1''$
on $(\hbw_1')^\perp$ has the same length as $\hbv_1^\perp$.  One can check that the length of this projection is equal to 
$\left.\frac{\dr}{\dr\sigma}\right|_{\sigma=\sigma_0}\arg(\T(\sigma_0))\hbe_1(\sigma)$.  Hence,  taking into account \eqref{hbv}, one obtains \eqref{claimI2}.

This completes the proof of Proposition \ref{delta:monotone} in case (i). In case (iii) the argument is exactly the same with the derivative replaced by one-sided derivatives.
Consider now the remaining case  (ii). Then, as follows from \eqref{polardec} and \eqref{rotcond},  the function $\mathfrak{d}_j(\T\sigma)$ is linear in $\sigma$ and its derivative is equal to $L$,
which immediately implies the proposition.
\end{proof}

Let us now define the natural enumeration for polygons. Let now $\{\sigma_m(\mathcal{P})\}, m\in \mathbb{Z}$, be the sequence of \emph{all real} quasi-eigenvalues of the polygon
$\mathcal{P}$ repeated with multiplicities, which is monotone increasing with $m$, see Remark \ref{rem:symmetric}. Recall that a quasi-eigenvalue $\sigma$ has  multiplicity two if 
$\mathfrak{d}_j(\T(\sigma))=0\,\, (\operatorname{mod}\,\,  2\pi$) for both $j~=~1,2$.
\begin{definition}
The first quasi-eigenvalue $\sigma_1(\mathcal{P})$  is defined as the first non-negative element the sequence  $\{\sigma_m(\mathcal{P})\}$. Moreover, if $\sigma_1(\mathcal{P})=0$ then
$\sigma_2(\mathcal{P})>0$, i.e., a zero quasi-eigenvalue is counted only once.
\end{definition}

We can now state the main result of this subsection that the natural enumeration of quasi-eigenvalues  yields the correct enumeration of Steklov eigenvalues for partially curvilinear polygons without exceptional angles.
\begin{theorem}
\label{thm:mainpolygons}
Theorem \ref{thm:main} holds for partially curvilinear non-exceptional polygons. 
\end{theorem}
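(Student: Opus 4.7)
The plan is to extend the gluing strategy used for zigzags in Propositions~\ref{lemma1}--\ref{lemma2} to the cyclic setting of a polygon, combining Dirichlet--Neumann bracketing with the zigzag enumeration result (Theorem~\ref{thm:quasizigzags}) and the injectivity result (Theorem~\ref{thm:injectivity}). First, since $\mathcal{P}$ is partially curvilinear with at least one straight boundary segment near every vertex, I would choose two points $W_1, W_2$ on straight portions of $\partial\mathcal{P}$ such that the segment $\mathcal{W}:=[W_1,W_2]$ lies in $\overline{\mathcal{P}}$ and meets $\partial\mathcal{P}$ perpendicularly at both endpoints. Cutting along $\mathcal{W}$ decomposes $\mathcal{P}$ into two partially curvilinear zigzag domains $\Omega_{\tI}, \Omega_{\tII}$ whose Steklov portions are zigzags $\mathcal{Z}_\tI, \mathcal{Z}_\tII$. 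Imposing a boundary condition $\daleth\in\{D,N\}$ on $\mathcal{W}$ yields the standard Dirichlet--Neumann bracketing
\[
\mathcal{N}_{\Omega_\tI^{(DD)}}(\lambda)+\mathcal{N}_{\Omega_\tII^{(DD)}}(\lambda)\le \mathcal{N}_{\mathcal{P}}(\lambda)\le \mathcal{N}_{\Omega_\tI^{(NN)}}(\lambda)+\mathcal{N}_{\Omega_\tII^{(NN)}}(\lambda).
\]

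Next, I would establish the following cyclic analogue of Lemma~\ref{lemma2key}: for any $\aleph\in\{N,D\}$ there exists $\delta>0$ such that for any $M>0$ there is an interval $\mathcal{I}\subset(M,+\infty)$ of length $\delta$ on which
\[
\mathcal{N}^q_{\Omega_\tI^{(\aleph\aleph)}}(\sigma)+\mathcal{N}^q_{\Omega_\tII^{(\aleph\aleph)}}(\sigma)=\mathcal{N}^q_{\mathcal{P}}(\sigma).
\]
The idea is that on the universal cover, the polygon's cyclic transfer matrix factors as $\hat{\mathtt{T}}(\sigma)=\hat{\mathtt{U}}_\tII(\sigma)\hat{\mathtt{U}}_\tI(\sigma)$ after aligning the base point with $W_1$. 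The natural enumeration count $\mathcal{N}^q_{\mathcal{P}}(\sigma)$ is (up to a constant) $\frac{1}{2\pi}(\mathfrak{d}_1(\hat{\mathtt{T}}(\sigma))+\mathfrak{d}_2(\hat{\mathtt{T}}(\sigma)))$ read modulo $2\pi$, where $\mathfrak{d}_j$ are the winding functions of Section~\ref{subs:noexc}, which by Proposition~\ref{delta:monotone} are strictly monotone with derivatives bounded away from $0$ and $\infty$. On the other hand, by \eqref{eq:phialephbethdef}, the zigzag counts are integer parts of the monotone winding functions $\varphi_{\Omega_\tI^{(\aleph\aleph)}}(\sigma)$ and $\tilde{\varphi}_{\Omega_\tII^{(\aleph\aleph)}}(\sigma)$. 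Tracking the argument along $\hat{\mathtt{T}}\hbe_j$ as it traverses $\Omega_\tI$ then $\Omega_\tII$, and comparing with the directional matching conditions for the zigzags at $W_1, W_2$, one obtains the identity above up to a global constant. The constant is fixed by evaluating at $\sigma=0$ (similarly to the proof of Proposition~\ref{prop:startcount} and equation~\eqref{eq:phiZZIZII}). Proposition~\ref{prop:integparts} then guarantees the existence of the intervals $\mathcal{I}$ on which the integer parts add exactly.

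With these ingredients, the conclusion is standard. Theorem~\ref{thm:quasizigzags} gives $\mathcal{N}_{\Omega^{(\aleph\beth)}}-\mathcal{N}^q_{\Omega^{(\aleph\beth)}}=o(1)$ off an asymptotically negligible set for each zigzag subproblem. Combining with the bracketing and the counting identity, we obtain $\mathcal{N}_{\mathcal{P}}(\sigma)-\mathcal{N}^q_{\mathcal{P}}(\sigma)\to 0$ along a sequence of intervals reaching infinity. The spacing estimate coming from Proposition~\ref{delta:monotone} together with Theorem~\ref{thm:injectivity} (which ensures an injective, almost-order-preserving matching of quasi-eigenvalues with genuine Steklov eigenvalues) then propagates this equality to all large $\sigma$, forcing $i_m=m$ for all sufficiently large $m$, and hence $\lambda_m-\sigma_m=o(1)$ under the natural enumeration, as claimed.

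The main obstacle is the proof of the counting identity in the middle paragraph. Unlike in the zigzag case, where the quasi-eigenvalue condition \eqref{eq:quasizigzag} is a directional pairing between two fixed test vectors $\baleph,\bbeth$, the cyclic polygon condition $\Tr\hat{\mathtt{T}}=2$ produces eigenvectors that vary with $\sigma$, and for a multiplicity-two quasi-eigenvalue both $\mathfrak{d}_1$ and $\mathfrak{d}_2$ cross $0\pmod{2\pi}$ simultaneously, requiring extra care. The mod $2\pi$ counting (rather than mod $\pi$ as for zigzags) and the presence of two distinct winding functions $\mathfrak{d}_1, \mathfrak{d}_2$ with possibly different growth rates also mean that a direct application of Proposition~\ref{prop:integparts} is not enough: one must verify that the intervals produced are not systematically avoided by the crossings of $\mathfrak{d}_1,\mathfrak{d}_2$. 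This is where the uniform lower bound on the derivatives of $\mathfrak{d}_j$ (Proposition~\ref{delta:monotone}) is essential, and the argument proceeds by applying Proposition~\ref{prop:integparts} to the pair $\varphi_{\Omega_\tI^{(\aleph\aleph)}}+\tilde{\varphi}_{\Omega_\tII^{(\aleph\aleph)}}$ on each of the two winding branches.
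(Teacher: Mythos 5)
Your decomposition is different from the paper's. The paper makes a \emph{single} straight cut at one point $V_0$ on a straight boundary piece (plus a small circular adjustment at the cut's top), producing one zigzag $\mathcal{Z}$ that traverses the whole boundary with identified start and end point $V_0$, and one bracketing
\[
\mathcal{N}_{\mathcal{Z}^{(DD)}}(\sigma)\le \NP(\sigma) \le \mathcal{N}_{\mathcal{Z}^{(NN)}}(\sigma).
\]
You cut along a chord $[W_1,W_2]$ into \emph{two} zigzag domains $\Omega_\tI,\Omega_\tII$ and claim the identity
$\mathcal{N}^q_{\Omega_\tI^{(\aleph\aleph)}}(\sigma)+\mathcal{N}^q_{\Omega_\tII^{(\aleph\aleph)}}(\sigma)=\mathcal{N}^q_{\mathcal{P}}(\sigma)$ on a sequence of intervals. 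Your bracketing is fine, but the counting identity, which you yourself identify as the obstacle, is handled only by loose hand-waving and does not close.

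The gap is precisely the missing analogue of Proposition~\ref{prop:auxdelta}. The paper's reason for the single cut is that it makes the full-boundary zigzag matrix $\hat{\mathtt{U}}_\mathcal{Z}(\sigma)$ almost identical to the polygon matrix $\hat{\mathtt{T}}(\sigma)$ (one extra angle-$\pi$ vertex, transfer matrix the identity), and it is this structural coincidence that yields the two-sided bound
$\left|2\psi_j(\sigma)-\varphi_{\mathcal{Z}^{(\aleph\aleph)}}(\sigma)\right|\le 1-\varepsilon$ for $j=1,2$. That estimate controls \emph{both} winding functions $\psi_1,\psi_2$ of the polygon by a \emph{single} zigzag phase $\varphi$, which is exactly what Lemma~\ref{lem:countfunctions} together with Proposition~\ref{prop:integparts} needs to force $[\psi_1]+[\psi_2]+1=[\varphi_{\mathcal{Z}^{(NN)}}]$ (resp. $[\psi_1]+[\psi_2]=[\varphi_{\mathcal{Z}^{(DD)}}]$) on good intervals. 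After cutting twice you have two independent zigzag phases $\varphi_\tI,\tilde\varphi_\tII$, and there is no comparable estimate pinning the pair $(\psi_1,\psi_2)$ to the pair $(\varphi_\tI,\tilde\varphi_\tII)$; the final sentence ("applying Proposition~\ref{prop:integparts} to the pair $\varphi_{\Omega_\tI}+\tilde{\varphi}_{\Omega_\tII}$ on each of the two winding branches") does not say what the second input to that Proposition is, and for a legitimate application you would need both inputs to be monotone with comparable derivatives, which $\psi_1,\psi_2$ are, but you then still need the crucial proximity bound that plays the role of Proposition~\ref{prop:auxdelta}. Also note a small error in the middle step: $\mathcal{N}^q_{\mathcal{P}}(\sigma)$ is $\left[\mathfrak{d}_1/(2\pi)\right]+\left[\mathfrak{d}_2/(2\pi)\right]+1$ by Lemma~\ref{lem:countfunctions}, a \emph{sum of integer parts}, not the floor of $(\mathfrak{d}_1+\mathfrak{d}_2)/(2\pi)$; these differ, and the discrepancy is exactly what the paper's Proposition~\ref{prop:auxdelta} and Proposition~\ref{prop:integparts} are designed to manage.

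If you want to salvage a two-cut route, the honest way is to do it in two stages: (i) invoke Lemma~\ref{lemma2key} to show $\mathcal{N}^q_{\Omega_\tI^{(\aleph\aleph)}}+\mathcal{N}^q_{\Omega_\tII^{(\aleph\aleph)}}=\mathcal{N}^q_{\mathcal{Z}^{(\aleph\aleph)}}$ on intervals, where $\mathcal{Z}$ is the full-boundary zigzag; (ii) separately prove $\mathcal{N}^q_{\mathcal{Z}^{(\aleph\aleph)}}=\mathcal{N}^q_{\mathcal{P}}$ on intervals, which is again the paper's \eqref{equalityNeumann}/\eqref{equalityDirichlet} and needs Proposition~\ref{prop:auxdelta}. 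At that point you would also have to ensure the two families of good intervals can be made to overlap (Proposition~\ref{prop:integparts} returns different intervals for different inputs). The single cut avoids step (i) entirely and is strictly simpler; this is why the paper makes one cut, not two.
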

\begin{proof}
Let $\mathcal{P}$ be a partially curvilinear polygon. Take any point $V_0$ on a straight piece of the boundary and make a straight cut perpendicular to $\partial\mathcal{P}$ at this point into the interior of $\mathcal{P}$; at the top of the cut we add another small circular cut, see Figure \ref{fig:poly-cut}.

\begin{figure}[htb]
\begin{center}
\includegraphics{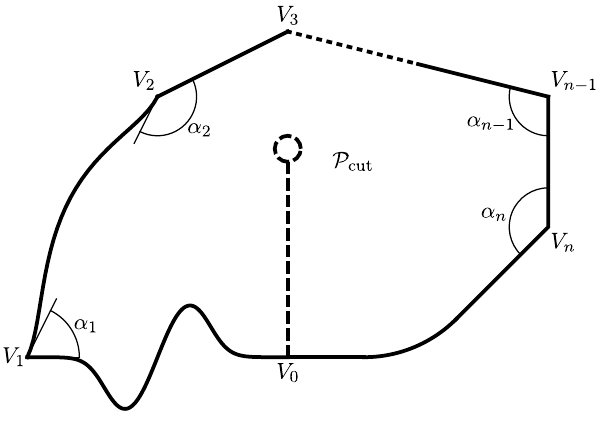}
\end{center}
\caption{A polygon with a cut\label{fig:poly-cut}}
\end{figure}

Imposing Neumann or Dirichlet conditions on the cut we may consider the polygon with a cut as a zigzag domain 
$\mathcal{P}_{\text{cut}}$
corresponding to the zigzag $\mathcal{Z}=\mathcal{Z}_\mathcal{P}=\partial\mathcal{P}$ with the same start and end point $V_0$. The assumptions of Lemma \ref{lem:improvement} are satisfied for this zigzag domain (note that the circular cut was added precisely to avoid having an angle greater than 
$\pi$ at the top of the vertical cut), and therefore  the quasimode construction applies. Denote by $\mathcal{N}_\mathcal{P}(\sigma)$  the eigenvalue counting function on a polygon, and by $\mathcal{N}_{\mathcal{Z}^{(DD)}}(\sigma)$ and $\mathcal{N}_{\mathcal{Z}^{(NN)}}(\sigma)$ the eigenvalue counting functions of zigzag $\mathcal{Z}$ with, respectively, the Dirichlet and Neumann condition on the cut.
Denote also by $\mathcal{N}^q_\mathcal{P}(\sigma)$, $\mathcal{N}_{\mathcal{Z}^{(DD)}}^q(\sigma)$ and $\mathcal{N}^q_{\mathcal{Z}^{(NN)}}(\sigma)$  the corresponding quasi-eigenvalue counting functions. By the Dirichlet--Neumann bracketing we have for all positive $\sigma$
\begin{equation}
\label{eq:DNbrack}
\mathcal{N}_{\mathcal{Z}^{(DD)}}(\sigma)\le \NP(\sigma) \le \mathcal{N}_{\mathcal{Z}^{(NN)}}(\sigma).
\end{equation}
In view of Remark \ref{rem:deltachoice}, we need to fix the choice of the matrix $\T$ corresponding to the polygon $\mathcal{P}$.
 From now on,  we choose it to be the matrix corresponding to the zigzag  $\mathcal{Z}_\mathcal{P}$.  This could be done by introducing an auxiliary vertex at $V_0$ with the angle equal to $\pi$ , and use this vertex as the starting point for enumeration of the vertices of $\mathcal{P}$. Note that the vertex transfer matrix at  $V_0$  is equal to identity and therefore does not affect $\T$.

Consider the functions
\begin{equation}\label{fourfunctions}
\psi_j(\sigma)=\frac{\mathfrak{d}_j(\sigma)}{2\pi}, \, j=1,2.
\end{equation}

\begin{remark} 
\label{rem:pmatrix}
In what follows we shall assume that $\T(0)$ is a positive matrix. If $\T(0)$ is negative, the proof follows along the same lines with minor modifications which will be indicated later.  
\end{remark}
\begin{lemma}
\label{lem:countfunctions}
The following formula holds:
\[
\NPq(\sigma)=[\psi_1(\sigma)]+[\psi_2(\sigma)]+1.
\]
\end{lemma}
\begin{proof}

We first note that the right-hand side is a step function that has discontinuities precisely at the quasi-eigenvalues. Moreover,  the jump at a discontinuity is equal to one if the corresponding quasi-eigenvalue is simple, and is equal to two if the corresponding quasi-eigenvalue is double. Therefore, it remains to check the equality for $\sigma=0$.

 Note that the vectors $\hbe_1(0)$ and $\hbe_2(0)$ can be chosen to be symmetric reflections of each other about one of the eigenvectors of the matrix $\T(0)$.
 Moreover, one can easily check that $\T(0)\hbe_1(0)$ and $\T(0)\hbe_2(0)$ remain symmetric with respect to the same vector. Therefore, either $\mathfrak{d}_j(0)$, $j=1,2$, have opposite signs, or $\mathfrak{d}_1(0)=\mathfrak{d}_2(0)=0$. In both cases the equality
\[
\NPq(0)=[\psi_1(0)]+[\psi_2(0)]+1
\]
follows from Definitions \ref{def:quasi} and \ref{def:quasimult}.
This completes the proof of the lemma.
\end{proof}

Let us go back to the proof of Theorem \ref{thm:mainpolygons}.  We recall that by Definition \ref{defn:naturalenumeration},
\[
\mathcal{N}_{\mathcal{Z}^{(NN)}}^q(\sigma)=\left[\varphi_{\mathcal{Z}^{(NN)}}(\sigma)\right]+1,
\qquad
\mathcal{N}_{\mathcal{Z}^{(DD)}}^q(\sigma)=\left[\varphi_{\mathcal{Z}^{(DD)}}(\sigma)\right].
\]
where by \eqref{eq:phialephbethdef}
\begin{equation}\label{eq:phicut}
\varphi_{\mathcal{Z}^{(NN)}}(\sigma)=\frac{\arg\left(\T(\sigma)\hat{\mathbf N}\right)}{\pi},
\qquad
\varphi_{\mathcal{Z}^{(DD)}}(\sigma)=\frac{\arg\left(\T(\sigma)\hat{\mathbf D}\right)}{\pi}-\frac{1}{2}.
\end{equation}

By Theorem \ref{thm:quasizigzags} applied to the 
$\mathcal{Z}$, we have that 
\[
\mathcal{N}_{\mathcal{Z}^{(NN)}}(\sigma)=\mathcal{N}_{\mathcal{Z}^{(NN)}}^q(\sigma)\quad\text{and}\quad \mathcal{N}_{\mathcal{Z}^{(DD)}}(\sigma)=\mathcal{N}_{\mathcal{Z}^{(DD)}}^q(\sigma)
\] 
for all $\sigma$ except some intervals of lengths tending to zero as $\sigma \to \infty$. Using Theorem \ref{thm:injectivity} to obtain an analogue of \eqref{llimit}, we may argue as in the proof of Proposition \ref{lemma2}.  We need to show that  for $\sigma$ belonging to some intervals of lengths bounded below and located arbitrarily far away on the real line,
\begin{equation}
\label{count:eqpol}
\NPq(\sigma)=\mathcal{N}_{\mathcal{Z}^{(NN)}}^q(\sigma),
\end{equation}
and for another collection of intervals with the same properties, 
\begin{equation}
\label{count:eqpol1}
\NPq(\sigma)= \mathcal{N}_{\mathcal{Z}^{(DD)}}^q(\sigma).
\end{equation}

In order to prove \eqref{count:eqpol} we will need the following proposition.

\begin{proposition}
\label{prop:auxdelta}
Let $\aleph\in\{D,N\}$ and $j\in\{1,2\}$. There exists  $\varepsilon>0$ such that for all $\sigma>0$,
\begin{equation}
\label{eq:auxdelta}  
\left|2\psi_j(\sigma)-\varphi_{\mathcal{Z}^{(\aleph\aleph)}}(\sigma)\right|\le 1-\varepsilon.
\end{equation}
\end{proposition}

Let us postpone the proof of the proposition and proceed with the proof of \eqref{count:eqpol} for $\sigma$ belonging to intervals of length bounded below located arbitrary far on the real line.
In view of Lemma \ref{lem:countfunctions}  we need to show  that for such $\sigma$
\begin{equation}
\label{equalityNeumann}
[\psi_1(\sigma)]+[\psi_2(\sigma)]+1=\left[\varphi_{\mathcal{Z}^{(NN)}}(\sigma)\right];
\end{equation}
similarly, \eqref{count:eqpol1} is equivalent to
\begin{equation}
\label{equalityDirichlet}
[\psi_1(\sigma)]+[\psi_2(\sigma)]=\left[\varphi_{\mathcal{Z}^{(DD)}}(\sigma)\right].
\end{equation}

Let us prove \eqref{equalityNeumann} first. For any $k\in \mathbb{N}$, choose $\tilde \sigma_k$ so that $\varphi_{\mathcal{Z}^{(NN)}}(\tilde\sigma_k)=2k+1$. This is possible to achieve  since $\varphi_{\mathcal{Z}^{(NN)}}$
has a positive derivative bounded away from zero, see Lemma \ref{lem:phi}. By Proposition \ref{prop:auxdelta} we have
\[
\left|2\psi_j(\tilde\sigma_k)-\varphi_{\mathcal{Z}^{(NN)}}(\tilde\sigma_k)\right|\le 1-\varepsilon, \qquad j=1,2.
\]
Therefore,
\begin{equation}
\label{auxx} 
k+\frac{\varepsilon}{2}\le\psi_j(\tilde\sigma_k)\le k+1-\frac{\varepsilon}{2}, \qquad j=1,2.
\end{equation}
Since the derivatives of $\psi_j(\sigma)$ and $\varphi_{\mathcal{Z}^{(NN)}}(\sigma)$ are uniformly bounded, there exists an interval $\mathcal{I}_k^{(NN)}=(\tilde\sigma_k,\tilde\sigma_k')$ of length uniformly bounded away from zero such that 
$[\psi_j(\sigma)]=k$ and $\left[\varphi_{\mathcal{Z}^{(NN)}}((\sigma)\right]=2k+1$  for all $\sigma \in \mathcal{I}_k^{(NN)}$ and  $j=1,2$. This proves \eqref{equalityNeumann}.

Equality \eqref{equalityDirichlet} is obtained using a similar argument. As above, choose $\tilde\sigma_k$ such that $\varphi_{\mathcal{Z}^{(DD)}}(\tilde\sigma_k)=2k+1$ and use again Proposition \ref{prop:auxdelta} to obtain \eqref{auxx}.
In view of the uniform boundedness of the derivatives of $\psi_j(\sigma)$ and $\varphi_{\mathcal{Z}^{(DD)}}(\sigma)$ we deduce that there exist intervals $\mathcal{I}_k^{(DD)}=(\tilde\sigma_k'', \tilde\sigma_k)$ of length uniformly bounded below such that for any $\sigma \in \mathcal{I}_k^{(DD)}$, $[\psi_j(\sigma)]=k$, $j=1,2$, and $[\varphi_{\mathcal{Z}^{(DD)}}(\sigma)]=2k$. This implies \eqref{equalityDirichlet}, completing the proof of Theorem \ref{thm:mainpolygons} modulo the proof of Proposition \ref{prop:auxdelta}.

Let us now  prove Proposition \ref{prop:auxdelta}. We will need the following elementary linear algebra lemma.
\begin{lemma}
\label{linalgebra}
There exists a constant $C>0$ such that 
\[
|\arg (\T(\sigma)\hbv_1)-\arg (\T(\sigma)\hbv_2)|<C|\arg\hbv_1-\arg \hbv_2|
\]
for any $\hbv_1, \hbv_2 \in \UC$ and any $\sigma\ge 0$.
\end{lemma}
\begin{proof}
The matrices $\T(\sigma)$ are products of rotations depending on $\sigma$ and symmetric matrices independent of $\sigma$. The rotations preserve the angles and could be therefore ignored. It is therefore sufficient to verify the statement of the lemma for a single symmetric matrix of determinant one. Changing coordinates, we may assume that the matrix is symmetric with eigenvalues $\tau$ and $1/\tau$. The result then follows from an explicit computation that is left to the reader. 
\end{proof}

It remains to prove Proposition \ref{prop:auxdelta}.

\begin{proof}[Proof of Proposition \ref{prop:auxdelta}] It suffices to prove the inequality \eqref{eq:auxdelta} for $j=1$ and $\aleph=N$, all other cases are proved similarly.
Choose $\varepsilon>0$ small enough so that 
\begin{equation}\label{eq:epsbound}
\varepsilon<\frac{1}{C+2},
\end{equation} 
where  $C$ is  from Lemma \ref{linalgebra}. For brevity we will denote in this proof 
\[
\alpha(\sigma):=\arg(\hbe_1(\sigma)),\quad 
\beta(\sigma):=\arg(\T(\sigma)\hbe_1(\sigma)),\quad 
\gamma(\sigma):=\arg(\T(\sigma)\N);
\]
then
\begin{equation}\label{eq:phipsialphabetagamma}
\psi(\sigma)=\frac{\beta(\sigma)-\alpha(\sigma)}{2\pi},\qquad
\varphi_{\mathcal{Z}^{(NN)}}(\sigma)=\frac{\gamma(\sigma)}{\pi}.
\end{equation}
We recall also that by assumption \eqref{argassump}, 
\begin{equation}\label{eq:alphabounds}
0\le \alpha(\sigma)<\pi.
\end{equation}

By Lemma \ref{lem:monotone}, the  matrix
$\T(\sigma)$ preserves the order of vectors in terms of their arguments. Re-write \eqref{eq:alphabounds} as
\[
\arg(\N)\le \alpha(\sigma)<\arg(-\N),
\]
then by this monotonicity
\[
\gamma(\sigma)\le \beta(\sigma)<\gamma(\sigma)+\pi.
\]
Subtracting $\alpha(\sigma)+\gamma(\sigma)$ from these inequalities, dividing by $\pi$ and re-arranging with account of \eqref{eq:phipsialphabetagamma} yields
\[
-\frac{\alpha(\sigma)}{\pi}\le 2\psi(\sigma)-\varphi_{\mathcal{Z}^{(NN)}}(\sigma)<1-\frac{\alpha(\sigma)}{\pi},
\]
which implies \eqref{eq:auxdelta} assuming
\begin{equation}\label{eq:alphapirestriction1}
\varepsilon\le \frac{\alpha(\sigma)}{\pi}\le1-\varepsilon.
\end{equation}

To finish the proof we need to consider the situation when \eqref{eq:alphapirestriction1} is not satisfied.  Suppose that $0\le \alpha(\sigma)<\pi\varepsilon$.
Applying Lemma \ref{linalgebra} with $\hbv_1=\hbe_1(\sigma)$ and $\hbv_2=\N$, we obtain
\[
-C\varepsilon\pi<\beta(\sigma)-\gamma(\sigma)<C\varepsilon\pi,
\]
or, equivalently, subtracting  $\alpha(\sigma)$, dividing by $\pi$, and using  \eqref{eq:phipsialphabetagamma},
\[
-(C+1)\varepsilon < -C\varepsilon-\frac{\alpha(\sigma)}{\pi}<2\psi(\sigma)-\varphi_{\mathcal{Z}^{(NN)}}(\sigma)<C\varepsilon-\frac{\alpha(\sigma)}{\pi}\le C\varepsilon,
\]
and  \eqref{eq:auxdelta}  then follows since we have chosen $\varepsilon$ satisfying \eqref{eq:epsbound}.  

The case $1-\pi\varepsilon< \alpha(\sigma)<\pi$ is dealt with in the same way, the only difference being that $\hbv_2=-\N$ is used when applying Lemma  \ref{linalgebra}.
\end{proof}

We have therefore proved Theorem \ref{thm:mainpolygons} under the assumption that the matrix $\T(0)$ is positive, see Remark \ref{rem:pmatrix}. 
If $\T(0)$ is negative the argument is analogous. Indeed, as follows from  Remark \ref{rem:U0pos}, we need to account for an additional rotation by the angle $\pi$ and thus 
subtract $-1/2$ from each of  the two functions $\psi_1(\sigma)$, $\psi_2(\sigma)$, and $-1$ from $\varphi_{\mathcal{Z}^{(\aleph\aleph)}}(\sigma)$ in order to get the analogue of Lemma  \ref{lem:countfunctions}. The rest of the argument remains the same. This completes the proof of Theorem \ref{thm:mainpolygons}. 
\end{proof}

\smallskip

We conclude this subsection with two corollaries of the results obtained above. We present their proofs assuming that  $\mathcal{P}$ and $\mathcal{P}'$ are non-exceptional.
The proof for exceptional polygons could be obtained by a simple modification of this argument using the results of  subsection \ref{subs:exepenum} and is left to the reader.

The first corollary provides a way to  control the Steklov quasi-eigenvalues under perturbations of side lengths, provided all the angles remain the same. Note that this result is used in the proof of Theorem \ref{thm:riesz}.
\begin{corollary}
\label{lemma:pertside}
Let $\mathcal{P}(\balpha, \bell)$ and $\mathcal{P}'(\balpha,\bell')$  be two curvilinear $n$-gons with the same respective angles  and side lengths satisfying $|\ell_i-\ell_i'|\le\varepsilon$ for all
$i=1,\dots,n$ and some $\varepsilon>0$. Let $\sigma_m$ and $\sigma'_m$, $m=1,2,\dots$,  be the quasi-eigenvalues of $\mathcal{P}$ and $\mathcal{P}'$, respectively.
There exists a constant $C>0$ depending only on $\balpha$ such that 
for all $\sigma_m<\frac{1}{\varepsilon}$,
\begin{equation}
\label{sigapprox}
|\sigma_m-\sigma_m'| \le C\sigma_m\varepsilon.
\end{equation}
\end{corollary}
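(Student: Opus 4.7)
The plan is to exploit the transfer-matrix characterisation of quasi-eigenvalues developed in Section~\ref{subs:noexc} and rely on the enumeration machinery established there. The crucial structural observation is that passing from $\bell$ to $\bell'$ leaves \emph{every} vertex transfer matrix $\hat{\mathtt A}(\alpha_j)$ unchanged; only the rotations $\hat{\mathtt R}(\sigma\ell_j)$ are replaced by $\hat{\mathtt R}(\sigma\ell_j')$, a rotation differing from the previous one by an angle of modulus at most $\sigma|\ell_j'-\ell_j|\le \sigma\varepsilon$.

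First, I would establish the uniform Lipschitz estimate
\[
\bigl|\arg\bigl(\hat{\mathtt T}(\balpha,\bell',\sigma)\,\hat{\bm\xi}\bigr)-\arg\bigl(\hat{\mathtt T}(\balpha,\bell,\sigma)\,\hat{\bm\xi}\bigr)\bigr|\le K(\balpha)\,n\sigma\varepsilon\qquad\forall\,\hat{\bm\xi}\in\UC,
\]
by a telescoping argument: switching one rotation $\hat{\mathtt R}(\sigma\ell_j)\mapsto\hat{\mathtt R}(\sigma\ell_j')$ at a time, the local shift of argument by at most $\sigma\varepsilon$ is then propagated through the remaining factors. Rotations preserve argument differences, while the symmetric matrices $\hat{\mathtt A}(\alpha_k)$ amplify them by at most a constant depending only on $\balpha$, by Lemma~\ref{linalgebra}. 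Summing over $j$ gives $K=K(\balpha)$.

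Second, I would transfer this to the phase functions $\mathfrak d_j(\sigma)$ from \eqref{def:delta}. The norm-preserving vectors $\hbe_j(\sigma)$ do depend on $\bell$, but continuously (by the Rellich analyticity argument already invoked in the proof of Lemma~\ref{lemma:diff}), so their arguments also shift by $O(\sigma\varepsilon)$ between $\bell$ and $\bell'$; combining this with the previous step yields
\[
|\mathfrak d_j(\sigma;\bell)-\mathfrak d_j(\sigma;\bell')|\le C_2(\balpha)\,\sigma\varepsilon,\qquad j=1,2,
\]
uniformly in $\sigma$. Combined with the lower bound $\dr\mathfrak d_j/\dr\sigma\ge C_1(\balpha)>0$ from Proposition~\ref{delta:monotone}, an elementary intermediate-value argument shows that each solution $\sigma_m$ of $\mathfrak d_j(\,\cdot\,;\bell)\equiv 0\pmod{2\pi}$ lies within distance $(C_2/C_1)\sigma_m\varepsilon$ of a solution of $\mathfrak d_j(\,\cdot\,;\bell')\equiv 0\pmod{2\pi}$.

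The last ingredient is to verify that the natural enumeration from Section~\ref{subs:noexc} is preserved under the perturbation. I would deform $\bell$ to $\bell'$ linearly, setting $\bell_t:=(1-t)\bell+t\bell'$ for $t\in[0,1]$, and note that each $\mathfrak d_j(\sigma;\bell_t)$ is jointly continuous in $(t,\sigma)$ and strictly increasing in $\sigma$. Zeros of a single $\mathfrak d_j$ therefore cannot collide along the deformation; zeros of $\mathfrak d_1$ and $\mathfrak d_2$ \emph{can} coincide at isolated times, producing temporary double quasi-eigenvalues, but the natural enumeration counts these with multiplicity (Definition~\ref{def:quasimult}), so the $m$-th quasi-eigenvalue $\sigma_m(t)$ depends continuously on $t$. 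The hypothesis $\sigma_m<1/\varepsilon$ ensures that the perturbation stays in the regime $\sigma_m(t)\varepsilon\lesssim 1$, whence a Gronwall-type estimate keeps $\sigma_m(t)$ comparable to $\sigma_m$ throughout, and integrating the pointwise bound over $t\in[0,1]$ yields \eqref{sigapprox}.

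The main obstacle is precisely the enumeration step: at isolated values of $t$ a zero of $\mathfrak d_1$ and a zero of $\mathfrak d_2$ can merge and then split, and it is a priori possible for the natural enumeration to be broken there. Dealing with this requires the multiplicity bookkeeping of Definition~\ref{def:quasimult} to guarantee that $\sigma_m(t)$ remains continuous even through collisions. In the exceptional case one replaces $\hat{\mathtt T}$ by the component-wise matrices $\hat{\mathtt U}$ of each exceptional boundary component and works with the phase functions $\varphi_{\mathcal Z},\tilde\varphi_{\mathcal Z}$ from \eqref{eq:phialephbethdef}; Lemma~\ref{lem:phi} then plays the role of Proposition~\ref{delta:monotone}, and the argument is otherwise identical.
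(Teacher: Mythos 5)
Your overall strategy — telescoping through the transfer matrix, invoking the monotonicity of $\mathfrak{d}_j$, and then securing the enumeration — is sensible, but there is a genuine gap at the second step, and it differs substantially from the route the paper takes.

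The problematic assertion is that the norm-preserving directions $\hbe_j(\sigma)$ ``shift by $O(\sigma\varepsilon)$'' between $\bell$ and $\bell'$ as a consequence of the Rellich analyticity used in Lemma~\ref{lemma:diff}. That lemma's Rellich argument only yields analyticity of $\hbe_j$ in the single real variable $\sigma$ for a fixed polygon; it does not give a bound on the $\bell$-derivative of $\arg\hbe_j$, and in fact no such bound holds uniformly. Near a value $\sigma_*$ where $\T(\sigma_*;\bell)$ is a rotation (i.e.\ where the symmetric factor in the polar decomposition is $\pm\mathrm{Id}$ and the quasi-eigenvalue is double), the symmetric part has a repeated eigenvalue $1$, and the eigenvector directions — hence the directions of $\hbe_j$ — are unstable: an $O(\varepsilon)$ perturbation of $\bell$ can rotate $\hbe_j$ by an angle of order $1$. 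So the Lipschitz estimate you need on $\arg\hbe_j$ is simply false, and the step ``combining this with the previous step yields $|\mathfrak{d}_j(\sigma;\bell)-\mathfrak{d}_j(\sigma;\bell')|\le C\sigma\varepsilon$'' does not follow from what you have written.

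The conclusion $|\mathfrak{d}_j(\sigma;\bell)-\mathfrak{d}_j(\sigma;\bell')|\le C(\balpha)\sigma\varepsilon$ is nevertheless true, but for a different reason: the contributions to $\partial\mathfrak{d}_j/\partial\ell_i$ coming from the $\ell_i$-dependence of $\hbe_j$ \emph{cancel}, exactly as in the step $D_2(\sigma)=0$ inside the proof of Proposition~\ref{delta:monotone}. That cancellation uses only that $\T$ preserves the length of $\hbe_j$ and has determinant one, and therefore it applies verbatim when one differentiates in $\ell_i$ rather than $\sigma$. If you wish to rescue your telescoping route, you must spell this cancellation out; the Rellich reference cannot carry that weight. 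You would also then need to control the remaining ``$D_1$-type'' term by $\|\partial\T/\partial\ell_i\|=O(\sigma)$ together with Lemma~\ref{linalgebra}, much as in the proof of Lemma~\ref{lem:phi}.

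The paper's proof sidesteps the whole issue with a much slicker device you did not use: it changes \emph{one} side length at a time and, crucially, re-chooses the starting vertex of $\mathcal{P}'$ (placing an auxiliary vertex $V'$ on the modified side) so that $\T'(\sigma)=\R((\ell_1'-\ell_1)\sigma)\,\T(\sigma)$. A rotation on the left leaves the symmetric factor $\Sh$ of the polar decomposition literally unchanged, so one may take $\hbe_j'(\sigma)=\hbe_j(\sigma)$, and then $\mathfrak{d}_j^{\mathcal{P}'}(\sigma)-\mathfrak{d}_j^{\mathcal{P}}(\sigma)=(\ell_1'-\ell_1)\sigma$ is an exact identity, not an estimate. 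The index matching $M=m$ then follows immediately because the shift is $<\pi$, which keeps the correspondence between solutions of $\mathfrak{d}_j\equiv 0\pmod{2\pi}$ and $\mathfrak{d}_j'\equiv 0\pmod{2\pi}$ bijective and order-preserving; no homotopy through $\bell_t$ and no Gronwall bound are needed. One then iterates over the $n$ side lengths, re-ordering the vertices at each step so the side being changed is first. Your homotopy-and-collision bookkeeping for the enumeration is not in itself wrong, but it is considerably more delicate than required, and it would still rest on the unjustified Lipschitz estimate above.
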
 
\begin{proof}
Assume first  that $|\ell_1-\ell_1'|\le \varepsilon$ and $\ell_i=\ell_i'$, $ i=2,\dots, n$.  Without loss of generality we may also assume that $\ell_1'\ge \ell_1$.
Let $V'$ be a point on the side $I_1'$  of the curvilinear polygon $\mathcal{P}' $ which is at the distance $\ell_1$ from $V_1$. Let $\T(\sigma)$ be the lifted matrix corresponding to the polygon $\mathcal{P}$ with the starting point at $V_1$, and  $\T'(\sigma)$ be the similar matrix for $\mathcal{P}'$ with the starting point at $V'$ (which could be viewed as an auxiliary vertex with angle $\pi$). Then it is immediate that $\T'(\sigma)=\R((l_1'-l_1)\sigma)\, \T(\sigma)$.
Therefore, one may choose  the vectors $\hbe_j(\sigma)$ and $\hbe_j'(\sigma)$, $j=1,2$,  for the polygons   $\mathcal{P}$ and 
$\mathcal{P}'$ in such a way that  $\hbe_j(\sigma)=\hbe_j'(\sigma)$, $j=1,2$, for all $\sigma>0$. Moreover, for any
$\sigma_m< 1/\varepsilon$ we have:
\begin{equation}
\label{fff}
|\mathfrak{d}_j^{\mathcal{P}'}(\sigma_m) -2\pi k| =\sigma (l_1'-l_1) \le \sigma_m \varepsilon,
\end{equation}
for some $k \in \mathbb{N}$, where $\mathfrak{d}_j^{\mathcal{P}'}(\sigma)$ is the function  defined by  formula \eqref{def:delta} corresponding to the polygon $\mathcal{P}'$.
Therefore, applying Propositions \ref{prop:deltazeros} and \ref{delta:monotone} we conclude that there is a quasi-eigenvalue $\sigma_M'$ of $\mathcal{P}'$ such that
$\mathfrak{d}_j^{\mathcal{P}'}(\sigma_M')=2\pi k$ and $|\sigma_m-\sigma_M'|\le C\sigma_k\varepsilon$. At the same time, since $\sigma_m \varepsilon<1<\pi$, the index $M$ is uniquely defined, and there there is a natural one-to-one correspondence between the solutions of the equations  $\mathfrak{d}_j^{\mathcal{P}'}(\sigma)=0\, (\operatorname{mod}\,\,  2\pi) $  and  $\mathfrak{d}_j^{\mathcal{P}}(\sigma)=0 \, (\operatorname{mod}\,\,  2\pi)$. Therefore, $m=M$,  and we arrive at \eqref{sigapprox}. The fact that the constant $C$ on the right-hand side of \eqref{sigapprox}  depends only on $\balpha$ follows by inspection of the proofs of Proposition \ref{delta:monotone} and Lemma \ref{lem:phi}.

Consider now the general case, and choose a sequence of polygons $\mathcal{P}^{(k)}(\balpha, \bell^{(k)})$, $k=1, \dots, n$, such that $\bell^{(k)}=(\ell_1',\dots \ell'_k, \ell_{k+1}, \dots, \ell_n)$. Note that $\mathcal{P}^{(n)}=\mathcal{P'}$. The result then follows by induction in $k$.
Indeed, the argument above implies  \eqref{sigapprox} for $k=1$.  The inductive step from $k$ to $k+1$ follows from a simple observation that we may always reorder the vertices so that the $(k+1)$-st side is counted first, and choose the starting point $V'^{(k)}$ appropriately so that 
the corresponding matrices $\T^{(k)}(\sigma)$ and $\T^{(k+1)}(\sigma)$ differ by a composition with rotation as before. This completes the proof of the corollary.
\end{proof}

The second corollary could be viewed as domain monotonicity for Steklov quasi-eigenvalues  with respect to the side lengths of curvilinear polygons.
\begin{corollary} Let $\mathcal{P}(\balpha, \bell)$ and $\mathcal{P}'(\balpha,\bell')$  be two curvilinear polygons with the same respective angles  and side lengths satisfying 
$\ell_i \le \ell_i'$  for all $i=1,\dots,n$.  Then  $\sigma_m \ge \sigma'_m$, $m=1,2,\dots$,  where $\sigma_m$ and $\sigma'_m$ are the quasi-eigenvalues of $\mathcal{P}$ and $\mathcal{P}'$, respectively.
\end{corollary}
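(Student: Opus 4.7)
The natural approach is to pass through the quantum graph reformulation: by Theorem \ref{thm:quantum} we have $\sigma_m = \sqrt{\nu_m}$ and $\sigma_m' = \sqrt{\nu_m'}$, where $\nu_m$ and $\nu_m'$ are the eigenvalues of the quantum graph Laplacians $\Delta_\mathcal{G}$ and $\Delta_{\mathcal{G}'}$ on the cyclic metric graphs $\mathcal{G}(\bell)$ and $\mathcal{G}(\bell')$ respectively. Crucially, since the angles $\balpha$ of $\mathcal{P}$ and $\mathcal{P}'$ coincide, the matching conditions \eqref{matchcondboth} at the vertices of the two graphs are identical; the only difference between $\mathcal{G}$ and $\mathcal{G}'$ is the lengths of the edges. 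Thus the problem reduces to showing that $\nu_m \ge \nu_m'$ whenever $\ell_j \le \ell_j'$ for all $j$, which we plan to establish via the min-max characterisation of Remark \ref{rem:varprinciple}.

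The key construction is a linear comparison map $T \colon \operatorname{Dom}(Q_\mathcal{G}) \to \operatorname{Dom}(Q_{\mathcal{G}'})$ defined edgewise by affine rescaling: if $f$ is a function on $\mathcal{G}$ and $s$ is the arc-length parameter on $I_j'$ running from $0$ to $\ell_j'$, set
\[
(Tf)(s) := f\!\left(\frac{\ell_j}{\ell_j'}\,s\right) \qquad \text{on } I_j'.
\]
Since this reparametrisation is an affine bijection from $[0,\ell_j']$ onto $[0,\ell_j]$ that fixes both endpoints, the one-sided limits of $Tf$ at each vertex $V_j$ agree with those of $f$. Consequently, $Tf$ automatically satisfies the same continuity-type condition $\sin\!\left(\tfrac{\pi^2}{4\alpha_j}\right)(Tf)|_{V_j+0}=\cos\!\left(\tfrac{\pi^2}{4\alpha_j}\right)(Tf)|_{V_j-0}$ that defines $\operatorname{Dom}(Q_{\mathcal{G}'})$, and $T$ is injective; the hypothesis that the \emph{same} angles $\balpha$ govern the matching conditions on both graphs is the point at which the equal-angle assumption enters essentially.

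A direct change of variables on each edge yields
\[
\int_{I_j'}|(Tf)'(s)|^2\,\dr s = \frac{\ell_j}{\ell_j'}\int_{I_j}|f'(t)|^2\,\dr t, \qquad
\int_{I_j'}|(Tf)(s)|^2\,\dr s = \frac{\ell_j'}{\ell_j}\int_{I_j}|f(t)|^2\,\dr t.
\]
Because $\ell_j \le \ell_j'$ for every $j$, the first identity gives $Q_{\mathcal{G}'}[Tf] \le Q_\mathcal{G}[f]$ and the second gives $\|Tf\|_{L^2(\mathcal{G}')}^2 \ge \|f\|_{L^2(\mathcal{G})}^2$. Hence the Rayleigh quotient satisfies
\[
\frac{Q_{\mathcal{G}'}[Tf]}{\|Tf\|_{L^2(\mathcal{G}')}^2} \le \frac{Q_\mathcal{G}[f]}{\|f\|_{L^2(\mathcal{G})}^2}
\qquad \text{for every } 0\ne f \in \operatorname{Dom}(Q_\mathcal{G}).
\]
Applying the min-max principle with the trial subspace $T(S)$ (which has the same dimension as $S$ by injectivity of $T$) produces $\nu_m' \le \nu_m$ for every $m$; taking non-negative square roots and invoking Theorem \ref{thm:quantum} yields $\sigma_m' \le \sigma_m$.

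There is no serious obstacle here; the proof is entirely in the spirit of classical domain monotonicity for the Dirichlet Laplacian, and the only subtlety is verifying that $T$ lands in the correct form domain, which is immediate from the fact that the scaling preserves the boundary traces at each vertex and that both graphs carry the same vertex conditions. I would simply record the scaling computation and the min-max step without further embellishment.
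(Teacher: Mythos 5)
Your argument is correct, and it takes a genuinely different route from the paper's. The paper proves this corollary by the same inductive machinery as Corollary~\ref{lemma:pertside}: it reduces to changing one side length, observes $\T'(\sigma)=\R((\ell_1'-\ell_1)\sigma)\,\T(\sigma)$, and then invokes the monotonicity of $\mathfrak{d}_j(\T(\sigma))$ in $\sigma$ (Proposition~\ref{delta:monotone}) together with Proposition~\ref{prop:deltazeros}. That proof is explicitly carried out only in the non-exceptional case, with the exceptional case deferred. Your route instead passes through Theorem~\ref{thm:quantum} and the variational characterization of Remark~\ref{rem:varprinciple}, and is a clean exercise in domain monotonicity: the affine edgewise rescaling $T$ preserves the form-domain matching conditions because they involve only one-sided boundary traces, which your $T$ fixes; the scaling identities send the Rayleigh quotient in the right direction precisely when every $\ell_j\le\ell_j'$; and injectivity of $T$ plugs into min-max to give $\nu_m'\le\nu_m$, hence $\sigma_m'\le\sigma_m$. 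What your approach buys is uniformity: it does not need the universal-cover argument apparatus of Section~\ref{sec:completeness}, and it treats exceptional and non-exceptional angles identically, since Theorem~\ref{thm:quantum} already absorbs the distinction. What the paper's approach buys is that the same argument simultaneously produces the quantitative \emph{perturbation} bound of Corollary~\ref{lemma:pertside}, which the Rayleigh-quotient comparison does not deliver directly.
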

\begin{proof} Using the same inductive argument as in the proof of Corollary \ref{lemma:pertside}, it suffices to prove the result if $\ell_1 \le \ell_1'$ and $\ell_i=\ell_i'$, $i=2,\dots, n$. As above, we choose the matrices $\T(\sigma)$ and $\T'(\sigma)$ corresponding to the polygons $\mathcal{P}$ and $\mathcal{P}'$ in such a way that
 $\T'(\sigma)=\R((l_1'-l_1)\sigma)\, \T(\sigma)$. It then follows that $\mathfrak{d}_j^{\mathcal{P}'}(\sigma) \ge \mathfrak{d}_j^{\mathcal{P}}(\sigma)$ for all $\sigma>0$, $j=1,2$.
The result then immediately follows from Propositions \ref{prop:deltazeros} and \ref{delta:monotone}.
\end{proof}
\subsection{Enumeration of quasi-eigenvalues for  exceptional  polygons and zigzags} 
\label{subs:exepenum}
In this subsection we explain how to modify the arguments of subsection \ref{subs:noexc} to the case of polygons and zigzags with 
exceptional angles. We will follow the same outline: decompose a polygon into
zigzag domains, establish natural enumeration for ``basic'' zigzags and show that natural enumeration is preserved under gluing.

In order to proceed with this scheme, we first need  to define the quasi-eigenvalues of an exceptional zigzag. Let $\mathcal{Z}^{(\aleph\beth)}$ be a zigzag with endpoints $P$, $Q$ and exceptional angles  at the vertices $V^\Eangles_1=V_{E_1},\dots, V^\Eangles_K=V_{E_K}$. This zigzag can be represented as a union of exceptional components $\mathcal{Y}_\kappa=\mathcal{Y}_\kappa\left(\balpha^{(\kappa)},\bell^{(\kappa)}\right)$, $\kappa=2,\dots,K$, joining the exceptional vertices  $V^\Eangles_{\kappa-1}$ and $V^\Eangles_\kappa$  (see subsection \ref{sec:exceptional} for notation), and two \emph{endpoint exceptional components} $\mathcal{Y}_1^{(\aleph\Eangles)}=\mathcal{Y}_1^{(\aleph\Eangles)}(\balpha^{(1)},\bell^{(1)})$ and $\mathcal{Y}_{K+1}^{(\Eangles\beth)}=\mathcal{Y}_{K+1}^{(\Eangles\beth)}(\balpha^{(K+1)},\bell^{(K+1)})$, joining $P$ to $V^\Eangles_1$ and $V^\Eangles_K$ to $Q$, respectively, with the boundary condition $\aleph$, $\beth$ imposed at $P$, $Q$, respectively.  Here, $\bell^{(1)}=\left(\ell^{(1)}_1, \dots, \ell^{(1)}_{n_1}\right)$ is the vector of $n_1$ lengths of curvilinear pieces between $P$ and $V^\Eangles_1$, ${\balpha'}^{(1)}=\left(\alpha^{(1)}_1, \dots, \alpha^{(1)}_{n_1-1}\right)$ is the vector of $n_1-1$ non-exceptional angles between these pieces, and $\balpha^{(1)}=\left(\alpha^{(1)}_1,\dots,  \alpha^{(1)}_{n_1-1}, \alpha^\Eangles_1\right)$.   Similarly, $\bell^{(K+1)}=\left(\ell^{(K+1)}_1, \dots, \ell^{(K+1)}_{n_{K+1}}\right)$ are the $n_{K+1}$ lengths of curvilinear pieces between  $V^\Eangles_K$ and $Q$, ${\balpha'}^{(K+1)}=\left(\alpha^{(K+1)}_1, \dots, \alpha^{(K+1)}_{n_{K+1}-1}\right)$ is the vector of $n_{K+1}-1$ non-exceptional angles between these pieces, and $\balpha^{(K+1)}=\left(\alpha^\Eangles_K,\alpha^{(K+1)}_1,\dots,  \alpha^{(K+1)}_{n_{K+1}-1}\right)$.

We have already, in essence,  defined by equation  \eqref{excepeq}  the subsequences of quasi-eigenvalues ``generated'' by  the exceptional components $\mathcal{Y}_\kappa$,  $\kappa=2,\dots,K$. We need now to define the quasi-eigenvalues generated by endpoint exceptional components $\mathcal{Y}_1$ and $\mathcal{Y}_{K+1}$.

Consider the equations
\begin{equation}
\label{eq:endpoints1}
\mathtt{U}({\balpha'}^{(1)},\bell^{(1)},\sigma)\baleph   \cdot \Cvect(\alpha^\Eangles_1) =0,
\end{equation}
and
\begin{equation}
\label{eq:endpoints2}
\mathtt{U}({\balpha'}^{(K+1)},\bell^{(K+1)},\sigma)\Cvect(\alpha^\Eangles_K)  \cdot \bbeth^\perp =0,
\end{equation}
where $\Cvect(\alpha^\Eangles)$ depends on the parity of $\alpha^\Eangles$ and is defined by \eqref{eq:orthogspecial2}. 

\begin{definition}
\label{quasieigenxz}
A number $\sigma \ge 0$ is called a {\it quasi-eigenvalue} of an exceptional zigzag  if  $\sigma$ is a solution of  an equation \eqref{excepeq} with $\kappa=2,\dots,K$,  corresponding to one of the exceptional components, or the equation \eqref{eq:endpoints1} corresponding to the endpoint exceptional component $\mathcal{Y}_1$, or the equation \eqref{eq:endpoints2} corresponding to the endpoint exceptional component $\mathcal{Y}_{K+1}$.
\end{definition}

Let us rewrite equations \eqref{excepeq},  \eqref{eq:endpoints1}, and \eqref{eq:endpoints2} in terms of the matrices acting on the universal cover $\UC$. By analogy with \eqref{eq:hatzigzag}, $\sigma \ge 0$ is a quasi-eigenvalue of an exceptional zigzag $\mathcal{Z}^{(\aleph\beth)}$  if it is a solution of one of the equations
\begin{align}
\arg\left(\U({\balpha'}^{(1)},\bell^{(1)},\sigma) \hat{\baleph}\right)&=\arg(\hat{\Cvect}^\perp(\alpha^\Eangles_1)) \pmod{\pi}, \label{eq:quasizigex1}\\
\arg\left(\U({\balpha'}^{(\kappa)},\bell^{(\kappa)},\sigma) \hat{\Cvect}(\alpha^\Eangles_{\kappa-1})\right)&=\arg( \hat{\Cvect}^\perp(\alpha^\Eangles_\kappa)) \pmod{\pi},\quad \kappa=2,\dots,K,\label{eq:quasizigexkappa}\\
\arg\left(\U({\balpha'}^{(K+1)},\bell^{(K+1)},\sigma)\hat{\Cvect}(\alpha^\Eangles_K)\right)&=\arg( \hat{\beth}) \pmod{\pi}.\label{eq:quasizigexK1}
\end{align}

In order to define the natural enumeration for exceptional zigzags, let us introduce the functions
\begin{align}
\label{eq:varphi1}\varphi_{\mathcal{Y}_1^{(\aleph\Eangles)}}(\sigma)&:=\frac{\arg\left(\U({\balpha'}^{(1)},\bell^{(1)},\sigma) \hat{\baleph}\right)-\arg(\hat{\Cvect}^\perp(\alpha^\Eangles_1))}{\pi}, \\
\label{eq:varphikappa}\varphi_{\mathcal{Y}_\kappa}(\sigma)&:=\frac{\arg\left(\U({\balpha'}^{(\kappa)},\bell^{(\kappa)},\sigma) \hat{\Cvect}(\alpha^\Eangles_{\kappa-1})\right)-\arg( \hat{\Cvect}^\perp(\alpha^\Eangles_\kappa))}{\pi},\quad \kappa=2,\dots,K,\\
\label{eq:varphiK1}\varphi_{\mathcal{Y}_{K+1}^{(\Eangles\beth)}}(\sigma)&:=\frac{\arg\left(\U({\balpha'}^{(K+1)},\bell^{(K+1)},\sigma)\hat{\Cvect}(\alpha^\Eangles_K)\right)-\arg( \hat{\beth})}{\pi}.
\end{align}
Obviously, the functions \eqref{eq:varphi1}--\eqref{eq:varphiK1} experience jumps at those and only those real values of $\sigma$ which solve \eqref{eq:quasizigex1}--\eqref{eq:quasizigexK1}, respectively. In order to define the natural enumeration of quasi-eigenvalues for the whole zigzag, we first introduce below the natural enumeration of quasi-eigenvalues  for exceptional and endpoint exceptional components. We want to emphasise that this will be done for auxiliary purposes only. While the quasi-eigenvalues of an exceptional or an endpoint exceptional component are well-defined (they are the real solutions of one of the equations  \eqref{eq:quasizigex1}--\eqref{eq:quasizigexK1}), one cannot associate true eigenvalues to such components.  Indeed,  exceptional and endpoint components are not zigzags, as they do not correspond to any zigzag domain. 
 
\begin{definition}\label{defn:naturalenumerationcomponent}  The quasi-eigenvalue counting functions of exceptional and endpoint exceptional components are defined by setting
\begin{equation}
\label{eq:Nexceptkappa}
\mathcal{N}^q_{\mathcal{Y}_\kappa}(\sigma):=
\begin{cases}
[\varphi_{\mathcal{Y}_\kappa}(\sigma)],\quad&\text{if both }\alpha^\Eangles_{\kappa-1}\text{ and } \alpha^\Eangles_{\kappa} \text{ are odd},\\
[\varphi_{\mathcal{Y}_\kappa}(\sigma)]+\frac{1}{2},\quad&\text{if  }\alpha^\Eangles_{\kappa-1}\text{ and } \alpha^\Eangles_{\kappa} \text{ are of different parity},\\
[\varphi_{\mathcal{Y}_\kappa}(\sigma)]+1,\quad&\text{if both }\alpha^\Eangles_{\kappa-1}\text{ and } \alpha^\Eangles_{\kappa} \text{ are even},
\end{cases}
\end{equation}
for $\kappa=2,\dots, K$, 
\begin{equation}
\label{eq:Nexcept1}
\mathcal{N}^q_{\mathcal{Y}_1^{(\aleph\Eangles)}}(\sigma):=
\begin{cases}
[\varphi_{\mathcal{Y}_1^{(\aleph\Eangles)}}(\sigma)]-\frac{1}{2},\quad&\text{if }\aleph=D\text{ and }\alpha^\Eangles_{\kappa} \text{ is odd},\\
[\varphi_{\mathcal{Y}_1^{(\aleph\Eangles)}}(\sigma)],\quad&\text{if }\aleph=D\text{ and }\alpha^\Eangles_{\kappa} \text{ is even},\\
[\varphi_{\mathcal{Y}_1^{(\aleph\Eangles)}}(\sigma)]+\frac{1}{2},\quad&\text{if }\aleph=N\text{ and }\alpha^\Eangles_{\kappa} \text{ is odd},\\
[\varphi_{\mathcal{Y}_1^{(\aleph\Eangles)}}(\sigma)]+1,\quad&\text{if }\aleph=N\text{ and }\alpha^\Eangles_{\kappa} \text{ is even},
\end{cases}
\end{equation}
and
\begin{equation}
\label{eq:NexceptK1}
\mathcal{N}^q_{\mathcal{Y}_{K+1}^{(\Eangles\beth)}}(\sigma):=
\begin{cases}
[\varphi_{\mathcal{Y}_{K+1}^{(\Eangles\beth)}}(\sigma)]+\frac{1}{2},\quad&\text{if }\alpha^\Eangles_{\kappa} \text{ is odd},\\
[\varphi_{\mathcal{Y}_{K+1}^{(\Eangles\beth)}}(\sigma)]+1,\quad&\text{if }\alpha^\Eangles_{\kappa} \text{ is even},
\end{cases}
\end{equation}
(where in \eqref{eq:NexceptK1} the formulae are the same for $\aleph=D,N$).
\end{definition}

\begin{remark} In view of Definition \ref{defn:naturalenumerationcomponent},  the quasi-eigenvalue counting functions  could be interpreted similarly to Proposition \ref{prop:startcount} in the following way.  
For an exceptional component $\mathcal{Y}_\kappa$, we count all positive solutions of \eqref{eq:quasizigexkappa} if $\alpha^\Eangles_{\kappa-1}$ and $\alpha^\Eangles_{\kappa-1}$ are of the same parity, and all positive solutions plus a half if the parity is different. For the endpoint exceptional components $\mathcal{Y}_1^{(\aleph\Eangles)}$ and $\mathcal{Y}_{K+1}^{(\Eangles\beth)}$, we count all positive solutions of \eqref{eq:quasizigex1} and \eqref{eq:quasizigexK1}, respectively, if the exceptional vertex is even, all positive solutions plus a half  if the exceptional vertex is odd and the boundary condition at the other end is Neumann, and all positive solutions minus a half if the exceptional vertex is odd and the boundary condition at the other end is Dirichlet. This is checked directly by evaluating the quasi-eigenvalue counting functions at $\sigma=0$.
\end{remark}

We can now define the natural enumeration for an exceptional zigzag.

\begin{definition}
\label{def:natenumzigexc}
The natural enumeration of the quasi-eigenvalues of a zigzag $\mathcal{Z}^{(\aleph\beth)}$ with $K$ exceptional angles is defined by setting
\[
\mathcal{N}^q_{\mathcal{Z}^{(\aleph\beth)}}(\sigma):=\mathcal{N}^q_{\vphantom{\mathcal{Y}_{K+1}^{(\Eangles\beth)}}\mathcal{Y}_1^{(\aleph\Eangles)}}(\sigma)
+\sum_{\kappa=2}^{K} \mathcal{N}^q_{\vphantom{\mathcal{Y}_{K+1}^{(\Eangles\beth)}}\mathcal{Y}_\kappa}(\sigma)
+\mathcal{N}^q_{\mathcal{Y}_{K+1}^{(\Eangles\beth)}}(\sigma).
\]
\end{definition}

The following analogue of Theorem \ref{thm:quasizigzags} holds.
\begin{theorem}
\label{thm:quasizigzagexc}
Let $\mathcal{Z}$ be a partially curvilinear exceptional zigzag, and let $\Omega$ be any $\mathcal{Z}$-zigzag domain. For $\aleph, \beth\in\{D,N\}$, let $\lambda_m^{(\aleph\beth)}$ denote the eigenvalues of the mixed eigenvalue problem \eqref{eq:gensloshing}$_{\aleph\beth}$ enumerated in increasing order with account of multiplicities, and let $\sigma_m^{(\aleph\beth)}$ denote the quasi-eigenvalues of the $\aleph\beth$-zigzag $\mathcal{Z}$ in the natural enumeration given by Definition \ref{def:natenumzigexc}. Then
\[
\lambda_m^{(\aleph\beth)}=\sigma_m^{(\aleph\beth)}+o(1)\qquad\text{as }m\to\infty.
\]
\end{theorem}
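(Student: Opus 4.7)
The plan is to reduce Theorem \ref{thm:quasizigzagexc} to the already-established non-exceptional case (Theorem \ref{thm:quasizigzags}) by using the fact that at an exceptional vertex, the matching conditions \eqref{matchVjexc} for the Dirac-type operator (equivalently, the vertex conditions \eqref{matchcond:exc} for the graph Laplacian, or the scattering conditions \eqref{eq:exceptionalcondition} for the Peters solutions) fully decouple the two sides of the vertex. Concretely, $\mathbf{f}|_{V_j-0}$ lies in $\operatorname{Span}(\Cvect^\perp(\alpha_j))$ and $\mathbf{f}|_{V_j+0}$ lies in $\operatorname{Span}(\Cvect(\alpha_j))$, so the value on one side is independent of the value on the other side. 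Under the identification $\mathbf{b}\mapsto\mathbf{b}^\sha$ of Section \ref{subsec:rep}, the vectors $\Cvect_\mathrm{even}$ and $\Cvect_\mathrm{odd}$ are rotations of $\mathbf{N}$ and $\mathbf{D}$ by $\pm\pi/4$, so choosing the appropriate trace of $\mathbf{f}$ in these directions is exactly the choice of a Dirichlet or Neumann condition (shifted by $\pi/4$), with the shift carrying the parity information $\Odd(\alpha_j)=\pm 1$.

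First, I would prove a base case analogous to Proposition \ref{lemma1} for \emph{endpoint exceptional components}. If $\mathcal{Z}$ is a partially curvilinear zigzag with \emph{one} exceptional vertex at an endpoint (that is, $\mathcal{Z}=\mathcal{Y}_1^{(\aleph\Eangles)}$ or $\mathcal{Y}_{K+1}^{(\Eangles\beth)}$), then the spectrum of the zigzag problem can be understood by reflection: symmetrising about the bisector at the exceptional corner and unfolding a sector of angle $\pi/(2k)$ into one of angle $\pi/k$, one turns an exceptional corner into a straight sloping beach boundary with either a Dirichlet or Neumann condition on the unfolded bisector (depending on the parity of $k$). One then applies the already proven eigenvalue asymptotics for the sloping beach/sloshing problems of \cite{sloshing} to get the correct asymptotics; the shifts by $\pm 1/2$ and the cases in \eqref{eq:Nexcept1}–\eqref{eq:NexceptK1} appear precisely because the exceptional corner shifts the phase by $\pm\pi/4$ depending on $\Odd(\alpha_j)$. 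This replaces Proposition \ref{lemma1} in the exceptional setting.

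Next, I would establish an analogue of Proposition \ref{lemma2}: if $W=V^\Eangles_\kappa$ is an \emph{interior} exceptional vertex of the zigzag $\mathcal{Z}^{(\aleph\beth)}$, impose a Dirichlet (resp.\ Neumann) condition on a short segment near $W$ on each side of $W$ — but critically, the exceptional nature of the vertex means we can directly cut the zigzag at $W$ into two sub-zigzags, one ending with an exceptional endpoint component terminating at $W$ and the other starting with such a component at $W$. The Dirichlet-Neumann bracketing inequality \eqref{twosided} still applies along the small cut transversal segment, and the analogue of Lemma \ref{lemma2key} now reads as an identity between $\mathcal{N}^q_{\mathcal{Z}^{(\aleph\beth)}}(\sigma)$ and the sum of the quasi-eigenvalue counting functions of the two sub-zigzags, where these sub-zigzags satisfy effectively Dirichlet or Neumann conditions at their matching endpoint. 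The shift conventions in Definition \ref{defn:naturalenumerationcomponent} are designed exactly so that these split counts add up correctly: an even exceptional angle contributes an integer from each side, while an odd exceptional angle contributes a half-integer from each side, and the two halves combine to an integer in the total count. Combined with Theorem \ref{thm:injectivityzigzag} (which controls the possibility of clustering and was proved already in the exceptional setting), this forces equality of $\mathcal{N}_{\mathcal{Z}^{(\aleph\beth)}}$ and $\mathcal{N}^q_{\mathcal{Z}^{(\aleph\beth)}}$.

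The main obstacle is the bookkeeping of the $\pm 1/2$ shifts at exceptional vertices. Because the natural enumeration for each exceptional or endpoint component in Definition \ref{defn:naturalenumerationcomponent} depends on four separate parity/boundary cases, checking that the sum in Definition \ref{def:natenumzigexc} reproduces the correct integer value of $\mathcal{N}^q_{\mathcal{Z}^{(\aleph\beth)}}(0)$ — and, more importantly, that it matches the counting function of the true eigenvalues after Dirichlet-Neumann bracketing — requires a careful case analysis analogous to the eight-row table in the proof of Lemma \ref{lemma2key}. The key observation that makes this tractable is that, just as the functions $\varphi_{\mathcal{Z}^{(\aleph\beth)}}$ and $\tilde\varphi_{\mathcal{Z}^{(\aleph\beth)}}$ have matching integer parts (Lemma \ref{lem:phiandtildephi}), the shifts in \eqref{eq:Nexceptkappa}–\eqref{eq:NexceptK1} are rigged so that telescoping across the exceptional vertices yields exactly the contribution that the non-exceptional gluing would have produced had the angle at $V^\Eangles_\kappa$ been perturbed off of $\Eangles$. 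Once this combinatorial identity is verified, the rest of the argument mirrors Section \ref{subs:noexc} verbatim.
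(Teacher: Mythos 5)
Your overall strategy — a base case established by symmetry and comparison with the sloshing asymptotics of \cite{sloshing}, followed by a gluing step via Dirichlet--Neumann bracketing, with tracking of the half-integer shifts in the natural enumeration — is the right one in spirit, and the key observation that the $\pm\frac{1}{2}$ conventions in Definition \ref{defn:naturalenumerationcomponent} are rigged so that contributions telescope correctly across exceptional vertices is exactly the kind of bookkeeping that makes the argument close. However, the two structural pieces you propose don't hold up as stated.

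First, the base case cannot be taken to be an endpoint exceptional component $\mathcal{Y}_1^{(\aleph\Eangles)}$ or $\mathcal{Y}_{K+1}^{(\Eangles\beth)}$. These are auxiliary combinatorial objects, not zigzags: they have no boundary condition $\aleph,\beth\in\{D,N\}$ at the exceptional end and therefore do not correspond to any zigzag domain $\Omega$, so there simply are no true eigenvalues $\lambda_m$ to compare against. The paper is explicit about this -- the natural enumeration for exceptional and endpoint exceptional components is introduced "for auxiliary purposes only." The base case the paper actually uses (Proposition \ref{lemma1exc}) is a genuine zigzag: two equal straight sides with the exceptional angle at the \emph{interior} vertex, so both endpoints carry $D$ or $N$ conditions and symmetry about the bisector through the exceptional vertex (or Lemma \ref{lemma:transpl}) reduces the problem to a sloshing computation.

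Second, the gluing step cannot be performed by cutting at an interior exceptional vertex $W=V^\Eangles_\kappa$. A cut in the zigzag-domain framework must meet $\partial\Omega$ orthogonally at a non-vertex point where the zigzag is straight (Definition \ref{def:zigzagdomain}); at a corner there is no such orthogonal direction and the two resulting pieces are not themselves zigzag domains. The fact that the quasi-eigenvalue equations $\eqref{excepeq}$ decouple at exceptional vertices is an \emph{asymptotic} statement about the model problem, not a statement that the harmonic extension inside $\Omega$ decouples there. The paper instead always cuts at a non-vertex point $W$ inside an exceptional (or endpoint exceptional) component (Propositions \ref{lemma2exc} and \ref{lemma3exc}), turning an interior exceptional component into two endpoint exceptional components and an endpoint component into a non-exceptional zigzag plus a smaller endpoint component; the exceptional vertices then always end up interior to two-sided base-case pieces. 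You would need to recast your decomposition in this form before the Dirichlet--Neumann bracketing and the analogue of Lemma \ref{lemma2key} apply.
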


\begin{proof}
Theorem \ref{thm:quasizigzagexc} is proved similarly to Theorem \ref{thm:quasizigzags}, see subsection \ref{sec:zigzags}. Below we outline the main steps of the argument and leave the details to the reader. 

We start with
\begin{proposition}
\label{lemma1exc}
Theorem \ref{thm:quasizigzagexc} holds  for zigzags consisting of two equal straight sides and an exceptional angle between them.
\end{proposition}
\begin{proof}
Proposition \ref{lemma1exc} is proved similarly to Proposition \ref{lemma1}. The problem is reduced to counting mixed Steklov-Neumann and Steklov-Dirichlet eigenvalues using either symmetry with respect to the bisector or the isospectral transformation described in Lemma \ref{lemma:transpl}. The result then follows by explicitly computing the total loss of quasi-eigenvalues as in the proof of  Proposition \ref{lemma1}  using the results of \cite{sloshing}. 
\end{proof}

The following two propositions are proved using a straightforward adaptation of the proof of Proposition~\ref{lemma2key}.

\begin{proposition}
\label{lemma2exc}
Let $\mathcal{Y}_\kappa$ be an exceptional component joining vertices $V^\Eangles_{\kappa-1}$  and $V^\Eangles_{\kappa}$  of a partially curvilinear zigzag $\mathcal{Z}$ with exceptional angles, and let $W\in \mathcal{Y}_\kappa$  be a point which is not a vertex and such that the zigzag $\mathcal{Z}$ is straight in some neighbourhood of $W$.  Let the boundary condition $\daleth\in\{D,N\}$ be imposed at $W$, which splits the exceptional component $\mathcal{Y}_\kappa$ into two endpoint exceptional components: 
$\mathcal{Y}_{\kappa, \mathrm{I}}^{(\Eangles\daleth)}$ starting at $V^\Eangles_{\kappa-1}$  and ending at $W$, and $\mathcal{Y}_{\kappa, \mathrm{II}}^{(\daleth\Eangles)}$ starting at $W$ and ending at  $V^\Eangles_{\kappa}$. 

Then  there exists $\delta>0$ such that for any $M>0$ there exists an interval $\mathcal{I}_M \subset (M,+\infty)$ of length $\delta$ such that
\[
\mathcal{N}^q_{\mathcal{Y}_{\kappa, \mathrm{I}}^{(\Eangles\daleth)}}(\sigma)
+\mathcal{N}^q_{\mathcal{Y}_{\kappa, \mathrm{II}}^{(\daleth\Eangles)}}(\sigma)
=\mathcal{N}^q_{\mathcal{Y}_\kappa}(\sigma)
\]
for any $\sigma\in \mathcal{I}_M$. 
\end{proposition}

\begin{proposition}
\label{lemma3exc}
Let $\mathcal{Y}^{(\aleph\Eangles)}_1$ be the endpoint exceptional component joining the vertices $P$ and $V^\Eangles_1$ of a partially curvilinear exceptional zigzag $\mathcal{Z}$, with the boundary condition $\aleph\in\{D,N\}$ imposed at its start point $P$. Let $W\in\mathcal{Y}^{(\aleph\Eangles)}_1$  be a point which is not a vertex and such that the zigzag  $\mathcal{Z}$ is straight in some neighbourhood of $X$. Let the boundary condition $\daleth\in\{D,N\}$ be imposed at $W$, which splits the endpoint exceptional component $\mathcal{Y}^{(\aleph\Eangles)}_1$ into the zigzag $\mathcal{Z}^{(\aleph\daleth)}_{1,\mathrm{I}}$ starting at $P$ and ending at $W$ and the endpoint exceptional component $\mathcal{Y}^{(\daleth\Eangles)}_{1,\mathrm{II}}$ starting at $W$ and ending at $V^\Eangles_1$. 

Then  there exists $\delta>0$ such that for any $M>0$ there exists an interval $\mathcal{I}_M^{\aleph,\daleth} \subset (M,+\infty)$ of length $\delta$ such that
\[
\mathcal{N}^q_{\mathcal{Z}^{(\aleph\daleth)}_{1,\mathrm{I}}}(\sigma)
+\mathcal{N}^q_{\mathcal{Y}^{(\daleth\Eangles)}_{1,\mathrm{II}}}(\sigma)
=\mathcal{N}^q_{\mathcal{Y}^{(\aleph\Eangles)}_1}(\sigma)
\]
for any $\sigma\in \mathcal{I}_M^{\aleph,\daleth}$. 
An analogous result holds  for the endpoint exceptional component $\mathcal{Y}^{(\Eangles\beth)}_{K+1}$.
\end{proposition}

Propositions \ref{lemma2exc} and \ref{lemma3exc} imply the analogue of Proposition \ref{lemma2} for partially curvilinear zigzags with exceptional angles. This result combined with 
Proposition \ref{lemma1exc} yields 
Theorem \ref{thm:quasizigzagexc} in the same way as Propositions \ref{lemma1} and \ref{lemma2}
yield Theorem \ref{thm:quasizigzags}.
\end{proof}
We can now prove the main result of this subsection.
\begin{theorem}
\label{thm:mainexc}
Theorem \ref{thm:main} holds for partially curvilinear exceptional polygons.
\end{theorem}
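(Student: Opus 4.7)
The plan is to adapt the proof of Theorem \ref{thm:mainpolygons} from subsection \ref{subs:noexc} to the exceptional case, substituting Theorem \ref{thm:quasizigzagexc} for Theorem \ref{thm:quasizigzags}. Pick a non-vertex point $V_0$ on a smooth, straight piece of $\partial\mathcal{P}$; then $V_0$ lies on some exceptional boundary component $\mathcal{Y}_{\kappa_0}^{\mathcal{P}}$ joining the exceptional vertices $V^{\Eangles}_{\kappa_0-1}$ and $V^{\Eangles}_{\kappa_0}$. Perform the same straight-plus-circular cut at $V_0$ used in the proof of Theorem \ref{thm:mainpolygons}, turning $\mathcal{P}$ into a partially curvilinear zigzag domain $\mathcal{P}_{\mathrm{cut}}$ whose zigzag $\mathcal{Z}$ has start and end point $V_0$, inherits all exceptional vertices of $\mathcal{P}$, and whose two endpoint exceptional components $\mathcal{Y}_1^{(\aleph\Eangles)}$ (from $V_0$ to $V^{\Eangles}_{\kappa_0}$) and $\mathcal{Y}_{K+1}^{(\Eangles\aleph)}$ (from $V^{\Eangles}_{\kappa_0-1}$ back to $V_0$) are precisely the two halves of $\mathcal{Y}_{\kappa_0}^{\mathcal{P}}$ severed at $V_0$, with cut condition $\aleph \in \{D,N\}$.

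Dirichlet-Neumann bracketing at the cut gives $\mathcal{N}_{\mathcal{Z}^{(DD)}}(\sigma) \le \mathcal{N}_{\mathcal{P}}(\sigma) \le \mathcal{N}_{\mathcal{Z}^{(NN)}}(\sigma)$, and Theorem \ref{thm:quasizigzagexc} identifies each outer function with the corresponding natural-enumeration quasi-eigenvalue counting function off a union of intervals of vanishing length. The natural analogue of Theorem \ref{thm:injectivityzigzag} for exceptional zigzags (which follows verbatim using Proposition \ref{prop:DiracL} with the exceptional matching conditions \eqref{matchVjexc}, together with the clustering argument of Proposition \ref{prop:nomoreNexc}) guarantees the existence of the limit in \eqref{llimit}, so it suffices to establish
\[
\mathcal{N}^q_{\mathcal{Z}^{(DD)}}(\sigma) = \mathcal{N}^q_{\mathcal{P}}(\sigma) = \mathcal{N}^q_{\mathcal{Z}^{(NN)}}(\sigma)
\]
on intervals of bounded length located arbitrarily far on the real line. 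Both quasi-eigenvalue counting functions decompose additively over exceptional components, and the non-endpoint pieces $\mathcal{Y}_\kappa$ (for $\kappa \ne \kappa_0$) are identical in $\mathcal{P}$ and $\mathcal{Z}$, so their contributions cancel. The comparison thus reduces to the single identity
\[
\mathcal{N}^q_{\mathcal{Y}_1^{(\aleph\Eangles)}}(\sigma) + \mathcal{N}^q_{\mathcal{Y}_{K+1}^{(\Eangles\aleph)}}(\sigma) = \mathcal{N}^q_{\mathcal{Y}_{\kappa_0}^{\mathcal{P}}}(\sigma), \qquad \aleph \in \{D, N\},
\]
holding on such intervals.

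The argument for this final identity follows Proposition \ref{lemma2key}: factoring $\mathtt{U}$ for the full polygon component as the product of the two endpoint-component matrices $\mathtt{U}$ (separated by a trivial rotation at the artificial vertex $V_0$), one sees from \eqref{eq:varphi1}, \eqref{eq:varphikappa}, \eqref{eq:varphiK1} that the sum $\varphi_{\mathcal{Y}_1^{(\aleph\Eangles)}}(\sigma) + \varphi_{\mathcal{Y}_{K+1}^{(\Eangles\aleph)}}(\sigma)$ equals $\varphi_{\mathcal{Y}_{\kappa_0}^{\mathcal{P}}}(\sigma)$ up to a $\sigma$-independent constant computable by evaluation at $\sigma = 0$. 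Because each of these $\varphi$-functions is smooth and monotone with derivatives bounded above and below (Lemma \ref{lem:phi}), Proposition \ref{prop:integparts} supplies, for every $M > 0$, an interval in $(M, +\infty)$ of uniform length on which the desired integer-part identity holds --- using \eqref{integparts1} for $\aleph = N$ and \eqref{integparts2} for $\aleph = D$. The main obstacle is purely bookkeeping: one must verify, in each of the combinations of $\aleph \in \{D, N\}$ with the four parity patterns for $\alpha^{\Eangles}_{\kappa_0-1}$ and $\alpha^{\Eangles}_{\kappa_0}$, that the half-integer offsets $0, \tfrac12, 1, -\tfrac12$ prescribed by Definition \ref{defn:naturalenumerationcomponent} for the two endpoint components combine consistently with the offset used in \eqref{eq:Nexceptkappa} for the original polygon component (taking into account Remark \ref{rem:multiplicityexc}). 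Once these case checks are completed, the same bracketing plus injectivity argument as in the closing paragraph of the proof of Theorem \ref{thm:mainpolygons} yields $\mathcal{N}_{\mathcal{P}}(\sigma) = \mathcal{N}^q_{\mathcal{P}}(\sigma)$ off intervals of vanishing length, and consequently $\lambda_m = \sigma_m + O(m^{-\varepsilon})$ via the quasimode bounds of Corollary \ref{cor:quasimodesstraightnearcorners}, finishing the proof of Theorem \ref{thm:mainexc}.
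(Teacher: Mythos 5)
Your proposal is correct and follows essentially the same route as the paper's own proof: cut at a non-vertex point $V_0$ on an exceptional component, apply Dirichlet--Neumann bracketing, use the additive decomposition \eqref{natenumpolexc} and Definition \ref{def:natenumzigexc} to cancel the unchanged components, and reduce to a single identity relating the two endpoint exceptional components of the zigzag to the severed exceptional component of the polygon --- which is exactly Proposition \ref{lemma2exc} with $W=V_0$ and $\daleth=\aleph$. The only cosmetic difference is that you sketch a re-derivation of that identity (via $\mathtt{U}$-factoring, evaluation at $\sigma=0$, and Proposition \ref{prop:integparts}) where the paper simply cites Proposition \ref{lemma2exc}; the content is the same.
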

\begin{proof}
Let $\mathcal{P}$ be a partially curvilinear exceptional polygon as defined in subsection \ref{sec:exceptional}.  In view of Definitions \ref{def:quasiexc} and \ref{def:quasiexcmult}, the quasi-eigenvalue counting function for the polygon $\mathcal{P}$ is given by
\begin{equation}
\label{natenumpolexc}
\mathcal{N}_\mathcal{P}^q(\sigma)=\sum_{\kappa=1}^K \mathcal{N}_{\mathcal{Y}_\kappa}(\sigma),
\end{equation}
where $\mathcal{Y}_\kappa$ are the exceptional boundary components of $\mathcal{P}$.

Let us make a cut inside the polygon precisely as in the proof of Theorem \ref{thm:mainpolygons} (see Figure \ref{fig:poly-cut}). As before, the cut produces a zigzag domain with exceptional angles and identified endpoints at some point $V_0$ on a straight part of the boundary $\partial \mathcal{P}$. Imposing either Dirichlet or Neumann boundary condition at $V_0$  and   arguing in the same way as in the proof of Theorem \ref{thm:mainpolygons}, 
we observe that the result follows by Dirichlet-Neumann bracketing from an analogue of equalities \eqref{count:eqpol} relating the quasi-eigenvalue counting functions of $\mathcal{P}$ 
and of the corresponding zigzags. Such an analogue can be easily deduced from  Proposition \ref{lemma2exc}, Definition \ref{def:natenumzigexc} and formula \eqref{natenumpolexc}.
This completes the proof of the theorem.
\end{proof}
\begin{remark}
Recall that new tools were required  to deduce Theorem \ref{thm:main} from Theorem \ref{thm:quasizigzags}, see subsection \ref{subs:noexc}.  The reason is that the quasi-eigenvalue condition \eqref{condperiod} for non-exceptional polygons is a vector-valued condition, unlike the Dirichlet and Neumann boundary conditions for zigzags. On the other hand, the quasi-eigenvalue condition \eqref{natenumpolexc} for an exceptional polygon is scalar and very closely related to the condition \eqref{eq:quasizigexkappa} for an exceptional  zigzag.  This explains why Theorem~\ref{thm:mainexc} is much easier to prove, essentially a direct corollary of Theorem~\ref{thm:quasizigzagexc}.
\end{remark}

Results of this subsection, together with Corollary  \ref{cor:diffbounded}, also imply the following analogue of Proposition \ref{prop:nomoreN}. 

\begin{proposition}\label{prop:nomoreNexc} There exists a $d>0$ and an $N>0$ such that any interval of the real line of length $d$ contains not more than $N$ quasi-eigenvalues of a zigzag.
\end{proposition}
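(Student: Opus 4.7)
The plan is to reduce everything to the monotonicity and uniform bounds on the derivative of the phase functions that track quasi-eigenvalues. For a non-exceptional zigzag $\mathcal{Z}^{(\aleph\beth)}$ the quasi-eigenvalues are, by \eqref{eq:hatzigzag} and \eqref{eq:phicond}, exactly the real numbers at which $\varphi_{\mathcal{Z}^{(\aleph\beth)}}(\sigma)$ crosses an integer. By Lemma \ref{lem:phi}, there is a constant $C_2$ (depending only on $\balpha$) such that $\varphi_{\mathcal{Z}^{(\aleph\beth)}}'(\sigma)\le C_2$ for all $\sigma>0$. Therefore the number of quasi-eigenvalues in any interval of length $d$ is at most $C_2 d + 1$; choosing $d=1$, the result holds for non-exceptional zigzags with $N = \lceil C_2\rceil + 1$.

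For an exceptional zigzag with $K$ exceptional angles, Definition \ref{quasieigenxz} says the set of quasi-eigenvalues is the union, over the $K-1$ exceptional components $\mathcal{Y}_\kappa$ and the two endpoint exceptional components $\mathcal{Y}_1^{(\aleph\mathcal{E})}$ and $\mathcal{Y}_{K+1}^{(\mathcal{E}\beth)}$, of the sets of real solutions of the equations \eqref{eq:quasizigex1}--\eqref{eq:quasizigexK1}. In each case the quasi-eigenvalues are precisely the real points at which the corresponding phase function $\varphi_{\mathcal{Y}_\kappa}(\sigma)$, $\varphi_{\mathcal{Y}_1^{(\aleph\mathcal{E})}}(\sigma)$, or $\varphi_{\mathcal{Y}_{K+1}^{(\mathcal{E}\beth)}}(\sigma)$ defined in \eqref{eq:varphi1}--\eqref{eq:varphiK1} crosses an integer. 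These functions are built out of rotations $\hat{\mathtt R}(\sigma\ell)$, whose argument increments are linear in $\sigma$, and $\sigma$-independent symmetric matrices $\hat{\mathtt A}(\alpha)$, acting on fixed initial vectors; the argument of Lemma \ref{lem:phi} applies verbatim and gives, for each one of these finitely many phase functions, a uniform upper bound $C_2^\kappa$ on its derivative.

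Thus each component contributes at most $C_2^\kappa d + 1$ quasi-eigenvalues in any interval of length $d$, so the total count is at most
\[
\sum_{\kappa} (C_2^\kappa d + 1) \le (K+1)(C^* d + 1),
\]
where $C^*$ is the largest of the $C_2^\kappa$ and $K+1$ is the number of components. Taking $d=1$ and $N=(K+1)(C^*+1)$ finishes the exceptional case. Combining the two cases and taking $d$ and $N$ uniform over all zigzag types, the proposition follows.

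There is no real obstacle; the only modest point to verify is that the derivative bound of Lemma \ref{lem:phi} extends unchanged to the endpoint phase functions $\varphi_{\mathcal{Y}_1^{(\aleph\mathcal{E})}}$ and $\varphi_{\mathcal{Y}_{K+1}^{(\mathcal{E}\beth)}}$, where one of the vectors on which $\hat{\mathtt U}$ acts is $\hat{\Cvect}(\alpha_{E})$ or $\hat{\baleph}$ rather than $\hat{\mathbf N}$ or $\hat{\mathbf D}$. Since Lemma \ref{lem:phi} only uses that the initial vector is a fixed element of $\UC$ and that $\hat{\mathtt U}$ is an alternating product of rotations depending linearly on $\sigma$ and fixed symmetric matrices, the same chain-rule-plus-\eqref{arg:choice} computation works and yields the needed uniform bound.
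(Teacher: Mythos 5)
Your proof is correct and follows essentially the same route the paper intends: the paper simply points to the results of the subsection (the component phase functions $\varphi_{\mathcal{Y}_\kappa}$) together with Corollary \ref{cor:diffbounded}, and you carry out the underlying argument — the uniform upper bound on $\varphi'$ from Lemma \ref{lem:phi}, applied component by component for the exceptional case and then summed over the finitely many components. The one detail you flag, that Lemma \ref{lem:phi}'s derivative bound carries over to the endpoint phase functions with $\hat\Cvect(\alpha_E)$ as initial vector, is handled correctly since the argument there depends only on the fixed initial vector and the alternating structure of rotations and $\sigma$-independent symmetric matrices.
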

\clearpage\section{Quasi-eigenvalues as roots of trigonometric polynomials}\label{sec:trigpolynoms}

\subsection{Explicit expressions for the entries of \texorpdfstring{$\mathtt{T}(\balpha,\bell,\sigma)$}{T}}

In this section we will prove Theorem \ref{thm:polygoneqn1}; we start, however, by finding explicit expressions for the matrices $\mathtt{T}(\balpha,\bell,\sigma)$.

We recall that  for a binary vector $\bzeta=(\zeta_1,\dots,\zeta_n)\in \pmset{n}=\{\pm 1\}^n$ with cyclic identification $\zeta_{n+1}\equiv\zeta_1$, we let
\begin{equation}\label{eq:Chdef}
\changes(\bzeta):=\{j \in\{1,\dots,n\}\mid \zeta_j\ne \zeta_{j+1} \} 
\end{equation}
denote the set of indices of sign change in $\bzeta$.

Let additionally
\begin{equation}\label{eq:Chtildedef} 
\tilde{\changes}(\bm{\zeta}):=\changes((\zeta_1,\dots,\zeta_n,-1))\cap\{1,\dots,n\}.
\end{equation}
To clarify, in order to obtain $\tilde{\changes}(\bm{\zeta})$, we pad $\bm{\zeta}$ by adding an additional component $-1$, compute the set of sign changes for the resulting vector, and drop $n+1$ from the result if present (i.e. if $\zeta_1=1$).

Let $n\ge 1$, let $\balpha=(\alpha_1,\dots,\alpha_n)\in(\Pi\setminus\Eangles)^n$ be a vector of non-exceptional angles, and let $\bell=(\ell_1,\dots,\ell_n)\in\mathbb{R}_+^n$ be a vector of lengths. We have already established in Section \ref{subsec:propscat} that the matrix $\mathtt{T}_n:=\mathtt{T}(\balpha,\bell,\sigma)$ belongs to the class $\mathcal{M}_1$, and therefore we have
\begin{equation}\label{eq:Tviapandq}
\renewcommand*{\arraystretch}{1.5}
\mathtt{T}_n=\begin{pmatrix} p_n(\balpha,\bell,\sigma)&q_n(\balpha,\bell,\sigma)\\\overline{q_n(\balpha,\bell,\sigma)}&\overline{p_n(\balpha,\bell,\sigma)}\end{pmatrix}
\end{equation}
for some functions $p_n$ and $q_n$ such that $|p_n|^2-|q_n|^2=1$; we use subscript $n$ to emphasise the dependance upon the length of vectors $\balpha$ and $\bell$.   

\begin{theorem}\label{thm:pnqnformulae} We have
\begin{equation}\label{eq:pnformula}
p_n(\balpha,\bell,\sigma)=\frac{1}{\prod\limits_{j=1}^n \sin\left(\frac{\pi^2}{2\alpha_j}\right)}\sum_{\substack{\bm{\zeta}\in\pmset{n}\\\zeta_1=1}} \mathfrak{p}_{\bm{\zeta}} \exp(\ir\bm{\ell}\cdot\bm{\zeta}\sigma),
\end{equation}
and
\begin{equation}\label{eq:qnformula}
q_n(\balpha,\bell,\sigma)=\frac{-\ir}{\prod\limits_{j=1}^n \sin\left(\frac{\pi^2}{2\alpha_j}\right)}\sum_{\substack{\bm{\zeta}\in\pmset{n}\\\zeta_1=1}} \mathfrak{q}_{\bm{\zeta}} \exp(-\ir\bm{\ell}\cdot\bm{\zeta}\sigma),
\end{equation}
where 
\[
\mathfrak{p}_{\bm{\zeta}}=\mathfrak{p}_{\bm{\zeta}}(\balpha):=\prod_{j\in\changes(\bm{\zeta})}  \cos\left(\frac{\pi^2}{2\alpha_j}\right)
\]
 is already defined in  \eqref{eq:pfrakdefn}, and we additionally set
\begin{equation}\label{eq:qfrakdefn}
\mathfrak{q}_{\bm{\zeta}}=\mathfrak{q}_{\bm{\zeta}}(\balpha):=\prod_{j\in \tilde{\changes}(\bm{\zeta})}  \cos\left(\frac{\pi^2}{2\alpha_j}\right),
\end{equation}
assuming the convention $\prod\limits_\varnothing=1$.
\end{theorem}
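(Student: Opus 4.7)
\smallskip

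\noindent\textbf{Proof proposal.} The plan is to argue by induction on $n$, using the recursive structure $\mathtt{T}_{n+1}(\balpha',\bell',\sigma) = \mathtt{C}(\alpha_{n+1},\ell_{n+1},\sigma)\,\mathtt{T}_n(\balpha,\bell,\sigma)$ that is built into Definition \eqref{eq:Tdef1}, where $\balpha'=(\balpha,\alpha_{n+1})$ and $\bell'=(\bell,\ell_{n+1})$.

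First I would dispense with the base case $n=1$. Here $\pmset{1}$ consists of the single vector $\bzeta=(1)$, and with cyclic identification $\zeta_2\equiv\zeta_1=1$, so $\changes((1))=\varnothing$ and $\tilde\changes((1))=\{1\}$. Consequently $\mathfrak{p}_{(1)}=1$ and $\mathfrak{q}_{(1)}=\cos\frac{\pi^2}{2\alpha_1}$, which matches the two nontrivial entries of $\mathtt{C}(\alpha_1,\ell_1,\sigma)$ in \eqref{eq:Sdef1}.

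Next, multiplying out $\mathtt{C}(\alpha_{n+1},\ell_{n+1},\sigma)\,\mathtt{T}_n$ in the representation \eqref{eq:Tviapandq} and recording the (1,1) and (1,2) entries yields the recursions
\begin{align*}
p_{n+1} &= \csc\tfrac{\pi^2}{2\alpha_{n+1}}\,\er^{\ir\ell_{n+1}\sigma}\,p_n \;-\;\ir\cot\tfrac{\pi^2}{2\alpha_{n+1}}\,\er^{-\ir\ell_{n+1}\sigma}\,\overline{q_n},\\
q_{n+1} &= \csc\tfrac{\pi^2}{2\alpha_{n+1}}\,\er^{\ir\ell_{n+1}\sigma}\,q_n \;-\;\ir\cot\tfrac{\pi^2}{2\alpha_{n+1}}\,\er^{-\ir\ell_{n+1}\sigma}\,\overline{p_n}.
\end{align*}
Substituting the inductive expressions for $p_n$, $q_n$, $\overline{p_n}$, $\overline{q_n}$ from \eqref{eq:pnformula}, \eqref{eq:qnformula} (noting that the coefficients $\mathfrak{p}_{\bzeta}$, $\mathfrak{q}_{\bzeta}$ are real, so conjugation only flips signs in the exponents) turns each right-hand side into a sum over $\bzeta\in\pmset{n}$ with $\zeta_1=1$. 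On the other hand, a sum over $\bzeta'\in\pmset{n+1}$ with $\zeta'_1=1$ splits into the two subsums according to $\zeta'_{n+1}=\pm 1$, with the corresponding values $\bell'\cdot\bzeta'\sigma = \bell\cdot\bzeta\sigma\pm\ell_{n+1}\sigma$. The induction therefore reduces to a matching of exponents with a matching of combinatorial coefficients.

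The main work — and the point where care is required — is the combinatorial lemma comparing $\mathfrak{p}_{\bzeta'},\mathfrak{q}_{\bzeta'}$ to $\mathfrak{p}_{\bzeta},\mathfrak{q}_{\bzeta}$ when $\bzeta'=(\bzeta,\pm 1)$. Working directly from \eqref{eq:Chdef} and \eqref{eq:Chtildedef}, taking careful account of the cyclic identification $\zeta_{n+2}'\equiv\zeta_1'=1$ (which contributes at position $n+1$ only in certain cases) and of the ``virtual'' $-1$ appended in the definition of $\tilde\changes$, I would verify the four identities
\begin{align*}
\mathfrak{p}_{(\bzeta,+1)} &= \mathfrak{p}_{\bzeta}, & \mathfrak{p}_{(\bzeta,-1)} &= \mathfrak{q}_{\bzeta}\cos\tfrac{\pi^2}{2\alpha_{n+1}},\\
\mathfrak{q}_{(\bzeta,-1)} &= \mathfrak{q}_{\bzeta}, & \mathfrak{q}_{(\bzeta,+1)} &= \mathfrak{p}_{\bzeta}\cos\tfrac{\pi^2}{2\alpha_{n+1}}.
\end{align*}
Each of these comes down to checking whether positions $n$ and $n+1$ lie in the relevant sign-change set, broken into subcases according to the values of $\zeta_n$ and (for $\mathfrak{q}$) the appended $-1$. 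Once these identities are in hand, matching the two cases of the recursion with the two halves of the decomposition of sums over $\pmset{n+1}$ completes the induction. The bookkeeping of where the factor $\cos\frac{\pi^2}{2\alpha_{n+1}}$ arises (either from the off-diagonal entry of $\mathtt{C}(\alpha_{n+1},\ell_{n+1},\sigma)$ or from the additional element of $\changes$/$\tilde\changes$ at position $n+1$) is the main obstacle, but it is purely combinatorial once organised by the parity of $\zeta_{n+1}'$.
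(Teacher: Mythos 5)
Your proposal is correct and follows essentially the same route as the paper: induction on $n$ via the recurrences obtained from $\mathtt{T}_{n+1}=\mathtt{C}(\alpha_{n+1},\ell_{n+1},\sigma)\,\mathtt{T}_n$, with the base case $n=1$ checked directly and the inductive step reduced to exactly the four combinatorial identities you state for $\mathfrak{p}_{\bzeta^*}$ and $\mathfrak{q}_{\bzeta^*}$, which match those the paper verifies from Definitions \eqref{eq:Chdef} and \eqref{eq:Chtildedef}.
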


\begin{proof} We remark, first of all, that the functions $p_n$ and $q_n$ obey the recurrence relations
\begin{align}
p_1&=\frac{1}{\sin\left(\frac{\pi^2}{2\alpha_1}\right)}\exp(\ir\ell_1\sigma),\qquad 
q_1=\frac{-\ir}{\sin\left(\frac{\pi^2}{2\alpha_1}\right)}\cos\left(\frac{\pi^2}{2\alpha_1}\right)\exp(-\ir\ell_1\sigma),\label{eq:tptq1}\\
p_{n+1} &=\frac{1}{\sin\left(\frac{\pi^2}{2\alpha_{n+1}}\right)} \left(\exp(\ir\ell_{n+1}\sigma)p_n-\ir \cos\left(\frac{\pi^2}{2\alpha_{n+1}}\right)\exp(-\ir\ell_{n+1}\sigma)\overline{q_n}\right),\label{eq:pn1}\\
q_{n+1} &= \frac{1}{\sin\left(\frac{\pi^2}{2\alpha_{n+1}}\right)} \left(\exp(\ir\ell_{n+1}\sigma)q_n-\ir \cos\left(\frac{\pi^2}{2\alpha_{n+1}}\right)\exp(-\ir\ell_{n+1}\sigma)\overline{p_n}\right),\label{eq:qn1}
\end{align}
which follows immediately by re-writing  \eqref{eq:Sdef1} and \eqref{eq:Tdef1} as
\[
\begin{split}
\renewcommand*{\arraystretch}{1.5}
&\begin{pmatrix} p_{n+1} & q_{n+1}\\\overline{q_{n+1}}&\overline{p_{n+1}}\end{pmatrix}\\
&=
\frac{1}{\sin\left(\frac{\pi^2}{2\alpha_1}\right)}
\begin{pmatrix}\exp(\ir\ell_{n+1}\sigma)&-\ir\cos\left(\frac{\pi^2}{2\alpha_{n+1}}\right)\exp(-\ir\ell_{n+1}\sigma)\\
\ir\cos\left(\frac{\pi^2}{2\alpha_{n+1}}\right)\exp(\ir\ell_{n+1}\sigma)&\exp(-\ir\ell_{n+1}\sigma)\end{pmatrix}
\begin{pmatrix} p_{n} & q_{n}\\\overline{q_{n}}&\overline{p_{n}}\end{pmatrix}.
\end{split}
\]

We now prove \eqref{eq:pnformula}--\eqref{eq:qnformula} by induction in $n$. 

For $n=1$, the only vector in $\pmset1$ with the first (and only) positive coordinate is $(1)$, with  $\changes((1))=\varnothing$ and 
$\tilde{\changes}((1))=\{1\}$. Thus $\mathfrak{p}_{(1)}=1$ and $\mathfrak{q}_{(1)}=\cos\left(\frac{\pi^2}{2\alpha_1}\right)$, and the statement of the Theorem matches \eqref{eq:tptq1}.

Assume that the statements hold for some $n\ge 1$. Denote $\bell^*=(\bell, \ell_{n+1})\in\mathbb{R}_+^{n+1}$ and $\bm{\zeta}^*=(\bm{\zeta},\zeta_{n+1})\in\pmset{n+1}$. Then by \eqref{eq:pn1},
\[
\begin{split}
&p_{n+1} \prod\limits_{j=1}^{n+1} \sin\left(\frac{\pi^2}{2\alpha_j}\right)\\
&=\sum_{\substack{\bm{\zeta}\in\pmset{n}\\\zeta_1=1}} \mathfrak{p}_{\bm{\zeta}} \exp(\ir\bm{\ell}\cdot\bm{\zeta}\sigma+\ir\ell_{n+1}\sigma)
+\cos\left(\frac{\pi^2}{2\alpha_{n+1}}\right)\sum_{\substack{\bm{\zeta}\in\pmset{n}\\\zeta_1=1}} \mathfrak{q}_{\bm{\zeta}} \exp(\ir\bm{\ell}\cdot\bm{\zeta}\sigma-\ir\ell_{n+1}\sigma)\\
&=\sum_{\substack{\bm{\zeta^*}\in\pmset{n+1}\\\zeta_1=\zeta_{n+1}=1}} \mathfrak{p}_{\bm{\zeta}} \exp(\ir\bm{\ell}^*\cdot\bm{\zeta}^*\sigma)
+\cos\left(\frac{\pi^2}{2\alpha_{n+1}}\right)\sum_{\substack{\bm{\zeta}\in\pmset{n}\\\zeta_1=-\zeta_{n+1}=1}} \mathfrak{q}_{\bm{\zeta}} \exp(\ir\bm{\ell}^*\cdot\bm{\zeta}^*\sigma).
\end{split}
\]
A careful analysis of definitions \eqref{eq:Chdef} and \eqref{eq:Chtildedef}  shows that we have
\[
\mathfrak{p}_{\bm{\zeta}^*} =\begin{cases} 
\mathfrak{p}_{\bm{\zeta}}\quad&\text{if }\zeta_1=\zeta_{n+1}=1,\\
\cos\left(\frac{\pi^2}{2\alpha_{n+1}}\right)\mathfrak{q}_{\bm{\zeta}}\quad&\text{if }\zeta_1=-\zeta_{n+1}=1,
\end{cases}
\]
and therefore
\[
p_{n+1} \prod\limits_{j=1}^{n+1} \sin\left(\frac{\pi^2}{2\alpha_j}\right)\\
=\sum_{\substack{\bm{\zeta}^*\in\pmset{n+1}\\\zeta_1=1}} \mathfrak{p}_{\bm{\zeta}^*} \exp(\ir\bm{\ell}^*\cdot\bm{\zeta}^*\sigma),
\]
thus proving \eqref{eq:pnformula}.

Similarly by \eqref{eq:qn1},
\[
\begin{split}
&q_{n+1} \prod\limits_{j=1}^{n+1} \sin\left(\frac{\pi^2}{2\alpha_j}\right)\\
&=-\ir\sum_{\substack{\bm{\zeta}\in\pmset{n}\\\zeta_1=1}} \mathfrak{q}_{\bm{\zeta}} \exp(-\ir\bm{\ell}\cdot\bm{\zeta}\sigma+\ir\ell_{n+1}\sigma)
-\ir\cos\left(\frac{\pi^2}{2\alpha_{n+1}}\right)\sum_{\substack{\bm{\zeta}\in\pmset{n}\\\zeta_1=1}} \mathfrak{p}_{\bm{\zeta}} \exp(-\ir\bm{\ell}\cdot\bm{\zeta}\sigma-\ir\ell_{n+1}\sigma)\\
&=-\ir\sum_{\substack{\bm{\zeta^*}\in\pmset{n+1}\\\zeta_1=-\zeta_{n+1}=1}} \mathfrak{q}_{\bm{\zeta}} \exp(-\ir\bm{\ell}^*\cdot\bm{\zeta}^*\sigma)
-\ir\cos\left(\frac{\pi^2}{2\alpha_{n+1}}\right)\sum_{\substack{\bm{\zeta}\in\pmset{n}\\\zeta_1=\zeta_{n+1}=1}} \mathfrak{p}_{\bm{\zeta}} \exp(-\ir\bm{\ell}^*\cdot\bm{\zeta}^*\sigma).
\end{split}
\]
Once more, an analysis of definitions \eqref{eq:Chdef} and \eqref{eq:Chtildedef} gives
\[
\mathfrak{	q}_{\bm{\zeta}^*} =\begin{cases} 
\cos\left(\frac{\pi^2}{2\alpha_{n+1}}\right)\mathfrak{p}_{\bm{\zeta}}\quad&\text{if }\zeta_1=\zeta_{n+1}=1,\\
\mathfrak{q}_{\bm{\zeta}}\quad&\text{if }\zeta_1=-\zeta_{n+1}=1,
\end{cases}
\]
and therefore
\[
q_{n+1} \prod\limits_{j=1}^{n+1} \sin\left(\frac{\pi^2}{2\alpha_j}\right)=-\ir\sum_{\substack{\bm{\zeta}^*\in\pmset{n+1}\\\zeta_1=1}} \mathfrak{q}_{\bm{\zeta}^*} \exp(-\ir\bm{\ell}^*\cdot\bm{\zeta}^*\sigma),
\]
thus proving \eqref{eq:qnformula}.
\end{proof}

\subsection{Proof of Theorem \ref{thm:polygoneqn1}(a)} We start by making the following simple observation, which follows immediately by comparing \eqref{eq:Fevendefn}, \eqref{eq:Fodddefn}, and \eqref{eq:pnformula}.

\begin{proposition}\label{prop:Fprelation} Let $n\ge 1$, let $\balpha=(\alpha_1,\dots,\alpha_n)\in(\Pi\setminus\Eangles)^n$ be a vector of non-exceptional angles, let $\bell=(\ell_1,\dots,\ell_n)\in\mathbb{R}_+^n$, and let the matrix $\mathtt{T}=\mathtt{T}_n:=\mathtt{T}(\balpha,\bell,\sigma)$ be written in the form \eqref{eq:Tviapandq}. Then
\[
p_n(\balpha,\bell,\sigma)\prod\limits_{j=1}^n \sin\left(\frac{\pi^2}{2\alpha_j}\right)=F_\mathrm{even}(\balpha,\bell,\sigma)+\ir  F_\mathrm{odd}(\balpha,\bell,\sigma).
\]
\end{proposition}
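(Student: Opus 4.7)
The plan is to verify this by a direct pointwise comparison of the two formulas, since no new content is really being introduced beyond splitting a complex exponential into real and imaginary parts.

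First I would invoke Theorem \ref{thm:pnqnformulae}, which gives
\[
p_n(\balpha,\bell,\sigma)\prod_{j=1}^n \sin\left(\frac{\pi^2}{2\alpha_j}\right)=\sum_{\substack{\bm{\zeta}\in\pmset{n}\\\zeta_1=1}} \mathfrak{p}_{\bm{\zeta}} \exp(\ir\bm{\ell}\cdot\bm{\zeta}\sigma).
\]
Then I would apply Euler's formula $\exp(\ir\bm{\ell}\cdot\bm{\zeta}\sigma)=\cos(\bm{\ell}\cdot\bm{\zeta}\sigma)+\ir\sin(\bm{\ell}\cdot\bm{\zeta}\sigma)$ term by term, and split the resulting sum into real and imaginary parts. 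The key observation needed to finish is that each coefficient $\mathfrak{p}_{\bm{\zeta}}(\balpha)=\prod_{j\in\changes(\bm{\zeta})}\cos\!\left(\frac{\pi^2}{2\alpha_j}\right)$ is a real number, so that the real part of the sum is exactly $\sum_{\bm{\zeta}}\mathfrak{p}_{\bm{\zeta}}\cos(\bm{\ell}\cdot\bm{\zeta}\sigma)$ and the imaginary part is exactly $\sum_{\bm{\zeta}}\mathfrak{p}_{\bm{\zeta}}\sin(\bm{\ell}\cdot\bm{\zeta}\sigma)$. These expressions match the definitions \eqref{eq:Fevendefn} and \eqref{eq:Fodddefn} of $F_\mathrm{even}$ and $F_\mathrm{odd}$ respectively, yielding the claim.

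There is no real obstacle here: the proposition is essentially a book-keeping corollary of Theorem \ref{thm:pnqnformulae}. Indeed the statement already notes that it follows \emph{immediately} from comparison of formulas. The only minor point worth being explicit about in the write-up is the reality of $\mathfrak{p}_{\bm{\zeta}}$, which is ensured because every factor is a cosine of a real number; this is what allows the clean identification of real and imaginary parts with $F_\mathrm{even}$ and $F_\mathrm{odd}$, with no cross-contamination between them.
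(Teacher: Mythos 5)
Your proof is correct and matches the paper's approach exactly: the paper dismisses this as following "immediately by comparing \eqref{eq:Fevendefn}, \eqref{eq:Fodddefn}, and \eqref{eq:pnformula}," and your write-up simply makes that comparison explicit via Euler's formula and the reality of the coefficients $\mathfrak{p}_{\bm{\zeta}}$. Nothing further to add.
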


Theorem \ref{thm:polygoneqn1}(a) now follows easily. Indeed, Definition \ref{def:quasi} and Lemma \ref{lem:Tproperties}(a) imply that $\sigma$ is a quasi-eigenvalue if and only if  $\Tr \mathtt{T}_n=2\Re p_n=2$ which is equivalent to $\sigma$ being a root of \eqref{eq:quasieq1}. Moreover, in this case, by Definition \ref{def:quasimult} and Lemma  \ref{lem:Tproperties}(b),  $\sigma>0$ is a double quasi-eigenvalue if and only if $\Im p_n=0$, and therefore \eqref{eq:Fodddefn} holds.

\subsection{Proof of Theorem \ref{thm:polygoneqn1}(b)}  Before proceeding to the actual proof of Theorem \ref{thm:polygoneqn1}(b), we introduce some extra notation. 
Let $n\ge 1$, and let $\balpha\in\Pi^n$, $\bell\in\mathbb{R}_+^n$. We  set, using \eqref{eq:Fevendefn} and \eqref{eq:Fodddefn},
\begin{equation}\label{eq:Fndefn}
F_n(\balpha,\bell,\sigma):=F_\mathrm{even}(\balpha,\bell,\sigma)+\ir  F_\mathrm{odd}(\balpha,\bell,\sigma)=\sum_{\substack{\bm{\zeta}\in\pmset{n}\\\zeta_1=1}} \mathfrak{p}_{\bm{\zeta}}(\balpha) \exp(\ir\bm{\ell}\cdot\bm{\zeta}\sigma)
\end{equation}
using the subscript to emphasise the dependence upon the length $n$ of vectors $\balpha$, $\bell$. We also introduce, by analogy with  \eqref{eq:Fndefn}, the function
\begin{equation}\label{eq:tildeFndefn}
\tilde{F}_n(\balpha,\bell,\sigma):=-\ir\sum_{\substack{\bm{\zeta}\in\pmset{n}\\\zeta_1=1}} \mathfrak{q}_{\bm{\zeta}}(\balpha)  \exp(-\ir\bm{\ell}\cdot\bm{\zeta}\sigma),
\end{equation}
and set additionally
\[
F_0:=1,\qquad\tilde{F}_0:=0.
\]

We note that if $\balpha$ does not contain any exceptional angles, then by Theorem \ref{thm:pnqnformulae} and Proposition \ref{prop:Fprelation},
\begin{equation}\label{eq:Fpqrelations}
F_n(\balpha,\bell,\sigma)=p_{n}(\balpha,\bell,\sigma) \prod\limits_{j=1}^{n} \sin\left(\frac{\pi^2}{2\alpha_j}\right),\qquad \tilde{F}_n(\balpha,\bell,\sigma)=q_{n}(\balpha,\bell,\sigma) \prod\limits_{j=1}^{n} \sin\left(\frac{\pi^2}{2\alpha_j}\right).
\end{equation}
However, unlike $p_n$ and $q_n$, the functions $F_n$ and $\tilde{F}_n$ are defined in the presence of exceptional angles as well, and we have the following generalisation of
recurrence relations \eqref{eq:pn1} and \eqref{eq:qn1}, with the identical proof:

\begin{proposition}\label{prop:Frecurrence} Let $n\ge 1$, let $\balpha=(\alpha_1,\dots,\alpha_n)\in\Pi^n$, $\bell=(\ell_1,\dots,\ell_n)\in\mathbb{R}_+^n$, and let additionally $\balpha'=(\alpha_1,\dots,\alpha_{n-1})\in\Pi^{n-1}$, $\bell'=(\ell_1,\dots,\ell_{n-1})\in\mathbb{R}_+^{n-1}$ (or both empty if $n=1$). Then
\begin{align}
F_{n}(\balpha,\bell,\sigma) &= \exp(\ir\ell_{n}\sigma)F_{n-1}(\balpha',\bell',\sigma)-\ir \cos\left(\frac{\pi^2}{2\alpha_{n}}\right)\exp(-\ir\ell_{n}\sigma)\overline{\tilde{F}_{n-1}(\balpha',\bell',\sigma)},\label{eq:Fnrec}\\
\tilde{F}_{n}(\balpha,\bell,\sigma) &= \exp(\ir\ell_{n}\sigma)\tilde{F}_{n-1}(\balpha',\bell',\sigma)-\ir \cos\left(\frac{\pi^2}{2\alpha_{n}}\right)\exp(-\ir\ell_{n}\sigma)\overline{F_{n-1}(\balpha',\bell',\sigma)},\label{eq:tildeFnrec}
\end{align}
\end{proposition}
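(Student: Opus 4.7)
The plan is to proceed by induction on $n$, recycling the combinatorial bookkeeping developed in the proof of Theorem \ref{thm:pnqnformulae}. The base case $n=1$ is immediate: the only admissible vector is $\bm{\zeta}=(1)$, for which $\changes((1))=\varnothing$ and $\tilde\changes((1))=\{1\}$, giving $\mathfrak{p}_{(1)}=1$ and $\mathfrak{q}_{(1)}=\cos(\pi^2/(2\alpha_1))$, so
\[
F_1(\balpha,\bell,\sigma)=\exp(\ir\ell_1\sigma),\qquad \tilde F_1(\balpha,\bell,\sigma)=-\ir\cos\!\left(\tfrac{\pi^2}{2\alpha_1}\right)\exp(-\ir\ell_1\sigma),
\]
which, together with the conventions $F_0=1$, $\tilde F_0=0$, verifies both \eqref{eq:Fnrec} and \eqref{eq:tildeFnrec}.

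For the inductive step, I would split the sum defining $F_n(\balpha,\bell,\sigma)$ in \eqref{eq:Fndefn} according to the value of $\zeta_n\in\{\pm1\}$, writing $\bm{\zeta}=(\bm{\zeta}',\zeta_n)$ with $\bm{\zeta}'=(\zeta_1,\dots,\zeta_{n-1})\in\pmset{n-1}$ and $\zeta_1=1$. The main combinatorial observation is the following comparison between cyclic sign change sets, using $\zeta_1=1$:
\begin{itemize}
\item[(i)] If $\zeta_n=1$, then positions $n-1$ and $n$ in $\changes(\bm{\zeta})\subset\{1,\dots,n\}$ behave exactly as position $n-1$ in $\changes(\bm{\zeta}')\subset\{1,\dots,n-1\}$ (the cyclic identification $\zeta_{n+1}\equiv 1$ for $\bm{\zeta}$ matches $\zeta'_n\equiv 1$ for $\bm{\zeta}'$), so $\changes(\bm{\zeta})=\changes(\bm{\zeta}')$, hence $\mathfrak{p}_{\bm{\zeta}}=\mathfrak{p}_{\bm{\zeta}'}$.
\item[(ii)] If $\zeta_n=-1$, position $n$ is automatically in $\changes(\bm{\zeta})$ (because $\zeta_{n+1}=\zeta_1=1$), while positions $1,\dots,n-1$ of $\changes(\bm{\zeta})$ coincide with $\tilde\changes(\bm{\zeta}')$, by the very definition \eqref{eq:Chtildedef} that pads $\bm{\zeta}'$ by $-1$. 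This yields $\mathfrak{p}_{\bm{\zeta}}=\cos(\pi^2/(2\alpha_n))\,\mathfrak{q}_{\bm{\zeta}'}$.
\end{itemize}

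Substituting these identifications into \eqref{eq:Fndefn} and separating the factor $\exp(\pm\ir\ell_n\sigma)$ corresponding to $\zeta_n=\pm1$, the sum over $\zeta_n=1$ assembles into $\exp(\ir\ell_n\sigma)F_{n-1}(\balpha',\bell',\sigma)$, while the sum over $\zeta_n=-1$ becomes $\cos(\pi^2/(2\alpha_n))\exp(-\ir\ell_n\sigma)\sum_{\bm{\zeta}'}\mathfrak{q}_{\bm{\zeta}'}\exp(\ir\bell'\cdot\bm{\zeta}'\sigma)$. Since $\mathfrak{q}_{\bm{\zeta}'}$ is real, the latter sum equals $-\ir\,\overline{\tilde F_{n-1}(\balpha',\bell',\sigma)}$ by \eqref{eq:tildeFndefn}, giving exactly \eqref{eq:Fnrec}. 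The proof of \eqref{eq:tildeFnrec} is obtained by an entirely parallel argument, this time splitting the sum defining $\tilde F_n$ according to whether $\zeta_n$ equals $-1$ (yielding the $\tilde F_{n-1}$ term, since the padding is already present) or $+1$ (yielding the $\overline{F_{n-1}}$ term, with the extra $\cos$ factor arising from the forced sign change at position $n$ induced by the $-1$ padding).

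The main obstacle I foresee is keeping the sign-change bookkeeping straight in case (ii): one must distinguish the cyclic identification used in $\changes$ from the non-cyclic padding used in $\tilde\changes$, and check that these identifications behave correctly when $\zeta_{n-1}$ varies. As a cross-check, for non-exceptional $\balpha$ the recurrences must be equivalent to \eqref{eq:pn1}--\eqref{eq:qn1} via the identities \eqref{eq:Fpqrelations} (multiply \eqref{eq:pn1} by $\prod_{j=1}^n\sin(\pi^2/(2\alpha_j))$ to recover \eqref{eq:Fnrec}), and since both sides of \eqref{eq:Fnrec}--\eqref{eq:tildeFnrec} are continuous in $\balpha\in\Pi^n$, one could alternatively extend to exceptional $\balpha$ by density. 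The direct combinatorial derivation outlined above is however cleaner and avoids invoking any limiting argument.
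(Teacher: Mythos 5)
Your proof is correct, and it is essentially the same argument the paper has in mind: the combinatorial identities in your cases (i) and (ii) — namely $\mathfrak{p}_{\bm{\zeta}} = \mathfrak{p}_{\bm{\zeta}'}$ when $\zeta_n = 1$, $\mathfrak{p}_{\bm{\zeta}} = \cos\left(\frac{\pi^2}{2\alpha_n}\right)\mathfrak{q}_{\bm{\zeta}'}$ when $\zeta_n = -1$, and their counterparts for $\mathfrak{q}$ — are exactly the content of the "careful analysis of definitions \eqref{eq:Chdef} and \eqref{eq:Chtildedef}" that drives the inductive step of Theorem \ref{thm:pnqnformulae}, which the paper cites as "the identical proof" for Proposition \ref{prop:Frecurrence}. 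You have verified the sign-change bookkeeping correctly (in particular that when $\zeta_n=-1$ the padding in $\tilde\changes(\bm{\zeta}')$ already coincides with the last entry of $\bm{\zeta}$), and the closing observation that $\sum_{\bm{\zeta}'}\mathfrak{q}_{\bm{\zeta}'}e^{\ir\bell'\cdot\bm{\zeta}'\sigma}=-\ir\,\overline{\tilde F_{n-1}}$ because $\mathfrak{q}_{\bm{\zeta}'}$ is real is exactly what completes the identification.
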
  

We can now proceed with the proof of Theorem \ref{thm:polygoneqn1}(b) proper. Assume the notation of Proposition \ref{prop:Frecurrence}, and consider one exceptional boundary component consisting of $n\ge 1$ smooth pieces  joining exceptional angles $\alpha_0$ and $\alpha_n$; the $n-1$ non-exceptional angles between the pieces are collected in the vector $\balpha'$. We need to show that   
\begin{equation}
\label{eq:excepeqexplicit}
\mathtt{U}\left(\balpha', \bell,\sigma\right) \Cvect\left(\alpha_0\right) \cdot \Cvect\left(\alpha_n\right)=0\quad\iff\quad F_\mathrm{even/odd}\left(\balpha,\bell,\sigma\right)=0,
\end{equation}
where
\begin{equation}\label{eq:UviaBTprime}
\mathtt{U}\left(\balpha', \bell,\sigma\right):=\mathtt{B}\left(\ell_n,\sigma\right) \mathtt{T}\left(\balpha', \bell',\sigma\right),
\end{equation}
cf. \eqref{eq:UviaBT}, $\Cvect(\alpha)$ is defined by \eqref{eq:orthogspecial2} and \eqref{eq:orthogspecial1}, and 
\[
F_\mathrm{even/odd}\left(\balpha,\bell,\sigma\right)=
\begin{cases}
F_\mathrm{even}\left(\balpha,\bell,\sigma\right)=\Re\left(F_n\left(\balpha,\bell,\sigma\right)\right)\quad&\text{if }\Odd(\alpha_0)=\Odd(\alpha_n),\\
F_\mathrm{odd}\left(\balpha,\bell,\sigma\right)=\Im\left(F_n\left(\balpha,\bell,\sigma\right)\right)\quad&\text{if }\Odd(\alpha_0)\ne\Odd(\alpha_n).
\end{cases}
\]

Using \eqref{eq:UviaBTprime}, \eqref{eq:Bdef1}, \eqref{eq:Tviapandq}, and \eqref{eq:Fpqrelations}, we re-write the left equation in \eqref{eq:excepeqexplicit} as
\begin{equation}
\label{eq:excepeqexplicit1}
\renewcommand*{\arraystretch}{1.5}
\frac{1}{ \prod\limits_{j=1}^{n-1} \sin\left(\frac{\pi^2}{2\alpha_j}\right)}
\begin{pmatrix} \exp(\ir\ell_n\sigma)F_{n-1}(\balpha',\bell',\sigma)&\exp(\ir\ell_n\sigma)\tilde{F}_{n-1}(\balpha',\bell',\sigma)\\
\exp(-\ir\ell_n\sigma)\overline{\tilde{F}_{n-1}(\balpha',\bell',\sigma)}&\exp(-\ir\ell_n\sigma)\overline{F_{n-1}(\balpha',\bell',\sigma)}
\end{pmatrix} \Cvect\left(\alpha_0\right) \cdot \Cvect\left(\alpha_n\right)=0,
\end{equation}
and drop the non-zero product in the denominator from now on.

We now have to consider four cases:
\begin{itemize}
\item[(i)] $\alpha_0$ even, $\alpha_n$ even;
\item[(ii)] $\alpha_0$ even, $\alpha_n$ odd;
\item[(iii)] $\alpha_0$ odd, $\alpha_n$ even;
\item[(iv)] $\alpha_0$ odd, $\alpha_n$ odd.
\end{itemize}

In cases (i), (ii) we substitute $\Cvect(\alpha_0)=\Cvect_\mathrm{even} = \frac{ \er^{-\ir \pi/4}}{\sqrt{2}}\begin{pmatrix} 1  \\ \ir \end{pmatrix}$ into \eqref{eq:excepeqexplicit1} to get
\begin{equation}
\label{eq:excepeqexplicit2}
 \renewcommand*{\arraystretch}{1.5}
 \frac{ \er^{-\ir \pi/4}}{\sqrt{2}} \begin{pmatrix}
 \exp(\ir\ell_n\sigma)F_{n-1}(\balpha',\bell',\sigma)+\ir \exp(\ir\ell_n\sigma)\tilde{F}_{n-1}(\balpha',\bell',\sigma)\\
 \exp(-\ir\ell_n\sigma)\overline{\tilde{F}_{n-1}(\balpha',\bell',\sigma)}+\ir\exp(-\ir\ell_n\sigma)\overline{F_{n-1}(\balpha',\bell',\sigma)}
 \end{pmatrix} \cdot \Cvect\left(\alpha_n\right)=0.
\end{equation}

 In case (i), substituting further $\Cvect(\alpha_n)=\Cvect_\mathrm{even} = \frac{ \er^{-\ir \pi/4}}{\sqrt{2}}\begin{pmatrix} 1  \\ \ir \end{pmatrix}$ (and recalling that our definition of the dot product involves complex conjugation of the second argument) we obtain, after minimal simplifications,
 \[
\Re\left( \exp(\ir\ell_n\sigma)F_{n-1}(\balpha',\bell',\sigma)-\ir \exp(-\ir\ell_n\sigma)\overline{\tilde{F}_{n-1}(\balpha',\bell',\sigma)}\right)=0.
 \]
 Using now  \eqref{eq:Fnrec} with account of $\cos\left(\frac{\pi^2}{2\alpha_{n}}\right)=\Odd(\alpha_n)=1$, we arrive at the required equivalent equation 
 $\Re(F_n(\balpha,\bell,\sigma))=0$, thus proving \eqref{eq:excepeqexplicit} in case (i).
 
 Similarly, in case (ii),  substituting $\Cvect(\alpha_n)=\Cvect_{\mathrm{odd}} = \frac{ \er^{\ir \pi/4}}{\sqrt{2}}\begin{pmatrix} 1  \\ -\ir \end{pmatrix}$ into \eqref{eq:excepeqexplicit2}, we obtain after simplifications
 \[
\Im\left( \exp(\ir\ell_n\sigma)F_{n-1}(\balpha',\bell',\sigma)+\ir \exp(-\ir\ell_n\sigma)\overline{\tilde{F}_{n-1}(\balpha',\bell',\sigma)}\right)=\Im(F_n(\balpha,\bell,\sigma))=0,
 \]
 (where we again used  \eqref{eq:Fnrec} but now with $\cos\left(\frac{\pi^2}{2\alpha_{n}}\right)=\Odd(\alpha_n)=-1$), proving \eqref{eq:excepeqexplicit} in case (ii).
 
 The cases (iii) and (iv) are similar and are left to the reader.
 
\subsection{Zigzag quasi-eigenvalues as roots of trigonometric polynomials}
 
In this subsection we briefly discuss trigonometric equations whose roots give the quasi-eigenvalues of zigzags and zigzag domains. 

Let $n\ge 1$, let $\balpha=\balpha'=(\alpha_1,\dots,\alpha_{n-1})\in(\Pi\setminus\Eangles)^{n-1}$, let  $\bell=(\ell_1,\dots,\ell_n)\in\mathbb{R}_+^n$, and let $\mathcal{Z}=\mathcal{Z}(\balpha,\bell)$ be a curvilinear $n$ piece zigzag (domain). The quasi-eigenvalues of a corresponding $\aleph\beth$-zigzag $\mathcal{Z}^{(\aleph\beth)}$ are prescribed by Definition \ref{def:quasizigzag}. Set additionally $\bell'=(\ell_1,\dots,\ell_{n-1})\in\mathbb{R}_+^{n-1}$. 

\begin{theorem} The quasi-eigenvalues of a  $\aleph\beth$-zigzag $\mathcal{Z}^{(\aleph\beth)}$, $\aleph,\beth\in\{N,D\}$, are the non-negative roots of the trigonometric polynomials
\begin{equation}\label{eq:zigzagpolys}
\begin{aligned}
\sum_{\substack{\bm{\zeta}\in\pmset{n-1}\\\zeta_1=1}}\mathfrak{p}_{\bm{\zeta}}(\balpha')\sin\left((\bm{\ell}'\cdot\bm{\zeta}+\ell_n)\sigma\right)
-\mathfrak{q}_{\bm{\zeta}}(\balpha')\cos\left((\bm{\ell}'\cdot\bm{\zeta}-\ell_n)\sigma\right)
\quad&\text{if}\quad\aleph=N, \beth=N,
\\
\sum_{\substack{\bm{\zeta}\in\pmset{n-1}\\\zeta_1=1}}\mathfrak{p}_{\bm{\zeta}}(\balpha')\cos\left((\bm{\ell}'\cdot\bm{\zeta}+\ell_n)\sigma\right)
-\mathfrak{q}_{\bm{\zeta}}(\balpha')\cos\left((\bm{\ell}'\cdot\bm{\zeta}-\ell_n)\sigma\right)
\quad&\text{if}\quad\aleph=N, \beth=D,
\\
\sum_{\substack{\bm{\zeta}\in\pmset{n-1}\\\zeta_1=1}}\mathfrak{p}_{\bm{\zeta}}(\balpha')\cos\left((\bm{\ell}'\cdot\bm{\zeta}+\ell_n)\sigma\right)
+\mathfrak{q}_{\bm{\zeta}}(\balpha')\cos\left((\bm{\ell}'\cdot\bm{\zeta}-\ell_n)\sigma\right)
\quad&\text{if}\quad\aleph=D, \beth=N,
\\
\sum_{\substack{\bm{\zeta}\in\pmset{n-1}\\\zeta_1=1}}\mathfrak{p}_{\bm{\zeta}}(\balpha')\sin\left((\bm{\ell}'\cdot\bm{\zeta}+\ell_n)\sigma\right)
+\mathfrak{q}_{\bm{\zeta}}(\balpha')\cos\left((\bm{\ell}'\cdot\bm{\zeta}-\ell_n)\sigma\right)
\quad&\text{if}\quad\aleph=D, \beth=D
\end{aligned}
\end{equation}
where $\mathfrak{p}_{\bm{\zeta}}$ and $\mathfrak{q}_{\bm{\zeta}}$ are defined by  \eqref{eq:pfrakdefn} and \eqref{eq:qfrakdefn}.
\end{theorem}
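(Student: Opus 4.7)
The plan is to reduce the four identities in \eqref{eq:zigzagpolys} to direct consequences of the matrix formula \eqref{eq:UviaBT} combined with the explicit formulas for the entries of $\mathtt{T}$ derived in Theorem \ref{thm:pnqnformulae}. Concretely, since the zigzag is non-exceptional, the angles $\balpha = \balpha'\in(\Pi\setminus\Eangles)^{n-1}$ are such that Theorem \ref{thm:pnqnformulae} applies to the truncated polygon data $(\balpha',\bell')$ with $\bell'=(\ell_1,\dots,\ell_{n-1})$, and we can write
\[
\mathtt{T}(\balpha',\bell',\sigma)=\begin{pmatrix} p_{n-1} & q_{n-1}\\ \overline{q_{n-1}} & \overline{p_{n-1}}\end{pmatrix}
\]
with $p_{n-1}$, $q_{n-1}$ given by \eqref{eq:pnformula}, \eqref{eq:qnformula}. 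Multiplying on the left by the diagonal side-transfer matrix $\mathtt{B}(\ell_n,\sigma)$ in \eqref{eq:UviaBT} then yields the fully explicit expression
\[
\mathtt{U}(\balpha,\bell,\sigma)=\begin{pmatrix} \er^{\ir\ell_n\sigma}p_{n-1} & \er^{\ir\ell_n\sigma}q_{n-1}\\ \er^{-\ir\ell_n\sigma}\overline{q_{n-1}} & \er^{-\ir\ell_n\sigma}\overline{p_{n-1}}\end{pmatrix}.
\]

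Next, for each of the four boundary pairings $(\aleph,\beth)\in\{N,D\}^2$, I would substitute the definitions \eqref{eq:ND} of $\mathbf{N}$ and $\mathbf{D}$ into the quasi-eigenvalue condition \eqref{eq:quasizigzag} and compute $\mathtt{U}(\balpha,\bell,\sigma)\baleph\cdot\bbeth^\perp$ by an elementary $2\times 2$ matrix-vector calculation. Since every quantity appearing has the symmetric ``$z,\bar z$'' structure enforced by $\mathcal{M}_1$, each of the four dot products simplifies to a constant times either the real or the imaginary part of $\er^{\ir\ell_n\sigma}(p_{n-1}\pm q_{n-1})$. For instance, the $NN$-case gives $\mathtt{U}\mathbf{N}\cdot\mathbf{D}=2\,\Im\bigl(\er^{\ir\ell_n\sigma}(p_{n-1}+q_{n-1})\bigr)$, with the other three cases producing analogous real/imaginary parts of $\er^{\ir\ell_n\sigma}(p_{n-1}\pm q_{n-1})$.

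To finish, I would multiply through by the nonzero constant $\prod_{j=1}^{n-1}\sin(\pi^2/(2\alpha_j))$, which by \eqref{eq:Fpqrelations} converts $p_{n-1}$ to $F_{n-1}(\balpha',\bell',\sigma)$ and $q_{n-1}$ to $\tilde{F}_{n-1}(\balpha',\bell',\sigma)$. Substituting the explicit expansions \eqref{eq:Fndefn} and \eqref{eq:tildeFndefn} and taking real or imaginary parts termwise then turns the phase $\er^{\ir\ell_n\sigma}\cdot \er^{\pm\ir\bm{\ell}'\cdot\bm{\zeta}\sigma}$ into a sine or cosine of $(\bm{\ell}'\cdot\bm{\zeta}\pm\ell_n)\sigma$, the $\mathfrak{p}_{\bm{\zeta}}$ terms contributing the combinations of $\bm{\ell}'\cdot\bm{\zeta}+\ell_n$ arguments and the $\mathfrak{q}_{\bm{\zeta}}$ terms (carrying the extra factor $-\ir$ from \eqref{eq:tildeFndefn}) contributing the $\bm{\ell}'\cdot\bm{\zeta}-\ell_n$ arguments. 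Matching the resulting expression with the four polynomials in \eqref{eq:zigzagpolys} in each of the four cases completes the argument.

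This is essentially a bookkeeping exercise: the structure of the proof is forced by \eqref{eq:UviaBT} and Theorem \ref{thm:pnqnformulae}, and no new analytic ideas are required. The only genuinely delicate point is tracking signs and the $\ir$ factor introduced by the definition \eqref{eq:tildeFndefn} of $\tilde{F}_{n-1}$, so that the correct combination of $\sin$ versus $\cos$, and the correct $\pm$ sign in front of the $\mathfrak{q}_{\bm{\zeta}}$ contribution, emerges in each of the four cases.
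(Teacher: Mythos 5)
Your proposal is correct and takes essentially the same approach as the paper: both start from the factorization $\mathtt{U}(\balpha',\bell,\sigma)=\mathtt{B}(\ell_n,\sigma)\,\mathtt{T}(\balpha',\bell',\sigma)$, express the entries of $\mathtt{T}$ via $F_{n-1}$ and $\tilde F_{n-1}$ (you go through $p_{n-1},q_{n-1}$ and \eqref{eq:Fpqrelations}, the paper invokes the analogue of \eqref{eq:excepeqexplicit1} directly, which amounts to the same thing), then reduce $\mathtt{U}\baleph\cdot\bbeth^\perp$ to real or imaginary parts of $\er^{\ir\ell_n\sigma}(F_{n-1}\pm\tilde F_{n-1})$ and expand termwise. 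Your explicit check of the $NN$ case (yielding $2\Im$ of that combination) is right, and the remaining three cases follow by the same sign-and-$\ir$ bookkeeping.
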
 

\begin{proof} We act as in the proof of Theorem \ref{thm:polygoneqn1}(b): we first use \eqref{eq:UviaBTprime} and then arrive at the analogue of \eqref{eq:excepeqexplicit1}, in which $\Cvect(\alpha_0)$ and  $\Cvect(\alpha_n)$ should be replaced by $\baleph$ and $\bbeth^\perp$, respectively. Polynomials \eqref{eq:zigzagpolys} are obtained directly from there after substituting in the expressions \eqref{eq:Fndefn} and \eqref{eq:tildeFndefn}, and some elementary manipulations.
\end{proof}
\clearpage\section{ A quantum graph interpretation of quasi-eigenvalues and the Riesz mean}\label{sec:quantum}

\subsection{Proof of Theorem \ref{thm:quantum}}\label{subsec:proofquantum}

We begin by proving Proposition \ref{prop:Dirac}.

\begin{proof}[Proof of Proposition \ref{prop:Dirac}] We give the proof in the case when there are no exceptional angles; the exceptional case is treated similarly, cf.\ Section \ref{subsec:bqm}. We first check that $\Dir$ is symmetric by the following direct calculation  for $\mathbf{f}$ and $\mathbf{g}$ in the domain of $\Dir$, using integration by parts, matching conditions \eqref{matchVj}, and the properties of matrices $\mathtt{A}(\alpha)$ from Remark \ref{rem:Aproperties}, and denoting $\mathtt{D}=\begin{pmatrix}1&0\\0&-1\end{pmatrix}=\mathtt{D}^*$, yielding
\[
\begin{split}
\left(\Dir \mathbf{f}, \mathbf{g}\right)_{(L^2(\mathcal{G}))^2}-\left(\mathbf{f}, \Dir \mathbf{g}\right)_{(L^2(\mathcal{G}))^2}
&=-\ir\sum_{j=1}^n \left.\mathtt{D}\mathbf{f}\cdot \mathbf{g}\right|_{V_j+0}^{V_{j+1}-0}\\
&=\ir\sum_{j=1}^n \left.\left(\mathtt{D} \mathtt{A}(\alpha_j)\mathbf{f}\cdot \mathtt{A}(\alpha_j)\mathbf{g}-\mathtt{D}\mathbf{f}\cdot \mathbf{g}\right)\right|_{V_j-0}\\
&=\ir\sum_{j=1}^n \left. \left(\mathtt{A}(\alpha_j)\mathtt{D} \mathtt{A}(\alpha_j)-\mathtt{D}\right)\mathbf{f}\cdot\mathbf{g}\right|_{V_j-0}=0
\end{split}
\]
(since $\mathtt{A}(\alpha_j)\mathtt{D} \mathtt{A}(\alpha_j)=\mathtt{D}$). The self-adjointness of $\Dir$ now follows by standard techniques similar to \cite{BK13}.

To prove the second part of the statement, suppose that $\sigma$ is an eigenvalue of $\Dir$; then a restriction of the corresponding eigenfunction $\mathbf{f}(s)$ to an edge $I_j$ has the form  $\begin{pmatrix} d_{j,1}\er^{\ir\sigma s} \\ d_{j,2}\er^{-\ir\sigma s} \end{pmatrix}$ with some constants $d_{j,1},d_{j,2}\in\mathbb{C}$ (which can be chosen so that $d_{j,2}=\overline{d_{j,1}}$, cf.\ Remark  \ref{rem:evfromConj} and the discussion in the proof of Proposition \ref{prop:almostorthogonality}). Set
\[
\mathbf{c}_j:=\left.\mathbf{f}(s)\right|_{V_j+0}.
\]
Then it is easily checked that the vectors  $\mathbf{c}_j$ satisfy
\[
\mathbf{c}_{j+1}=\mathtt{A}(\alpha_j)\mathtt{B}(\sigma\ell_j)  \mathbf{c}_{j}.
\]
Repeating now word by word the arguments of Section \ref{subsec:bqm} we see that the eigenvalues $\sigma$ of $\mathcal{D}$ are indeed the roots of  \eqref{eq:Ttrace}.
\end{proof}

We now proceed to the proof of Theorem \ref{thm:quantum}. Note that the vectors $\Cvect_\mathrm{odd}$ and $\Cvect_\mathrm{even}$ defined by \eqref{eq:orthogspecial1} are the eigenvectors of the matrix ${\mathtt A}(\alpha)$ with the eigenvalues
$\eta_1(\alpha)=\tan\left(\frac{\pi^2}{4\alpha}\right)$ and $\eta_2(\alpha)=\cot\left(\frac{\pi^2}{4\alpha}\right)$, respectively. Therefore, the matrix ${\mathtt A}(\alpha)$ in the basis 
\[
\left\{\frac{1}{\sqrt{2}}\Cvect_\mathrm{odd},\frac{1}{\sqrt{2}}\Cvect_\mathrm{even}\right\}
\] 
takes the diagonal form
\[
\begin{pmatrix}
\tan\left(\frac{\pi^2}{4\alpha}\right) & 0 \\ 0 & \cot\left(\frac{\pi^2}{4\alpha}\right)
\end{pmatrix}.
\]
Let us calculate the operator $\Dir$ in the same basis. The transition matrix is given by 
\[
{\mathtt W}=\frac{1}{2} \begin{pmatrix} 1+\ir & 1-\ir \\1-\ir & 1+\ir \end{pmatrix}, 
\]
and therefore the operator $\Dir$ in the new basis is given by the matrix
\[
{\mathtt W}^{-1}\Dir{\mathtt  W}=\begin{pmatrix} 0 & - \frac{\mathrm{d}}{\mathrm{d}s} \\ \frac{\mathrm{d}}{\mathrm{d}s} & 0  \end{pmatrix}.
\]
and its square $\Dir^2$ by the matrix
\[
{\mathtt W}^{-1}\Dir^2{\mathtt  W}=\begin{pmatrix} -\frac{\mathrm{d}^2}{\mathrm{d}s^2}&0 \\ 0&-\frac{\mathrm{d}^2}{\mathrm{d}s^2} \end{pmatrix}.
\]

Now, every $\mathbf{f}\in (L^2(\mathcal{G}))^2$ can be uniquely written as  
\[
\mathbf{f}=f_\mathrm{odd}\Cvect_\mathrm{odd}+f_\mathrm{even}\Cvect_\mathrm{even},
\] 
with $f_\mathrm{odd}, f_\mathrm{even}\in L^2(\mathcal{G})$. In other words, we have a direct sum decomposition,
\[
 (L^2(\mathcal{G}))^2=\mathbf{L}^2_\mathrm{odd}(\mathcal{G}) \oplus \mathbf{L}^2_\mathrm{even}(\mathcal{G}),
\]
 where
 \[
\mathbf{L}^2_\mathrm{odd}(\mathcal{G}):=  \Cvect_\mathrm{odd}L^2(\mathcal{G}),\qquad 
\mathbf{L}^2_\mathrm{even}(\mathcal{G}):=  \Cvect_\mathrm{even}L^2(\mathcal{G}).
\]
It is easily seen that both spaces $\mathbf{L}^2_\mathrm{odd/even}(\mathcal{G})$ are invariant for the operator $\Dir^2$, and therefore the spectrum of $\Dir^2$ is the union of the spectra of 
\[
\Dir^2_\mathrm{odd/even}:=\left.\Dir^2\right|_{\mathbf{L}^2_\mathrm{odd/even}(\mathcal{G})}.
\]

We claim that the spectra of $\Dir^2_\mathrm{odd}$ and $\Dir^2_\mathrm{even}$ are the same, and both coincide with the spectrum of $\Delta_\mathcal{G}$.
Obviously, $f_\mathrm{odd}\Cvect_\mathrm{odd}$ is in the domain of $\Dir^2$ if and only if both it and $\Dir(f_\mathrm{odd}\Cvect_\mathrm{odd})$ are in the domain of $\Dir$. A straightforward calculation shows that this happens exactly when conditions \eqref{matchcondboth} are satisfied with $f=f_\mathrm{odd}$, and we then have 
\[
\Dir^2_\mathrm{odd}\left(f_\mathrm{odd}\Cvect_\mathrm{odd}\right)=\left(\Delta_\mathcal{G}f_\mathrm{odd}\right)\Cvect_\mathrm{odd}. 
\]
Thus, the spectrum of $\Dir^2_\mathrm{odd}$ coincides with the spectrum of $\Delta_\mathcal{G}$.

A similar argument shows that the domain of $\Dir^2_\mathrm{even}$ consists of vector functions $f_\mathrm{even}\Cvect_\mathrm{even}$ satisfying the ``dual'' matching conditions
\begin{equation}
\label{matchcondbothdual}
\begin{split}
\cos\left(\frac{\pi^2}{4\alpha_j}\right)f|_{V_j+0}&=\sin\left(\frac{\pi^2}{4\alpha_j}\right)\,f|_{V_j-0},\\
\sin\left(\frac{\pi^2}{4\alpha_j}\right)f'|_{V_j+0}&=\cos\left(\frac{\pi^2}{4\alpha_j}\right)\,f'|_{V_j-0}
\end{split}
\end{equation} 
(with $f=f_\mathrm{even}$); these conditions are obtained from  \eqref{matchcondboth} by simply swapping sines and cosines. Denoting the quantum graph Laplacian subject to matching conditions \eqref{matchcondbothdual} by $\Delta_{\mathcal{G}'}$, we conclude that 
\[
\Dir^2_\mathrm{even}\left(f_\mathrm{even}\Cvect_\mathrm{even}\right)=\left(\Delta_{\mathcal{G}'}f_\mathrm{even}\right)\Cvect_\mathrm{even},
\]
and the spectrum of $\Dir^2_\mathrm{even}$ coincides with the spectrum of $\Delta_{\mathcal{G}'}$.

It remains to show that the spectra of $\Delta_{\mathcal{G}}$ and $\Delta_{\mathcal{G}'}$ coincide. 
It is easy to see that if $f(s)$ is an eigenfunction of $\Delta_{\mathcal{G}}$ corresponding to a non-zero eigenvalue (and therefore not a piecewise constant) then $f'(s)$ is an eigenfunction of $\Delta_{\mathcal{G}'}$  corresponding to the same eigenvalue. The same also holds the other way round. It is now enough to show that the multiplicities of eigenvalue zero coincide. 

By the variational principle, see Remark \ref{rem:varprinciple}, and its analogue for $\Delta_{\mathcal{G}'}$, the only possible eigenfunctions corresponding to eigenvalue zero are piecewise constants.  
In the non-exceptional case, it is easily checked from matching conditions  \eqref{matchcond:exc} or \eqref{matchcondbothdual} that zero is in the spectrum of either operator if and only if 
\[
\prod_{j=1}^n \tan\left(\frac{\pi^2}{4\alpha_j}\right)=\prod_{j=1}^n \cot\left(\frac{\pi^2}{4\alpha_j}\right)=1,
\]
and then it is a simple eigenvalue of either $\Delta_{\mathcal{G}}$ or $\Delta_{\mathcal{G}'}$. In the exceptional case, as follows from \eqref{matchcond:exc} and \eqref{matchcondbothdual}, the only exceptional components $\mathcal{Y}_\kappa$ which have the Neumann conditions at either end are those for which $\Odd\left(\alpha^\mathcal{E}_{\kappa-1}\right)=-\Odd\left(\alpha^\mathcal{E}_{\kappa}\right)=1$ in the case of $\Delta_{\mathcal{G}}$ and  $\Odd\left(\alpha^\mathcal{E}_{\kappa-1}\right)=-\Odd\left(\alpha^\mathcal{E}_{\kappa}\right)=-1$ in the case of  $\Delta_{\mathcal{G}'}$. In both cases the number of such components, and therefore the multiplicity of eigenvalue zero, is $\frac{\#\mathfrak{K}_\mathrm{odd}}{2}$, see also Remark \ref{rem:multiplicityexc}.

This completes the proof of Theorem \ref{thm:quantum}.

\subsection{Proof of Theorem \ref{thm:polygoneqn0}}\label{subs:quantumgrapheigenvalues}  As mentioned in Remark \ref{rem:secular}, we prove Theorem \ref{thm:polygoneqn0} by explicitly constructing the secular equation for the eigenvalues of the quantum graph $\mathcal{G}$ and invoking Theorem \ref{thm:quantum}. The method we use is standard, and we mostly follows \cite{KoSm, KN2, BK13, Ber17} and a private communication from P. Kurasov.

Suppose that $\nu=\sigma^2>0$ is an eigenvalue of $\Delta_{\mathcal{G}}$. We write a corresponding eigenfunction $f(s)$ on the edges $I_j$ and $I_{j+1}$ adjacent to the vertex $V_j$, $j=1,\dots,n$, using the local coordinate $s_j$ such that $s_j|_{V_j}=0$ (the coordinates near adjacent vertices are related by \eqref{eq:sjjm1}; cf.\ Figure \ref{fig:polygon_coords}):
\begin{align}
f|_{I_j}(s_j)&=a_1^{(j)}\er^{\ir\sigma s_j}+b_1^{(j)}\er^{-\ir\sigma s_j},\label{eq:fj}\\
f|_{I_{j+1}}(s_j)&=b_2^{(j)}\er^{\ir\sigma s_j}+a_2^{(j)}\er^{-\ir\sigma s_j},\label{eq:fjp1}
\end{align}
with some constants $a_k^{(j)}, b_k^{(j)}\in\mathbb{C}$, $k=1,2$. Substituting \eqref{eq:fj}--\eqref{eq:fjp1} into matching conditions \eqref{matchcondboth}, resolving with respect to $a_1^{(j)}, a_2^{(j)}$, and combining the results for $j=1,\dots,n$,  shows that the vectors
\[
\mathbf{a}:=\begin{pmatrix}a_1^{(1)}\\a_2^{(1)}\\\vdots\\a_1^{(n)}\\a_2^{(n)}\end{pmatrix}\in\mathbb{C}^{2n}\qquad\text{and}\qquad \mathbf{b}:=\begin{pmatrix}b_1^{(1)}\\b_2^{(1)}\\\vdots\\b_1^{(n)}\\b_2^{(n)}\end{pmatrix}\in\mathbb{C}^{2n}
\]
are related by the \emph{vertex scattering matrix} 
\begin{equation}\label{eq:scvg}
\mathtt{Sc}^\text{v}_\mathcal{G}=\mathtt{Sc}^\text{v}_\mathcal{G}(\balpha):=
\begin{pmatrix}
-\cos\frac{\pi^2}{2\alpha_1}&\sin\frac{\pi^2}{2\alpha_1}&&&&&\\
\sin\frac{\pi^2}{2\alpha_1}&\cos\frac{\pi^2}{2\alpha_1}&&&&&\\
&&-\cos\frac{\pi^2}{2\alpha_2}&\sin\frac{\pi^2}{2\alpha_2}&&&\\
&&\sin\frac{\pi^2}{2\alpha_2}&\cos\frac{\pi^2}{2\alpha_2}&&&\\
&&&&\ddots&&\\
&&&&&-\cos\frac{\pi^2}{2\alpha_n}&\sin\frac{\pi^2}{2\alpha_n}\\
&&&&&\sin\frac{\pi^2}{2\alpha_n}&\cos\frac{\pi^2}{2\alpha_n}
\end{pmatrix}
\end{equation}
as 
\begin{equation}\label{eq:ascvb}
\mathbf{a}=\mathtt{Sc}^\text{v}_\mathcal{G}\mathbf{b}.
\end{equation}
We note that $\mathtt{Sc}^\text{v}_\mathcal{G}$ is unitary and that $\det\mathtt{Sc}^\text{v}_\mathcal{G}=(-1)^n$.  Note also that the blocks of the vertex scattering matrix \eqref{eq:scvg} differ from the Peters solution scattering matrix \eqref{eq:scbeach} due to the change of basis given by $\mathtt{W}$.

We now re-write \eqref{eq:fj} in the variable $s_{j-1}$ using  \eqref{eq:sjjm1}:
\[
f|_{I_j}=b_2^{(j-1)}\er^{\ir\sigma s_{j-1}}+a_2^{(j-1)}\er^{-\ir\sigma s_{j-1}}=b_2^{(j-1)}\er^{\ir\sigma\ell_j}\er^{\ir\sigma s_j}+a_2^{(j-1)}\er^{-\ir\sigma\ell_j}\er^{-\ir\sigma s_j}.
\]
Comparing this with \eqref{eq:fj}, resolving the resulting equations with respect to $b_2^{(j-1)}$, $b_1^{(j)}$, and again combining  the results for $j=1,\dots,n$ gives a relation
\begin{equation}\label{eq:bscea}
\mathbf{b}=\mathtt{Sc}^\text{e}_\mathcal{G}\mathbf{a},
\end{equation}
where $\mathtt{Sc}^\text{e}_\mathcal{G}$ is the \emph{edge scattering matrix} 
\begin{equation}\label{eq:sceg}
\mathtt{Sc}^\text{e}_\mathcal{G}=\mathtt{Sc}^\text{e}_\mathcal{G}(\bell):=
\begin{pmatrix}
0&&&&&&\er^{-\ir\sigma\ell_1}\\
&0&\er^{-\ir\sigma\ell_2}&&&&\\
&\er^{-\ir\sigma\ell_2}&0&&&&\\
&&&\ddots&&&\\
&&&&0&\er^{-\ir\sigma\ell_n}&\\
&&&&\er^{-\ir\sigma\ell_n}&0&\\
\er^{-\ir\sigma\ell_1}&&&&&&0
\end{pmatrix}.
\end{equation}
Note that $\mathtt{Sc}^\text{e}_\mathcal{G}$ is also unitary. 

Combining \eqref{eq:ascvb} and \eqref{eq:bscea}, we arrive at the \emph{secular equation} for the quantum graph $\mathcal{G}$,
\begin{equation}\label{eq:secular}
\det(\mathtt{Sc}^\text{v}_\mathcal{G}\mathtt{Sc}^\text{e}_\mathcal{G}-\Id)=0.
\end{equation}
It is well known, see references above, that the positive roots of \eqref{eq:secular} are equal to the square roots of the positive eigenvalues of $\Delta_\mathcal{G}$; moreover, the multiplicity of $\sigma>0$ as a root of \eqref{eq:secular} coincides with the multiplicity of $\nu=\sigma^2$ as an eigenvalue of $\Delta_\mathcal{G}$.

We now proceed with evaluating the determinant in \eqref{eq:secular}. We remark that due to unitarity of $\mathtt{Sc}^\text{v}_\mathcal{G}$ and the fact that it is Hermitian, we have 
\begin{equation}\label{eq:secdet}
\det(\mathtt{Sc}^\text{v}_\mathcal{G}\mathtt{Sc}^\text{e}_\mathcal{G}-\Id)=
\frac{\det\left(\mathtt{Sc}^\text{e}_\mathcal{G}-(\mathtt{Sc}^\text{v}_\mathcal{G})^{-1}\right)}{\det\left((\mathtt{Sc}^\text{v}_\mathcal{G})^{-1}\right)}=
(-1)^n \det\left(\mathtt{Sc}^\text{v}_\mathcal{G}-\mathtt{Sc}^\text{e}_\mathcal{G}\right).
\end{equation}
The matrix $\mathtt{Sc}^\text{v}_\mathcal{G}-\mathtt{Sc}^\text{e}_\mathcal{G}$ is a tridiagonal circulant $2n\times 2n$ matrix, and determinants of such matrices can be evaluated using, for example, \cite[formula (1)]{Mol08}, which in our case after some simplifications reads
\begin{equation}\label{eq:Molinari}
\begin{split}
&\det\left(\mathtt{Sc}^\text{v}_\mathcal{G}-\mathtt{Sc}^\text{e}_\mathcal{G}\right)\\
&\quad=(-1)^n\er^{-\ir\sigma\sum_{j=1}^n\ell_j}\left(-2\prod_{j=1}^n \sin\frac{\pi^2}{2\alpha_j}
+\Tr\left( \tilde{\mathtt{C}}(\alpha_n,\ell_n,\sigma) \tilde{\mathtt{C}}(\alpha_{n-1},\ell_{n-1},\sigma)\cdots \tilde{\mathtt{C}}(\alpha_1,\ell_1,\sigma)\right)\right),
\end{split}
\end{equation}
where the matrices 
\[
\tilde{\mathtt{C}}(\alpha,\ell,\sigma):=
\begin{pmatrix}
\exp(\ir\ell\sigma)&-\ir\cos\frac{\pi^2}{2\alpha}\exp(-\ir\ell\sigma)\\
\ir\cos\frac{\pi^2}{2\alpha}\exp(\ir\ell\sigma)&\exp(-\ir\ell\sigma)
\end{pmatrix}
\]
are related to the matrices $\mathtt{C}(\alpha,\ell,\sigma)$ defined in \eqref{eq:Sdef1} by 
\[
\mathtt{C}(\alpha,\ell,\sigma)=\frac{1}{\sin\frac{\pi^2}{2\alpha}}\tilde{\mathtt{C}}(\alpha,\ell,\sigma).
\]
Repeating now word by word the proofs of Theorem \ref{thm:pnqnformulae} and Proposition \ref{prop:Fprelation} and dropping the sine factors in denominators gives 
\[
\Tr\left( \tilde{\mathtt{C}}(\alpha_n,\ell_n,\sigma) \tilde{\mathtt{C}}(\alpha_{n-1},\ell_{n-1},\sigma)\cdots \tilde{\mathtt{C}}(\alpha_1,\ell_1,\sigma)\right)=2F_\mathrm{even}(\balpha,\bell,\sigma),
\]
and \eqref{eq:secdet} becomes, with account of \eqref{eq:Molinari},
\[
\det(\mathtt{Sc}^\text{v}_\mathcal{G}\mathtt{Sc}^\text{e}_\mathcal{G}-\Id)=2\er^{-\ir\sigma\sum_{j=1}^n\ell_j}\left(F_\mathrm{even}(\balpha,\bell,\sigma)-\prod_{j=1}^n \sin\frac{\pi^2}{2\alpha_j}\right)=
2\er^{-\ir\sigma\sum_{j=1}^n\ell_j}F^{\mathcal{P}}(\balpha,\bell,\sigma).
\]
The first two statements of Theorem \ref{thm:polygoneqn0} now follow by dropping the non-zero factor $2\er^{-\ir\sigma\sum_{j=1}^n\ell_j}$ and using Theorem \ref{thm:quantum}. 

To prove the last statement of Theorem \ref{thm:polygoneqn0} concerning the multiplicity of the quasi-eigenvalue $\sigma=0$, we again  use Theorem \ref{thm:quantum} and \cite[Corollary 23]{FKW} which states that the algebraic multiplicity $\tilde{N}$ of $\sigma=0$  as a root of the secular equation \eqref{eq:secular} and the multiplicity $N_0$ of $\nu=0$ as an eigenvalue of $\Delta_\mathcal{G}$ are related by
\[
\tilde{N}=2N_0-|E|+D,
\]
where $|E|$ is the number of edges of the graph, and $D$ is the number of Dirichlet conditions. Since in our case $|E|=D=n$, the result follows immediately.

\subsection{Proof of Theorem \ref{thm:riesz}}

First, note that for any $\varepsilon<\varepsilon_0$,
\begin{equation}
\label{eq:rieszest}
\Riesz(\lambda)-\Riesz^q(\lambda)=O(\lambda^{1-\varepsilon}),
\end{equation}
where $\Riesz^q(\lambda):=\mathcal{R}(\{\sigma_m\}; \lambda)$ denotes the first Riesz mean for the sequence $\{\sigma_m\}$ quasi-eigenvalues of $\mathcal{P}$. Indeed, by Theorem 
\ref{thm:main} we have the estimate  $|\sigma_m-\lambda_m|=O(m^{-\varepsilon})$. At the same time, 
by Weyl's law \eqref{eq:Weylaw}, there are $O(\lambda)$ terms in the sums on the right-hand side of \eqref{eq:rieszdef} for either $\Riesz(\lambda)$ or $\Riesz^q(\lambda)$, and moreover $O(m^{-\varepsilon})$ is equivalent to $O(\lambda_m^{-\varepsilon})$. Putting this all together we get \eqref{eq:rieszest}. Therefore, it suffices to prove that
\begin{equation}
\label{Riesz:quasi}
\Riesz^q(\lambda)=\frac{|\partial \mathcal{P}|}{2 \pi}\lambda^2 + O(\lambda^{\frac{2n}{2n+1}}). 
\end{equation}

Let us  assume first  that all the side lengths of the curvilinear polygon $\mathcal{P}$ are (rationally) commensurable.  Then it follows from  equations \eqref{eq:quasieq1} and \eqref{eq:quasieq2bis}
that the sequence $\sigma_m$  is periodic: there exist $T, M>0$ such that $\sigma_{m+M}=\sigma_{m}+T$ for all $m\ge 1$ (in what follows, we refer to $T$ as the {\it period} of the sequence 
$\sigma_m$).
Moreover, in view of Remark \ref{rem:symmetric}, the roots of equations \eqref{eq:quasieq1} and \eqref{eq:quasieq2bis} are symmetric with respect to $\sigma=0$.  The algebraic multiplicity of $\sigma=0$ is always even, and according to Definitions \ref{def:quasimult} and \ref{def:quasiexcmult} exactly half of these zeros are counted as quasi-eigenvalues. This leads to the following observation: on any interval $[jT, (j+1)T]$, $j=0,1,2,\dots$,  the quasi-eigenvalues are located symmetrically with respect to the center of the interval, i.e. the midpoint of the period. Therefore, the sum of all the quasi-eigenvalues on each such  interval   is equal to $\frac{(2j+1)MT}{2}$. Note  that if $jT$, $j\ge 1$,  is a quasi-eigenvalue of some multiplicity (which is necessarily even in view of the observation above regarding the multiplicity of $\sigma=0$), then we assume that half of these eigenvalues contribute to the interval  $[(j-1), jT]$ and the other half to the interval $[jT,(j+1)T]$.

Assume that $\lambda=kT$ for some  $k\in \mathbb{N}$. Then the previous discussion implies that
\begin{equation}
\label{RieszkT}
\Riesz^q(kT)=MT\sum_{j=0}^{k-1}\frac{2j+1}{2}=\frac{MT k^2}{2}=\frac{M}{2T}\lambda^2,
\end{equation}
which proves \eqref{Riesz:quasi} in this case. Note that the equality $\frac{\pi M}{T}=|\partial \mathcal{P}|$ can be easily deduced from Weyl's law \eqref{eq:Weylaw}. 

Suppose now that $\lambda=kT+\varepsilon$ for some $0<\varepsilon<T$.
Then we have:
\[
\Riesz^q(\lambda)=\int_0^{kT} \mathcal{N}(\{\sigma_m\}; t)\, \dr t+\int_{KT}^\lambda  \mathcal{N}(\{\sigma_m\}; t)\, \dr t=
\frac{MTk^2}{2}+\int_{KT}^\lambda \frac{M}{T} t \, \dr t + O(T)=\frac{M}{2T}\lambda^2+O(T).
\]
Here we have used \eqref{RieszkT} as well as \eqref{eq:Weylaw} to obtain the second equality.
This completes the proof of \eqref{Riesz:quasi} (in fact, in this case the remainder is $O(T)$), and thus of \eqref{eq:rieszcorr} if all $\ell_1,\dots,\ell_n$ are commensurable. 

Next, suppose that $\ell_1,\dots, \ell_n$ are arbitrary real numbers. By the simultaneous version of Dirichlet's approximation theorem (in the form obtained via Minkowski's theorem, see \cite{Mat}), for any real $d>1$ there exist a $\zeta=\zeta(d) \in \mathbb{N}\cap(d,4d)$, and $\xi_j\in\mathbb{N}$,  $j=1,\dots, n$, such that 
with $\ell'_j:=\frac{\xi_j}{\zeta}$ we have
\begin{equation}
\label{approxsides}
|\ell_j-\ell'_j|<\frac{1}{d^{\frac{1}{n}} \zeta }<\frac{1}{d^\frac{n+1}{n}},\qquad j=1,\dots, n.
\end{equation} 
Later on, we will choose $d$ depending on the parameter $\lambda$ and will write $d=d(\lambda)$.

Denote by $\sigma'_m$ the quasi-eigenvalues of a curvilinear polygon $\mathcal{P}'$ with the side lengths $\ell'_1, \dots, \ell'_n$ and the same respective angles as 
$\mathcal{P}$. Assume 
\begin{equation}
\label{auxasslam}
d(\lambda)^\frac{n+1}{n}>\lambda.  
\end{equation}
Applying Corollary \ref{lemma:pertside} we have
\begin{equation}
\label{eq:pertside}
\left|\sigma_m-\sigma'_m\right|<\frac{C\lambda}{d(\lambda)^\frac{n+1}{n}}.
\end{equation}
for all  $\sigma_m<\lambda$ and some constant $C>0$.

Inequality \eqref{eq:pertside} together with Weyl's law \eqref{eq:Weylaw} implies that
\begin{equation}
\label{eq1r}
\Riesz^q(\lambda)=\sum_{\sigma_m\le \lambda} (\lambda- \sigma'_m)+(\sigma'_m-\sigma_m)=\mathcal{R}_{\mathcal{P}'}^q(\lambda)+O\left(\frac{\lambda^2}{d(\lambda)^\frac{n+1}{n}}\right). 
\end{equation}
At the same time, consider the polygon $\mathcal{P}'$. The lengths of all its sides are rational numbers with the common denominator $\zeta<4 d(\lambda)$. Let $T'$ be the period of the sequence  $\sigma'_m$.  It is easy to check from  equations   \eqref{eq:quasieq1} and \eqref{eq:quasieq2bis} that $T'=O(d(\lambda))$.
Therefore, by the result that we have already established for curvilinear polygons with rationally commensurable sides, 
\begin{equation}
\label{eq2r}
\mathcal{R}_{\mathcal{P}'}^q(\lambda)=\frac{\left| \partial\mathcal{P}' \right|}{2\pi}\lambda^2+O(d(\lambda))=\frac{\left| \partial \mathcal{P} \right|}{2\pi}\lambda^2+ 
O\left(\frac{\lambda^2}{d(\lambda)^\frac{n+1}{n}}\right)+O(d(\lambda)),
\end{equation}
where the last equality follows from \eqref{approxsides}. Combining \eqref{eq1r} and \eqref{eq2r} we get
\begin{equation}
\label{eq3r}
\Riesz^q(\lambda)=\frac{| \partial \mathcal{P} |}{2\pi}\lambda^2+ 
O\left(\frac{\lambda^2}{d(\lambda)^\frac{n+1}{n}}\right)+O(d(\lambda)).
\end{equation}
Let us now balance the error terms by choosing $d(\lambda)=\lambda^\frac{2n}{2n+1}$, which satisfies \eqref{auxasslam}. Substituting this into \eqref{eq3r} we obtain
\[
\Riesz^q(\lambda)=\frac{| \partial \mathcal{P} |}{2\pi}\lambda^2+ O\left(\lambda^\frac{2n}{2n+1}\right).
\]
With account of \eqref{eq:rieszest}, this completes the proof of Theorem \ref{thm:riesz}.
\clearpage\section{Layer Potentials}\label{sec:layer}
\subsection{Layer potential operators}

The aim of section \ref{sec:layer} is to extend the results obtained so far for partially curvilinear polygons to the fully curvilinear case. 
Throughout this section we assume that $\Omega_0$, $\Omega$, and $\tilde\Omega$ are curvilinear polygons, all with the same angles in $\Pi$ and the same side lengths in the same order. We also assume that $\Omega_0$ is partially curvilinear. The boundaries of all three domains are thus homeomorphic to the circle of length $L$, denoted $\mathbb{S}_L^1$, where $L$ is the common perimeter of $\Omega_0$, $\Omega$, and $\tilde\Omega$.  In this section, with a slight abuse of notation, we identify the sides $I_j$ of $\Omega_0$  of length $\ell_j$, $j=1,\dots, n$, with their images $I_j \subset \mathbb{S}_L^1$ (which are arcs of length $\ell_j$) under the above homeomorphism. Since the sides of $\Omega$ and $\tilde\Omega$ are of the same length as the sides of $\Omega_0$, the intervals $I_j$ of $\mathbb{S}_L^1$  correspond to the sides of these curvilinear polygons as well.

Let $s$ be a common arc length parameter, with the same orientation and with $s=0$ at the same vertex, on all three boundaries $\partial\Omega_0$, $\partial\Omega$, and $\partial\tilde\Omega$. Let $q_0(s)$, $q(s)$, and $\tilde q(s)$ be clockwise arc length parametrisations of the three boundaries, with outward unit normals $\mathbf{n}_0(s)$, $\mathbf{n}(s)$, and $\tilde{\mathbf n}(s)$. Also let $\gamma_0(s)$, $\gamma(s)$, and $\tilde\gamma(s)$ be the three (signed) curvatures. Our assumptions thus mean that $\gamma_0$ vanishes in a neighbourhood of each vertex. We will be interested in a situation when $q$ and $\tilde q$ (and thus $\gamma$ and $\tilde\gamma$) are close to each other in some $C^l$ norm. The outward normals and curvatures are defined at all points except the finitely many vertices.

We will be comparing the Steklov spectra of these polygons, so let $\mathcal D_0$, $\mathcal D$, and $\tilde{\mathcal D}$ be the Dirichlet-to-Neumann operators on each, with eigenvalues $\{\lambda_{0,m}\}$, $\{\lambda_{m}\}$, and $\{\tilde{\lambda}_m\}$. We will assume, as we can, that all these operators act in the same Hilbert space $L^2(\mathbb{S}_L^1)$. Throughout, we write $\CS^k$ or $\CS^k(\mathbb{S}_L^1)$, with $k=0$ or $k=1$,  for the direct sum $C^k(I_1)\oplus\cdots\oplus C^k(I_n)$; in particular, a function in $\CS^0(\mathbb{S}_L^1)$  need not be continuous at the ends of the intervals $I_j$.   At the same time, the Sobolev space $H^1(\mathbb{S}_L^1)$ is defined in the usual manner.

\begin{theorem}\label{thm:bigcurvthm} Fix a domain $\Omega$ of the type described above and let $\tilde\Omega$ vary within that class. Then there exist constants $C, \delta>0$, depending only on the geometry of $\Omega$, such that if $\tilde\Omega$ satisfies the condition
\begin{equation}\label{eq:gammadiff}
\|\gamma-\tilde\gamma\|_{\CS^1(\mathbb{S}_L^1)}\leq \delta,
\end{equation}
then $\mathcal D-\tilde{\mathcal D}$ is bounded as an operator $L^2(\mathbb{S}_L^1)\to L^2(\mathbb{S}_L^1)$ and further
\[
\|\mathcal D-\tilde{\mathcal D}\|_{L^2\to L^2}\leq C\|\gamma-\tilde\gamma\|_{\CS^1}.
\]
\end{theorem}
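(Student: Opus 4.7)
The plan is to implement the layer potential approach mentioned in the section heading and in Remark on Costabel's work, exploiting the fact that $\partial\Omega$ and $\partial\tilde\Omega$ differ only in their curvature profile while sharing arc length, orientation, and angles.

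First, I would represent both $\mathcal D$ and $\tilde{\mathcal D}$ via single-layer potentials. Standard theory gives $\mathcal D = (-\tfrac12 I + K^*)\,S^{-1}$, where
\[
S[\phi](s) = -\frac{1}{\pi}\int_{L\mathbb{S}^1} \log|q(s)-q(t)|\,\phi(t)\,\dr t,\qquad K[\phi](s)= \frac{1}{\pi}\int_{L\mathbb{S}^1}\frac{(q(t)-q(s))\cdot\mathbf n(s)}{|q(s)-q(t)|^2}\phi(t)\,\dr t,
\]
with analogous operators $\tilde S,\tilde K$ defined using $\tilde q$ and $\tilde{\mathbf n}$. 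On curvilinear polygons the invertibility of $S$ (between appropriate Sobolev spaces) and the Fredholm properties of $\tfrac12 I \pm K$ are known to hold provided no angle is a Mellin exceptional value; this is where Costabel's machinery enters. A first task is therefore to verify that these mapping properties are \emph{uniform} over small $C^1$-perturbations of the curvature, so that $S^{-1}$ and $\tilde S^{-1}$ are bounded independently of $\tilde\Omega$ (for $\tilde\Omega$ close enough to $\Omega$).

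Second, I would use the algebraic identity
\[
\mathcal D - \tilde{\mathcal D} \;=\; (-\tfrac12 I + K^*)\bigl(S^{-1}-\tilde S^{-1}\bigr) + (K^*-\tilde K^*)\,\tilde S^{-1}
\;=\; -(-\tfrac12 I + K^*)\,S^{-1}(S-\tilde S)\,\tilde S^{-1} + (K^*-\tilde K^*)\,\tilde S^{-1},
\]
and reduce the theorem to estimates of the form
\[
\|S-\tilde S\|_{L^2\to H^1} + \|K-\tilde K\|_{L^2\to L^2} \;\le\; C\,\|\gamma-\tilde\gamma\|_{C^1}.
\]
For these, I would compare the integral kernels directly. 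Using the common arc length parametrisation and a short Taylor expansion, one has $|q(s)-q(t)|^2 = (s-t)^2 - \tfrac{1}{12}\gamma(s)^2(s-t)^4 + O((s-t)^5)$ on each smooth arc, and a parallel formula for $\tilde q$. The leading $(s-t)^2$ singularity is identical for $\Omega$ and $\tilde\Omega$, so the difference $\log|q(s)-q(t)|-\log|\tilde q(s)-\tilde q(t)|$ has only a mild logarithmic singularity whose size is controlled by $\|\gamma-\tilde\gamma\|_{L^\infty}$. The kernel of $K-\tilde K$, which involves $\mathbf n$ (hence $q'$) and the squared denominator, is controlled similarly but requires one derivative of the perturbation, explaining the $C^1$-norm in the statement.

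Third, and this is the main obstacle, one must handle the corners. Near each vertex $V_j$ both kernels have a non-integrable singularity whose structure is governed by the angle $\alpha_j$, and the cancellation argument above is valid only on the smooth pieces. The remedy, following Costabel, is to subtract a \emph{model corner kernel} depending only on $\alpha_j$ (and thus common to $\Omega$, $\tilde\Omega$, and the partially curvilinear $\Omega_0$), leaving a remainder that is a Mellin-type pseudodifferential operator of lower order. The remainders for $\Omega$ and $\tilde\Omega$ can then be compared on each side of the vertex using the same Taylor-expansion technique; the fact that both $\gamma$ and $\tilde\gamma$ are $C^1$ up to the vertex (with the same one-sided limits allowed to differ) is precisely what is needed. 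Combining the interior and corner estimates yields the desired bounds on $S-\tilde S$ and $K-\tilde K$, and plugging back into the algebraic identity completes the proof. The technically heaviest step will be establishing that the corner model operators are invertible in a neighbourhood of $\Omega_0$ uniformly in the perturbation, which is where the work of Costabel and a careful deformation argument (running $\tilde\Omega$ back to the partially curvilinear $\Omega_0$ along a $C^1$-path of curvatures) will be needed.
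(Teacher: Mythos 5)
Your overall framework --- representing $\mathcal D$ and $\tilde{\mathcal D}$ via layer potentials, isolating the differences $S - \tilde S$ and $K - \tilde K$ by a resolvent-type identity, and then estimating those differences by comparing kernels --- is the strategy the paper uses. There are, however, two genuine gaps.

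First, the regularity bookkeeping in your identity is not self-consistent. With $\tilde S^{-1}$ on the far right, the composition chain for the first term reads $L^2 \xrightarrow{\ \tilde S^{-1}\ } H^{-1} \xrightarrow{\ S-\tilde S\ } ? \xrightarrow{\ S^{-1}\ } ? \xrightarrow{\ -\tfrac12 I + K^*\ } ?$. For the output to land in $L^2$ one needs $S-\tilde S$ to map $H^{-1}\to H^1$, which is strictly stronger than the $L^2\to H^1$ bound you state; likewise the second term needs $K^*-\tilde K^*\colon H^{-1}\to L^2$, equivalently $K-\tilde K\colon L^2\to H^1$, not just $L^2\to L^2$. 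With only the estimates you claim, your product does not close up to an $L^2$-bounded operator. The paper's Lemma~\ref{lem:differences} is formulated with exactly the stronger bounds that are actually required, namely $\|\SL - \tilde{\SL}\|_{H^{-1}\to H^1}$ and $\|\DL - \tilde{\DL}\|_{L^2\to H^1}$. A related problem is that your formula invokes $\tilde S^{-1}$, whose operator norm must be shown bounded uniformly in $\tilde\Omega$; you flag this as ``a first task'' but give no argument. The paper sidesteps it entirely by rearranging the Calderon relation so that the only inverse appearing is $\SL^{-1}$ of the fixed domain, together with a prefactor $\left(I-\SL^{-1}(\SL-\tilde\SL)\right)^{-1}$ that is bounded via a Neumann series once $\delta$ is small. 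That algebraic choice removes the uniformity problem at the source.

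Second, your proposed corner treatment --- subtracting model corner kernels, invoking Mellin symbols, establishing uniform invertibility of corner model operators, and running a deformation inside the corner analysis --- is substantially heavier machinery than is needed, and it is not what the paper does. You note yourself that the leading $(s-t)^2$ part of the kernel is shared between $\Omega$ and $\tilde\Omega$; the decisive point, which you understate, is that the same cancellation persists right up to the vertices because the angles of the two domains coincide. The paper exploits this by estimating the kernel differences directly on each rectangle $I_j\times I_k$: on diagonal and far off-diagonal rectangles they are bounded by $C\|\gamma-\tilde\gamma\|_{C^1}$, while on adjacent-side rectangles they are bounded by $C\|\gamma-\tilde\gamma\|_{C^1}\,r^{-1}$, where $r(s,s')$ is the straight-corner model distance; the weight $r^{-1}$ already defines a bounded $L^2\to L^2$ operator by a short homogeneity argument, and this is the only point at which Costabel's theory enters. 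No Mellin symbol calculus, no corner model invertibility, and no deformation within the corner analysis are required. Realising that the angle-matching hypothesis makes the singular corner contributions cancel --- so that direct kernel estimates suffice --- is precisely the simplification that makes the theorem tractable.
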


\begin{remark} This theorem states that within each class of curvilinear polygons we consider here, the dependence of the Dirichlet-to-Neumann operator (with respect to the operator norm) on the curvature of the boundary (with respect to the $C^1$ norm) is locally Lipschitz continuous.
\end{remark}

\begin{remark} Repeated  applications of Theorem \ref{thm:bigcurvthm} imply that even without assumption \eqref{eq:gammadiff},  $\mathcal D-\tilde{\mathcal D}$ is still bounded from $L^2(\mathbb{S}_L^1)$ to $L^2(\mathbb{S}_L^1)$.
\end{remark}

The proof of this theorem will occupy the remainder of the section. But first we explain how to use this Theorem to extend the results proved in previous sections for  partially curvilinear polygons to the fully curvilinear case. The following   proposition can be viewed as  a sort of a bootstrap argument.

\begin{proposition}\label{prop:autoimprovement} Suppose that $\Omega$ and $\tilde\Omega$ are curvilinear polygons as described above. Suppose further that $\|\mathcal D-\tilde{\mathcal D}\|<\pi/(6L)$, and let $\tilde{\delta}>1$ be as in Corollary \ref{cor:quasimodesfullycurv}. Then $|\sigma_m-\lambda_m|=o(1)$ implies $|\sigma_m-\tilde{\lambda}_m|=O\left(m^{\frac 12(1-\tilde\delta)}\right)$.
\end{proposition}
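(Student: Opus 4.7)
The plan is to use the hypothesis $\|\mathcal{D} - \tilde{\mathcal{D}}\| < \pi/(6L)$ together with the $o(1)$-closeness of $\{\sigma_m\}$ and $\{\lambda_m\}$ on $\Omega$ to force the natural quasi-eigenvalue-to-eigenvalue correspondence on $\tilde\Omega$ to be the identity map. The specific constant $\pi/(6L)$ is calibrated precisely so that a telescoping argument combined with Weyl's law produces a contradiction otherwise.

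First I would apply the min-max principle: since $\mathcal{D}$ and $\tilde{\mathcal{D}}$ are both self-adjoint on the common space $L^2(L\mathbb S^1)$, we get $|\lambda_m - \tilde\lambda_m| \le \|\mathcal{D} - \tilde{\mathcal{D}}\| < \pi/(6L)$ for every $m$. Chaining with the hypothesis $|\sigma_m - \lambda_m| = o(1)$ via the triangle inequality gives
\[
|\sigma_m - \tilde\lambda_m| < \tfrac{\pi}{6L} + \epsilon_m, \qquad \epsilon_m \to 0.
\]
Since the quasi-eigenvalues depend only on the angles and side lengths (shared by $\Omega$ and $\tilde\Omega$), the same sequence $\{\sigma_m\}$ attaches to both polygons. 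I would then invoke Theorem \ref{thm:injectivity} applied to $\tilde\Omega$ to obtain a constant $C > 0$ and a map $j: \mathbb N \to \mathbb N$, strictly increasing for large arguments, with $|\sigma_m - \tilde\lambda_{j(m)}| \le C m^{(1-\tilde\delta)/2}$. The task thus reduces to showing $j(m) = m$ for all sufficiently large $m$.

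Comparing the Weyl laws for the counting functions of $\{\tilde\lambda_m\}$ (from \eqref{eq:Weylaw}) and $\{\sigma_m\}$ (from \eqref{eq:Weylawquasi}), both of which equal $L\sigma/\pi + O(1)$, I conclude $|j(m) - m| = O(1)$. The strict monotonicity of $j$ for large $m$ forces $j(m+1) - (m+1) \ge j(m) - m$, so the bounded sequence $j(m) - m$ is non-decreasing and therefore eventually equal to some constant $k_0 \in \mathbb Z$ for all $m \ge M_*$.

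The main step, which I expect to be the hard part, is to show $k_0 = 0$ via a telescoping/Weyl argument. Assume $k_0 \ge 1$ (the case $k_0 \le -1$ is symmetric, with the roles of consecutive indices reversed). Setting $m_i := M_* + i k_0$, the identity $j(m_i) = m_{i+1}$ combined with the two bounds above yields
\[
\sigma_{m_{i+1}} - \sigma_{m_i} \le |\sigma_{m_i} - \tilde\lambda_{m_{i+1}}| + |\tilde\lambda_{m_{i+1}} - \sigma_{m_{i+1}}| < \tfrac{\pi}{6L} + C m_i^{(1-\tilde\delta)/2} + \epsilon_{m_{i+1}}.
\]
Because $\tilde\delta > 1$ makes the term $C m_i^{(1-\tilde\delta)/2}$ tend to zero, summing this inequality over $i = 0, \dots, N-1$ and using that the extra error terms have vanishing Cesàro mean gives $\sigma_{m_N} - \sigma_{m_0} \le N(\pi/(6L) + o(1))$ as $N \to \infty$. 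On the other hand, Weyl's law forces $\sigma_{m_N} - \sigma_{m_0} = \pi N k_0 / L + O(1)$. Dividing by $N$ and letting $N \to \infty$ forces $k_0 \le 1/6$, contradicting $k_0 \ge 1$. Hence $k_0 = 0$, so $j(m) = m$ for all sufficiently large $m$, and the estimate $|\sigma_m - \tilde\lambda_m| = O(m^{(1-\tilde\delta)/2})$ follows for large $m$, then for all $m$ after enlarging $C$ to absorb the finitely many small indices.
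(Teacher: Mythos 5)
Your proposal is correct, and it takes a genuinely different route from the paper. The paper exploits the cluster structure of the quasi-eigenvalues: by the Weyl law, $\{\sigma_m\}$ has gaps of size $> \pi/(2L)$ infinitely often, so the sequence partitions into clusters separated by such gaps; the hypothesis $|\sigma_m-\lambda_m|=o(1)$ traps the $\lambda_m$ in intervals extending $\pi/(12L)$ around each cluster, and the bound $\|\mathcal D-\tilde{\mathcal D}\|<\pi/(6L)$ together with min-max traps the $\tilde\lambda_m$ in intervals extending a further $\pi/(6L)$ (total $\pi/(4L)$, still disjoint since the gaps exceed $\pi/(2L)$). The injective map $j$ from Theorem \ref{thm:injectivity} then must preserve cluster membership and so, being an increasing injection of a finite block to itself, is the identity. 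You instead go global: derive $\tilde\lambda_m=\pi m/L+O(1)$ via min-max chaining, compare Weyl laws to get $|j(m)-m|=O(1)$, observe that $j(m)-m$ is eventually nondecreasing (strict monotonicity of $j$ plus integrality) and bounded, hence eventually constant $k_0$, and then telescope along the arithmetic progression $m_i=M_*+ik_0$ and compare against the Weyl law to force $k_0<1$, i.e.\ $k_0=0$. Both proofs hinge on the same cast (min-max, the Weyl law for quasi-eigenvalues, Theorem \ref{thm:injectivity}, and the hypothesis $|\sigma_m-\lambda_m|=o(1)$), and both use the constant $\pi/(6L)$ essentially, but the paper's version is more local and geometric while yours is more of an averaging/counting argument. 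One small stylistic caveat: you cite \eqref{eq:Weylaw} for $\tilde\Omega$, which in the paper's logical order rests on Theorem \ref{thm:main} and would be circular if used naively; but as you implicitly note, the weaker statement $\tilde\lambda_m=\pi m/L+O(1)$ follows already from min-max plus the quasi-eigenvalue Weyl law under the hypotheses at hand, so there is no genuine gap --- just a citation that should be replaced by a one-line direct derivation.
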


\begin{proof} The proof begins with the observation that the sequence $\{\sigma_m\}$ must have repeated spectral gaps of size greater than $\pi/2L$. Indeed, if this is not the case, then for all sufficiently large $\lambda$, the counting function for $\{\sigma_m\}$ would be at least $\frac{2L}{\pi}\lambda$, contradicting the Weyl law \cite[Lemma 3.7.4]{BK13}. 
Now partition the sequence $\{\sigma_m\}$ into clusters $\{\sigma_{a_k},\ldots,\sigma_{b_k}\}$, ending each cluster at the first new gap of size greater than $\pi/(2L)$, so that $a_{k+1}=b_k+1$ and $\sigma_{a_{k+1}}-\sigma_{b_{k}}>\pi/(2L)$. Since $|\sigma_m-\lambda_m|\to 0$, the disjoint intervals
\[
\left(\sigma_{a_k}-\frac{\pi}{12L},\sigma_{b_k}+\frac{\pi}{12L}\right),
\]
must eventually contain 
$\lambda_{a_k},\ldots,\lambda_{b_k}$, and no other eigenvalues of $\mathcal D$. On the other hand, $\|\mathcal D-\tilde{\mathcal D}\|<\frac{\pi}{6L}$. So every element of $\{\tilde{\lambda}_m\}$ must be within a distance $\frac{\pi}{6L}$ of some element of $\{\lambda_m\}$, which means that, for sufficiently large $k$, the intervals
\[
\left(\sigma_{a_k}-\frac{\pi}{4L},\sigma_{b_k}+\frac{\pi}{4L}\right),
\]
which are still disjoint, must contain 
$\tilde\lambda_{a_k},\ldots,\tilde\lambda_{b_k}$, and no other eigenvalues of ${\mathcal {\tilde D}}$.

We also note that, by Theorem \ref{thm:injectivity}, there is a map $j:\mathbb N\to\mathbb N$ which is eventually strictly increasing and for which $\left|\sigma_m-\tilde\lambda_{j(m)}\right|\leq Cm^{\frac 12(1-\tilde\delta)}\to 0$. 
These two observations imply that $j(m)=m$ for sufficiently large $m$. \end{proof}

Now we deduce the correct enumeration for any $\Omega$ from  the correct enumeration for $\Omega_0$ using a continuity argument. The key is the following Proposition.

\begin{proposition} Let $\Omega$ be a fully curvilinear polygon. There exists a continuous  family of curvilinear polygons $\Omega_t$, $t\in[0,1]$, all with the same angles and side lengths, with $\Omega_1=\Omega$, and $\Omega_0$ being partially curvilinear. 
\end{proposition}

\begin{proof} We proceed in three steps. First, we build a family $\Omega_{t}'$ with all the required properties except possibly for the preservation of the side lengths. This is easy to do by working locally in a neighbourhood of each vertex: assume that a vertex is at the origin, and that $\Omega$ in the vicinity  of the vertex coincides with 
\[
\left\{(x,y): x>0, y_{1,-}(x)<y<y_{1,+}(x)\right\},
\] 
where $y_\pm(0)=0$, see Figure \ref{fig:Figextra}.

\begin{figure}[htb]
\begin{center}
\includegraphics{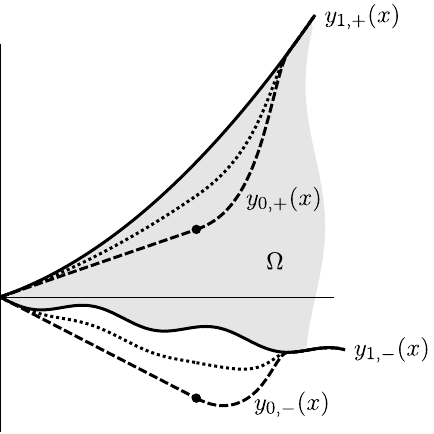}
\end{center}
\caption{A family $\Omega_t'$ (dotted boundary) near a vertex, with the original fully curvilinear polygon $\Omega=\Omega'_1$ (solid boundary) and a constructed partially curvilinear $\Omega'_0$ (dashed boundary; the black dots indicate the ends of the straight parts of the boundary)\label{fig:Figextra}}
\end{figure}

Let $\chi(x)$ denote a smooth nonnegative cut-off function satisfying \eqref{eq:cutofffunction}. We now set, with $\varepsilon>0$ small enough, 
\[
y_{0,\pm}(x):=\chi(x/\varepsilon) y_{1,\pm}'(0)x + (1-\chi(x/\varepsilon)) y_{1,\pm}(x)
\]
(so that these functions are linear near the origin), and choose $\Omega_0'$ to coincide locally with
\[
\left\{(x,y): x>0, y_{0,-}(x)<y<y_{0,+}(x)\right\} 
\]
(so its partially curvilinear), and $\Omega'_t$, $t\in[0,1]$, to coincide locally with
\[
\left\{(x,y): x>0, (1-t) y_{0,-}(x)+ty_{1,-}(x)<y<(1-t) y_{0,+}(x)+ty_{1,+}(x)\right\}. 
\]

On the second step, let $\rho_t$ be the maximum ratio of the corresponding side lengths of $\Omega_t'$ and $\Omega=\Omega_1'$,
\[
\rho_t:=\max\left\{\frac{\ell_1(\Omega_t')}{\ell_1(\Omega)}, \dots, \frac{\ell_n(\Omega_t')}{\ell_n(\Omega)}\right\},
\] 
with $\rho_1=1$. We now construct the continuous family of curvilinear polygons  $\Omega_t''$ as copies of $\Omega_t'$ scaled by a linear factor $\rho_t^{-1}$; this ensures that each side length $\ell_j(\Omega_t'')$ of the resulting family is  less than or equal to the corresponding side length $\ell_j(\Omega)$.

Finally, we adjust the domains $\Omega_t''$ to increase, if required, the side lengths $\ell_j(\Omega_t'')=\rho_t^{-1}\ell_j(\Omega_t')$ back to $\ell_j(\Omega)$ for all $j$, while leaving a neighbourhood of each vertex unchanged. This may be done, for example, by adding smooth oscillations with the appropriate $t$-dependent amplitude and frequency to each side away from the corners. The result is a continuous family $\Omega_t$ of curvilinear polygons with $\Omega_1=\Omega$, and $\Omega_0$ partially curvilinear, all $\Omega_t$ having the same angles and side lengths as $\Omega$.
\end{proof}

Along such a family $\Omega_t$, the curvature depends continuously on $t$ in the $\CS^1$-norm, and therefore by Theorem \ref{thm:bigcurvthm}, the operator $\mathcal{D}_t$ depends continuously on $t$ in the $L^2$ norm. Since $t\in[0,1]$ and $[0,1]$ is compact, this dependence is uniformly continuous, so there exists $\varepsilon>0$ such that $|t-t'|\leq\varepsilon$ implies that $\|\mathcal{D}-\tilde{\mathcal D}\|<\pi/(6L)$. Finally, by Theorem \ref{thm:main} for the \emph{partially} curvilinear polygon $\Omega_0$, which we proved in Section \ref{sec:completeness}, we know that for $\Omega_0$, $|\sigma_m-\lambda_m(\Omega_0)|=o(1)$. So by one use of Proposition \ref{prop:autoimprovement}, we conclude that for all $t\in[0,\varepsilon]$,
\[
|\sigma_m-\lambda_m(\Omega_t)|=O(m^{(-\tilde\delta+1)/2}).
\]
But then a second use gets us the same for all $t\in[0,2\varepsilon]$, and so on until we reach $1$ in finitely many steps. Therefore, we have
\begin{theorem} For any curvilinear polygon $\Omega$,
\[|\sigma_m-\lambda_m|=O(m^{(-\tilde\delta+1)/2}),\]
where $\tilde\delta$ is defined as in Corollary \ref{cor:quasimodesfullycurv}.
\end{theorem}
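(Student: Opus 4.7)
The plan is to pass from the partially curvilinear case (already handled in Section \ref{sec:completeness}, where $|\sigma_m-\lambda_m|=o(1)$ is known for both Theorem \ref{thm:mainpolygons} and Theorem \ref{thm:mainexc}) to the fully curvilinear case by a continuity-plus-bootstrap argument along a smooth family of polygons whose angles and side lengths (and therefore quasi-eigenvalue sequence) are all equal to those of $\Omega$. The two nontrivial analytic ingredients, Theorem \ref{thm:bigcurvthm} and Proposition \ref{prop:autoimprovement}, are already available; the remaining task is to assemble them.

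First I would construct a smooth deformation $\{\Omega_t\}_{t\in[0,1]}$ of curvilinear polygons with common angles $\boldsymbol\alpha$ and side lengths $\boldsymbol\ell$, with $\Omega_0$ partially curvilinear and $\Omega_1=\Omega$. I would do this in three steps: (i) vary the curvature smoothly near each vertex from the partially curvilinear profile to that of $\Omega$, allowing side lengths to drift; (ii) apply a smooth overall scaling factor $f(t)$ that preserves angles and guarantees that each side of $\Omega_t''$ is at most the required length $\ell_j$; (iii) restore the correct side lengths by adding smooth, $t$-dependent oscillations supported in the interior of each side, away from the vertices. Since $\boldsymbol\alpha$ and $\boldsymbol\ell$ are preserved, the quasi-eigenvalue sequence $\{\sigma_m\}$ is independent of $t$.

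Next I would use Theorem \ref{thm:bigcurvthm}: along this family the curvature $\gamma_t$ depends smoothly on $t$ in $C^1(L\mathbb S^1)$, hence by the theorem the operator-norm map $t\mapsto \mathcal D_{\Omega_t}$ is continuous on the compact interval $[0,1]$ and therefore uniformly continuous. Choose $\varepsilon>0$ such that $|t-t'|\le\varepsilon$ implies $\|\mathcal D_{\Omega_t}-\mathcal D_{\Omega_{t'}}\|_{L^2\to L^2}<\pi/(6L)$. Partition $[0,1]$ into finitely many subintervals of length $\le \varepsilon$ and march from $t=0$: the base case $|\sigma_m-\lambda_m(\Omega_0)|=o(1)$ is Theorem \ref{thm:main} for partially curvilinear $\Omega_0$, and at each step Proposition \ref{prop:autoimprovement} upgrades $|\sigma_m-\lambda_m(\Omega_t)|=o(1)$ to $|\sigma_m-\lambda_m(\Omega_{t'})|=O(m^{(1-\tilde\delta)/2})$ for any $\Omega_{t'}$ within operator-norm distance $\pi/(6L)$ of $\Omega_t$. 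In particular, the $o(1)$ hypothesis needed to feed the next step is preserved, so after finitely many iterations we reach $t=1$ and obtain the required bound for $\Omega=\Omega_1$.

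The hard part has really already been done: the curvature-to-operator Lipschitz estimate of Theorem \ref{thm:bigcurvthm} and the clustering-and-counting argument behind Proposition \ref{prop:autoimprovement} are the two substantive inputs. What remains is essentially bookkeeping: verifying that the deformation $\{\Omega_t\}$ exists with the claimed regularity, checking that the rate $O(m^{(1-\tilde\delta)/2})$ is not worsened through the finitely many bootstrap steps (it is not, since $\tilde\delta$ depends only on the common angles of all $\Omega_t$ through \eqref{eq:defofdeltafull}), and noting that the enumeration $j(m)=m$ produced at each stage by Proposition \ref{prop:autoimprovement} is automatically inherited by the next.
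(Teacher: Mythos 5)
Your argument is correct and follows the paper's own proof essentially step for step: the three-stage construction of the deformation family $\{\Omega_t\}$, the use of Theorem \ref{thm:bigcurvthm} to get uniform continuity of $t\mapsto\mathcal D_{\Omega_t}$ on the compact interval, and the finite bootstrap via Proposition \ref{prop:autoimprovement} starting from the partially curvilinear base case are exactly the ingredients the paper assembles in the same order. Nothing to add.
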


Our approach uses the theory of layer potentials. So we let $\SL$ and $\DL$ be the single- and double-layer potential operators on the boundary of our domain $\Omega$. Throughout, we use $\tilde\SL$, $\SL_0$, and analogous expressions to denote the same operators on $\tilde\Omega$ and $\Omega_0$ respectively. Recall that these operators are defined as follows:
\[
\begin{split}
\SL(f)(s)&=\int_{\mathbb{S}_L^1}K_{\SL}(s,s')f(s')\, \dr s';\qquad \DL(f)(s)=\int_{\mathbb{S}_L^1}\left(\frac{\partial}{\partial n(s')}K_{\SL}(s,s')\right)f(s')\, \dr s',\\
K_{\SL}(s,s')&=\frac{1}{2\pi}\log |q(s)-q(s')|,
\end{split}
\]
where $n(s')$ is the outward unit normal and the integral for $\DL$ is a principal value integral.

We now collect some basic facts about these operators. First, we have the \emph{Calderon jump relations}, stated in \cite[Chapter 7, (11.35)]{Tay} in the smooth context but also true in the general setting of Lipschitz domains (see for example \cite{Ag2011}):
\begin{equation}\label{eq:calderonjump}
\SL\mathcal D=-\frac 12(\operatorname{Id}-\DL).
\end{equation}
Now we give some information about the boundedness properties of these operators, which again hold for Lipschitz domains. Although we cite \cite{pp14}, some of these results are originally due to Verchota \cite{Verchota}.
\begin{proposition}[{\cite[Section 2 and Lemma 3.1]{pp14}}]\label{prop:putinar} The operators
\[
\SL: L^2(\partial\Omega)\to H^1(\partial\Omega),\quad
\DL: L^2(\partial\Omega)\to L^2(\partial\Omega),\quad 
\DL: H^1(\partial\Omega)\to H^1(\partial\Omega)
\]
are all bounded. Moreover, $\SL$ is invertible as long as the capacity of the domain $\Omega$ is not equal to one.
\end{proposition}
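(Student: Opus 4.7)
The proposition collects several standard but deep mapping properties for layer potentials on Lipschitz domains, so my plan is to reduce each claim to a classical result and verify the reductions go through in the present low-regularity setting (a curvilinear polygon with corner angles strictly less than $\pi$ is in particular a Lipschitz domain). I would organise the proof into four steps, one per assertion.

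First, I would handle $\DL : L^2(\partial\Omega)\to L^2(\partial\Omega)$. The kernel
\[
K_{\DL}(s,s')=\frac{1}{2\pi}\,\frac{(q(s)-q(s'))\cdot\mathbf{n}(s')}{|q(s)-q(s')|^{2}}
\]
is the real part of a Cauchy-type kernel along the Lipschitz curve $\partial\Omega$. The plan is to identify $\DL$ with the real part of a Cauchy integral operator, so that its $L^2$-boundedness is an immediate consequence of the Coifman--McIntosh--Meyer theorem, together with Verchota's adaptation to $L^{2}$ boundary data on general Lipschitz domains. No smoothness beyond Lipschitz is needed.

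Second, for $\SL : L^2(\partial\Omega)\to H^1(\partial\Omega)$, I would control the tangential derivative $\partial_s(\SL f)$. On a Lipschitz curve one has an identity of the form $\partial_s(\SL f)=\DL^{*}(\tau f)+\text{(compact error)}$, where $\tau$ is rotation by $\pi/2$ and $\DL^{*}$ is the adjoint double layer. Since $\DL$ is $L^2$-bounded by Step 1, so is $\DL^{*}$, whence $\partial_s\SL : L^2\to L^2$ is bounded; together with the obvious $L^2\to L^2$ bound on $\SL$ itself (via Schur's test, the kernel being only logarithmically singular), this yields the $L^2\to H^1$ bound. Third, for $\DL : H^1(\partial\Omega)\to H^1(\partial\Omega)$, the key is to estimate the commutator $[\partial_s,\DL]$. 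A direct calculation shows its kernel is bounded by that of a Calder\'on commutator, and Calder\'on's commutator theorem, again combined with the Coifman--McIntosh--Meyer machinery, gives $[\partial_s,\DL] : L^{2}\to L^{2}$, which suffices.

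Finally, for invertibility of $\SL$, I would run a Fredholm argument. The operator $\SL : L^2\to H^1$ is the sum of an invertible logarithmic operator and a compact one on a Lipschitz curve, so it is Fredholm of index zero; hence invertibility is equivalent to injectivity. If $\SL f=0$ with $f\in L^{2}$, then the harmonic extension $u=\mathcal{S}f$ vanishes on $\partial\Omega$, so $u\equiv0$ in $\Omega$, and by the jump relations the exterior trace of $u$ also vanishes on $\partial\Omega$. The exterior problem for $u$ is governed by its behaviour at infinity, which is purely logarithmic with coefficient $\int f\,\dr s$; this trivial exterior solution exists precisely when the logarithmic (Robin) capacity of $\Omega$ equals $1$. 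Thus $\SL$ fails to be injective if and only if the capacity is $1$, giving the claimed invertibility. The main obstacle in the whole argument is of course Step 1: the $L^{2}$-boundedness of the Cauchy integral on a Lipschitz graph is deep, and every subsequent step is essentially bootstrapped from it; everything else is a careful bookkeeping of jump relations, Schur estimates and Fredholm theory.
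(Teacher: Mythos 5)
The paper does not prove this proposition: it is stated as a direct citation to Perfekt--Putinar \cite[Section 2, Lemma 3.1]{pp14}, whose results in turn rest on Verchota's work. Your sketch is a plausible reconstruction along the standard lines of that literature: the $L^2$-boundedness of $\DL$ does reduce to the $L^2$-boundedness of the Cauchy integral on a Lipschitz curve (Coifman--McIntosh--Meyer, as adapted by Verchota), and the $L^2\to H^1$ mapping for $\SL$ follows because $\partial_s(\SL f)$ has kernel $\frac{1}{2\pi}(q(s)-q(s'))\cdot\dot q(s)/|q(s)-q(s')|^2$, which is the real part of $\overline{\dot q(s)}/(q(s)-q(s'))$, i.e.\ again a Cauchy-type operator (this is actually cleaner than the identity $\partial_s\SL=\DL^*(\tau f)+\text{compact}$ you proposed, since the kernel of $\DL^*$ involves $\mathbf{n}(s)$ rather than $\dot q(s)$ and the two differ by a rotation of the first factor, not by a compact remainder). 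The capacity-one obstruction to invertibility is exactly the usual logarithmic-kernel phenomenon in the plane.

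Two small caveats worth noting. For the $H^1\to H^1$ boundedness of $\DL$, Verchota's original proof proceeds via Rellich identities and the conjugate harmonic function trick rather than by estimating $[\partial_s,\DL]$ as a Calder\'on commutator; your commutator claim is plausible but would need to be made precise, and it is not the route taken in the references. For the invertibility Fredholm step, the assertion that $\SL$ on a general Lipschitz curve is ``invertible logarithmic operator $+$ compact'' in $L^2\to H^1$ is not automatic: the log-kernel decomposition $\log|q(s)-q(s')|=\log|s-s'|+\log(|q(s)-q(s')|/|s-s'|)$ has a merely bounded second term on a Lipschitz curve, which is not obviously a compact $L^2\to H^1$ operator. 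The actual invertibility proof in the Lipschitz category (Verchota, and \cite[Lemma 3.1]{pp14}) again uses Rellich identities to obtain an a priori estimate rather than a compactness decomposition. For the curvilinear polygons considered in this paper (piecewise smooth, angles in $(0,\pi)$), the extra regularity away from the corners likely rescues a Fredholm argument, but one would need to address the corner contributions explicitly. None of this is a fatal gap in your proposal, but the Fredholm step is the place that would need tightening.
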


\begin{remark} The capacity of a domain (also called the \emph{logarithmic capacity}) scales linearly with the domain itself, and is bounded below by the inradius. We may thus safely assume that the capacity of each of our domains is greater than one. If not, to prove a theorem such as Theorem \ref{thm:bigcurvthm}, we simply scale up the domain(s) so that all inradii and thus all capacities are greater than one, apply the result there, and transform back. Since $\mathcal D$ is homogeneous under scaling, the result follows for all domains. This allows us to make the assumption, which we do throughout, that our single-layer operators $\SL$, $\SL_0$, and $\tilde\SL$ are all invertible. 
\end{remark}

\subsection{Proof of Theorem \ref{thm:bigcurvthm}}

Our first goal is to reduce the proof of Theorem \ref{thm:bigcurvthm} to the proof of the following lemma, which gives bounds on the differences of the single- and double-layer potential operators acting on different domains.
\begin{lemma}\label{lem:differences} There exist constants $C$ and $\delta$ depending only on the geometry of $\Omega$ such that if 
\[
\|\gamma(s)-\tilde\gamma(s)\|_{\CS^1}\leq\delta,
\]
then
\begin{enumerate}
\item $\SL-\tilde\SL$ is bounded from $H^{-1}(\mathbb{S}_L^1)\to H^1(\mathbb{S}_L^1)$, and $\|\SL-\tilde\SL\|_{H^{-1}\to H^1}
\leq C\|\gamma-\tilde \gamma\|_{\CS^1}$. 
\item $\DL-\tilde \DL$ is bounded from $L^2(\mathbb{S}_L^1)\to H^1(\mathbb{S}_L^1)$, and $\|\DL-\tilde\DL\|_{L^2\to H^1}
\leq C\|\gamma-\tilde\gamma\|_{\CS^1}$.
\end{enumerate}
\end{lemma}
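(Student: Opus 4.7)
The plan is to express $\SL - \tilde\SL$ and $\DL - \tilde\DL$ as integral operators on $L\mathbb{S}^1 \times L\mathbb{S}^1$ and show that their kernels decompose into a part common to both domains (which cancels in the difference) plus a regular remainder whose size is controlled linearly by $\|\gamma - \tilde\gamma\|_{C^1}$. The analysis splits along a partition of unity on $L\mathbb{S}^1$ subordinate to a cover consisting of small neighbourhoods of each vertex $V_j$ together with their complements. The strategy in each region is different, so I would treat them separately and combine at the end.

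Away from the vertices, both parametrisations $q$ and $\tilde q$ are $C^2$ (matching the smoothness of the underlying arcs). For arc-length parametrised smooth curves one has the identity
\[
|q(s)-q(s')|^2 = (s-s')^2\bigl(1 + (s-s')^2 h_\Omega(s,s')\bigr),
\]
where $h_\Omega$ is a smooth, positive function whose $C^1$ norm is controlled linearly by $\|\gamma\|_{C^1}$. Hence
\[
\tfrac{1}{2\pi}\log|q(s)-q(s')| = \tfrac{1}{2\pi}\log|s-s'| + \tfrac{1}{4\pi}\log\bigl(1 + (s-s')^2 h_\Omega(s,s')\bigr),
\]
and the singular piece $\tfrac{1}{2\pi}\log|s-s'|$ cancels in $K_{\SL}-K_{\tilde\SL}$. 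Thus the difference kernel is smooth (in fact vanishing like $(s-s')^2$ on the diagonal) with $C^1$ norm bounded by $C\|\gamma-\tilde\gamma\|_{C^1}$. An analogous expansion for the Neumann projection of the displacement vector gives a similarly smooth difference for the double-layer kernel, again controlled linearly by $\|\gamma-\tilde\gamma\|_{C^1}$. A kernel which is $C^1$ in both variables with this size bound defines an operator bounded from $L^2$ to $H^1$; a further derivative of kernel smoothness (available here) upgrades this to $H^{-1}\to H^1$ for the single-layer contribution.

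Near a vertex $V_j$ the key observation is that $\Omega$ and $\tilde\Omega$ share the angle $\alpha_j$ and the orientation of the two adjacent sides. One may therefore choose arc-length coordinates centered at $V_j$ so that $q$ and $\tilde q$ have identical one-sided tangent vectors at $s=0$, and write
\[
q(s)-q(s') = q_0(s)-q_0(s') + R_\Omega(s,s'),
\]
with $q_0$ the reference partially curvilinear parametrisation (straight near $V_j$) and $R_\Omega$ a remainder of order $\|\gamma\|_{C^0}|s|\,|s'|$ whose derivatives are controlled by $\|\gamma\|_{C^1}$. Following the Mellin-transform analysis of boundary integral operators on curvilinear polygons developed by Costabel, each of $K_\SL$ and $K_\DL$ near $V_j$ can then be decomposed as a principal Mellin symbol depending only on $\alpha_j$ plus a lower-order remainder depending smoothly on the local curvature. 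The angle-dependent principal symbols coincide between $\Omega$ and $\tilde\Omega$ and so cancel, while the remainders contribute an operator whose norm on the relevant Sobolev spaces is $O(\|\gamma-\tilde\gamma\|_{C^1})$.

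The main obstacle will be making the cancellation for the double-layer kernel near each vertex rigorous. Unlike the single-layer kernel, whose leading singularity is a bare logarithm independent of the domain, $K_\DL$ on a curvilinear polygon exhibits a jump-type behaviour at the vertex in which angle and curvature contributions are intertwined, and separating them requires a careful Mellin-calculus bookkeeping of the type carried out in \cite{cost}. Once this near-vertex estimate and the smooth-region estimate described above are both established, Lemma \ref{lem:differences} follows by summing the contributions via the partition of unity, with the constant $\delta$ determined by the radius at which the near-vertex Mellin analysis applies and by the domain of validity of the Taylor expansions used away from the vertices.
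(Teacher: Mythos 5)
Your high-level route differs from the paper's, which avoids Mellin analysis entirely. The paper decomposes $L\mathbb{S}^1\times L\mathbb{S}^1$ into the rectangles $I_j\times I_k$, writes out in closed form all six relevant kernels $K_{\SL-\tilde\SL}$, $\partial_sK_{\SL-\tilde\SL}$, $\partial_{s'}K_{\SL-\tilde\SL}$, $\partial_s\partial_{s'}K_{\SL-\tilde\SL}$, $K_{\DL-\tilde\DL}$, $\partial_sK_{\DL-\tilde\DL}$, and bounds each pointwise on each rectangle by $C\|\gamma-\tilde\gamma\|_{C^1}$ on-diagonal (or by $C\|\gamma-\tilde\gamma\|_{C^1}\,r^{-1}$ near an off-diagonal corner, $r$ being the distance to the corner) using only Taylor expansion along the boundary, Gr\"onwall's inequality, and $\ddot q=\gamma\,\mathbf n$. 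In other words, the cancellation of angle-dependent leading singularities that you want to read off from a Mellin principal symbol is instead obtained by a direct comparison of $q$ and $\tilde q$ through their shared one-sided tangent data at each vertex -- an elementary and fully quantitative substitute for the corner calculus.

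That said, there is a genuine gap in your proposal where you yourself flag it. You write that separating the angle and curvature contributions in $K_\DL$ near a vertex ``requires a careful Mellin-calculus bookkeeping,'' but you do not carry it out, and the lemma is exactly the assertion that this separation is \emph{quantitative}: the remainder must be Lipschitz in $\gamma$ in $C^1$ norm, uniformly as one approaches the corner. Costabel's results in \cite{cost} give the \emph{structure} of the kernel near a corner (a homogeneous Mellin symbol depending on the angle, plus lower-order terms), but they do not directly assert that the lower-order part depends in a locally Lipschitz way on the curvature and its first derivative, which is what you need to conclude $\|\DL-\tilde\DL\|_{L^2\to H^1}\le C\|\gamma-\tilde\gamma\|_{C^1}$. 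Establishing that dependence is the substance of the lemma. A second, smaller point: once you have per-patch $L^2\to H^1$ bounds you still need the output to be continuous across each vertex in order for the piecewise-$H^1$ contributions to assemble into $H^1(L\mathbb{S}^1)$. In the paper this is Proposition \ref{prop:kernelmatching}, which checks that $K_{\SL-\tilde\SL}$, $\partial_{s'}K_{\SL-\tilde\SL}$ and $K_{\DL-\tilde\DL}$ have no jump as $s$ crosses a vertex while $s'$ stays away, precisely because the problematic factor $\dot q(s)$ does not occur in those three kernels. Your partition of unity on its own does not deliver this matching; if you stay with the Mellin route you must instead verify that the Mellin estimate really yields boundedness into $H^1$ of the \emph{full} two-sided neighbourhood of the corner, not merely of each adjacent side separately.
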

We defer the proof of this Lemma to future subsections and now give the proof of Theorem \ref{thm:bigcurvthm}.

\begin{proof}[Proof of Theorem  \ref{thm:bigcurvthm}] The point is that the Calderon jump relations \eqref{eq:calderonjump} for $\Omega$ and $\tilde\Omega$ allow us to write $\mathcal D-\tilde{\mathcal D}$ in terms of $\SL-\tilde\SL$ and $\DL-\tilde\DL$. Specifically, since we assume $\SL$ is invertible, subtracting the jump relation for $\tilde\Omega$ from that for $\Omega$ and rearranging yields
 \begin{equation}\label{eq:differenceofdton}
\mathcal D-\tilde{\mathcal D}=\SL^{-1}\left(\frac 12(\DL-\tilde\DL)-(\SL-\tilde\SL)\tilde{\mathcal D}\right).
\end{equation}
We want the only tildes on the right to be in the differences of layer potential operators, and so a little more rearrangement yields the following formal expression:
\begin{equation}\label{eq:differenceofdton2}
\mathcal D-\tilde{\mathcal D}=\left(I-\SL^{-1}(\SL-\tilde\SL)\right)^{-1}\SL^{-1}\left(\frac 12(\DL-\tilde \DL)-(\SL-\tilde\SL)\mathcal D\right).
\end{equation}
This formal expression can now be justified. First, by Proposition \ref{prop:putinar} and the bounded inverse theorem, $\SL^{-1}$ is bounded from $H^1$ to $L^2$. By the same proposition, $\frac{1}{2}(\operatorname{Id}-\DL)$ is bounded from $H^1$ to itself, and thus by the Calderon jump relation, $\mathcal D$ is bounded from $H^1$ to $L^2$. By self-adjointness and duality, $\mathcal D$ is also bounded from $L^2$ to $H^{-1}$. By Lemma \ref{lem:differences}, the operator
\[\SL^{-1}((\DL-\tilde\DL)-(\SL-\tilde\SL)\mathcal D)\]
is bounded from $L^2$ to $L^2$. Further, its operator norm is bounded by $\|\gamma-\tilde\gamma\|_{\CS^1}$ times a constant depending only on the geometry of $\Omega$ (this absorbs the norms of $\mathcal D$ and $\SL^{-1}$, both of which depend only on $\Omega$).

Finally, as long as $\delta$ is chosen smaller than $\left(2C\|\SL^{-1}\|_{H^1\to L^2}\right)^{-1}$, which depends only on the geometry of $\Omega$, we have that 
\[
\left\|\SL^{-1}(\SL-\tilde\SL)\right\|_{L^2\to L^2}\leq \left\|\SL^{-1}(\SL-\tilde\SL)\right\|_{H^{-1}\to L^2}\leq \frac{1}{2}.
\]
Therefore $I-\SL^{-1}(\SL-\tilde\SL)$ is invertible on $L^2$, with inverse bounded by $2$. The required statement for $\mathcal D-\tilde{\mathcal D}$ now follows immediately from \eqref{eq:differenceofdton2}, and the proof of Theorem \ref{thm:bigcurvthm} is complete.
\end{proof}


\subsection{Proof of Lemma \ref{lem:differences}}

The proof proceeds via a careful analysis of the Schwartz kernels of the operators $\SL-\tilde{\SL}$ and $\DL-\tilde{\DL}$. These kernels are $K_{\SL-\tilde{\SL}}$ and $K_{\DL-\tilde{\DL}}$ respectively and of course the difference of kernels is the kernel of the difference. Each of these operators therefore has an extremely explicit Schwartz kernel which is conducive to direct analysis. 

Our Schwartz kernels live on $\mathbb{S}_L^1\times \mathbb{S}_L^1$,  and have input and output variables which we denote $s$ and $s'$ respectively. We decompose $\mathbb{S}_L^1\times \mathbb{S}_L^1$ as a union of rectangles of the form $I_j\times I_k$ and analyse each kernel on each rectangle separately. The critical results we need are as follows.

\begin{lemma}\label{lem:heresthecontent} There exist constants $C$ and $\delta$ depending only on the geometry of $\Omega$ so that if $||\gamma-\tilde\gamma||_{\CS^1}\leq\delta$, then the three operators with Schwartz kernels
\[
K_{\SL-\tilde\SL},\quad \partial_{s'}K_{\SL-\tilde\SL},\quad K_{\DL-\tilde\DL}
\]
have the following properties:
\begin{enumerate}
\item For each $j$ and $k$, each operator is bounded from $L^2(I_k)\to H^1(I_j)$, with the norm bounded by $C\|\gamma-\tilde\gamma\|_{\CS^1}$;
\item For each $j$ and $k$, each operator kernel is bounded on the rectangle $I_j\times I_k$.
\item For each $j$, each operator kernel is continuous on $I_j\times \mathbb{S}_L^1$, with the possible exception of the two points $V\times V$ where $V$ is an endpoint of $I_j$. In particular, as long as the first variable $s$ is in the interior of $I_j$, none of these kernels have a jump discontinuity as the second variable $s'$ crosses a corner.
\end{enumerate}
\end{lemma}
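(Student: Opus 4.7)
The plan is to analyse the three kernels on each rectangle $I_j \times I_k$ in three regimes: (a) the near-diagonal region with both $s, s'$ on the same side; (b) the region bounded away from the diagonal and from all vertices; and (c) the region near a common vertex. First I would align parametrisations by $q(0) = \tilde q(0)$, $q'(0) = \tilde q'(0)$, and use integration of the Frenet equations from the common starting vertex to obtain $\|q^{(r)} - \tilde q^{(r)}\|_{C^0} \leq C\|\gamma - \tilde\gamma\|_{C^{r-1}}$ for $r = 0, 1, 2, 3$. Pointwise bounds obtained from the three regimes will give properties (2) and (3) directly, and Schur's test applied to the kernel and its $s$-derivative will then yield the $L^2 \to H^1$ operator-norm bound in (1).

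The key principle uniform across the three regimes is that the leading singularities of the kernels in arc-length coordinates depend only on universal geometric data (the arc-length constraints $|q'| = 1$, $q' \cdot q'' = 0$, etc., and, at vertices, the common interior angle), so they cancel exactly in the differences. In regime (a), with $t = s - s'$, arc-length Taylor expansions give
\[
|q(s) - q(s')|^2 = t^2 - \tfrac{1}{12}\gamma(s')^2 t^4 + O(t^5), \quad (q(s) - q(s')) \cdot n(s') = \tfrac{1}{2}\gamma(s') t^2 + O(t^3),
\]
with the implicit coefficients polynomial in $\gamma$ and its first derivatives. Substituting into the kernel formulae extracts universal singularities ($\tfrac{1}{2\pi}\log|t|$ for $K_{\SL}$, $-\tfrac{1}{2\pi t}$ for $\partial_{s'}K_{\SL}$, and the bounded term $-\tfrac{\gamma(s')}{4\pi}$ for $K_{\DL}$) plus smooth remainders depending polynomially on $\gamma$ and its derivatives. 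Taking the difference with the $\tilde\Omega$ counterpart, the universal parts cancel, and what remains satisfies pointwise and $C^1$ bounds of size $C\|\gamma - \tilde\gamma\|_{C^1}$. Regime (b) is routine: the kernels are smooth functions of $(q(s), q(s'), n(s'))$, so their differences are controlled in $C^1$ by $\|q - \tilde q\|_{C^2}$, hence by $\|\gamma - \tilde\gamma\|_{C^0}$.

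Regime (c) is the main obstacle. Near a common vertex $V$, I would introduce polar coordinates $(r, \theta)$ at $V$ and represent each adjacent side as a small perturbation of the straight wedge with the common interior angle $\alpha$, the perturbation being controlled by the local curvatures and their derivatives. Following the Mellin-transform analysis of Costabel \cite{cost}, each kernel then admits an asymptotic expansion in homogeneous functions of $r$ whose leading piece depends only on $\alpha$ and therefore matches between $\Omega$ and $\tilde\Omega$, while the subleading corrections are linear in the curvatures of the two adjacent sides and their first derivatives. Subtracting, the leading pieces cancel and the remainder obeys the required $C\|\gamma - \tilde\gamma\|_{C^1}$ bound. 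The same analysis shows that the jumps of $K_{\DL}$ and $\partial_{s'} K_{\SL}$ as $s'$ crosses $V$ are governed by the universal leading piece and so disappear in the differences, yielding the continuity claim (3) away from the exceptional points $V \times V$ at which the leading piece itself is singular. The technically delicate part is keeping this cancellation uniform as $s$ or $s'$ is allowed to approach $V$ at an arbitrary rate; this can be done either by a full Mellin calculation following Costabel's approach or by a direct but careful polar-coordinate analysis, and constitutes the main non-routine work in the proof.
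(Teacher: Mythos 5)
Your three-regime decomposition, the alignment $q(0)=\tilde q(0)$, $\dot q(0)=\dot{\tilde q}(0)$ with Fr\'enet/Gr\"onwall control of $q^{(r)}-\tilde q^{(r)}$, and the principle that the universal leading singularities cancel in the differences are all exactly what the paper does. For regime (a) your arc-length Taylor expansion is the same calculation carried out in Propositions~\ref{prop:differenceofsinglelayers}--\ref{prop:differenceofdoublelayers}, and regime (b) is equally routine in both.

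The genuine difference, and the place where your plan stops short, is regime (c). You propose a Mellin/polar-coordinate analysis ``following Costabel'' and acknowledge this as ``the main non-routine work'' without carrying it out. The paper avoids Mellin analysis entirely here: after placing both curves through the vertex with the same tangent lines (possible because the angles match), it writes $q(s)=s(\cos\alpha,\sin\alpha)+\beta_+(s)$, $q(s')=s'(-1,0)+\beta_-(s')$ (and tilde analogues), obtains $|\beta_\pm-\tilde\beta_\pm|\le C\Gamma s^2$ and similar estimates for one and two derivatives, introduces the straight-wedge distance $r(s,s')=|s(\cos\alpha,\sin\alpha)-s'(-1,0)|$, and then bounds each of the six kernel differences pointwise by $C\Gamma_1\,r^{m}$ with $m\ge -1$ (Proposition~\ref{prop:differenceoffdiagonal}). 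The only place a Costabel-type homogeneity argument is invoked is to conclude $L^2\to L^2$ boundedness of the single auxiliary operator with kernel $r^{-1}$; the cancellation itself is elementary Euclidean calculus. This is substantially more hands-on than your proposed route and avoids the delicacy you flag of controlling the Mellin remainder uniformly as $(s,s')\to (V,V)$. Two further small points: your planned Schur test for the $L^2\to H^1$ bound needs a weight (a bare Schur test applied to an $r^{-1}$ kernel fails because $\int r^{-1}\,ds'$ diverges logarithmically as $s\to 0$), which is why the paper instead cites \cite[Theorem~4.2]{cost}; and your heuristic that ``the jumps of $K_{\DL}$ and $\partial_{s'}K_{\SL}$ as $s'$ crosses $V$ are governed by the universal leading piece and so disappear in the differences'' is not how the paper establishes continuity --- the cross-corner matching it actually proves and uses (Proposition~\ref{prop:kernelmatching}) is in the $s$ variable, where the kernels in question involve only $q(s),q(s'),\dot q(s'),\mathbf{n}(s')$ and so are individually continuous, with no cancellation needed.
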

Now we complete the proof of Lemma \ref{lem:differences}, given Lemma \ref{lem:heresthecontent}.

\begin{proof}
First, we claim that each of the three operators in Lemma \ref{lem:heresthecontent} in fact defines a bounded operator from $L^2(I_k)\to H^1(\mathbb{S}_L^1)$ for each $k$, with norm bounded by $C\|\gamma-\tilde\gamma\|_{\CS^1}$. Each of these operators is the direct sum of the corresponding
operators from $L^2(I_k)$ to $H^1(I_j)$. 
The output of such an operator may not be in $H^1(\mathbb{S}_L^1)$ a priori, as it might not be continuous at the vertices. However, we claim that for any input in $L^2(I_k)$, the output is continuous at the vertices. Assuming this continuity, the $H^1(\mathbb{S}_L^1)$ norm of the output is the sum of the $H^1(I_j)$ norms of each piece, which is enough.

Indeed, the continuity follows from Lemma \ref{lem:heresthecontent}. Let $g(s)$ be any function in $L^2(I_k)$. The output is $\int_{I_k}K(s,s')g(s')\, \dr s'$, and its continuity is a real analysis exercise. Specifically, break this integral into a small ball around each endpoint and a large middle section. The middle integral is continuous since, by part 3 of Lemma \ref{lem:heresthecontent}, $K(s,s')$ is continuous for all values of $s'$ not at the endpoints. And since $K(s,s')$ is bounded by part 2 of the Lemma, the endpoint integrals are small, so a standard $\varepsilon/3$ argument  completes the proof of continuity.

It is now immediate that each of the three operators in fact is bounded from $L^2(\mathbb{S}_L^1)\to H^1(\mathbb{S}_L^1)$ with the same norm bounds (up to a factor of $n$), as their output is simply the sum of the outputs of the operators from $L^2(I_k)\to H^1(\mathbb{S}_L^1)$. This proves the second statement in Lemma \ref{lem:differences}.

For the first statement in Lemma \ref{lem:differences}, part 1 of Lemma \ref{lem:heresthecontent} shows that it is sufficient to prove that
\begin{equation}\label{eq:hminus1tol2}
\|\SL-\tilde\SL\|_{H^{-1}\to H^1}\leq C(\|K_{\SL-\tilde\SL}\|_{L^2\to H^1}+\|\partial_{s'}K_{\SL-\tilde\SL}\|_{L^2\to H^1}),
\end{equation}
with $C$ a constant depending only on the geometry of $\Omega$, and where the norms on the right-hand side are, in an abuse of notation, the operator norms of the operators with those kernels (so that, in particular, the operator denoted by $K_{\SL-\tilde\SL}$ in the right-hand side is the same as $\SL-\tilde\SL$ in the left-hand side).

To this end, let $\mathcal M$ be a Fourier multiplier operator on $\mathbb{S}_L^1$, multiplying each basis element $\mathrm{e}^{2\pi \mathrm{i}m s/L}$ by $1+|m|$. Then, up to a factor of $L$ which we  ignore as it can be absorbed into $C$, $\mathcal M$ is an isometric isomorphism from $L^2$ to $H^{-1}$. Thus
\[
\|\SL-\tilde\SL\|_{H^{-1}\to H^1}=\|(\SL-\tilde\SL)\mathcal M\|_{L^2\to H^1}=\sup_{f\in L^2, \|f\|=1}\left\|\int_{\mathbb{S}_L^1}K_{\SL-\tilde\SL}(s,s')(\mathcal Mf)(s')\, \dr s'\right\|_{H^1}. 
\]
Write out the Fourier expansions of $K_{\SL-\tilde\SL}(s,s')$ and of $f(s')$, ignoring all normalisation constants (which can be absorbed):
\[
K_{\SL-\tilde\SL}(s,s')=\sum_{m\in\mathbb Z}c_m(s)\mathrm{e}^{\mathrm{i}m s'},\quad f(s')=\sum_{m\in\mathbb Z} d_m\mathrm{e}^{\mathrm{i}ms'}.
\]
Then by a direct calculation,
\begin{equation}\label{eq:brokennorms}
\begin{split}
\left\|\SL-\tilde\SL\right\|_{H^{-1}\to H^1}&=\sup_{d_m:\sum d_m^2=1}\left\|\sum_{m\in\mathbb Z}(1+|m|)d_{-m}c_m(s)\right\|_{H^1}\\
&\leq \sup_{d_m:\sum d_m^2=1}\left\|\sum_{m\in\mathbb Z}d_{-m}c_m(s)\right\|_{H^1}+\sup_{d_m:\sum d_m^2=1}\left\|\sum_{m\in\mathbb Z}|m|d_{-m}c_m(s)\right\|_{H^1}.
\end{split}
\end{equation}
But also by direct calculations,
\begin{align*}
\left\|K_{\SL-\tilde\SL}\right\|_{L^2\to H^1}&=\sup_{d_m:\ \sum d_m^2=1}\left\|\sum_{m\in\mathbb Z}d_{-m}c_m(s)\right\|_{H^1},\\
\left\|\partial_{s'}K_{\SL-\tilde\SL}\right\|_{L^2\to H^1}&=\sup_{d_m:\ \sum d_m^2=1}\left\|\sum_{m\in\mathbb Z}(-m)d_{-m}c_m(s)\right\|_{H^1}.
\end{align*}
Although it does not initially look identical, this second norm is the same as the second term of \eqref{eq:brokennorms}, as the signs of the coefficients $d_m$ may be multiplied by $-1$ times sgn $m$. The equation \eqref{eq:hminus1tol2} follows immediately, and with it Lemma \ref{lem:differences}.
\end{proof}

\subsection{Proof of Lemma \ref{lem:heresthecontent}: kernel expressions}

The starting point for our proof of Lemma \ref{lem:heresthecontent} is work of Costabel \cite{cost}. Costabel analyses kernels of operators of the form $\SL-\SL_0$ and $\DL-\DL_0$, comparing a curvilinear polygon to a polygon with straight edges near the corners. From \cite{cost}, one extracts that each of the three operator kernels in Lemma \ref{lem:heresthecontent} is continuous on $I_j\times I_k$, except when $k=j\pm 1$, in which case there is a singularity at $V\times V$. Costabel carefully analyses the asymptotics of the kernels at each singular point and in fact, from his work one can deduce boundedness of each kernel in this singular case as well (we will also see it directly). By writing $\SL-\tilde\SL=(\SL-\SL_0)-(\tilde\SL-\SL_0)$, this proves part 2 of Lemma \ref{lem:heresthecontent}. As for part 3, this almost follows from Costabel, except that we need to prove the following:
\begin{proposition}\label{prop:kernelmatching} If $s$ is a vertex and $s'$ is not, then the kernels $K_{\SL-\tilde\SL}$, $\partial_{s'}K_{\SL-\tilde\SL}$, and $K_{\DL-\tilde\DL}$ are continuous at $(s,s')$.
\end{proposition}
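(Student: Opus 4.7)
First, I would reduce the proposition to the statement that each of the individual kernels $K_{\SL}$, $\partial_{s'}K_{\SL}$, and $K_{\DL}$ on $\Omega$, as well as their $\tilde\Omega$ counterparts, is continuous at the point $(V, s_0')$, where $V$ is a vertex of the polygon and $s_0'$ is not. Continuity of the differences then follows from the trivial observation that subtraction preserves continuity, so the entire problem reduces to verifying the claim for a single curvilinear polygon.

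The key observation is that although the unit tangent $q'(s)$ and outward normal $n(s)$ are undefined at vertices, the arc length parametrisation $q:L\mathbb{S}^1\to\partial\Omega$ is itself a continuous (in fact Lipschitz) bijection of $L\mathbb{S}^1$ onto $\partial\Omega$, even across the vertices. Since $s_0'$ lies in the interior of some side, on a small neighbourhood $W$ of $s_0'$ disjoint from all vertices the vector-valued functions $q'(s')$ and $n(s')$ are continuous in $s'$, because the side containing $s_0'$ is a smooth arc. Choosing a neighbourhood $U$ of $V$ in $L\mathbb{S}^1$, the fact that $V\neq s_0'$ and that $q$ is an injective continuous map from the compact circle ensures that $|q(s)-q(s')|$ is bounded below by a positive constant on the product neighbourhood $U\times W$. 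The same reasoning applies verbatim to $\tilde\Omega$.

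Given these ingredients, I would conclude by inspecting the explicit formulae
\[
K_{\SL}(s,s')=\tfrac{1}{2\pi}\log|q(s)-q(s')|,\quad \partial_{s'}K_{\SL}(s,s')=\tfrac{1}{2\pi}\frac{(q(s')-q(s))\cdot q'(s')}{|q(s)-q(s')|^2},\quad K_{\DL}(s,s')=\tfrac{1}{2\pi}\frac{(q(s')-q(s))\cdot n(s')}{|q(s)-q(s')|^2},
\]
together with their $\tilde\Omega$ analogues. On the neighbourhood $U\times W$ each of these kernels is a composition of continuous functions (the positive lower bound on the denominator ensuring that the logarithm and divisions are well-behaved). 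Taking differences and restricting to $(V, s_0')$ yields the proposition. There is no genuine obstacle: the assumption $s_0'\neq V$ keeps us off the singular diagonal, and crucially the only appearances of $q'$ and $n$ in the kernel formulae are at the \emph{second} argument $s'$, so the jump discontinuities of these vectors at the vertex $V$ occurring in the \emph{first} slot are irrelevant to the pointwise continuity being claimed.
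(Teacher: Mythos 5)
Your proof is correct and coincides with the paper's own argument: both reduce to the separate continuity of $K_{\SL}$, $\partial_{s'}K_{\SL}$, $K_{\DL}$ and their tilde analogues, observe that $q(s)$ stays continuous across the vertex while $\dot q(s')$ and $\mathbf n(s')$ are continuous because $s'$ is interior to a side, note $|q(s)-q(s')|$ is bounded below, and crucially exploit that these three kernels involve $\dot q$ and $\mathbf n$ only in the $s'$ slot so the jump of $\dot q$ at the vertex is irrelevant.
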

This will follow from our explicit expressions for the kernels and therefore we postpone the proof for the moment, but once it is done, part 3 of Lemma \ref{lem:heresthecontent} follows immediately.

To prove part 1, and also complete the proof of Proposition \ref{prop:kernelmatching}, we must analyse the kernels directly. In order to show part 1 we take advantage of the definition of the $H^1$ norm, and observe that it suffices to show the following \emph{six} kernels induce bounded operators from $L^2(I_k)\to L^2(I_j)$ for each $j$ and $k$, with norms bounded by $C\|\gamma-\tilde\gamma\|_{\CS^1}$:
\[
K_{\SL-\tilde\SL},\quad \partial_{s'}K_{\SL-\tilde\SL},\quad \partial_sK_{\SL-\tilde\SL},\quad \partial_{s}\partial_{s'}K_{\SL-\tilde\SL},\quad K_{\DL-\tilde\DL},\quad\partial_s K_{\DL-\tilde\DL}.
\]
We will do this by showing explicitly that the absolute value of each of these kernels is bounded by $\|\gamma-\tilde\gamma\|_{\CS^1}$ times a simple function which induces a bounded operator from $L^2(I_k)\to L^2(I_j)$. In the case when $k\neq j\pm 1$ this function may be chosen to be a constant, but if $k=j\pm 1$ we must choose this function to be (mildly) singular at $V\times V$.

In order to do this, we write out the six kernels in question. Up to normalizing constants which we ignore as they are not relevant to the argument,
\begin{align}
\label{eq:noderivssingle} 
K_{\SL}(s,s')&=\log |q(s)-q(s')|;
\\
\label{eq:onederivfirstsingle}
\partial_s K_{\SL}(s,s')&=\frac{(q(s)-q(s'))\cdot\dot q(s)}{|q(s)-q(s')|^2};
\\
\label{eq:onederivsecondsingle}
\partial_{s'} K_{\SL}(s,s')&=-\frac{(q(s)-q(s'))\cdot\dot q(s')}{|q(s)-q(s')|^2};
\\
\label{eq:twoderivssingle}
\partial_s\partial_{s'} K_{\SL}(s,s')&=-\frac{\dot q(s)\cdot\dot q(s')}{|q(s)-q(s')|^2}+\frac{2((q(s)-q(s'))\cdot\dot q(s))((q(s)-q(s'))\cdot\dot q(s'))}{|q(s)-q(s')|^4};
\\
\label{eq:noderivsdouble}
K_{\DL}(s,s')&=-\frac{(q(s)-q(s'))\cdot \mathbf{n}(s')}{|q(s)-q(s')|^2};
\\
\label{eq:onederivdouble}
\partial_s K_{\DL}(s,s')&=-\frac{\dot q(s)\cdot\mathbf{n}(s')}{|q(s)-q(s')|^2}+\frac{2((q(s)-q(s'))\cdot\dot q(s))((q(s)-q(s'))\cdot\mathbf{n}(s'))}{|q(s)-q(s')|^4}.
\end{align}
The six kernels we need are these expressions minus the corresponding expressions with tildes. All dots denote derivatives. Note that here $\mathbf{n}(s')$ is a 90-degree rotation of $\dot q(s')$ (the sign is usually irrelevant) and that $\dot q(s)$ and $\dot q(s')$ are unit vectors since we have an arc length parametrisation.

We begin by proving Proposition \ref{prop:kernelmatching}. In fact this is easy because the kernels themselves are separately continuous, so their differences are continuous as well. At a point $(s,s')$ where $s$ is a vertex and $s'$ is not, the functions $q(s)$, $q(s')$ and $\dot q(s')$ are all continuous, though $\dot q(s)$ is not (it has a jump discontinuity at the vertex). By rotation, $\mathbf{n}(s')$ is also continuous. Moreover $|q(s)-q(s')|$ is continuous and nonzero. The same is true for all expressions with tildes. From this it is easy to see that the three expressions \eqref{eq:noderivssingle}, \eqref{eq:onederivsecondsingle}, and \eqref{eq:noderivsdouble} are all continuous, as are their analogues with tildes. This completes the proof of Proposition \ref{prop:kernelmatching}, leaving only the need to prove part 1 of Lemma \ref{lem:heresthecontent}, which will be done with a direct but lengthy calculation.

\subsection{Geometric preliminaries}

As a reminder, we have a fixed domain $\Omega$ and a domain $\tilde\Omega$ which is among the class of domains for which $\|\gamma-\tilde\gamma\|_{\CS^1}$ is bounded by some yet to be chosen small geometric constant $\delta$, depending only on the geometry of $\Omega$. We will, throughout, write
\[
\Gamma:=\|\gamma-\tilde\gamma\|_{\CS^0};\quad \Gamma_1:=\|\gamma-\tilde\gamma\|_{\CS^1},
\]
and our restriction on $\tilde\Omega$ is precisely that $\Gamma\leq\Gamma_1\leq\delta$ for some $\delta$ to be chosen later. Throughout we use $C$ to denote a constant \emph{depending only on the geometry of} $\Omega$ (and in particular not on the geometry of $\tilde\Omega$).

Note that all six kernels \eqref{eq:noderivssingle}--\eqref{eq:onederivdouble} are independent of rotation and translation of the domain $\tilde\Omega$ in $\mathbb R^2$. This allows us a degree of freedom of choice, and we will usually take advantage of this to ensure that $\dot{\tilde q}(s_0)=\dot q(s_0)$ for some specific value $s_0$, chosen conveniently. Of course we cannot ensure this for more than one point at a time, but that will not be necessary.

Our bounds on the kernels \eqref{eq:noderivssingle}--\eqref{eq:onederivdouble} are all based on the basic fact that the curvature $\gamma(s)$ is related to $\ddot q(s)$ by 
\begin{equation}\label{eq:curvdoubledot}\ddot q(s)=\gamma(s)\cdot\Rot(\dot q(s))=\gamma(s)\mathbf{n}(s),\end{equation}
where $\Rot$ is counterclockwise rotation by $\pi/2$. In particular we have $\ddot q(s)\perp\dot q(s)$. The equation \eqref{eq:curvdoubledot} may be used to estimate expressions involving $q$ and $\dot q$ because by the fundamental theorem of calculus,
\begin{equation}\label{eq:calc1}
q(s)=q(s')+(s-s')\dot q(s')+\int_{s'}^s\int_{s'}^t\ddot q(u)\, \dr u\, \dr t,
\end{equation}
\begin{equation}\label{eq:calc2}
\dot q(s)=\dot q(s')+\int_{s'}^s\ddot q(u)\, \dr u,
\end{equation}
with identical expressions for $\tilde q(s)$ and its derivative. Note that \eqref{eq:calc1} and \eqref{eq:calc2} hold only when $s$ and $s'$ are on the same side; otherwise there is a jump discontinuity in $\dot q$, which modifies the expressions in the way one would expect.

The following proposition reflects the fact that if the curvatures of $\partial\tilde{\Omega}$ and $\partial\Omega$ are close to one another, then their boundaries change direction in similar ways.

\begin{proposition}\label{prop:geomprelim} Suppose that $\tilde\Omega$ additionally satisfies the condition that there exists $s_0$ for which $\dot{\tilde q}(s_0)=\dot q(s_0)$. Then the following hold:
\begin{equation}\label{eq:prop95}
|(\dot{\tilde q}(s)-\dot q(s))-(\dot{\tilde q}(s')-\dot q(s'))|\leq C\Gamma|s-s'|;
\end{equation}
\begin{equation}\label{eq:cor96}
|\ddot{\tilde q}(s)-\ddot q(s)|\leq C\Gamma;
\end{equation}
\begin{equation}\label{eq:prop98}
|\dddot{\tilde q}(s)-\dddot q(s)|\leq C\Gamma_1.
\end{equation}
\end{proposition}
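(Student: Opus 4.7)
The plan is to derive (1) and (2) simultaneously from a Gronwall argument applied to $e(s) := \dot{\tilde q}(s) - \dot q(s)$, and then obtain (3) by differentiating the Frenet relation once more. Throughout, $C$ denotes a constant depending only on the geometry of $\Omega$; since $\delta$ is itself chosen in terms of $\Omega$, we may absorb $\|\tilde\gamma\|_\infty \le \|\gamma\|_\infty + \delta$ into such a constant.

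The starting point is the Frenet identity $\ddot q(s) = \gamma(s)\,\mathbf{n}(s)$ together with its analogue for $\tilde q$. Since $\mathbf{n}(s)$ and $\tilde{\mathbf n}(s)$ are $\pm\pi/2$ rotations of $\dot q(s)$ and $\dot{\tilde q}(s)$ respectively, a direct rearrangement yields
\begin{equation*}
\ddot{\tilde q}(s) - \ddot q(s) = \tilde\gamma(s)\bigl(\tilde{\mathbf n}(s) - \mathbf{n}(s)\bigr) + \bigl(\tilde\gamma(s) - \gamma(s)\bigr)\mathbf{n}(s),
\end{equation*}
with $|\tilde{\mathbf n}(s) - \mathbf n(s)| = |e(s)|$; hence pointwise away from vertices $|\dot e(s)| \le C|e(s)| + \Gamma$. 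The crucial observation allowing one to integrate this inequality globally, rather than only on a single side, is that $e$ is continuous across each vertex: at $V_j$ both $\dot q$ and $\dot{\tilde q}$ jump by the same rotation through $-(\pi - \alpha_j)$, because $\Omega$ and $\tilde\Omega$ have identical angles at corresponding vertices and are parameterised by arc length with the same side lengths in the same cyclic order. Therefore $e$ is absolutely continuous on the entire loop $L\mathbb{S}^1$, and since $e(s_0)=0$, Gronwall's inequality yields $|e(s)| \le C\Gamma|s - s_0| \le C\Gamma$. Substituting this bound back into the displayed formula proves (2); statement (1) then follows from $e(s) - e(s') = \int_{s'}^{s}\dot e(u)\,\dr u$ combined with the uniform bound $|\dot e| \le C\Gamma$ obtained from (2).

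For (3), differentiating the Frenet relation once more and using $\dot{\mathbf n} = -\gamma\,\dot q$ gives $\dddot q = \dot\gamma\,\mathbf{n} - \gamma^2\,\dot q$, and similarly for $\tilde q$. Subtracting and regrouping,
\begin{equation*}
\dddot{\tilde q} - \dddot q = \bigl(\dot{\tilde\gamma} - \dot\gamma\bigr)\tilde{\mathbf n} + \dot\gamma\,\bigl(\tilde{\mathbf n} - \mathbf{n}\bigr) - \bigl(\tilde\gamma^2 - \gamma^2\bigr)\dot{\tilde q} - \gamma^2\,e,
\end{equation*}
and each term is separately controlled: the first by $\Gamma_1$, the second by $C|e| \le C\Gamma$, the third by $C\Gamma$, and the fourth by $C\Gamma$. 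Since $\Gamma \le \Gamma_1$, (3) follows.

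The main obstacle is ensuring that the Gronwall argument closes on the full loop rather than only between consecutive vertices. Without the matching-angle hypothesis one would pick up uncontrolled jumps of $e$ at each corner, and the estimate would hold only on each side separately, with constants that accumulate across vertices. The crucial, if elementary, point is that the common angle and length data of $\Omega$ and $\tilde\Omega$ force the jumps of $\dot q$ and $\dot{\tilde q}$ to cancel exactly, so that $e$ itself is globally continuous and the integrated form of the differential inequality remains valid across every vertex.
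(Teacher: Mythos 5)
Your proof is correct, and it is genuinely leaner than the paper's. Both arguments rest on the same two ingredients -- the Frenet identity \eqref{eq:curvdoubledot} and Gr\"onwall's inequality -- and both exploit the observation that the jumps of $\dot q$ and $\dot{\tilde q}$ at corresponding vertices cancel (the paper records this as the remark after \eqref{eq:calc2}). The difference is the logical order. The paper proves \eqref{eq:prop95} \emph{directly}: it sets up a differential inequality for $G_{s'}(s)=|F(s)-F(s')|$ with an arbitrary base point $s'$, runs Gr\"onwall to obtain \eqref{eq:gronwallcons}, substitutes $s'=s_0$ to extract the crude bound $|F(s)|\le C\Gamma$, and then bootstraps this back into \eqref{eq:gronwallcons} for general $s'$ to get the linear factor $|s-s'|$; only after this two-step argument does it deduce \eqref{eq:cor96} from \eqref{eq:randomultline}. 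You instead run a single Gr\"onwall from the fixed base point $s_0$ to get $|e(s)|\le C\Gamma$, feed this into the Frenet decomposition to obtain \eqref{eq:cor96}, and then read off \eqref{eq:prop95} as the trivial consequence $|e(s)-e(s')|=\bigl|\int_{s'}^s\dot e\bigr|\le C\Gamma|s-s'|$, since $\dot e=\ddot{\tilde q}-\ddot q$. This avoids the two-parameter Gr\"onwall and the bootstrap entirely. Your proof of \eqref{eq:prop98} is also a touch more explicit than the paper's, which states only two of the four terms arising from the add-subtract decomposition; you write out all four in the identity
\[
\dddot{\tilde q}-\dddot q=(\dot{\tilde\gamma}-\dot\gamma)\tilde{\mathbf n}+\dot\gamma(\tilde{\mathbf n}-\mathbf n)-(\tilde\gamma^2-\gamma^2)\dot{\tilde q}-\gamma^2 e
\]
and bound each, which is the complete version of what the paper intends. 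One small point worth stating explicitly, which you gesture at but could make sharper: the differential inequality $|\dot e|\le C|e|+\Gamma$ holds only a.e.\ (off the vertex set), and $e$ is absolutely continuous globally precisely because its one-sided limits agree across each vertex; this is what licenses the integral Gr\"onwall and the integral identity $e(s)-e(s')=\int_{s'}^s\dot e$ across corners.
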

\begin{remark} In fact \eqref{eq:prop95} holds without that condition on $\tilde\Omega$, as the left-hand side of \eqref{eq:prop95} is invariant under rotation of $\tilde\Omega$. The others do not. \end{remark}

\begin{proof} Take \eqref{eq:calc2} for $\dot q$ and subtract it from the same for $\dot{\tilde q}$, then plug in \eqref{eq:curvdoubledot} to obtain
\[
(\dot{\tilde q}(s)-\dot q(s))-(\dot{\tilde q}(s')-\dot q(s'))=\int_{s'}^s\int_{s'}^s(\tilde\gamma(u)\Rot(\dot{\tilde q}(u))-\gamma(u)\Rot(\dot q(u)))\, \dr u.
\]
This holds for all $s$ and $s'$, not just those in the same boundary component; as the angles are the same, the jump discontinuities that are added to \eqref{eq:calc2} are the same for both $\Omega$ and $\tilde\Omega$ and therefore cancel. We estimate the integral using the usual analysis trick of adding and subtracting $\gamma(u)\Rot(\dot{\tilde q}(u))$ inside the integral. Observe also that rotation does not change the norm of a vector. Write $F(s)=\dot{\tilde q}(s)-\dot q(s)$ and $G_{s'}(s)=|F(s)-F(s')|$, so that the left-hand side of \eqref{eq:prop95} is $G_{s'}{s}$, and we obtain
\[
G_{s'}(s)\leq\int_{s'}^s(|\tilde\gamma(u)-\gamma(u)|\cdot|\dot{\tilde q}(u)|+|\gamma(u)|\cdot|F(u)|)\, \dr u.
\]
However $\dot{\tilde q}$ is a unit vector, so the first term is bounded by $\Gamma|s-s'|$. And $|\gamma(u)|$ is the curvature of $\Omega$ and thus bounded by $C$, so adding and subtracting $F(s)$ inside the second term gives
\[
G_{s'}(s)\leq \Gamma|s-s'|+C\int_{s'}^{s}|F(u)-F(s')+F(s')|\, \dr u\leq(\Gamma+C|F(s')|)|s-s'|+\int_{s'}^{s}CG_{s'}(u)\, \dr u.
\]
Since $F(s)$ is continuous so is $G_{s'}(s)$. We may therefore use the integral form of Gr\"onwall's inequality to obtain a bound for $G_{s'}(s)$:
\[
G_{s'}(s)\leq (\Gamma+C|F(s')|)|s-s'|+\int_{s'}^s(\Gamma+C|F(s')|)|u-s'|\cdot C\cdot\exp\left(\int_u^s C\, \dr u'\right)\, \dr u.
\]
A straightforward estimate gives
\begin{equation}\label{eq:gronwallcons}
G_{s'}(s)\leq (\Gamma+C|F(s')|)(|s-s'|+C|s-s'|^2\exp[C|s-s'|]).
\end{equation}
Now, in \eqref{eq:gronwallcons}, substitute $s_0$ for $s'$ and notice that by our assumption $F(s_0)=0$. The observation that $|s-s'|$ is universally bounded by a constant $C$ (namely $C=L$) yields a very crude bound for $|F(s)|$:
\[
|F(s)|=G_{s_0}(s)\leq\Gamma|s-s_0|+C\Gamma|s-s_0|^2\er^{C|s-s_0|}\leq C\Gamma.
\]
However, plugging this crude bound back into \eqref{eq:gronwallcons} and again using $|s-s'|\leq L$ on some (but not all) of the $|s-s'|$ terms gives a bound of $C\Gamma|s-s'|$ for $G_{s'}(s)$. This is \eqref{eq:prop95}.

To get \eqref{eq:cor96}, use \eqref{eq:curvdoubledot} and estimate the resulting difference as with the interior of the integral in the previous paragraph, by adding and subtracting $\gamma(s)\cdot\Rot(\dot{\tilde q}(s))$:
\begin{equation}\label{eq:randomultline}
\begin{split}
|\ddot{\tilde q}(s)-\ddot q(s)|&\leq |\tilde{\gamma}(s)\cdot\Rot(\dot{\tilde q}(s))-\gamma(s)\cdot\Rot(\dot{\tilde q}(s))|+|{\gamma}(s)\cdot\Rot(\dot{\tilde q}(s))-\gamma(s)\cdot\Rot(\dot q(s))|\\ 
&\leq\Gamma + |\gamma(s)|\cdot |\Rot(\dot{\tilde q}(s))-\Rot(\dot q(s))|\leq \Gamma+C|\dot{\tilde q}(s)-\dot q(s)|.
\end{split}
\end{equation}
By our assumption, $\dot{\tilde q}(s)-\dot q(s)$ equals zero for $s=s_0$. So \eqref{eq:prop95} demonstrates that
\[
|\dot{\tilde q}(s)-\dot q(s)|\leq C\Gamma|s-s_0|,
\]
which is crudely bounded by $C\Gamma$. This gives \eqref{eq:cor96}.

Finally, \eqref{eq:prop98} is obtained by differentiating both sides of \eqref{eq:curvdoubledot} and subtracting the non-tilde version from the tilde version, which yields
\[
|\dddot{\tilde q(s)}-\dddot q(s)|\leq 
|\dot{\tilde{\gamma}}(s)-\dot\gamma(s)|\cdot |\Rot(\dot q(s))|+|\gamma(s)|\cdot|\Rot(\ddot{\tilde q}(s))-\Rot(\ddot{q}(s))|.
\]
The first term is bounded by $\Gamma_1$ as the second factor is a unit vector. For the second term, note that $|\gamma(s)|\leq C$ and that the rotation $\ddot{\tilde q}(s)-\ddot q(s)$, by \eqref{eq:cor96}, is bounded in absolute value by $C\Gamma$. So the second term is bounded by $C\Gamma$ overall. Since of course $\Gamma\leq\Gamma_1$, \eqref{eq:prop98} follows. \end{proof}

\subsection{On-diagonal rectangles}
We now consider the on-diagonal rectangles, that is, each rectangle of the form $I_j\times I_j$ for fixed $j$. It is a well-known fact (see e.g. \cite{cost}) that although $K_{\SL}$ and $K_{\tilde\SL}$ both have logarithmic singularities at the diagonal $\{s=s'\}$, the difference does not, and is actually smooth if the boundary curve $I_j$ is smooth. 
This also will follow from our analysis. 
The kernel $K_{\DL}$ is actually smooth as well if the boundary is smooth.

We begin by analysing the differences of single layer kernels and their derivatives.
\begin{proposition}\label{prop:differenceofsinglelayers} There exists $\delta>0$ depending only on $\Omega$ such that if $\Gamma_1\leq\delta$, then for all $s,s'\in I_j$, we have
\begin{equation}\label{eq:diffofnoderiv}
\left|K_{\SL-\tilde\SL}(s,s')\right|\leq C\Gamma_1|s-s'|^2;
\end{equation}
\begin{equation}\label{eq:diffofonederivsprime}
\left|\partial_{s'} K_{\SL-\tilde\SL}\right|\leq C\Gamma_1|s-s'|;
\end{equation}
\begin{equation}\label{eq:diffofonederivs}
\left|\partial_{s} K_{\SL-\tilde\SL}\right|\leq C\Gamma_1|s-s'|;
\end{equation}
\begin{equation}\label{eq:diffoftwoderivs}
\left|\partial_s\partial_{s'} K_{\SL-\tilde\SL}\right|\leq C\Gamma_1.
\end{equation}
\end{proposition}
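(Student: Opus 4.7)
The plan is to exploit the fact that the arc-length parametrisation forces $|\dot q|=|\dot{\tilde q}|=1$, so the singular diagonal behaviour of each kernel in \eqref{eq:noderivssingle}--\eqref{eq:twoderivssingle} is an intrinsic function of $s-s'$ that is identical for $\Omega$ and $\tilde\Omega$. The differences between corresponding kernels therefore arise only at subleading orders of the Taylor expansion, which by \eqref{eq:curvdoubledot} are governed pointwise by $\gamma$, $\tilde\gamma$, and their derivatives; Proposition~\ref{prop:geomprelim} then converts these into the required $\Gamma_1$-bound.

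First I would rigidly move $\tilde\Omega$ (which affects none of the kernels under consideration) so that the hypothesis of Proposition~\ref{prop:geomprelim} holds with some $s_0\in I_j$, and fix $\delta$ small enough that $\tilde\gamma$ stays in a bounded class so that higher-order Taylor remainders are controlled by a geometric constant. Then, using \eqref{eq:calc1}--\eqref{eq:calc2} and \eqref{eq:curvdoubledot} together with the identities $\dot q\cdot\ddot q=0$ and $\dot q\cdot\dddot q=-\gamma^2$ that follow from $|\dot q|=1$, I would expand the scalar building blocks
\[
|q(s)-q(s')|^2,\quad (q(s)-q(s'))\cdot\dot q(s),\quad (q(s)-q(s'))\cdot\dot q(s'),\quad \dot q(s)\cdot\dot q(s')
\]
and their tilded analogues to sufficiently high order in $s-s'$. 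In each case the leading and second-leading terms depend only on $|\dot q|=1$ and not on the curve, so after factoring out the appropriate power of $(s-s')$ the quantity has the form $1+(s-s')^2\psi+O((s-s')^3)$ with $\psi$ a smooth function of $\gamma$ and $\dot\gamma$. Subtracting the tilded version kills the curve-independent leading parts identically, and invoking \eqref{eq:cor96}--\eqref{eq:prop98} of Proposition~\ref{prop:geomprelim} controls each resulting difference by $C\Gamma_1$ times a positive power of $|s-s'|$. For the logarithmic kernel one writes $\log R^2-\log\tilde R^2=\log\bigl(1+(R^2-\tilde R^2)/\tilde R^2\bigr)$ and uses $\tilde R^2\asymp (s-s')^2$ (for $\delta$ small) to extract \eqref{eq:diffofnoderiv}; the same quotient expansion, after cancelling the $(s-s')^{-1}$ singularities in the ratios \eqref{eq:onederivfirstsingle}--\eqref{eq:onederivsecondsingle}, yields \eqref{eq:diffofonederivsprime}--\eqref{eq:diffofonederivs}.

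The main obstacle is the mixed second-derivative kernel \eqref{eq:twoderivssingle}: each of its two summands individually diverges like $(s-s')^{-2}$, so the boundedness already for a single curve comes from a delicate cancellation between them, and any attempt to estimate the summands separately would lose the bound. I would therefore first combine the two terms over the common denominator $|q(s)-q(s')|^4$, expand the resulting numerator as a polynomial in $s-s'$ using the identities above, and verify that the two lowest powers vanish identically, leaving a smooth remainder of size $O(1)$ whose curve-dependent part is controlled by $\Gamma_1$ via Proposition~\ref{prop:geomprelim}. Dividing by $|q(s)-q(s')|^4\asymp (s-s')^4$ and subtracting the analogous expression with tildes then gives \eqref{eq:diffoftwoderivs}.
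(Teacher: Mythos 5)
Your plan for \eqref{eq:diffofnoderiv}--\eqref{eq:diffofonederivs} tracks the paper's own proof closely: fix $s'$, rotate $\tilde\Omega$ so $\dot{\tilde q}(s')=\dot q(s')$, Taylor expand via \eqref{eq:calc1}--\eqref{eq:calc2}, note the curve-independent leading orders cancel on subtraction, and feed the residuals through Proposition~\ref{prop:geomprelim}. (The paper packages one of the intermediate steps as an auxiliary estimate $|\ddot{\tilde q}(s)\cdot\dot{\tilde q}(s')-\ddot q(s)\cdot\dot q(s')|\le C\Gamma|s-s'|$, which you effectively re-derive in your expansions.)

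For \eqref{eq:diffoftwoderivs}, however, your stated rationale contains a genuine misconception, and the route it motivates is harder than necessary. You reason that since each of the two summands of \eqref{eq:twoderivssingle} diverges like $(s-s')^{-2}$, their cancellation must be exhibited before comparing curves, hence the common-denominator expansion. But the quantity to control is $\partial_s\partial_{s'}K_{\SL-\tilde\SL}$, i.e.\ a \emph{tilde-minus-nontilde} difference, and \emph{each summand difference separately} is already $O(\Gamma_1)$. The paper exploits exactly this: the difference of the first summands $-\dot q(s)\cdot\dot q(s')/|q(s)-q(s')|^2$ is handled just like the one-derivative case (the ``$1$'' in $\dot q(s)\cdot\dot q(s')=1+\int_{s'}^s\ddot q\cdot\dot q(s')$ cancels against the tilde version), while the second summand is recognised as $-2\,\partial_sK_{\SL}\,\partial_{s'}K_{\SL}$, so the product rule for differences together with the \emph{already proven} bounds \eqref{eq:diffofonederivsprime}--\eqref{eq:diffofonederivs} and the trivial bounds $|\partial_sK_{\SL}|,|\partial_{s'}K_{\tilde\SL}|\le|s-s'|^{-1}$ give $C\Gamma_1$ at once. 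Your combined-denominator calculation can surely be pushed through, but it requires verifying cancellation of several low-order coefficients in a degree-$4$ numerator for each curve separately and then comparing the remainders, none of which is needed once one notices the factorisation $-2\,\partial_sK_{\SL}\,\partial_{s'}K_{\SL}$ and applies the estimates you have already proved. I would replace the final paragraph with the term-by-term argument above.
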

Observe that since $|s-s'|$ is bounded this immediately implies that all such kernels are bounded by a constant times $\Gamma_1$, and thus induce operators from $L^2(I_j)$ to $L^2(I_j)$ with norms bounded by a constant times $\Gamma_1$, as desired.

\begin{proof} First we fix an $s'$. As discussed, the kernels are invariant under a Euclidean motion of $\tilde\Omega$, so we assume without loss of generality that $\dot{\tilde q}(s')=\dot q(s')$, which also allows us to use Proposition \ref{prop:geomprelim}. 

To begin, observe that by \eqref{eq:calc1}, which holds since $s,s'$ are on the same side:
\[
|q(s)-q(s')|^2=(s-s')^2+2(s-s')\int_{s'}^s\int_{s'}^t\dot q(s')\cdot \ddot q(u)\, \dr u\, \dr t+\left|\int_{s'}^s\int_{s'}^t\ddot q(u)\, \dr u\, \dr t\right|^2.
\]
Write $\ddot q(u)=\ddot q(s')+\int_{s'}^u\dddot q(v)\, \dr v$; then since $\ddot q(s')$ and $\dot q(s')$ are orthogonal we have
\[
|q(s)-q(s')|^2=(s-s')^2+2(s-s')\int_{s'}^s\int_{s'}^t\int_{s'}^u\dot q(s')\cdot \dddot q(v)\, \dr v\, \dr u\, \dr t+\left|\int_{s'}^s\int_{s'}^t\ddot q(u)\, \dr u\, \dr t\right|^2.
\]
The same expression holds with tildes, and we can subtract the two to estimate the difference of distances-squared. The first terms are the same for each and thus cancel. The second terms' difference, since $\dot q(s')=\dot{\tilde q}(s')$, is
\[2(s-s')\int_{s'}^s\int_{s'}^t\int_{s'}^u\dot q(s')\cdot (\dddot q(v)-\dddot{\tilde q}(v))\, \dr v\, \dr u\, \dr t.\]
By \eqref{eq:prop98}, and the fact that $|\dot q(s')|=1$, this expression is bounded by $C\Gamma_1|s-s'|^4$. Finally, we need to estimate the difference of the last terms with and without tildes. By the usual add/subtract trick this difference of last terms is
\[
\begin{split}
&\left(\int_{s'}^s\int_{s'}^t\ddot q(u)\, \dr u\, \dr t\right)\cdot \left(\int_{s'}^s\int_{s'}^t(\ddot q(u)-\ddot{\tilde q}(u))\, \dr u\, \dr t\right) \\+ &\left(\int_{s'}^s\int_{s'}^t(\ddot q(u)-\ddot{\tilde q}(u))\, \dr u\, \dr t\right)\cdot\left( \int_{s'}^s\int_{s'}^t\ddot{\tilde q}(u)\, \dr u\, \dr t\right).
\end{split}
\]
Each piece of this can be estimated. For the first factor, $|\ddot q(u)|=|\gamma(u)|$, which is bounded by $C$ and so the first term is bounded by $C|s-s'|^2$. The second factor is bounded using \eqref{eq:cor96} by $C\Gamma|s-s'|^2$. The third factor is the same. And the fourth factor has 
\[
|\ddot{\tilde q}(u)|\leq |\ddot q(u)|+|\ddot{\tilde q}(u)-\ddot q(u)|\leq C+\Gamma\leq C+1\leq C,
\]
as long as we choose $\delta\leq 1$. So overall the difference of last terms is bounded by $C\Gamma|s-s'|^4$. Thus, overall, we have
\begin{equation}\label{eq:distanceofsquares}
||q(s)-q(s')|^2-|\tilde q(s)-\tilde q(s')|^2|\leq C\Gamma_1|s-s'|^4.
\end{equation}

Now observe that for some $c>0$ depending only on the geometry of $\Omega$, $|q(s)-q(s')|\geq c|s-s'|$, because all interior angles are positive and the boundary does not intersect itself. So $|q(s)-q(s')|\leq |s-s'|\leq C|q(s)-q(s')|$. Therefore, manipulating \eqref{eq:distanceofsquares} leads to
\begin{equation}\label{eq:distanceofsquareswith1}
\left|1-\frac{|\tilde q(s)-\tilde q(s')|^2}{|q(s)-q(s')|^2}\right|\leq C\Gamma_1|s-s'|^2.
\end{equation}
Taking logarithms, as long as $\Gamma_1$ is sufficiently small, 
yields \eqref{eq:diffofnoderiv}.

Before we analyse the difference of \emph{derivatives} of single layer kernels, a brief proposition.

\begin{proposition}\label{prop:auxdiffsingle} For all $s,s'\in I_j$,
\begin{equation}\label{eq:prop911}
|\ddot{\tilde q}(s)\cdot \dot{\tilde q}(s')-\ddot q(s)\cdot\dot q(s')|\leq C\Gamma|s-s'|.
\end{equation}
\end{proposition}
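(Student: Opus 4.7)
The plan is to exploit the pointwise orthogonality $\ddot q(s)\perp\dot q(s)$ coming from \eqref{eq:curvdoubledot}, which asserts $\ddot q(s)=\gamma(s)\mathbf{n}(s)$, and analogously for the tilde quantities. Since $\ddot q(s)\cdot\dot q(s)=0$ and $\ddot{\tilde q}(s)\cdot\dot{\tilde q}(s)=0$, I can rewrite
\[
\ddot{\tilde q}(s)\cdot \dot{\tilde q}(s')-\ddot q(s)\cdot\dot q(s')
=\ddot{\tilde q}(s)\cdot\bigl(\dot{\tilde q}(s')-\dot{\tilde q}(s)\bigr)-\ddot q(s)\cdot\bigl(\dot q(s')-\dot q(s)\bigr).
\]
This already gains one factor of $|s-s'|$ over the naive estimate, because the right-hand side now involves increments of $\dot q$ and $\dot{\tilde q}$ between $s$ and $s'$.

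Next I would apply the fundamental theorem of calculus to both increments, writing $\dot{\tilde q}(s')-\dot{\tilde q}(s)=\int_s^{s'}\ddot{\tilde q}(u)\,\dr u$ and similarly without the tilde (valid since $s,s'\in I_j$ lie on a single smooth arc, so no jump discontinuities are introduced). Then the standard add/subtract trick gives
\[
\ddot{\tilde q}(s)\cdot\int_s^{s'}\bigl(\ddot{\tilde q}(u)-\ddot q(u)\bigr)\,\dr u
+\bigl(\ddot{\tilde q}(s)-\ddot q(s)\bigr)\cdot\int_s^{s'}\ddot q(u)\,\dr u.
\]
The first piece is bounded by $|\ddot{\tilde q}(s)|\cdot C\Gamma|s-s'|\le C\Gamma|s-s'|$ using \eqref{eq:cor96} to control the integrand (after absorbing $|\ddot{\tilde q}(s)|\le|\ddot q(s)|+C\Gamma\le C$ for $\delta$ small), and the second piece is bounded by $C\Gamma\cdot C|s-s'|$ by combining \eqref{eq:cor96} with the universal bound $|\ddot q(u)|=|\gamma(u)|\le C$. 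Adding them yields the claim.

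I expect no substantial obstacle: the whole manoeuvre is an orthogonality observation followed by the same estimates already used in the proof of Proposition \ref{prop:geomprelim}. The one bookkeeping point is that, as in Proposition \ref{prop:differenceofsinglelayers}, we are implicitly working under the normalisation $\dot{\tilde q}(s')=\dot q(s')$ obtained by rotating $\tilde\Omega$; however, since each of the four inner products appearing in the statement is invariant under a common rigid motion applied to $\tilde q$, this normalisation may be imposed without loss of generality, and Proposition \ref{prop:geomprelim} applies with $s_0=s'$ to justify \eqref{eq:cor96} in the form needed above.
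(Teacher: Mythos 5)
Your proof is correct, and it rests on the same two ingredients that drive the paper's argument: the pointwise orthogonality $\ddot q(s)\perp\dot q(s)$ (a consequence of \eqref{eq:curvdoubledot}) and the curvature-difference bound \eqref{eq:cor96}. The algebraic route, however, is genuinely different. The paper expands $\ddot q(s)=\gamma(s)\operatorname{Rot}(\dot q(s))$ and $\ddot{\tilde q}(s)=\tilde\gamma(s)\operatorname{Rot}(\dot{\tilde q}(s))$, performs the add/subtract at the level of the scalar curvatures, and then argues that each remaining factor vanishes to first order at $s=s'$ via a derivative bound; you instead exploit orthogonality immediately by subtracting the (zero) values $\ddot{\tilde q}(s)\cdot\dot{\tilde q}(s)$ and $\ddot q(s)\cdot\dot q(s)$, pass to integrals of $\ddot q$ and $\ddot{\tilde q}$ by the fundamental theorem of calculus, and do a single add/subtract at the level of $\ddot q$. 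What your version buys is that one never has to unpack $\ddot q$ into curvature and rotation at all; once the orthogonality subtraction is made, the estimate is a one-line application of \eqref{eq:cor96} and the a priori bound $|\ddot q(u)|\leq C$. What the paper's version buys is uniformity with the style of the surrounding computations in that section, which repeatedly write things in terms of $\gamma$ and $\operatorname{Rot}$. Your remark about the rotation-invariance of the left-hand side, which legitimizes applying Proposition \ref{prop:geomprelim} with $s_0=s'$, is exactly the right thing to flag and is the same implicit normalization the paper uses ("we may assume that $\dot{\tilde q}(s')=\dot q(s')$").
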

\begin{proof} Fix $s'$. 
As before, we may  assume that $\dot{\tilde q}(s')=\dot q(s')$. Use \eqref{eq:curvdoubledot} and the usual add/subtract trick to bound the left-hand side of \eqref{eq:prop911} by
\[|(\gamma(s)-\tilde\gamma(s))\Rot(\dot{q}(s))\cdot \dot{q}(s')|+|\tilde\gamma(s)(\Rot(\dot q(s))\cdot \dot q(s')-\Rot(\dot{\tilde q}(s))\cdot \dot{\tilde q}(s'))|.\]
For the first term, the first factor is bounded by $\Gamma$. The second factor $\Rot(\dot{q}(s))\cdot \dot{q}(s')$ is zero when $s=s'$ and has $s$-derivative equal to $\Rot(\ddot{q}(s))\cdot \dot{q}(s')$, which has absolute value bounded by $|\gamma(s)|\leq C$; thus the second factor is bounded by $C|s-s'|$. All together the first term is bounded by $C\Gamma|s-s'|$. As for the second term, the first factor $|\tilde\gamma (s)|$ is bounded by $C$ 
(assuming that $\delta\leq 1$). 
For the second factor, $\dot{\tilde q}(s')=\dot q(s')$, so the second factor is
\[
|(\Rot(\dot q(s))-\Rot(\dot{\tilde q}(s)))\cdot \dot{q}(s')|\leq |\dot q(s)-\dot{\tilde q}(s)|.
\]
But $\dot q(s)-\dot{\tilde q}(s)$ is zero when $s=s'$ and has $s$-derivative bounded in absolute value by $|\ddot q(s)-\ddot{\tilde q}(s)|$, which by \eqref{eq:cor96} is bounded by $C\Gamma$. We therefore get a bound of $C\Gamma|s-s'|$ here as well. This completes the proof.
\end{proof}

We use this to prove \eqref{eq:diffofonederivsprime}.
The kernel of $\partial_{s'} K_{\SL-\tilde\SL}$ is
\[-\frac{(q(s)-q(s'))\cdot\dot q(s')}{|q(s)-q(s')|^2}+\frac{(\tilde q(s)-\tilde q(s'))\cdot\dot{\tilde q}(s')}{|\tilde q(s)-\tilde q(s')|^2}.\]
Since $|q(s)-q(s')|^{-1}\leq C|s-s'|^{-1}$ as before, this is bounded in absolute value by
\[C|s-s'|^{-2}|(q(s)-q(s'))\cdot\dot q(s')-\frac{|q(s)-q(s')|^2}{|\tilde q(s)-\tilde q(s')|^2}(\tilde q(s)-\tilde q(s'))\cdot\dot{\tilde q}(s')|.\]
That ratio of squares is very close to $1$, so we add and subtract 1 from it. In addition, taking the dot product of \eqref{eq:calc1} with $\dot q(s')$ yields
\[(q(s)-q(s'))\cdot\dot q(s')=s-s'+\int_{s'}^s\int_{s'}^t\ddot q(u)\cdot \dot q(s')\, \dr u\, \dr t,\]
and the same is true with tildes. When we plug all this in, the main $s-s'$ terms cancel, and we get an upper bound for $\partial_{s'}K_{\SL-\tilde\SL}$ of
\begin{equation}\label{eq:firstupperonederiv}
\begin{split}
&C|s-s'|^{-2}\\
\cdot&\left |\int_{s'}^s\int_{s'}^t(\ddot q(u)\cdot \dot q(s') - \ddot{\tilde q}(u)\cdot \dot{\tilde q}(s'))\, \dr u\, \dr t 
+\left(1-\frac{|q(s)-q(s')|^2}{|\tilde q(s)-\tilde q(s')|^2}\right )(\tilde q(s)-\tilde q(s'))\cdot\dot{\tilde q}(s')\right |.
\end{split}
\end{equation}
The first of these two terms (including the pre-factor), by Proposition \ref{prop:auxdiffsingle}, is bounded  by $C|s-s'|^{-2}\cdot C\Gamma|s-s'|^{3}=C\Gamma|s-s'|$. As for the second, the factor of $1$ minus the fraction can be estimated with \eqref{eq:distanceofsquareswith1} and is bounded by $C\Gamma_1|s-s'|^2$. The other factors are bounded by $|s-s'|$ and $1$ respectively. So including the pre-factor we get a boound of $C\Gamma_1|s-s'|$ here as well. This proves \eqref{eq:diffofonederivsprime}.

Since the single layer kernels are symmetric, we also get \eqref{eq:diffofonederivs}.

Now we tackle the second derivatives of the single layer kernels.
The kernel $\partial_{s}\partial_{s'}K_{\SL-\tilde\SL}$ is given by \eqref{eq:twoderivssingle} minus the analogous expression with tildes. We consider the differences of the first and second terms of \eqref{eq:twoderivssingle} respectively.

The difference of the first terms can be handled nearly identically to the proof of \eqref{eq:diffofonederivsprime}. Following the first few steps of that proof, it has absolute value bounded by
\[
C|s-s|^{-2}\left|\dot q(s)\cdot \dot q(s')-\frac{|q(s)-q(s')|^2}{|\tilde q(s)-\tilde q(s')|^2}\dot{\tilde q}(s)\cdot\dot{\tilde q}(s')\right|.
\]
From \eqref{eq:calc2} we get
\[
\dot q(s)\cdot \dot q(s')=1+\int_{s'}^s\ddot q(u)\cdot q(s')\, \dr u.
\]
And the same trick of adding and subtracting $1$ from the ratio of squares gives an upper bound of
\begin{equation}
C|s-s'|^{-2}\left|\int_{s'}^s(\ddot q(u)\cdot \dot q(s')-\ddot{\tilde q}(u)\cdot\dot{\tilde q}(s'))\, \dr u + \left (1-\frac{|q(s)-q(s')|^2}{|\tilde q(s)-\tilde q(s')|^2}\right )\dot{\tilde q}(s)\cdot \dot{\tilde q}(s')\right|.
\end{equation}
The same estimates as before, namely Proposition \ref{prop:auxdiffsingle} and \eqref{eq:distanceofsquareswith1}, show that this is bounded, overall, by $C\Gamma_1$ as desired.

For the difference of the second terms of \eqref{eq:twoderivssingle}, observe that the second term is precisely $-2\partial_sK_{\SL}\partial_{s'}K_{\SL}$. So the difference of second terms is this minus the version with tildes, and we can use the usual add/subtract trick to bound this difference by
\begin{equation}\label{eq:diagonaltrick}
\left|\partial_sK_{\SL}\partial_{s'}K_{\SL-\tilde\SL}\right|+\left|\partial_{s}K_{\SL-\tilde\SL}\partial_{s'}K_{\tilde\SL}\right|.
\end{equation}
But by direct calculation, regardless of the parametrisations,
\[
\left|\partial_sK_{\SL}\right|\leq |s-s'|^{-1};\qquad \left|\partial_{s'}K_{\tilde\SL}\right|\leq |s-s'|^{-1}.
\]
Putting this together with \eqref{eq:diffofonederivsprime} and \eqref{eq:diffofonederivs} gives an overall bound of $C\Gamma_1$, and we have proven \eqref{eq:diffoftwoderivs}. This completes the proof of Proposition \ref{prop:differenceofsinglelayers}.
\end{proof}

We have dealt with the single layer potentials and their derivatives. Now we analyse the double layer potentials.

\begin{proposition}\label{prop:differenceofdoublelayers} There exists $\delta>0$ depending only on $\Omega$ such that if $\Gamma_1\leq\delta$, then for all $s,s'\in I_j$, we have
\begin{equation}\label{eq:diffofnoderiv2}
\left|K_{\DL-\tilde\DL}(s,s')\right|\leq C\Gamma_1;
\end{equation}
\begin{equation}\label{eq:diffofonederivs2}
\left|\partial_{s} K_{\DL-\tilde\DL}(s,s')\right|\leq C\Gamma_1.
\end{equation}
\end{proposition}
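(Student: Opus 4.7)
The proof plan follows the template of Proposition \ref{prop:differenceofsinglelayers}. I fix $s'\in I_j$ and, using invariance of both kernels under Euclidean motion of $\tilde\Omega$, arrange $\dot{\tilde q}(s')=\dot q(s')$, which also forces $\tilde{\mathbf n}(s')=\mathbf n(s')$. Throughout I write $h=s-s'$, $A=(q(s)-q(s'))\cdot\mathbf n(s')$, $B=|q(s)-q(s')|^2$, and analogously $\tilde A,\tilde B$. The essential inputs are Proposition \ref{prop:geomprelim}, the bound \eqref{eq:distanceofsquares}, and the non-degeneracy $B\geq ch^2$.

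For \eqref{eq:diffofnoderiv2}: since $\dot q(s')\cdot\mathbf n(s')=0$, the identity \eqref{eq:calc1} gives
\[A=\int_{s'}^s\int_{s'}^t\ddot q(u)\cdot\mathbf n(s')\,\dr u\,\dr t,\]
with the analogous expression for $\tilde A$, so $|A|,|\tilde A|\leq Ch^2$. Mimicking the decomposition used in the proof of \eqref{eq:diffofonederivsprime}, I write
\[K_{\DL-\tilde\DL}=-\frac{1}{B}\Bigl[(A-\tilde A)+\tilde A\Bigl(1-\tfrac{B}{\tilde B}\Bigr)\Bigr].\]
The first bracketed term is a double integral of $(\ddot q-\ddot{\tilde q})(u)\cdot\mathbf n(s')$, bounded by $C\Gamma h^2$ via \eqref{eq:cor96}; the second is bounded by $|\tilde A|\cdot|1-B/\tilde B|\leq Ch^2\cdot C\Gamma_1 h^2$ via \eqref{eq:distanceofsquareswith1}. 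Dividing by $B\geq ch^2$ yields \eqref{eq:diffofnoderiv2}.

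For \eqref{eq:diffofonederivs2}, the product-rule trick \eqref{eq:diagonaltrick} is unavailable, since each of the two terms of \eqref{eq:onederivdouble} is individually of order $h^{-1}$. I combine them over the common denominator $B^2$,
\[\partial_s K_{\DL}=\frac{N}{B^2},\qquad N:=-(\dot q(s)\cdot\mathbf n(s'))\,B+2\bigl((q(s)-q(s'))\cdot\dot q(s)\bigr)A,\]
and decompose each of the four factors as a leading polynomial in $h$ plus a remainder,
\[\dot q(s)\cdot\mathbf n(s')=h\gamma(s')+\delta_1,\quad B=h^2+\delta_2,\quad (q(s)-q(s'))\cdot\dot q(s)=h+\delta_3,\quad A=\tfrac{1}{2}h^2\gamma(s')+\delta_4,\]
with $|\delta_1|=O(h^2)$, $|\delta_2|=O(h^4)$, $|\delta_3|,|\delta_4|=O(h^3)$. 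The $h^3\gamma(s')$ contributions to the two summands of $N$ cancel identically, reducing $N$ to a sum of pure products of the $\delta_i$'s, from which $|N|\leq Ch^4$. I then estimate the differences $|\delta_i-\tilde\delta_i|$ termwise using Proposition \ref{prop:geomprelim}, \eqref{eq:cor96}, and \eqref{eq:distanceofsquares}. The sharpness of these bounds depends on the observation that scalar products such as $(\ddot q-\ddot{\tilde q})(u)\cdot\dot q(s')$ vanish at $u=s'$ (because $\ddot q(s'),\ddot{\tilde q}(s')\perp\dot q(s')$), gaining an extra factor of $|u-s'|$ inside the integrals and upgrading the naive $O(\Gamma)$ bound on such integrands to $O(\Gamma|u-s'|)$. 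Assembling yields $|N-\tilde N|\leq C\Gamma_1 h^4$, and plugging this together with $|\tilde B^2-B^2|\leq C\Gamma_1 h^6$ into the decomposition
\[\partial_s K_{\DL-\tilde\DL}=\frac{N-\tilde N}{\tilde B^2}+N\cdot\frac{\tilde B^2-B^2}{B^2\tilde B^2}\]
yields \eqref{eq:diffofonederivs2}.

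The principal obstacle will be the final step: a naive term-by-term comparison of $N$ and $\tilde N$ gives only $|N-\tilde N|\leq C\Gamma h^3$, which fails to survive division by $B^2\sim h^4$. One must first recognise that the $h^3$ contributions to $N$ and to $\tilde N$ each cancel internally as $\mp h^3\gamma(s')$ and $\mp h^3\tilde\gamma(s')$, and only then compare the subleading $h^4$ remainders, whose leading coefficients involve $\dot\gamma$ and therefore yield the required $\Gamma_1$ rather than the weaker $\Gamma$.
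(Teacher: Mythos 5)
Your proof is correct, but it follows a genuinely different route from the paper's. The paper does not work with the raw numerator of $\partial_s K_{\DL}$ at all: it first rewrites $A=(q(s)-q(s'))\cdot\mathbf{n}(s')$ as $\tfrac12 h^2\gamma(s')$ plus a \emph{triple} integral of $\dddot q\cdot\mathbf{n}(s')$ (equation \eqref{eq:betternoderivsdouble}), whence $K_{\DL-\tilde\DL}$ is expressed as the three-line sum \eqref{eq:longequationdouble} in which every summand already carries an explicit factor of $\gamma-\tilde\gamma$, $1-B/\tilde B$, or $\dddot q-\dddot{\tilde q}$. Differentiating that expression in $s$ is then a straightforward product-rule exercise, and the key auxiliary input is that $\partial_s$ of the factor $1-B/\tilde B$ equals $-2\tilde\gamma(s')\partial_s K_{\SL-\tilde\SL}\cdot(B/\tilde B)$, so the already-proved single-layer estimate \eqref{eq:diffofonederivs} is re-used wholesale. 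Your approach instead puts the two terms of \eqref{eq:onederivdouble} over the common denominator $B^2$, expands $\dot q(s)\cdot\mathbf n(s')$, $B$, $(q(s)-q(s'))\cdot\dot q(s)$, and $A$ as $h$-polynomials plus remainders, and spots that the two $\pm h^3\gamma(s')$ contributions to the numerator $N$ cancel identically \emph{within} $N$ before any comparison with $\tilde N$; you then compare the surviving $O(h^4)$ pieces termwise using \eqref{eq:cor96}, \eqref{eq:prop98}, and \eqref{eq:distanceofsquares}. Both routes reach the same estimate; yours is more self-contained (it does not need to appeal back to the already-established $\partial_s K_{\SL-\tilde\SL}$ bound at this stage) at the cost of a longer bookkeeping computation, whereas the paper's is brisker because it funnels the hard analysis through a single structured decomposition. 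You also correctly identify the obstruction to the naive product-rule splitting \eqref{eq:diagonaltrick}: each summand of \eqref{eq:onederivdouble} is individually $O(h^{-1})$ on the diagonal, so treating them separately would leave a $\Gamma/h$ loss that never recovers; this is precisely the cancellation the paper's rewriting of $A$ builds in implicitly and your $h$-expansion of $N$ makes explicit.
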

Once this is proven, we have completed the proof of part 1 of Lemma \ref{lem:heresthecontent} in the diagonal, $j=k$ case, as all six kernels will be bounded by $C\Gamma_1$ and thus will induce operators from $L^2(I_j)\to L^2(I_j)$ with norm less than or equal to $C\Gamma_1$.

\begin{proof}
As usual fix an $s'$ and assume $\dot{\tilde q}(s')=\dot q(s')$. Using \eqref{eq:noderivsdouble} and \eqref{eq:calc1}, along with the fact that $\dot q(s')\cdot\mathbf{n}(s')=0$, we get
\[
K_{\DL}(s,s')=-|q(s)-q(s')|^{-2}\left(\int_{s'}^s\int_{s'}^t\ddot q(u)\cdot \mathbf{n}(s')\, \dr u\, \dr t\right).
\]
Let us rewrite this using $\ddot q(u)=\ddot q(s')+\int_{s'}^u \dddot q(v)\, \dr v$ and the fact that $\ddot q(s')=\gamma(s')\mathbf{n}(s')$:
\begin{equation}\label{eq:betternoderivsdouble}
K_{\DL}(s,s')=-|q(s)-q(s')|^{-2}\left(\frac 12|s-s'|^2\gamma(s')+\int_{s'}^s\int_{s'}^t\int_{s'}^u\dddot q(v)\cdot\mathbf{n}(s')\, \dr v\, \dr u\, \dr t\right).\end{equation}
We can use this, and the adding/subtracting 1 trick, to write an expression for the difference:
\begin{equation}\label{eq:longequationdouble}
\begin{split}
K_{\DL-\tilde\DL}(s,s')=&-\frac 12\frac{|s-s'|^2}{|q(s)-q(s')|^2}\left((\gamma(s')-\tilde\gamma(s'))+\tilde\gamma(s')(1-\frac{|q(s)-q(s')|^2}{|\tilde q(s)-\tilde q(s')|^2})\right)
\\ &-\frac{1}{|q(s)-q(s')|^2}\left(\int_{s'}^s\int_{s'}^t\int_{s'}^u(\dddot q(v)\cdot\mathbf{n}(s')-\dddot{\tilde q}(v)\cdot\tilde{\mathbf{n}}(s'))\, \dr v\, \dr u\, \dr t \right)
\\
&-\frac{1}{|q(s)-q(s')|^2}\left(1-\frac{|q(s)-q(s')|^2}{|\tilde q(s)-\tilde q(s')|^2}\right)\int_{s'}^s\int_{s'}^t\int_{s'}^u\dddot{\tilde q}(v)\cdot\tilde{\mathbf{n}}(s')\, \dr v\, \dr u\, \dr t.
\end{split}
\end{equation}
It remains to bound this and its $s$-derivative, in absolute value, by $C\Gamma_1$. We do this for each of the three terms separately.

Consider the first line of \eqref{eq:longequationdouble}. The pre-factor is a $C^1$ function of $s$ and $s'$ on the rectangle $I_j$ and is independent of $\tilde\Omega$, so its $C^1$ norm is uniformly bounded by a constant $C$. The second factor is bounded, using \eqref{eq:distanceofsquareswith1}, by $\Gamma+(C+\Gamma)(C\Gamma_1|s-s'|)$, which is less than or equal to $C\Gamma_1$. As for the derivative of the second factor, by a direct calculation with logarithmic differentiation, we see that it is
\begin{equation}\label{eq:logthing}
\tilde\gamma(s')\left(-2\partial_sK_{\SL-\tilde\SL}\right)\frac{|q(s)-q(s')|^2}{|\tilde q(s)-\tilde q(s')|^2}.
\end{equation}
By \eqref{eq:distanceofsquareswith1} and \eqref{eq:diffofonederivs}, this is bounded by 
\[
(C+\Gamma)(C\Gamma_1|s-s'|)(1+C\Gamma_1|s-s'|^2)\leq C\Gamma_1.
\]
This is enough to bound the first line of \eqref{eq:longequationdouble} and its $s$-derivative by $C\Gamma_1$.

Now consider the second line. The pre-factor is bounded by $C|s-s'|^{-2}$, and the integrand, since $\mathbf{n}(s')=\tilde{\mathbf{n}}(s')$, is bounded in absolute value by $\Gamma_1$. Thus the second line itself is bounded by $C\Gamma_1|s-s'|$. As for its derivative, there are two terms. If the $s$-derivative hits the pre-factor we get $2|q(s)-q(s')|^{-4}(q(s)-q(s'))\cdot\dot q(s)$, which is bounded in absolute value by $C|s-s'|^{-3}$, yielding an overall bound of $C\Gamma_1$. If it hits the integral, it removes one of the integrals, so the bound on the integral becomes $C\Gamma_1|s-s'|^2$ instead of $C\Gamma_1|s-s'|^3$. Multiplied by $C|s-s'|^{-2}$ this still yields $C\Gamma_1$.

Finally, examine the third line. The first two factors are bounded by $C|s-s'|^{-2}$ and $C\Gamma_1|s-s'|^2$ respectively, as a consequence of \eqref{eq:distanceofsquareswith1}. For the integral we evaluate the inner integral and get a double integral of $\ddot{\tilde q}(u)-\ddot{\tilde q}(s')$ dotted with a unit vector. But $|\ddot{\tilde q}(u)|=|\tilde\gamma(u)|\leq C+\Gamma$, so that integral is less than $2(C+\Gamma)\frac 12|s-s'|^2$. Putting all three together, the third line is bounded by $C\Gamma_1|s-s'|^2$, which is certainly bounded by $C\Gamma_1$. As for the $s$-derivative, it can hit three different factors. If it hits the first factor it produces an extra factor of $|q(s)-q(s')|^{-2}(q(s)-q(s'))\cdot\dot q(s)$, which is bounded by $C|s-s'|^{-1}$. If it hits the second factor, the second factor turns into \eqref{eq:logthing} and thus the bound of $C\Gamma_1|s-s'|^2$ becomes $C\Gamma_1|s-s'|$ instead. And if it hits the integral, one of the integrals disappears and the bound again loses a factor of $|s-s'|$. But the $s$-derivative is still bounded by $C\Gamma_1|s-s'|$, which is more than enough. This completes the proof of Proposition \ref{prop:differenceofdoublelayers}, and with it the $j=k$ case of part 1 of Lemma \ref{lem:heresthecontent}.
\end{proof}

\subsection{Off-diagonal rectangles}

Now we assume $k\neq j$. We claim it is enough to consider the case where $k=j-1$. Indeed, the $k=j+1$ case is identical. For the other values of $k$, the geometric situation is the same as if we take the $k=j-1$ case and restrict the input to lie in a sub-interval of $I_{j-1}$ away from the intersection $V_j=I_j\cap I_{j-1}$, so the analysis here will cover those values of $k$ as well. In this $k=j-1$ case, the diagonal singularity is not an issue, so all of our kernels are smooth in the interior of $I_j\times I_{k}$. But there \emph{is} a singularity at the point $V_j\times V_j$, and as indicated in \cite{cost}, it has a more substantial effect than in the on-diagonal rectangles. Indeed not all of our kernels will be bounded near $V_j\times V_j$. However, we will be able to bound them, in absolute value, by a kernel which induces a bounded operator from $L^2(I_k)$ to $L^2(I_j)$.

So let $s$ and $s'$ be the usual arc length coordinates, and assume without loss of generality that $s=0$ at the vertex $V_j$. Thus we have $s\geq 0$ on $I_j$, and $s'\leq 0$ on $I_k=I_{j-1}$. Assume without loss of generality that $q(0)=\tilde q(0)=0$ and that for both $\partial\Omega$ and $\partial\tilde\Omega$, $I_{j-1}$ is tangent to the $x$-axis at $V_j$, with $I_j$ making an angle $\alpha$ with the $x$-axis for both. Now we define two vector-valued functions $\beta_{-}(s')$ and $\beta_{+}(s)$ by the equations
\[
q(s')=s'\begin{pmatrix}-1\\0\end{pmatrix}+\beta_{-}(s');\quad q(s)=s\begin{pmatrix}\cos\alpha\\\sin\alpha\end{pmatrix}+\beta_{+}(s).
\]
Define analogues with tildes the same way.
\begin{proposition} The following are true:
\begin{enumerate}
\item The function $\beta_-(s')$ is as smooth as $q(s')$ (at least $C^3$), is $O\left((s')^2\right)$, and its Taylor coefficient of $(s')^2$ at $s'=0$ is perpendicular to $I_{j-1}$. Similar statements hold for $\beta_+$, and the analogues with tildes also hold.
\item We have $\ddot\beta_{+}(s)=\ddot q(s)$, $\ddot\beta_{-}(s')=\ddot q(s')$, and the same are true for tildes and third derivatives.
\item We have the estimates
\begin{equation}\label{eq:betaestimates}
\begin{alignedat}{2}
\left\|\ddot\beta_{\pm}-\ddot{\tilde \beta}_{\pm}\right\|_{L^{\infty}}&\leq C\Gamma;&\qquad\left\|\dddot\beta_{\pm}-\dddot{\tilde \beta}_{\pm}\right\|_{L^{\infty}}&\leq C\Gamma_1;\\
\left|\dot\beta_{+}(s)-\dot{\tilde\beta}_{+}(s)\right|&\leq C\Gamma s;&\qquad\left|\dot\beta_{-}(s')-\dot{\tilde\beta}_{-}(s')\right|&\leq C\Gamma |s'|;\\ 
\left|\beta_{+}(s)-\tilde\beta_{+}(s)\right|&\leq\frac 12 C\Gamma s^2;&\qquad\left|\beta_{-}(s')-\tilde\beta_{-}(s')\right|&\leq\frac 12 C\Gamma (s')^2.
\end{alignedat}
\end{equation}
\end{enumerate}
\end{proposition}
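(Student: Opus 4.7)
The proposition is essentially a bookkeeping exercise around the definitions of $\beta_{\pm}, \tilde\beta_{\pm}$ combined with Proposition~\ref{prop:geomprelim}, so I do not foresee any real obstacle. I would organise it in three short blocks corresponding to the three items in the statement.

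\emph{Smoothness and vanishing at the vertex (item 1).} The identity $\beta_{-}(s')=q(s')-s'(-1,0)$ shows $\beta_{-}$ inherits the regularity of $q$, which is at least $C^3$ since $\gamma\in C^1$. The normalisation $q(0)=V_j=0$ gives $\beta_{-}(0)=0$. To see $\dot\beta_{-}(0)=0$, use that $\dot q(0)=(-1,0)+\dot\beta_{-}(0)$ must be a unit vector tangent to the $x$-axis (because $I_{j-1}$ is tangent to the $x$-axis at $V_j$), so $\dot\beta_{-}(0)=(c,0)$ with $|{-1+c}|=1$; the clockwise orientation and the fact that $s'\le 0$ on $I_{j-1}$ force $c=0$ (the alternative $c=2$ would correspond to the wrong tangent direction). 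Hence $\beta_{-}(s')=\mathcal{O}((s')^2)$, with $(s')^2$-coefficient equal to $\tfrac{1}{2}\ddot\beta_{-}(0)=\tfrac{1}{2}\ddot q(0)$. Since $\ddot q\perp \dot q$ by \eqref{eq:curvdoubledot}, this coefficient is perpendicular to $I_{j-1}$, as claimed. The statements for $\beta_{+}$, $\tilde\beta_{\pm}$ are identical, using the tangent vector $(\cos\alpha,\sin\alpha)$ at $V_j$ along $I_j$.

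\emph{Agreement of higher derivatives (item 2).} This is immediate: the subtracted linear terms $s'(-1,0)$ and $s(\cos\alpha,\sin\alpha)$ contribute nothing to second or third derivatives, so $\ddot\beta_{\pm}=\ddot q$, $\dddot\beta_{\pm}=\dddot q$, and analogously with tildes.

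\emph{The estimates in \eqref{eq:betaestimates} (item 3).} The key observation is that our normalisation makes $\Omega$ and $\tilde\Omega$ share the same tangent vectors at $V_j$: $\dot q(0)=\dot{\tilde q}(0)$ (either $(-1,0)$ or $(\cos\alpha,\sin\alpha)$, depending on which side we approach from). This allows Proposition~\ref{prop:geomprelim} to be applied with $s_0=0$. Combining this with item~(2) gives
\[
\|\ddot\beta_{\pm}-\ddot{\tilde\beta}_{\pm}\|_{L^\infty}=\|\ddot q-\ddot{\tilde q}\|_{L^\infty}\le C\Gamma
\]
from \eqref{eq:cor96}, and
\[
\|\dddot\beta_{\pm}-\dddot{\tilde\beta}_{\pm}\|_{L^\infty}=\|\dddot q-\dddot{\tilde q}\|_{L^\infty}\le C\Gamma_1
\]
from \eqref{eq:prop98}. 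The remaining four bounds are then obtained by integrating from the vertex, using that $\beta_{\pm}(0)=\tilde\beta_{\pm}(0)=0$ and $\dot\beta_{\pm}(0)=\dot{\tilde\beta}_{\pm}(0)=0$ by item~(1). For example,
\[
\dot\beta_{+}(s)-\dot{\tilde\beta}_{+}(s)=\int_0^s\bigl(\ddot\beta_{+}(u)-\ddot{\tilde\beta}_{+}(u)\bigr)\,\dr u,
\]
so $|\dot\beta_{+}(s)-\dot{\tilde\beta}_{+}(s)|\le C\Gamma s$; integrating once more gives $|\beta_{+}(s)-\tilde\beta_{+}(s)|\le\tfrac{1}{2}C\Gamma s^2$. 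The estimates for $\beta_{-}$ are analogous, integrating from $s'=0$ down to negative $s'$.

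The only subtlety worth flagging is ensuring that the normalisation imposed just before the proposition (namely that both boundaries pass through the origin with prescribed tangent directions at $V_j$) does not conflict with the single freedom of rotation/translation used elsewhere in the section; since that freedom is applied vertex-by-vertex during the analysis of the off-diagonal rectangle $I_j\times I_{j-1}$, this is consistent.
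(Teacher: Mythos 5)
Your proposal is correct and follows essentially the same route as the paper: first derive $\beta_{\pm}(0)=0$, $\dot\beta_{\pm}(0)=0$ from the normalisation so that the $(s')^2$-coefficient is $\tfrac12\ddot q(0)\perp\dot q(0)$, then observe item (2) is immediate, and finally pull the first two bounds of \eqref{eq:betaestimates} directly from \eqref{eq:cor96} and \eqref{eq:prop98} (applicable with $s_0=0$ since the one-sided tangents at $V_j$ agree by construction) and integrate from the vertex for the remaining four. The only difference is that you spell out the argument for $\dot\beta_{\pm}(0)=0$ in more detail than the paper, which simply calls item (1) obvious apart from the orthogonality.
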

\begin{proof} The first statement is obvious except for the orthogonality, but that follows from the fact that since $q(s')$ is an arc length parametrization, the vectors $\ddot q(0)$ and $\dot q(0)$ are orthogonal. The second statement is clear. The first two estimates in the third statement follow from \eqref{eq:cor96} and \eqref{eq:prop98}, and the others follow from integration and the fact that $\beta_{\pm}(0)=\tilde\beta_{\pm}(0)$ and $\dot\beta_{\pm}(0)=\dot{\tilde\beta}_{\pm}(0)$.
\end{proof}

Now define, as in \cite{cost},
\[
r(s,s'):=\left|s\begin{pmatrix}\cos\alpha\\\sin\alpha\end{pmatrix} - s'\begin{pmatrix}-1\\0\end{pmatrix}\right|.
\]
Its utility is the following
\begin{proposition} The kernel $r^{-1}(s,s')$ defines an operator which is bounded from $L^2(I_{j-1})$ to $L^2(I_j)$.
\end{proposition}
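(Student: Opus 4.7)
The plan is to reduce the boundedness of the operator with kernel $r^{-1}(s,s')$ to a standard Hardy--Hilbert type estimate on $L^2(\mathbb{R}_+)$. First I would change variables to make both parameters positive: since $s'\leq 0$ on $I_{j-1}$, set $t:=-s'\in[0,\ell_{j-1}]$. A direct computation using $r(s,s')=|s(\cos\alpha,\sin\alpha)-s'(-1,0)|$ then gives
\[
r(s,-t)^2 = (s\cos\alpha+t)^2+s^2\sin^2\alpha = s^2+2st\cos\alpha+t^2.
\]
Since $\alpha\in(0,\pi)$, we have $|\cos\alpha|<1$, and I would verify the elementary inequality
\[
s^2+2st\cos\alpha+t^2 \geq (1-|\cos\alpha|)(s^2+t^2),
\]
which follows by completing the square in two cases according to the sign of $\cos\alpha$. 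Consequently $r(s,-t)\geq c_\alpha(s+t)$ for a positive constant $c_\alpha$ depending only on the angle $\alpha$ (using also $\sqrt{s^2+t^2}\geq \tfrac{1}{\sqrt{2}}(s+t)$).

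With this lower bound in hand, the kernel in question is dominated by $c_\alpha^{-1}(s+t)^{-1}$, so it suffices to show that the integral operator with kernel $(s+t)^{-1}$ is bounded from $L^2([0,\ell_{j-1}])$ to $L^2([0,\ell_j])$. This is the classical Hardy--Hilbert inequality on $L^2(\mathbb{R}_+)$ (the kernel $(s+t)^{-1}$ is bounded on $L^2(\mathbb{R}_+)$ with operator norm $\pi$), and restriction of the domain/range to finite subintervals only decreases the norm. Alternatively, a short application of Schur's test with weight $w(s)=s^{-1/2}$ (and the scale-invariant bound $\int_0^\infty (s+t)^{-1}t^{-1/2}\,dt=\pi s^{-1/2}$) gives the same conclusion without appealing to Hilbert's inequality directly.

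No step here should be a serious obstacle; the only point requiring a small calculation is the geometric lower bound $r\geq c_\alpha(s+t)$, whose constant degenerates as $\alpha\to 0$ or $\alpha\to\pi$. Since by standing assumption $\alpha\in\Pi=(0,\pi)$, the constant is strictly positive, and the proposition follows.
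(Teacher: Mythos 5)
Your argument is correct and gives a complete, self-contained proof, but it is genuinely different from the route taken in the paper. The paper simply invokes the second statement of \cite[Theorem 4.2]{cost}, which treats operators whose kernels are homogeneous of negative degree (here $K_{-1}=r^{-1}$, homogeneous of degree $-1$ under $(s,s')\mapsto(\lambda s,\lambda s')$); a cutoff localises near the corner, Costabel's theorem handles the localised piece, and the remainder is bounded. That choice keeps the whole layer-potential section uniformly within Costabel's framework. Your approach instead proves the bound from scratch: the geometric lower bound $r\ge c_\alpha(s+t)$ reduces the problem to the Hardy--Hilbert operator with kernel $(s+t)^{-1}$, which is the classical Schur/Hilbert inequality on $L^2(\mathbb{R}_+)$. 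This is more elementary and exposes exactly how the constant depends on (and degenerates with) the angle $\alpha$, which the citation hides; the trade-off is that it does not illuminate the broader pseudodifferential structure that Costabel's theorem supplies and that the paper uses elsewhere in Section 8.

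One small slip worth correcting: with $t=-s'\geq 0$ one has
\[
s(\cos\alpha,\sin\alpha)-s'(-1,0)=(s\cos\alpha-t,\ s\sin\alpha),
\]
so
\[
r(s,-t)^2=s^2-2st\cos\alpha+t^2,
\]
with a minus sign, not a plus sign, in front of the cross term. Fortunately this is immaterial: the optimal constant in the inequality $s^2\mp2st\cos\alpha+t^2\ge c(s^2+t^2)$ is $c=1-|\cos\alpha|$ in both cases (the two cases $\cos\alpha\gtrless 0$ simply swap roles), so your lower bound $r\ge c_\alpha(s+t)$ with $c_\alpha=\sqrt{(1-|\cos\alpha|)/2}$ stands, and the rest of the proof goes through unchanged.
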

\begin{proof} This is a consequence of \cite{cost} and is essentially proved there. Specifically, apply the second statement in \cite[Theorem 4.2]{cost} with $s=0$ and $j=-1$, noting that $\tilde H^0=H^0=L^2$. The kernel $r^{-1}$ is an example of one of Costabel's kernels $K_j$ with homogeneity $j=-1$. If $\chi$ is a cutoff function localizing near $V_j$, then by \cite[Theorem 4.2]{cost}, $\chi(s) r^{-1}\chi(s')$ is bounded from $L^2$ to $L^2$. On the other hand, $1-\chi(s)r^{-1}\chi(s')$ is bounded on $I_{j}\times I_{j-1}$ and is also bounded from $L^2$ to $L^2$. Adding them completes the proof.
\end{proof}

The point of this is that we have the following proposition:
\begin{proposition}\label{prop:differenceoffdiagonal} There exists $\delta>0$ depending only on $\Omega$ such that if $\Gamma_1\leq\delta$, then for all $s\in I_j$ and $s'\in I_{j-1}$, we have
\begin{align}\label{eq:diffofnoderivod}
|K_{\SL-\tilde\SL}(s,s')|&\leq C\Gamma r;
\\
\label{eq:diffofonederivsprimeod}
|\partial_{s'} K_{\SL-\tilde\SL}|&\leq C\Gamma;
\\
\label{eq:diffofonederivsod}
|\partial_{s} K_{\SL-\tilde\SL}|&\leq C\Gamma;
\\
\label{eq:diffoftwoderivsod}
|\partial_s\partial_{s'} K_{\SL-\tilde\SL}|&\leq C\Gamma_1r^{-1};
\\
\label{eq:diffofnoderiv2od}
|K_{\DL-\tilde\DL}(s,s')|&\leq C\Gamma_1;
\\
\label{eq:diffofonederivs2od}
|\partial_{s} K_{\DL-\tilde\DL}(s,s')|&\leq C\Gamma_1r^{-1}.
\end{align}
\end{proposition}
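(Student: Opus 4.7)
\medskip

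\textbf{Proof plan.} The key geometric observation is that since $\Omega$ and $\tilde\Omega$ are matched at the vertex $V_j$ (same angle $\alpha$, same tangent directions), both boundaries are perturbations of the same straight wedge in a neighbourhood of the origin. Specifically, $q(s)-q(s')=\vec w(s,s')+(\beta_+(s)-\beta_-(s'))$ where $\vec w(s,s')=s(\cos\alpha,\sin\alpha)-s'(-1,0)$ satisfies $|\vec w|=r$ and $\beta_\pm$ is $O((\cdot)^2)$; the same formula holds for $\tilde q$ with $\tilde\beta_\pm$ in place of $\beta_\pm$. Thus all differences $K_{\SL-\tilde\SL}$ and $K_{\DL-\tilde\DL}$ (and their derivatives) will be expressible purely in terms of differences $\beta_\pm-\tilde\beta_\pm$ and their derivatives, which are controlled by \eqref{eq:betaestimates}.

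First I would establish two preliminary comparisons. From the law of cosines one gets $r\ge\sin(\alpha/2)(s+|s'|)$, so that $s,|s'|\le Cr$ and, combined with $|\beta_\pm|\le Cs_\pm^2\le Cr^2$, $|q(s)-q(s')|=r(1+O(r))\asymp r$. Next, expanding $|q(s)-q(s')|^2=r^2+2\vec w\cdot(\beta_+-\beta_-)+|\beta_+-\beta_-|^2$ and subtracting the analogous expression for $\tilde q$, the leading $r^2$ terms cancel and the $\beta$-estimates \eqref{eq:betaestimates} give
\[\bigl||q(s)-q(s')|^2-|\tilde q(s)-\tilde q(s')|^2\bigr|\le C\Gamma r^3,\]
hence $\bigl||q(s)-q(s')|-|\tilde q(s)-\tilde q(s')|\bigr|\le C\Gamma r^2$ and $\bigl|1-|q-q'|^2/|\tilde q-\tilde q'|^2\bigr|\le C\Gamma r$. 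Estimate \eqref{eq:diffofnoderivod} follows immediately upon taking logarithms.

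For the four derivative estimates on $K_{\SL-\tilde\SL}$, I use the explicit formulas \eqref{eq:onederivfirstsingle}--\eqref{eq:twoderivssingle} together with the add/subtract trick: each difference is split into a ``difference of numerators, common denominator'' term and a ``common numerator, difference of reciprocals of $|q-q'|^2$'' term. The first type is controlled using $|q(s)-q(s')-(\tilde q(s)-\tilde q(s'))|\le C\Gamma r^2$ and $|\dot q-\dot{\tilde q}|\le C\Gamma r$ (from \eqref{eq:betaestimates}); the second is controlled using the comparison estimate of the previous paragraph. Each additional derivative loses a factor of $r$ in the denominator, matching \eqref{eq:diffofonederivsprimeod}--\eqref{eq:diffoftwoderivsod}. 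The upgrade from $\Gamma$ to $\Gamma_1$ in \eqref{eq:diffoftwoderivsod} comes from the one place where a derivative must fall on a $\ddot\beta-\ddot{\tilde\beta}$ term, producing $\dddot\beta-\dddot{\tilde\beta}=O(\Gamma_1)$.

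The main obstacle is the double-layer estimates \eqref{eq:diffofnoderiv2od}--\eqref{eq:diffofonederivs2od}, since the ``straight wedge'' kernel $-\vec w\cdot\mathbf n_0/r^2$ (with $\mathbf n_0$ the normal at the vertex) is itself only $O(r^{-1})$. The crucial point is that this singular leading term is \emph{identical} for $\Omega$ and $\tilde\Omega$ because they share both the angle $\alpha$ and the tangent/normal vectors at $V_j$. To exploit this cleanly, I would expand $\mathbf n(s')=\mathbf n_0+(\mathbf n(s')-\mathbf n_0)$ (and similarly for $\tilde{\mathbf n}$) and write
\[K_{\DL}(s,s')=-\frac{\vec w\cdot\mathbf n_0}{|q-q'|^2}-\frac{(\beta_+-\beta_-)\cdot\mathbf n_0}{|q-q'|^2}-\frac{(q(s)-q(s'))\cdot(\mathbf n(s')-\mathbf n_0)}{|q-q'|^2}.\]
Subtracting the same decomposition with tildes, the first summand contributes only through the comparison of denominators (giving an $O(\Gamma r)$ factor, which cancels one of the singular powers of $1/r^2$); the second and third summands involve differences $\beta_\pm-\tilde\beta_\pm=O(\Gamma r^2)$, $\mathbf n-\tilde{\mathbf n}=O(\Gamma r)$, and $\dot\beta_\pm-\dot{\tilde\beta}_\pm=O(\Gamma r)$, all balancing the $1/r^2$ factor. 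The bookkeeping is intricate but routine; once $K_{\DL-\tilde\DL}$ is shown to be uniformly bounded by $C\Gamma_1$, the estimate \eqref{eq:diffofonederivs2od} for $\partial_s K_{\DL-\tilde\DL}$ follows by the same analysis with one extra $s$-derivative (losing one power of $r$ and, as in the single-layer case, requiring $\Gamma_1$ rather than $\Gamma$ because of the $\dddot\beta-\dddot{\tilde\beta}$ term). The most delicate bookkeeping is making the cancellation of the $\vec w\cdot\mathbf n_0/r^2$ term rigorous; but once that is done, the remaining terms are all manifestly lower order.
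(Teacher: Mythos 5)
Your proposal is correct and follows the same overall machinery as the paper (the $\beta_\pm$ decomposition, the comparison function $r$, and the add/subtract bookkeeping), but you misjudge where the difficulty lies. The ``main obstacle'' you identify for the double-layer estimates — that the straight-wedge kernel $-\vec w\cdot\mathbf n_0/r^2$ is itself $O(r^{-1})$ and must be shown to cancel — is in fact a false alarm that takes care of itself. In the paper's treatment, $K_{\DL-\tilde\DL}$ is split by the standard add/subtract into a ``difference of numerators over common denominator'' and a ``common numerator times difference of reciprocals''; since $q(s)-q(s')-\tilde q(s)+\tilde q(s')=(\beta_+-\tilde\beta_+)-(\beta_--\tilde\beta_-)$ and $\mathbf n(s')-\tilde{\mathbf n}(s')$ are both $O(\Gamma r^2)$ respectively $O(\Gamma r)$ (the $\vec w\cdot\mathbf n_0$ term drops out automatically because $\vec w$ and $\mathbf n_0$ are shared by both domains), the first piece is $O(\Gamma r^2\cdot r^{-2})=O(\Gamma)$ with no delicate cancellation to track, and the second piece is handled by the usual $|1-|q-q'|^2/|\tilde q-\tilde q'|^2|\leq C\Gamma r$ bound. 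Your explicit three-term split isolating $\vec w\cdot\mathbf n_0$ is valid and arrives at the same estimates, but it unpacks into arithmetic the cancellation that the add/subtract trick performs automatically.

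One small slip: you attribute the appearance of $\Gamma_1$ (rather than $\Gamma$) in \eqref{eq:diffoftwoderivsod} and \eqref{eq:diffofonederivs2od} to a place where ``a derivative must fall on a $\ddot\beta-\ddot{\tilde\beta}$ term, producing $\dddot\beta-\dddot{\tilde\beta}$.'' But the explicit kernels \eqref{eq:twoderivssingle} and \eqref{eq:onederivdouble} involve only $q$ and $\dot q$, never $\dddot q$, so no such term arises off-diagonal. The paper's off-diagonal estimates in fact only use $\Gamma$, and are stated with $\Gamma_1$ merely because $\Gamma\leq\Gamma_1$; third derivatives of $q$ enter the argument only in the on-diagonal Propositions, where the cancellation $\ddot q(s')\cdot\dot q(s')=0$ forces a one-step-higher Taylor expansion. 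This does not invalidate anything you wrote, but the justification is off.
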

Once we have proven this proposition, all of our kernels on $I_j\times I_{j-1}$ will be bounded in absolute value by $\Gamma_1$ times a kernel which defines a bounded operator from $L^2\to L^2$. This proves part 1 of Lemma \ref{lem:heresthecontent} and thereby completes the proof of the results in this section. It remains only to prove Proposition \ref{prop:differenceoffdiagonal}. 
\begin{proof}
First we note that $r$ is a good approximation to $|q(s)-q(s')|$ in the sense that
\begin{equation}\label{eq:qandrcomp}
C^{-1}r\leq |q(s)-q(s')|\leq Cr.
\end{equation}
In fact the ratio of $|q(s)-q(s')|$ and $r$ actually approaches 1 as $s,s'\to 0$, since the deviations of $q(s)$ and $q(s')$ from straight lines are quadratic, and if the deviations $\beta_{\pm}$ were identically zero then we would have $r(s,s')=|q(s)-q(s')|$. 

Now we claim:
\[
\begin{split}
\left|\, |q(s)-q(s')|-|\tilde q(s)-\tilde q(s')|\, \right|&\leq |q(s)-\tilde q(s)|+|q(s')-\tilde q(s')|\\
&=|\beta_{+}(s)-\tilde\beta_{+}(s)|+|\beta_{-}(s')-\tilde\beta_{-}(s')|\\\
&\leq \frac 12 C\Gamma(s^2+(s')^2)\leq C\Gamma r^2\leq C\Gamma r |q(s)-q(s')|.
\end{split}
\]
Indeed this follows from the definition of $\beta$, estimates \eqref{eq:betaestimates}, the fact that the ratio $r^2(s,s')/(s^2+(s')^2)$ is bounded by $C$, and \eqref{eq:qandrcomp}. As a consequence,
\begin{equation}\label{eq:1minusonemore}
\left|1-\frac{|\tilde q(s)-\tilde q(s')|}{|q(s)-q(s')|}\right|\leq C\Gamma r,
\end{equation}
and so as long as $\Gamma$ is sufficiently small,
\[
|K_{\SL-\tilde\SL}(s,s')|=\left|\log\frac{|\tilde q(s)-\tilde q(s')|}{|q(s)-q(s')|}\right|\leq C\Gamma r,
\]
which proves \eqref{eq:diffofnoderivod}.

To go after derivatives of the single layer potential, observe that, after doing the usual add and subtract 1 trick, the kernel of $\partial_sK_{\SL-\tilde\SL}$ is
\begin{equation}\label{eq:offdiagkernelonederiv}
\begin{split}
&|q(s)-q(s')|^{-2}\bigg(((q(s)-q(s'))\cdot\dot q(s)-(\tilde q(s)-\tilde q(s'))\cdot\dot{\tilde q}(s))
\\&+\left(1-\frac{|q(s)-q(s')|^2}{|\tilde q(s)-\tilde q(s')|^2}\right)(\tilde q(s)-\tilde q(s'))\cdot\dot{\tilde q}(s)\bigg).
\end{split}
\end{equation}
The pre-factor is bounded by $Cr^{-2}$, so we need to show the sum of two terms is bounded by $C\Gamma r^2$. The first of these terms, using an add-subtract trick, is bounded by
\[
C\left(|(q(s)-q(s'))\cdot(\dot q(s)-\dot{\tilde q}(s))|+|((q(s)-q(s'))-(\tilde q(s)-\tilde q(s')))\cdot\dot{\tilde q}(s)|\right).
\]
By \eqref{eq:qandrcomp} and rearrangement, this is bounded by
\[
Cr|\dot q(s)-\dot{\tilde q}(s)|+|(q(s)-\tilde q(s))+(q(s')-\tilde q(s'))|.
\]
Switching from $q$ to $\beta_{\pm}$, then applying \eqref{eq:betaestimates}, yields an upper bound of
\[Cr\Gamma s+\frac 12\Gamma(s^2+(s')^2),\]
which is at most $C\Gamma r^2$, as $s\leq |s-s'|\leq Cr$.  
Now the second of the two terms in \eqref{eq:offdiagkernelonederiv} is bounded by
\[
C\left|1-\frac{|q(s)-q(s')|^2}{|\tilde q(s)-\tilde q(s')|^2}\right|\cdot|\tilde q(s)-\tilde q(s')|.
\]
By \eqref{eq:1minusonemore}, for sufficiently small $\Gamma$, the first factor is bounded by $C\Gamma r$. By \eqref{eq:1minusonemore} and \eqref{eq:qandrcomp}, the second term is bounded by $Cr$. Putting these together gives what we want and proves \eqref{eq:diffofonederivsod}. Since our geometric setup is symmetric with respect to interchange of $s$ and $s'$, we also get \eqref{eq:diffofonederivsprimeod}.

For the second derivative of the single layer potential, as with the diagonal case, we consider the differences between the tilde and non-tilde versions of the first and second terms of \eqref{eq:twoderivssingle} separately. The first two terms have difference which is bounded by
\[
Cr^{-2}\left(\left|\dot q(s)\cdot \dot q(s')-\dot{\tilde q}(s)\cdot\dot{\tilde q}(s')\right|+\left|\left(1-\frac{|q(s)-q(s')|^2}{|\tilde q(s)-\tilde q(s')|^2}\right)\dot{\tilde q}(s)\cdot\dot{\tilde q}(s')\right|\right).
\]
The right-most portion of this is bounded by $C\Gamma r$, so with the pre-factor, that gives $C\Gamma r^{-1}$ as desired. The left-most term is bounded, using an add-subtract trick, by
\[
|\dot q(s)\cdot (\dot q(s')-\dot{\tilde q}(s'))|+|(\dot q(s)-\dot{\tilde q}(s))\cdot \dot{\tilde q}(s')|=|\dot \beta_{-}(s')-\dot{\tilde \beta}_{-}(s')|+|\dot \beta_{+}(s)-\dot{\tilde \beta}_{+}(s)|,
\]
and by \eqref{eq:betaestimates} this is bounded by $C\Gamma(s-s')\leq C\Gamma r$. So the first terms of \eqref{eq:twoderivssingle} differ by $C\Gamma r^{-1}$. As for the second terms of \eqref{eq:twoderivssingle}, the same trick as on the diagonal yields a bound of \eqref{eq:diagonaltrick}, and by \eqref{eq:diffofonederivsod} and \eqref{eq:diffofonederivsprimeod} this is bounded by
\[C\Gamma(|\partial_s K_{\SL}(s,s')|+|\partial_{s'}K_{\tilde\SL}(s,s')|).\]
But each of the terms in brackets, by \eqref{eq:onederivfirstsingle} and \eqref{eq:onederivsecondsingle}, is bounded by $C|q(s)-q(s')|^{-1}$, which by \eqref{eq:qandrcomp} is bounded by $Cr^{-1}$, yielding an overall bound of $C\Gamma r^{-1}$. This proves \eqref{eq:diffoftwoderivsod}.

For the double layer potential, the kernel $K_{\DL-\tilde\DL}(s,s')$ is \eqref{eq:offdiagkernelonederiv} but with all $\dot q(s)$ replaced by $\mathbf{n}(s)$, same for the tildes. Most of the analysis is identical, except that now we need to replace the bound $|\dot q(s)-\dot{\tilde q}(s)|\leq C\sigma s$ with the same bound for $|\mathbf{n}(s)-\tilde{\mathbf{n}}(s)|$. But this is just a 90-degree rotation, which leaves the magnitude unchanged, so the same bound applies, proving \eqref{eq:diffofnoderiv2od}.

Finally we need to analyse $\partial_sK_{\DL}(s,s')$ and do so by dealing with the first and second terms of \eqref{eq:onederivdouble} separately. For the first terms, the proof is precisely analogous to the proof of \eqref{eq:diffoftwoderivsod}, with the same replacement of $\dot q(s)$ by $\mathbf{n}(s)$, and the rotation trick we just used in the previous paragraph. For the second terms, observe that the second term of \eqref{eq:onederivdouble} is precisely $-2(\partial_sK_{\SL})K_{\DL}$. By the same trick as usual, the difference of terms is bounded by
\[
|\partial_s K_{\SL}(s,s')K_{\DL-\tilde\DL}(s,s')|+|\partial_s K_{\SL-\tilde\SL}(s,s')K_{\tilde\DL}(s,s')|.
\]
Using \eqref{eq:diffofonederivsod} and \eqref{eq:diffofnoderiv2od}, this is bounded by
\[
C\Gamma(|\partial_s K_{\SL}(s,s')|+|K_{\tilde\DL}(s,s')|).
\]
By direct calculation, the first term is bounded by $|q(s)-q(s')|^{-1}$, and the second by the same with tildes. By \eqref{eq:qandrcomp} and \eqref{eq:1minusonemore} both of these are bounded by $Cr^{-1}$, yielding \eqref{eq:diffofonederivs2od}. This completes the proof of Proposition \ref{prop:differenceoffdiagonal}, and with it all the results in this section.
\end{proof}
\clearpage\section{Further examples and numerics}\label{sec:numerics}

\subsection{General setup and benchmarking}\label{sec: numsetup}  The examples in this Section extend those in subsection \ref{sec:examples}. In most cases, the Steklov eigenvalues are computed using the Finite Element package  \emph{FreeFEM} (earlier versions known as \emph{FreeFem++}), see \cite{Freefem} and short notes \cite{Freefemweb}.  
In most cases, we choose a uniform mesh with $300$ mesh points per unit length on the boundary.
Roots of trigonometric polynomials are found using \emph{Mathematica}  operating with double precision. 

In order to benchmark the performance of the finite element solver, we compare the numerically computed Steklov eigenvalues $\lambda^\mathrm{num}$ of the unit square $\mathcal{P}_4\left(\frac{\pi}{2},1\right)$ with the exact eigenvalues \cite[section 3.1]{GP2017} 
\[
\begin{split}
\lambda\in\left\{0,2\right\}&\cup\left\{2t\tanh t\mid \tan t=-\tanh t\text{ or }\tan t=\coth t, t>0\right\}\\
&\cup\left\{2t\coth t\mid \tan t=\tanh t\text{ or }\tan t=-\coth t, t>0\right\}.
\end{split},
\] 
We also compare the numerically computed Steklov eigenvalues and the exact eigenvalues $\sigma_{2m}=\sigma_{2m+1}=m$ for the unit disk $\mathbb{D}_1$. 
Figure \ref{fig:Fig17} shows the relative numerical error 
\[
\varepsilon^\mathrm{num}_m:=\left|\frac{\lambda_m^\mathrm{num}}{\lambda_m}-1\right|
\]
for the square and the disk, and also the relative asymptotic error
\[
\varepsilon^\mathrm{asy}_m:=\left|\frac{\sigma_m}{\lambda_m}-1\right|
\]
for the disk.

\begin{figure}[htb]
\begin{center}
\includegraphics{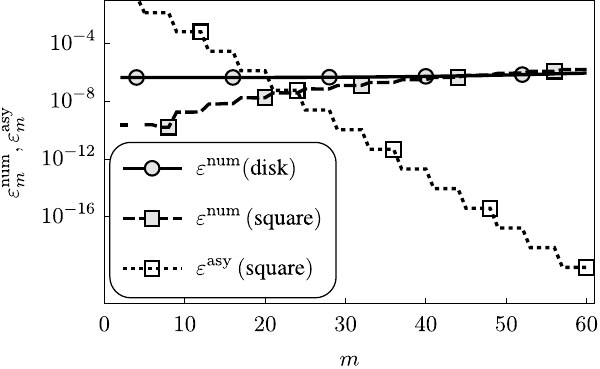}
\end{center}
\caption{Relative FEM errors for the disk and the square, and asymptotics error for the square\label{fig:Fig17}}
\end{figure} 

One can see that with the chosen mesh size, the relative error $\varepsilon^\mathrm{num}_m$ does not exceed approximately $10^{-6}$ for the eigenvalues $\lambda_m$, $m=2,\dots,100$.  Although it is well known that adaptive FEM are better suited for Steklov eigenvalue problems, see e.g.\ \cite{Garau}, they are processor-time costly and harder to implement. As we conduct the numerical experiments purely for illustrative purposes in order to demonstrate the practical effectiveness of the asymptotics,  the use of uniform meshes already gives very good results as shown above.  For an alternative method of calculating Steklov or mixed Steklov-Dirichlet-Neumann eigenvalues, see, e.g., \cite{ABIN19,AIN19}.

As in the examples which follow the exact eigenvalues are not known, we redefine from now on the relative asymptotic error as
\[
\varepsilon^\mathrm{asy}_m:=\left|\frac{\sigma_m}{\lambda_m^\mathrm{num}}-1\right|
\]
and use these quantities for all illustrations.

\subsection{Example \ref{ex:specialorexceptional} revisited}

Before proceeding to concrete examples, we fist demonstrate formulae \eqref{eq:allspecial} when all angles are special. Recalling Definition \ref{def:quasi}, formula \eqref{eq:Tdef1} and Remark \ref{rem:Aproperties}(c), we get in this case 
\[
\mathtt{T}(\balpha,\bell,\sigma)=\prod_{j=1}^n \Odd(\alpha_j)\begin{pmatrix}\er^{\ir |\partial\mathcal{P}| \sigma}&0\\0&\er^{-\ir |\partial\mathcal{P}| \sigma}\end{pmatrix},
\]
and so 
\[
\Tr\mathtt{T}=2\cos\left( |\partial\mathcal{P}| \sigma\right) \prod_{j=1}^n \Odd(\alpha_j)=2
\]
with $\sigma\ge 0$ if and only if \eqref{eq:allspecial}  holds. The statement on multiplicities, as well as the statement in case (b) of Example \ref{ex:specialorexceptional}  when some exceptional angles are present, are easily checked.

Switching to particular examples, we consider, in addition to right-angled triangles $T_1$ and $T_2$, a family of curvilinear triangles $\mathcal{T}(\alpha)$ constructed according to Figure \ref{fig:Fig18}. For each $\alpha\in\left(0,\frac{\pi}{3}\right)$, the vertices of $\mathcal{T}(\alpha)$  coincide with the vertices of an equilateral triangle of side one, two sides are straight, and the third (curved) side is given by the  equation shown in  Figure \ref{fig:Fig18}. Thus $\mathcal{T}(\alpha)=\mathcal{P}\left(\left(\frac{\pi}{3},\frac{\pi}{3},\alpha\right),\left(1,1,\ell_\alpha\right)\right)$, where the length $\ell_\alpha$ of the curved side has to be found numerically. We consider further two particular cases $\mathcal{T}_3=\mathcal{T}\left(\frac{\pi}{5}\right)$ (two angles are odd special and one is even special) and $\mathcal{T}_4=\mathcal{T}\left(\frac{\pi}{7}\right)$ (all three angles are odd special), for which $\ell_{\frac{\pi}{5}}\approx 1.0130$ and $\ell_{\frac{\pi}{7}}\approx 1.0296$, respectively.

\begin{figure}[htb]
\begin{center}
\includegraphics{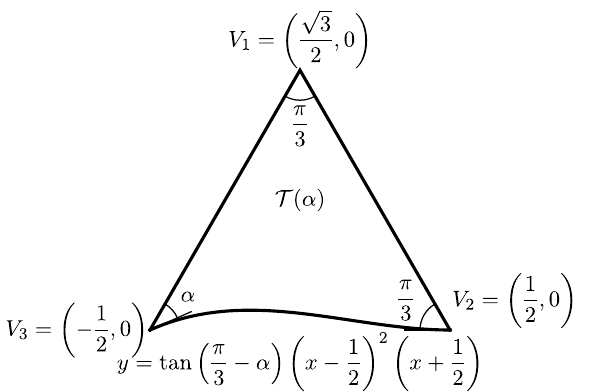}
\end{center}
\caption{Family $\mathcal{T}(\alpha)$ of curvilinear triangles\label{fig:Fig18}}
\end{figure}

The asymptotic accuracy for $T_1$, $T_2$, $\mathcal{T}_3$, and $\mathcal{T}_4$ is plotted in Figure \ref{fig:Fig19}.

\begin{figure}[htb]
\begin{center}
\includegraphics{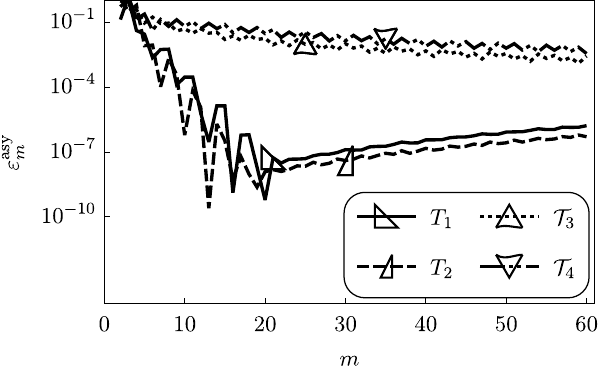}
\end{center}
\caption{Asymptotic accuracy for $T_1$, $T_2$, $\mathcal{T}_3$, and $\mathcal{T}_4$\label{fig:Fig19}}
\end{figure} 

\subsection{Example \ref{ex:quasi-regular} revisited} We start with the proof of Proposition \ref{prop:regnonexc}. For a quasi-regular $n$-gon $\mathcal{P}_n(\alpha,\ell)$ with a non-exceptional angle $\alpha$, we have, by \eqref{eq:Tdef1},
\[
\mathtt{T}(\balpha,\bell,\sigma)=\mathtt{C}(\alpha,\ell,\sigma)^n,
\]
where $\mathtt{C}$ is given by \eqref{eq:Sdef1}. Thus $\mathtt{T}$ will have an eigenvalue one if and only if $\mathtt{C}(\alpha,\ell,\sigma)$ has an eigenvalue $c$ equal to one of the complex $n$-roots of one, $\er^{2\ir q/n}$, $q\in\mathbb{Z}$. As $\det \mathtt{C}=1$, the other eigenvalue of $\mathtt{C}$ is $\frac{1}{c}$, therefore to cover all the distinct cases we need to take $q=0,\dots,\left[\frac{n}{2}\right]$; moreover, the condition can be then equivalently re-written as 
\begin{equation}\label{eq:eigreg}
\Tr \mathtt{C}(\alpha,\ell,\sigma)=2\csc\left(\frac{\pi^2}{2\alpha}\right)\cos(\ell\sigma)=c+1/c=2\cos\left(\frac{2q}{n}\right).
\end{equation}
Solving \eqref{eq:eigreg} for non-negative $\sigma$ gives the expressions for quasi-eigenvalues in the statement of Proposition \ref{prop:regnonexc}.

To prove the statement on multiplicities, we remark that if $c\ne\pm 1$, the matrix  $\mathtt{C}(\alpha,\ell,\sigma)$ has two linearly independent eigenvectors, and so does $\mathtt{T}(\balpha,\bell,\sigma)$. The rest of the statement follows from the careful analysis of the dimension of the eigenspace of $\mathtt{C}(\alpha,\ell,\sigma)$ coresponding to eigenvalues $c=\pm 1$ when $\sigma$ is a root of \eqref{eq:eigreg}.

As an illustration, we present numerical data for the equilateral triangle $P_3=\mathcal{P}_3\left(\frac{\pi}{3},1\right)$, the regular pentagon $P_5=\mathcal{P}_5\left(\frac{3\pi}{5},1\right)$,  the regular hexagon $P_6=\mathcal{P}_6\left(\frac{2\pi}{3},1\right)$, and a Reuleaux triangle  $\mathcal{R}=\mathcal{P}_3\left(\frac{2\pi}{3},\frac{\pi}{3}\right)$ (whose boundary is the union of three arcs constructed on the sides of an equilateral triangle of side one as chords, with centres at the opposite vertices), see Figure \ref{fig:Fig20}.

\begin{figure}[htb]
\begin{center}
\includegraphics{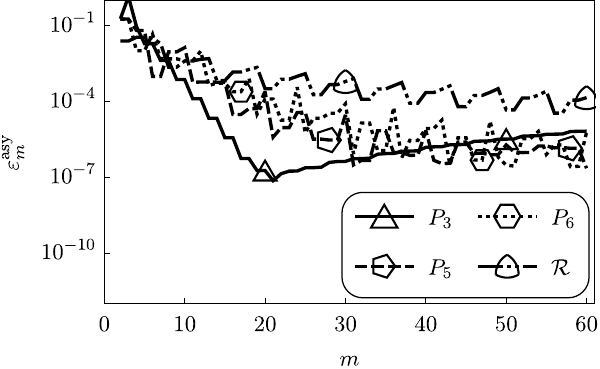}
\end{center}
\caption{Asymptotic accuracy for $P_3$, $P_5$, $P_6$, and $\mathcal{R}$\label{fig:Fig20}}
\end{figure} 

Additionally, we consider a family of (non-symmetric) one-angled droplets $\mathcal{D}_\alpha=\mathcal{P}_1(\alpha, \ell_\alpha)$ shown in Figure \ref{fig:Fig21}; the perimeter $\ell_\alpha$ needs to be calculated numerically. The quasi-eigenvalues $\sigma$ are listed in Example \ref{ex:quasi-regular}(a$_1$).

\begin{figure}[htb]
\begin{center}
\includegraphics{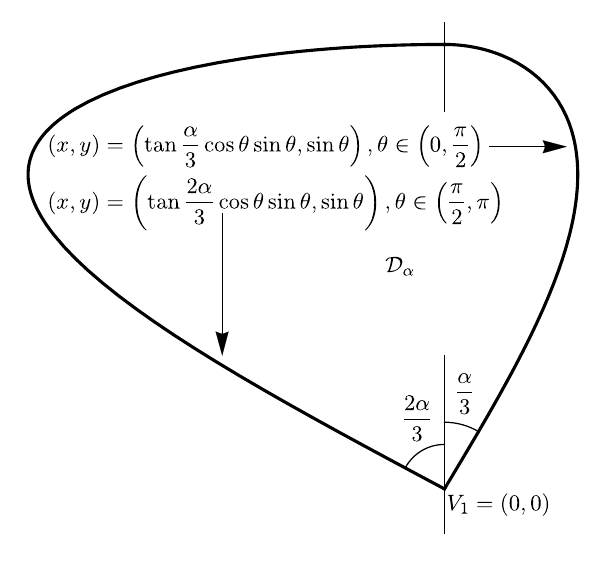}
\end{center}
\caption{A family of one-gons (droplets) $\mathcal{D}_\alpha$\label{fig:Fig21}}
\end{figure} 

Asymptotic accuracy for a selection of droplets is shown in Figure \ref{fig:Fig22}.

\begin{figure}[htb]
\begin{center}
\includegraphics{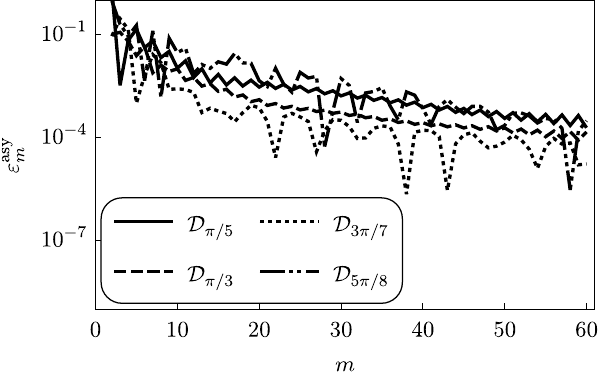}
\end{center}
\caption{Asymptotic accuracy for droplets $\mathcal{D}_\alpha$, $\alpha\in\left\{\frac{\pi}{5},\frac{\pi}{3},\frac{3\pi}{7},\frac{5\pi}{8}\right\}$\label{fig:Fig22}}
\end{figure}

\subsection{Discussion, and going beyond the theorems} When analysing the numerical data presented in Figures \ref{fig:Fig19}, \ref{fig:Fig20}, and \ref{fig:Fig22}, one should exercise caution in interpreting the results. For example, the asymptotic accuracy curves for $T_1$ and $T_2$ in Figure \ref{fig:Fig19}, and for $P_3$ in Figure \ref{fig:Fig20}, sharply bend upwards around $m\approx 20$. This means that for higher eigenvalues the errors of numerical computations exceed asymptotic errors (with the asymptotics in these cases converging rather rapidly), and the results become unreliable.

We make the following empirical observations on the speed of convergence of quasi-eigenvalues $\sigma_m$ to the actual eigenvalues $\lambda_m$ as $m\to\infty$ based on numerical results:
\begin{itemize}
\item convergence is more rapid for straight polygons compared to (partially) curvilinear polygons, for which it is in turn faster than for fully curvilinear polygons;
\item the rate of convergence becomes somewhat slower as the number of vertices increases.
\end{itemize}
\begin{remark} 
\label{convrate}
In view of the results  of \cite{Dav,Ur} for the sloshing problem, one could suggest that the curvature at the corner points may contribute to lower order terms in the  spectral asymptotics.
In particular, for fully curvilinear polygons, it is likely that  $\lambda_m-\sigma_m=O\left(\frac{1}{m}\right)$,  and that this estimate cannot be improved in general.  
At the same time, one can  show using the methods of Section \ref{sec:quasieigenvalues} and \cite[Section 3]{sloshing}, that for the triangles  $T_1$, $T_2$ and $P_3$ with all angles being special or exceptional, 
the error term in the spectral asymptotics decays superpolynomially (and, in fact, similar behaviour is expected for any partially curvilinear polygon with
all the angles which are either special or exceptional). 
\end{remark}

We also emphasise that all our theoretical results are only applicable to curvilinear polygons with angles less than $\pi$, see Remark \ref{rem:alphabiggerpi}. Consider, however, the family of sectors
\[
\mathcal{S}_\alpha=\{z=\rho\er^{\ir\theta}, 0<r<1, 0<\theta<\alpha\}=\mathcal{P}\left(\left(\alpha, \frac{\pi}{2},\frac{\pi}{2}\right),(1,1,\alpha)\right).
\]   
For $\alpha<\pi$, Theorem \ref{thm:polygoneqn1}(b) is applicable, giving three series of simple quasi-eigenvalues
\begin{equation}\label{eq:qesectors}
\sigma=\left\{\begin{aligned}
&\frac{\pi}{\alpha}\left(m-\frac{1}{2}\right),\\
&\frac{1}{2}\arccos\left(\cos\left(\frac{\pi^2}{2\alpha}\right)\right)+2\pi (m-1),\\
&-\frac{1}{2}\arccos\left(\cos\left(\frac{\pi^2}{2\alpha}\right)\right)+2\pi m,
\end{aligned}
\right.
\quad m\in\mathbb{N}.
\end{equation}

Numerical experiments indicate, however, that formulae \eqref{eq:qesectors} give good approximations of eigenvalues even when $\alpha>\pi$, see Figure \ref{fig:Fig23}. 
Together with further numerical experiments (we omit the details) this gives a good indication that Theorem \ref{thm:polygoneqn1} may be applicable (possibly with worsened remainder estimates) to \emph{all} curvilinear polygons with angles less than $2\pi$. 

\begin{figure}[htb]
\begin{center}
\includegraphics{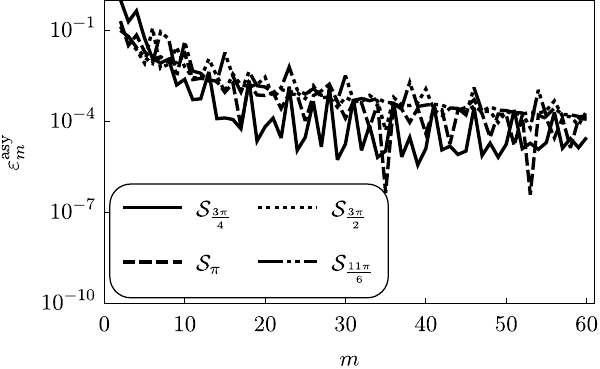}
\end{center}
\caption{Asymptotic accuracy for sectors $\mathcal{S}_\alpha$, $\alpha\in\left\{\frac{3\pi}{4},\pi,\frac{3\pi}{2},\frac{11\pi}{6}\right\}$\label{fig:Fig23}}
\end{figure} 

Finally, we note that it is straightforward to extend our results to not necessarily simply connected domains $\Omega$ for which all boundary components are either smooth curves or curvilinear polygons with interior (with respect to $\Omega$) angles less than $\pi$: the set of quasi-eigenvalues for such a domain is just a union of the sets of quasi-eigenvalues generated by individual boundary components taken with multiplicities, cf.\ \cite{GPPS}.
\clearpage

\begin{thebibliography}{KLPPS21}\addcontentsline{toc}{section}{References}

\bibitem[Agr06]{Ag2006} M. S. Agranovich, 
	\emph{On a mixed Poincare--Steklov type spectral problem in a Lipschitz domain}, 
	Russ. J. Math. Phys. \textbf{13}(3), 281--290 (2006).
	
\bibitem[Agr11]{Ag2011} M. S. Agranovich, 
	\emph{Mixed problems in a Lipschitz domain for strongly elliptic second-order systems}, 
	Func. Anal. Appl. \textbf{45}(2), 81--98 (2011).
	
\bibitem[ABIN20]{ABIN19} H. Ammari, O. Bruno, K. Imeri, and N. Nigam,
	\emph{Wave enhancement through optimization of boundary conditions},
	SIAM J. Sci. Comput.  {\bf 42}(1), B207--B224 (2020).
	
\bibitem[AIN20]{AIN19} H. Ammari, K. Imeri, and N. Nigam,
	\emph{Optimization of Steklov--Neumann eigenvalues},
	J. Comput. Physics \textbf{406}, 109211 (2020).
	
\bibitem[Ber17]{Ber17} G. Berkolaiko, 
	\emph{An elementary introduction to quantum graphs}, 
	in \emph{Geometric and Computational Spectral Theory}, eds. A. Girouard, D. Jakobson, M. Levitin, N. Nigam, I. Polterovich, and F. Rochon, 
	Contemporary Mathematics {\bf 700},  American Mathematical Society, Providence, Rhode Island and Centre de Recherches Math\'{e}matique, Montr\'{e}al, Qu\'{e}bec, 2017, 41--72.
	
\bibitem[BeKu13]{BK13} G. Berkolaiko and P. Kuchment,  
	\emph{Introduction to quantum graphs},  
	Mathematical Surveys and Monographs {\bf 186}, American Mathematical Society, Providence, Rhode Island, 2013.
	
\bibitem[BoEn09]{BE} J. Bolte and  S. Endres, 
	{\em The trace formula for quantum graphs with general self adjoint boundary conditions}, 
	Ann. Henri Poincar\'e \textbf{10}, 189--223 (2009).
	
\bibitem[Bro94]{brown} R. Brown, 
	\emph{The mixed problem for Laplace's equation in a class of Lipschitz domains}, 
	Comm. PDEs.  \textbf{19}(7--8), 1217--1233 (1994).
	
\bibitem[Cos83]{cost} M. Costabel, 
	\emph{Boundary integral operators on curved polygons}, 
	Ann. Mat. Pur. Appl. \textbf{133}(1), 305--326 (1983).
	
\bibitem[Dav74]{Dav} A.M.J. Davis, 
	{\em Short surface waves in a canal: dependence of frequency on the curvatures and their derivatives}, 
	Q. J. Mech. Appl. Math. \textbf{27} , 523--535 (1974).
	
\bibitem[DE\v{S}95]{DES} P. Duclos, P. Exner, and P. \v{S}\v{t}ov\'\i\v{c}ek, 
	\emph{Curvature-induced resonances in a two-dimensional Dirichlet tube}, 
	Annales de l'I.H.P. Section A \textbf{62}(1), 81--101  (1995). 
	
\bibitem[Edw93]{Ed} J. Edward, 
	\emph{An inverse spectral result for the Neumann operator on planar domains}, 
	J. Func. Anal. \textbf{111}, 312--322 (1993).
	
\bibitem[FKW07]{FKW} S. A. Fulling, P. Kuchment, and J. H. Wilson, 
	\emph{Index theorems for quantum graphs},
	J. Phys. A \textbf{40} 14165--14180 (2007).
	
\bibitem[GaMo11]{Garau} E. M. Garau and P. Morin, 
	\emph{Convergence and quasi-optimality of adaptive FEM for Steklov eigenvalue problems}, 
	IMA J. Num. Anal. \textbf{31}(3), 914--946 (2011).
	
 \bibitem[Gil80]{Gil}  P. Gilkey, 
 	{\em The spectral geometry of the higher order Laplacian},
	Duke Math. J. \textbf{47}(3), 511--528 (1980).
	
\bibitem[GLPS19]{GLPS} A. Girouard, J. Lagac\'e, I. Polterovich, and A. Savo, 
	{\em  The Steklov spectrum of cuboids}, 
	Mathematika \textbf{65}(2), 272--310 (2019).
	
\bibitem[GPPS14]{GPPS} A. Girouard, L. Parnovski, I. Polterovich, and D. A. Sher, 
	{\em  The Steklov spectrum of surfaces: asymptotics and invariants}, 
	Math. Proc. Cambridge Philos. Soc. \textbf{157}(3), 379--389  (2014).
	
\bibitem[GiPo17]{GP2017} A. Girouard and I. Polterovich, 
	\emph{Spectral geometry of the Steklov problem}, 
	J. Spectral Theory \textbf{7}(2), 321--359 (2017).
	
\bibitem[GHW21]{GHW} C. Gordon, P. Herbrich, and D. Webb, 
	\emph{Steklov and Robin isospectral manifolds},  
	J. Spectral Theory \textbf{11}(1), 39--61 (2021).
	
\bibitem[GrSe96]{Gru} G. Grubb and R. Seeley, 
	{\em Zeta and eta functions for Atiyah-Patodi-Singer operators}, 
	J. Geom. Anal. \textbf{6}(1), 31--77 (1996).
	
\bibitem[HaLa20]{LH} A. Hassannezhad and A. Laptev, 
	{\em Eigenvalue bounds of mixed Steklov problems}, 
	Comm.  Contemp. Math. \textbf{22}(2), 1950008 (2020).
	
\bibitem[Hec12]{Freefem} F. Hecht, 
	{\em New development in FreeFem++}, 
	J.  Numer. Math. \textbf{20}(3--4), 251--266 (2012). 
	
\bibitem[Ivr19]{Iv} V. Ivrii, 
	{\em Spectral asymptotics for Dirichlet to Neumann operator in the domains with edges},
	in V. Ivrii, {\em Microlocal Analysis, Sharp Spectral Asymptotics and Applications V},
	Springer, Cham, 2019,  513--539.   
	
\bibitem[Kha18]{Kh} M. Khalile, 
	{\em  Spectral asymptotics for Robin Laplacians on polygonal domains},  
	J. Math. Anal. Appl. \textbf{461}, 1498--1543 (2018).
	
\bibitem[KOBP20]{KOBP} M. Khalile, T. Ourmi\`eres-Bonafos, and  K. Pankrashkin,  
	{\em Effective operator for Robin eigenvalues in domains with corners},
	Annales de l'Institut Fourier \textbf{70}(5),  2215--2301 (2020).
	
\bibitem[KoSm99]{KoSm} T. Kottos and  U. Smilansky, 
	{\em Periodic orbit theory and spectral statistics for quantum graphs},
	Ann. Physics \textbf{274}, 76--124  (1999).
	
\bibitem[KLPPS21]{inversesteklov} S. Krymski, M. Levitin, L. Parnovski, I. Polterovich, and D. A. Sher, 
	{\em Inverse Steklov spectral problem for curvilinear polygons},
	Int. Math. Res. Notices \textbf{2021}(1), 1--37 (2021).
	
\bibitem[KuKu99]{KuKu99} P. Kuchment and L. A. Kunyansky, 
	{\em Spectral properties of high contrast band-gap materials and operators on graphs},  
	Exp. Math. \textbf{8}(1),  1--28 (1999).
	
\bibitem[KuKu02]{KuKu} P. Kuchment and L. Kunyansky, 
	{\em Differential operators on graphs and photonic crystals}, 
	Adv. in Comput. Math. \textbf{16},  263--290 (2002).
	

\bibitem[KuNo10]{KN2} P. Kurasov and  M. Nowaczyk, 
	{\em Geometric properties of quantum graphs and vertex scattering matrices}, 
	Opus. Math. \textbf{30}(3),  295--309 (2010).
	
\bibitem[Lev19]{Freefemweb} M. Levitin, 
	{\em A very brief introduction to eigenvalue computations with FreeFem++}, 
	electronic notes available at \url{http://michaellevitin.net/FreeFem++.html} (2019).
	
\bibitem[LPP06]{LPP} M. Levitin, L. Parnovski, and I. Polterovich, 
	\emph{Isospectral domains with mixed boundary conditions}, 
	J. Phys. A: Math. Gen. \textbf{39}, 2073--2082 (2006).
	
\bibitem[LPPS21]{sloshing} M. Levitin, L. Parnovski, I. Polterovich, and D. A. Sher, 
	{\em Sloshing, Steklov and corners: Asymptotics of sloshing eigenvalues}, 
	J. d'Analyse Math. (2021).
	
\bibitem[Mat02]{Mat} J. Matou\v{s}ek, 
	{\em Lectures on discrete geometry}, 
	Graduate Texts in Mathematics 212, Springer-Verlag, New York, 2002. 
	
\bibitem[MaRo15]{MR} R. Mazzeo and J. Rowlett, 
	{\em A heat trace anomaly on polygons}, 
	Math. Proc. Cambridge Philos. Soc. \textbf{159}(2), 303--319 (2015). 
	
\bibitem[Mol08]{Mol08} L. G. Molinari, 
	{\em Determinants of block tridiagonal matrices},
	Lin.  Algebra Applic. \textbf{429}, 2221--2226 (2008).
	
\bibitem[NRS19]{NRS} M. Nursultanov, J. Rowlett, and D. A. Sher, 
	{\em The heat kernel on curvilinear polygonal domains in surfaces}, 
	arXiv:1905.00259.
	
\bibitem[McL00]{mclean} W. McLean, 
	{\em Strongly elliptic systems and boundary integral equations}, 
	Cambridge University Press, 2000.
	
\bibitem[PePu14]{pp14} K.-M. Perfekt and M. Putinar, 
	{\em Spectral bounds for the Neumann--Poincar\'e operator on planar domains with corners}, 
	J. d'Anal. Math. \textbf{124}(1) , 39--57 (2014).
	
\bibitem[Pet50]{Pet50} A. S. Peters, 
	\emph{The effect of a floating mat on water waves}, 
	Comm. Pure and Appl. Math. \textbf{3}(4), 319--354 (1950).
	
\bibitem[PoSh15]{PS} I. Polterovich and D. A. Sher, 
	{\em Heat invariants of the Steklov problem}, 
	J. Geom. Anal. \textbf{25}(2), 924--950 (2015).
	
\bibitem[PST17]{PST} I. Polterovich, D. A. Sher, and J. Toth, 
	{\em Nodal length of Steklov eigenfunctions on real-analytic Riemannian surfaces}, 
	J. Reine Angew. Math. \textbf{754}, 17--47 (2019).
	
\bibitem[Rel54]{Rel} F. Rellich, 
	{\em Perturbation theory of eigenvalue problems}, 
	Courant Inst. Math. Sci., New York, 1954.
	
\bibitem[Ron15]{Rondi} L. Rondi, 
	{\em Continuity properties of Neumann-to-Dirichlet maps with respect to the $H$-convergence of the coefficient matrices},
	Inverse Problems \textbf{31}, 045002 (2015).
	
\bibitem[Roz86]{Ros} G. V. Rozenblyum, 
	{\em On the asymptotics of the eigenvalues of certain two-dimensional spectral problems}, 
	Sel. Math. Sov. \textbf{5}, 233--244  (1986).
	
\bibitem[Rue12]{Rueck} R. Rueckriemen, 
	{\em  Heat trace asymptotics for quantum graphs}, 
	arXiv:1212.2840.
	
\bibitem[Saf87]{Sa} Yu. Safarov, 
	{\em Riesz means of a function of the distribution of the eigenvalues of an elliptic operator},  
	Zap. Nauchn. Sem. Leningrad. Otdel. Mat. Inst. Steklov. (LOMI) \textbf{163}, 143--145 (1987) (Russian); 
	translation in J. Soviet Math. \textbf{49}(5), 1210--1212  (1990).
	
\bibitem[Tay96]{Tay} M. Taylor, 
	{\em Partial Differential Equations II: Qualitative Studies of Linear Equations}, 
	Springer, New York, 1996.
	
\bibitem[Tol21]{Tol21} J. Tolosa,
	{\em Length-preserving directions and some Diophantine equations},
	Amer. Math. Monthly \textbf{128}(2), 125--139 (2021).
	
\bibitem[Urs74]{Ur} F. Ursell, 
	{\em Short surface waves in a canal: dependence of frequency on curvature}, 
	J. Fluid Mech. \textbf{63}(1), 177--181 (1974).
	
\bibitem[Ver84]{Verchota} G. Verchota, 
	{\em Layer potentials and regularity for the Dirichlet problem for Laplace's equation in Lipschitz domains}, 
	J. Func. Anal. \textbf{59}, 572--611 (1984).
	
\end{thebibliography}
\end{document}